\theoremstyle{plain}
   \newtheorem{theorem}{Theorem}[section]
   \newtheorem{proposition}[theorem]{Proposition}
   \newtheorem{lemma}[theorem]{Lemma}
   \newtheorem{corollary}[theorem]{Corollary}
   \newtheorem{conjecture}[theorem]{Conjecture}
\theoremstyle{definition}
   \newtheorem{definition}[theorem]{Definition}
   \newtheorem{example}[theorem]{Example}
   \newtheorem{remark}[theorem]{Remark}
\swapnumbers \theoremstyle{theorems}
\newcommand{\Z}{\mathbb{Z}}
\newcommand{\T}{\mathbb{T}}
\def\TT{\mathbb{T}}
\def\UU{\mathbb{U}}
\def\kk{\Bbbk}
\def\endproof{\hfill$\square$\medskip}
\begin{document}

\title[Noncommutative marked surfaces II]{\sc Noncommutative marked surfaces II: tagged triangulations, clusters, and their symmetries}

\author[Arkady Berenstein]{Arkady Berenstein \hspace{-3pt}}

\author[Min Huang]{Min Huang \hspace{-3pt}}

\author[Vladimir Retakh]{Vladimir Retakh}

\keywords{Noncommutative clusters, noncommutative triangulations, Laurent phenomenon, symmetries, braid groups.}

\subjclass[2020]{16B99, 13F60, 05E14, 57Q15}

\thanks{This work was partially supported by the Simons Foundation Collaboration Grant for Mathematicians no.~636972 (AB), the National Natural Science Foundation
of China (No.12471023) (MH)}

\address{Arkady Berenstein \\ Department of Mathematics, University of Oregon, Eugene, OR 97403, USA}
\email{arkadiy@math.uoregon.edu}

\address{Min Huang \\ School of Mathematics (Zhuhai), Sun Yat-sen University, Zhuhai, China.}
\email{huangm97@mail.sysu.edu.cn}

\address{Vladimir Retakh \\ Department of Mathematics, Rutgers University, Piscataway, NJ 08854, USA}
\email{vretakh@math.rutgers.edu}

\begin{abstract} The aim of the paper is to define noncommutative cluster structure on several algebras ${\mathcal A}$ related to marked surfaces possibly with orbifold points of various orders, which includes noncommutative clusters, i.e.,  embeddings of a given group $G$ into the multiplicative monoid ${\mathcal A}^\times$ and  an action of a certain braid-like group $Br_{\mathcal A}$ by automorphisms of each cluster group in a compatible way. For punctured surfaces we construct new symmetries, noncommutative tagged clusters and establish a noncommutative Laurent Phenomenon.

\end{abstract}

\maketitle

\tableofcontents

\medskip

\section{Introduction and main results}

Noncommutative cluster theory is still in its infancy. The few examples, including Kontsevich rank $2$ (free) cluster algebra (\cite{BR0,Ko,  LS}) and noncommutative marked surfaces (\cite {BR}) suggest the following informal definition. 

A (noncommutative) cluster structure on a given graded algebra ${\mathcal A}$ over  a field $\kk$ is a certain graded group $Br_{\mathcal A}$ (we refer to it as {\it cluster braid group}) together with a collection of (homogeneous) embeddings $\iota$ of a given graded group $G$ into the multiplicative monoid ${\mathcal A}^\times$ (these embeddings are referred to as noncommutative clusters)  and  a (usually faithful) homogeneous action $\triangleright_\iota$ of $Br_{\mathcal A}$ on $G$ for any $\iota$ 
such that:

$\bullet$ The extensions $\iota:\kk G\to {\mathcal A}$ are injective, and their images generate ${\mathcal A}$ (and ${\mathcal A}$ is a noncommutative localization of $\kk G$).

$\bullet$ (monomial mutation) For any $\iota$ and $\iota'$ we expect a (unique) automorphism 
$\mu_{\iota,\iota'}$ which turns noncommutative clusters to a groupoid $\Gamma_{\mathcal A}$ so that the automorphism group $Aut(\iota)$ of any $\iota$ is isomorphic to 
 $Br_{\mathcal A}$ so that $\triangleright_\iota$ is the natural action of $Aut(\iota)$ on $G$.

$\bullet$ For any cluster homomorphism $f:{\mathcal A}\twoheadrightarrow {\mathcal A}'$ we expect a unique subgroupoid $\Gamma_{\mathcal A}^f$ and a functor $f_*:\Gamma_{\mathcal A}^f\to \Gamma_{{\mathcal A}'}$ so that its restriction to the automorphism group of each object is injective.

$\bullet$ In particular, if $\sigma$ is a cluster automorphism, we claim that the quotient homomorphism $\varphi_\sigma({\mathcal A})\twoheadrightarrow {\mathcal A}_\sigma={\mathcal A}/\langle Im(\sigma-1)\rangle$, of the coinvariant algebra of $\sigma$ is a cluster homomorphism, where the clusters on ${\mathcal A}_\sigma$ are those clusters $\iota$ of ${\mathcal A}$ for which $\iota(\kk G)$ is $\sigma$-invariant and $\varphi_\sigma(\iota(\kk G))\cong \kk G_\sigma$ for some other group $G_\sigma$ (which then becomes the cluster group of ${\mathcal A}_\sigma$).

Based on numerous examples, we expect in some cases a (noncommutative) {\it Laurent Phenomenon} as well:

$\bullet$  Given a cluster $\iota:G\hookrightarrow {\mathcal A}^\times$, for any  cluster $\iota':G\hookrightarrow {\mathcal A}^\times$ 
there is a submonoid $M_{\iota'}\subset G$ generating $G$ such that $\iota'(M_{\iota'})$ is in  the semiring $\mathbb{Z}_{\ge 0} \iota(G)$, moreover, 
$$\iota'(m)=\iota(\mu_{\iota,\iota'}(m))+\text{lower ~terms~in}~\iota(G)$$
for any $m\in {M_{\iota'}}$. 
\smallskip

In fact, this axiomatic allows us to define the {\it upper} cluster algebra ${\mathcal U}\subset {\mathcal A}$ to be the intersection of all $\iota(\kk(G))$ in ${\mathcal A}$, which will match its definition in the commutative and quantum situation.

We expect $G$ to be (almost) free, making both ${\mathcal A}$ and ${\mathcal U}$ more interesting. For instance, in the noncommutative rank $2$ case, ${\mathcal A}$ is the localization of the subalgebra ${\mathcal A}_{r_1,r_2}$ of $\kk\langle y_1^{\pm 1},y_2^{\pm 1}\rangle$ generated by $y_k$, $k\in {\mathbb Z}$ and $z$ (in the notation of \cite{BR0}). We expect that the corresponding upper cluster algebra ${\mathcal U}_{r_1,r_2}$ is generated by $y_0,y_1,y_2,y_3$. 

Here $G=\langle y_1,y_2\rangle$ is the free group of rank $2$ with the cluster braid group action given by (in the notation of \cite{BR0}) $T_1,T_2\in Aut(G)$ via 
\begin{equation}
    \label{eq:braid rank 2}
 T_i(y_j)=\begin{cases}
 y_i & \text{if $i=j$}\\
 y_1^{-r_1}y_2 & \text{if $i=1,j=2$}\\
 y_1y_2^{r_2} & \text{if $i=2,j=1$}\\

\end{cases}
\end{equation}
where $r_1,r_2$ are fixed natural numbers. We denote by $Br_{r_1,r_2}$ the subgroup of $Aut(G)$ generated by $T_1$ and $T_2$.
We show in Section \ref{sec:Rank $2$ cluster groups and braid action} that $Br_{r_1,r_2}$ is essentially an Artin braid group, i.e., it satisfies
$$\underbrace{T_1 T_2 T_1\cdots}_m =\underbrace{T_2 T_1 T_2\cdots}_m\ ,$$ where 
$m=\begin{cases} 
3 & \text{if $r_1r_2=1$}\\
4 & \text{if $r_1r_2=2$}\\
6 & \text{if $r_1r_2=3$}\\
\end{cases}$,
which justifies the name. 
We prove (Theorem \ref{th:faithful rank 2}) that \eqref{eq:braid rank 2} is, indeed, the presentation of $Br_{r_1,r_2}$ when $r_1r_2\in \{1,2,3\}$ and $Br_{r_1,r_2}$ is free if $r_1r_2\ge 4$. In particular, $Br_{1,1}$ is the ordinary braid group $Br_3$ on $3$ strands.
 We can also illustrate how the abelianization works here 
by replacing $G$ with $\mathbb{Z}^2$. Namely, define $T_i^{ab}\in Aut(\mathbb{Z}^2)=GL_2(\mathbb{Z})$, 
$i=1,2$ by same formulas \eqref{eq:braid rank 2}, i.e., 
$T_1^{ab}=
\begin{pmatrix}
1 & 0\\
r_2 & 1\\
\end{pmatrix}$, 
$T_2^{ab}=
\begin{pmatrix}
1 & -r_1\\
0 & 1\\
\end{pmatrix}$,
and the abelianization homomorphism $Br_{r_1,r_2}\to GL_2(\mathbb{Z})$ by $T_i\mapsto T_i^{ab}$. It is curious to see that the homomorphism is not injective precisely when $r_1r_2\in \{1,2,3\}$ and $T_1^{ab} T_2^{ab}$ in $GL_2(\mathbb{Z})$ is of finite order (Lemma \ref{lem:finiteorder}).
We expect this phenomenon of non-injectivity of the structural homomorphism $Br_{\mathcal A}\to Br_{{\mathcal A}^{ab}}$ to be non-injective frequently, see examples in Section \ref{subsec:Abelianization and $q$-abelianization of Noncommutative surfaces} (by the way, the abelianization homomorphism ${\mathcal A}\to {\mathcal A}^{ab}$ is expected to be a cluster one). In this case,  
the clusters are labeled by integers ($G_k=\langle y_k,y_{k+1}\rangle \simeq F_2$, $k\in \mathbb{Z}$) and the monomial mutations 
$\mu_{k\ell}$ are isomorphisms $G_\ell\simeq G_k$ determined by $\mu_{km}=\mu_{k\ell}\circ \mu_{\ell m}$ whenever $m$ is in the interval $[k,\ell]$, $\mu_{kk}=Id_{G_k}$ and 
$\mu_{k,k+1}(y_{k+1})=y_{k+1}$, 
$\mu_{k,k+1}(y_{k+2})=\begin{cases} 
y_k^{-1} y_{k+1}^{r_{k+1}} & \text{if $k$ is even}\\
y_k^{-1} & \text{if $k$ is odd}\\
\end{cases}$,
$\mu_{k+1,k}(y_{k+1})=y_{k+1}$, 
$\mu_{k+1,k}(y_{k})=\begin{cases} 
y_{k+2}^{-1} y_{k+1}^{r_{k+1}} & \text{if $k$ is odd}\\
y_{k+2}^{-1} & \text{if $k$ is even}\\
\end{cases}$. 

The corresponding algebra ${\mathcal A}_{r_1,r_2}$ defined in \cite{BR0} exhibits Noncommutative Laurent Phenomenon (see \cite{BR0} and Section \ref{sec:rank2alg}).

In the commutative/quantum setting, we claim that the localization ${\mathcal A}$ of a (quantum) cluster algebra $\underline {\mathcal A}$ by the set $X$ of all of its cluster variables satisfies all of the above requirements with $G\cong {\mathbb Z}^m$ (or its central extension $G_q$ in quantum case) so that $\kk G=\kk[x_1^{\pm 1},\ldots,x_m^{\pm 1}]$ for a given cluster $\{x_1,\ldots,x_n\}$ in ${\mathcal A}$. 
The well-known commutative/quantum Laurent Phenomenon asserts that the set of all (quantum) cluster variables belongs to the group algebra $\kk G$ which is an instance of its 
noncommutative counterpart stated above. In these cases, $Br_{\mathcal A}$ is essentially the group of symplectic transvections introduced in \cite{SSVZ}) and as we prove in Section \ref{subsec:classical monomial mutations and braid}, it is always a quotient of an appropriate Artin braid group (which, is the case for the ``abelianization" of $Br_{r_1,r_2}$
above). In the commutative case (geometric type), each seed ${\bf S}$ is essentially the exchange $m\times n$ matrix $\widetilde B=(b_1,\ldots,b_n)$ (all $G_\Sigma$ are copies of $\mathbb{Z}^m$).
For any elementary mutation ${\bf S} \stackrel{k}{\to} {\bf S}'$ define $\mu_{{\bf S}',{\bf S}}: G_{{\bf S}}\to G_{{\bf S}'}$ by 
$\mu_{{\bf S}',{\bf S}}(e_j)=\begin{cases} 
e_j & \text{if $j\neq k$}\\
-e_k+[b_k]_+ & \text{if $j=k$}\\
\end{cases}$ and extend uniquely by transitivity for any ${\bf S}$, ${\bf S}'$ viewed as vertices of the free $n$-valent tree (quantum case is nearly identical, see Section \ref{subsec:classical monomial mutations and braid} for details). The Laurent Phenomenon is well-known in these cases and $\mu_{{\bf S}',{\bf S}}$ can be viewed as the leading term of the Laurent expansion (Theorems \ref{th:monomial mutation surfaces intro}, \ref{th:leading term}, and \ref{th:leading term rank 2}).

Our next, totally noncommutative,  cluster algebra ${\mathcal A}_n$ introduced in \cite{BR} (which is corresponding to the Dynkin type $A_{n-3}$) is generated by $x_{ij}^{\pm 1}$ for distinct $i,j\in [1,n]$ subject to 

$\bullet$ (Triangle relations)
$x_{ij}x_{kj}^{-1}x_{ki}=x_{ik}x_{jk}^{-1}x_{ji}$ for distinct $i,j,k\in [1,n]$;

$\bullet$ (Ptolemy relations)
$x_{ik}=x_{ij}x_{lj}^{-1}x_{lk}+x_{ik}x_{jl}^{-1}x_{jk}$ for distinct $i,j,k,l\in [1,n]$ such that $i,j,k,l$ are in clockwise order.

Following \cite{BR}, we construct in Section \ref{Sec:triangleg} noncommutative clusters for ${\mathcal A}_n$ as certain embeddings $\iota_\Delta$ of the free group $F_{3n-4}$ into ${\mathcal A}_n$ labeled by triangulations $\Delta$ of the $n$-gon, so that the image of $\iota_\Delta$ is the subgroup of ${\mathcal A}_n^\times$ generated by $x_{ij}$, $(i,j)\in \Delta$. More precisely, following \cite{BR}, we define the triangle group $\TT_\Delta$ to be generated by $t_{ij}$, $(i,j)\in \Delta$ subject to the above triangle relations and claim that the assignments $t_{ij}\mapsto x_{ij}$, $(i,j)\in \Delta$ define an injective homomorphism of groups $\TT_\Delta\hookrightarrow {\mathcal A}_n$ which will play a role of a noncommutative cluster (with a slight abuse of notation, $\TT_\Delta$ is our noncommutative cluster group). 
The noncommutative Laurent Phenomenon holds for all noncommutative clusters for ${\mathcal A}_n$ (see \cite{BR} and Section \ref{sec:NC Laurent surfaces} for details).

Furthermore, for any triangulation $\Delta$ of the $n$-gon and any internal edge $(i,k)\in \Delta$ we define an automorphism $T_{ik}$ of $\TT_\Delta$
 by  
 \begin{equation}
     \label{eq:Tij}
 T_{ik}(t_{\gamma})=\begin{cases}
     t_{ij}t_{kj}^{-1}t_{kl}t_{il}^{-1}t_{ik} & \text{if $\gamma=(ik)$}\\
     t_{ki}t_{li}^{-1}t_{lk}t_{jk}^{-1}t_{ji} & \text{if $\gamma=(ki)$}\\

          t_\gamma & \text{otherwise}\\

 \end{cases},
 \end{equation}
 where $(i,j,k,l)$ is the unique clockwise quadrilateral in $\Delta$ with the diagonal $\gamma$ (that is, $T_\gamma$ scales the noncommutative diagonal $t_\gamma$ by a noncommutative cross-ratio of its quadrilateral and fixes all other diagonals).
 
 We denote by $\underline{Br}_\Delta^+$ (resp. $\underline{Br}_\Delta$) the submonoid (resp. the subgroup)  of $Aut(\TT_\Delta)$ generated by all $T_{ik}$ (clearly, $\underline{Br}_\Delta^+\subset \underline{Br}_\Delta$ and the former generates the latter).

This notation is justified by the following theorem.

\begin{theorem} [Theorem \ref{th:Br on Sigman1}] 
\label{th:braid triangular polygon}  For any $n\ge 4$ and any triangulation $\Delta$ of the $n$-gon, the group $\underline{Br}_\Delta$ is isomorphic to the braid group $Br_{n-2}$ on $n-2$ strands. Moreover, the monoid $\underline{Br}_{\Delta}^+$ is generated by $T_{ij}=T_{ji}$ for all diagonals $(i,j)\in \Delta$ subject to the following relations: 
$$\begin{cases}
T_{ij}T_{jk}T_{ki}T_{ij}=T_{jk}T_{ki}T_{ij}T_{jk}, & \text{if $(i,j,k)$ is a counter-clockwise  triangle in $\Delta$},\\
T_{ij}T_{k\ell}T_{ij}=T_{k\ell}T_{ij}T_{k\ell}, &  \text{if $(i,j)$ and $(k,\ell)$ are two sides of some triangle in $\Delta$},\\
T_{ij}T_{k\ell}=T_{k\ell}T_{ij}, & \text{otherwise.}
\end{cases}
$$
\end{theorem}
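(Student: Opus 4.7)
The plan is to reduce the theorem to the case of a fan triangulation (where the presentation collapses to the standard Artin one) by exploiting flip invariance on the flip graph of triangulations. First I would verify by direct computation, using formula \eqref{eq:Tij} together with the triangle and Ptolemy identities defining $\TT_\Delta$, that the three families of relations hold in $\underline{Br}_\Delta^+\subset \mathrm{Aut}(\TT_\Delta)$. The commutation relation is essentially local: if $(i,j)$ and $(k,\ell)$ share no triangle in $\Delta$, then $T_{ij}$ fixes every $t_\gamma$ appearing in the defining formula for $T_{k\ell}$ and vice versa, so the two automorphisms commute. The length-$3$ ``adjacent'' relation for two sides of a common triangle is the cluster-theoretic braid relation, provable by a bounded explicit calculation. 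The length-$4$ triangle relation associated to an interior triangle $(i,j,k)$ is the most delicate, requiring careful noncommutative bookkeeping of three cross-ratio scalings via the Ptolemy identities.

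Next, for the fan triangulation $\Delta_0$ from vertex $1$ with diagonals $(1,3),\ldots,(1,n-1)$, the dual tree is a path, so no interior triangles arise and the presentation from Step 1 reduces, on setting $\sigma_k := T_{1,k+2}$, to the standard Artin presentation of $Br_{n-2}$. This yields a surjection $Br_{n-2}\twoheadrightarrow \underline{Br}_{\Delta_0}$; injectivity I would establish by exhibiting a faithful target for $\underline{Br}_{\Delta_0}$ (for instance by passing to the abelianization of $\TT_{\Delta_0}$ and recovering a known faithful $Br_{n-2}$-representation). The flip graph of triangulations of the $n$-gon is connected, so to extend to arbitrary $\Delta$ it suffices to show that each flip $\Delta\to \Delta'$ exchanging a diagonal $(i,k)$ with $(j,\ell)$ in a quadrilateral $(i,j,k,\ell)$ induces an isomorphism $\underline{Br}_\Delta\cong \underline{Br}_{\Delta'}$ matching the given presentations. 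Concretely, one would write the new generator $T_{j\ell}$ as an explicit word in $\{T_\gamma:\gamma\in \Delta\}$ dictated by the noncommutative monomial mutation on $t_{j\ell}$, and then check that each $\Delta'$-relation is derivable from the $\Delta$-relations; iterating flips down to $\Delta_0$ gives $\underline{Br}_\Delta\cong Br_{n-2}$.

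For the monoid presentation, let $\mathcal{M}$ denote the abstract monoid with the listed generators and relations. Step 1 produces a surjective monoid homomorphism $\mathcal{M}\twoheadrightarrow \underline{Br}_\Delta^+$, whose group completion by the above is $Br_{n-2}$. Injectivity of $\mathcal{M}\to \underline{Br}_\Delta^+$ can then be deduced either from a Garside-style argument showing $\mathcal{M}$ embeds in its group of fractions, or by exhibiting an explicit normal form on positive words matching those in $\underline{Br}_\Delta^+$. The main obstacle is flip invariance in Step 3: writing down the explicit flip word for $T_{j\ell}$ and verifying compatibility of presentations across flips that create or destroy an interior triangle (thereby trading a length-$4$ relation for a pair of length-$3$ relations). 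A close technical second is the length-$4$ triangle relation of Step 1, which hinges on a nontrivial noncommutative identity in $\TT_\Delta$ involving the three cross-ratios around an interior triangle.
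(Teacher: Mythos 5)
Your overall architecture---verify the relations, reduce to the fan triangulation by flip invariance, and then split the fan case into a surjection $Br_{n-2}\twoheadrightarrow \underline{Br}_{\Delta_0}$ plus an injectivity argument via a faithful model---matches the paper's: the compatibility of the presentations under flips is the content of Theorem \ref{thm:quiverbraidgroup} (the bulk of the technical work, as you anticipate), and the fan case is Theorem \ref{th:Br on Sigman}. The one step that would fail as proposed is your mechanism for injectivity. You suggest ``passing to the abelianization of $\TT_{\Delta_0}$ and recovering a known faithful $Br_{n-2}$-representation.'' Abelianizing the $T_\gamma$-action yields exactly the symplectic transvection representation of Section \ref{subsec:classical monomial mutations and braid}, and that representation is \emph{not} faithful: already in rank $2$ the abelianized generators $T_i^{ab}$ satisfy extra relations precisely in the finite types $r_1r_2\in\{1,2,3\}$ (see the discussion around \eqref{eq:braid rank 2} and Lemma \ref{lem:finiteorder}); the case $r_1r_2=1$ is the pentagon, i.e.\ $n=5$ of your theorem. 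The paper moreover expects the structural map $Br_{\mathcal A}\to Br_{{\mathcal A}^{ab}}$ to be non-injective quite generally. So no faithful $Br_{n-2}$-model can be extracted from the abelianization, and this is the crux of the lower bound $\underline{Br}_{\Delta_0}\supseteq Br_{n-2}$.

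The paper's fix is to stay nonabelian. Inside the sector group $\UU_{\Delta_0}$ (a free group) one exhibits elements $y_i=t_{i1}^{-1}t_i$, $i=2,\dots,n-1$, generating a rank-$(n-2)$ free subgroup $G$ invariant under all $T_{1,i}$, on which $T_{1,i}(y_i)=y_{i-1}y_{i+1}^{-1}y_i$ and $T_{1,i}(y_j)=y_j$ for $j\neq i$; after a further change of generators this is the classical Artin-type action of $Br_{n-2}$ on a free group, whose faithfulness is quoted from \cite[Theorem 3.2]{P} (Lemma \ref{lem:faithful}). You would need to replace your abelianization step by such a nonabelian free witness. Two smaller remarks: the flip step is organized in the paper through the groupoid ${\bf TSurf}_\Sigma$ rather than by ad hoc rewriting of generators, which is what makes the conjugation formula $\underline{Br}_{\Delta'}=\mu_{\Delta',\Delta}\,\underline{Br}_\Delta\,\mu_{\Delta',\Delta}^{-1}$ automatic; and the monoid statement is not handled by a new Garside argument but by observing that for acyclic triangulations $\underline{Br}^+_\Delta$ is the standard positive braid monoid and invoking \cite{P0}.
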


For instance, if  $\Delta$ is a triangulation of the hexagon as in Figure \ref{fig:hexagon},
     \begin{figure}[h]
\includegraphics{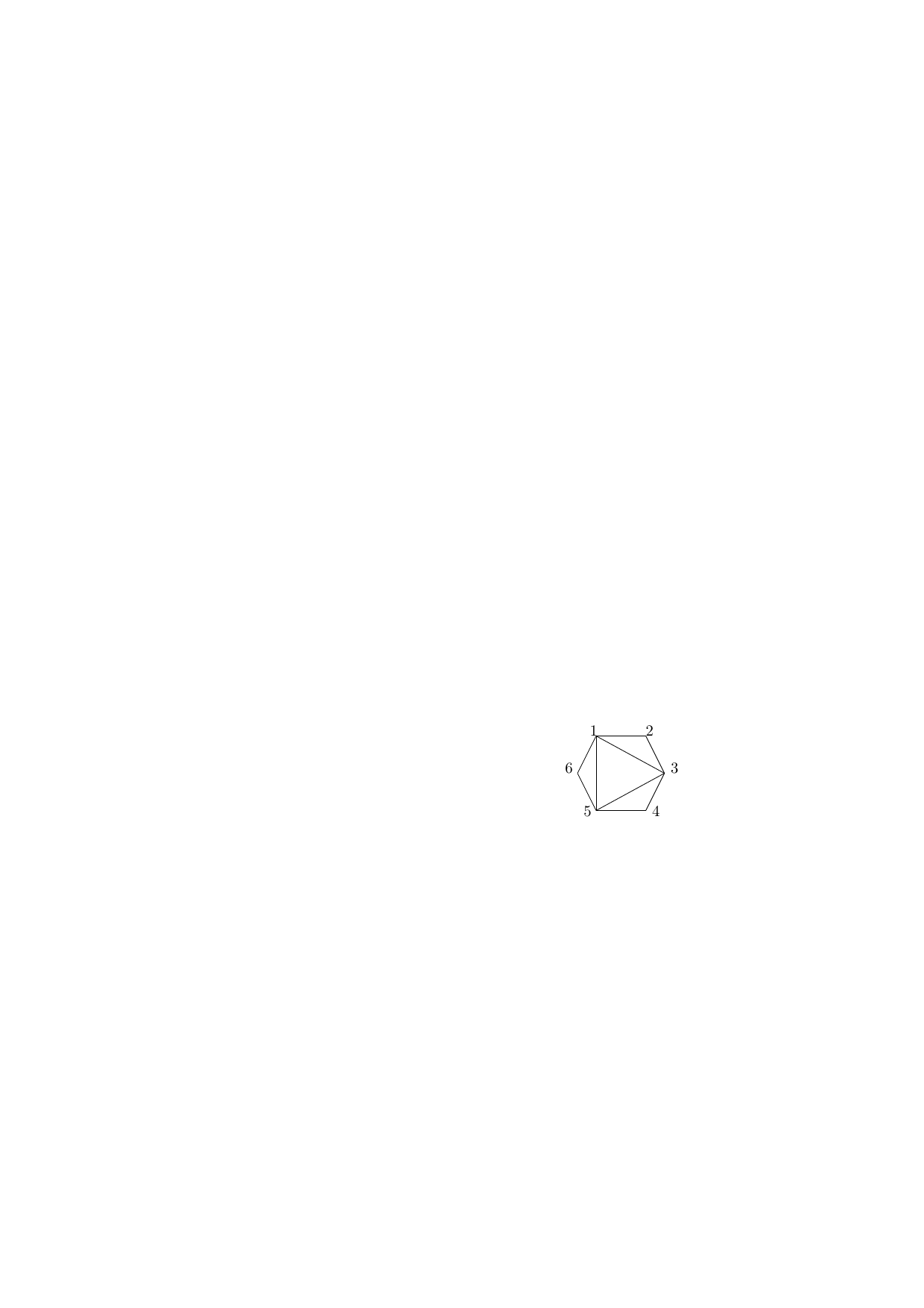}
\caption{Hexagon with a triangulation}
 \label{fig:hexagon}
\end{figure}                                                            
both $\underline{Br}_{\Delta}^+$ and $\underline{Br}_{\Delta}\cong Br_4$ are generated by $T_{13}$, $T_{15}$, and $T_{35}$ subject to $T_{13}T_{35}T_{13}=T_{35}T_{13}T_{35}, T_{35}T_{15}T_{35}=T_{15}T_{35}T_{15}, T_{35}T_{15}T_{35}=T_{15}T_{35}T_{15}$ and 
$T_{31}T_{15}T_{53}T_{31}=T_{15}T_{53}T_{31}T_{15}=T_{53}T_{31}T_{15}T_{53}$.

By definition, the monoid $\underline{Br}^+_\Delta$ maps into the group $\underline{Br}_\Delta$ by the natural (Grothendieck) localization. It follows from Remark \ref{rem:acyclic braid type A} that for any triangulation $\Delta$ of $\Sigma_{n+2}$ with each triangle having a boundary edge,  $\underline{Br}^+_\Delta$ coincides with the standard braid monoid $Br_n^+$. Therefore, results of Brieskorn (see e.g., \cite{P}) imply that $Br_n^+$ naturally embeds into $Br_n$. We conjecture (Conjecture \ref{conj:injective braid monoid group}) that this injectivity holds for any triangulation of any surface.

Our next cases of noncommutative cluster algebras, ${\mathcal B}_n$, ${\mathcal C}_n$,
and ${\mathcal D}_n$ (corresponding to Dynkin types $B_{n-1}$, $C_{n-1}$, 
and $D_n$, respectively), are generated by $(x_{ij}^\pm)^{\pm 1}$ for $i,j\in [1,n]$ and $x_{0,i}^{\pm 1}$, $x_{i,0}^{\pm 1}$ (for ${\mathcal B}_n$ and ${\mathcal D}_n$ only), subject to: 

$\bullet$ (Triangle relations) $x_{ij}^+(x_{kj}^+)^{-1}x_{ki}^+=x_{ik}^-(x_{jk}^-)^{-1}x_{ji}^{-}$ for $i,j,k\in[1,n]$ such that $i,j,k$ are in clockwise order (we allow $j=k$).

$\bullet$  (Additional triangle relations for ${\mathcal B}_n$ and ${\mathcal D}_n$) $x_{0i}(x^-_{ji})^{-1}x_{j0}=x_{0j}(x_{ij}^+)^{-1}x_{i0}$ for $i,j\in [1,n]$;

$\bullet$ (Ptolemy relations) $x_{lj}^-=x_{lk}^+(x_{ik}^+)^{-1}x_{ij}^++x_{li}^+(x_{ki}^-)^{-1}x_{kj}^-$ for $i,j,k,l\in [1,n]$ such that $i,j,k,l$ are in clockwise order (we allow $k=l$).

$\bullet$ (Additional Ptolemy relations for ${\mathcal B}_n$ and ${\mathcal D}_n$) $x_{ik}^+=x_{ij}^+x_{0j}^{-1}x_{0k}+x_{i0}x_{j0}^{-1}x_{jk}^+$ for $i,j,k\in [1,n]$ such that $i,j,k$ are in clockwise order (we allow $i=k$),

$\bullet$  (Additional relation for $\mathcal B_n$) $x^{+}_{ii}=x^-_{ii}=x_{i0}x_{0i}$ for any $i\in [n]$.

As in the usual Lie-theoretic setting, where $B_{n-1}$ is a folding of $D_n$ and $C_{n-1}$ is a folding of $A_{2n-3}$, we prove the following results (in fact, implicitly we use coinvariant algebra of an automorphism $\sigma$, see Section \ref{subsec:symmetries and orbifolds}).

\begin{theorem} [Corollaries \ref{cor:Chekhov-Shapiro} and \ref{cor:Tp=1}]
\label{th:quotient B_n intro}
For all $n\ge 2$ one has:

$(a)$ For any $d\ge 2$, the quotient 
of ${\mathbb Q}(\cos\frac{2\pi}{d})\otimes_\mathbb Q{\mathcal A}_{nd}$  
by relations 
$x_{ij}=x_{i+n, j+n}$
modulo $nd$ for distinct $i,j=1,\ldots,nd$ and $x_{i,i+kn}=2\cos(\frac{\text{min}\{k-1,d-k\}}{d} \pi)x_{i,i+n}=2\cos(\frac{\text{min}\{k-1,d-k\}}{d} \pi)x_{i+n,i}$, $i=1,\cdots,n, k=1,\cdots,d-1$
is generated by $x_{ij}^+:=x_{ij}, x_{ij}^-:=x_{i,j+(d-1)n}$ for distinct $i,j=1,\ldots,n$
and $x_i:=x_{i,i+n}=x_{i+n,i}$ for $i=1,\ldots,n$ subject to:

$\bullet$ $x_{ij}^+(x_{kj}^+)^{-1}x_{ki}^+=x_{ik}^-(x_{jk}^-)^{-1}x_{ji}^-$ for any distinct $i,j,k$ in clockwise order.
 
$\bullet$  $x_{ij}^+x^{-1}_{j}x_{ji}^+=x_{ij}^-x^{-1}_{j}x_{ji}^-$ for any distinct $i,j$.

$\bullet$ $x_{\ell j}^+=x_{\ell i}^+(x_{ki}^-)^{-1}x_{kj}^-+x_{\ell k}^+(x_{ik}^+)^{-1}x_{ij}^+$ for any distinct $i,j,k,\ell$ in clockwise order.

$\bullet$ $x_j=x_{ji}^- x_i^{-1}x_{ij}^+ + 2\cos\left({\frac{\pi}{d}}\right)x_{ji}^+ x_i^{-1} x_{ij}^+ +x_{ji}^+ x_i^{-1}x_{ij}^-$ for any distinct $i,j$.

(This is a noncommutative version of Chekhov-Shapiro algebra from \cite[Section 2.1]{CS}, see also Definition \ref{def:generalized Chekhov-Shapiro}). In particular, this is ${\mathcal C}_n$ if $d=2$.

$(b)$ ${\mathcal B}_n$ is the 
quotient of ${\mathcal D}_n$ given by relations 
$x_{0i}=x^{-1}_{i0}x_{ii}^{+},x_{i0}=x_{ii}^{-}x_{0i}^{-1}$ for  $i=1,\ldots,n$.
\end{theorem}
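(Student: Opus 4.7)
The plan is to establish both parts by realizing the target algebra as the coinvariant algebra $\mathcal{A}/\langle \text{Im}(\sigma-1)\rangle$ of a suitable cluster automorphism $\sigma$ acting on a larger ``unfolded'' algebra $\mathcal{A}$, in the spirit of the general symmetry/coinvariant framework sketched in Section \ref{subsec:symmetries and orbifolds}. In both cases the verification amounts to (i) identifying the symmetry and checking that it is a cluster automorphism, and (ii) unpacking the resulting presentation of the coinvariant algebra and matching it against the presentation stated.

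For part (a), let $\sigma$ denote the cyclic rotation of $\mathbb{Q}(\cos\frac{2\pi}{d})\otimes_\mathbb{Q}\mathcal{A}_{nd}$ defined by $\sigma(x_{ij}) = x_{i+n,j+n}$ (indices mod $nd$). It manifestly permutes the defining triangle and Ptolemy relations, so it extends to an order-$d$ algebra automorphism, and it permutes the rotation-symmetric triangulations of the $nd$-gon, so the general machinery qualifies it as a cluster automorphism. The coinvariant algebra is then generated by $\sigma$-orbit representatives: two per pair of $\sigma$-distinct vertex orbits, taken to be $x_{ij}^+ := x_{ij}$ and $x_{ij}^- := x_{i,j+(d-1)n}$ for distinct $i,j \in [1,n]$, together with the $\sigma$-stable ``central'' diagonals $x_{i,i+kn}$ for $k = 1,\dots,d-1$. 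The Chebyshev-type normalization prescribed in the statement expresses each central diagonal as a scalar multiple of $x_i := x_{i,i+n}$; this identification should be obtained by iterating the $\mathcal{A}_{nd}$-Ptolemy relation along the chain of quadrilaterals between consecutive $\sigma$-translates of $x_{i,i+n}$ and using $\sigma$-invariance to turn the recursion into a Chebyshev second-kind recurrence with parameter $2\cos(\pi/d)$. The first three Chekhov--Shapiro relations then follow directly by restricting the triangle and Ptolemy relations of $\mathcal{A}_{nd}$ to the chosen generators.

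The main obstacle is the fourth ``orbifold Ptolemy'' relation with its distinguished $2\cos(\pi/d)$ coefficient. The idea is to apply the $\mathcal{A}_{nd}$-Ptolemy relation to the quadrilaterals $(i, j, j+kn, i+kn)$ for $k = 1, \dots, d-1$ around the full $\sigma$-orbit and telescope; after the Chebyshev substitutions the initial and final quadrilaterals contribute respectively $x_{ji}^- x_i^{-1} x_{ij}^+$ and $x_{ji}^+ x_i^{-1} x_{ij}^-$, while the intermediate contributions must be shown to collapse---via the Chebyshev recursion at $\lambda = 2\cos(\pi/d)$---to exactly $2\cos(\pi/d) \cdot x_{ji}^+ x_i^{-1} x_{ij}^+$. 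The case $d = 2$ (giving $\mathcal{C}_n$) is a sanity check: $2\cos(\pi/2) = 0$ annihilates the middle term and a single application of Ptolemy suffices.

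For part (b), the substitutions $x_{0i} = x_{i0}^{-1}x_{ii}^+$ and $x_{i0} = x_{ii}^- x_{0i}^{-1}$ both imply $x_{i0} x_{0i} = x_{ii}^{\pm}$, which is precisely the additional $\mathcal{B}_n$-relation $x_{ii}^+ = x_{ii}^- = x_{i0} x_{0i}$; their mutual consistency $x_{ii}^+ = x_{ii}^-$ already holds in $\mathcal{D}_n$ by specializing the additional triangle relation $x_{0i}(x_{ji}^-)^{-1}x_{j0} = x_{0j}(x_{ij}^+)^{-1}x_{i0}$ at $i = j$. The remaining verification is a routine one-by-one rewriting: every defining relation of $\mathcal{D}_n$ involving $x_{0i}$ or $x_{i0}$, once transformed by the substitutions, reduces either to a defining relation of $\mathcal{B}_n$ or to a tautology, and conversely. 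The only mild subtlety is the degeneration of the additional Ptolemy relation of $\mathcal{D}_n$ when two of the indices coincide, which has to be checked to yield the correctly degenerate additional Ptolemy relation of $\mathcal{B}_n$.
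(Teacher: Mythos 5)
Your coinvariant-algebra framing is the right starting point, and part (b) is essentially correct: the imposed relations $x_{0i}=x_{i0}^{-1}x_{ii}^+$, $x_{i0}=x_{ii}^-x_{0i}^{-1}$ amount exactly to the additional relation $x_{ii}^+=x_{ii}^-=x_{i0}x_{0i}$ of ${\mathcal B}_n$ (the consistency $x_{ii}^+=x_{ii}^-$ being, as you say, the degenerate additional triangle relation), so the quotient of ${\mathcal D}_n$ is ${\mathcal B}_n$ by comparison of presentations. The paper reaches the same conclusion more conceptually, via the tagging automorphism and Corollary \ref{cor:Tp=1} (converting the ordinary puncture into a $0$-puncture), but the content is the same.

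Part (a) has a genuine gap. Your argument, even if the telescoping collapse were carried out, would only show that the four families of relations \emph{hold} in the quotient; the theorem asserts they are a complete set of \emph{defining} relations, and nothing in your proposal addresses why no further relations are needed. The paper does not derive the $2\cos(\pi/d)$ relation by chasing Ptolemy relations inside ${\mathcal A}_{nd}$ at all: that relation is a defining \emph{axiom} (the bigon special puncture relation of Definition \ref{def:ASigma}) of the noncommutative orbifold surface ${\mathcal A}_{\Sigma_{nd}/\sigma_d}$, whose presentation is exactly the generalized Chekhov--Shapiro one by Remark \ref{rem:A_{n,d}}. The actual work is in Proposition \ref{prop:symmetries and orbifolds} (and the proof of Theorem \ref{th:functoriality nc-surface}(b)): one builds an explicit homomorphism from the coinvariant algebra into ${\mathcal Frac}({\mathcal A}_{\Sigma_{nd}/\sigma_d})$ sending the wrapping diagonals to the prescribed $2\cos$ multiples of the special loop (Lemma \ref{lem:map}), extends it through the localization via Lemma \ref{lem:lift} and the Laurent expansion, and identifies the kernel with precisely the imposed relations. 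This skew-field argument is what delivers both directions of the presentation claim simultaneously. Two smaller points: the relations $x_{i,i+kn}=2\cos(\frac{\min\{k-1,d-k\}}{d}\pi)\,x_{i,i+n}$ are \emph{imposed} in the statement, not derived, so your plan to obtain them by iterating Ptolemy is misdirected (at most one needs their consistency); and your key claim that the intermediate terms of the telescoping collapse to the single $2\cos(\pi/d)\,x_{ji}^+x_i^{-1}x_{ij}^+$ term is exactly the nontrivial computation and is left entirely unverified.
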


We claim that all noncommutative clusters $\iota:G\hookrightarrow {\mathcal X}_n$ are in one-to-one correspondence with appropriate triangulations (=the corresponding commutative clusters) $\Delta$ of a once punctured $n$-gon as follows (See Sections \ref{sec:NC Laurent surfaces}). 

$\bullet$ If ${\mathcal X}_n={\mathcal B}_n$ or ${\mathcal C}_n$ then these are triangulations of once punctured $n$-gon with the collapsed triangle around the puncture. There are $\binom{2n-2}{n-1}$ such triangulations.

 \begin{figure}[ht]
\includegraphics{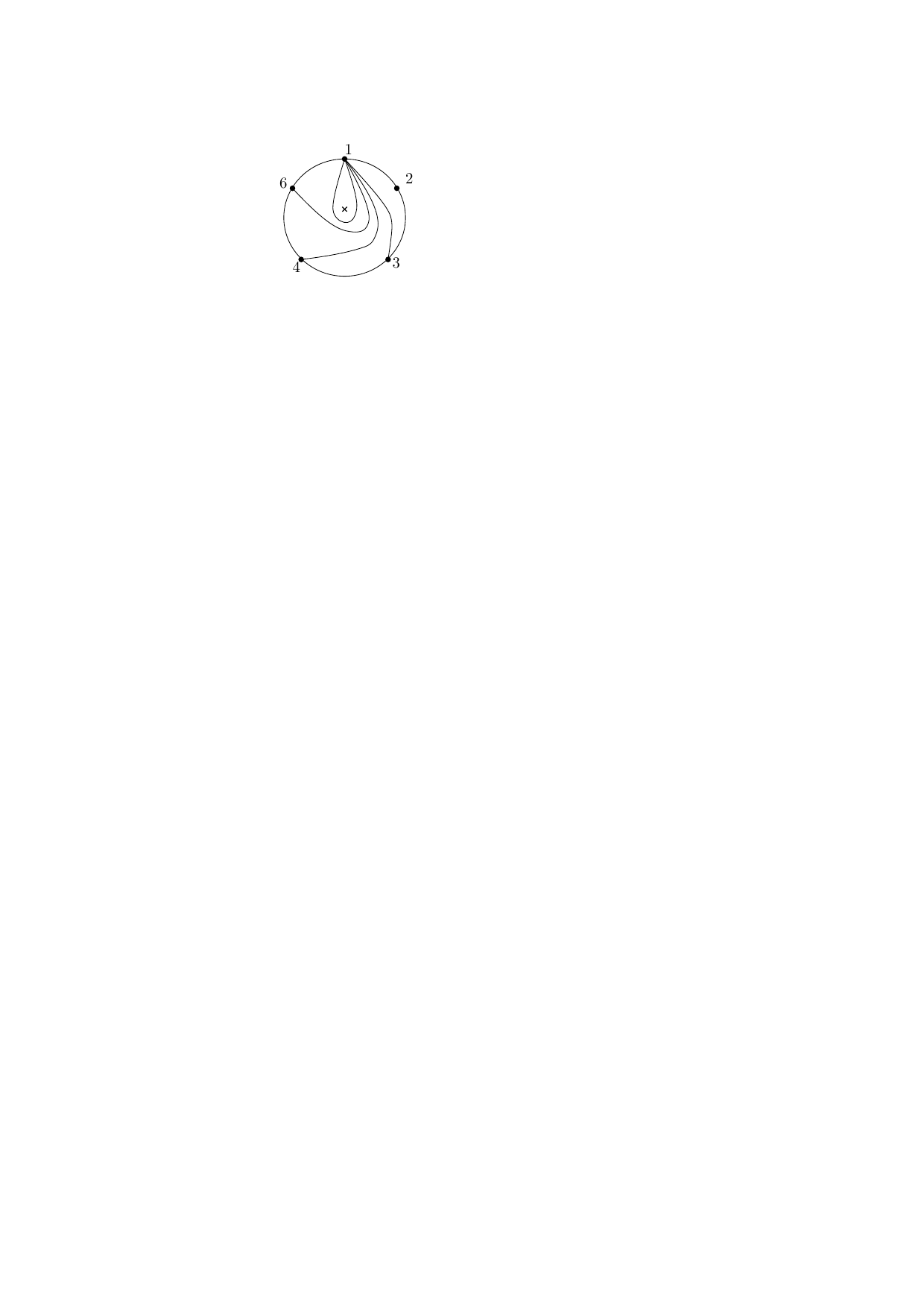}
\caption{}
\end{figure}

$\bullet$ If ${\mathcal X}_n={\mathcal D}_n$ then these are {\it tagged} triangulations of once punctured $n$-gon. There are $\frac{3n-2}{n}\binom{2n-2}{n-1}
=\binom{2n-2}{n}+\binom{2n-1}{n}$ of them, out of which the first summand is the number of  triangulations  with no self-folded triangles (thus approximately $\frac{1}{3}$ of all clusters are unavoidably tagged).

For any such a triangulation $\Delta$, similarly to ${\mathcal A}_n$, we define a triangle group $\TT_\Delta$ with above triangle relations together with a natural inclusion $\iota_\Delta:\TT_\Delta\hookrightarrow {\mathcal D}_n$, $t_\gamma\mapsto x_\gamma$ (all $\TT_\Delta$ are free of same rank, so, with a slight abuse of notation, this is our group $G$ from the axioms in the beginning of the section). This exhausts all noncommutative clusters for ${\mathcal X}_n$ (it follows from \cite{BR} and Theorem \ref{thm:1-relator} that $\TT_\Delta$ is a free group of rank $3n$ for ${\mathcal D}_n$,  $3n-1$ for ${\mathcal B}_n$, and $3n-2$ for ${\mathcal C}_n$).

The noncommutative Laurent Phenomenon holds in ${\mathcal B}_n,{\mathcal C}_n,{\mathcal D}_n$ as well (see Section \ref{sec:NC Laurent surfaces} for details).

Similarly to \eqref{eq:Tij}, for any aforementioned triangulation $\Delta$ of an appropriately punctured $n$-gon, we define an automorphism $T_\gamma$ of $\TT_\Delta$ for all internal edges $\gamma\in \Delta$ (see Section \ref{Sec:triangleg} for details) and denote by $\underline{Br}_\Delta^+$ (resp. $\underline{Br}_\Delta$) the submonoid (reps. the subgroup)  of $Aut(\TT_\Delta)$ generated by all $T_{\gamma}$ (clearly, $Br_\Delta^+\subset Br_\Delta$ and the former generates the latter). The following is an analog of Theorem \ref{th:braid triangular polygon}. 

\begin{theorem} [Corollary \ref{Cor:CD}, Theorem \ref{th:Br on Sigman1}]
\label{th:braid triangular once punctured polygon} 

For any triangulation $\Delta$ as above, the group $\underline{Br}_\Delta$ is isomorphic to a quotient of the Artin braid group $Br_{B_{n-1}}$, $Br_{C_{n-1}}$, and $Br_{D_n}$ respectively for ${\mathcal B}_n$, ${\mathcal C}_n$, and ${\mathcal D}_n$. Moreover, the surjective homomorphism $Br_{C_{n-1}}\twoheadrightarrow \underline{Br}_\Delta$ is an isomorphism.   
\end{theorem}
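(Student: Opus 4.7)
The plan is to mimic the proof of Theorem \ref{th:braid triangular polygon} while accounting for the extra generator(s) coming from the puncture. Since for any two (tagged) triangulations $\Delta,\Delta'$ of the once-punctured $n$-gon the monomial mutations $\mu_{\Delta,\Delta'}$ should conjugate the $T_\gamma$ into $T_{\gamma'}$'s --- the noncommutative counterpart of flip-invariance of the Artin presentation --- the isomorphism class of $\underline{Br}_\Delta$ is independent of $\Delta$, so it suffices to work with a single convenient reference triangulation $\Delta_0$.

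For ${\mathcal B}_n$ and ${\mathcal C}_n$ I would take $\Delta_0$ to include the collapsed self-folded triangle at the puncture together with a fan of outer diagonals from a fixed boundary vertex; for ${\mathcal D}_n$ I would take a fan together with two tagged diagonals at the puncture. The $T_\gamma$ generators on the outer fan satisfy ordinary braid and commutation relations in exactly the same way as in Theorem \ref{th:braid triangular polygon}, since the relevant quadrilaterals avoid the puncture. The essentially new input is the relation between the distinguished internal edge(s) at the puncture and the adjacent fan diagonal; a direct computation using the additional triangle and Ptolemy relations of ${\mathcal X}_n$ (together with the formulas for $T_\gamma$ analogous to \eqref{eq:Tij}) should yield the length-$4$ braid relation for the $B/C$ cases and the fork relations (commutation plus length-$3$) in the $D$ case. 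Assembling these relations produces the desired surjection from $Br_{B_{n-1}}$, $Br_{C_{n-1}}$, or $Br_{D_n}$ onto $\underline{Br}_{\Delta_0}$.

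For the type-$C_{n-1}$ injectivity my preferred route is to exploit the folding of Theorem \ref{th:quotient B_n intro}(a) at $d=2$: ${\mathcal C}_n$ arises as a quotient of (a scalar extension of) ${\mathcal A}_{2n}$ under a $\mathbb{Z}/2$-symmetry $\sigma$. This should descend to a bijection between ${\mathcal C}_n$-triangulations and $\sigma$-invariant triangulations of the $2n$-gon, and correspondingly to an identification of $\underline{Br}_\Delta$ with the subgroup of $\underline{Br}_{\tilde\Delta}$ generated by $\sigma$-invariant $T$-automorphisms. The standard embedding $Br_{C_{n-1}}\hookrightarrow Br_{A_{2n-3}}$ as fixed points of the diagram involution then matches the surjection $Br_{C_{n-1}}\twoheadrightarrow \underline{Br}_\Delta$ under the isomorphism of Theorem \ref{th:braid triangular polygon}, yielding injectivity from the type-$A$ faithfulness already established.

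The main obstacle is verifying this folding compatibility on the nose --- in particular, that the ``doubled'' generator associated to the diameter upstairs (a product of two commuting $T$'s for a $\sigma$-pair of diagonals) corresponds to the unique generator at the puncture downstairs, in a manner consistent with the standard $C \hookrightarrow A$ inclusion. An alternative, possibly cleaner, route is to construct a direct faithful action of $\underline{Br}_\Delta$ on the free triangle group $\TT_\Delta$ and then to run a Garside/normal-form argument on $\underline{Br}^+_\Delta$ parallel to Brieskorn's theorem cited after Theorem \ref{th:braid triangular polygon}. Either way, the $B$ and $D$ cases are expected to remain genuine quotients because the corresponding braid monoids admit additional torsion-like identifications arising from the puncture decorations, so no injectivity claim is made there.
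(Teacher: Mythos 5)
Your first half --- deriving the presentation of $Br_\Delta$ from a reference fan triangulation and concluding that $\underline{Br}_\Delta$ is a quotient of $Br_{B_{n-1}}$, $Br_{C_{n-1}}$, $Br_{D_n}$ --- is essentially what the paper does (Corollary \ref{Cor:CD} is read off from the presentation in Theorem \ref{th:brgroup}/\ref{th:Br_n 1}), so that part is fine.

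The injectivity argument for type $C_{n-1}$, however, has a genuine gap: the folding runs the functoriality in the wrong direction. The triangle group of the orbifold quotient $\Sigma_{2n}/\sigma$ receives a map $\nu_{f}\colon \TT_{\tilde\Delta}\to\TT_{\Delta}$ \emph{from} the triangle group of the $2n$-gon cover, so faithfulness of the braid action on $\TT_{\tilde\Delta}$ (the type-$A$ result) says nothing about faithfulness of the induced action on $\TT_{\Delta}$: an element could act nontrivially upstairs and trivially on the quotient. To factor the injective composite $Br_{C_{n-1}}\hookrightarrow \underline{Br}_{\tilde\Delta}$ through $\underline{Br}_{\Delta}$ you would need a map $\underline{Br}_{\Delta}\to\underline{Br}_{\tilde\Delta}$, i.e.\ a lifting of automorphisms of $\TT_\Delta$ to $\TT_{\tilde\Delta}$, which is not available; the paper's Example \ref{ex:sphere with 4 points} shows that the analogous descent of faithfulness can genuinely fail. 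Moreover, the folding statement you want to invoke is Proposition \ref{prop:BtoA}, whose proof in the paper \emph{uses} Theorem \ref{th:Br on Sigman1}, so within this paper's logic your route would be circular.

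The paper's actual proof works entirely downstairs, which is your ``alternative route'' but made concrete: for the star-like triangulation $\Delta$ of the once-special-punctured $n$-gon it exhibits explicit elements $y_0,\dots,y_{n-1}$ generating a free subgroup $G'$ of rank $n$ of the sector group $\UU_\Delta$, invariant under $Br_\Delta$, and computes $T_i(y_i)=y_{i-1}y_{i+1}^{-1}y_i$ for $i\ge 2$ while $T_1(y_1)=y_0y_2^{-1}y_0y_2^{-1}y_1$, i.e.\ the generator at the puncture acts as the \emph{square} of a standard transvection. Faithfulness then follows by combining Crisp's embedding $Br_{C_{n-1}}\hookrightarrow Br_n$, $\sigma_{n-1}\mapsto\tau_{n-1}^2$, with the faithfulness of the standard $Br_n$-type action on a free group (Lemma \ref{lem:faithful}). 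Finally, a small point: the paper does not assert that the $B$ and $D$ maps ``remain genuine quotients'' for structural reasons; it conjectures they are isomorphisms and simply cannot prove it, so your closing justification for omitting those cases is not the right one.
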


We expect that the surjective homomorphisms $Br_{B_{n-1}}\twoheadrightarrow \underline{Br}_\Delta$ and $Br_{D_n}\twoheadrightarrow \underline{Br}_\Delta$ are isomorphisms, that is, our actions of $Br_{B_{n-1}}$ and $Br_{D_n}$ on the corresponding free groups $\TT_\Delta$ are faithful. We verified this for ${\mathcal D}_2$, i.e., $Br_{{\mathcal D}_2}\cong \mathbb{Z}^2$ in Example \ref{ex:D_2}.

The difficulty in proving that these homomorphisms are isomorphisms suggested a more conceptual definition of $Br_\Delta$  as automorphisms groups of objects of a certain groupoid ${\bf Tsurf}_\Sigma^t$ (which is a main example of what we call $\varphi$-groupoids, see Section \ref{subsec:triangulated surfaces and triangle groups} for details). We abbreviate $Br_\Delta:=Aut_{{\bf Tsurf}^t_\Sigma}(\Delta)$, the automorphism group of an object $\Delta$ of the groupoid ${\bf Tsurf}^t_\Sigma$ and refer to it as the {\it braid group} of the triangulation $\Delta$. 
 This is justified by the following

\begin{theorem} [Theorem \ref{th:braid generation}, Theorem \ref{th:Br_n 1} (a) (b) (c)]
\label{th:quiver braid group intro}
$Br_\Delta$ is always generated by elements $T_\gamma$ for all internal edges $\gamma$ of $\Delta$. Moreover, 

$(a)$ $Br_\Delta\cong {Br}_{n-2}$ for any triangulation $\Delta$ of the $n$-gon $\Sigma_{n}$.

$(b)$ $Br_\Delta\cong {Br}_{B_{n-1}}$ for any triangulation $\Delta$ of $\Sigma$, the $n$-gon with a $0$-puncture. 

$(c)$ 
$Br_\Delta\cong Br_{C_{n-1}}$ for any triangulation $\Delta$ of $\Sigma$, an $n$-gon with a special puncture.

$(d)$ $Br_\Delta\cong Br_{D_n}$ for any triangulation $\Delta$ of the once punctured $n$-gon $\Sigma$.  
\end{theorem}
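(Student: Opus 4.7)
My strategy is to work entirely inside the groupoid ${\bf Tsurf}^t_\Sigma$ and extract a presentation of $Br_\Delta$ that matches the appropriate Artin braid group, decoupled into: (i) showing $Br_\Delta$ is generated by the $T_\gamma$'s (the content of Theorem \ref{th:braid generation}); and (ii) verifying that the defining Artin relations of the corresponding braid group both hold in $Br_\Delta$ and are sufficient to present it. For (i), I exploit the $\varphi$-groupoid structure: morphisms of ${\bf Tsurf}^t_\Sigma$ are generated by elementary flip morphisms modulo the structural axioms (involution, square, pentagon, and, in the tagged setting, tag-change relations). Any automorphism $\alpha\in Br_\Delta$ is thus a loop of flips $\Delta=\Delta_0\to\cdots\to\Delta_k=\Delta$, and I would proceed by induction on $k$: since the loop returns to $\Delta$, some flipped edge must eventually be restored, and the pentagon and square axioms let me reassemble the intervening subword as a conjugate of a single $T_\gamma$, which I then pull out and recurse on a shorter loop.

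With generation in hand, the pentagon and square axioms force on the $T_\gamma$'s exactly the commutation, triangle $3$-cycle, and quadrilateral $4$-cycle relations appearing in Theorems \ref{th:braid triangular polygon} and \ref{th:braid triangular once punctured polygon}; the derivations of those relations use only the groupoid axioms rather than the concrete action on $\TT_\Delta$, so they persist in the abstract $Br_\Delta$. Comparing with the standard Artin presentations, the triangle-adjacency quiver of $\Delta$ yields: in (a) a type $A_{n-2}$ diagram, giving $Br_{n-2}$; in (c) a type $C_{n-1}$ diagram, giving $Br_{C_{n-1}}$ (which by Theorem \ref{th:braid triangular once punctured polygon} also matches $\underline{Br}_\Delta$, since in the $C$-case the action on $\TT_\Delta$ is faithful); in (d) a type $D_n$ diagram, giving $Br_{D_n}$; and in (b) a type $B_{n-1}$ diagram, giving $Br_{B_{n-1}}$. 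For (b) I would additionally exploit the folding of Theorem \ref{th:quotient B_n intro}(b): each triangulation of the $0$-punctured $n$-gon lifts canonically to a $\sigma$-invariant tagged triangulation of the once-punctured $n$-gon, and classical Artin folding identifies the $\sigma$-fixed subgroup of $Br_{D_n}$ with $Br_{B_{n-1}}$, so (d) implies (b).

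The principal obstacle is \emph{completeness} of relations in (b) and (d), where the action on $\TT_\Delta$ is not known to be faithful and $\underline{Br}_\Delta$ therefore cannot be used as a surrogate for $Br_\Delta$: the abstract groupoid could, a priori, carry additional relations coming from tagged pentagon or puncture-encircling moves beyond the Artin ones. The heart of the argument is to verify that every such move is already a consequence of the Artin relations among the $T_\gamma$'s. Concretely, I would catalogue all non-trivial short loops at $\Delta$ (involving at most one tag-change or puncture-encircling move together with a bounded number of flips), express each as an explicit word in the $T_\gamma$'s, and check by direct computation with the groupoid axioms that these words reduce to the identity modulo the Artin relations. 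Once this finite check is carried out, the canonical surjection $Br_X\twoheadrightarrow Br_\Delta$ for $X\in\{A_{n-2},B_{n-1},C_{n-1},D_n\}$ is forced to be injective, completing all four cases.
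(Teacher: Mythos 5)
Your overall architecture (generation of $Br_\Delta$ by the $T_\gamma$, then matching a presentation against the Artin groups) is the same as the paper's, and your generation argument is essentially the paper's: the paper decomposes any loop of flips into simple cycles using the presentation of ${\bf TSurf}_\Sigma$ (Theorem \ref{th:presentation of Tsurf}) and the conjugation formula $h_{\Delta,\Delta'}T_{\gamma',\Delta'}h_{\Delta,\Delta'}^{-1}$ of Proposition \ref{thm:generate}, packaged in the general groupoid lemma Theorem \ref{th:two-cycle generation}. The gap is in your treatment of \emph{completeness} of the relations, which is where essentially all of the paper's work lies. First, it is not true that for an arbitrary triangulation the ``triangle-adjacency quiver'' is an $A$-, $B$-, $C$- or $D$-type diagram: any triangulation of $\Sigma_n$ containing an internal triangle produces a $3$-cycle in $Q_\Delta$ and hence the conjugated relation $R3$, and a self-folded triangle or a star at the puncture produces the relations $R8$/$R9$; the Artin-type presentation only appears for specially chosen triangulations (Remark \ref{rem:quivers no special}(b)), after which one must invoke $Br_\Delta\cong Br_{\Delta'}$ (Corollary \ref{cor:brinv}). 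Second, your proposed verification --- cataloguing ``short loops at $\Delta$'' with a bounded number of flips --- cannot establish that the surjection $Br_X\twoheadrightarrow Br_\Delta$ is injective. The relations of $Br_\Delta$ come from simple cycles based at \emph{arbitrary} vertices of the exchange graph, conjugated back to $\Delta$ along arbitrary paths; to control these one needs to know that the candidate presentation is \emph{stable under every flip}, i.e.\ that $h_{\Delta',\Delta}$ induces an isomorphism between the abstractly presented groups $\widetilde{Br}_\Delta$ and $\widetilde{Br}_{\Delta'}$. That is exactly the content of Theorem \ref{thm:quiverbraidgroup} and the long case analysis of Section \ref{sec:proofthm:fundegroup} (including the subtle well-definedness of the puncture relation $R9$, Lemma \ref{lem:R9unique}), none of which is replaced by a bounded-radius check around a single $\Delta$.

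The route you propose for part (b) is also not viable as stated. A $0$-puncture is not realized in this paper as the quotient of an ordinary puncture by an orientation-preserving involution of the once-punctured $n$-gon; the relation between ${\mathcal B}_n$ and ${\mathcal D}_n$ in Theorem \ref{th:quotient B_n intro}(b) is a coinvariant/quotient statement about the algebras (imposing $T_p=1$, Corollary \ref{cor:Tp=1}), not a branched double cover of surfaces, so there is no canonical $\sigma$-invariant lift of a triangulation and no induced identification of $Br_\Delta$ with the $\sigma$-fixed subgroup of $Br_{D_n}$. (Statements of that flavour are precisely the open Conjecture \ref{conj:injective relative braid homomorphism} and the conjecture at the end of the introduction.) The paper instead obtains (b) directly from Theorem \ref{th:brgroup}: the loop around the $0$-puncture has weight $\frac{1}{2}\neq 1$, so relation $R2$ degenerates to the length-four braid relation $T_\alpha T_\beta T_\alpha T_\beta=T_\beta T_\alpha T_\beta T_\alpha$ at the singular node, which is exactly the type $B_{n-1}$ Artin presentation for a suitable triangulation. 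You should replace the folding argument by this direct reading of the presentation, and note that part (d) additionally uses the known isomorphism between the ``circular'' presentation of Example \ref{ex:Dn group} and $Br_{D_n}$ (or a triangulation with a self-folded triangle whose quiver is literally of type $D_n$).
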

Actually, one of our main results is Theorem \ref{th:brgroup}, in which we explicitly  compute all $Br_\Delta$. Rather surprisingly, this generalizes quiver braid groups introduced and studied in  
\cite{BM} and \cite{Q} (Remarks \ref{rem:equi} and \ref{rem:quivers no special}). A Weyl group analogue of this result has also been investigated in \cite{FST24}.

In fact, we can recover both classical cluster structures of the types $A_{n-2}$, $B_{n-1}$, $C_{n-1}$, and $D_n$ as abelianizations of ${\mathcal A}_n$, ${\mathcal B}_n$, ${\mathcal C}_n$, and ${\mathcal D}_n$, respectively, together with their symplectic transvection groups. Similarly, quantum cluster structures of types $A_{n-2}$ and $C_{n-1}$ can be recovered from ${\mathcal A}_n$ and ${\mathcal C}_n$, respectively, by forcing the appropriate generators to $q$-commute ($D_n$ is excluded due to puncture), see Section \ref{subsec:Abelianization and $q$-abelianization of Noncommutative surfaces}.

Generalizing ${\mathcal A}_n$, ${\mathcal B}_n$, ${\mathcal C}_n$, and ${\mathcal D}_n$ and following \cite[Section 3]{BR} we introduce {\it non-commutative surface} ${\mathcal A}_\Sigma$ for any (connected or not) marked surface $\Sigma$ that also may have orbifold points of orders  $\Z_{\geq 2}$ and the order $\frac{1}{2}$ (studied in \cite{FST2}), we refer to them as special punctures and $0$-punctures respectively (Section \ref{subsec:Some notation on surfaces}).

It turns out that the presentation of (generalized) ${\mathcal A}_\Sigma$ can be given only in terms of {\it total angles} $T_i$, $i\in I$ (Section \ref{subsec:From surfaces to their noncommutative versions}). In fact, we need only the following axioms to glue a ``noncommutative surface" out of ``noncommutative triangles".
 
 $\bullet$ If $\Sigma=\Sigma_3$, the unpunctured disk with three marked points $I=\{1,2,3\}$, then ${\mathcal A}_\Sigma$ is generated by $x_{ij}^{\pm 1}$, $i,j\in I$ subject to the triangle relation 
 $$T_1^{23}=T_1^{32},$$ 
 where $T_i^{jk}=x_{ji}^{-1}x_{jk}x_{ik}^{-1}$ is the noncommutative angle at the vertex $i$ of the triangle $\Sigma_3$ (in fact, the above relation is equivalent to $T_2^{13}=T_2^{31}$ or $T_3^{12}=T_3^{21}$, i.e.,  the angles depend only on the vertex. These are noncommutative analogs of Penner's $h$-lengths, see e.g., \cite{BR}).
 \begin{figure}[ht]

\centerline{\includegraphics{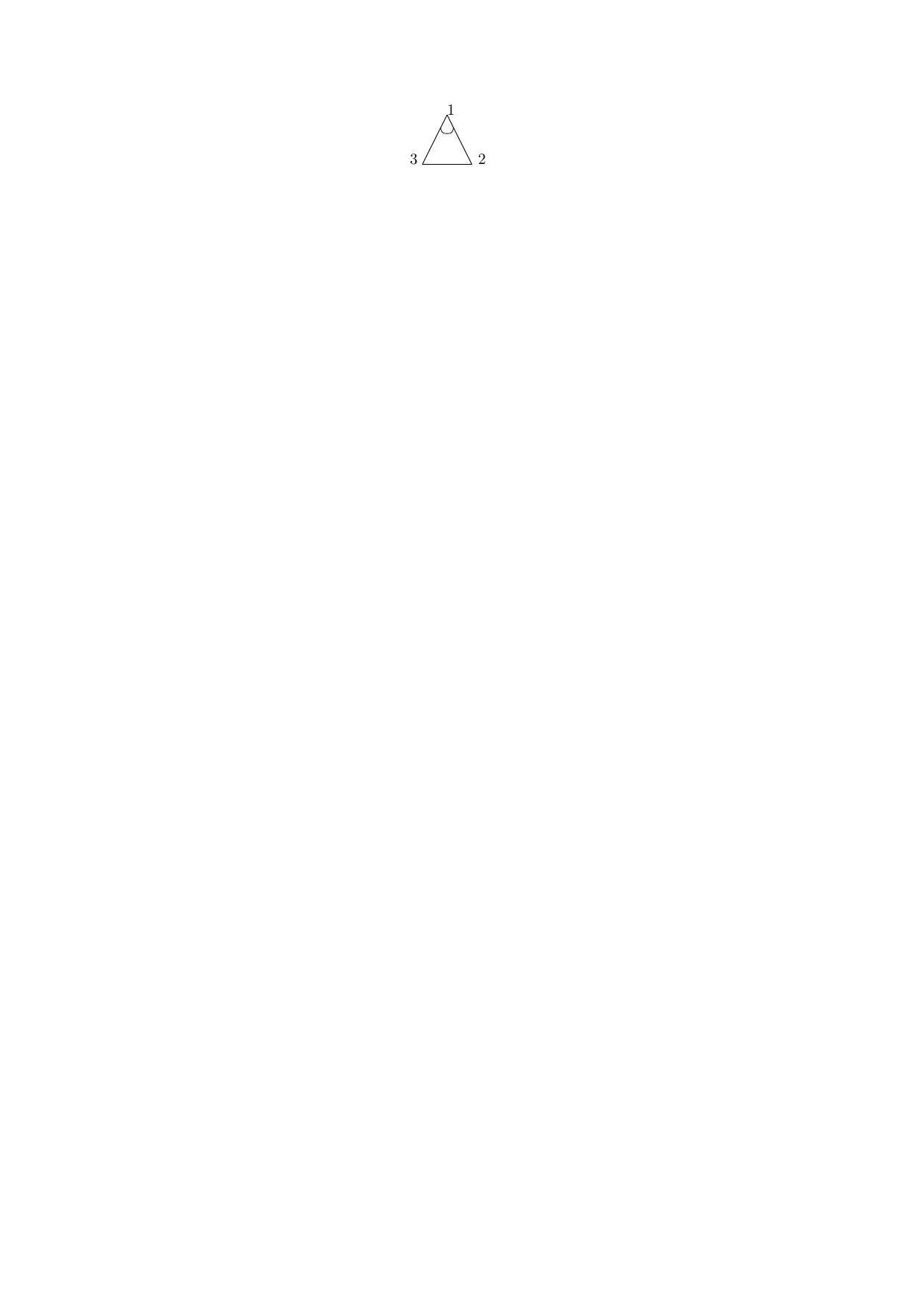}}
\end{figure}

 $\bullet$ If $P$ is a polygon in $\Sigma$, the angle $T_i^P$ is well-defined at every vertex $i$ of $P$ and it is additive in the sense that any subdivision of $P$ by its internal edge at $i$ into two sub-polygons $P'$ and $P''$ results in a relation (which is equivalent to the noncommutative Ptolemy's relations, see Lemma \ref{le:angle=ptolemy}(e))
 $$T_i^P=T_i^{P'}+T_i^{P''} \ .$$
 In particular the total angle $T_i\in {\mathcal A}_\Sigma$ is defined for any marked point $i\in I$. 

  \begin{figure}[ht]

\centerline{\includegraphics{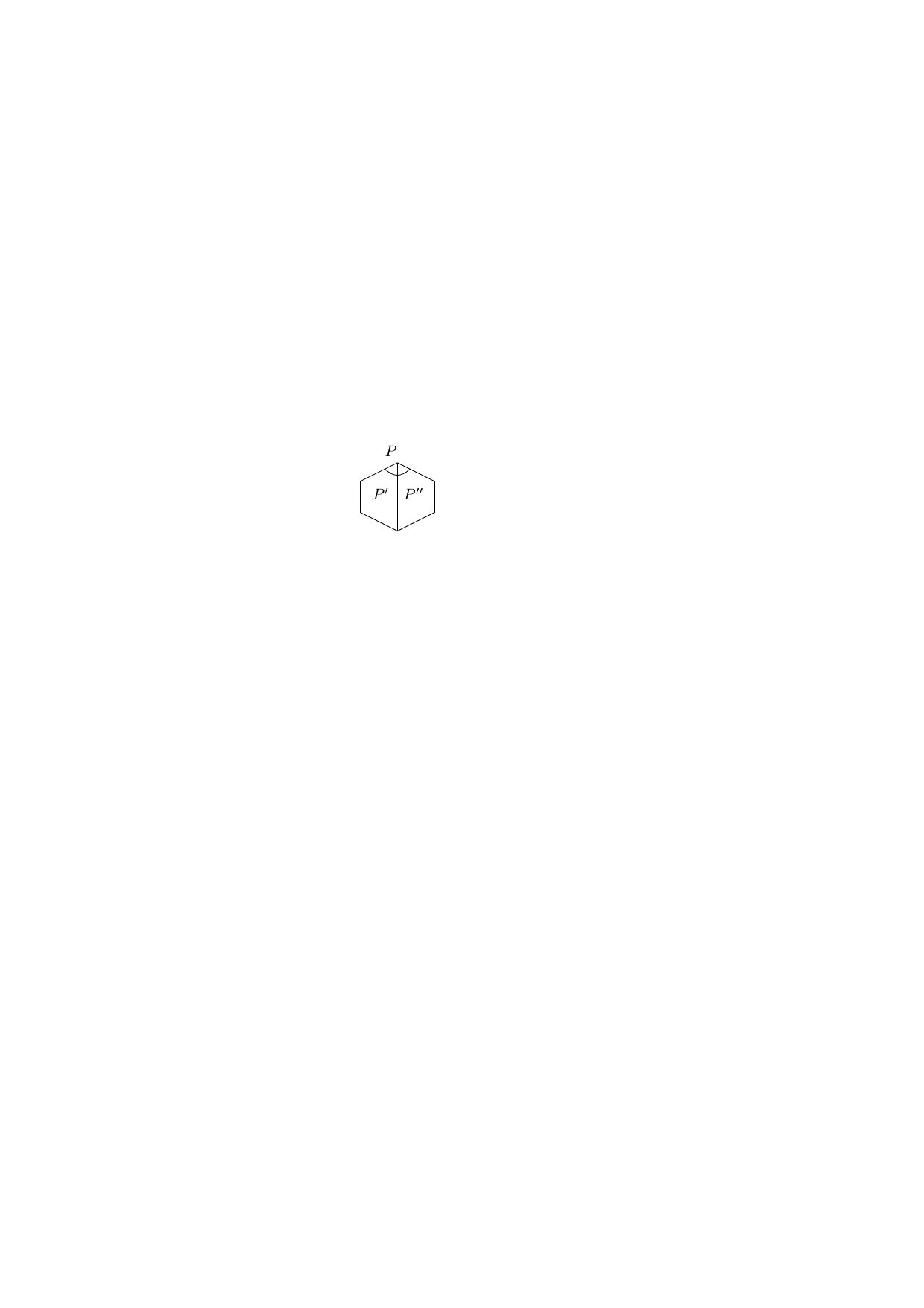}}
\caption{Additivity of angles}
\end{figure}

When $\Sigma$ has ordinary puncture, we obtain a surprising generalization of \cite[Proposition 3.15]{MSW}.

\begin{theorem} [Corollary \ref{cor:tagged automorphism}]
\label{th:tagged automorphism intro}        
For any subset $P\subset I_p(\Sigma)$ the assignments $$x_{\gamma}\mapsto T_{s(\gamma)}^{\chi_P(s(\gamma))}x_{\gamma}T_{t(\gamma)}^{\chi_P(t(\gamma))}
$$ define an involutive automorphism $\varphi_P$
of the algebra ${\mathcal A}_\Sigma$. Moreover, $\varphi_{P\cup P'}=\varphi_P \circ \varphi_{P'}$ if $P\cap P'=\emptyset$.
\end{theorem}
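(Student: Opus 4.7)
The plan is to verify that $\varphi_P$ is a well-defined algebra endomorphism by checking it preserves the defining triangle and Ptolemy relations of ${\mathcal A}_\Sigma$, and then to compute $\varphi_P(T_i)$ explicitly; from that computation the involutivity and the disjoint-union identity follow in one line each. The key structural observation is a telescoping principle: whenever a product of generators and their inverses is formed so that consecutive factors share an endpoint with opposite orientation -- as happens inside each triangle angle $T_i^{jk}=x_{ji}^{-1}x_{jk}x_{ik}^{-1}$ and inside each monomial of a Ptolemy sum such as $x_{ab}x_{db}^{-1}x_{dc}$ -- the inner conjugating factors $T_v^{\pm\chi_P(v)}$ at each shared vertex $v$ cancel pairwise. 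A one-line calculation then gives
\[
\varphi_P(T_i^{jk})=T_i^{-\chi_P(i)}\,T_i^{jk}\,T_i^{-\chi_P(i)},
\]
so the triangle relation $T_i^{jk}=T_i^{kj}$ is preserved, and for a Ptolemy relation $x_{ac}=x_{ab}x_{db}^{-1}x_{dc}+x_{ad}x_{bd}^{-1}x_{bc}$ both sides acquire the same outer conjugation $T_a^{\chi_P(a)}(\cdot)T_c^{\chi_P(c)}$, reducing preservation to the original relation. The additional relations at ordinary, special, and $0$-punctures (including those listed in Theorem \ref{th:quotient B_n intro}) are checked by the same telescoping mechanism.

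Next I would establish the key identity
\[
\varphi_P(T_i)=T_i^{\,1-2\chi_P(i)}\qquad(i\in I),
\]
i.e., $\varphi_P(T_i)=T_i^{-1}$ for $i\in P$ and $\varphi_P(T_i)=T_i$ otherwise. Using angle additivity (Lemma \ref{le:angle=ptolemy}(e)) to write $T_i=\sum_{(jk)}T_i^{jk}$ over triangles incident to $i$ and applying the previous computation yields
\[
\varphi_P(T_i)=T_i^{-\chi_P(i)}\Bigl(\sum_{(jk)}T_i^{jk}\Bigr)T_i^{-\chi_P(i)}=T_i^{-\chi_P(i)}\,T_i\,T_i^{-\chi_P(i)}=T_i^{\,1-2\chi_P(i)}.
\]

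With this identity in hand, involutivity is immediate: for each arc $\gamma$,
\[
\varphi_P^2(x_\gamma)=\varphi_P(T_{s(\gamma)})^{\chi_P(s(\gamma))}\,T_{s(\gamma)}^{\chi_P(s(\gamma))}\,x_\gamma\,T_{t(\gamma)}^{\chi_P(t(\gamma))}\,\varphi_P(T_{t(\gamma)})^{\chi_P(t(\gamma))}=x_\gamma,
\]
since at each endpoint $v$ either $\chi_P(v)=0$ (so the factor is $1$) or $v\in P$ (so $\varphi_P(T_v)^{\chi_P(v)}T_v^{\chi_P(v)}=T_v^{-1}T_v=1$). Thus $\varphi_P$ is bijective, hence an automorphism. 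For disjoint $P,P'$, at each marked point $v$ at most one of $\chi_P(v),\chi_{P'}(v)$ is nonzero, so substituting the key identity into $\varphi_P(\varphi_{P'}(x_\gamma))$ and collecting exponents produces $T_v^{\chi_P(v)+\chi_{P'}(v)}=T_v^{\chi_{P\cup P'}(v)}$ at each endpoint, matching $\varphi_{P\cup P'}(x_\gamma)$ on the nose.

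The main obstacle is ensuring the telescoping argument applies uniformly to \emph{every} defining relation of ${\mathcal A}_\Sigma$. The basic triangle and Ptolemy relations are immediate, but the additional relations at punctures in the ${\mathcal B}_n$, ${\mathcal D}_n$, and general orbifold settings each must be inspected to confirm that their constituent monomials are alternating-sign words in the $x_\gamma$ (so that all interior $T_v^{\pm\chi_P(v)}$ cancel). Once this verification is in place, the identity $\varphi_P(T_i)=T_i^{1-2\chi_P(i)}$ is structural and the remaining assertions of the theorem are formal one-line consequences.
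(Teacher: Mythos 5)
Your proposal is correct and follows essentially the same route as the paper: the telescoping cancellation of the inner factors $T_v^{\pm\chi_P(v)}$ along alternating composable words is exactly how the paper proves its general scaling-automorphism lemma (Lemma \ref{le:scaling automorphisms}, of which $\varphi_P$ is the special case $c_i=T_i^{\chi_P(i)}$), and your key identity $\varphi_P(T_i)=T_i^{1-2\chi_P(i)}$ is the paper's Proposition \ref{pr:tagging angles}, proved there by the same observation that $T_i$ is a sum of alternating Laurent monomials beginning and ending at $i$. The only cosmetic slip is the citation of Lemma \ref{le:angle=ptolemy}(e) for angle additivity, which in the body of the paper is Proposition \ref{pro:total angle} (resp.\ Proposition \ref{prop:total}).
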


This $\varphi_\Sigma$ can be viewed as a noncommutative analog of a green sequence of mutations (see e.g., \cite{Dk} and Remark \ref{rmk:green}). We expect that all cluster automorphisms of ${\mathcal A}_\Sigma$ are compositions of automorphisms of $\Sigma$ and $\varphi_p$
(Conjecture \ref{conj:clusterauto})

In fact, the elements $x_\gamma^{tag}:=\varphi_p(x_\gamma)$ generalize tagged cluster  coordinates introduced in \cite{FST}. In Section \ref{sec:NC Laurent surfaces} we describe an explicit noncommutative Laurent phenomenon for all (tagged and non-tagged) cluster variables $x_\gamma^{tag}$.

Theorem \ref{th:tagged automorphism intro} implies that the coinvariant algebra of $\varphi_p$ is quotient of ${\mathcal A}_\Sigma$ by the relation $T_p=1$ (unless $\Sigma$ is closed once punctured, see Corollary \ref{cor:Tp=1}) and this is ${\mathcal A}_{\Sigma^{p}}$, where $\Sigma^{p}$ is $\Sigma$ in which $p$ is regarded as a $0$-puncture (Corollary \ref{cor:Tp=1}). Thus, the aforementioned ${\mathcal B}_n$ is a noncommutative disk with a single $0$-puncture. 

Following \cite{BR}, we prove (Corollary \ref{cor:BSigma functorial}) that our noncommutative surfaces ${\mathcal A}_\Sigma$ are topological invariants of $\Sigma$ (possibly with special or $0$-punctures). Specifically, the assignment $\Sigma\mapsto {\mathcal A}_\Sigma$ is a fully faithful functor from the category of such surfaces to the category of $\mathbb{Q}$-algebras.

It turns out that there are even finer invariants, which we refer to as {\it sector subalgebras}.  This is a subalgebra ${\mathcal B}_\Sigma$ of ${\mathcal A}_\Sigma$ generated by {\it noncommutative sectors} $y_{\gamma,\gamma'}:=x_{\overline \gamma}^{-1}x_{\gamma'}$ for all pairs $(\gamma,\gamma')$ of composable curves (where $\overline \gamma$ is oppositely oriented $\gamma$), i.e., $\gamma$ and $\gamma'$ form a directed sector in $\Sigma$ 
(these are analogs of $Y$-coordinates on usual/quantum cluster varieties).

For instance, if $\Sigma=\Sigma_n$, is an unpunctured disk with
$n$ boundary points, then ${\mathcal B}_\Sigma$ is generated by all $y_{ij}^k$ for distinct $i,j,k\in [n]$ subject to the relations in \cite[Theorem 2.14]{BR}, see also Theorem \ref{th:sector presentation}.

It is almost immediate (Corollary \ref{cor:BSigma functorial}) that ${\mathcal B}_\Sigma$
is also a topological invariant of $\Sigma$.

Following \cite{BR}, to any triangulation $\Delta$ of any surface $\Sigma$ we assign the {\it triangle group $\TT_\Delta$}  generated by $t_\gamma$, $\gamma\in \Delta$ subject to the triangle relations (equivalent to that the angle is well-defined at any vertex of any triangle of $\Delta$): $t_\gamma=1$ if $\gamma$ is a trivial loop and
\begin{equation}
\label{eq:triangle relations}
t_{\gamma_1} t_{\overline \gamma_2}^{-1}t_{\gamma_3}=t_{\overline \gamma_3} t_{\gamma_2}^{-1}t_{\overline \gamma_1}
\end{equation}
for any triangle  in $\Delta$ whose edges $\gamma_1$, $\gamma_2$, $\gamma_3$ are cyclically ordered  (where $\overline \gamma$ is the oppositely oriented $\gamma$). By definition, $\T_\Delta$ is naturally graded via $\deg t_\gamma=1$.

For any oriented marked surface $\Sigma$, the monomial mutations from the beginning of the introduction $\mu_{\Delta',\Delta}:\TT_{\Delta'}\simeq \TT_{\Delta}$ are well-defined (homogeneous) group isomorphisms  viewed as the transitive extensions of ``first halfs" of the Ptolemy relations (Section \ref{subsec:triangle groups}). In fact these monomial mutations are modeled in the aforementioned groupoid ${\bf TSurf}_\Sigma$ as horizontal morphisms $h_{\Delta',\Delta}$ from $\Delta$ to $\Delta'$, under the natural functor from ${\bf TSurf}_\Sigma$ to the groupoid ${\bf Grp}'$ whose objects are groups and morphisms are group isomorphisms (Theorem \ref{th:monomial mutation surfaces} and Remark \ref{rm:monomial mutation surfaces}). 

Generalizing  \cite[Theorem 3.30]{BR}, we prove that for  any triangulation $\Delta$ of $\Sigma$ the assignments $t_\gamma\mapsto x_\gamma$, $\gamma\in \Delta$ define an injective homomorphism of groups $\iota_\Delta: \TT_\Delta\hookrightarrow {\mathcal A}_\Sigma^\times$ which extends to an injective homomorphism of algebras $\kk \TT_\Delta\hookrightarrow {\mathcal A}_\Sigma$ (Theorem \ref{th:iotaDelta} (a)), 
which we view a noncommutative cluster in the sense of the axioms at the beginning of the section and this also gives is a noncommutative Laurent Phenomenon because all $x_\gamma$ belong to the image of $\iota_\Delta$. 
In particular, this recovers the quantum expansion formula from \cite{MSW} for surfaces with no $0$-punctures and no  ordinary punctures (Corollary \ref{cor:expansion21}). 

\begin{theorem} 
[Proposition \ref{expansion1} (1), Corollary \ref{Cor:NCexpan}]
\label{th:monomial mutation surfaces intro} 
Given triangulations $\Delta$ and $\Delta'$ of an oriented surface $\Sigma$, the leading term of the Laurent expansion of any $x_{\gamma'}$, $\gamma'\in \Delta'$ with respect to $x_\gamma,\gamma\in \Delta$ is $\iota_\Delta(\mu_{\Delta,\Delta'}(t_{\gamma'}))$.
\end{theorem}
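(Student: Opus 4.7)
The plan is to prove the theorem by induction on the flip-distance between $\Delta$ and $\Delta'$ in the flip graph of $\Sigma$, which is connected, and to exploit the transitivity $\mu_{\Delta,\Delta'}=\mu_{\Delta,\Delta''}\circ \mu_{\Delta'',\Delta'}$ of monomial mutations coming from the groupoid structure of ${\bf TSurf}_\Sigma$ (Section \ref{subsec:triangle groups}). This reduces the problem to a base case of a single flip together with a multiplicativity statement for leading terms under substitution.

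For the base case, suppose $\Delta'$ is obtained from $\Delta$ by a single flip of an internal edge $\gamma$ to a new edge $\gamma'$. The relevant noncommutative Ptolemy relation (in its appropriate form depending on whether the local quadrilateral is generic, involves a self-folded triangle, a special puncture, or a $0$-puncture) writes $x_{\gamma'}=M_1+M_2$ as a sum of two noncommutative monomials in the $x_\mu$ with $\mu\in\Delta$. By the very definition of $\mu_{\Delta,\Delta'}$ as the ``first half'' of the Ptolemy relation (Section \ref{subsec:triangle groups}), exactly one summand, say $M_1$, equals $\iota_\Delta(\mu_{\Delta,\Delta'}(t_{\gamma'}))$. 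A direct inspection of the two sides via the grading $\deg x_\gamma=1$ and the angle/degree count in the quadrilateral (using the relation $T_i^P=T_i^{P'}+T_i^{P''}$ between total angles) shows that $M_1$ strictly dominates $M_2$ in the chosen ordering on noncommutative Laurent monomials, so $M_1$ is indeed the leading term.

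For the inductive step, fix an intermediate triangulation $\Delta''$ with a single flip $\Delta''\to\Delta'$ and $d(\Delta,\Delta'')=n-1$. Applying the base case in $\mathcal A_\Sigma$ relative to $\Delta''$ gives an expansion $x_{\gamma'}=M'_1+M'_2$ whose leading term is $\iota_{\Delta''}(\mu_{\Delta'',\Delta'}(t_{\gamma'}))$. Each factor $x_\nu^{\pm 1}$ with $\nu\in\Delta''$ appearing in $M'_1$ is then expanded via the inductive hypothesis in terms of the generators $x_\mu$, $\mu\in\Delta$. Using the compatibility $\iota_\Delta\circ \mu_{\Delta,\Delta''}=\iota_{\Delta''}$ on $\TT_{\Delta''}$ (a consequence of the fact that $\mu$ is realized by horizontal morphisms in ${\bf TSurf}_\Sigma$, Theorem \ref{th:iotaDelta}), the product of the leading terms of these factors is
\[
\iota_\Delta\bigl(\mu_{\Delta,\Delta''}(\mu_{\Delta'',\Delta'}(t_{\gamma'}))\bigr)=\iota_\Delta(\mu_{\Delta,\Delta'}(t_{\gamma'})),
\]
which is the predicted leading term.

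The main obstacle is to justify that the leading monomial of the product is the product of the leading monomials, and that inserting such expansions into $M'_1$ does not cause the leading contribution to be cancelled by cross-terms originating from subleading summands or from $M'_2$. This is a noncommutative positivity-type statement, and it is here that the noncommutative Laurent Phenomenon (Corollary \ref{Cor:NCexpan}) intervenes: all coefficients appearing in iterated Ptolemy expansions are non-negative integers, ruling out any cancellation. For the inverse factors $x_\nu^{-1}$, one applies the same argument to the Laurent expansion of $x_\nu^{-1}$, whose leading monomial is $\iota_\Delta(\mu_{\Delta,\Delta''}(t_\nu)^{-1})$ because $\mu_{\Delta,\Delta''}$ is a group isomorphism. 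Combining multiplicativity of leading terms with the transitivity of $\mu$ closes the induction.
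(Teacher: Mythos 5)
Your proposal has a genuine gap, and it is not the route the paper takes. The paper does not induct on flip distance: it first proves the full expansion formula $x_{\gamma'}=\sum_{\vec\gamma\in Adm(\gamma',\Delta)} c_{\vec\gamma}\,x_{\vec\gamma}$ over admissible sequences (Theorem \ref{thm:laurent} / Proposition \ref{expansion1}, established by passing to the canonical polygon of $\gamma'$ and reducing to the polygon case of \cite{BR}), then defines a total order $\prec$ on $Adm(\gamma',\Delta)$ and identifies the leading term as the monomial of the leftmost admissible sequence, which is checked to equal $\iota_\Delta(\mu_{\Delta,\Delta'}(t_{\gamma'}))$ (Corollary \ref{Cor:NCexpan}).

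The first problem with your argument is that you never specify the order in which ``leading term'' is measured, and the one tool you invoke --- the grading $\deg x_\gamma=1$ --- cannot do the job: in the Ptolemy relation $x_{\alpha'}=x_{\overline\alpha_1}x_{\overline\alpha}^{-1}x_{\alpha_3}+x_{\alpha_2}x_\alpha^{-1}x_{\overline\alpha_4}$ both summands have degree $1$, so no degree count distinguishes $M_1$ from $M_2$. The relevant order is the combinatorial order on admissible sequences (or, in the abelianized picture, a dominance order on exponent vectors), and setting it up is precisely the content you are skipping. The second problem is the multiplicativity step: even granting positivity of coefficients (which rules out cancellation), you still must show that the product of the leading monomials of the factors of $M'_1$ dominates \emph{every other} product arising from the substitution, including all cross-terms and everything coming from $M'_2$. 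That is a compatibility of the order with noncommutative multiplication, which is nontrivial and unproved in your sketch. Finally, your appeal to Corollary \ref{Cor:NCexpan} to rule out cancellation is circular, since that corollary \emph{is} the statement under proof; you would need to cite the independently proved Theorem \ref{thm:laurent} instead --- but that theorem already yields the full expansion with respect to $\Delta$ directly, at which point the induction on flip distance becomes superfluous and the real work is the identification of the leftmost admissible sequence with $\mu_{\Delta,\Delta'}(t_{\gamma'})$, which your proposal does not carry out.
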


This monomial mutation is particularly transparent when $\Delta=\Delta_1$ is a star-like triangulation of $\Sigma_n$, i.e., all
diagonals of $\Delta$ start at $1$. In this case, for any diagonal $(ij)\in \Delta'$ with $1<i<j\le n$, the monomial mutation is given by $$\mu_{\Delta,\Delta'}(t_{ij})=t_{i,i+1}t_{1,i+1}^{-1}t_{1j}.$$

For a punctured surface, we can define more such triangulations and groups, which we refer to as {\it tagged}. Following \cite{FST}, we start by selecting a subset $P$ of the set $I_p$ of punctures of $\Sigma$. 
We then create the tagged triangulation $\Delta^{P}$ by replacing all self-folded triangles around points of $P$ in $\Delta$ with tagged bigons, and we tag every remaining point in $P$.

The {\it tagged} triangle group to be generated by $t_\gamma, \gamma\in \Delta^{tag}$ subject to the above relations with the following two extra relations.

$\bullet$ $t_{\gamma_1}t_{ \gamma_2}=t_{\overline \gamma_2}t_{\overline \gamma_1}$ for any tagged cyclic bigon $(\gamma_1,\gamma_2)$ in $\Delta$ with $t(\gamma)\in tag(\Delta)$ of valency $2$.

$\bullet$ $t_{\alpha}(t_{\gamma_1}t_{\gamma_2})^{-1}t_{\alpha'}=t_{\overline \alpha'}(t_{\gamma_1}t_{\gamma_2})^{-1}t_{\overline \alpha}$ for any once-punctured cyclic bigon $(\alpha,\alpha')$ which encloses a tagged cyclic bigon $(\gamma_1,\gamma_2)$ in $\Delta$ with $s(\alpha)=s(\gamma)$.

 \begin{figure}[ht]
\includegraphics{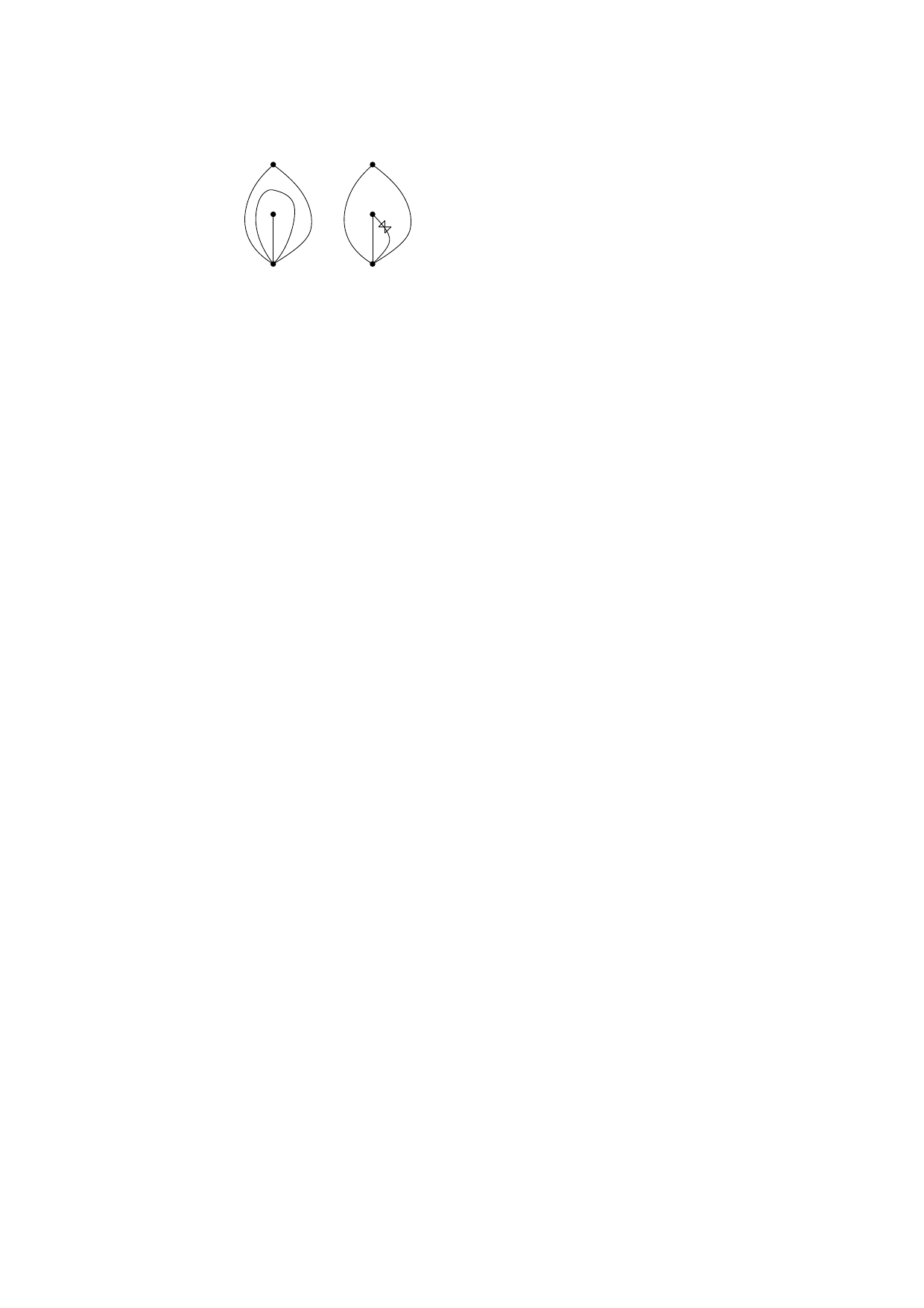}
\caption{Self-folded triangle and tagged cyclic bigon}
\end{figure}

In fact, this allows us to extend Theorem \ref{th:monomial mutation surfaces intro} to all tagged and untagged triangulations (Theorem \ref{th:monomial mutation surfaces}, in particular by twisting a cluster $\iota_\Delta$ with our automorphism $\varphi_{I_P}$, we obtain the following result.

\begin{proposition}[Tagging/untagging automorphisms, Proposition \ref{pro:tag/untag}]
Let $\Sigma$ be an oriented punctured surface, $\Delta$ be an ordinary triangulation of $\Sigma$, and $P\subset I_p(\Sigma)\setminus I_p(\Delta)$. Then the assignments 
$$t_\gamma\mapsto 
\begin{cases} 
t_{\overline\gamma}^{-1}, &\text{if $s(\gamma),t(\gamma)\in P$,}\\
t_{\alpha_4}t^{-1}_{\overline\alpha_3}, &\text{if $s(\gamma)\notin P, t(\gamma)\in P$,}\\
t^{-1}_{\overline\alpha_1}t_{\alpha_2}, &\text{if $t(\gamma)\notin P, s(\gamma)\in P,$}\\
t_{\gamma}, &\text{otherwise},\\
\end{cases}
$$
define an automorphism $\varphi_{P,\Delta}$ of $\TT_\Delta$,
where in the second case, $(\alpha_3,\alpha_4,\gamma)$ is the first cyclic triangle that $\gamma$ passes by rotation counterclockwise along $t(\gamma)$, in the third case, $(\alpha_1,\alpha_2,\overline\gamma)$ is the first cyclic triangle that $\gamma$ passes by rotation counterclockwise along $s(\gamma)$.
\end{proposition}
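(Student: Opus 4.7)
The plan is to reduce the statement to a single puncture $p$ and then verify, by direct computation, that the claimed assignments respect the defining triangle relations of $\TT_\Delta$ and admit an explicit inverse.

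First, for disjoint subsets $P_1, P_2 \subset I_p(\Sigma)\setminus I_p(\Delta)$, the modifications prescribed at $P_1$ and at $P_2$ involve disjoint stars in $\Delta$ and hence act on disjoint collections of the generators $t_\gamma$. A direct inspection of the four cases in the formula should yield the composition rule $\varphi_{P_1\cup P_2,\Delta} = \varphi_{P_1,\Delta}\circ\varphi_{P_2,\Delta}$, reducing the entire proof to the case $P=\{p\}$.

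Next, for a single puncture $p\notin I_p(\Delta)$, enumerate the edges of $\Delta$ incident to $p$ in counterclockwise rotation around $p$ as $\gamma_1,\ldots,\gamma_d$ (each oriented to end at $p$), together with the outer edges $\alpha_1,\ldots,\alpha_d$ so that $(\gamma_i,\alpha_i,\overline\gamma_{i+1})$ forms a cyclic triangle of $\Delta$ for each $i$ (indices taken mod $d$). The formulas modify only those $t_\gamma$ for which $\gamma$ is incident to $p$, so the only triangle relations requiring verification are those of the star triangles at $p$ and, if they exist, triangles containing loops at $p$. I would substitute the explicit images from the four cases into each such triangle relation and check that the resulting identity in $\TT_\Delta$ is a consequence of the triangle relations of the adjacent triangles in the star. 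Bijectivity would then be established by verifying that $\varphi_{\{p\},\Delta}$ is an involution: iterating the formula in each of the four cases produces a word in $\TT_\Delta$ that collapses back to $t_\gamma$ using the same star relations.

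The main obstacle is the case in which $\gamma$ is a loop at $p$, i.e.~$s(\gamma)=t(\gamma)=p$: the assignment $t_\gamma\mapsto t_{\overline\gamma}^{-1}$ must be shown to be consistent with the triangle relations of the (at most two) triangles containing $\gamma$, and these neighboring triangles can themselves contain further loops. A closely related subtlety concerns the reduction step: after $\varphi_{P_1,\Delta}$ replaces a generator by a product of several $t_\alpha$'s, one must check that $\varphi_{P_2,\Delta}$ then acts on this product exactly as predicted by the combined formula for $\varphi_{P_1\cup P_2,\Delta}$. If the direct verification becomes unwieldy, the injectivity of the cluster embedding $\iota_\Delta\colon\TT_\Delta\hookrightarrow{\mathcal A}_\Sigma$ together with the algebra automorphism $\varphi_P$ from Theorem \ref{th:tagged automorphism intro} offers an alternative route, since $\varphi_{P,\Delta}$ should coincide with $\iota_\Delta^{-1}\circ\varphi_P\circ\iota_\Delta$ after one shows that $\varphi_P$ preserves the image of $\iota_\Delta$.
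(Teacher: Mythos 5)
Your core method---substituting the prescribed images into the triangle relations and checking them case by case---is exactly what the paper does, but your organization has a genuine flaw. The reduction to a single puncture rests on the claim that the modifications at $P_1$ and $P_2$ ``involve disjoint stars and hence act on disjoint collections of the generators.'' This is false whenever an arc $\gamma\in\Delta$ joins a puncture of $P_1$ to a puncture of $P_2$: both maps then modify $t_\gamma$, and the two-endpoint rule $t_\gamma\mapsto t_{\overline\gamma}^{-1}$ is precisely the composite that must be reconciled with the two one-endpoint rules. So the reduction does not let you escape the mixed cases; you must still verify the triangle relations for triangles having two or three vertices in $P$. The paper does this directly and uniformly: for each clockwise cyclic triangle $(\gamma_1,\gamma_2,\gamma_3)$ it splits on $|\{s(\gamma_1),s(\gamma_2),s(\gamma_3)\}\cap P|\in\{0,1,2,3\}$ and computes both sides, using in the one- and two-vertex cases the \emph{other} triangle $(\gamma_1,\beta_1,\beta_2)$ adjacent to the relevant edge. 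This also disposes of your worry about loops at $p$: a loop contributes a triangle with two of its source-vertices equal to $p$, which is just the two-vertex case, so no separate treatment is needed.

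Two further cautions. First, your fallback route via $\iota_\Delta$ and the algebra automorphism $\varphi_P$ is not a shortcut: $\varphi_P$ multiplies $x_\gamma$ by total angles $T_p$, which are \emph{sums} of Laurent monomials, so it is not at all visible that $\varphi_P$ preserves the group image $\iota_\Delta(\TT_\Delta)$ or that the result is the single monomial in the statement; establishing this is essentially the tagged Laurent expansion (Proposition \ref{expansion1}), which the paper develops later and partly on top of this proposition, so you would risk circularity. Second, you have not actually verified that $\varphi_{P,\Delta}$ is an involution---the counterclockwise-first-triangle convention makes $\varphi_{P,\Delta}^2(t_\gamma)$ a nontrivial word that must be collapsed using the star relations, and this computation is not done in your proposal (the paper, for its part, leaves invertibility implicit; a cleaner route to surjectivity is to recover the generators $t_\gamma$ incident to $P$ one at a time by walking around each star).
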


In particular, if $\Sigma$ is a closed surface, and $\Delta$ is an ordinary triangulation of $\Sigma$, then $\mu_{\Delta,\Delta^{tag}}(t_{\gamma^{tag}})= t_{\overline{\gamma}^{-1}}$ for all $\gamma\in \Delta$.

This will give tagged clusters and tagged Laurent Phenomenon as follows.

For any tagged triangulation $\Delta^{tag}$ of $\Sigma$ let $\Delta$ be the corresponding ordinary triangulation of $\Sigma$
we define an embedding $\iota_{\Delta^{tag}}:\TT _{\Delta^{tag}}\hookrightarrow {\mathcal A}_\Sigma^\times$ 
by $t_{\gamma^{tag}}\mapsto \varphi_P(x_\gamma)$ for all $\gamma\in \Delta$.

We refer to all $\iota_{\Delta^{tag}}$ as the {\it tagged noncommutative clusters}. Following \cite{FST}, together with the ordinary noncommutative clusters $\iota_\Delta$ they complete the cluster structure of ${\mathcal A}_\Sigma$ for any punctured $\Sigma$.  

We prove (Proposition \ref{pr:G-Laurent}, Theorem \ref{thm:laurent}) that noncommutative tagged clusters also give a noncommutative Laurent Phenomenon $\iota_{\Delta^{tag}}:\TT_{\Delta^{tag}}\hookrightarrow {\mathcal A}_\Sigma$ and obtain the corresponding expansion formula for any $x_\gamma$ as sum of elements of $\iota_{\Delta^{tag}}(\TT_{\Delta^{tag}})$. In particular, we write an explicit formula for $x_\gamma^{tag}$ in terms of any (tagged) triangulation $\Delta$ to generalize both classical and quantum cases (\cite[Theorems 4.10, 4.17, 4.20]{MSW}, \cite[Theorem 5.2]{H1}, and \cite{Huang2}).

It follows from the discussion of monomial mutations above that  $\TT_\Delta$ is independent of a choice of $\Delta$ (Remark \ref{rm:monomial mutation surfaces}, this e.g., recovers results of \cite{BR}) and therefore we can call it $\TT_\Sigma$. We also show (Remark \ref{rem:canonical triangle group}) that the assignments $\Sigma\mapsto \TT_\Sigma$ define ``almost" a functor from the category of marked surfaces, that is,  $\TT_\Sigma$ is a topological invariant which has a flavor of the fundamental group. However, this invariant is more interesting even for unpunctured disks $\Sigma_n$, for which the fundamental group is trivial (in the forthcoming paper \cite{BHRR} with Eugen Rogozinnikov we explain this in detail). 

By specializing some defining relations of ${\mathcal A}_\Sigma$ to become $q$-commutation relations, we recover quantum cluster algebras of (orientable) surfaces with neither $0$-punctures nor ordinary punctures, as well as an explicit Laurent expansion from \cite{H,H1}.

Furthermore, for any triangulation $\Delta$ of $\Sigma$, we define the {\it sector triangle group} $\UU_\Delta\subset \TT_\Delta$ generated by noncommutative 
sectors $u_{\gamma,\gamma'}:=t_{\overline \gamma}^{-1}t_{\gamma'}$ for any directed 
sector $(\gamma,\gamma')$ in $\Delta$. By definition, we have a commutative diagram 
\begin{equation}
\label{eq:TA-diagram intro}
\xymatrix{
  \kk \UU_\Delta \ar@{^{(}->}[d] \ar@{^{(}->}[r]
                & {\mathcal B}_\Sigma \ar@{^{(}->}[d]  \\
  \kk \TT_\Delta \ar@{^{(}->}[r]
                & {\mathcal A}_\Sigma             }
\end{equation}                
whose vertical arrows are natural inclusions. This diagram, in particular, gives a ``sector" version of the aforementioned Noncommutative Laurent Phenomenon. Similarly to $\TT_\Delta$, the groups $\UU_\Delta$  do not depend on the choice of a triangulation $\Delta$, so there is a canonical group $\UU_\Sigma$ together with almost a functor $\Sigma\mapsto \UU_\Sigma$ refining the aforementioned almost a functor $\Sigma\mapsto \TT_\Sigma$. 
In particular, these groups are also topological invariants of surfaces (see Section \ref{subsec:triangulated surfaces and triangle groups} for details). 
Moreover, the following holds.

\begin{theorem} [Theorem \ref{th:Udelta2}] 
\label{th:Udelta intro}
Let $\Sigma$ be a marked surface. 

$(a)$ If it has a non-empty boundary, then $\UU_\Sigma$ is a free group of rank $2 |I_b| + 3|I_p|-4\chi(\Sigma)$, where $I_b$ is the set of boundary marked points, $I_p$ is the set of punctures, and $\chi(\Sigma)$ is the Euler characteristic of $\Sigma$. 

$(b)$ If $\Sigma$ is closed, then $\UU_\Sigma$ is a $1$-relator torsion free group on $1+3|I_p|-4\chi(\Sigma)$ generators.
\end{theorem}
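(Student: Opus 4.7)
The plan is to exploit the triangulation-invariance of $\UU_\Sigma$ from Section~\ref{subsec:triangulated surfaces and triangle groups} and reduce to a presentation-level calculation inside $\TT_\Delta$ for a fixed triangulation $\Delta$. From \cite{BR} and Theorem~\ref{thm:1-relator} I take that $\TT_\Delta$ is free of rank $3|I_b|+4|I_p|-4\chi(\Sigma)$ when $\partial\Sigma\ne\emptyset$ and a $1$-relator torsion-free group when $\Sigma$ is closed, combined with the Euler counts $E=2|I_b|+3|I_p|-3\chi$ and $F=|I_b|+2|I_p|-2\chi$ (whence $2E-F=3|I_b|+4|I_p|-4\chi$).

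The core of the plan is a Tietze transformation that exhibits $\UU_\Delta$ as a free factor of $\TT_\Delta$. At each vertex $v$ choose a distinguished directed edge $\beta_v$ with $s(\beta_v)=v$. For any $\gamma$ with $s(\gamma)=v$ one has $t_\gamma=t_{\beta_v}\,u_{\overline{\beta_v},\gamma}$; eliminating $t_\gamma$ for $\gamma\ne\beta_v$ rewrites the defining presentation of $\TT_\Delta$ on the alphabet $\{t_{\beta_v}\}_{v\in I}\cup\{u_{\overline{\beta_v},\gamma}:s(\gamma)=v,\ \gamma\ne\beta_v\}$ of size $2E$, with the $F$ triangle relations as defining relations. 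A direct calculation using $s(\overline\gamma)=t(\gamma)$ shows that each rewritten triangle relation takes the form $u_1u_2^{-1}u_3=u_4u_5^{-1}u_6$, \emph{purely} in the sector letters (the common $t_{\beta_{v_1}}$ on the two sides cancels). Since any sector $u_{\gamma,\gamma'}$ is expressible as $u_{\overline{\beta_v},\overline\gamma}^{-1}u_{\overline{\beta_v},\gamma'}$ (with $v=t(\gamma)$), the rewritten presentation yields the free-product decomposition
\[
\TT_\Delta \;\cong\; F(\{t_{\beta_v}\}_{v\in I})\,*\,\UU_\Delta,
\]
with $\UU_\Delta$ presented on the $2E-|I|$ basic sectors subject to the $F$ rewritten (sector-only) triangle relations.

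For part~(a), since $\TT_\Delta$ is free of rank $2E-F$, the free factor $\UU_\Delta$ in this decomposition must itself be free, of rank $(2E-F)-|I|=3|I_p|+2|I_b|-4\chi$. For part~(b) with $|I_b|=0$, the $1$-relator structure of $\TT_\Delta$ forces exactly $F-1$ of the rewritten triangle relations to be consequences of the remaining one; using these $F-1$ dependent relations to eliminate $F-1$ basic sectors by Tietze leaves $\UU_\Delta$ presented on $(2E-|I_p|)-(F-1)=1+3|I_p|-4\chi$ generators with a single defining relator, still purely in sector letters. Torsion-freeness of $\UU_\Sigma$ is automatic from $\UU_\Sigma\hookrightarrow\TT_\Sigma$ and the torsion-freeness of the latter; equivalently, Magnus's theorem applied to the surviving relator recovers this intrinsically.

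The main obstacle I anticipate is in the closed case: verifying that the Tietze elimination of the $F-1$ ``redundant'' rewritten triangle relations never drags a coordinate letter $t_{\beta_v}$ into the surviving relator. Since each original rewritten triangle relation is already purely in sector letters, this reduces to tracking that the elimination steps stay within the sector subgroup, which I plan to ensure by choosing the $F-1$ eliminable relations via a spanning tree of the dual graph of $\Delta$ — so that the single surviving relator corresponds to the unique non-tree edge, i.e.\ the topological cycle responsible for closedness — and then checking combinatorially that each elimination in that order preserves the sector-only property.
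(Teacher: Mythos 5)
Your proposal reaches the same structural endpoint as the paper --- the free-product decomposition $\TT_\Delta\cong \UU_\Delta * F_{|I|}$ of \eqref{eq:free sector factorization}, with $\UU_\Delta$ presented by the sector-only rewritings of the triangle relations (these are exactly the relations of Corollary \ref{cor:relationBC} and Theorem \ref{thm:sectorgroup}) --- but by a different route. The paper works \emph{subgroup-first}: it chooses, for each marked point $i$, an arc $\gamma_i$ with $s(\gamma_i)=i$ such that $t_{\gamma_i}$ belongs to a distinguished generating set of $\TT_\Delta$ (Remark \ref{rmk:generator}), builds the retraction $\pi:t_\gamma\mapsto t_{\gamma_{s(\gamma)}}^{-1}t_\gamma$ of $\TT_\Delta$ onto $\UU_\Delta$ (Proposition \ref{prop:quotient1}), and then deduces the decomposition and the rank count from the known structure of $\TT_\Delta$ via Nielsen--Schreier/Grushko-type arguments (Lemmas \ref{lem:freeprodut} and \ref{lem:freeproduct1}). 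You work \emph{presentation-first}: the Tietze substitution $t_\gamma=t_{\beta_{s(\gamma)}}u_{\overline{\beta_{s(\gamma)}},\gamma}$ makes every triangle relation visibly sector-only (the common $t_{\beta_{v_1}}$ does cancel, by the same computation as in the proof of Theorem \ref{th:sector presentation}), so the free-product decomposition falls out of the presentation rather than being imported. What your route buys is independence from Remark \ref{rmk:generator}; what it costs is that in the closed case you must actually carry out the shelling/spanning-tree elimination that the paper delegates to the already-established $1$-relator structure of $\TT_\Delta$ and Lemma \ref{lem:freeproduct1}.

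Two soft spots. First, in part (b) you write that the $1$-relator structure ``forces exactly $F-1$ of the rewritten triangle relations to be consequences of the remaining one''; that is not the right statement (deleting redundant relations would leave $2E-|I|$ generators, not $1+3|I_p|-4\chi$), and it is not what you then do --- you use the $F-1$ relations to \emph{eliminate} $F-1$ sector generators, which is the correct Tietze move but still needs the explicit shelling order on triangles to guarantee each relation, at its turn, contains an eliminable letter occurring exactly once. Your worry about a coordinate letter $t_{\beta_v}$ reappearing is, as you suspect, vacuous: every relation is sector-only before any elimination begins, and Tietze type-II moves on sector-only relations stay sector-only. Second, your argument silently assumes there are no special loops or $0$-punctures (where $t_{\overline\ell}=t_\ell$ and the extra monogon/pending relations perturb the $2E$-generator count) and ignores the low-complexity exceptional surfaces listed in Theorem \ref{th:Udelta2}; the paper's hypotheses and case list should be carried along.
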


For instance, if $\Sigma$ is an unpunctured  cylinder with $b_1$ points on one boundary components and $b_2$ on another, then $\UU_\Sigma$ is isomorphic to $\UU_{\Sigma_{b_1+b_2+2}}$.  If $\Sigma$ is 
 a once punctured torus, then  $\UU_\Sigma$ is generated by $a,b,c,d$ subject to $ aba^{-1}b^{-1}=dcd^{-1}c^{-1}$, i.e., it is the fundamental group of a closed genus $2$ surface, and (recall from \cite[Example 3.28]{BR} that in this case $\TT_\Sigma$ is generated by $a,b,c,d,e$ subject to $abcde =cbeda$).


Furthermore, we define the reduced noncommutative surface $\underline {\mathcal A}_\Sigma$ to be the quotient algebra of ${\mathcal A}_\Sigma$ by the relations $x_\gamma=1$ for all boundary curves $\gamma$ (in particular, $\underline {\mathcal A}_\Sigma={\mathcal A}_\Sigma$ for closed surfaces). Similarly, the reduced triangle group $\underline \TT_\Delta$ is the quotient of $\TT_\Delta$ by the relations $t_\gamma=1$ for all boundary edges $\gamma$ in $\Sigma$ and the the reduced sector group $\underline \UU_\Delta$ is the image of $\UU_\Delta$
 under the canonical projection $\TT_\Delta\twoheadrightarrow
 \underline \TT_\Delta$.  We show (Proposition \ref{pr:G-Laurent} (b)) that the reduced homomorphisms $\kk \underline \TT_\Delta\to \underline {\mathcal A}_\Sigma$ are injective, therefore, we have a reduced version of the commutative diagram \eqref{eq:TA-diagram intro} verbatim.

Clearly, $\underline {\mathcal B}_\Sigma\subset \underline {\mathcal A}_\Sigma$ and $\underline \UU_\Sigma\subset \underline \TT_\Sigma$. Quite surprisingly, both inclusions become an equality iff $\Sigma$ has neither $0$-punctures nor ordinary punctures (Theorems \ref{th:reduced surfaces} and \ref{th:reduced sector=reduced triangle}). 

We obtain the following surprising 

\begin{theorem} [Theorem \ref{th:reduced oddgons}] For any $g\ge 0$ the group 
    $\underline \TT_{\Sigma_{2g+3}}=\underline \UU_{\Sigma_{2g+3}}$ is isomorphic to the fundamental group of the closed surface of genus $g$.
\end{theorem}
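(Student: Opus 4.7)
The plan is to compute an explicit presentation of $\underline{\TT}_{\Sigma_{2g+3}}$ from a fan triangulation and then to match it against the standard surface-group relator. The equality $\underline{\TT}_{\Sigma_{2g+3}}=\underline{\UU}_{\Sigma_{2g+3}}$ itself is essentially immediate: since $\Sigma_{2g+3}$ is an unpunctured disk, every arc $\gamma$ has both endpoints on the boundary. Picking any boundary edge $\alpha$ for which $(\bar\alpha,\gamma)$ is a directed sector, one has $u_{\bar\alpha,\gamma}=t_\alpha^{-1}t_\gamma$, which in the reduced quotient (where $t_\alpha=1$) equals $t_\gamma$. Hence every generator $t_\gamma$ of $\underline{\TT}$ lies in $\underline{\UU}$, and combined with the trivial inclusion $\underline{\UU}\subseteq\underline{\TT}$ this gives the desired equality.

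For the remaining identification, set $n=2g+3$ and fix the fan triangulation $\Delta$ of $\Sigma_n$ from vertex $1$ (diagonals $(1,i)$ for $3\le i\le n-1$; triangles $(1,i,i+1)$ for $2\le i\le n-1$). Writing $a_i:=t_{1,i}$ and $b_i:=t_{i,1}$ for $3\le i\le n-1$, the triangle relation \eqref{eq:triangle relations} applied to $(1,i,i+1)$ reads $t_{1,i}\,t_{i+1,i}^{-1}\,t_{i+1,1}=t_{1,i+1}\,t_{i,i+1}^{-1}\,t_{i,1}$. Imposing $t_\gamma=1$ on every boundary edge collapses these to $b_3=a_3$ (from the triangle at vertex $2$), $a_{n-1}=b_{n-1}$ (from the triangle at vertex $n$), and $b_{i+1}=a_i^{-1}a_{i+1}b_i$ for $3\le i\le n-2$. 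The recursion eliminates every $b_i$, so the compatibility $b_{n-1}=a_{n-1}$ yields the single defining relation
\[ r:=a_{n-1}^{-1}\,a_{n-2}^{-1}a_{n-1}\,a_{n-3}^{-1}a_{n-2}\cdots a_3^{-1}a_4\cdot a_3=1, \]
making $\underline{\TT}_{\Sigma_{2g+3}}$ a one-relator group on the $2g$ free generators $a_3,\ldots,a_{n-1}$.

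Repeatedly applying the identity $a_{i+1}^{-1}a_i^{-1}a_{i+1}=[a_{i+1}^{-1},a_i^{-1}]\,a_i^{-1}$ telescopes $r$ into
\[ r=[a_{n-1}^{-1},a_{n-2}^{-1}]\,[a_{n-2}^{-1},a_{n-3}^{-1}]\cdots[a_4^{-1},a_3^{-1}], \]
a cyclic word of length $4g$ in which every generator $a_i$ appears exactly once positively and once negatively (the shared middle letters in adjacent commutators cancel, and a final $a_3^{-1}a_3$ cancellation trims the tail). By the classical polygonal classification of surfaces, the presentation $2$-complex with one $0$-cell, $2g$ $1$-cells, and one $2$-cell attached along $r$ is therefore a closed orientable surface; its Euler characteristic $1-2g+1=2-2g$ identifies it with the closed genus-$g$ surface, and its fundamental group is $\underline{\TT}_{\Sigma_{2g+3}}$ by construction.

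The main obstacle is the bookkeeping of the second step: carrying the recursion $b_{i+1}=a_i^{-1}a_{i+1}b_i$ all the way to $i=n-2$ and verifying that the resulting expression for $b_{n-1}\cdot a_{n-1}^{-1}$ is precisely the telescoping commutator product above. Once this normal form for $r$ is extracted, the topological identification of the presentation complex with a closed orientable genus-$g$ surface is immediate from Euler characteristic and the sign-balanced two-fold appearance of each letter.
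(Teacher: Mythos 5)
Your reduction to the fan triangulation, the elimination of the $b_i$'s, and the resulting one-relator presentation $\langle a_3,\dots,a_{n-1}\mid r\rangle$ with
$r=a_{n-1}^{-1}a_{n-2}^{-1}a_{n-1}a_{n-3}^{-1}a_{n-2}\cdots a_3^{-1}a_4a_3$
are all correct, as is the telescoping of $r$ into the product of $2g-1$ overlapping commutators and the count that $|r|=4g$ with each generator occurring once with each sign. The equality $\underline\TT=\underline\UU$ via $u_{\bar\alpha,\gamma}=t_\alpha^{-1}t_\gamma$ for a boundary edge $\alpha$ is also fine (it is the unpunctured case of the paper's Theorem \ref{th:reduced sector=reduced triangle}).

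The gap is in the last step. The criterion you invoke --- ``each generator appears exactly once positively and once negatively, hence the presentation $2$-complex is a closed orientable surface of Euler characteristic $1-2g+1$'' --- is false as stated. The presentation complex has one $0$-cell by fiat, whereas the closed surface obtained from the $4g$-gon by the edge-pairing $r$ may have several vertex classes; when it does, the presentation complex is not that surface, and the one-relator group is the free product of the surface group with a free group. A concrete counterexample to your criterion is $w=abca^{-1}b^{-1}c^{-1}$: every letter occurs once with each sign, the hexagon identification space is a torus (two vertex classes, $\chi=2-3+1=0$), but $\langle a,b,c\mid w\rangle\cong\mathbb Z^2\ast\mathbb Z$, not a surface group. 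So your Euler-characteristic computation $V-E+F=1-2g+1$ presupposes exactly the fact that needs checking, namely that the pairing induced by $r$ identifies all $4g$ vertices of the polygon to a single point (equivalently, that the link of the unique $0$-cell of the presentation complex is one circle). For your specific $r$ this does hold --- tracing the identifications, the odd-indexed vertices $v_3\sim v_5\sim\cdots\sim v_{2n-7}$ form one chain, the even-indexed ones form two chains (indices $\equiv 0$ and $\equiv 2 \bmod 4$), and the three extra identifications coming from the occurrences of $a_{n-1}$ and $a_3$ (namely $v_3\sim v_2$, $v_4\sim v_1$, $v_{4g}\sim v_{2n-7}$, $v_1\sim v_{2n-8}$) glue all chains together --- but this verification (or, alternatively, an explicit free-group automorphism carrying $r$ to the standard relator $\prod_{j=1}^g[x_j,y_j]$) must be supplied before the classification of surfaces can be applied. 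With that check added, your argument is complete and matches the presentation count of the paper's Theorem \ref{th:Udelta reduced} ($2g=|I_b|+1-4\chi(\Sigma_{2g+3})$ generators, one relation).
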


If $\Sigma$ is an unpunctured cylinder with two marked points, then 
$\underline \TT_\Sigma=\underline \UU_\Sigma$ is generated by $a,b,c$ subject to $cba=abc$, which is not a surface group. More generally, we establish the following

\begin{theorem} [Theorem \ref{th:Udelta reduced}] 
\label{th:Udelta reduced intro}
In notation of Theorem \ref{th:Udelta intro}, if $\Sigma$ has neither $0$-punctures  nor ordinary punctures, then $\underline \UU_\Sigma=\underline \TT_\Sigma$ is a one-relator torsion free group in $|I_b| +1 -4\chi(\Sigma)$ generators.

\end{theorem}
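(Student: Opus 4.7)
The plan is three-step: (i) reduce to $\underline \TT_\Sigma$ via the established equivalence with $\underline \UU_\Sigma$; (ii) extract from a triangulation a presentation with the claimed number of generators and one relation; (iii) invoke Magnus's theorem to obtain torsion-freeness. Step~(i) is immediate: since $\Sigma$ has neither $0$-punctures nor ordinary punctures, Theorem~\ref{th:reduced sector=reduced triangle} gives $\underline \UU_\Sigma = \underline \TT_\Sigma$ under exactly this hypothesis, so it suffices to present the latter.

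For step~(ii), I would fix an ideal triangulation $\Delta$ of $\Sigma$. With $V = |I_b|$ vertices, $E = E_{\mathrm{int}} + |I_b|$ unoriented edges, and $T$ triangles, the Euler relation $V - E + T = \chi(\Sigma)$ combined with the face-edge identity $3T = 2E_{\mathrm{int}} + |I_b|$ yields $E_{\mathrm{int}} = |I_b| - 3\chi(\Sigma)$ and $T = |I_b| - 2\chi(\Sigma)$. Setting $t_\gamma = 1$ for all boundary oriented edges in the defining presentation of $\TT_\Delta$ leaves $2E_{\mathrm{int}}$ oriented-interior-edge generators and $T$ triangle relations of the form $t_{\gamma_1}t_{\overline\gamma_2}^{-1}t_{\gamma_3} = t_{\overline\gamma_3}t_{\gamma_2}^{-1}t_{\overline\gamma_1}$. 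Each such relation can be solved for any of its six generators. Choosing a spanning tree of the dual graph of $\Delta$ and performing Tietze transformations that eliminate one generator per processed triangle, I would use $T-1$ of the relations to eliminate $T-1$ generators. The last, unused relation $r$, together with the remaining $2E_{\mathrm{int}} - (T-1) = |I_b| + 1 - 4\chi(\Sigma)$ generators, is the desired presentation. That exactly one relation survives (rather than zero) reflects the topology of $\Sigma$: a boundary cycle (or a genus cycle in the closed case) contributes one global compatibility that purely local triangle relations cannot absorb.

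For step~(iii), Magnus's theorem states that $\langle X \mid r\rangle$ is torsion-free iff $r$ is not a proper power in the free group on $X$. Tracking $r$ through the elimination, it arises as the monodromy word around a distinguished cycle in $\Sigma$; its image in the abelianization is a primitive class, consistent with the closed-surface identifications in Theorem~\ref{th:reduced oddgons} and the cylinder example $\langle a,b,c \mid cba = abc\rangle$ preceding the statement. Primitivity of the abelianization forces $r$ itself to not be a proper power, and torsion-freeness follows.

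The main obstacles are twofold. In step~(ii), making the Tietze elimination systematic requires care: each triangle relation involves six generators and not every substitution order keeps the remaining system solvable, so the spanning-tree traversal and a choice of orientation conventions must be coordinated. In step~(iii), the uniform verification that $r$ is not a proper power across all eligible $\Sigma$ is delicate. A promising route for both is to exploit mutation invariance of $\TT_\Sigma$ (Theorem~\ref{th:monomial mutation surfaces intro}) to reduce, by a sequence of mutations, to a canonical triangulation—say, a star-shaped triangulation for disks, or a standard handle decomposition for surfaces of positive genus—where both the elimination and the primitivity of the final relator become manifest.
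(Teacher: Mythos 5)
Your steps (i) and (ii) are sound in outline. The reduction to $\underline\TT_\Sigma$ via Theorem \ref{th:reduced sector=reduced triangle} is exactly right, and the Euler-characteristic bookkeeping ($E_{\mathrm{int}}=|I_b|-3\chi$, $T=|I_b|-2\chi$) together with a spanning-tree Tietze elimination does land on $|I_b|+1-4\chi(\Sigma)$ generators and a single surviving relation; this is consistent with the route the paper's machinery suggests (Theorem \ref{thm:sectorgroup} gives $\TT_\Delta\cong \UU_\Delta * F_{|I_b|}$ and Remark \ref{rmk:generator} records the shape of the generators and the single relator), even though the paper does not print a proof of this particular statement. One caveat on step (ii): the hypothesis of the theorem excludes only $0$-punctures and ordinary punctures, so special punctures (orbifold points of order $\ge 2$) are still permitted; your count $3T=2E_{\mathrm{int}}+|I_b|$ presupposes an honest triangulation with no special loops/monogons, so as written it only covers the totally unpunctured case and would need to be adjusted (special loops come with the monogon relations $t_{\overline\ell}=t_\ell$ and degenerate triangles).

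The genuine gap is in step (iii). You propose to conclude that the surviving relator $r$ is not a proper power because its image in the abelianization is primitive. But in precisely the benchmark cases you cite as corroboration, the relator abelianizes to \emph{zero}: for $\underline\TT_{\Sigma_{2g+3}}\cong\pi_1(\text{genus }g\text{ surface})$ the relator is $\prod_i[a_i,b_i]$, and for the cylinder the relator is $cba(abc)^{-1}$; both lie in the commutator subgroup. Zero is not a primitive class, so the implication "primitive abelianization $\Rightarrow$ not a proper power" has a false hypothesis and yields nothing here. (And one cannot bypass this: $\langle a,b\mid [a,b]^2\rangle$ has torsion while its abelianization is $\Z^2$, so no abelian invariant of the presentation can certify torsion-freeness.) To close the gap you must argue directly with the word $r$ — e.g., use the explicit alternating form $t_{\overline\gamma_1}^{-1}t_{\gamma_2}t_{\overline\gamma_3}^{-1}\cdots t_{\gamma_{2n}}$ of the relator recorded in Remark \ref{rmk:generator} and show such a word is never a proper power in the free group on the surviving generators, or reduce by the monomial-mutation invariance to a canonical triangulation where $r$ is visibly a surface-type relator, before invoking the Karrass--Magnus--Solitar criterion. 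You should also say a word about why $r$ is nontrivial (otherwise the group would be free rather than one-relator), which your deficiency count alone does not rule out.
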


Returning to the braid group actions, in the context of Theorem \ref{th:braid generation}
we also denote by $Br^+_\Delta$ (see Section \ref{subsec:cluster braids monoids and groups}) the submonoid of the braid group $Br_\Delta$ generated by all $T_\gamma$ 
(see Section \ref{subsec:cluster braids monoids and groups}) and prove that the group $Br_\Delta$  is independent of $\Delta$ (Corollary \ref{cor:brinv}). Unlike $\TT_\Sigma$ or $\UU_\Sigma$, we expect this to be a full invariant with one exception: $Br_{\Sigma_6}\cong Br_{\Sigma_{3,1}}\cong Br_4$ (see Remark \ref{rem:Br_4 exceptional}). The same applies to the image $\underline{Br}_\Delta$ of $Br_\Delta$ in $Aut(\TT_\Delta)$ (we call the latter the {\it (cluster) braid group}\footnote{This agrees with terminology of \cite{HHQ,Q1,KQ}} of $\Delta$) due to the following result.

\begin{corollary} [Corollary \ref{cor:brmuta}]
$\underline{Br}_{\Delta'}=\mu_{\Delta',\Delta} \,\underline{Br}_{\Delta}\,\mu_{\Delta',\Delta}^{-1}$ for any triangulations $\Delta$ and $\Delta'$ of any $\Sigma$, where $\mu_{\Delta',\Delta}:\TT_\Delta\simeq\TT_{\Delta'}$ is the aforementioned monomial mutation.
    
\end{corollary}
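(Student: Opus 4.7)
My plan is to deduce this corollary from the groupoid-theoretic definition of $Br_\Delta$ sketched in the introduction, namely $Br_\Delta = Aut_{{\bf Tsurf}^t_\Sigma}(\Delta)$, together with the functor from ${\bf Tsurf}_\Sigma$ to ${\bf Grp}'$ under which the horizontal morphism $h_{\Delta',\Delta}$ maps to the monomial mutation $\mu_{\Delta',\Delta}$ (Theorem \ref{th:monomial mutation surfaces} and Remark \ref{rm:monomial mutation surfaces}). In any groupoid, a morphism $h:\Delta\to\Delta'$ induces a group isomorphism $Aut(\Delta)\simeq Aut(\Delta')$ by conjugation $g\mapsto h\,g\,h^{-1}$; applying this to ${\bf Tsurf}^t_\Sigma$ and then pushing forward through the natural functor to $\mathbf{Grp}'$ immediately produces the equality $\underline{Br}_{\Delta'}=\mu_{\Delta',\Delta}\,\underline{Br}_\Delta\,\mu_{\Delta',\Delta}^{-1}$ once we know both images identify with the standard subgroups generated by the $T_\gamma$'s.

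The first step is therefore to record that $\underline{Br}_\Delta$ is precisely the image of $Br_\Delta$ under the composed functor $Aut_{{\bf Tsurf}^t_\Sigma}(\Delta)\to Aut(\TT_\Delta)$, which is the content of (or immediate from) Theorem \ref{th:quiver braid group intro}. Second, because the horizontal morphisms $h_{\Delta',\Delta}$ compose transitively (they give the monomial mutations their groupoid structure), it suffices to prove the claim when $\Delta'$ is obtained from $\Delta$ by a single flip at an internal edge $\gamma$. Third, in that adjacent case I would verify the equality on generators: for each internal edge $\gamma'$ of $\Delta$ one checks that the conjugate $\mu_{\Delta',\Delta}\,T_\gamma^{\Delta}\,\mu_{\Delta',\Delta}^{-1}$ lies in $\underline{Br}_{\Delta'}$, and conversely. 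This reduces to a local calculation near the flipped edge, using the explicit formula \eqref{eq:Tij} (and its tagged analog) together with the explicit description of $\mu_{\Delta',\Delta}$ on the generators $t_{\gamma'}$ coming from the leading-term reading of the Ptolemy relation.

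The main obstacle is this last verification on generators, because the edges $\gamma'$ whose incident quadrilaterals change under the flip must be tracked carefully: the conjugated generator $\mu_{\Delta',\Delta}\,T_{\gamma'}\,\mu_{\Delta',\Delta}^{-1}$ need not coincide with a single generator $T_{\gamma'}^{\Delta'}$ of $\underline{Br}_{\Delta'}$, but rather with a short word in the generators dictated by the new quadrilateral surrounding $\gamma'$ in $\Delta'$. The cleanest way to carry this out is to work in the groupoid ${\bf Tsurf}^t_\Sigma$ itself, where the relation $h_{\Delta',\Delta}T_\gamma^{\Delta}=T_{\gamma}^{\Delta'} h_{\Delta',\Delta}$ (for $\gamma$ not involved in the flip) and its analog for the flipped edge are built into the construction of the groupoid; then pushing down via the functor to $\mathbf{Grp}'$ produces the required identities in $Aut(\TT_{\Delta'})$ automatically. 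Provided the groupoid setup of Section \ref{subsec:triangulated surfaces and triangle groups} is in hand, the corollary follows with essentially no further computation beyond naturality.
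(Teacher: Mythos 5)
Your proposal is correct and matches the paper's (unwritten) argument: the corollary is stated as an immediate consequence of the fact that $\mu_{\Delta',\Delta}={\bf F}(h_{\Delta',\Delta})$ for the functor ${\bf F}$ of Theorem \ref{th:monomial mutation surfaces}, so that $Aut_{{\bf TSurf}^t_\Sigma}(\Delta')=h_{\Delta',\Delta}\,Aut_{{\bf TSurf}^t_\Sigma}(\Delta)\,h_{\Delta',\Delta}^{-1}$ pushes forward to the claimed conjugation of images $\underline{Br}_\Delta=\pi_\Delta(Br_\Delta)$. The flip-by-flip verification on generators sketched in your middle paragraph is an unnecessary detour, and your final paragraph correctly observes that functoriality makes it superfluous.
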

Therefore, there are groups $Br_\Sigma$ and $\underline{Br}_\Sigma$ (up to conjugation) isomorphic to all $Br_\Delta$ and $\underline {Br}_\Delta$ for $\Delta\in {\bf TSurf}_\Sigma^t$. In fact, $Br_\Delta$, $\underline{Br}_\Delta$, and $Br_\Sigma$, $\underline{Br}_\Sigma$ can be defined even for non-orientable surfaces; see Section \ref{subsec:non-orientable}. Denote by $\pi_\Delta:Br_\Delta\twoheadrightarrow \underline{Br}_\Delta$ the canonical surjective group homomorphism. 

We show (Proposition \ref{prop:invariant}) that $\UU_\Delta$ is also invariant under each (automatically faithful) $\underline{Br}_\Delta$-action. 
Moreover, this induces a unique (up to conjugation) action of $\underline{Br}_\Sigma$ on both $\TT_\Sigma$ and $\UU_\Sigma$. The former action is faithful by definition and the latter one is faithful when $\Sigma$ is unpunctured (Proposition \ref{prop:faithfulequ}) and conjecture in the punctured case.
Thus, the assignments $\Sigma\mapsto \underline{Br}_\Sigma$ define another topological invariant of marked surfaces.

\begin{example} Let $\Sigma$ be a once-punctured torus. Then $Br_\Sigma$ is a free group of rank $3$ in the $\tau_1,\tau_2,\tau_3$ 
(Corollary \ref{pro:free} (b))
 and we expect that $\pi_\Sigma$ is an isomorphism.
In this case, $\TT_\Sigma$ is generated by $a,b,c,d,e$ subject to $abcde=edcba$ and
the $Br_\Sigma$-action on $\TT_\Sigma$ (its presentation is in Theorem \ref{th:action}) is given by 
$$\tau_1(x)=\begin{cases}
  b^{-1}c^{-1}eabcde, & \mbox{ if } x=a,\\
  dcbab^{-1}cd, & \mbox{ if } x=d,\\
  x, & \mbox{ otherwise},
\end{cases}\quad
\tau_2(x)=\begin{cases}
  c^{-1}d^{-1}e^{-1}d^{-1}c^{-1}, & \mbox{ if } x=b,\\
  edcbcde, & \mbox{ if } x=e,\\
  x, & \mbox{ otherwise}.
\end{cases}$$
and
$$\tau_3(x)=\begin{cases}
  d^{-1}e^{-1}abc, & \mbox{ if } x=c,\\
  x, & \mbox{ otherwise}.
\end{cases}$$

\end{example}

This example demonstrates that our $Br_\Sigma$ has a flavor of a mapping class group. In the forthcoming work \cite{BHRR}, we will explicitly relate $\TT_\Sigma$, $\UU_\Sigma$, and $Br_\Sigma$ to the corresponding groups on certain ramified double covers of $\Sigma$.

We already established that $\pi_\Delta$ is an isomorphism for  $\Sigma=\Sigma_n$ and the polygon with one special puncture (Theorems \ref{th:Br on Sigman} and \ref{th:Br on Sigman1}) and conjecture it for all $\Sigma$  except for a sphere with $4$ punctures or projective plane with $2$ punctures (Conjecture \ref{conj:faithful}), for which we provide abundant partial evidence (we discuss non-orientable $\Sigma$ in Section \ref{subsec:non-orientable}).

In particular, we prove (Theorem \ref{th:Br_n 1} (e) (f)) that $Br_\Sigma$ is isomorphic to $Br_{\hat D_{n+2}}$ for $\Sigma=\Sigma_{n,2}$, the twice punctured disk with $n$ boundary marked points, and $Br_{\hat A_{p+q}}$ for $\Sigma=\Sigma_p^q$, the unpunctured cylinder with $p$ marked points on one boundary and $q$ marked points on another, where $\hat D_k$ and $\hat A_{p+q}$ are the affine Dynkin diagrams of type $D_k$ and $A_{p+q}$, respectively.

Also, we obtain more surprising braid group homomorphisms based on the following

\begin{theorem} [Proposition \ref{pr:relative braid homomorphism}]
\label{th:boundary gluing}
Let $\Sigma$ be a surface with boundary, and 
let $f:\Sigma\to \Sigma'$ be a surjective morphism of surfaces that only glue boundary arcs of $\Sigma$. Then there is a canonical  homomorphism $f_*:Br_{\Sigma}\to Br_{\Sigma'}$ induced by $f$ (we expect that $f_*$ is always injective, Conjecture \ref{conj:injective relative braid homomorphism}).
\end{theorem}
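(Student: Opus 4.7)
The plan is to construct $f_*$ as the effect on automorphism groups of a pushforward functor $F:{\bf TSurf}_\Sigma^t\to {\bf TSurf}_{\Sigma'}^t$. First I would define $F$ on objects: given a triangulation $\Delta$ of $\Sigma$, let $F(\Delta)=f(\Delta)$ be its image in $\Sigma'$. Because $f$ only glues boundary arcs of $\Sigma$, each triangle of $\Delta$ maps bijectively (as a $2$-simplex) to a triangle of $f(\Delta)$ and each interior edge of $\Delta$ remains an interior edge of $f(\Delta)$; boundary edges of $\Delta$ either stay on the boundary (when untouched by $f$) or get paired with another boundary edge and collapse to a single interior edge of $f(\Delta)$. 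One then checks that $f(\Delta)$ is a valid (possibly tagged) triangulation of $\Sigma'$, since $f$ does not disturb punctures or tagging data.

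Next I would extend $F$ to morphisms. The generating morphisms of ${\bf TSurf}_\Sigma^t$---flips at interior edges, tagging morphisms, and the generators $T_\gamma$ of Theorem \ref{th:quiver braid group intro}---are each supported in a small combinatorial neighborhood (a quadrilateral, a punctured bigon, etc.), and $f$ transports each such neighborhood to a neighborhood of the same combinatorial type in $f(\Delta)$. The defining relations of ${\bf TSurf}_\Sigma^t$ are equally local (each involves only a handful of adjacent triangles and edges), so each such relation in $\Sigma$ maps to a valid relation in $\Sigma'$, and $F$ is a well-defined functor.

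Restricting $F$ to automorphism groups yields a group homomorphism $F_\Delta:Br_\Delta\to Br_{f(\Delta)}$ sending each generator $T_\gamma$ to $T_{f(\gamma)}$. By Corollary \ref{cor:brmuta} the various $Br_\Delta$ assemble into a single group $Br_\Sigma$ via the conjugations $\mu_{\Delta',\Delta}$, and functoriality of $F$ implies that each $F_\Delta$ intertwines these conjugations with the analogous ones on the $\Sigma'$-side; thus the family $(F_\Delta)$ descends to a single canonical homomorphism $f_*:Br_\Sigma\to Br_{\Sigma'}$, well-defined up to conjugation.

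The main obstacle lies in verifying that the commutation relations of Theorem \ref{th:quiver braid group intro} and its general analog (Theorem \ref{th:brgroup}) survive under $F$: one must rule out the possibility that $f$ brings two previously commuting generators $T_\alpha$ and $T_\beta$ into a shared triangle of $f(\Delta)$, which would force a braid relation in $Br_{f(\Delta)}$ rather than commutation. Since $f$ only identifies pairs of boundary edges and never merges the interiors of distinct triangles, the triangle set of $\Delta$ is in canonical bijection with that of $f(\Delta)$ and no new triangle-adjacencies are created; this reduces the verification to a case-by-case inspection of the relation types (triangle braid, quadrilateral commutation, and tagging-related relations) in the explicit presentation of $Br_\Delta$, each of which clearly persists under $f$ and completes the construction of $f_*$.
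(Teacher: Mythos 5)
Your construction follows essentially the route the paper takes: push triangulations forward along $f$, observe that this is functorial, and restrict to automorphism groups. The paper, however, already has the machinery in place — the functor $f_*:{\bf TSurf}_\Sigma^f\to{\bf TSurf}_{\Sigma'}$ of Lemma \ref{le:tsurf^f} and the induced homomorphism $Br^f_\Delta\to Br_{f(\Delta)}$ of Lemma \ref{le:relative homomorphism} — so the only content of Proposition \ref{pr:relative braid homomorphism} is the observation that when $f$ merely identifies pairs of boundary arcs it is injective on the interior, hence every arc and every triangulation of $\Sigma$ is $f$-admissible, so ${\bf TSurf}_\Sigma^f={\bf TSurf}_\Sigma$ and $Br^f_\Delta=Br_\Delta$. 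Well-definedness is then inherited from the groupoid presentation of Theorem \ref{th:presentation of Tsurf}, whose relations are flip-local and visibly compatible with $f$. You instead verify the relations of the explicit group presentation of Theorem \ref{th:brgroup}; this also works, and Corollary \ref{cor:brmuta} is indeed the right tool to make $f_*$ independent of $\Delta$ up to conjugation, but it imports one of the heaviest theorems of the paper where a much lighter argument suffices.

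One sentence of yours is false as written and should be repaired: gluing two boundary arcs \emph{does} create a new triangle-adjacency — the two triangles of $\Delta$ carrying the identified boundary edges become neighbors across the resulting new interior edge of $f(\Delta)$ (this is precisely what produces the extra generator in, e.g., Corollary \ref{cor:affineatod}). What actually saves the relation check is the weaker statement you need: since $f$ is injective on interior arcs and induces a bijection on triangles, two interior edges $\alpha,\beta$ of $\Delta$ are sides of a common triangle of $\Delta$ if and only if $f(\alpha),f(\beta)$ are sides of a common triangle of $f(\Delta)$, and the same persistence holds for all the local configurations (quiver subpatterns, once-punctured bigons, stars of punctures) that govern the relations $R1$--$R9$ of Theorem \ref{th:brgroup}. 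The new adjacencies involve only the newly created interior edges, whose generators $T_{f(e)}$ do not lie in the image of $f_*$ and therefore impose no constraint on well-definedness. With that correction your argument is sound.
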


We obtain a morphism $f:\Sigma_p^2\to \Sigma_{p,2}$, by gluing the two boundary edges of $\Sigma_p^2$ to each other on the boundary component with $2$ point. Since $Br_{\Sigma_p^2}\cong Br_{\widetilde A_{p+1}}$ and $Br_{\Sigma_{p,2}}\cong Br_{\widetilde D_{p+2}}$,
we explicitly describe (Corollary \ref{cor:affineatod}) the corresponding homomorphism of braid groups $Br_{\widetilde A_n}\to Br_{\widetilde D_{n+1}}$ predicted 
in Theorem \ref{th:boundary gluing} and, of course, expect it to be injective
(alas, we could not find it in the literature).

More generally, for any morphism of marked surfaces $f:\Sigma\to \Sigma'$, we define a subgroup $Br_\Sigma^f$ of $Br_\Sigma$ to be the automorphism group (of any object) of the relative groupoid ${\bf TSurf}_\Sigma^f$ (see Section \ref{sec:taggedtri})
and conjecture (Conjecture \ref{conj:injective relative braid homomorphism}) that the induced homomorphism 
$Br_\Sigma^f\to Br_{\Sigma'}$   
is injective.
In other words, the general noncommutative cluster axiomatic at the beginning of the introduction fully applies to the noncommutative surfaces as well.

We conclude with the discussion of noncommutative surfaces $\Sigma/\Gamma$ (necessarily with special punctures) where $\Sigma$ is connected and $\Gamma$ is a (necessarily finite) group of automorphisms  $\Sigma$ preserving the set of all marked points (Section \ref{subsec:symmetries and orbifolds}). 
Clearly, $\Gamma$-action on $\Sigma$ lifts to that on  ${\mathcal A}_\Sigma$ by automorphisms via $x_\gamma\mapsto x_{\sigma(\gamma)}$ for all curves $\gamma$ on $\Sigma$ and all $\sigma\in \Gamma$.

It is well-known (\cite[Section 2]{CV}) that $\underline \Sigma:=\Sigma/\Gamma$ is always a surface with an orbifold structure and the canonical projection  $\Sigma\to \underline\Sigma$ is a branched cover (we also write $\Sigma/\sigma$ when $\Gamma$ is the cyclic group generated by $\sigma$). 
One can show (Proposition \ref{prop:symmetries and orbifolds})
that the noncommutative surface  ${\mathcal A}_{\underline \Sigma}$ is isomorphic to some quotient of the coinvariant algebra of $\Gamma$.

 Note that this works also in some non-orientation-preserving situations. For instance, if $\Sigma$ is a sphere with $n$ punctures on the equator of $\sigma$ is the reflection about the equator then $\Sigma/\sigma$ is $\Sigma_n$ and the coinvariant algebra of $\sigma$ in ${\mathcal A}_\Sigma$ is ${\mathcal A}_n$.

In particular, if $\Gamma=\mathbb Z_2$ acting on the isosceles trapezoid $\Sigma_4$, 
\begin{figure}[ht]
\centerline{\includegraphics[width=3.5cm]{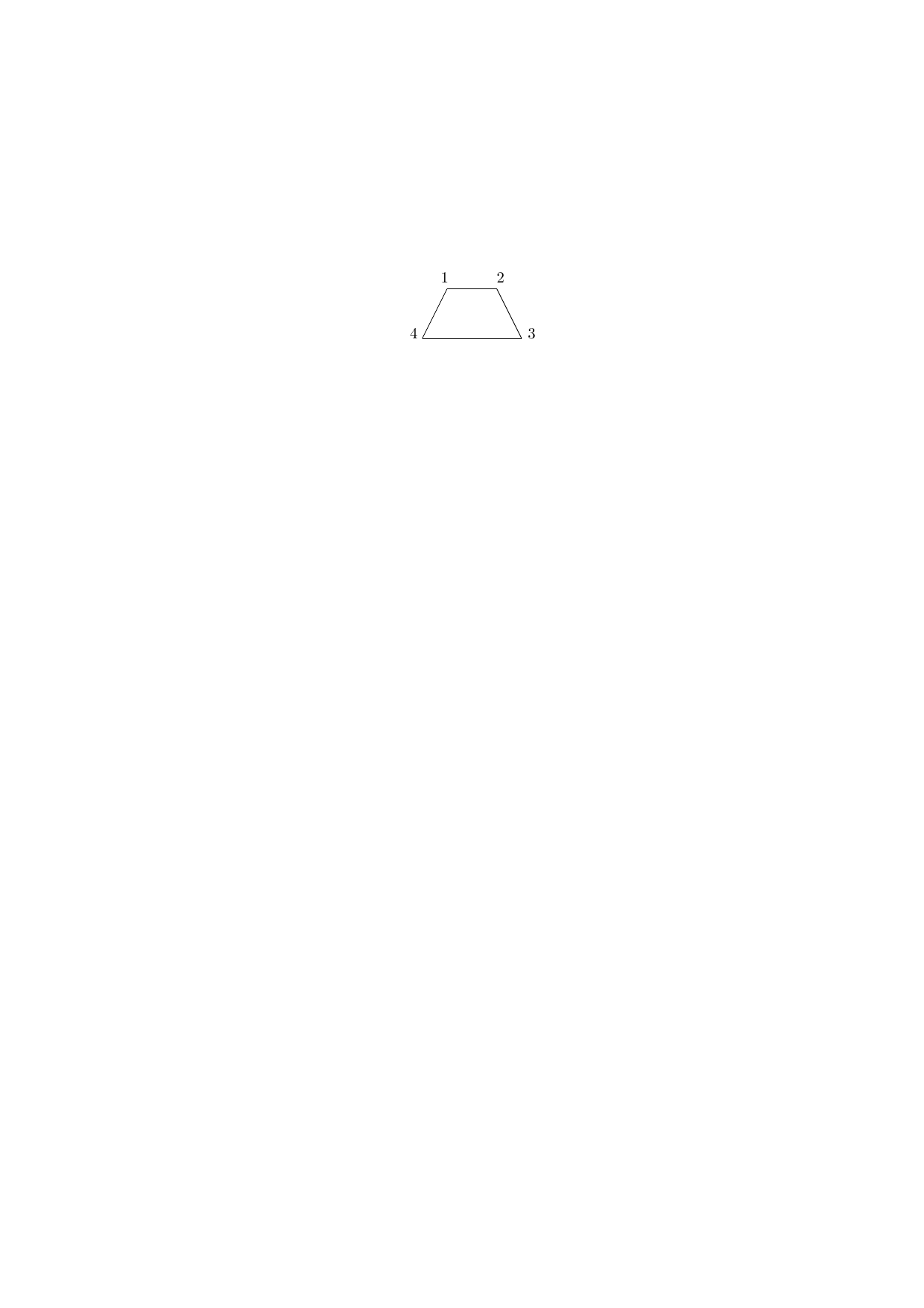}}
\caption{\rm Trapezoid}
\end{figure}
then ${\mathcal A}_{\underline \Sigma}$ is generated by $x_{12}, x_{13},x_{31}, x_{23},x_{32}, x_{34}$ subject to the relations:

$\bullet$ $x_{12}x_{31}^{-1}x_{32}=x_{23}x_{13}^{-1}x_{12}$;

$\bullet$ $x_{32}x_{31}^{-1}x_{34}=x_{34}x_{13}^{-1}x_{23}$;

$\bullet$ 
$x_{13}x_{23}^{-1}x_{13}=x_{12}x_{23}^{-1}x_{34}+x_{23}$

$\bullet$ 
$x_{13}x_{32}^{-1}x_{13}=x_{34}x_{32}^{-1}x_{12}+x_{32}$

We will refer to it as a {\it noncommutative} isosceles trapezoid (its abelianization together with symmetrization $x_{23}=x_{32}$ and $x_{13}=x_{31}$ satisfy the isosceles trapezoid relations). 

Also, if $\Gamma=\mathbb Z_2\times \mathbb Z_2$ acting on the disk $\Sigma_4$, viewed as a rectangle (see the left graph in Figure \ref{Fig:rectangleandtriangle}), 
\begin{figure}[ht]
\centerline{\includegraphics[width=7cm]{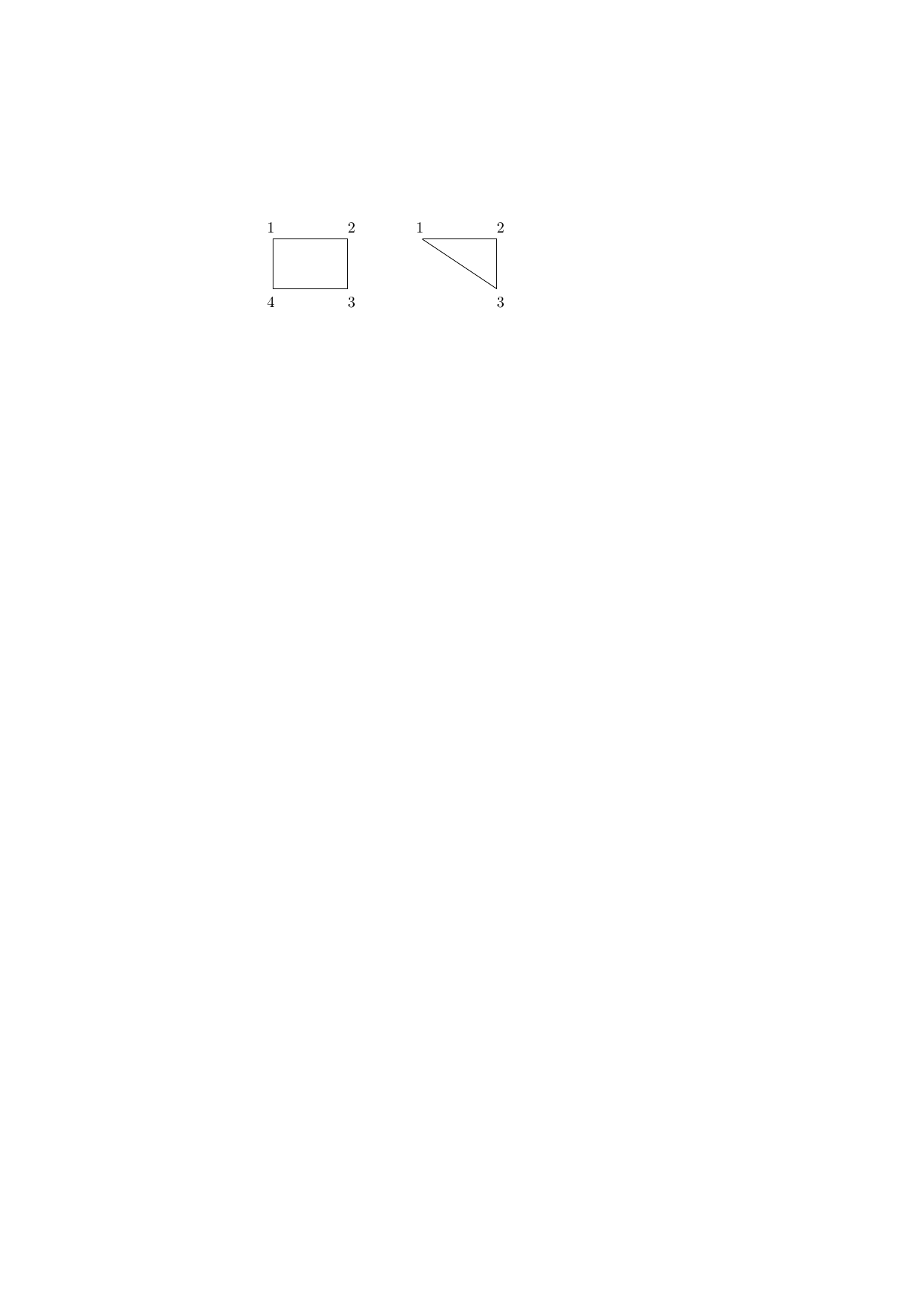}}
\caption{\rm Rectangle and triangle}
\label{Fig:rectangleandtriangle}
\end{figure}
then ${\mathcal A}_{\underline \Sigma}$ is generated by $x_{12}, x_{13}, x_{23}$ subject to the relations:

$\bullet$ $x_{12}x_{13}^{-1}x_{23}=x_{23}x_{13}^{-1}x_{12}$;

$\bullet$ $x_{13}x_{23}^{-1}x_{13}=x_{12}x_{23}^{-1}x_{12}+x_{23}$. 

We will refer to it as a {\it noncommutative} right-angled triangle (see the right graph in Figure \ref{Fig:rectangleandtriangle}), its abelianization is subject to Pythagorean theorem. 

Also, if $\Sigma=\Sigma_n$ is the regular $n$-gon and $\Gamma=I_2(n)$ the dihedral group of symmetries $\Sigma$, then the coinvariant algebra $\underline {\mathcal A}_n$ can be thought of as a noncommutative triangle with one of the remaining angles $\frac{\pi}{n}$ due to the following result.

\begin{theorem} 
\label{th:Chebyshev}
The coinvariant algebra of $I_2(n)$ in ${\mathcal A}_n$, $n\ge 3$, is generated by $a^{\pm 1}, b^{\pm 1}$ subject to the  relation
$p_n(ab^{-1})=0$
where $p_n\in {\mathbb Z}[x]$ is a monic polynomial
given by 
$$p_n(x)=\begin{cases} U_{\frac{n-1}{2}}(\frac{x}{2})-U_{\frac{n-3}{2}}(\frac{x}{2}),
&\text{if $n$ is odd,}\\
2T_{\frac{n}{2}}(\frac{x}{2}), &\text{if $n$ is even},\\
\end{cases}$$
where $T_k$ (resp. $U_k$) is the $k$-th Chebyshev polynomial  of the first (resp. second) kind (the algebraic integer $2\cos(\frac{\pi}{n})$ is a root of $p_n$).

\begin{figure}[ht]

\centerline{\includegraphics{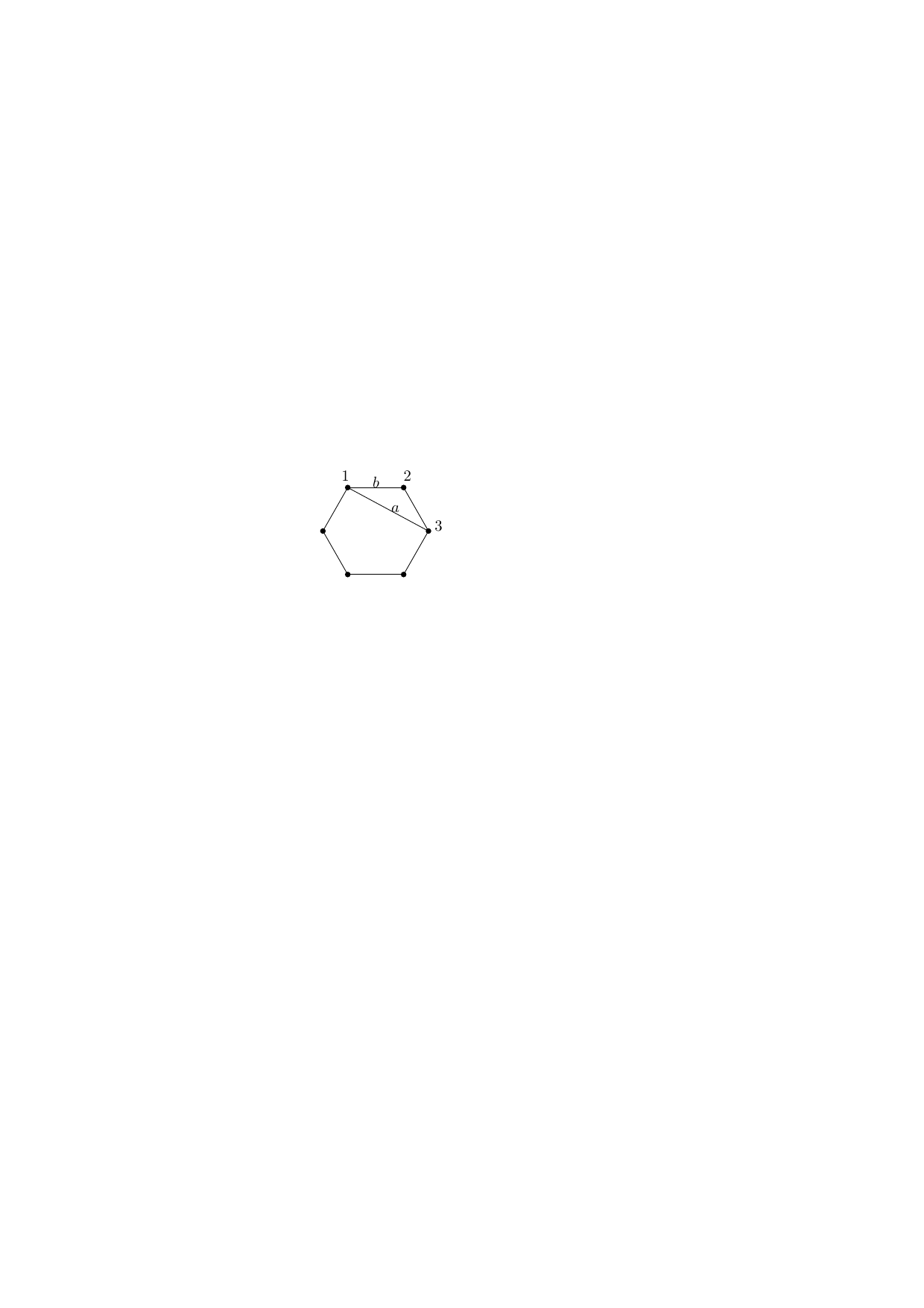}}

\end{figure}
\end{theorem}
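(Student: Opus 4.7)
The plan is to explicitly identify the $I_2(n)$-coinvariant algebra $\underline{\mathcal A}_n$ by using Ptolemy relations to reduce the generators to two, then match the single surviving relation with the stated Chebyshev polynomial $p_n$.

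First I would analyze the $I_2(n)$-orbits on the set $\{x_{ij}\}$: rotations preserve the signed distance $d := j-i \bmod n$, while reflections negate it, so orbits correspond to $d \in \{1,\ldots,\lfloor n/2 \rfloor\}$. Let $a_d \in \underline{\mathcal A}_n$ denote the common image, and set $b := a_1$, $a := a_2$. Projecting the Ptolemy relation for the quadrilateral $(1,d,d+1,d+2)$ with $2 \le d \le n-2$ to $\underline{\mathcal A}_n$ gives
\[
a_d = (a_{d-1}+a_{d+1})\,a^{-1}b,
\]
which rearranges to $a_{d+1} = a_d\,b^{-1}a - a_{d-1}$. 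Writing $a_d = z_d \cdot b$ and $\eta := ab^{-1}$, this becomes $z_{d+1} = z_d\,\eta - z_{d-1}$ with $z_1 = 1$, $z_2 = \eta$; induction together with the Chebyshev recursion $U_k(t) = 2tU_{k-1}(t) - U_{k-2}(t)$ then gives $z_d = U_{d-1}(\eta/2)$, whence $a_d = U_{d-1}(\eta/2)\cdot b$. So $\underline{\mathcal A}_n$ is generated by $a$ and $b$. The identifications $a_d = a_{n-d}$ imposed by reflection symmetry become $U_{d-1}(\eta/2) = U_{n-d-1}(\eta/2)$ in $\kk[\eta]$, and the Chebyshev recursion propagates any single such identity to all others; only the innermost case survives, giving $U_m(\eta/2) - U_{m-1}(\eta/2) = 0$ for odd $n = 2m+1$ and $U_m(\eta/2) - U_{m-2}(\eta/2) = 0$ for even $n = 2m$. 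Via the standard identity $U_m - U_{m-2} = 2T_m$, each matches the stated $p_n(ab^{-1}) = 0$.

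For the converse inclusion I would define a homomorphism $\varphi\colon \mathcal A_n \to \kk\langle a^{\pm 1},b^{\pm 1}\rangle/(p_n(ab^{-1}))$ by $x_{ij} \mapsto U_{(j-i)-1}(\eta/2)\cdot b$, well-defined on orbits since $U_{d-1} = U_{n-d-1}$ modulo $p_n$. The triangle relation $x_{ij}x_{kj}^{-1}x_{ki} = x_{ik}x_{jk}^{-1}x_{ji}$ maps to the trivial identity $U_pU_q^{-1}U_r\,b = U_rU_q^{-1}U_p\,b$ because all $U_k(\eta/2)$ commute within the subring $\kk[\eta]/(p_n)$. Each Ptolemy relation reduces, after cancelling the trailing $b$, to the classical Chebyshev product identity
\[
U_{d_1+d_2-1}\,U_{d_2+d_3-1} = U_{d_1-1}\,U_{d_3-1} + U_{d_1+d_2+d_3-1}\,U_{d_2-1},
\]
itself a special case of $U_MU_N = \sum_{k=0}^{\min(M,N)} U_{M+N-2k}$. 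Being manifestly $I_2(n)$-invariant, $\varphi$ descends to $\underline{\mathcal A}_n$ and inverts the surjection above.

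The main obstacle will be confirming that $p_n$ is \emph{sharp}, i.e., that no proper divisor of $p_n$ already suffices as a relation in $\underline{\mathcal A}_n$. I would settle this by passing to the abelianization: the usual cluster algebra of the regular $n$-gon has diagonal-to-side ratio $2\cos(\pi/n)$, whose minimal polynomial over $\mathbb Q$ is precisely $p_n$, forcing $\eta$ to have degree $\deg p_n$ in $\underline{\mathcal A}_n$ as well.
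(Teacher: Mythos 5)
Your main computation --- identifying the $I_2(n)$-orbits with the distances $d$, projecting the Ptolemy relation to get the recursion $a_{d+1}=a_d b^{-1}a-a_{d-1}$, solving it by $a_d=U_{d-1}(\eta/2)\,b$ with $\eta=ab^{-1}$, and showing that the reflection identifications $U_{d-1}=U_{n-d-1}$ all collapse to the single innermost one --- is exactly the argument given in Section \ref{sec:proofofthcheby} of the paper (the paper writes it as $(x_{1,i}+I)b^{-1}=U_{i-2}(\frac{ab^{-1}}{2})$), so that part is fine.

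The genuine problem is your final ``sharpness'' paragraph. You assert that the minimal polynomial of $2\cos(\pi/n)$ over $\mathbb{Q}$ is precisely $p_n$. This is false for most $n$: the paper's own remark lists $p_6=x(x^2-3)$ and $p_9=(x-1)(x^3-3x-1)$, both visibly reducible, and in general the roots of $p_n$ are \emph{all} of $2\cos\bigl(\frac{(2k+1)\pi}{n}\bigr)$, so $\deg p_n=\lfloor n/2\rfloor$ while the minimal polynomial of $2\cos(\pi/n)$ has degree $\varphi(2n)/2$, which is strictly smaller whenever $n$ is not of the right form. Consequently the abelianization/specialization argument only shows that $\eta$ satisfies the (smaller) minimal polynomial, and cannot force $\eta$ to have degree $\deg p_n$; as written, this step proves nothing about sharpness. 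Note also that this paragraph is logically redundant with your construction of $\varphi$: if $\varphi$ is well defined and splits the surjection, sharpness is already automatic. But $\varphi$ itself has an unaddressed issue you should confront rather than patch with the abelianization: to send $x_{ij}^{-1}$ anywhere you need $U_{d-1}(\eta/2)$ to be a unit in $\kk\langle a^{\pm1},b^{\pm1}\rangle/(p_n(ab^{-1}))$, and $U_{d-1}$ and $p_n$ can share nontrivial common factors (e.g.\ $n=12$, $d=4$: $U_3(\eta/2)=\eta(\eta^2-2)$ while $\eta^2-2$ divides $p_{12}(\eta)$), so invertibility of the images is not automatic and must be argued --- or the target presentation interpreted so that these elements are inverted. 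Either repair the well-definedness of $\varphi$ directly or drop the minimal-polynomial claim; as it stands the last step of your argument is incorrect.
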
  

Theorem \ref{th:Chebyshev} is proved in Section \ref{sec:proofofthcheby}.

\begin{remark} In fact, $p_{2n}(x)=\sum \limits_{k=0}^{\left\lfloor {\frac {n}{2}}\right\rfloor }(-1)^k\left({\binom {n-k}{k}}+{\binom {n-k-1}{k-1}}\right)  x^{n-2k}$ and $$p_{2n+3}=\sum_{k=0}^{\left\lfloor {\frac {n}{2}}\right\rfloor }(-1)^k\left(\binom{n+1-k}{k}x^{n+1-2k}-\binom{n-k}{k}x^{n-2k}\right)+((-1)^{\left\lceil {\frac {n}{2}}\right\rceil}-(-1)^{\left\lfloor {\frac {n}{2}}\right\rfloor})/2$$

For instance, $p_3=x-1$, $p_4=x^2-2$, $p_5=x^2-x-1$, $p_6=(x^2-3)x$, $p_7=x^3-x^2-2x+1$, $p_8=x^4-4x^2+2$, $p_9=
(x-1)(x^3-3x-1)$.

\end{remark}

We say that a group $\Gamma$ of automorphisms of $\Sigma$ is {\it admissible} if it preserves a triangulation of $\Sigma$.

Note, however, that in the above examples $\Gamma$ (including those in Theorem \ref{th:Chebyshev}) are not admissible. 

\begin{proposition} 
\label{pr:invariant triangulations intro}
Let $\Sigma$
be a connected surface and $\Gamma$ an admissible group of automorphisms of $\Sigma$. Then $\Gamma$ acts faithfully by automorphism of $\TT_\Sigma$ and of $Br_\Sigma$ in a compatible way.  
\end{proposition}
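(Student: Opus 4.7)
The plan is to first exhibit the action at the level of a single $\Gamma$-invariant triangulation, then transport it to the canonical objects $\TT_\Sigma$ and $Br_\Sigma$, and finally verify faithfulness. Since $\Gamma$ is admissible, fix a triangulation $\Delta$ of $\Sigma$ with $\sigma(\Delta)=\Delta$ for every $\sigma\in\Gamma$, where $\sigma$ permutes the oriented edges of $\Delta$. For each such $\sigma$, the assignment $t_\gamma\mapsto t_{\sigma(\gamma)}$ preserves the triangle relations \eqref{eq:triangle relations}, since $\sigma$ maps each triangle of $\Delta$ to another triangle and respects (up to a global sign handled by the pair $\gamma,\overline\gamma$) the cyclic order in each such triangle; hence every instance of \eqref{eq:triangle relations} is sent to another instance. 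This gives a homomorphism $\Gamma\to\mathrm{Aut}(\TT_\Delta)$. The parallel construction works for $Br_\Delta$: the map $T_\gamma\mapsto T_{\sigma(\gamma)}$ preserves the defining relations of Theorem~\ref{th:braid triangular polygon} (and their generalizations behind Theorem~\ref{th:quiver braid group intro}), because $\sigma$ permutes the triangles and quadrilaterals of $\Delta$ that determine which of the three relation types (braid, commutation, or four-cycle) governs a given pair $(T_\gamma,T_{\gamma'})$, and hence yields a homomorphism $\Gamma\to\mathrm{Aut}(Br_\Delta)$.

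The compatibility of the two $\Gamma$-actions with the natural surjection $Br_\Delta\twoheadrightarrow\underline{Br}_\Delta\subset\mathrm{Aut}(\TT_\Delta)$ is immediate from the explicit formula \eqref{eq:Tij} (and its analogues for $\mathcal{B}_n,\mathcal{C}_n,\mathcal{D}_n$ and arbitrary surfaces): both sides of \eqref{eq:Tij} transform identically under the relabeling of indices induced by $\sigma$, so $(\sigma\cdot g)(\sigma\cdot t_\gamma)=\sigma\cdot(g\cdot t_\gamma)$ for all $g\in Br_\Delta$ and $\gamma\in\Delta$. Transporting along the monomial mutations $\mu_{\Delta',\Delta}$, which identify (up to conjugation) $\TT_\Delta$ with $\TT_\Sigma$ and $Br_\Delta$ with $Br_\Sigma$, yields canonical $\Gamma$-actions on $\TT_\Sigma$ and $Br_\Sigma$ together with the required compatibility.

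For faithfulness, suppose $\sigma\in\Gamma$ acts trivially on $\TT_\Delta$. The homomorphism $\iota_\Delta:\TT_\Delta\hookrightarrow\mathcal{A}_\Sigma$ is injective and sends distinct oriented edges $\gamma$ to distinct generators $x_\gamma$ of $\mathcal{A}_\Sigma$, so the equality $t_{\sigma(\gamma)}=t_\gamma$ forces $\sigma(\gamma)=\gamma$ as an oriented curve for every $\gamma\in\Delta$. Consequently $\sigma$ fixes every vertex of $\Delta$ (as an endpoint of edges) and every closed triangle of $\Delta$, and the standard fact that a homeomorphism of a connected surface fixing each face of a triangulation is isotopic to the identity gives $\sigma=\mathrm{id}$ in $\Gamma$. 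Faithfulness of the $\Gamma$-action on $Br_\Sigma$ then follows from its faithfulness on $\TT_\Sigma$ together with the compatibility above, since the image of $Br_\Sigma$ in $\mathrm{Aut}(\TT_\Sigma)$ is $\underline{Br}_\Sigma\cong Br_\Sigma$ (by the faithfulness conjecture where already proven, and in general by the definition of $Br_\Sigma$ via the groupoid $\mathbf{TSurf}^t_\Sigma$).

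The main obstacle I anticipate is the well-definedness step for $Br_\Delta$ when $\Gamma$ contains symmetries that reverse the orientation of some edge (sending $\gamma$ to $\overline\gamma$) or that swap the two clockwise triangles adjacent to a diagonal; in those cases one must verify that the signs and cyclic orderings appearing in the braid relations of Theorem~\ref{th:braid triangular polygon} cancel correctly, exploiting the symmetry $T_\gamma=T_{\overline\gamma}$ and the fact that $t_\gamma$ and $t_{\overline\gamma}$ are independent generators of $\TT_\Delta$. Once this case analysis is dispatched, and once one checks that the aforementioned isotopy argument applies uniformly whether $\Gamma$ acts by orientation-preserving or orientation-reversing maps (invoking the non-orientable setup of Section~\ref{subsec:non-orientable} if needed), the remainder of the proof is essentially formal.
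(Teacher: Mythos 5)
The paper does not spell out a standalone proof of this proposition: it obtains the $\Gamma$-action structurally, from the functoriality of $\Sigma\mapsto{\bf TSurf}^t_\Sigma$ (Corollary \ref{cor:Gamma-action}) applied to a $\Gamma$-invariant $\Delta$, which gives automorphisms of $Br_\Delta=Aut_{{\bf TSurf}^t_\Sigma}(\Delta)$, and then transports them to $\TT_\Delta$ via the triangular functor ${\bf F}$; compatibility and the semidirect-product description come from Lemma \ref{le:full automorphisms of Delta}. Your plan of checking relations directly on the presentations of $\TT_\Delta$ and $Br_\Delta$ is a legitimate alternative route, and your argument for $\TT_\Delta$ (the relation set \eqref{eq:triangle relations} is stable under both $\sigma$ and edge reversal) and for faithfulness on $\TT_\Delta$ (distinctness of the $x_\gamma$, plus the fact that a self-map of $\Sigma$ fixing all arcs of a triangulation is isotopic to the identity) is sound.

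The genuine gap is in the $Br_\Delta$ part, precisely at the point you flag as an ``anticipated obstacle'' and then do not resolve. When $\sigma$ is orientation-reversing (e.g.\ an admissible reflection, which the text immediately after the proposition explicitly treats as an instance), the assignment $T_\gamma\mapsto T_{\sigma(\gamma)}$ is \emph{not} a homomorphism of $Br_\Delta$: the defining relations of Theorem \ref{th:brgroup} (and already the cyclic relation $T_{ij}T_{jk}T_{ki}T_{ij}=T_{jk}T_{ki}T_{ij}T_{jk}$ for counter-clockwise triangles in Theorem \ref{th:braid triangular polygon}) are orientation-sensitive, and $\sigma$ sends the relation attached to a counter-clockwise triangle to a word that is not a relation for the resulting clockwise triangle. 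The correct action is $T_\gamma\mapsto T_{\sigma(\gamma)}^{-1}$, coming from the fact that $\sigma_*$ is a \emph{contravariant} automorphism of the groupoid when $\sigma$ reverses orientation (Corollary \ref{cor:Gamma-action}, Lemma \ref{le:anti-action on braid groups}, Theorem \ref{th:symmetries of n-gon}(a)); inverting reverses the order of products and restores the cyclic relations. Your proposed fix via $T_\gamma=T_{\overline\gamma}$ addresses only reversal of individual edges, not global orientation reversal, so it does not close the gap. Separately, your last step --- deducing faithfulness on $Br_\Sigma$ ``from'' faithfulness on $\TT_\Sigma$ and compatibility --- needs an extra ingredient: you must know that $\gamma\mapsto T_\gamma$ is injective on unoriented internal edges (the abelianization of $Br_\Delta$ does not separate them because of the braid relations; one should instead use that $T_\gamma$ acts nontrivially on $\TT_\Delta$ only on $t_\gamma,t_{\overline\gamma}$, so $T_{\sigma(\gamma)}^{\pm1}=T_\gamma$ forces $\sigma(\gamma)\in\{\gamma,\overline\gamma\}$).
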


For instance, $Br_{\Sigma_{2n}}=Br_{2n-2}$ has an inner automorphism of order $2$ which is induced by the rotation by $\pi$  (Theorem \ref{th:symmetries of n-gon}(a)) and $Br_{3n-2}$ has an inner automorphism of order $3$ which is induced by the rotation by $\frac{2\pi}{3}$ (Theorem \ref{th:symmetries of n-gon}(b)). Also,  $Br_{2n-2}$ has an outer automorphism of order $2$ which is induced by an admissible (i.e., preserving a triangulation) reflection of $\sigma$ (Theorem \ref{th:symmetries of n-gon}(a)). This agrees with the celebrated Dyer-Grossman theorem (\cite{DG}) asserting the only non-trivial outer automorphism $Br_n$, $n\ge 3$  (up to conjugation) is given by $T_i\mapsto T_i^{-1}$.

Similarly, the group $Br_{\Sigma_{n,1}}=Br_{D_n}$ has an inner automorphism $\sigma$ of order $n$
 (Theorem \ref{th:symmetries of n-gon}(c)) in case $n$ is odd. 
Likewise, the group $Br_{\Sigma_{2n,1}}=Br_{D_{2n}}$ has an outer automorphism $\sigma$ of order $2$ induced by an admissible reflection of $\Sigma_{2n,1}$.
 (Theorem \ref{th:symmetries of n-gon}(a)). 

Even though we are unaware of an analog of Dyer-Grossman theorem for $Br_{D_n}$, $n\ge 4$, the above observations would illustrate it as well.

Note that if $\sigma$ is an admissible reflection, then $\Sigma/\sigma$ is an ordinary marked surface $\underline \Sigma$ with boundary. For example, $\underline \Sigma_{2n}=\Sigma_{n+1}$, and $\underline 
 \Sigma_{2n,1}=\Sigma_{n+1}$, and if $\Sigma$ is a sphere with $n$ punctures on the equator and $\sigma$ is the reflection about the equatorial plane, then $\underline \Sigma=\Sigma_n$. Then the coinvariant algebra $({\mathcal A}_\Sigma)_\sigma$ of an admissible reflection $\sigma$ is naturally isomorphic to ${\mathcal A}_{\underline \Sigma}$ (see Remark \ref{rem:coinvariant orbifold} (b)).


We conclude the introduction with a discussion of the behavior of relevant for the quotient map $f_\Gamma:\Sigma\to \underline \Sigma=\Sigma/\Gamma$.

Clearly, by Proposition \ref{pr:invariant triangulations intro}, the $\Gamma$-invariant subgroup $Br_\Sigma^\Gamma$ of $Br_\Sigma$ naturally contains a relative braid group $Br_\Sigma^{f_\Gamma}$. We expect that the opposite is also true.

\begin{conjecture} In the assumptions of Proposition \ref{pr:invariant triangulations intro},
the relative braid group 
$Br_\Sigma^{f_\Gamma}$  is the $\Gamma$-fixed subgroup of $Br_\Sigma$, i.e., $Br_\Sigma^{f_\Gamma}=Br_\Sigma^\Gamma$. 
    
\end{conjecture}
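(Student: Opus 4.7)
The plan is to establish the two inclusions separately. The inclusion $Br_\Sigma^{f_\Gamma} \subseteq Br_\Sigma^\Gamma$ is essentially definitional: by the construction of the relative groupoid ${\bf TSurf}_\Sigma^{f_\Gamma}$ (Section \ref{sec:taggedtri}), its objects are those triangulations of $\Sigma$ that descend through $f_\Gamma$, i.e., the $\Gamma$-invariant triangulations, which exist by admissibility (Proposition \ref{pr:invariant triangulations intro}); its morphisms are sequences of mutations that descend likewise, hence are $\Gamma$-equivariant composite flips supported on full $\Gamma$-orbits of internal edges. Fixing a $\Gamma$-invariant triangulation $\Delta_0$ as a basepoint, any $g \in Br_\Sigma^{f_\Gamma}$ then visibly commutes with the $\Gamma$-action on $Br_\Sigma = Br_{\Delta_0}$ induced by $\sigma(T_\gamma) = T_{\sigma(\gamma)}$, so $g\in Br_\Sigma^\Gamma$.

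For the reverse inclusion $Br_\Sigma^\Gamma \subseteq Br_\Sigma^{f_\Gamma}$, fix the same $\Gamma$-invariant basepoint $\Delta_0$. Given $g \in Br_\Sigma^\Gamma$, by Theorem \ref{th:braid generation} we may write $g = T_{\gamma_N} \cdots T_{\gamma_1}$ along a closed flip path $\Delta_0 \to \Delta_1 \to \cdots \to \Delta_N = \Delta_0$. The goal is to produce a word equivalent to $g$ modulo the braid relations of Theorem \ref{th:brgroup} that only uses the $\Gamma$-orbit products $\hat T_{[\gamma]} := \prod_{\sigma \in \Gamma/\Gamma_\gamma} T_{\sigma(\gamma)}$ (in any compatible order, well-defined up to the commutation relations of Theorem \ref{th:brgroup} since distinct representatives of disjoint orbits commute), because each such $\hat T_{[\gamma]}$ is the lift of a single mutation in the quotient triangulation $\Delta_0/\Gamma$ of $\underline\Sigma$, and the $\hat T_{[\gamma]}$ generate $Br_\Sigma^{f_\Gamma}$ by Theorem \ref{th:braid generation} applied to $\underline\Sigma$.

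A natural approach begins with the $\Gamma$-average $\tilde g := \prod_{\sigma \in \Gamma} \sigma(g)$, which by $\Gamma$-invariance equals $g^{|\Gamma|}$ in $Br_\Sigma$, and whose tautological expression as a word is $\prod_{\sigma}(T_{\sigma(\gamma_N)}\cdots T_{\sigma(\gamma_1)})$. One then argues by induction on $N$ and the $\Gamma$-orbit structure of the $\gamma_i$, using the commutation, triangle, and Dynkin-type braid relations of Theorem \ref{th:brgroup}, that this word can be rewritten as a concatenation of orbit-flips $\hat T_{[\gamma_i]}$ traversing only $\Gamma$-invariant triangulations, showing $\tilde g = g^{|\Gamma|} \in Br_\Sigma^{f_\Gamma}$. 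Extracting $g$ itself from $g^{|\Gamma|}$ inside $Br_\Sigma$ then uses torsion-freeness together with the fact that $Br_\Sigma^{f_\Gamma}$ is a root-closed subgroup (known in the Dynkin cases of Theorems \ref{th:braid triangular polygon}--\ref{th:braid triangular once punctured polygon} and conjectural in general).

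The main obstacle is precisely the equivariant rewriting in the second step: when a $\Gamma$-orbit of edges meets an individual $\gamma_i$ in a non-disjoint configuration (two orbit-elements share a triangle, or $\Gamma$ fixes $\gamma_i$ pointwise), the naive symmetrization fails, and one must invoke the triangular braid relations together with the presentation of the coinvariant algebra $\mathcal{A}_{\underline\Sigma}$ in terms of special/$0$-punctures (as in Theorem \ref{th:quotient B_n intro}) to absorb the contested flips into the orbit-generators $\hat T_{[\gamma]}$. Carrying this out uniformly across all admissible $\Gamma$-actions --- orientation-preserving rotations, admissible reflections, and their products --- is the principal difficulty, which is why the statement remains conjectural.
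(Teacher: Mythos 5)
The statement you are addressing is left as an open conjecture in the paper: no proof is given there, and the only thing the authors record is the easy inclusion $Br_\Sigma^{f_\Gamma}\subseteq Br_\Sigma^\Gamma$ (stated just before the conjecture as a consequence of Proposition \ref{pr:invariant triangulations intro}). Your first paragraph reproduces essentially that observation, with the caveat that for an admissible reflection the induced action is $T_\gamma\mapsto T_{\sigma(\gamma)}^{-1}$ rather than $T_\gamma\mapsto T_{\sigma(\gamma)}$ (Lemma \ref{le:anti-action on braid groups}), so even this direction needs a separate check in the orientation-reversing case.

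For the reverse inclusion your plan has three gaps, each of which is at least as hard as the conjecture itself. First, the central rewriting step --- that the word $\prod_{\sigma}(T_{\sigma(\gamma_N)}\cdots T_{\sigma(\gamma_1)})$ representing $g^{|\Gamma|}$ can be reorganized, using only the relations of Theorem \ref{th:brgroup}, into a product of orbit-flips $\hat T_{[\gamma]}$ along $\Gamma$-invariant triangulations --- is announced by induction but never carried out; it is precisely where the difficulty lives, since two arcs in one $\Gamma$-orbit can be sides of a common triangle, in which case the corresponding generators satisfy a braid rather than a commutation relation and the symmetrized word does not factor orbitwise. Second, even granting $g^{|\Gamma|}\in Br_\Sigma^{f_\Gamma}$, descending to $g$ needs $Br_\Sigma^{f_\Gamma}$ to be root-closed in $Br_\Sigma$, which you concede is only known in a few finite Dynkin cases; moreover torsion-freeness of $Br_\Sigma$ is itself only conjectural in the paper, so this step rests on two unproven inputs. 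Third, the claim that the orbit-products generate $Br_\Sigma^{f_\Gamma}$ does not follow from Theorem \ref{th:braid generation} applied to $\underline\Sigma$: that theorem gives generators of $Br_{\underline\Sigma}$, while $Br_\Sigma^{f_\Gamma}=Aut_{{\bf TSurf}_\Sigma^{f_\Gamma}}(\Delta_0)$ is a different group, related to $Br_{\underline\Sigma}$ only through the comparison homomorphism $f_*$, whose injectivity is itself Conjecture \ref{conj:injective relative braid homomorphism}; and the two-cycle generation criterion (Theorem \ref{th:two-cycle generation}) has not been verified for the relative groupoid. In short, the proposal is a sensible heuristic outline, but none of its three load-bearing steps is established, as you yourself acknowledge in the final sentence.
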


\subsection*{Acknowledgments}
Part of this work was done during visits to Heidelberg University, Max Planck Institute for Mathematics in the Sciences, IHES (AB and VR), and University of Geneva (AB). We thank Anna Wienhard, Eigen Rogozinnikov, Maxim Kontsevich, and Anton Alekseev for fruitful discussions and hospitality. MH would also like to express gratitude to Yu Qiu for insightful discussions.


\section{Notation and basic results on noncommutative surfaces}

\subsection{Some notation on surfaces and the category ${\bf Surf}$} 

\label{subsec:Some notation on surfaces}

In this paper, a {\it marked surface} $\Sigma$ 
is an oriented surface (i.e., a smooth not necessarily connected compact $2$-dimensional manifold) with a non-empty finite set $I=I(\Sigma)=I_b\sqcup I_p$ of marked points with 
a subset $I_b=I_b(\Sigma)\subset I$ of marked boundary points, the set $I_p=I_p(\Sigma)$ of internal marked points, which come with the order map $p\mapsto |p|\in \Z_{\ge 0}$. We refer to all such $p$ with $|p|=1$ as {\it ordinary}
punctures, those with $|p|\ge 2$ as {\it special}  punctures and those with $|p|=0$ as {\it zero} punctures. We require that any connected boundary component contains at least one marked point and any closed connected component of $\Sigma$ has at least one ordinary puncture. Sometimes we will use notation $I_{p,k}:=\{p\in I_p:|p|=k\}$
 so that $I_p=\bigsqcup\limits_{k\ge 0} I_{p,k}$. Points of
 $I_{p,k}$, $k\ge 2$ are called orbifold points of order $k$ in the literature and points of $I_{p,0}$ are known as orbifold points of order $\frac{1}{2}$ (see. e.g.,  \cite{CS,FST3})

 A {\it morphism} $f:\Sigma\to \Sigma'$ of marked surfaces is a smooth map of underlying surfaces with finite fibers such that (we abbreviate $I:=I(\Sigma)$, $I':=I(\Sigma')$): 
 
$\bullet$ 
$f(I_b)\subset I'_b\sqcup I'_{p,1}$, $f(I_{p,1})\subset I'_{p,1}\sqcup I'_{p,0}$,  $f(I_{p,\ge 2})\subset I'_{p,\ge 2}$, $f(I_{p,0})\subset I'_{p,0}$, 
 $f^{-1}(I'_{p,1})\subset  I_{p,1}$.

We abbreviate $I^f:=(f^{-1}(I'_{p,>1})\setminus I_{p,>1})\cup\{p\in I_{p,\ge 2}\mid |p|\neq |f(p)|\}$. 


$\bullet$ For each point $i\in \Sigma\setminus I^f$, there is a neighborhood $\mathcal O_i$ of $i$ in $\Sigma$ such that the restriction of $f$ to $\mathcal O_i$ is injective (if $i\in \partial \Sigma$ is a boundary point, then $\mathcal O_i$ is a “half-neighborhood”).

$\bullet$ For each $p\in I^f$, there is a neighborhood $\mathcal O_p$ of $p$ in $\Sigma$ such that the restriction of $f$ to $\mathcal O_p$ is an $\frac{|f(p)|}{|p|}$-fold cover of $f(\mathcal O_p)$ ramified at $f(p)$.

We denote by ${\bf Surf}$ the category of marked surfaces with the above morphisms.

 \begin{figure}[ht]

\centerline{\includegraphics[width=12cm]{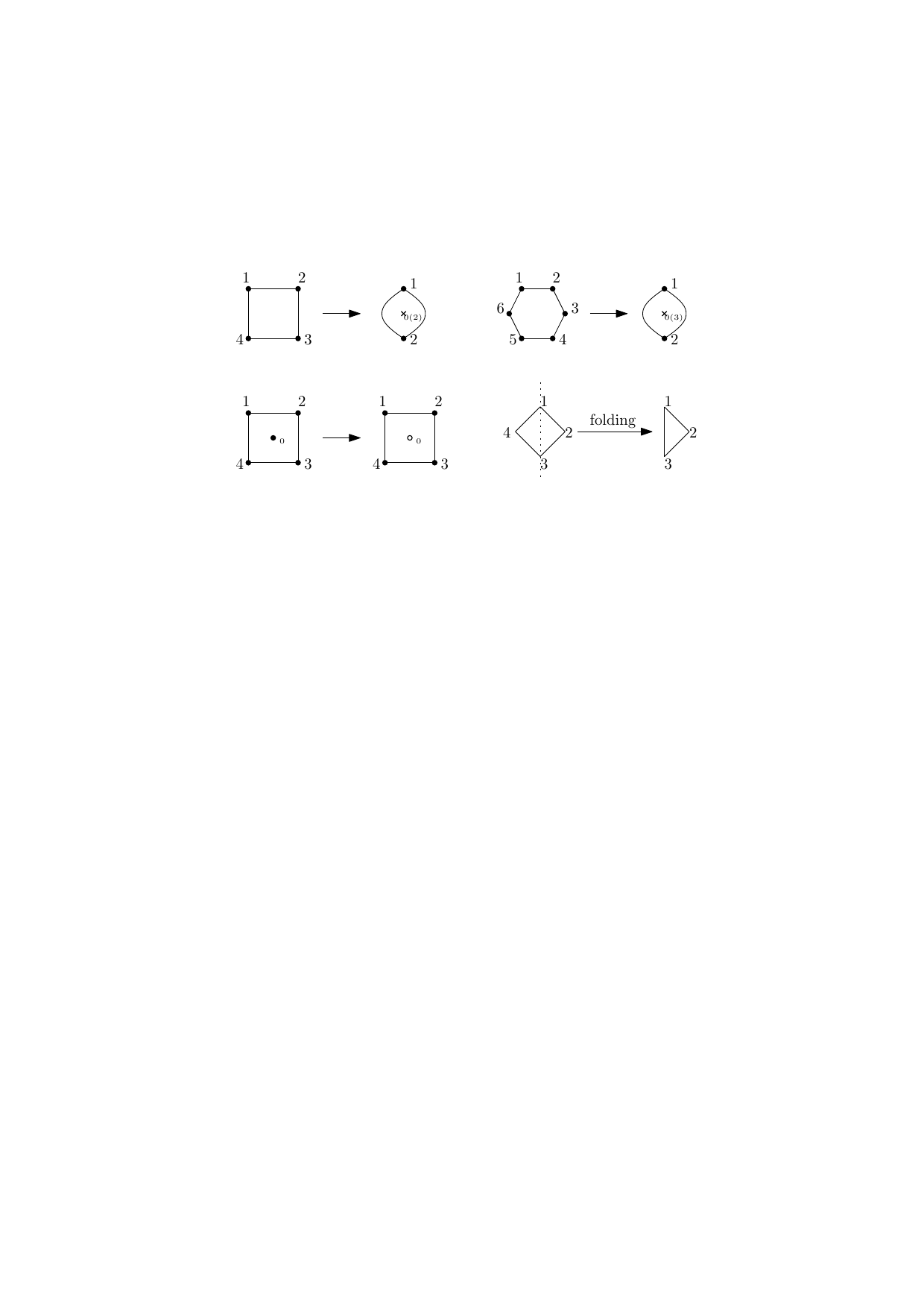}}

\caption{\rm Some examples of morphisms in {\bf Surf}}

\end{figure}

It is immediate that any morphism $\Sigma\to \Sigma$ fixing $I(\Sigma)$ is identity (up to homotopy). This implies that any group $\Gamma$ of automorphisms of $\Sigma$ embeds into the group of permutations of $I(\Sigma)$, thus being of finite order. 

It is well-known that 
for any finite group $\Gamma$ of automorphisms of an oriented surface $\Sigma$, the quotient space $\Sigma/\Gamma$ is also a surface possibly with an orbifold structure. Any orbifold surface can be obtained in this way with a cyclic group $\Gamma$.
(in particular, if $\Gamma$ fixes a point of $\Sigma$, it becomes a subgroup of $O_2(\mathbb{R})$, i.e., $\Gamma$ is cyclic or dihedral).

If $\Gamma$ is such a group with the property permuting $I_b(\Sigma)$ and $I_p(\Sigma)$ in an order-preserving way, then $\Sigma/\Gamma$ is an object of ${\bf Surf}$ and the natural quotient map $f_\Gamma:\Sigma\twoheadrightarrow \Sigma/\Gamma$ is a morphism in ${\bf Surf}$. More precisely, 
$I(\Sigma/\Gamma)=f_\Gamma(I(\Sigma))\sqcup \Sigma^{orb,\Gamma}$ where $\Sigma^{orb,\Gamma}\subset \Sigma/\Gamma$ is the set of all orbifold points in $\Sigma/\Gamma\setminus  I(\Sigma/\Gamma)$, so that the order of a point  $p\in \Sigma^{orb,\Gamma}$ is its natural orbifold order, which is the cardinality of the stabilizer of  $p$ in $\Gamma$ ($=|\Gamma|/|\Gamma\cdot p|$). Also, for any special puncture $p$ in $\Sigma$, the order of $f_\Gamma(p)$  is $|p|\cdot |\Gamma\cdot p|$.

In this paper, all curves connect a marked point in $I_b\cup I_{p,1}$ to another marked point in $I_b\cup I_{p,0}\cup I_{p,1}$.
They do not cross the boundary of $\Sigma$ (except at their endpoints) and are assumed to be directed. All curves are considered up to isotopy. 
Denote by $\Gamma(\Sigma)$ the set of all curves in $\Sigma$.

We denote by $\overline \gamma$ the oppositely directed curve of $\gamma$. Denote by $s(\gamma)$ and $t(\gamma)$ the starting point and ending point, respectively, of $\gamma$. We say that a pair of curves $(\beta,\beta')$ is \emph{composable} if $t(\beta)=s(\beta')$ is not a $0$-puncture.

An \emph{arc} $\gamma$ in $\Sigma$ is a simple curve (up to isotopy with respect to $I$). A \emph{boundary arc} in $\Sigma$ is an arc that lies in the boundary of $\Sigma$. A \emph{special loop} is an arc $\gamma$ that cuts out a monogon around a special puncture. A \emph{pending arc} is an arc incident to a $0$-puncture.

\begin{figure}[ht]

\centerline{\includegraphics[width=3.5cm]{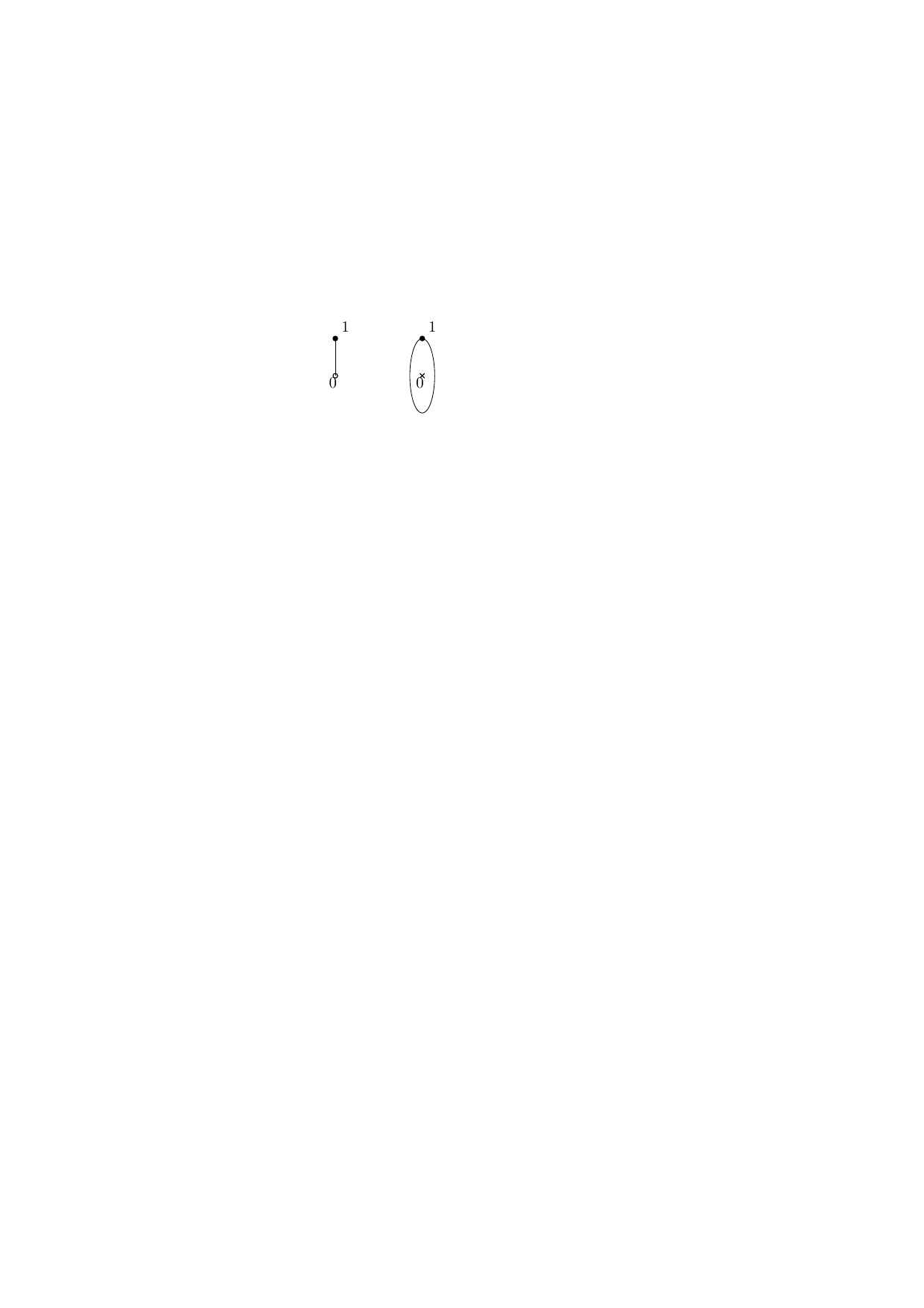}}

\caption{\rm Pending arc and special loop}

\end{figure}

Let $J$ be any non-empty subset of $\bigsqcup I_{p,\geq 2}$. We say that a curve $\gamma$ is $J$-{\it admissible} if after removing any $j\in J\setminus\{s(\gamma),t(\gamma)\}$, the number of self-intersection of $\gamma$ does not change (any curve is $\emptyset$-admissible). Denote by $[\Gamma(\Sigma)]$ the set of $\bigsqcup I_{p,\geq 2}$-admissible curves in $\Sigma$. In particular, we have $\Gamma(\Sigma)=[\Gamma(\Sigma)]$ if and only if $\bigsqcup I_{p,\geq 2}=\emptyset$, i.e., $\Sigma$ contains no special punctures.

For any morphism $f: \Sigma\to \Sigma'$ and $i\in I'(\Sigma)$, we say that $f$ is a \emph{local isomorphism} at $i$ if there exists a local neighborhood $U'$ of $i$ and a dense (not necessarily connected) subset $U$ of $f^{-1}(U')$ such that the restriction of $f$ to $U$ is a bijection $f\mid_{U}:U\simeq U'$.

Below are some examples of local isomorphisms that are, in fact, boundary-gluing maps.

\begin{figure}[ht]
\includegraphics{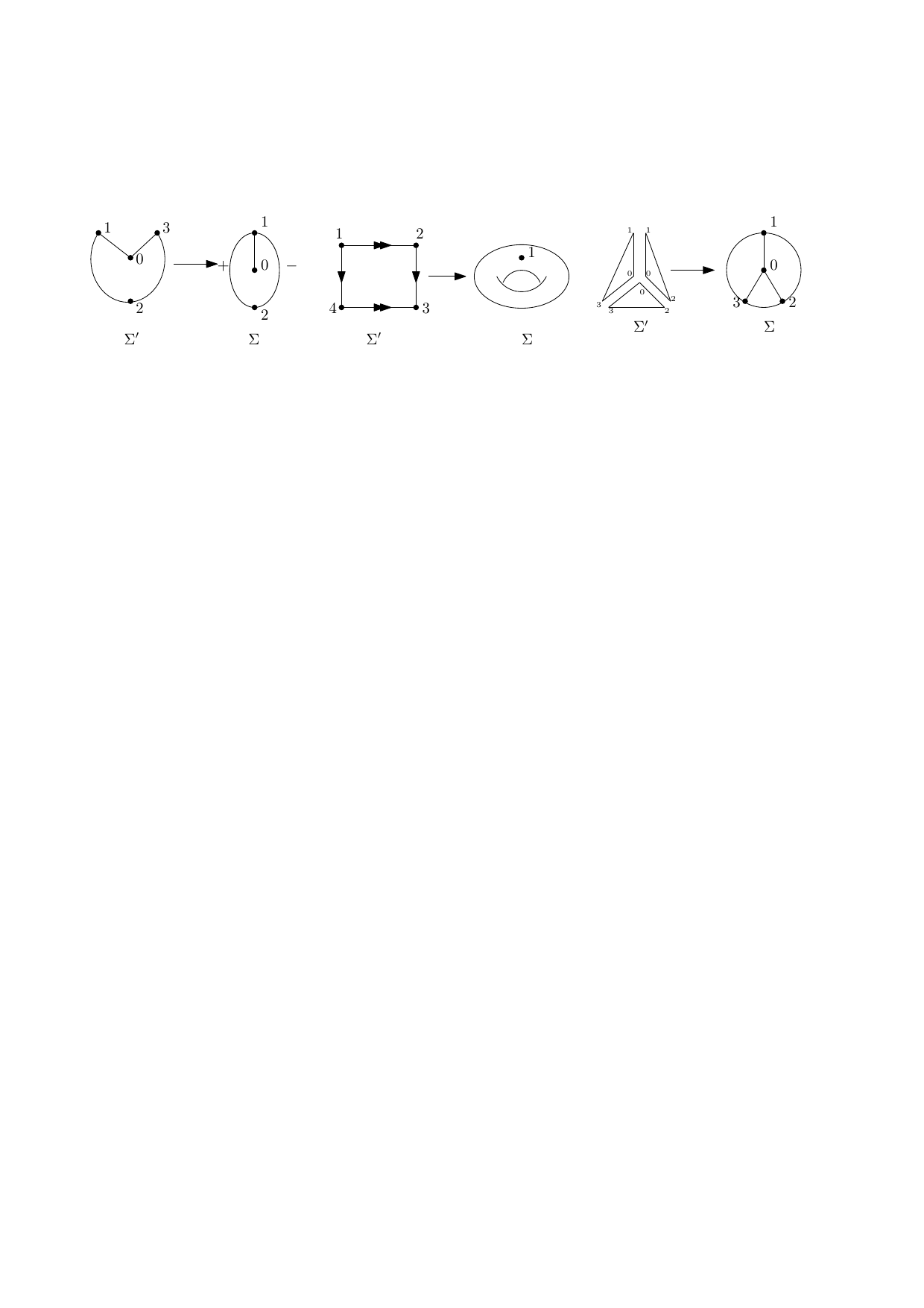}
\caption{Some examples of local isomorphisms}
\label{Fig-e1}
\end{figure}

\begin{figure}[ht]
\includegraphics{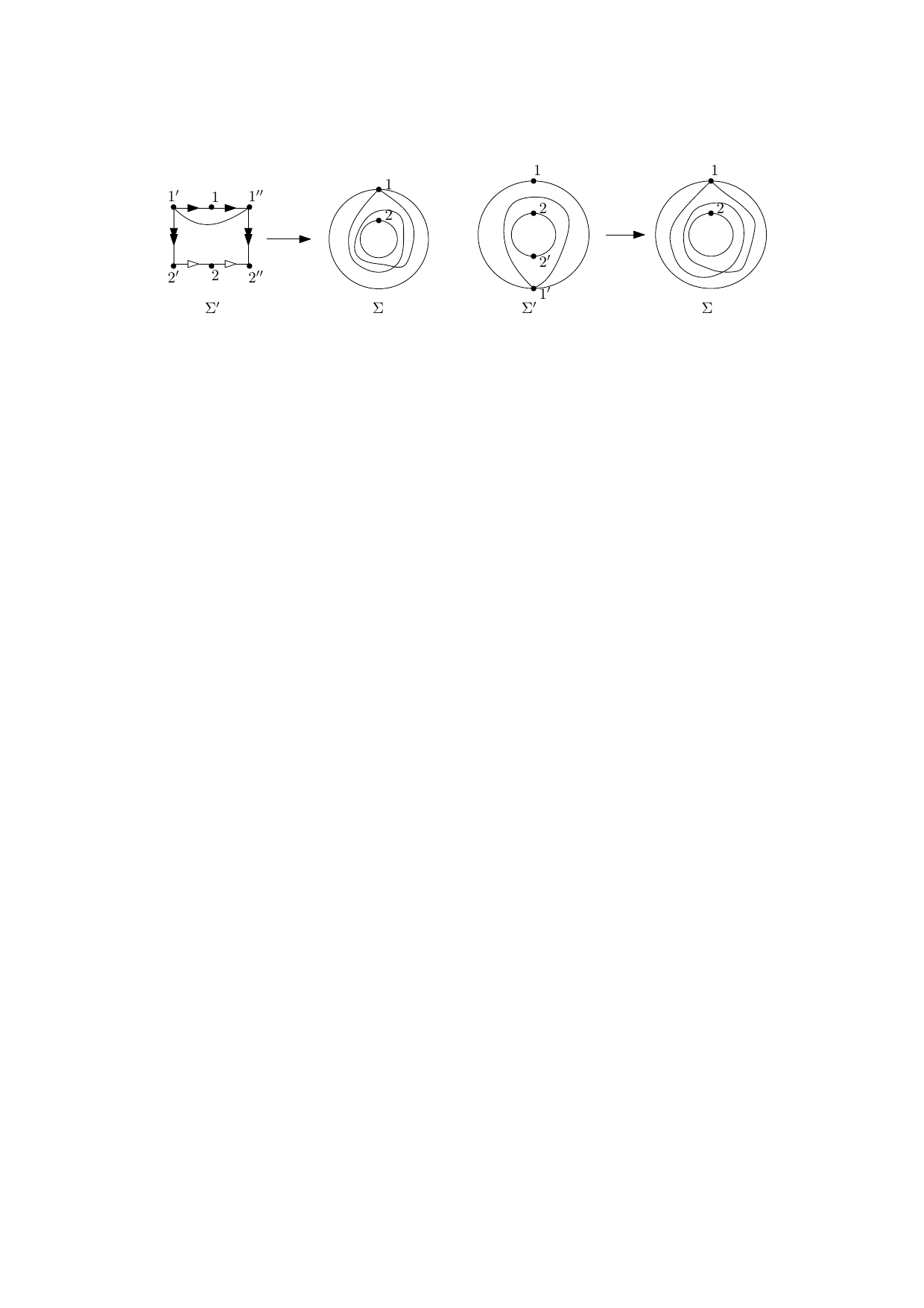}
\caption{Some examples of local isomorphisms}
\end{figure}

Denote by $\Sigma_n$ the disk with $n$ marked points labeled $1,2,\cdots, n$ clockwise and by $(i,j)$ the arc connecting $i$ and $j$. Set $[n]=\{1,2,\cdots,n\}$ and 
$$i^+=\begin{cases}
    i+1, &  \text{ if $i\in [n]\setminus \{n\}$,}\\
    1, &  \text{ if $i=n$,}
\end{cases}
\hspace{5mm}\text{and} \hspace{5mm}
i^-=\begin{cases}
    i-1, & \text{ if $i\in [n]\setminus \{1\}$,}\\
    n, & \text{ if $i=1$}.
\end{cases}$$

\begin{definition} \cite[Definition 3.11]{BR}
We say that a sequence of curves $P=(\gamma_{1}, ..., \gamma_{n})$ is an $n$-gon in $\Sigma$ if there exists a morphism $f:\Sigma_n\rightarrow \Sigma$ such that $f(i)\in I_b(\Sigma)\cup I_{p,1}(\Sigma)$ and $f(i,i^+) =\gamma_{i}$ for all $i\in [n]$. In particular, we refer to $P$ as a \emph{bigon} if $n=2$, a \emph{triangle} if $n=3$, and a \emph{quadrilateral} if $4$. 
\end{definition}

\subsection{Noncommutative surfaces and their sector versions}
\label{subsec:From surfaces to their noncommutative versions}

\begin{definition} 
\label{def:ASigma}
For any marked surface $\Sigma\in {\bf Surf}$ define the algebra
$\mathcal A_\Sigma$ over the field $\kk_\Sigma:=\mathbb{Q}(\cos(\frac{\pi}{|p|}),p\in \bigsqcup I_{p,\geq 2})$ to be generated by $x_\gamma$, $\gamma\in \Gamma(\Sigma)$
and $x_\gamma^{-1}$, $\gamma\in [\Gamma(\Sigma)]$
subject to

\begin{enumerate}[$(1)$]
\item (Triangle relations) 
$x_{ \alpha_{ 1}}x^{-1}_{\overline\alpha_{ 2}} x_{ \alpha_{ 3}}=x_{\overline\alpha_{ 3}}x^{-1}_{ \alpha_{ 2}} x_{\overline\alpha_{ 1}}$ for any cyclic triangle $( \alpha_{ 1}, \alpha_{ 2}, \alpha_{ 3})$ in $\Sigma$.

\item (Monogon relations) $x_{\overline\ell}=x_{\ell}$ for each special loop $\ell$.

\item (Zero puncture relations)  $x_{\ell}=x_{\gamma}x_{\overline\gamma}$, if $\ell$ is a loop encloses a pending arc $\gamma$ with $s(\gamma)=s(\ell)$. In particular, $x_{\ell}=x_{\overline\ell}$ for any loop around a $0$-puncture.

\item (Ptolemy relations) $x_{ \alpha'}=x_{\overline\alpha_{ 1}}x^{-1}_{\overline\alpha}x_{ \alpha_{ 3}}+x_{ \alpha_{ 2}}x^{-1}_{ \alpha}x_{\overline\alpha_{ 4}}$
for any cyclic quadrilateral $(\alpha_{1}, \alpha_{2}, \alpha_{3}, \alpha_{4})$
with diagonals $\alpha$ and $\alpha'$ such that $s(\alpha)=s(\alpha_1), s(\alpha')=t(\alpha_1)$. 

\item (Bigon special puncture relations) $x_{\alpha'}=x_{\overline\alpha_{1}}x^{-1}_{\alpha}x_{ \alpha_{ 1}}+2\cos(\frac{\pi}{|p|})x_{\overline\alpha_{ 1}}
x^{-1}_{\alpha}x_{\overline\alpha_2}+x_{\alpha_{ 2}}x^{-1}_{\alpha}x_{\overline\alpha_{2}}$ for any bigon $(\alpha_1,\alpha_2)$ around a special puncture $p$, where $\alpha$ is the loop around $p$ such that $(\alpha_1,\alpha_2,\alpha)$ is a triangle, and $\alpha'$ is the loop around $p$ such that $(\alpha',\alpha_2,\alpha_1)$ is a triangle.

\item (Bigon $0$-puncture relations) $x_{\alpha'}=(x_{\alpha_2}+x_{\overline{\alpha_1}})x_{\overline\alpha}^{-1}$ and $x_{\overline{\alpha'}}=x_{\alpha}^{-1}(x_{\overline{\alpha_2}}+x_{{\alpha_1}})$ for any bigon $(\alpha_1,\alpha_2)$ around a $0$-puncture $p$, where $\alpha$ is the pending arc with $s(\alpha)=s(\alpha_1),t(\alpha)=p$, and $\alpha'$ is the pending arc such that $s(\alpha')=s(\alpha_2),t(\alpha')=p$.
\end{enumerate}
\end{definition}

\begin{remark} This algebra
is a non-commutative version of the generalized cluster algebra defined by Chekhov and Shapiro in \cite[Section 2.1]{ChS}.
\end{remark}  

Following \cite{BR}, we refer to ${\mathcal A}_\Sigma$ as a noncommutative surface.

\begin{theorem} 
\label{th:functoriality nc-surface} For any marked surface $\Sigma$,

$(a)$ 
If $I_{p,0}(\Sigma)=\emptyset$, then the algebra ${\mathcal A}_\Sigma$ is graded by setting $deg~ x_\gamma=1$ for any $\gamma$ and admits a unique anti-involution $\overline{\cdot}$ such that $\overline x_\gamma=x_{\overline \gamma}$.

$(b)$ The assignments $\Sigma\mapsto {\mathcal A}_\Sigma$ are almost functorial in the sense that
any morphism $f:\Sigma\to \Sigma'$ in ${\bf Surf}$ induces
a (bar-equivariant) homomorphism of $\mathbb K_{\Sigma'}$-algebras $f_*:\mathbb K_{\Sigma'}\otimes _{\mathbb K_\Sigma}{\mathcal A}_\Sigma^f\to {\mathcal A}_{\Sigma'}$, where ${\mathcal A}_\Sigma^f$ is the subalgebra of  ${\mathcal A}_\Sigma$ generated by $x_\gamma$ for all $\gamma\in \Gamma(\Sigma)$ and $x_\gamma^{-1}$ for all $\gamma$ such that $f(\gamma)$ is  $\bigsqcup I_{p,\geq 2}(\Sigma')$-admissible.
\end{theorem}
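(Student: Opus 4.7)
\medskip

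\noindent \textbf{Proof proposal.}

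\emph{Part (a).} The plan is to observe that every defining relation in Definition~\ref{def:ASigma} is homogeneous of degree $1$ when we assign $\deg x_\gamma = 1$ and $\deg x_\gamma^{-1} = -1$: indeed, in $(1)$ both $x_{\alpha_1}x_{\overline\alpha_2}^{-1}x_{\alpha_3}$ and $x_{\overline\alpha_3}x_{\alpha_2}^{-1}x_{\overline\alpha_1}$ have degree $1-1+1=1$; in $(2)$ both sides have degree $1$; in $(4)$ the LHS has degree $1$ while each summand on the RHS is $1-1+1=1$; likewise for $(5)$. Relations $(3)$ and $(6)$ involve $0$-punctures and are vacuous under the hypothesis $I_{p,0}(\Sigma)=\emptyset$, so the grading is well-defined. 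For the anti-involution, I would define a $\kk_\Sigma$-linear anti-homomorphism $\overline{\cdot}$ on the free algebra by $x_\gamma\mapsto x_{\overline\gamma}$, $x_\gamma^{-1}\mapsto x_{\overline\gamma}^{-1}$, and verify each defining relation is preserved: applying $\overline{\cdot}$ to the LHS of $(1)$ directly produces its RHS (and vice versa); $(2)$ becomes the identical relation for $\overline\ell$; $(4)$ becomes a Ptolemy relation for the reverse-oriented quadrilateral $(\overline\alpha_4,\overline\alpha_3,\overline\alpha_2,\overline\alpha_1)$, which is again an instance of $(4)$; $(5)$ is preserved similarly. Hence $\overline{\cdot}$ descends to ${\mathcal A}_\Sigma$. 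Its involutive character and uniqueness follow from its definition on generators.

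\emph{Part (b).} I would define $f_*$ on generators by $f_*(x_\gamma)=x_{f(\gamma)}$ for all $\gamma\in \Gamma(\Sigma)$, and $f_*(x_\gamma^{-1})=x_{f(\gamma)}^{-1}$ whenever $\gamma\in [\Gamma(\Sigma)]$ satisfies $f(\gamma)\in [\Gamma(\Sigma')]$, i.e. on exactly the inverse generators included in ${\mathcal A}_\Sigma^f$. Since $|f(p)|/|p|\in \Z_{>0}$ for each $p\in I_{p,\geq 2}(\Sigma)$, Chebyshev-polynomial evaluation gives $\cos(\pi/|p|)=\cos((|f(p)|/|p|)\pi/|f(p)|)\in \mathbb Q(\cos(\pi/|f(p)|))$, so $\kk_\Sigma\subset \kk_{\Sigma'}$ and the extension of scalars is legitimate. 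To show $f_*$ descends to an algebra homomorphism, I would verify relation by relation that each defining relation of ${\mathcal A}_\Sigma^f$ is sent to a true identity in ${\mathcal A}_{\Sigma'}$. When $f$ is a local isomorphism near the support of a given relation, cyclic $n$-gons, monogons and bigons in $\Sigma$ map to cyclic $n$-gons, monogons and bigons of the same combinatorial type in $\Sigma'$, and the corresponding relation is an instance of the same defining relation of ${\mathcal A}_{\Sigma'}$. Bar-equivariance is immediate from $f(\overline\gamma)=\overline{f(\gamma)}$.

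\emph{Main obstacle.} The hardest cases are those in which $f$ fails to be a local isomorphism on the support of the relation, i.e.~boundary gluings and branched covers. When $f$ glues boundary arcs, the images of certain triangles or quadrilaterals may degenerate (two sides collapse to the same curve or to a trivial loop); one must then check that the resulting identity in ${\mathcal A}_{\Sigma'}$ is either automatic or a consequence of monogon, triangle and Ptolemy relations there. The most delicate situation is when $f$ is an $n$-fold cover at a special puncture $p$ with $|f(p)|=n|p|$: the bigon relation $(5)$ in $\Sigma$ carries the coefficient $2\cos(\pi/|p|)=2\cos(n\pi/|f(p)|)$, which differs from the elementary coefficient $2\cos(\pi/|f(p)|)$ controlling $(5)$ in ${\mathcal A}_{\Sigma'}$. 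Here one has to derive the image identity by iterating the elementary bigon and Ptolemy relations around $f(p)$ $n$ times, which I expect ultimately reduces to a Chebyshev-polynomial identity in the variable $2\cos(\pi/|f(p)|)$. Finally, the restriction to ${\mathcal A}_\Sigma^f$ (excluding $x_\gamma^{-1}$ when $f(\gamma)\notin [\Gamma(\Sigma')]$) is precisely what guarantees that every target expression appearing in these verifications is well-defined in ${\mathcal A}_{\Sigma'}$.
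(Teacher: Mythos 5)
Your part (a) is fine and matches the paper, which simply observes that all defining relations are degree-homogeneous and bar-stable.

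Part (b), however, has a genuine gap, and it starts with the very first line of your construction: you set $f_*(x_\gamma)=x_{f(\gamma)}$ for \emph{all} $\gamma\in\Gamma(\Sigma)$, but ${\mathcal A}_\Sigma^f$ contains $x_\gamma$ for every curve $\gamma$, including those for which $f(\gamma)$ is not a curve of $\Sigma'$ at all (e.g.\ an arc crossing the fold line of a boundary-identifying or reflecting morphism, cf.\ Figure \ref{4to31}), and those for which $f(\gamma)$ is a closed curve winding $k$ times around a special puncture $\underline p$. In the latter case the correct image is not "$x_{f(\gamma)}$" but the scalar multiple $2\cos(\tfrac{2k\pi}{|\underline p|})\,x_{\underline\gamma}$ of the special loop; this Chebyshev identity is exactly what you defer as an "expectation," and it is the crux of the branched-cover case (Lemma \ref{lem:map} in the paper, proved by calculation). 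Without it your map is simply not defined on part of ${\mathcal A}_\Sigma^f$. A second, structural problem is that ${\mathcal A}_\Sigma^f$ is a \emph{subalgebra} of ${\mathcal A}_\Sigma$, not an algebra given by generators and relations: checking that the defining relations of ${\mathcal A}_\Sigma$ map to identities does not yield a well-defined homomorphism on a subalgebra, since the generators of ${\mathcal A}_\Sigma^f$ may satisfy further relations in ${\mathcal A}_\Sigma$ derived through the excluded inverses.

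The paper's proof avoids both problems by an entirely different route: it chooses a single $f$-compatible triangulation $\Delta$ (Lemma \ref{le:f-admissible triangulations}), defines $\hat f_*$ only on the Laurent generators $x_\gamma^{\pm1}$, $\gamma\in\Delta$ (with the explicit $2\cos(\tfrac{2k\pi}{|\underline p|})$ coefficients of Lemma \ref{lem:map} for arcs inside a preimage polygon of a special loop), and then extends to all of ${\mathcal A}_\Sigma^f$ using the localization structure ${\mathcal A}_\Sigma={\mathcal A}_\Delta[{\bf S}^{-1}]$ from Theorem \ref{th:iotaDelta}, the semifir extension Lemma \ref{lem:lift}, and the Laurent expansion Theorem \ref{thm:laurent} to guarantee that the images of the inverted elements are nonzero in ${\mathcal Frac}({\mathcal A}_{\Sigma'})$. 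If you want to salvage a relation-by-relation argument, you would first need a presentation of ${\mathcal A}_\Sigma^f$ and a proof of the Chebyshev identities for images of winding curves; as written, the hardest cases you correctly identify as the "main obstacle" are precisely the ones left unproved.
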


We prove Theorem \ref{th:functoriality nc-surface} in Section \ref{subsec:Proof of Theorem functoriality nc-surface}.

\begin{example}
    
\label{ex:A_n} 
In particular, this implies (cf. \cite[Section 3]{BR})

$(a)$ If $\Sigma=\Sigma_3$, the unpunctured disk with three marked points $I=\{i,j,k\}$, then ${\mathcal A}_\Sigma={\mathcal A}_3$ is generated by $x_{ij}$, $i,j\in I$ are distinct subject to the triangle relation 
 $$T_i^{jk}=T_i^{kj}\ ,$$ 
 where $T_i^{jk}=x_{ji}^{-1}x_{jk}x_{ik}^{-1}$ is the {\it noncommutative angle} at the vertex $i$
 (i.e., the noncommutative angles depend only on the vertex)
 \begin{figure}[ht]

\centerline{\includegraphics{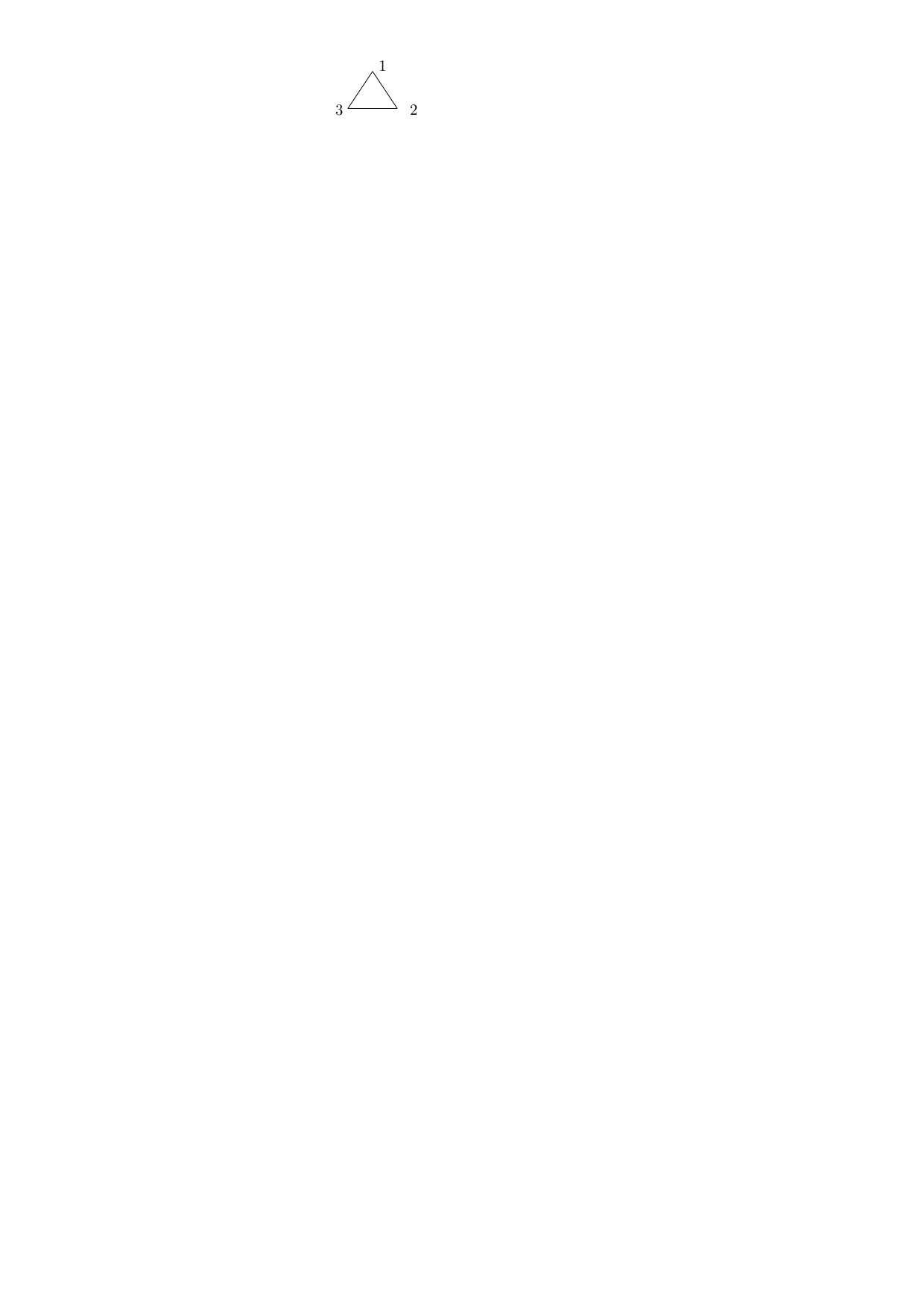}}
\end{figure}

$(b)$ If $\Sigma=\Sigma_4$, the unpunctured disk with $4$ marked points $I=\{1,2,3,4\}$ with diagonals $(13)$ and $(24)$, then ${\mathcal A}_\Sigma={\mathcal A}_4$ 
generated by $x_{ij}$, $i,j\in I$ are distinct subject to the triangle relations 
 $T_i^{jk}=T_i^{kj}$
 for any distinct $i,j,k\in I$ and
$T_1^{24}=T_1^{23}+T_1^{34},~T_2^{13}=T_2^{14}+T_2^{34}$.

\begin{figure}[ht]
\centerline{\includegraphics{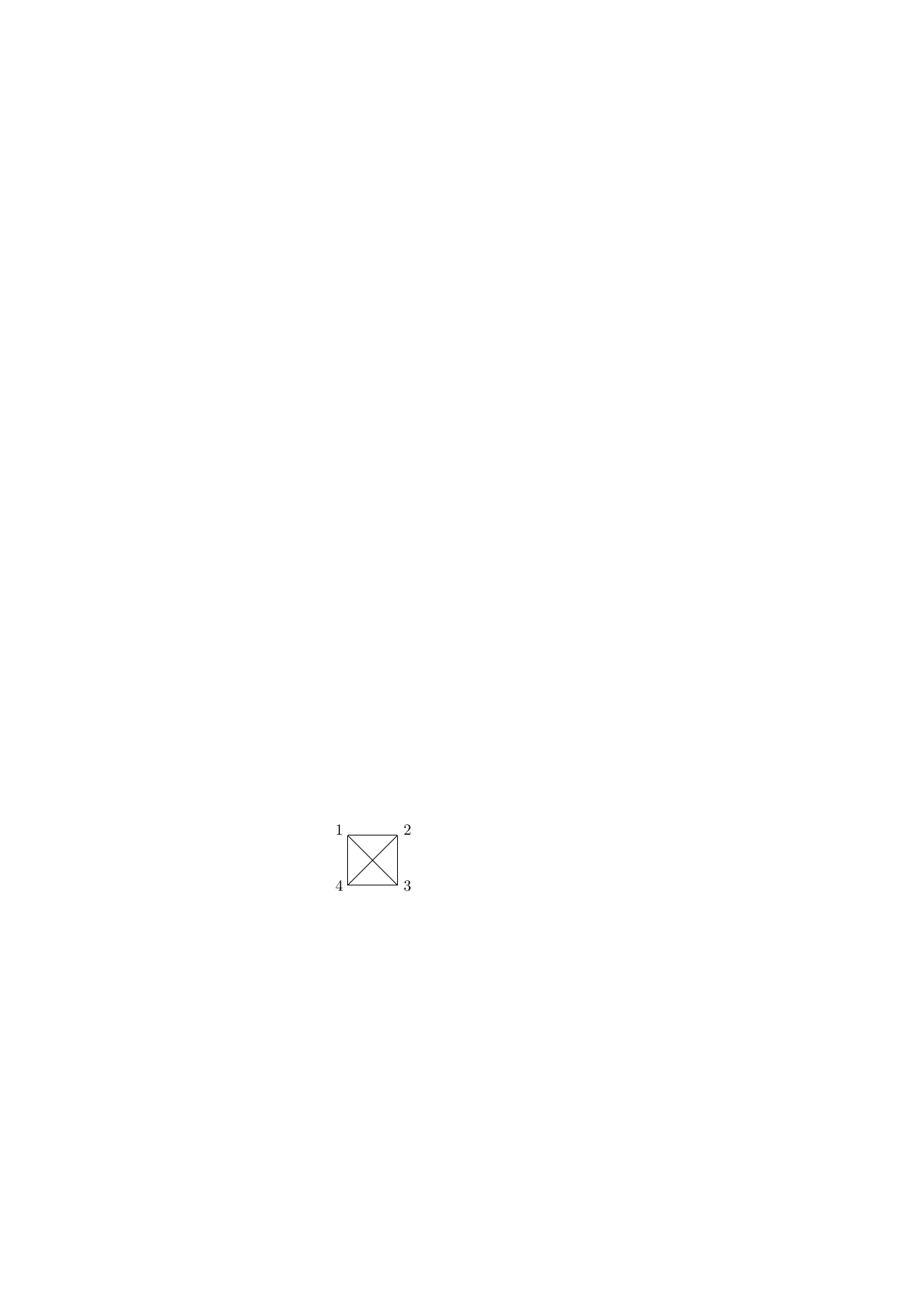}}
\end{figure}

$(c)$ More generally, ${\mathcal A}_n:={\mathcal A}_{\Sigma_n}$ introduced in \cite[Section 3]{BR} is generated by  $x_{ij}^{\pm 1}$ for distinct $i,j\in [1,n]$ subject to 

$\bullet$ (Triangle relations)
$x_{ij}x_{kj}^{-1}x_{ki}=x_{ik}x_{jk}^{-1}x_{ji}$ for distinct $i,j,k\in [1,n]$;

$\bullet$ (Ptolemy relations)
$x_{ik}=x_{ij}x_{lj}^{-1}x_{lk}+x_{ik}x_{jl}^{-1}x_{jk}$ for distinct $i,j,k,l\in [1,n]$ such that $i,j,k,l$ are in clockwise order.

\end{example} 

\begin{definition}
\label{def:generalized Chekhov-Shapiro}
    
For any $n\ge 1$ and any $n\times n$ symmetric matrix ${\bf c}$ with entries in some field $\kk$, let ${\mathcal A}_{n,{\bf c}}$ denote the $\kk$-algebra generated by $x_{ij}^{\pm}$ for distinct $i,j=1,\ldots,n$ and $x_i$ for $i=1,\ldots,n$,
subject to the following relations:

 $\bullet$ $x_{ij}^+(x_{kj}^+)^{-1}x_{ki}^+=x_{ik}^-(x_{jk}^-)^{-1}x_{ji}^-$ for any distinct $i,j,k$ in clockwise order,
 
 $\bullet$  $x_{ij}^+x^{-1}_{j}x_{ji}^+=x_{ij}^-x^{-1}_{j}x_{ji}^-$ for any distinct $i,j$;

 $\bullet$ $x_{\ell j}^+=x_{\ell i}^+(x_{ki}^-)^{-1}x_{kj}^-+x_{\ell k}^+(x_{ik}^+)^{-1}x_{ij}^+$ for any distinct $i,j,k,\ell$ in clockwise order,

$\bullet$ $x_j=x_{ji}^- x_i^{-1}x_{ij}^+ + c_{ij}x_{ji}^+ x_i^{-1} x_{ij}^+ +x_{ji}^+ x_i^{-1}x_{ij}^-$ for any distinct $i,j$.
\end{definition}

\begin{remark}
\label{rem:A_{n,d}}  

In particular, we have
${\mathcal C}_n={\mathcal A}_{n,{\bf 0}}$.
More generally, denote by $\underline \Sigma_{n,d}$ the disk with $k$ marked points on the boundary and special puncture of order $d$, i.e., the (orbifold) quotient 
$\Sigma_{nd}/\sigma_d$, where $\sigma_d$ is the rotation of the disk by $\frac{2\pi}{d}$. Then ${\mathcal A}_{\underline \Sigma_{n,d}}={\mathcal A}_{n,{\bf c}}$ where with
$\kk=\mathbb{Q}(\cos(\frac{\pi}{d}))$ and
all $c_{ij}=2\cos(\frac{\pi}{d})$.
\end{remark}

\begin{proposition}\label{pro:total angle}
For any $\Sigma$ and any $i\in I$, there exists a unique element $T_i=T_i(\Sigma)\in {\mathcal A}_\Sigma$ satisfying the following conditions: 

$\bullet$ $T_i=T_i^{i^-i^+}$ in case $\Sigma=\Sigma_3$ and $i\in \{1,2,3\}$.
 
$\bullet$ $T_i=\sum\limits_{i'\in f^{-1}(i)} f_*(T_{i'})$  for any morphism $f:\Sigma'\to \Sigma$ that is a local isomorphism at $i$.

we refer to $T_i$ as the \emph{total angle} at $i$.

\end{proposition}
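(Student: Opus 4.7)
The plan is to define $T_i$ explicitly from any triangulation of $\Sigma$ near $i$ and then show both that the definition is independent of the triangulation chosen and that it is the unique element satisfying the two bullet conditions. Concretely, pick any triangulation $\Delta$ of $\Sigma$, let $\Delta(i)$ be the multiset of triangles of $\Delta$ meeting $i$ (counting a self-folded triangle twice when $i$ is its enclosed puncture), and set
$$T_i(\Delta):=\sum_{t\in \Delta(i)} T_i^t,$$
where each $T_i^t$ is pulled back from the triangle angle on $\Sigma_3$ along the morphism $\Sigma_3\hookrightarrow \Sigma$ realizing $t$ (a local isomorphism at the vertex sitting over $i$). The first bullet condition is then immediate, since $\Sigma_3$ admits the trivial triangulation and by the triangle relation in Example~\ref{ex:A_n}(a) all three expressions for the angle at $i$ coincide.

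To verify the second bullet and, simultaneously, independence of $\Delta$, I would show that $T_i(\Delta)$ is unchanged under each elementary move between triangulations. A flip at an edge not incident to $i$ in a quadrilateral $Q$ whose vertex set avoids $i$ leaves $\Delta(i)$ itself invariant. A flip in a quadrilateral $Q$ having $i$ as a vertex replaces one sum of angles at $i$ by another, and these are forced equal by the Ptolemy relation for $Q$ (this is the promised additivity $T_i^P=T_i^{P'}+T_i^{P''}$, announced as Lemma~\ref{le:angle=ptolemy}(e) in the introduction). The analogous moves involving a special loop, a pending arc, or a self-folded triangle at $i$ must be checked individually, and they follow respectively from the monogon, zero-puncture, and bigon relations (2)--(6) of Definition~\ref{def:ASigma}. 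Given flip-invariance, for any local isomorphism $f:\Sigma'\to \Sigma$ at $i$ one may choose $\Delta$ with $\Delta(i)$ contained inside the local-isomorphism neighborhood; then $\Delta$ pulls back to a local triangulation of $\Sigma'$ at each $i'\in f^{-1}(i)$, and the functoriality statement of Theorem~\ref{th:functoriality nc-surface}(b) gives $f_*(T_{i'}^{t'})=T_i^{f(t')}$ termwise, so summing yields the compatibility identity.

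Uniqueness is then quick: given any candidate $T_i'$ satisfying both conditions, take the morphism $f:\bigsqcup_{t\in \Delta(i)} \Sigma_3 \to \Sigma$ that glues the individual triangles into the neighborhood of $i$ in $\Sigma$. This $f$ is a local isomorphism at $i$, so the second bullet forces $T_i'=\sum_{i'\in f^{-1}(i)} f_*(T_{i'}')$, and the first bullet determines each $T_{i'}'$ as the corresponding triangle angle; hence $T_i'=T_i(\Delta)$. The main obstacle will be the flip-invariance step, specifically the local case analysis around self-folded triangles and around special and zero punctures --- each configuration uses a different defining relation of $\mathcal{A}_\Sigma$, and it is there (rather than in the generic quadrilateral flip, which is a direct Ptolemy computation) that care is needed to pair the right local move with the right relation of Definition~\ref{def:ASigma}.
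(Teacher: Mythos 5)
Your route is genuinely different from the paper's: you define $T_i$ as a sum of corner angles over a full triangulation and propose to prove flip-invariance, whereas the paper defines $T_i$ for a boundary point via a single \emph{canonical} triangle $(\gamma^+,\gamma,\overline{\gamma^-})$ built from the two boundary arcs at $i$ (found in the universal or double ramified cover), so that no choice of triangulation is made and uniqueness is immediate from the second bullet applied to the morphism $\Sigma_3\to\Sigma$ realizing that triangle; for a puncture the paper cuts along an arc $\gamma$ based at $i$ to reduce to the boundary case and proves independence of $\gamma$ by a commutative-diagram argument for non-crossing arcs plus induction on the number of crossings. Your plan essentially amounts to proving Proposition \ref{prop:total} first and deriving Proposition \ref{pro:total angle} from it, which is a legitimate alternative, but two steps have real gaps.

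First, your definition $T_i(\Delta)=\sum_{t\in\Delta(i)}T_i^t$ is incomplete when $i$ is the basepoint of a special loop: as Proposition \ref{prop:total} records, the total angle also contains terms $2\cos(\frac{\pi}{|p|})\,x_{\ell_p}^{-1}$ coming from the monogons around special punctures, which are not corners of any triangle of $\Delta$. Without these terms flip-invariance actually fails at a flip involving a special loop, because the bigon special puncture relation of Definition \ref{def:ASigma} produces exactly the cross term $2\cos(\frac{\pi}{|p|})x_{\overline\alpha_1}x_\alpha^{-1}x_{\overline\alpha_2}$ that must be absorbed somewhere; and your uniqueness argument also breaks there, since the triangles of $\Delta(i)$ no longer cover a neighborhood of $i$ (the monogon is missing), so the gluing map $\bigsqcup\Sigma_3\to\Sigma$ is not a local isomorphism at $i$ and the first bullet gives you no base case for the monogon piece. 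Second, the reduction of an arbitrary local isomorphism $f$ to the triangulated picture is not as stated: triangles of $\Delta$ are not local objects, so "$\Delta(i)$ contained inside the local-isomorphism neighborhood" cannot literally be arranged. What you need is that every corner of $\Delta$ at $i$ lies inside a single sector $f(\mathcal O_{i'})$, and this requires an argument, since the sector boundaries are the images of the boundary arcs of $\Sigma'$, which need not be simple or pairwise compatible and hence need not extend to a triangulation of $\Sigma$. The paper avoids this by using one canonical triangle per boundary preimage $i'$ (so that $f(\cup_{i'}\Delta_{i'})$ is automatically a polygon and Ptolemy additivity applies directly) and, for punctures, by the arc-cutting device with its crossing-number induction; your proof needs a substitute for that step.
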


\begin{proof} 
For a boundary marked point $i\in I_b$, suppose that $\gamma^+,\gamma^-$ are two boundary arcs such that $s(\gamma^\pm)=i$. Consider the universal cover or the double ramified cover of the connected component of $\Sigma$ containing $i$, we see that there exists a unique curve $\gamma$ in $\Sigma$ such that $(\gamma^+,\gamma,\overline\gamma^-)$ is a triangle in $\Sigma$. we refer to $(\gamma^+,\gamma,\overline\gamma^-)$ as the canonical triangle with vertex $i$, denoted by $\Delta_i$. Define $T_i:=x^{-1}_{\overline \gamma^+}x_{\gamma}x^{-1}_{\gamma^-}$. The uniqueness follows from the uniqueness of the canonical triangle. 

We now show that $T_i$ satisfies the required properties. First, it is clear that $T_i=T_i^{i^-i^+}$ in case $\Sigma=\Sigma_3$. 

Next, let $f:\Sigma'\to \Sigma$ be a morphism that is a local isomorphism at $i$. For any $i'\in f^{-1}(i)$, we have $i'$ is a boundary marked point in $\Sigma'$. Since $f$ is a local isomorphic at $i$, $f(\cup_{i'\in f^{-1}(i)}\Delta_{i'})$ is a polygon in $\Sigma$. Then $T_i=\sum\limits_{i'\in f^{-1}(i)} f_*(T_{i'})$ follows from the Ptolemy relations.

Now consider a puncture $i\in I_{P,1}$. Choose an arc $\gamma$ with $s(\gamma)=i$, and let $\Sigma_\gamma$ be the surface obtained from $\Sigma$ by cutting along $\gamma$. The canonical morphism $f_\gamma:\Sigma_\gamma\to \Sigma$ is a local isomorphism at $i$, and each $i'\in f^{-1}_\gamma(i)$ is a boundary marked point. Define $T_i=T_i(\gamma):=\sum_{i'\in f^{-1}_\gamma(i)} (f_\gamma)_*(T_{i'})$.

We now prove that $T_i$ is dependent of the choice of $\gamma$ and satisfies the required conditions.

Assume that $\gamma,\gamma'$ are two arcs with $s(\gamma)=s(\gamma')=i$.

If $\gamma$ and $\gamma'$ do not cross, then we have the following commutative diagram of morphisms:
$$\xymatrix{
& \Sigma_\gamma \ar@<1ex>[dr]^{f_\gamma} \\
\Sigma_{\gamma,\gamma'}\ar@<1ex>[ur]^{f_\gamma'}\ar@<1ex>[dr]_{f_\gamma} &  & \Sigma \\
& \Sigma_{\gamma'}\ar@<1ex>[ur]_{f_{\gamma'}}}$$
Since every $i'\in f^{-1}_\gamma(i)\cup f^{-1}_{\gamma'}(i)$ is a boundary marked point, we have
$$T_i(\gamma)=\sum_{i''\in f^{-1}_{\gamma'}(i')}\sum_{i'\in f^{-1}_{\gamma}(i)} (f_{\gamma'}f_\gamma)_*(T_{i''})=\sum_{i''\in (f_{\gamma'}f_\gamma)^{-1}(i)}(f_{\gamma'}f_\gamma)_*(T_{i''}),$$
$$T_i(\gamma')=\sum_{i''\in f^{-1}_{\gamma}(i')}\sum_{i'\in f^{-1}_{\gamma'}(i)} (f_{\gamma}f_{\gamma'})_*(T_{i''})=\sum_{i''\in (f_{\gamma}f_{\gamma'})^{-1}(i)}(f_{\gamma}f_{\gamma'})_*(T_{i''}).$$
Hence, $T_i(\gamma)=T_i(\gamma')$.

If $\gamma$ and $\gamma'$ cross, then we can resolve their intersection to obtain an arc $\gamma''$ that intersects both 
$\gamma$ and $\gamma'$ fewer times. By induction on the number of crossing points, we have $T_i(\gamma)=T_i(\gamma'')=T_i(\gamma')$. 

Therefore, $T_i$ does not depend on the choice of $\gamma$.

Let $f:\Sigma'\to \Sigma$ be a morphism that is a local isomorphism at $i$.

{\bf Case 1.} Suppose that there exists a puncture $i'\in f^{-1}(i)$. Then $f^{-1}(i)=\{i'\}$. Any arc $\gamma$ with $s(\gamma)=i$ in $\Sigma$ lifts to an arc $\gamma'$ with $s(\gamma')=i'$ in $\Sigma'$.
Thus, 
$$T_i=T_i(\gamma)=\sum_{\hat i\in f^{-1}_\gamma(i)} (f_\gamma)_*(T_{\hat i}),\;\;\; T_{i'}=T_{i'}(\gamma')=\sum_{\hat i'\in f^{-1}_{\gamma'}(i')} (f_{\gamma'})_*(T_{\hat i'}).$$ 
The map $f:\Sigma'\to \Sigma$ induces a morphism $f:\Sigma'_{\gamma'}\to \Sigma_\gamma$, which is local isomorphism at all $\hat i\in f_\gamma^{-1}(i)$, fitting into the following commutative diagram:
$$\centerline{\xymatrix{
& \Sigma'_{\gamma'}\ar[r]^{f}\ar[d]_{f_{\gamma'}} & \Sigma_\gamma \ar[d]^{f_\gamma} \\
& \Sigma'\ar[r]^{f} &   \Sigma }}$$
Since each $\hat i\in f^{-1}_\gamma(i)$ is a boundary marked point, we have 
$$\sum_{\hat i\in f^{-1}_\gamma(i)} T_{\hat i}=\sum_{\hat i'\in f^{-1}_{\gamma'}(i')} f_*(T_{\hat i'}).$$ 
It follows that $T_i=f_*(T_{i'})$.

{\bf Case 2.} Suppose that there are no punctures in $f^{-1}(i)$. Fix $i'\in f^{-1}(i)$ and a boundary arc $\gamma'$ with $s(\gamma')=i'$. Then $\gamma:=f(\gamma')$ is an arc in $\Sigma$ with $s(\gamma)=i$. The map $f:\Sigma'\to \Sigma$ induces a morphism $f:\Sigma'\to \Sigma_\gamma$, which is a local isomorphism at all $\hat i\in f_\gamma^{-1}(i)$, as shown in the following commutative diagram:
$$\xymatrix{
&\Sigma'\ar[r]^{f}\ar[d]_{id} & \Sigma_\gamma \ar[d]^{f_\gamma} \\
&\Sigma'\ar[r]^{f} &   \Sigma }$$
Therefore,
$$T_i=\sum_{\hat i\in f^{-1}_\gamma(i)} T_{\hat i}=\sum_{i'\in f^{-1}(\hat i)} f_*(T_{i'})=\sum\limits_{i'\in f^{-1}(i)} f_*(T_{i'}).$$

The proof is complete.
\end{proof}

 \begin{example}
In Figure \ref{Fig-e1}, for the once-punctured bigon we have $$T_1(\Sigma)=f(T_1(\Sigma'))+f(T_3(\Sigma'))=((x^+_{21})^{-1}+(x^-_{21})^{-1})x_{20}x^{-1}_{10}.$$

For the once-punctured torus, we have 
 \begin{equation*}
    \begin{array}{rcl}
T_1(\Sigma)
&=&
f(T_1(\Sigma_4))+f(T_3(\Sigma_4))+f(T_2(\Sigma_4))+f(T_4(\Sigma_4))\vspace{2.5pt}\\
&=& f(x_{41}^{-1}x_{42}x_{12}^{-1})+f(x_{12}^{-1}x_{13}x_{23}^{-1})+f(x_{23}^{-1}x_{24}x_{34}^{-1})+f(x_{34}^{-1}x_{31}x_{41}^{-1})\vspace{2.5pt}\\
&=& f(x_{41}^{-1}x_{43}x_{13}^{-1})+f(x_{31}^{-1}x_{32}x_{12}^{-1})+f(x_{12}^{-1}x_{13}x_{23}^{-1})+f(x_{23}^{-1}x_{21}x_{31}^{-1})\vspace{2.5pt}\\
&+& f(x_{43}^{-1}x_{14}x_{31}^{-1})+f(x_{34}^{-1}x_{31}x_{41}^{-1}).
\end{array}
\end{equation*}
 
For the once-punctured triangle, we have 
$$T_0(\Sigma)=x_{10}^{-1}x_{12}x_{02}^{-1}+x_{20}^{-1}x_{23}x_{03}^{-1}+x_{30}^{-1}x_{31}x_{01}^{-1}.$$
\end{example}

\begin{lemma} 
\label{le:angle=ptolemy}

$(a)$  ${\mathcal A}_{\Sigma\sqcup \Sigma'}={\mathcal A}_\Sigma \ast{\mathcal A}_{\Sigma'}$, where $\ast$ denotes the free product of algebras.
 
$(b)$ If $f:\Sigma\sqcup \Sigma\to \Sigma$ is the canonical double cover, then the corresponding morphism ${\mathcal A}_\Sigma \ast{\mathcal A}_\Sigma\to {\mathcal A}_\Sigma$ is the multiplication.

\end{lemma}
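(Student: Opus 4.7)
For part $(a)$, I will leverage the fact that $\Sigma\sqcup\Sigma'$ is a disjoint union: since any simple curve is connected, we have $\Gamma(\Sigma\sqcup\Sigma')=\Gamma(\Sigma)\sqcup\Gamma(\Sigma')$, and the same holds for the admissible subsets $[\Gamma(\cdot)]$. Each of the six relation families in Definition~\ref{def:ASigma}---triangle, monogon, zero-puncture, Ptolemy, bigon-special, bigon-zero---involves only curves forming a single triangle, bigon, or quadrilateral, which is necessarily contained in one connected component of $\Sigma\sqcup\Sigma'$. Consequently the presentation of ${\mathcal A}_{\Sigma\sqcup\Sigma'}$ is the disjoint union of the presentations of ${\mathcal A}_\Sigma$ and ${\mathcal A}_{\Sigma'}$, with no relations coupling the two sets of generators. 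This is exactly the presentation of the free product over the composite ground field $\kk_{\Sigma\sqcup\Sigma'}=\kk_\Sigma\cdot\kk_{\Sigma'}$, proving $(a)$.

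For part $(b)$, I will first verify that the canonical double cover $f:\Sigma\sqcup\Sigma\to\Sigma$ is indeed a morphism in ${\bf Surf}$: one checks $I^f=\emptyset$ because each preimage $p_i\in f^{-1}(p)$ satisfies $|p_i|=|p|$, and each point of $\Sigma\sqcup\Sigma$ has a neighborhood (inside its own copy) on which $f$ is injective. Thus $f$ is a local isomorphism everywhere, and Theorem~\ref{th:functoriality nc-surface}$(b)$ produces a homomorphism $f_*:{\mathcal A}_{\Sigma\sqcup\Sigma}\to{\mathcal A}_\Sigma$ sending $x_\gamma\mapsto x_{f(\gamma)}$. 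Under the identification ${\mathcal A}_{\Sigma\sqcup\Sigma}={\mathcal A}_\Sigma\ast{\mathcal A}_\Sigma$ from part $(a)$, each factor corresponds to one copy of $\Sigma$ on which $f$ restricts to the identity. Hence $f_*$ restricts to the identity homomorphism on each free factor, and by the universal property of the free product it agrees with the unique codiagonal map $\mathrm{id}\ast\mathrm{id}:{\mathcal A}_\Sigma\ast{\mathcal A}_\Sigma\to{\mathcal A}_\Sigma$, i.e.\ with multiplication.

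The main potential obstacle is the bookkeeping for part $(a)$: one must confirm that no relation in Definition~\ref{def:ASigma} secretly couples generators from distinct components, and that the inverted generators $x_\gamma^{-1}$ satisfy no cross-component constraint. Both issues dissolve quickly: every defining relation lives inside a single connected polygon, and $\gamma\in\Gamma(\Sigma\sqcup\Sigma')$ is $\bigsqcup I_{p,\ge 2}$-admissible in $\Sigma\sqcup\Sigma'$ if and only if it is admissible in its own component. I therefore expect this lemma to reduce to a careful unpacking of the definitions rather than a substantive computation.
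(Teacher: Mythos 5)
Your proposal is correct and follows essentially the same route as the paper: part $(a)$ is the observation that curves and defining relations never couple distinct connected components, and part $(b)$ identifies $f_*$ on each free factor as the identity and concludes by the universal property of the free product that it is the fold/multiplication map. The extra care you take in checking that $f$ is a morphism in ${\bf Surf}$ with $I^f=\emptyset$ is harmless detail the paper leaves implicit.
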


\begin{proof} 

For (a), a curve in $\Sigma\sqcup \Sigma'$ is always of the form $\gamma\in \Sigma=\Sigma\sqcup \emptyset$ or $\gamma'\in \Sigma'=\emptyset \sqcup \Sigma'$, as $\Sigma$ and $\Sigma'$ are disconnected in $\Sigma\sqcup \Sigma'$, we thus have ${\mathcal A}_{\Sigma\sqcup \Sigma'}={\mathcal A}_\Sigma \ast{\mathcal A}_{\Sigma'}$.

For (b), a curve in $\Sigma\sqcup \Sigma$ is always of the form $\gamma\in \Sigma=\Sigma\sqcup \emptyset$ or $\gamma'\in \Sigma=\emptyset \sqcup \Sigma$. Then $x_{\gamma,\emptyset}=x_\gamma*1$ and $x_{\emptyset,\gamma'}=1*x_{\gamma'}$ in ${\mathcal A}_\Sigma \ast{\mathcal A}_{\Sigma'}$. Then taking $\Sigma'=\Sigma$ we see that $f_*(x_\gamma*1)=x_\gamma$ and $f_*(1*x_{\gamma'})=x_{\gamma'}$ for any curves $\gamma,\gamma'$ in $\Sigma$. Since  $x*y=(x*1)*(1*y)$, applying $f_*$, we obtain $f_*(x*y)=f_*(x)f_*(y)$ because $f_*$ is an algebra homomorphism. Thus, $f_*$ factors through the multiplication map ${\mathcal A}_\Sigma\otimes {\mathcal A}_\Sigma\to {\mathcal A}_\Sigma$.

The lemma is proved.
\end{proof}

For any pair of (isotopy classes of) curves $(\gamma,\gamma')$ with $t(\gamma)=s(\gamma')$ in $\Sigma$, we abbreviate $y_{\gamma,\gamma'}:=x_{\overline\gamma}^{-1}x_{\gamma'}$ and sometimes refer to it as a \emph{noncommutative sector variable}.
Denote by ${\mathcal B}_\Sigma$ the subalgebra of ${\mathcal A}_\Sigma$ generated by all $y_{\gamma,\gamma'}$.
We sometimes refer to $\mathcal B_{\Sigma}$ as the \emph{sector subalgebra} of $\mathcal A_{\Sigma}$.
By definition, ${\mathcal B}_\Sigma$ is subalgebra of the $0$-th graded component of ${\mathcal A}_\Sigma$
if $I_{p,0}(\Sigma)=\emptyset$.

Using almost functoriality (i.e., topological invariance) of ${\mathcal A}_\Sigma$, we obtain the following immediately.

\begin{corollary}
\label{cor:BSigma functorial}
The assignments $\Sigma\mapsto {\mathcal B}_\Sigma$ define almost a functor ${\bf Surf}\to {\bf Alg}_{\mathbb Q}$ in the same sense as  in Theorem \ref{th:functoriality nc-surface}.
    
\end{corollary}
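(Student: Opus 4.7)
The proof plan is to deduce the corollary directly from Theorem \ref{th:functoriality nc-surface}(b) by restricting the induced algebra homomorphism to the sector subalgebras. Given a morphism $f:\Sigma\to \Sigma'$ in ${\bf Surf}$, I would set ${\mathcal B}_\Sigma^f := {\mathcal B}_\Sigma \cap {\mathcal A}_\Sigma^f$ and aim to show that the homomorphism $f_*:\mathbb{K}_{\Sigma'}\otimes_{\mathbb{K}_\Sigma} {\mathcal A}_\Sigma^f \to {\mathcal A}_{\Sigma'}$ from Theorem \ref{th:functoriality nc-surface} restricts to a homomorphism $\mathbb{K}_{\Sigma'}\otimes_{\mathbb{K}_\Sigma} {\mathcal B}_\Sigma^f \to {\mathcal B}_{\Sigma'}$. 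Once this is established, the composition rule $(g\circ f)_*=g_*\circ f_*$ on sector subalgebras is immediate from the same rule for the ambient algebras, giving the same "almost functorial" conclusion as in Theorem \ref{th:functoriality nc-surface}.

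The key step is to verify the containment $f_*({\mathcal B}_\Sigma^f)\subseteq {\mathcal B}_{\Sigma'}$ on algebra generators. By construction ${\mathcal B}_\Sigma^f$ is generated by those sector variables $y_{\gamma,\gamma'}=x_{\overline\gamma}^{-1}x_{\gamma'}$ for which $\overline\gamma$ is admissible in $\Sigma$ and $f(\overline\gamma)$ is admissible in $\Sigma'$. Tracing through the definition of $f_*$ recalled in Theorem \ref{th:functoriality nc-surface}, we have $f_*(x_\gamma)=x_{f(\gamma)}$ on curve generators, so that
\[
f_*(y_{\gamma,\gamma'}) \;=\; f_*(x_{\overline\gamma})^{-1}\,f_*(x_{\gamma'}) \;=\; x_{\overline{f(\gamma)}}^{-1}\,x_{f(\gamma')} \;=\; y_{f(\gamma),f(\gamma')},
\]
where I use $f(\overline\gamma)=\overline{f(\gamma)}$ (a formal consequence of $f$ being a map of oriented surfaces respecting the orientation-reversal on curves) together with $t(f(\gamma))=f(t(\gamma))=f(s(\gamma'))=s(f(\gamma'))$. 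Since the right-hand side is by definition an element of ${\mathcal B}_{\Sigma'}$, the restriction is well-defined.

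The main, though minor, obstacle I anticipate is bookkeeping at punctures of differing orders, where one must check that the admissibility of $\overline\gamma$ and of $f(\overline\gamma)$ genuinely guarantees that the inverses $x_{\overline\gamma}^{-1}\in {\mathcal A}_\Sigma^f$ and $x_{\overline{f(\gamma)}}^{-1}\in {\mathcal A}_{\Sigma'}$ exist simultaneously; this is precisely what is built into the definition of ${\mathcal A}_\Sigma^f$ in Theorem \ref{th:functoriality nc-surface}(b) together with the morphism conditions $f(I_{p,\ge 2})\subseteq I'_{p,\ge 2}$ and $f(I_{p,0})\subseteq I'_{p,0}$. A further mild subtlety is that even when $t(\gamma)\in I_{p,1}(\Sigma)$ is an ordinary puncture sent by $f$ into $I_{p,0}(\Sigma')$, so that $(f(\gamma),f(\gamma'))$ fails to be composable in the sense of Section \ref{subsec:Some notation on surfaces}, the expression $x_{\overline{f(\gamma)}}^{-1}x_{f(\gamma')}$ still lies in ${\mathcal B}_{\Sigma'}$ as a product of generators of ${\mathcal A}_{\Sigma'}$. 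With these admissibility checks in place, the restriction is automatic and the corollary follows.
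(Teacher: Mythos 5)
Your proposal is correct and matches the paper's intent: the paper declares the corollary "immediate" from Theorem \ref{th:functoriality nc-surface}, and the intended argument is exactly your restriction of $f_*$ to the sector generators $y_{\gamma,\gamma'}=x_{\overline\gamma}^{-1}x_{\gamma'}$. One small imprecision: $f_*(x_\gamma)=x_{f(\gamma)}$ does not hold literally when $f(\gamma)$ winds around a special puncture (there $f_*(x_\gamma)$ is a scalar multiple of $x_\ell$ for the special loop $\ell$, per the proof of Theorem \ref{th:functoriality nc-surface}), but the image of $y_{\gamma,\gamma'}$ is then still a scalar multiple of a sector variable in $\kk_{\Sigma'}\otimes{\mathcal B}_{\Sigma'}$, so the conclusion is unaffected.
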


We expect that $\overline  {\mathcal B}_\Sigma\cap  {\mathcal B}_\Sigma=\kk_\Sigma$, moreover, that the subalgebra of ${\mathcal A}_\Sigma$ generated $\overline  {\mathcal B}_\Sigma$ and ${\mathcal B}_\Sigma$ is isomorphic to their free product.

\begin{theorem}
\label{th:sector presentation} 
If $I_{p,0}(\Sigma)=\emptyset$, then the sector subalgebra $\mathcal B_\Sigma$ 
has the following presentation:
\begin{enumerate}[$(1)$]
\item (Triangle relations)
$y_{\alpha_1,\alpha_2}y_{\alpha_3,\alpha_1}y_{\alpha_2,\alpha_3}=1$ for any cyclic triangle $(\alpha_1,\alpha_2,\alpha_3)$.
    
\item (Ptolemy relations) 
$y_{\alpha_1,\alpha'}=
y_{\alpha,\alpha_3}+y_{\alpha_1,\alpha_2}y_{\overline \alpha,\overline\alpha_4}$ for any cyclic quadrilateral $(\alpha_1,\alpha_2,\alpha_3,\alpha_4)$ with diagonals $\alpha$ and $\alpha'$ such that $s(\alpha)=s(\alpha_1)$ and $s(\alpha')=t(\alpha_1)$.

\item (Monogon relations) $y_{\ell,\ell}=1$ for each special loop $\ell$.

\item (Bigon special puncture relations) 
$y_{\overline\alpha',\overline\alpha_1}y_{\overline\alpha,\alpha_1}+2\cos(\frac{\pi}{|p|})y_{\overline\alpha',\overline\alpha_1}y_{\overline\alpha,\overline\alpha_2}+y_{\overline\alpha',\alpha_2}y_{\overline\alpha,\overline\alpha_2}=1$
for any bigon $(\alpha_1,\alpha_2)$ around a special puncture $p$, where $\alpha$ is the loop around $p$ such that $(\alpha_1,\alpha_2,\alpha)$ is a triangle and $\alpha'$ is the loop around $p$ such that $(\alpha',\alpha_2,\alpha_1)$ is a triangle.

\item (Star relations) $y_{\overline \gamma_1,\gamma_2}y_{\overline \gamma_2,\gamma_3}\cdots y_{\overline \gamma_k,\gamma_1}=1$ for any marked point $i$ and a sequence of curves $\gamma_1,\cdots,\gamma_k$ such that $s(\gamma_1)=s(\gamma_2)=\cdots =s(\gamma_k)=i.$  
\end{enumerate}
\end{theorem}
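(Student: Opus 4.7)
The plan is to verify that the listed relations hold in $\mathcal{B}_\Sigma$ and then verify they are complete. For the first direction, I would substitute $y_{\gamma,\gamma'} = x_{\overline{\gamma}}^{-1} x_{\gamma'}$ into each stated identity and reduce to a defining relation of $\mathcal{A}_\Sigma$ from Definition \ref{def:ASigma}. Concretely, the triangle relation $y_{\alpha_1,\alpha_2}y_{\alpha_3,\alpha_1}y_{\alpha_2,\alpha_3} = 1$ expands to $x_{\overline{\alpha_1}}^{-1} x_{\alpha_2} x_{\overline{\alpha_3}}^{-1} x_{\alpha_1} x_{\overline{\alpha_2}}^{-1} x_{\alpha_3}$, and the trailing factor $x_{\alpha_1} x_{\overline{\alpha_2}}^{-1} x_{\alpha_3} = x_{\overline{\alpha_3}} x_{\alpha_2}^{-1} x_{\overline{\alpha_1}}$ by Definition \ref{def:ASigma}(1), so the product telescopes to $1$. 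The sector Ptolemy relation unfolds via $y_{\overline{\alpha},\overline{\alpha_4}} = x_{\alpha}^{-1} x_{\overline{\alpha_4}}$ to the Ptolemy relation of $\mathcal{A}_\Sigma$ after left-multiplication by $x_{\overline{\alpha_1}}$. The monogon and bigon special-puncture relations translate directly from Definition \ref{def:ASigma}(2) and (5), and the star relations are telescoping products $(x_{\gamma_1}^{-1} x_{\gamma_2})(x_{\gamma_2}^{-1} x_{\gamma_3}) \cdots (x_{\gamma_k}^{-1} x_{\gamma_1}) = 1$.

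For completeness, let $\widetilde{\mathcal{B}}_\Sigma$ denote the abstract $\kk_\Sigma$-algebra defined by the proposed presentation and let $\pi: \widetilde{\mathcal{B}}_\Sigma \twoheadrightarrow \mathcal{B}_\Sigma$ be the surjection determined by $y_{\gamma,\gamma'} \mapsto x_{\overline{\gamma}}^{-1} x_{\gamma'}$. Fix an ordinary triangulation $\Delta$ of $\Sigma$. Using the sector Ptolemy relation (2) repeatedly (pivoting to cross fewer and fewer edges of $\Delta$), I would show by induction on the number of transverse intersections of $\gamma, \gamma'$ with $\Delta$ that every $y_{\gamma,\gamma'}$ lies in the subalgebra $\widetilde{\mathcal{B}}_\Delta \subset \widetilde{\mathcal{B}}_\Sigma$ generated by the sector variables $y_{\alpha,\beta}$ with $\alpha,\beta \in \Delta$. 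This is the sector analogue of the Laurent reduction for $\mathcal{A}_\Sigma$ from Theorem \ref{th:monomial mutation surfaces intro}.

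The next step is to identify $\widetilde{\mathcal{B}}_\Delta$ with the group algebra $\kk_\Sigma \UU_\Delta$ (modulo the bigon special-puncture relations when special punctures are present): within $\widetilde{\mathcal{B}}_\Delta$ the only surviving multiplicative constraints among $\Delta$-sectors are the triangle relations (1) together with the star relations (5), and these are precisely the defining relations of $\UU_\Delta$ as a subgroup of $\TT_\Delta$. By Theorem \ref{th:Udelta intro}, $\UU_\Delta$ is free (or one-relator), and by the commutative diagram \eqref{eq:TA-diagram intro}, $\kk_\Sigma \UU_\Delta \hookrightarrow \mathcal{B}_\Sigma$ is injective. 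Combined with the previous reduction, this forces $\pi$ to be an isomorphism.

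The main obstacle I anticipate is confluence of the Ptolemy reduction: I must ensure that the element of $\widetilde{\mathcal{B}}_\Delta$ produced by iterated rewriting does not depend on the sequence of flips. I would settle this by checking a pentagon-type identity between two adjacent flip sequences in any common sub-configuration, which reduces to the triangle, sector Ptolemy, and star relations already listed; the special-puncture case additionally uses the bigon relation (4) in place of the standard Ptolemy. A secondary subtlety, appearing in the identification of $\widetilde{\mathcal{B}}_\Delta$ with $\kk_\Sigma \UU_\Delta$, is that in the closed-surface case exactly one combination of star relations produces the single defining relator of $\UU_\Delta$ from Theorem \ref{th:Udelta intro}(b); matching these requires a careful combinatorial correspondence between vertex-cycles in $\Delta$ and the relator(s) of $\UU_\Delta$.
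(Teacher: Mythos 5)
Your first half (verifying that relations $(1)$--$(5)$ hold in $\mathcal{B}_\Sigma$ by expanding $y_{\gamma,\gamma'}=x_{\overline\gamma}^{-1}x_{\gamma'}$ and telescoping against the defining relations of $\mathcal{A}_\Sigma$) is correct and matches the paper. The completeness half, however, rests on a reduction step that is false: you claim that iterating the sector Ptolemy relation shows every $y_{\gamma,\gamma'}$ lies in the subalgebra $\widetilde{\mathcal{B}}_\Delta$ generated by $\Delta$-sectors, and hence that $\mathcal{B}_\Sigma=\iota_\Delta(\kk\,\UU_\Delta)$. But $\mathcal{B}_\Sigma$ is strictly larger than $\iota_\Delta(\kk\,\UU_\Delta)$ in general --- this is why the top row of the diagram \eqref{eq:TA-diagram intro} is an inclusion, not an equality. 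Concretely, for $\gamma\notin\Delta$ the sector $y_{\gamma,\gamma'}=x_{\overline\gamma}^{-1}x_{\gamma'}$ involves $x_{\overline\gamma}^{-1}$, the inverse of a \emph{sum} of $\Delta$-monomials (e.g.\ in $\Sigma_4$ with $\Delta$ containing the diagonal $(1,3)$, the sector $x_{42}^{-1}x_{41}$ requires inverting $x_{24}=x_{23}x_{13}^{-1}x_{14}+x_{21}x_{31}^{-1}x_{34}$), and such inverses do not lie in the image of $\kk\,\UU_\Delta$. The Ptolemy relation only rewrites a sector \emph{toward} the new diagonal, such as $y_{\alpha_1,\alpha'}$; the reverse sectors $y_{\overline{\alpha'},\,\cdot}$ are forced by the star relations to be inverses of those sums, and your rewriting cannot eliminate them. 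So the crucial identity $\widetilde{\mathcal{B}}_\Sigma=\widetilde{\mathcal{B}}_\Delta$ fails, and with it the conclusion that $\pi$ is injective.

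The paper takes an entirely different route that sidesteps localization issues: it fixes one reference curve $\gamma_i$ at each marked point $i\in I_b\cup I_{p,1}$ and shows (Proposition \ref{prop:quotient}) that $x_\gamma\mapsto x_{\gamma_{s(\gamma)}}^{-1}x_\gamma$ defines an idempotent algebra endomorphism $\pi$ of $\mathcal{A}_\Sigma$ whose image is exactly $\mathcal{B}_\Sigma$. A retract of a presented algebra inherits a presentation whose relations are the $\pi$-images of the original defining relations (Corollary \ref{cor:relationB}); the proof is then completed by a purely formal check that relations $(1)$--$(5)$, with the star relations doing the work of changing the reference curve $\gamma_i$, imply each of those pushed-forward relations. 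If you want to repair your argument you would essentially have to reconstruct this retraction; the ``reduce to one cluster and invoke freeness of $\UU_\Delta$'' strategy, modeled on the Laurent phenomenon, does not apply to the sector algebra because it is not generated by the sectors of a single triangulation.
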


We prove Theorem \ref{th:sector presentation} in Section \ref{subsec:Proof of Theorem sector presentation and reduced surfaces}.

\begin{corollary}  \cite[Theorem 2.14]{BR}
\label{cor:tugged B_n}
  If $\Sigma$ is an unpunctured disk with $I=I_b=[n]=\{1,\ldots,n\}$ then ${\mathcal B}_\Sigma$ is generated by $y_{ij}^k$ for all distinct triples $i,j,k\in I$ subject to  

$\bullet$ (Triangle relations) $y_{ij}^ky_{ji}^k=1, y^k_{ij}y^i_{jk}y^j_{ki}=1$ and $y^l_{ij}y^l_{jk}y^l_{ki}=1$
for  $i,j,k,l\in I$;

$\bullet$ (Ptolemy relations)  
$y_{il}^j=y^k_{ij}y^i_{jl}+y^k_{il}$ for cyclic $(i,j,k,l)$ in $I$.

\end{corollary}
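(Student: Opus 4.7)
The plan is to specialize Theorem \ref{th:sector presentation} directly to $\Sigma=\Sigma_n$. Since $\Sigma_n$ is an unpunctured disk, $I_p(\Sigma_n)=\emptyset$, so relations $(3)$ and $(4)$ of Theorem \ref{th:sector presentation} are vacuous (there are no special loops and no special punctures). Moreover, every pair of distinct marked points in $\Sigma_n$ is joined by a unique arc up to isotopy, so any composable pair $(\gamma,\gamma')$ with $t(\gamma)=s(\gamma')=k$, $s(\gamma)=i$ and $t(\gamma')=j$ is uniquely determined by the triple $(i,j,k)\in[n]^3$ of distinct indices, and $y_{\gamma,\gamma'}$ can unambiguously be written $y_{ij}^k=x_{ki}^{-1}x_{kj}$. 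This identifies the generators listed in the corollary.

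Next, I would translate the three surviving families of relations. Each cyclic triangle in $\Sigma_n$ has vertices $i,j,k$, and relation $(1)$ of Theorem \ref{th:sector presentation} reads $y_{ij}^k y_{jk}^i y_{ki}^j=1$, which is the second triangle relation of the corollary. A cyclic quadrilateral $(i,j,k,l)$ yields, after directly unpacking the notation in Theorem \ref{th:sector presentation}$(2)$, a Ptolemy relation of the form $y_{il}^j=y_{il}^k+y_{ik}^j y_{kl}^i$. Using the cyclic triangle relation in the form $y_{ij}^k=y_{ik}^j y_{kj}^i$ together with the ``chain rule'' $y_{kj}^i y_{jl}^i=y_{kl}^i$ (see below), this rewrites as $y_{il}^j=y_{ij}^k y_{jl}^i+y_{il}^k$, matching the Ptolemy relation in the corollary. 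Finally, the star relations $(5)$ of Theorem \ref{th:sector presentation} in their $2$- and $3$-vertex forms give exactly $y_{ij}^k y_{ji}^k=1$ and $y_{ij}^l y_{jk}^l y_{ki}^l=1$, the first and third triangle relations of the corollary.

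The remaining point is to verify that star relations of length $m\ge 4$ already follow from the relations listed in the corollary. The key tool is the chain rule $y_{ij}^l y_{jk}^l=y_{ik}^l$, which is a direct consequence of the $3$-star $y_{ij}^l y_{jk}^l y_{ki}^l=1$ combined with the $2$-star $y_{ik}^l y_{ki}^l=1$. Applying this chain rule iteratively collapses any star word $y_{i_1 i_2}^l y_{i_2 i_3}^l\cdots y_{i_m i_1}^l$ step by step down to $y_{i_1 i_m}^l y_{i_m i_1}^l$, which equals $1$ by the $2$-star relation. Hence all longer star relations are redundant.

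The main obstacle is essentially bookkeeping: carefully tracking the orientations of the curves $\alpha_i,\alpha,\alpha'$ appearing in Theorem \ref{th:sector presentation} and matching $y_{\gamma,\gamma'}$ with the index form $y_{ij}^k$, so that the Ptolemy and triangle relations produced by the theorem genuinely reproduce the forms stated in the corollary (in particular, reconciling the two equivalent presentations of Ptolemy via the chain rule and the cyclic triangle identity). Once this translation is carried out cleanly, the corollary is an immediate consequence of Theorem \ref{th:sector presentation}.
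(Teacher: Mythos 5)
Your proposal is correct and follows exactly the route the paper intends: the corollary is stated as an immediate specialization of Theorem \ref{th:sector presentation} to the unpunctured disk, and you supply precisely the translation (identification of $y_{\gamma,\gamma'}$ with $y_{ij}^k=x_{ki}^{-1}x_{kj}$, reconciliation of the two forms of the Ptolemy relation via the triangle and chain-rule identities, and reduction of long star relations to the $2$- and $3$-term ones) that the paper leaves implicit. The only point worth a footnote is that when collapsing a long star word one may hit a pair with $i_1=i_3$, where one cancels directly by the $2$-star relation rather than invoking the chain rule, but this does not affect the argument.
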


And if $\Sigma=\Sigma_{n,1}$, is a punctured disk with
$n$ boundary points and $I_{P,1}=\{0\}$, then  ${\mathcal A}_\Sigma={\mathcal D}_n$  and Theorem \ref{th:sector presentation} implies the following: 

\begin{corollary}
${\mathcal B}_{\Sigma_{n,1}}$ is generated by $y_{0j}^{i,\pm}=x_{i0}^{-1}x^\pm_{ij}, y_{j0}^{i,\pm}=(x^\pm_{ij})^{-1}x_{i0}$ and $y^0_{ij}=x_{0i}^{-1}x_{0j}$ for distinct $i,j\in [n]$ subject to the relations:

$\bullet$ (Triangle relations) 
$y^{j,\pm}_{0i}y^{0}_{ij}y^{i,\pm}_{j0}=1$
for distinct $i,j\in [n]$;

$\bullet$ (Exchange relations) 
$y_{0k}^{i,-}=y^{j,+}_{0i}y^0_{ik}+y^{j,-}_{0k}$ for all counter-clockwise cyclic $(i,j,k)$ in $[n]$ and $y_{0k}^{i,+}=y^{j,-}_{0i}y^0_{ik}+y^{j,+}_{0k}$ for all clockwise cyclic $(i,j,k)$ in $[n]$.

$\bullet$ (Star relations) $y_{0j}^{i,\pm}y_{j0}^{i,\pm}$ for all $j\in [n]$ and $y^0_{ij}=y^0_{ji}=1$ for distinct $i,j\in [n]$.
\end{corollary}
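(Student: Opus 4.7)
The plan is to apply Theorem \ref{th:sector presentation} directly to $\Sigma=\Sigma_{n,1}$, which has one ordinary puncture $0$ and no $0$-punctures or special punctures, so the general presentation applies unmodified. First I would classify the composable pairs of curves. At the puncture $0$, the only incident curves are the arcs $(0,i)$ for $i\in[n]$, so the sectors at $0$ are precisely the $y^0_{ij}=x_{0i}^{-1}x_{0j}$. At a boundary marked point $i$, the incident curves are $(i,0)$, the two isotopy classes $(i,j)^\pm$ of arcs to each other boundary point $j$ (separated by the puncture), and the two boundary arcs. The sectors $y^{i,\pm}_{0j}$ and $y^{i,\pm}_{j0}$ arise precisely by pairing $(i,0)$ with $(i,j)^\pm$.

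The remaining sectors at $i$, namely those between $(i,j)^{\pm_1}$ and $(i,k)^{\pm_2}$ or involving a boundary arc, can be eliminated as generators by inserting $x_{i0}x_{i0}^{-1}$: explicitly, $(x_{ij}^{\pm_1})^{-1}x_{ik}^{\pm_2}=y^{i,\pm_1}_{j0}\cdot y^{i,\pm_2}_{0k}$, and the star relation at $i$ of Theorem \ref{th:sector presentation} makes these identifications consistent. The triangle relations $y^{j,\pm}_{0i}y^0_{ij}y^{i,\pm}_{j0}=1$ are the sector triangle relations applied to the two cyclic triangles with vertices $0,i,j$ and edge $(i,j)^\pm$. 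The exchange relations are the sector Ptolemy relations applied to quadrilaterals whose diagonals are an arc to the puncture and an arc $(i,j)^\pm$; the two sign choices correspond to the two orientations of the cyclic triple $(i,j,k)$ around $0$. The listed star relations $y^{i,\pm}_{0j}y^{i,\pm}_{j0}=1$ and $y^0_{ij}y^0_{ji}=1$ are the length-two instances at vertex $i$ and at vertex $0$ respectively; all longer star relations follow from these together with the triangle relations.

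The main obstacle will be careful bookkeeping of the two isotopy classes $(i,j)^\pm$ and the clockwise/counter-clockwise conventions so that the signs in the exchange relations match those stated. One must verify that both Ptolemy families defining $\mathcal{D}_n$, namely the ordinary one and the one involving arcs to the puncture, reduce to exactly the two stated exchange identities after left-multiplying by $x_{i0}^{-1}$ and right-adjusting by $x_{0i}^{-1}x_{0k}$ or by $x_{j0}^{-1}$ as appropriate; and that no additional relation is contributed by Ptolemy relations on quadrilaterals with all four vertices on the boundary, since those relations only express the ``long'' sectors at $i$ that have already been eliminated via the star relations. Once these checks are in place, matching each family term-by-term with Theorem \ref{th:sector presentation} yields both the generation statement and the completeness of the listed relations.
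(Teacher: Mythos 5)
Your proposal is correct and follows essentially the same route as the paper, which presents this corollary as an immediate specialization of Theorem \ref{th:sector presentation} to $\Sigma_{n,1}$ (the paper does not write out the details you supply). Your bookkeeping of the sectors at the puncture versus at boundary points, the elimination of the remaining sectors via the star relations, and the identification of the exchange relations with the sector Ptolemy relations all match the intended derivation.
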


Denote by $\underline{\mathcal A}_\Sigma$  the quotient of $\mathcal A_\Sigma$ by the ideal generated by $\{x_{\gamma}-1\mid \gamma \text{ is a boundary arc}\}$. We sometimes refer to 
$\underline{\mathcal A}_\Sigma$ as  \emph{reduced} noncommutative surface.

Likewise, denote by $\underline {\mathcal B}_\Sigma$ the image of ${\mathcal B}_\Sigma$ under the canonical homomorphism ${\mathcal A}_\Sigma\twoheadrightarrow \underline{\mathcal A}_\Sigma$. We sometimes refer to $\underline {\mathcal B}_\Sigma$ as the \emph{reduced sector algebra}. Clearly, $\underline {\mathcal A}_\Sigma={\mathcal A}_\Sigma$ hence $\underline {\mathcal B}_\Sigma={\mathcal B}_\Sigma$ when $\Sigma$ is closed.

\begin{theorem}
\label{th:reduced surfaces} Suppose that $\Sigma$ is not closed with $I_{p,0}(\Sigma)=\emptyset$. Then there exists a projection $\underline \pi$ from
$\underline{\mathcal {A}}_{\Sigma}$ onto $\underline{\mathcal {B}}_{\Sigma}\subset \underline{\mathcal {A}}_{\Sigma}$. Moreover, under this projection, we have 
$\underline{\mathcal {B}}_{\Sigma}=\underline{\mathcal {A}}_{\Sigma}$ if and only if $I_{p,1}(\Sigma)=\emptyset$.
\end{theorem}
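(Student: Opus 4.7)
The plan is to realize $\underline{\mathcal A}_\Sigma$ as a $\underline{\mathcal B}_\Sigma$-module extension by ``anchor'' elements indexed by the punctures of $\Sigma$, and take $\underline\pi$ to be the projection killing nontrivial anchor monomials. By Lemma~\ref{le:angle=ptolemy}(a) we may assume $\Sigma$ is connected with non-empty boundary. For each $i\in I_b$ fix a boundary arc $\gamma_i$ with $t(\gamma_i)=i$ (so $x_{\overline{\gamma_i}}=1$ in $\underline{\mathcal A}_\Sigma$), and for each puncture $p\in I_p$ fix an arc $\delta_p$ from a boundary point to $p$. Then every generator satisfies $x_\gamma=x_{\overline{\delta_{s(\gamma)}}}\cdot y_{\delta_{s(\gamma)},\gamma}$ (with the convention $\delta_i=\gamma_i$ for $i\in I_b$), so any monomial rewrites into a normal form $\big(\prod_{p\in I_p} x_{\overline{\delta_p}}^{n_p}\big)\cdot B$ with $B\in \underline{\mathcal B}_\Sigma$. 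Define $\underline\pi$ linearly as the map sending such a normal form to $B$ when every $n_p=0$ and to $0$ otherwise. Well-definedness reduces to showing that the relations of Definition~\ref{def:ASigma} are homogeneous for the $\Z^{I_p}$-grading counting anchor multiplicities, which holds because each relation equates monomials of identical total multiplicity at every puncture.

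\medskip
For the ``only if'' direction of the moreover, let $p\in I_{p,1}$ and let $\alpha$ be any arc with $s(\alpha)=p$. Its normal form has anchor exponent $n_p=1$ at $p$, so $\underline\pi(x_\alpha)=0$, while $\underline\pi$ is the identity on $\underline{\mathcal B}_\Sigma$. Since $x_\alpha\neq 0$ in $\underline{\mathcal A}_\Sigma$ (by the injectivity of $\iota_\Delta$, applied to any triangulation $\Delta$ containing $\alpha$), we conclude $x_\alpha\notin \underline{\mathcal B}_\Sigma$, hence $\underline{\mathcal B}_\Sigma\subsetneq \underline{\mathcal A}_\Sigma$. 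For the ``if'' direction, assume $I_{p,1}=\emptyset$; then every puncture is special, and we must show each anchor $x_{\overline{\delta_p}}$ lies in $\underline{\mathcal B}_\Sigma$, which collapses every normal form to its sector part and forces $\underline\pi=\id$. For a special $p$ pick a bigon $(\alpha_1,\alpha_2)$ enclosing $p$ with base at a boundary point $i$ and with enclosing loops $\alpha,\alpha'$ as in Definition~\ref{def:ASigma}(5). The ``outgoing'' elements $x_\alpha,x_{\alpha'},x_{\alpha_1},x_{\alpha_2}$ all start at $i\in I_b$ and hence lie in $\underline{\mathcal B}_\Sigma$; the bigon special-puncture relation, viewed as a linear equation in $x_{\overline{\alpha_1}}$, expresses it as a $\kk_\Sigma$-rational combination of these sector elements, using that $2\cos(\pi/|p|)\in \kk_\Sigma$ and that the relevant left factor is invertible in $\underline{\mathcal A}_\Sigma$. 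Running this argument along a finite chain of overlapping bigons connecting the boundary base-point to $\delta_p$ yields $x_{\overline{\delta_p}}\in \underline{\mathcal B}_\Sigma$.

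\medskip
The principal technical obstacle is verifying the $\Z^{I_p}$-homogeneity of the bigon special-puncture relation (three summands with mixed ``outgoing/incoming'' arcs at $p$) and the corresponding unique-solvability statement that underwrites the ``if'' direction; both reduce ultimately to the invertibility of appropriate cosine-shifted generators in $\underline{\mathcal A}_\Sigma$ and to the confluence of the normal-form rewriting modulo the defining relations.
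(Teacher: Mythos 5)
There is a genuine gap, and it is in the very first step: the claimed normal form $\bigl(\prod_{p} x_{\overline{\delta_p}}^{\,n_p}\bigr)\cdot B$ with $B\in\underline{\mathcal B}_\Sigma$ does not exist. Writing $x_\gamma=x_{\overline{\delta_{s(\gamma)}}}\,y_{\delta_{s(\gamma)},\gamma}$ is fine for a single generator, but in a noncommutative algebra the anchors cannot be collected to the left of a word: already $x_{\gamma'}x_{\gamma}$ with $s(\gamma)=s(\gamma')=p$ becomes $x_{\overline{\delta_p}}\,y\,x_{\overline{\delta_p}}\,y'$, an anchor--sector--anchor--sector word, and moving the inner $x_{\overline{\delta_p}}$ past $y$ would require $\underline{\mathcal B}_\Sigma$ to be stable under conjugation by the anchors, which is not known. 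Consequently $\underline\pi$ is not well defined by the recipe given (the invoked $\Z^{I_p}$-homogeneity of the relations does not address this reordering problem), and even if it were, it would only be a linear retraction, whereas the theorem (via Proposition \ref{prop:quotient}) asserts an idempotent \emph{algebra} endomorphism. The paper's route is to take $\pi(x_\gamma)=y_{\overline{\gamma_{s(\gamma)}},\gamma}=x_{\gamma_{s(\gamma)}}^{-1}x_\gamma$, verify directly that this assignment preserves every defining relation of ${\mathcal A}_\Sigma$ (the anchors cancel telescopically along composable alternating words, which is exactly the shape of all relations), choose $\gamma_i$ to be a boundary arc iff $i\in I_b$, and then observe that $\pi$ preserves the ideal $\langle x_\gamma-1\rangle$ and hence descends to $\underline\pi$.

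Two further points. For the ``if'' direction you are solving a non-problem: in this paper curves never end at a special puncture (they connect points of $I_b\cup I_{p,1}$ to points of $I_b\cup I_{p,0}\cup I_{p,1}$), so when $I_{p,1}=I_{p,0}=\emptyset$ there are no anchors $\delta_p$ at all; every generator has $s(\gamma)\in I_b$ and is already a sector element $y_{\overline\beta,\gamma}$ (with $\beta$ the boundary arc into $s(\gamma)$) in the reduced algebra, so $\underline{\mathcal B}_\Sigma=\underline{\mathcal A}_\Sigma$ is immediate and the chain-of-bigons argument is vacuous. For the ``only if'' direction, quoting injectivity of $\iota_\Delta$ into the \emph{unreduced} ${\mathcal A}_\Sigma$ does not give $x_\alpha\neq 0$ (or $\neq 1$) in the quotient $\underline{\mathcal A}_\Sigma$; you need the reduced injectivity of Proposition \ref{pr:G-Laurent}(b), or, as the paper does, pass to the abelianization (Lemma \ref{le:abelianization of a surface}) where $\underline x_{\gamma_p}$ is a genuine cluster variable and hence not equal to $1$, so the kernel ideal $\langle \underline x_{\gamma_p}-1\rangle$ of $\underline\pi$ is nonzero whenever $I_{p,1}\neq\emptyset$.
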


We prove Theorem \ref{th:reduced surfaces} in Section \ref{subsec:Proof of Theorem sector presentation and reduced surfaces}.


\subsection{Coinvariants and noncommutative orbifolds}
\label{subsec:Coinvariants and noncommutative orbifolds}

Recall that for any group $\Gamma$ of automorphisms of an algebra ${\mathcal A}$ the coinvariant algebra ${\mathcal A}_\Gamma$ of $\Gamma$ is the quotient of ${\mathcal A}$ by the ideal generated by all $\sigma(a)-a$, $a\in {\mathcal A}$, $\sigma\in \Gamma$.

We simply write ${\mathcal A}_\sigma$ when $\Gamma$ is the cyclic group generated by $\sigma\in Aut({\mathcal A})$.

\begin{proposition}
\label{prop:symmetries and orbifolds} 
Let $\Sigma'\in {\bf Surf}$ and let $f:\Sigma\to \Sigma'$ be the corresponding morphism in ${\bf Surf}$ induced by the quotient $\Sigma'=\Sigma/\Gamma$, where $\Gamma$ is a group of automorphisms of $\Sigma$. Then there is a surjective homomorphism $\kk_{\Sigma'}\otimes_{\kk_\Sigma}({\mathcal A}^f_{\Sigma})_\Gamma\twoheadrightarrow {\mathcal A}_{\Sigma'}$ (in the notation of Theorem \ref{th:functoriality nc-surface}), whose kernel is generated by the following elements:

$\bullet$ $x_{\gamma}-x_{\overline \gamma}$ for all arcs $\gamma$ such that $f(\gamma)$ is a special loop enclosing a special puncture $p$ such that $|p|\neq |f(p)|$;

$\bullet$ $x_{\gamma_k}-2\cos(\frac{k}{|\gamma|}\pi)x_\gamma$ for all pairs $(\gamma,\gamma_k)$ such that $f(\gamma)$ is a special loop enclosing a special puncture $p$ such that $|p|\neq |f(p)|$, and $f(\gamma_k)$ is a closed curve with $k$ self-intersection points and enclosing the same special puncture as $f(\gamma)$.
\end{proposition}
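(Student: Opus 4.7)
The plan is to construct the homomorphism via the functoriality of Theorem~\ref{th:functoriality nc-surface}, show it descends to the $\Gamma$-coinvariants, establish surjectivity by lifting curves along the branched cover, and identify the kernel by constructing an inverse homomorphism modulo the listed relations. By Theorem~\ref{th:functoriality nc-surface}(b), $f:\Sigma\to \Sigma'$ induces a homomorphism $f_*:\kk_{\Sigma'}\otimes_{\kk_\Sigma}{\mathcal A}^f_\Sigma\to {\mathcal A}_{\Sigma'}$ sending $x_\gamma\mapsto x_{f(\gamma)}$. Every $\sigma\in \Gamma$ satisfies $f\circ \sigma=f$, so $f_*(x_{\sigma(\gamma)})=x_{f(\sigma(\gamma))}=x_{f(\gamma)}=f_*(x_\gamma)$ and $f_*$ descends to a homomorphism $\overline{f_*}:\kk_{\Sigma'}\otimes_{\kk_\Sigma}({\mathcal A}^f_\Sigma)_\Gamma\to {\mathcal A}_{\Sigma'}$.

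For surjectivity, every curve $\gamma'$ in $\Sigma'$ has endpoints in $f(I_b(\Sigma)\cup I_{p,0}(\Sigma)\cup I_{p,1}(\Sigma))$, since special punctures never serve as endpoints of curves, and therefore lifts through the branched cover $f$ to a curve $\gamma$ in $\Sigma$ with $\overline{f_*}(x_\gamma)=x_{\gamma'}$. Writing $J$ for the ideal generated by the two listed families, the inclusion $J\subset \ker(\overline{f_*})$ is straightforward: the monogon relation $x_\ell=x_{\overline\ell}$ in ${\mathcal A}_{\Sigma'}$ kills $x_\gamma-x_{\overline\gamma}$ whenever $\ell:=f(\gamma)$ is a special loop, and iterating the bigon special puncture relation around the enclosed special puncture in $\Sigma'$ yields a Chebyshev-type identity relating the $k$-times wound loop to the simple loop with coefficient $2\cos(k\pi/|\gamma|)$, which kills the second family.

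The main task --- and the principal obstacle --- is the reverse inclusion $\ker(\overline{f_*})\subset J$. I would prove this by constructing an inverse homomorphism $\psi:{\mathcal A}_{\Sigma'}\to \widetilde{\mathcal A}:=\kk_{\Sigma'}\otimes_{\kk_\Sigma}({\mathcal A}^f_\Sigma)_\Gamma/J$, defined on each generator $x_{\gamma'}$ by $\psi(x_{\gamma'}):=[x_\gamma]$ for any chosen lift $\gamma$ of $\gamma'$ in $\Sigma$. Independence of the lift follows from $\Gamma$-coinvariance, as two lifts differ by the action of some $\sigma\in\Gamma$. One then verifies each defining relation of ${\mathcal A}_{\Sigma'}$ in $\widetilde{\mathcal A}$: triangle and Ptolemy relations lift directly by picking compatible lifts of the triangles and quadrilaterals involved; zero-puncture and $0$-puncture bigon relations lift because $f$ is a local isomorphism at $I_{p,0}$-preimages; monogons for loops in $\Sigma'$ whose lifts are genuine loops follow from the ${\mathcal A}_\Sigma$-monogon relation, while monogons for loops whose lifts are only arcs are handled precisely by the first family in $J$. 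The delicate case is the bigon special puncture relation in ${\mathcal A}_{\Sigma'}$ at a special puncture of altered order: its trigonometric coefficient $2\cos(\pi/|f(p)|)$ has no analogue on the $\Sigma$-side, and the second family in $J$ is exactly what is needed to rewrite the multi-wound loop $x_{\gamma_k}$ as a scalar multiple of the simple loop $x_\gamma$ and thereby reduce the $\Sigma'$-bigon relation to one already present in ${\mathcal A}_\Sigma$. This trigonometric bookkeeping at the new orbifold points of $\Sigma'$, together with the need to account uniformly for all multi-wound loop generators, is where the bulk of the effort lies.
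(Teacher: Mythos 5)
Your overall strategy is sound but genuinely different from the paper's. The paper does not verify the defining relations of ${\mathcal A}_{\Sigma'}$ in a quotient; instead it works entirely at the level of a single $f$-compatible pair of triangulations $(\Delta,\underline\Delta)$. The proof of Theorem \ref{th:functoriality nc-surface} together with Lemma \ref{lem:map} produces an explicit homomorphism of Laurent polynomial rings $\kk_{\Sigma'}[x_\gamma^{\pm1}\mid\gamma\in\Delta]\to\kk_{\Sigma'}[x_{\gamma'}^{\pm1}\mid\gamma'\in\underline\Delta]$ in which each generator is sent either to a generator or to an explicit scalar multiple $2\cos(\tfrac{k}{|\underline p|}\pi)\,x_{\underline\gamma}$ of one; the kernel of such a map is read off directly and is exactly the two listed families. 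The extension to all of $\kk_{\Sigma'}\otimes_{\kk_\Sigma}({\mathcal A}_\Sigma^f)_\Gamma$ is then obtained by localization (Lemma \ref{lem:lift}) and by the observation that $\hat f_*$ is independent of the chosen compatible pair. Your route — descending to coinvariants, then constructing an inverse $\psi$ modulo $J$ by checking every defining relation of ${\mathcal A}_{\Sigma'}$ — buys a more self-contained argument that does not lean on the localization machinery, at the cost of having to verify all six relation families (and in particular the trigonometric identity behind the bigon special puncture relation at the new orbifold points, which in the paper is absorbed into the ``direct calculation'' of Lemma \ref{lem:map}). Both approaches concentrate the real work in the same place.

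One local correction you should make: your surjectivity sentence asserts that every curve $\gamma'$ in $\Sigma'$ lifts to a curve in $\Sigma$. This fails precisely for the special loops $\ell'$ around a special puncture $p'$ with $|p'|\neq|f^{-1}(p')|$: by Lemma \ref{le:f-admissible triangulations}(c) the preimage $f^{-1}(\ell')$ is a polygon, not a curve, and the lift of $\ell'$ starting at a chosen basepoint is only a side of that polygon. Surjectivity still holds because $x_{\ell'}$ is the image of $x_\gamma$ for $\gamma$ such a side (with coefficient $1$), and the same adjustment is needed in the definition of $\psi(x_{\ell'})$; as you note, the first family in $J$ is then exactly what makes this choice of side well defined up to the monogon relation. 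With that repair the argument goes through.
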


We prove Proposition \ref{prop:symmetries and orbifolds} in Section \ref{subsec:Proof of Theorem functoriality nc-surface}.

The following is an immediate consequence of Proposition \ref{prop:symmetries and orbifolds}.

\begin{corollary}\label{cor:Chekhov-Shapiro}  
 For any $d\ge 2$, the quotient 
of $\mathbb Q(\cos\frac{2\pi}{d})\otimes_\mathbb Q{\mathcal A}_{nd}$  
by relations 
$x_{ij}=x_{i+n, j+n}$
modulo $nd$ for distinct $i,j=1,\ldots,nd$ and $x_{i,i+kn}=2\cos(\frac{\text{min}\{k-1,d-k\}}{d} \pi)x_{i,i+n}=2\cos(\frac{\text{min}\{k-1,d-k\}}{d} \pi)x_{i+n,i}$, $i=1,\cdots,n, k=1,\cdots,d-1$
is generated by $x_{ij}^+:=x_{ij}, x_{ij}^-:=x_{i,j+(d-1)n}$ for distinct $i,j=1,\ldots,n$
and $x_i:=x_{i,i+n}=x_{i+n,i}$ for $i=1,\ldots,n$ subject to:

$\bullet$ $x_{ij}^+(x_{kj}^+)^{-1}x_{ki}^+=x_{ik}^-(x_{jk}^-)^{-1}x_{ji}^-$ for any distinct $i,j,k$ in clockwise order.
 
$\bullet$  $x_{ij}^+x^{-1}_{j}x_{ji}^+=x_{ij}^-x^{-1}_{j}x_{ji}^-$ for any distinct $i,j$.

$\bullet$ $x_{\ell j}^+=x_{\ell i}^+(x_{ki}^-)^{-1}x_{kj}^-+x_{\ell k}^+(x_{ik}^+)^{-1}x_{ij}^+$ for any distinct $i,j,k,\ell$ in clockwise order.

$\bullet$ $x_j=x_{ji}^- x_i^{-1}x_{ij}^+ + 2\cos\left({\frac{\pi}{d}}\right)x_{ji}^+ x_i^{-1} x_{ij}^+ +x_{ji}^+ x_i^{-1}x_{ij}^-$ for any distinct $i,j$.
\end{corollary}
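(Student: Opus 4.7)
The plan is to invoke Proposition~\ref{prop:symmetries and orbifolds} directly with $\Sigma := \Sigma_{nd}$ and $\Gamma := \mathbb{Z}/d$ acting on $\Sigma$ by the rotation $\sigma$ through angle $2\pi/d$, so that on the boundary $\sigma(i) = i+n \pmod{nd}$ and the center of the disk is the unique $\Gamma$-fixed point, with stabilizer of order $d$. Thus the quotient is $\Sigma' := \Sigma/\Gamma = \underline{\Sigma}_{n,d}$, the disk with $n$ boundary marked points $\bar 1,\dots,\bar n$ and one special puncture $p$ of order $d$ at the image of the center. By Remark~\ref{rem:A_{n,d}}, $\mathcal{A}_{\Sigma'}$ is precisely $\mathcal{A}_{n,\mathbf{c}}$ with $c_{ij}=2\cos(\pi/d)$ for all $i,j$, whose defining relations are exactly the four bullets displayed in the Corollary.

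Next I would read off the two families of kernel relations supplied by Proposition~\ref{prop:symmetries and orbifolds} applied to the quotient morphism $f\colon\Sigma\to\Sigma'$. The $\Gamma$-coinvariance $x_\gamma = x_{\sigma\gamma}$ translates, on the generators of $\mathcal{A}_{nd}$, to $x_{ij} = x_{i+n,j+n}\pmod{nd}$, accounting for the first family of defining relations in the Corollary. The extra kernel from the proposition is supported on arcs of $\Sigma$ whose image under $f$ is a special loop around $p$; since both endpoints of such an arc must lie in one $\sigma$-orbit on the boundary, these are exactly the chords $\gamma^{(k)} := (i,i+kn)$ for $1\le k\le d-1$. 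The chord $\gamma^{(1)}=(i,i+n)$ projects to the simple loop around $p$, while $\gamma^{(k)}$ projects to a closed curve around $p$ whose minimal-position self-intersection count is $\min(k-1,d-k)$ --- the $\min$ arises because, at an orbifold point of order $d$, the classes of loops winding $k$ and $d-k$ times are isotopic up to orientation, so the self-intersection count is symmetric about $k=d/2$. Substituting into the second bullet of Proposition~\ref{prop:symmetries and orbifolds} yields
$$x_{i,i+kn} \;=\; 2\cos\!\Bigl(\tfrac{\min\{k-1,d-k\}}{d}\pi\Bigr)\,x_{i,i+n},$$
while the first bullet of the same proposition together with $\Gamma$-invariance gives $x_{i,i+n}=x_{i+n,i}$. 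These are exactly the remaining Corollary relations on $\mathcal{A}_{nd}$.

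To finish, I introduce the change of variables $x_{ij}^+:=x_{ij}$ and $x_{ij}^-:=x_{i,j+(d-1)n}$ --- distinguishing the two arcs from $\bar i$ to $\bar j$ in $\Sigma'$ by which side of $p$ they pass on --- and $x_i:=x_{i,i+n}=x_{i+n,i}$. Then the triangle relations of $\mathcal{A}_{nd}$ descend to the first and second bullets of the Corollary, the Ptolemy relations of $\mathcal{A}_{nd}$ not crossing $p$ descend to the third bullet, and the Ptolemy relations crossing $p$, combined with the Chebyshev relation $x_{i,i+2n}=2\cos(\pi/d)\,x_{i,i+n}$, collapse into the fourth bullet with its coefficient $2\cos(\pi/d)$. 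The main technical point to nail down carefully is the self-intersection count $\min(k-1,d-k)$ that drives the Chebyshev coefficient; once that is granted, the remainder is routine bookkeeping of how each generator and defining relation of $\mathcal{A}_{nd}$ passes to the coinvariant quotient and matches one of the four bullets of the Corollary.
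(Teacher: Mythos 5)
Your proposal is correct and follows the same route as the paper: the paper derives this corollary directly from Proposition~\ref{prop:symmetries and orbifolds} applied to the rotation action of $\mathbb{Z}/d$ on $\Sigma_{nd}$, with the identification $\mathcal{A}_{\underline\Sigma_{n,d}}=\mathcal{A}_{n,\mathbf{c}}$ (all $c_{ij}=2\cos(\pi/d)$) already recorded in Remark~\ref{rem:A_{n,d}}. You have simply unpacked the same argument in more detail, including the self-intersection count $\min\{k-1,d-k\}$ that feeds the second family of kernel generators of the proposition.
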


\begin{remark} \label{rem:coinvariant orbifold}
$(a)$ Let $\sigma$ be an orientation-preserving automorphism of an oriented surface $\Sigma$. Then ${\mathcal A}_{\Sigma/\sigma}$ is a quotient algebra of $({\mathcal A}_\Sigma)_\sigma$.

$(b)$ Suppose that $\sigma$ is an admissible reflection of $\Sigma$. Then $\mathcal A_{\Sigma/\sigma}\cong {\mathcal A}_{\Sigma_+}\cong {\mathcal A}_{\Sigma_-}$,
where $\Sigma_+$ and $\Sigma_-$ are halves of $\Sigma$ interchanged by $\sigma$ (e.g., $\Sigma_+$ is a fundamental domain of $\Sigma$ and it has a boundary which consists of all curves of $\Sigma$ of $\sigma$). This is true because if $\gamma$ is a curve in $\Sigma$ which crosses the reflection line, the image $f(\gamma)=\gamma/\sigma$ is not well-defined in $\Sigma/\sigma$, in particular, $x_\gamma^{-1}\notin \mathcal A_{\Sigma}^f$. 

In particular,  ${\mathcal A}_{n+1}$ is isomorphic to the coinvariant algebra of the automorphism $\tau$ of ${\mathcal A}_{2n}$ induced by the reflection of $\Sigma_{2n}$ along the diagonal $(1,n+1)$. 

${\mathcal A}_{n+2}$ is the coinvariant algebra of the automorphism $\tau$ of ${\mathcal D}_{2n}$ induced by the reflection of $\Sigma_{2n}$ along the line passing through $1,0$ and $n+1$. 

\end{remark}

\subsection{More automorphisms, tagged curves, and the algebra ${\mathcal B}_n$} 
\label{subsec:symmetries and orbifolds}

Clearly, any automorphism $\sigma$ of $\Sigma$ defines an automorphism of ${\mathcal A}_\Sigma$ via $x_\gamma\mapsto x_{\sigma(\gamma)}$. 

It turns out that there are more automorphisms of ${\mathcal A}_\Sigma$ parametrized by a family ${\bf c}=(c_i,i\in I)$ of invertible elements of ${\mathcal A}_\Sigma$.

\begin{lemma}
[Scaling algebra automorphisms]
\label{le:scaling automorphisms} 
For any family ${\bf c}=(c_i,i\in I)$ as above, the assignments $x_\gamma\mapsto c_{s(\gamma)}x_\gamma c_{t(\gamma)}$ define an automorphism $\varphi_{\bf c}$  of ${\mathcal A}_\Sigma$. Also $\varphi_{\bf c}\circ \varphi_{\bf c}'=\varphi_{{\bf c}\cdot {\bf c'}}$ whenever $c_ic'_i=c'_ic_i$ for all $i\in I$ (here ${\bf c}\cdot {\bf c'}=(c_ic'_i)$).

\end{lemma}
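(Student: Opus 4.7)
The plan is to verify that the assignments $x_\gamma\mapsto c_{s(\gamma)}x_\gamma c_{t(\gamma)}$ preserve every defining relation of $\mathcal A_\Sigma$ listed in Definition \ref{def:ASigma}, and then derive the composition law and the automorphism property.

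The key observation is a path-homogeneity principle. Regard each generator $x_\gamma$ as a formal morphism $s(\gamma)\to t(\gamma)$, so that $x_\gamma^{-1}$ reverses source and target. Inspecting the relations of Definition \ref{def:ASigma}, every monomial appearing on either side of any relation is a composable word in these morphisms, and on both sides the resulting path begins and ends at the same pair of vertices. Concretely: the triangle relation $x_{\alpha_1}x_{\overline{\alpha_2}}^{-1}x_{\alpha_3}=x_{\overline{\alpha_3}}x_{\alpha_2}^{-1}x_{\overline{\alpha_1}}$ gives two parallel loops at $s(\alpha_1)$; every summand of the Ptolemy relation traces a path from $s(\alpha')$ to $t(\alpha')$; and the bigon relations around a special puncture have all three summands endpoint-parallel to the loop on the left. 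Under $\varphi_{\bf c}$, each factor $x_\gamma$ acquires $c_{s(\gamma)}$ on the left and $c_{t(\gamma)}$ on the right, and each $x_\gamma^{-1}$ acquires $c_{t(\gamma)}^{-1}$ and $c_{s(\gamma)}^{-1}$ respectively. In a composable word, all interior $c$'s cancel in adjacent pairs, leaving every monomial sandwiched between $c_s$ on the far left and $c_t$ on the far right. Hence each defining relation is mapped to the same relation conjugated on the outside by invertible elements, which is equivalent to the original.

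The two cases that fall outside this pattern have to be handled separately. The monogon relation $x_{\overline\ell}=x_\ell$ for a special loop is preserved because both sides of the relation become $c_{s(\ell)}(\cdot)c_{s(\ell)}$. The $0$-puncture relation $x_\ell=x_\gamma x_{\overline\gamma}$ is the only genuinely delicate one: the right-hand side picks up an interior factor of $c_p^2$ at the $0$-puncture $p$, while the left-hand side does not. Thus the natural reading of the lemma is that one requires $c_p=1$ at every $0$-puncture (which is automatic in the motivating applications of Theorem \ref{th:tagged automorphism intro}, where $c_i\neq 1$ only at tagged ordinary punctures). With this understood, $\varphi_{\bf c}$ extends uniquely to a well-defined algebra endomorphism of $\mathcal A_\Sigma$.

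For the composition law, a direct computation on a generator gives
\[
(\varphi_{\bf c}\circ\varphi_{\bf c'})(x_\gamma)=\varphi_{\bf c}(c'_{s(\gamma)})\,c_{s(\gamma)}\,x_\gamma\,c_{t(\gamma)}\,\varphi_{\bf c}(c'_{t(\gamma)}).
\]
Under the commutation hypothesis $c_ic'_i=c'_ic_i$, provided $\varphi_{\bf c}$ fixes each $c'_i$ (which will be the main subtle point and is automatic when the $c'_i$ are expressible in $\varphi_{\bf c}$-invariant form, as is the case in the intended applications), this collapses to $(c_{s(\gamma)}c'_{s(\gamma)})x_\gamma(c_{t(\gamma)}c'_{t(\gamma)})=\varphi_{{\bf c}\cdot{\bf c'}}(x_\gamma)$. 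Taking ${\bf c'}=(c_i^{-1})$ then exhibits a two-sided inverse of $\varphi_{\bf c}$, so $\varphi_{\bf c}$ is invertible and hence an automorphism. The main obstacle is thus the single point of verifying the invariance $\varphi_{\bf c}(c'_i)=c'_i$ needed to collapse the composition: the rest is a mechanical application of the path-homogeneity principle and the cancellation of interior $c$-factors.
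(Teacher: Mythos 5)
Your proof is correct and takes essentially the same route as the paper: the paper's entire argument is the single computation that $\varphi_{\bf c}$ sends any composable alternating word $x_{\overline\gamma_1}^{-1}x_{\gamma_2}\cdots x_{\overline\gamma_n}^{-1}$ to $c_{s(\gamma_1)}^{-1}\,(\text{same word})\,c_{t(\gamma_n)}^{-1}$, which is precisely your path-homogeneity / interior-cancellation principle. The two caveats you flag --- that the $0$-puncture relation $x_\ell=x_\gamma x_{\overline\gamma}$ picks up an interior $c_p^2$ and so effectively forces $c_p=1$ at $0$-punctures, and that the composition law needs $\varphi_{\bf c}(c'_i)=c'_i$ in addition to $c_ic'_i=c'_ic_i$ --- are genuine and are simply not addressed in the paper's one-line proof, though both hold in the intended application to the tagging automorphisms of Corollary \ref{cor:tagged automorphism}.
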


\begin{proof} 
For an odd number $n$ and a sequence of curves $\gamma_1,\gamma_2,\cdots, \gamma_n$ with $s(\gamma_{i+1})=t(\gamma_i)$ for $i=1,\cdots, n-1$, we have 
$$\varphi_{\bf c}(x^{-1}_{\overline\gamma_1}x_{\gamma_2}x^{-1}_{\overline\gamma_3}\cdots x^{-1}_{\overline\gamma_n})=c^{-1}_{s(\gamma_1)}x^{-1}_{\overline\gamma_1}x_{\gamma_2}x^{-1}_{\overline\gamma_3}\cdots x^{-1}_{\overline\gamma_n}c^{-1}_{t(\gamma_n)}.$$ Therefore, $\varphi_{\bf c}$ preserves all relations in Definition \ref{def:ASigma}.

The result follows.
\end{proof}

\begin{remark} 
\label{rem:automorphisms}
We expect that the group of automorphisms of ${\mathcal A}_\Sigma$ is generated by automorphisms of $\Sigma$ and scaling algebra automorphisms from Lemma \ref{le:scaling automorphisms}. Moreover, we expect that the group of invertible elements of ${\mathcal A}_\Sigma$ is generated by $\kk^\times$ and all $x_\gamma$.    
\end{remark}
In particular, when $\Sigma$ is punctured, set    $c_i=T_i^{\chi_P(i)}$ for $i\in I_{P,1}$ for any subset $P\subset I_{P,1}$ (here $\chi_P(i)=
\begin{cases}
1 & \text{if $i\in P$}\\
0 &\text{otherwise}
\end{cases}$
is the characteristic function of $P$). Then Lemma \ref{le:scaling automorphisms} implies the following 

\begin{corollary} [Tagging automorphism]
\label{cor:tagged automorphism}    
For any subset $P\subset I_{P,1}(\Sigma)$ the assignments 
$$x_{\gamma}\mapsto T_{s(\gamma)}^{\chi_P(s(\gamma))}x_{\gamma}T_{t(\gamma)}^{\chi_P(t(\gamma))}$$ define an involutive automorphism $\varphi_P$
of the algebra ${\mathcal A}_\Sigma$. Moreover, $\varphi_{P\cup P'}=\varphi_P \circ \varphi_{P'}$ if $P\cap P'=\emptyset$.
\end{corollary}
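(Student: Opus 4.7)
The plan is to invoke Lemma~\ref{le:scaling automorphisms} with the family $\mathbf{c}=(c_i)_{i\in I}$ defined by $c_i:=T_i^{\chi_P(i)}$. Since $\chi_P$ is supported on $I_{P,1}$, the scaling is trivial outside of $P$, and one only needs to verify that $T_p$ is invertible in ${\mathcal A}_\Sigma$ for each $p\in P$. This invertibility will actually drop out of the main computation below, and provisionally one may set up $\varphi_P$ as an endomorphism on the subalgebra generated by the $x_\gamma$ (no inverses), deferring the existence of $T_p^{-1}$ until involutivity has been checked.

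The central step is the identity
\begin{equation*}
\varphi_P(T_i)=T_i^{1-2\chi_P(i)}\qquad (i\in I),
\end{equation*}
which specializes to $\varphi_P(T_i)=T_i$ when $i\notin P$ and to $\varphi_P(T_p)=T_p^{-1}$ when $p\in P$. To verify it, I use the description of $T_i$ given in the proof of Proposition~\ref{pro:total angle}: choose an arc $\gamma$ with $s(\gamma)=i$ (in case $i$ is a puncture) and write
\begin{equation*}
T_i=\sum_j x_{\overline{\gamma_j^+}}^{-1}\,x_{\delta_j}\,x_{\gamma_j^-}^{-1},
\end{equation*}
one summand per canonical triangle $(\gamma_j^+,\delta_j,\overline{\gamma_j^-})$ at a lift of $i$, so that $s(\gamma_j^\pm)=i$, $t(\gamma_j^+)=s(\delta_j)=:q_j'$, and $t(\gamma_j^-)=t(\delta_j)=:q_j$. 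Applying $\varphi_P$ factor by factor, the scaling coefficients at the interior vertices telescope:
\begin{equation*}
\varphi_P\bigl(x_{\overline{\gamma_j^+}}^{-1}x_{\delta_j}x_{\gamma_j^-}^{-1}\bigr)=T_i^{-\chi_P(i)}\cdot\bigl(x_{\overline{\gamma_j^+}}^{-1}x_{\delta_j}x_{\gamma_j^-}^{-1}\bigr)\cdot T_i^{-\chi_P(i)},
\end{equation*}
independently of whether $q_j,q_j'$ belong to $P$. Summing over $j$ yields $\varphi_P(T_i)=T_i^{-\chi_P(i)}\cdot T_i\cdot T_i^{-\chi_P(i)}=T_i^{1-2\chi_P(i)}$.

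Involutivity is then immediate: for any curve $\gamma$,
\begin{equation*}
\varphi_P^2(x_\gamma)=\varphi_P(T_{s(\gamma)})^{\chi_P(s(\gamma))}\,\varphi_P(x_\gamma)\,\varphi_P(T_{t(\gamma)})^{\chi_P(t(\gamma))}=T_{s(\gamma)}^{-\chi_P(s(\gamma))}\cdot T_{s(\gamma)}^{\chi_P(s(\gamma))}x_\gamma T_{t(\gamma)}^{\chi_P(t(\gamma))}\cdot T_{t(\gamma)}^{-\chi_P(t(\gamma))}=x_\gamma,
\end{equation*}
using $\chi_P(i)\in\{0,1\}$. This shows that $\varphi_P$ is an automorphism and, as a bonus, that $T_p$ is a unit for every $p\in P$ with inverse $\varphi_P(T_p)$, legitimizing the initial appeal to Lemma~\ref{le:scaling automorphisms}.

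Finally, for disjoint $P,P'$, the supports of $\chi_P$ and $\chi_{P'}$ are disjoint, so at each $i$ at most one of $T_i^{\chi_P(i)}$ and $T_i^{\chi_{P'}(i)}$ is nontrivial; in particular they commute, and $T_i^{\chi_P(i)}\cdot T_i^{\chi_{P'}(i)}=T_i^{\chi_{P\cup P'}(i)}$. The composition formula from Lemma~\ref{le:scaling automorphisms} then yields $\varphi_P\circ\varphi_{P'}=\varphi_{P\cup P'}$. The main obstacle in this program is the sector-by-sector cancellation underlying the identity $\varphi_P(T_i)=T_i^{1-2\chi_P(i)}$: the algebra is short but hinges on a careful identification of source and target of every generator in a canonical triangle, and on the crucial fact that the two ``outside'' endpoints of each sector summand both coincide with $i$.
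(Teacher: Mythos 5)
Your proof follows the paper's route exactly: the paper obtains $\varphi_P$ from Lemma \ref{le:scaling automorphisms} with $c_i=T_i^{\chi_P(i)}$, and its Proposition \ref{pr:tagging angles} establishes $\varphi_P(T_i^{\pm 1})=T_i^{\mp 1}$ for $i\in P$ by the very same telescoping of Laurent monomials based at $i$, from which involutivity and the composition rule for disjoint $P,P'$ follow. The only caveat is that your ``deferred invertibility'' of $T_p$ is slightly circular (the telescoping computation already manipulates $c_i^{-1}$, and $\varphi_P$ must act on the generators $x_\gamma^{-1}$), but the paper's own appeal to the scaling lemma presupposes this invertibility just as silently, so this is not a deviation from its argument.
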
 

\begin{remark}\label{rmk:green}
    Corollary \ref{cor:tagged automorphism} can be viewed as a noncommutative version of the cluster transformation defined by a green sequence of mutations. It is closely related to cluster DT-transformations; see \cite{GS, Ko}.
\end{remark}

The tagging automorphisms share the following remarkable property.

\begin{proposition}
\label{pr:tagging angles}
    
$\varphi_P(T_i)=T_i$ if $i\notin P$, $\varphi_P(T_i^{\pm 1})=T_i^{\mp 1}$ if $i\in P$.
    
\end{proposition}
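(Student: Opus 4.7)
The plan is to exploit the "sum of local triangle angles" decomposition of $T_i$ provided by Proposition \ref{pro:total angle}, and then observe that $\varphi_P$ acts on each such local summand by conjugation with $T_i^{-\chi_P(i)}$, with all other factors cancelling telescopically.

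First I would recall that at a boundary point $j$, one has $T_j = x_{\overline{\gamma^+}}^{-1} x_\gamma x_{\gamma^-}^{-1}$ for the canonical triangle, and that at a puncture $i$ (necessarily $i\in I_{p,1}$) cutting along an arc $\gamma$ gives $T_i = \sum_{i'\in f_\gamma^{-1}(i)} (f_\gamma)_*(T_{i'})$. Pushing each boundary summand into ${\mathcal A}_\Sigma$, one gets that $T_i$ is a finite sum of expressions of the form
\[
\theta(\beta,\delta,\alpha) \;=\; x_{\overline\beta}^{-1}\, x_\delta\, x_\alpha^{-1}
\]
where $(\beta,\delta,\overline\alpha)$ is a cyclic triangle at $i$ (so $s(\beta) = s(\alpha) = i$, $t(\beta) = s(\delta) = j$, $t(\delta) = t(\alpha) = k$ for some $j,k \in I$).

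Next I would compute $\varphi_P(\theta(\beta,\delta,\alpha))$ directly from the definition $\varphi_P(x_\gamma) = T_{s(\gamma)}^{\chi_P(s(\gamma))} x_\gamma T_{t(\gamma)}^{\chi_P(t(\gamma))}$. Since $\varphi_P$ is an algebra homomorphism,
\[
\varphi_P(\theta) \;=\; \bigl(T_i^{-\chi_P(i)} x_{\overline\beta}^{-1} T_j^{-\chi_P(j)}\bigr)\bigl(T_j^{\chi_P(j)} x_\delta T_k^{\chi_P(k)}\bigr)\bigl(T_k^{-\chi_P(k)} x_\alpha^{-1} T_i^{-\chi_P(i)}\bigr),
\]
and the middle pairs $T_j^{-\chi_P(j)}T_j^{\chi_P(j)}$ and $T_k^{\chi_P(k)} T_k^{-\chi_P(k)}$ are identity, leaving exactly $T_i^{-\chi_P(i)}\, \theta\, T_i^{-\chi_P(i)}$. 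Note that for this cancellation to be well-defined it is enough that $T_j$ exists whenever $\chi_P(j) \neq 0$, which is automatic since $P \subset I_{p,1}$.

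Summing over all local triangle angles at $i$ then gives
\[
\varphi_P(T_i) \;=\; T_i^{-\chi_P(i)}\,T_i\,T_i^{-\chi_P(i)} \;=\; T_i^{\,1-2\chi_P(i)}.
\]
So $\varphi_P(T_i) = T_i$ when $i\notin P$, and $\varphi_P(T_i) = T_i^{-1}$ when $i\in P$. Taking inverses of this identity, $\varphi_P(T_i^{-1}) = T_i^{\mp 1}$ follows automatically from $\varphi_P(T_i^{\pm 1})$, yielding the full proposition. The only subtle step is checking that the local cancellation really does work simultaneously at every triangle vertex $j,k$ regardless of whether $j,k$ are boundary points, $0$-punctures, special punctures, or punctures in/out of $P$; but in every case $\chi_P$ is defined (zero outside $I_{p,1}$) and the inner $T_j^{\pm \chi_P(j)}$ factors trivially pair up, so no case analysis is really needed beyond the observation that only the endpoint $i$ contributes a surviving $T_i^{-\chi_P(i)}$ on each side.
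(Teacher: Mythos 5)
Your proof is correct and follows essentially the same route as the paper's: both decompose $T_i$ (via Proposition \ref{pro:total angle}) into Laurent monomials whose outermost factors are inverted generators anchored at $i$, and observe that under $\varphi_P$ the interior $T_j^{\pm\chi_P(j)}$ factors cancel telescopically, leaving $T_i^{-\chi_P(i)}\,T_i\,T_i^{-\chi_P(i)}$. The only cosmetic difference is that the paper phrases the summands in the general alternating form $x_{\overline\gamma_1}^{-1}x_{\gamma_2}\cdots x_{\gamma_{2n}}x_{\overline\gamma_{2n+1}}^{-1}$ with $s(\gamma_1)=t(\gamma_{2n+1})=i$ (which also covers the $2\cos(\pi/|p|)\,x_{\ell_p}^{-1}$ loop terms at special punctures), whereas you specialize to length-three triangle angles; your cancellation argument applies verbatim to the longer monomials and to the loop terms.
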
 

\begin{proof} By Proposition \ref{pro:total angle}, $T_i$ is a sum of linear combination of some Laurent monomials of the form $x_{\overline\gamma_1}^{-1}x_{\gamma_2}\cdots x_{\gamma_{2n}}x_{\overline\gamma_{2n+1}}^{-1}$ for some sequence of composable curves $\gamma_1,\cdots,\gamma_{2n+1}$ with $s(\gamma_1)=t(\gamma_{2n+1})=i$.
Then the result follows immediately. 
\end{proof}

In view of the above,  we abbreviate $x_{\gamma^P}:=\varphi_P(x_\gamma)$ for any $\gamma$ and any subset $P\subset I_{P,1}$ and sometimes refer to it as a  {\it noncommutative tagged curve} (and  to $\gamma^P$ as the $P$-{\it tagged} curve)
Clearly, $\gamma^P$ depends only on $\{s(\gamma),t(\gamma)\}\cap P$, e.g.,  $\gamma^\emptyset=\gamma$.

\begin{figure}[ht]

\centerline{\includegraphics {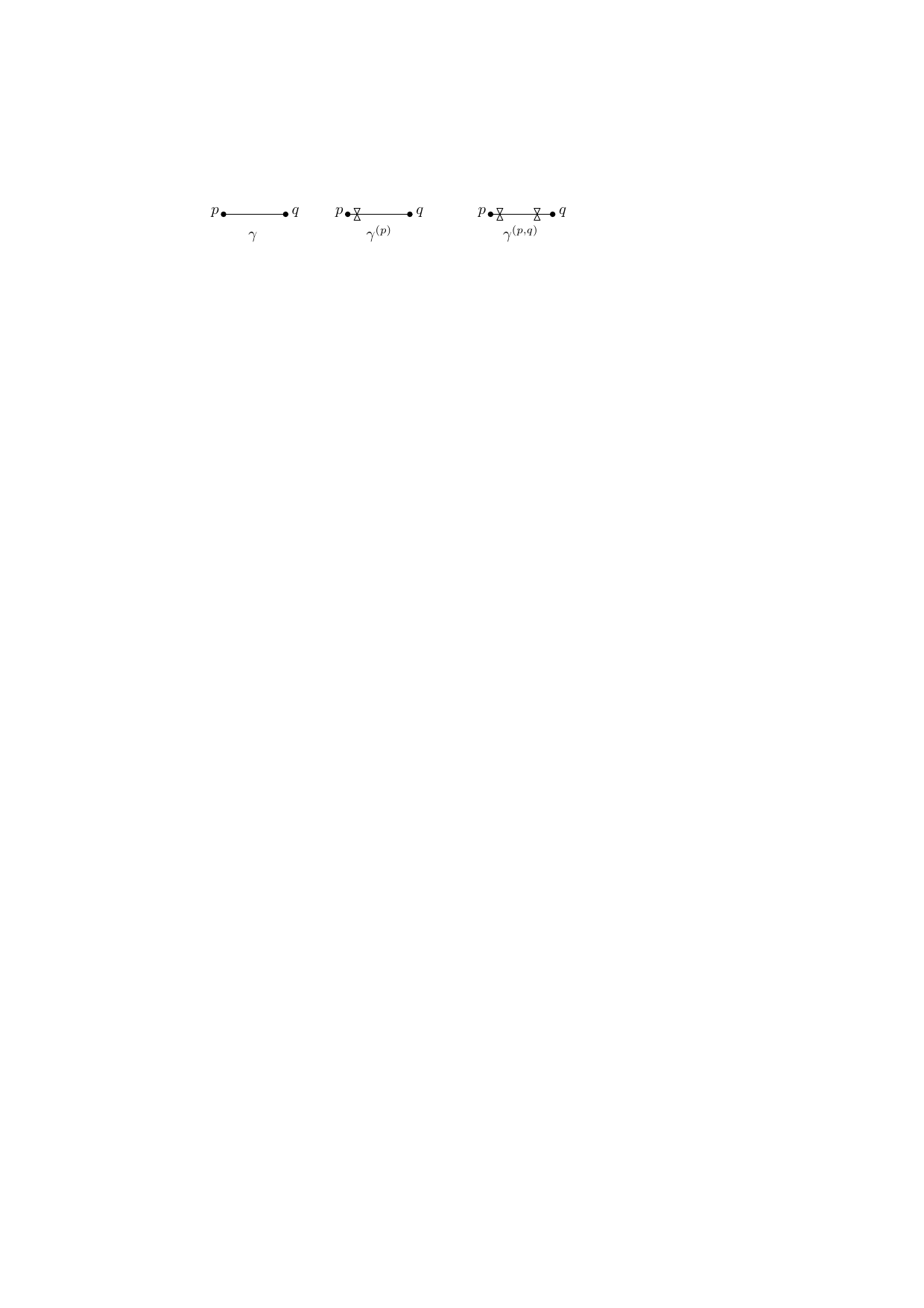}}

\caption{\rm Tagged curves}

\end{figure}

The following is immediate. 

\begin{corollary} 
\label{cor:Tp=1}
In the notation of Corollary \ref{cor:tagged automorphism}, suppose that $|I|\ge 2$ (i.e.,  $\Sigma$ is not a once-punctured closed surface). Then for any subset $P\subset I_{P,1}$, the algebra 
${\mathcal A}_{\Sigma^P}$ is the quotient of ${\mathcal A}_\Sigma$ by the relations $T_p=1$ for all $p\in P$, equivalently $x_\ell=x_\gamma x_{\overline\gamma}$ for all loop encloses an arc $\gamma$ with $s(\ell)=s(\gamma)$ and $t(\gamma)\in P$, where $\Sigma^P$ is obtained from $\Sigma$ by converting the ordinary punctures in $P$ into $0$-punctures.

In particular, when $\Sigma=\Sigma_{n,1}$ is the once-punctured disk and $P$ is the unique puncture, we have ${\mathcal A}_{\Sigma^P}={\mathcal B}_n$.
\end{corollary}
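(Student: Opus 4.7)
The plan is to construct mutually inverse algebra homomorphisms between $\mathcal{A}_{\Sigma^P}$ and $\mathcal{A}_\Sigma/\langle T_p-1:p\in P\rangle$. First I would compute $T_p$ at an ordinary puncture $p$. Since $|I|\geq 2$, I can choose an arc $\gamma$ with $t(\gamma)=p$ and $s(\gamma)\neq p$. The cutting morphism $f_\gamma:\Sigma_\gamma\to\Sigma$ is a local isomorphism at $p$ with a single preimage $p'$, now a boundary marked point of $\Sigma_\gamma$ whose canonical triangle has as sides the two boundary copies of $\gamma$ and, as internal edge, the loop $\ell$ at $s(\gamma)$ enclosing $\gamma$. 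The formula in the proof of Proposition~\ref{pro:total angle} then yields
\begin{equation*}
T_p \;=\; (f_\gamma)_*(T_{p'}) \;=\; x_\gamma^{-1}\, x_\ell\, x_{\overline\gamma}^{-1},
\end{equation*}
so $T_p=1$ is equivalent to $x_\ell=x_\gamma x_{\overline\gamma}$. Because $T_p$ is $\gamma$-independent, this single relation encodes the whole family of identities $x_{\ell'}=x_{\gamma'}x_{\overline{\gamma'}}$ over all pairs $(\gamma',\ell')$ with $t(\gamma')=p$.

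The forgetful map $f:\Sigma\to\Sigma^P$ (identity on the underlying surface, demoting each $p\in P$ from ordinary to $0$-puncture) is a morphism in ${\bf Surf}$, so Theorem~\ref{th:functoriality nc-surface} produces a homomorphism $f_*:\mathcal{A}_\Sigma\to\mathcal{A}_{\Sigma^P}$ (here $\mathcal{A}_\Sigma^f=\mathcal{A}_\Sigma$ and $\kk_\Sigma=\kk_{\Sigma^P}$). The zero-puncture relation (3) in Definition~\ref{def:ASigma} forces $f_*(T_p)=1$ for $p\in P$, so $f_*$ descends to an epimorphism $\bar f_*:\mathcal{A}_\Sigma/\langle T_p-1:p\in P\rangle\twoheadrightarrow\mathcal{A}_{\Sigma^P}$. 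For the inverse $\psi(x_\gamma):=x_\gamma$, I must verify every defining relation of $\mathcal{A}_{\Sigma^P}$ in the quotient: the triangle, monogon, Ptolemy, and bigon special-puncture relations coincide with those of $\mathcal{A}_\Sigma$, and the zero-puncture relation (3) for $p\in P$ is the content of the first step. The only nontrivial task is to derive the bigon $0$-puncture relation (6).

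This is the main obstacle. For a bigon $(\alpha_1,\alpha_2)$ around $p\in P$ with arcs $\alpha,\alpha'$ to $p$, I plan to cut $\Sigma$ along $\alpha$: in $\Sigma_\alpha$, the bigon lifts to a cyclic quadrilateral whose sides are $\alpha_1$, $\alpha_2$, and the two boundary copies $\alpha^{(1)},\overline{\alpha^{(2)}}$ of $\alpha$ (meeting at the new boundary point $p'$), and whose two diagonals are $\alpha'$ and a lift $\tilde\ell$ of the loop $\ell$. The Ptolemy relation in $\mathcal{A}_{\Sigma_\alpha}$ reads
\begin{equation*}
x_{\alpha'} \;=\; x_{\overline{\alpha_1}}\, x_{\overline{\tilde\ell}}^{-1}\, x_{\alpha^{(1)}} \;+\; x_{\alpha_2}\, x_{\tilde\ell}^{-1}\, x_{\alpha^{(2)}}.
\end{equation*}
Pushing forward by $(f_\alpha)_*$ sends $x_{\alpha^{(i)}}\mapsto x_\alpha$ and $x_{\tilde\ell}\mapsto x_\ell$. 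Using $x_\ell=x_{\overline\ell}$ (the triangle relation applied to the self-folded triangle $(\alpha,\overline\alpha,\ell)$, already valid in $\mathcal{A}_\Sigma$) and $x_\ell=x_\alpha x_{\overline\alpha}$ (from $T_p=1$), one computes $x_\ell^{-1}x_\alpha=x_{\overline\ell}^{-1}x_\alpha=x_{\overline\alpha}^{-1}$, whence the identity collapses to $x_{\alpha'}=(x_{\overline{\alpha_1}}+x_{\alpha_2})x_{\overline\alpha}^{-1}$, which is exactly (6); the companion relation for $\overline{\alpha'}$ follows symmetrically via the bar anti-involution. With all relations verified, $\bar f_*$ and $\psi$ are mutually inverse on generators, yielding the isomorphism. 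The specialization $\Sigma=\Sigma_{n,1}$, $P=\{0\}$ then identifies $\mathcal{A}_{\Sigma^P}$ with $\mathcal{B}_n$ by comparing the resulting presentation with the one given in the introduction. The delicate point that I expect to absorb most of the effort is the careful bookkeeping of orientations and identification of the lifted diagonals $\alpha'$ and $\tilde\ell$ when pushing forward along $(f_\alpha)_*$.
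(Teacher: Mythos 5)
The paper offers no argument beyond declaring the corollary immediate from Corollary \ref{cor:tagged automorphism} and the zero-puncture relations of Definition \ref{def:ASigma}, so your presentation-by-presentation verification is exactly the intended reasoning spelled out in full, and its two substantive computations --- the identity $T_p=x_\gamma^{-1}x_\ell x_{\overline\gamma}^{-1}$ extracted from the proof of Proposition \ref{pro:total angle}, and the derivation of the bigon $0$-puncture relation from the once-punctured-bigon Ptolemy relation together with $x_\ell=x_{\overline\ell}=x_\gamma x_{\overline\gamma}$ --- are both correct (the swap of $\alpha^{(1)}$ and $\alpha^{(2)}$ in your Ptolemy display is harmless since both push forward to $x_\alpha$). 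The only point neither you nor the paper addresses is whether generators $x_\gamma$ of ${\mathcal A}_\Sigma$ with $s(\gamma),t(\gamma)\in P$, which have no counterpart among the curves of $\Sigma^P$, lie in the image of your map $\psi$; this is vacuous for the ${\mathcal B}_n$ application and does not affect the main content of your argument.
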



\subsection{Rank $2$ algebras}\label{sec:rank2alg} We recall the definition of Kontsevich's rank $2$ non-commutative cluster algebra, see \cite{Ko,BR}. Given $r_1,r_2\in \mathbb Z_{>0}$ and two variables $x_1,y_1$, for any $k\in \mathbb Z_{>0}$ denote
$r_k=\begin{cases}
    r_1 & \text{if $k$ is odd}\\
    r_2 & \text{if $k$ is even}
\end{cases}$, let
$x_{k+1}=x_ky_kx_k^{-1}$ and $y_{k+1}=(1+y_k^{r_k})x^{-1}_k$ recursively for any $k$.

Denote $z=[x_1,y_1]:=x_1y_1x_1^{-1}y_1^{-1}$. Then $z=[x_k,y_k]$ for all $k$ (see \cite{BR0}). We have 
$$\begin{cases}
x_{k+1}=zy_k \\
y_{k+1}zy_{k-1}=1+y_k^{r_k}\\
y_{k+1}zy_k=y_ky_{k+1}
\end{cases}$$

Let ${\mathcal A}_{r_1,r_2}$ be the subalgebra of $\kk\langle y_1^{\pm 1},y_2^{\pm 1}\rangle$ generated by $y_k$, $k\in {\mathbb Z}$ and $z$.
It follows from \cite{BR0} that ${\mathcal A}_{r_1,r_2}$ is generated by $y_0,y_1,y_2,y_3,z,z^{-1}$. In particular, $y_k$ is a non-commutative Laurent polynomial in $y_1,y_2$ for any $k$.

\section{Triangulations and braid groups}

For two arcs $\gamma, \gamma'\in \Gamma(\Sigma)$, the \emph{crossing number} $n_{\gamma,\gamma'}$ of $\gamma$ and $\gamma'$ is the minimum number of crossings of arcs $\alpha$ and $\alpha'$, where $\alpha$ is isotopic to $\gamma$ and $\alpha'$ is isotopic to $\gamma'$. We call $\gamma$ and $\gamma'$ \emph{compatible} if the crossing number of $\gamma$ and $\gamma'$ is $0$.  

We say that a loop $\gamma$ is {\it around} a point $p\in I_p$ if it only encloses $p$. 

A \emph{triangulation} $\Delta$ of $\Sigma$ is a maximal collection of compatible arcs together with all boundary arcs such that any $p\in I_{p,0}$ is contained in a loop (necessarily unique) $\gamma\in \Delta$ around $p$.

\begin{figure}[ht]
\includegraphics[width=3cm]{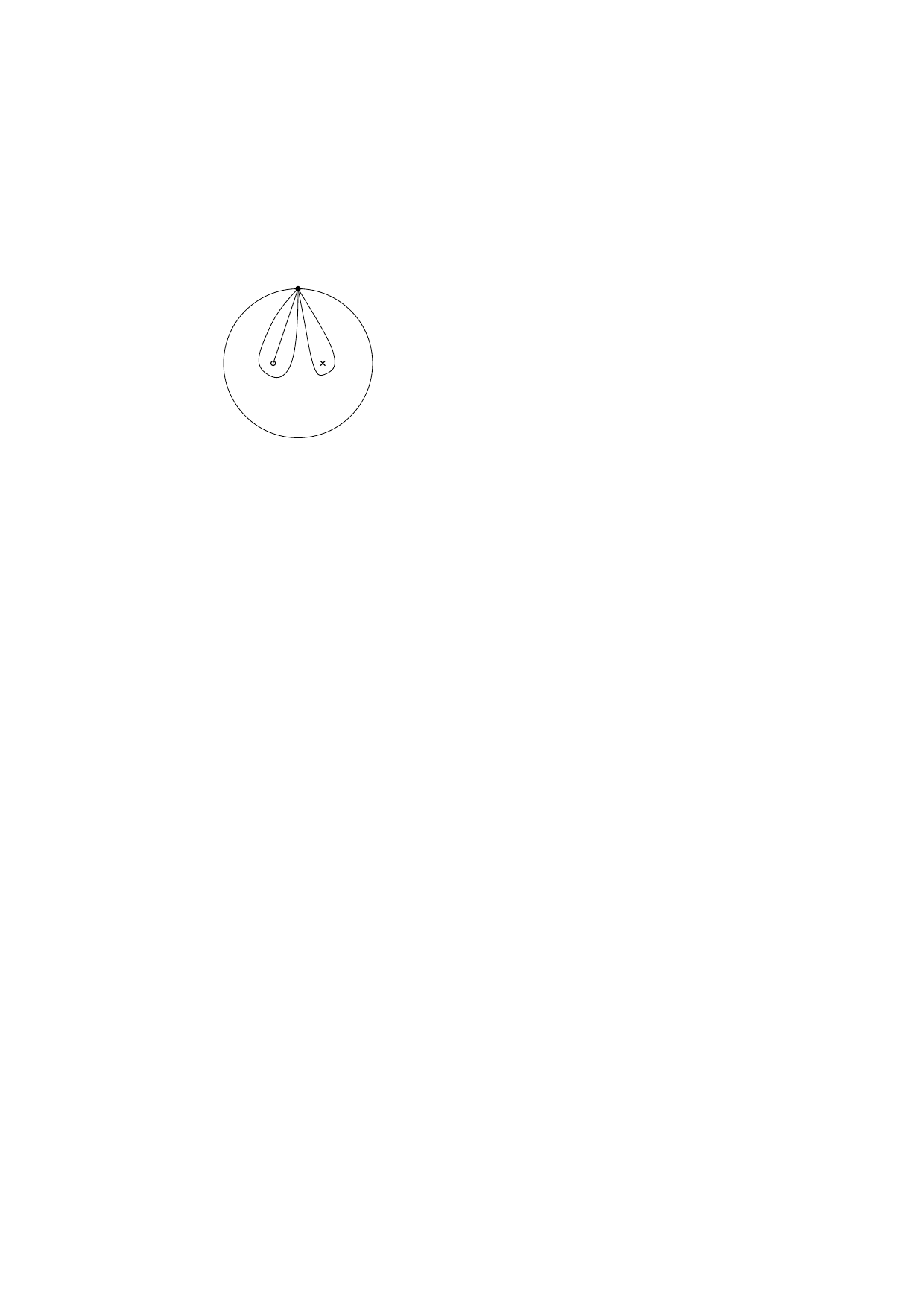}
\caption{\rm An example of triangulation, $\circ$: $0$-puncture, $\times: \mathbb Z_{\geq 2}$ puncture}
\end{figure}

Clearly, any triangulation contains a loop around any special puncture and a self-folded triangle around a $0$-puncture.

\subsection{Category of triangulated surfaces}

\label{subsec:triangulated surfaces and triangle groups}

We say that triangulations $\Delta$ and $\Delta'$ are related by a {\it flip} if there are internal arcs $\gamma\in \Delta$ and $\gamma'\in \Delta'$ such that 

$\bullet$ Either $\gamma$ and $\gamma'$ are both loops around a  $0$-puncture $p$ and $\Delta'\setminus \Delta$ is the 
self-folded triangle in $\Delta'$ enclosed by $\gamma'$.

$\bullet$ or $\Delta\setminus \Delta'=\{\gamma,\overline \gamma\}$ and
$\Delta'\setminus \Delta=\{\gamma',\overline \gamma'\}$ otherwise.

In the case of flip, we denote $\Delta'=\mu_\gamma \Delta=\mu_{\overline \gamma}\Delta$.

\begin{figure}[ht]
\includegraphics{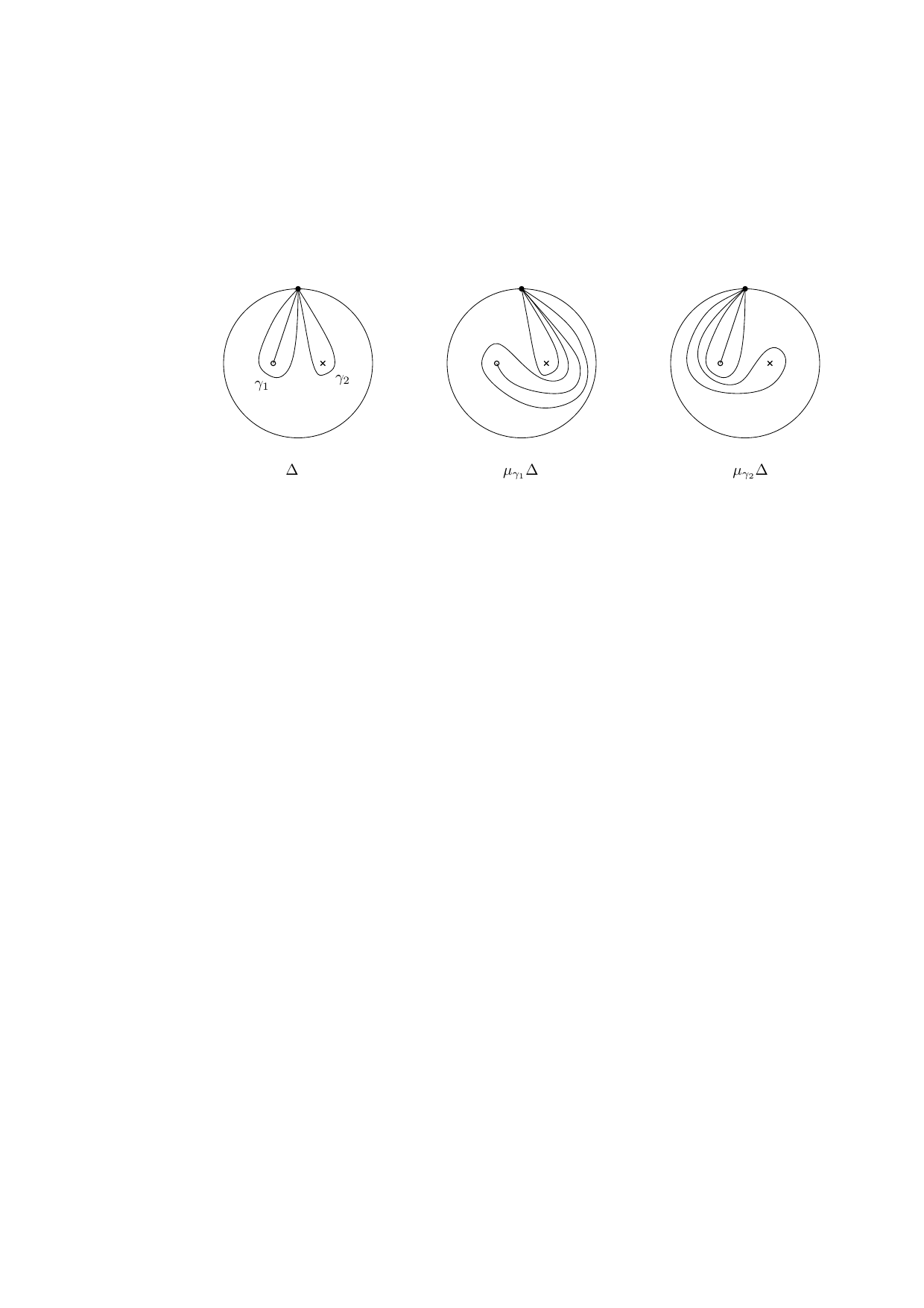}
\caption{\rm Examples of flip}
\end{figure}

The following result was proved by Harer in \cite{Ha} when $\bigsqcup \limits_{k\neq 1}I_{p,k}(\Sigma)=\emptyset$ and by Felikson-Shapiro-Turmarkin in \cite[Theorem 4.2]{FST3} when $\bigsqcup \limits_{k\neq 1}I_{p,k}(\Sigma)\neq\emptyset$.

\begin{theorem} Any triangulations of any $\Sigma\in {\bf Surf}$ are related by a sequence of flip.
    
\end{theorem}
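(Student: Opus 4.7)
The plan is to prove connectivity of the flip graph by induction on the total intersection number
$$n(\Delta,\Delta')=\sum_{\gamma\in \Delta,\ \gamma'\in \Delta'} n_{\gamma,\gamma'}\ .$$
The base case $n(\Delta,\Delta')=0$ forces every arc of $\Delta'$ to be compatible with every arc of $\Delta$, so by maximality $\Delta=\Delta'$, after observing that the forced local data (the self-folded triangle around each $0$-puncture and the unique loop around each special puncture) are common to every triangulation.

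For the inductive step, assume $n(\Delta,\Delta')>0$ and pick some $\gamma'\in \Delta'\setminus\Delta$ crossing arcs of $\Delta$. Walking along $\gamma'$, I would locate the first two triangles $T_1,T_2$ of $\Delta$ that $\gamma'$ traverses and identify their shared edge $\alpha\in\Delta$. The key geometric step is to show that the flip $\mu_\alpha\Delta$ strictly reduces $n(\cdot,\Delta')$, or at worst preserves it while strictly reducing a secondary complexity (such as the number of crossings between $\gamma'$ and $\alpha$ itself, which drops by at least one for a generic quadrilateral flip). Iterating such reducing flips terminates at a triangulation with no crossings with $\Delta'$, hence equal to $\Delta'$.

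The main obstacle is handling the degenerate local pictures that arise near punctures and orbifold points, where $T_1\cup T_2$ is not an honest quadrilateral. There are three cases that require separate bookkeeping. First, when $\alpha$ is the interior edge of a self-folded triangle around an ordinary puncture, the flip is not a standard quadrilateral flip and instead rotates the folded pair; one must verify that this rotation still reduces intersections with $\gamma'$. Second, when $\alpha$ is a loop around a special puncture $p\in I_{p,\ge 2}$, no flip of $\alpha$ exists, and one must argue that such loops are common to every triangulation of $\Sigma$ and so may be removed from both $\Delta$ and $\Delta'$, reducing to a surface with strictly smaller complexity. Third, for $0$-punctures, the flip exchanges a loop for its enclosed pending arc (as in the definition above the theorem), and one needs to check that the corresponding move still fits into the intersection-reducing framework; this is the case handled by Felikson--Shapiro--Tumarkin in \cite{FST3}, building on the Fomin--Shapiro--Thurston framework, and requires a careful separate analysis.

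Once these local cases are dispatched, the inductive argument closes. As an alternative, one could quote a Teichm\"uller-theoretic proof: decorated (orbifold) Teichm\"uller space admits a cell decomposition whose top-dimensional cells are indexed by triangulations of $\Sigma$ and whose codimension-one faces correspond precisely to flips; contractibility of this space then yields connectivity of the dual flip graph. I expect the inductive/combinatorial route to be more elementary but more tedious in the orbifold case, while the Teichm\"uller route is cleaner but imports significant geometric machinery.
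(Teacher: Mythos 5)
The paper does not actually prove this statement: it is quoted as a known result, attributed to Harer \cite{Ha} in the case without special or $0$-punctures and to Felikson--Shapiro--Tumarkin \cite[Theorem 4.2]{FST3} in the orbifold case. So there is no ``paper proof'' to compare against; the relevant comparison is between your sketch and the cited literature.

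Your outline follows the standard combinatorial strategy (induction on the total crossing number between the two triangulations), and you correctly identify both the degenerate local configurations near punctures and orbifold points and the alternative Teichm\"uller-theoretic route via contractibility of the (decorated) arc/cell complex. However, as a proof it has a genuine gap at exactly the point where all the difficulty lives: the claim that flipping the common edge $\alpha$ of the first two triangles traversed by $\gamma'$ ``strictly reduces $n(\cdot,\Delta')$, or at worst preserves it while strictly reducing a secondary complexity'' is asserted, not proved. This monotonicity is not automatic --- a single flip can increase crossings of the new arc with other arcs of $\Delta'$, and in the punctured and self-folded configurations even the crossings with $\gamma'$ itself need not drop for the naive choice of $\alpha$; making the induction work requires a careful choice of which arc to flip and a correctly weighted complexity function, which is precisely the content of the arguments in \cite{FST} and \cite{FST3}. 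Since you ultimately defer the hardest cases ($0$-punctures and orbifold points) to \cite{FST3} anyway, your proposal in effect reduces, like the paper, to citing the known theorem; that is acceptable for this statement, but the intersection-reduction argument as written should not be presented as a complete independent proof.
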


For any triangulations $\Delta$ and $\Delta'$ of $\Sigma$, we define the distance $dist(\Delta,\Delta')=dist(\Delta',\Delta)$ to be the smallest number of flips from $\Delta$ to $\Delta'$.

Given a morphism $f:\Sigma\to \underline \Sigma$, we say that an arc $\gamma\in \Delta$ is \emph{$f$-admissible} if $f(\gamma)$ is a curve (if $f$ is a folding along a line in $\Sigma$, then any curve crossing the line is not admissible). We say that a triangulation $\Delta$ is \emph{$f$-admissible} if every arc in $\Delta$ is $f$-admissible and the collection of arcs in $f(\Delta)$ forms a triangulation of $f(\Sigma)$. In this case, we also denote the resulting triangulation of $f(\Sigma)$ by $f(\Delta)$.

\begin{figure}[ht]
\includegraphics[width=6cm]{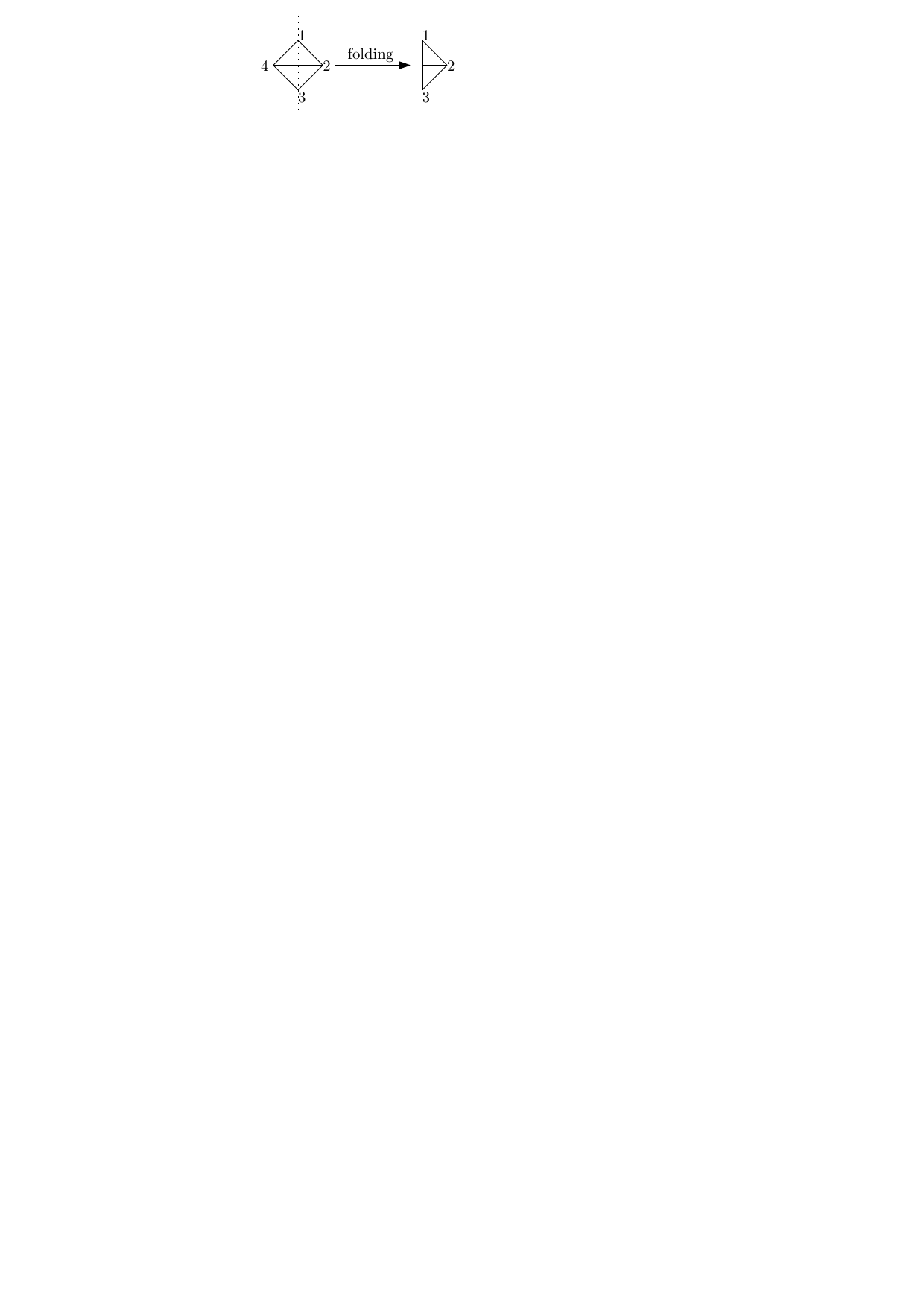}
\caption{}
\label{4to31}
\end{figure}

For example, in Figure \ref{4to31}, the arc $(2,4)$ is not $f$-admissible as $f(2,4)$ is not a curve. In Figure \ref{Fig-morphism}, the triangulations $\Delta$ are $f$-admissible.

\begin{figure}[ht]
\includegraphics[width=15cm]{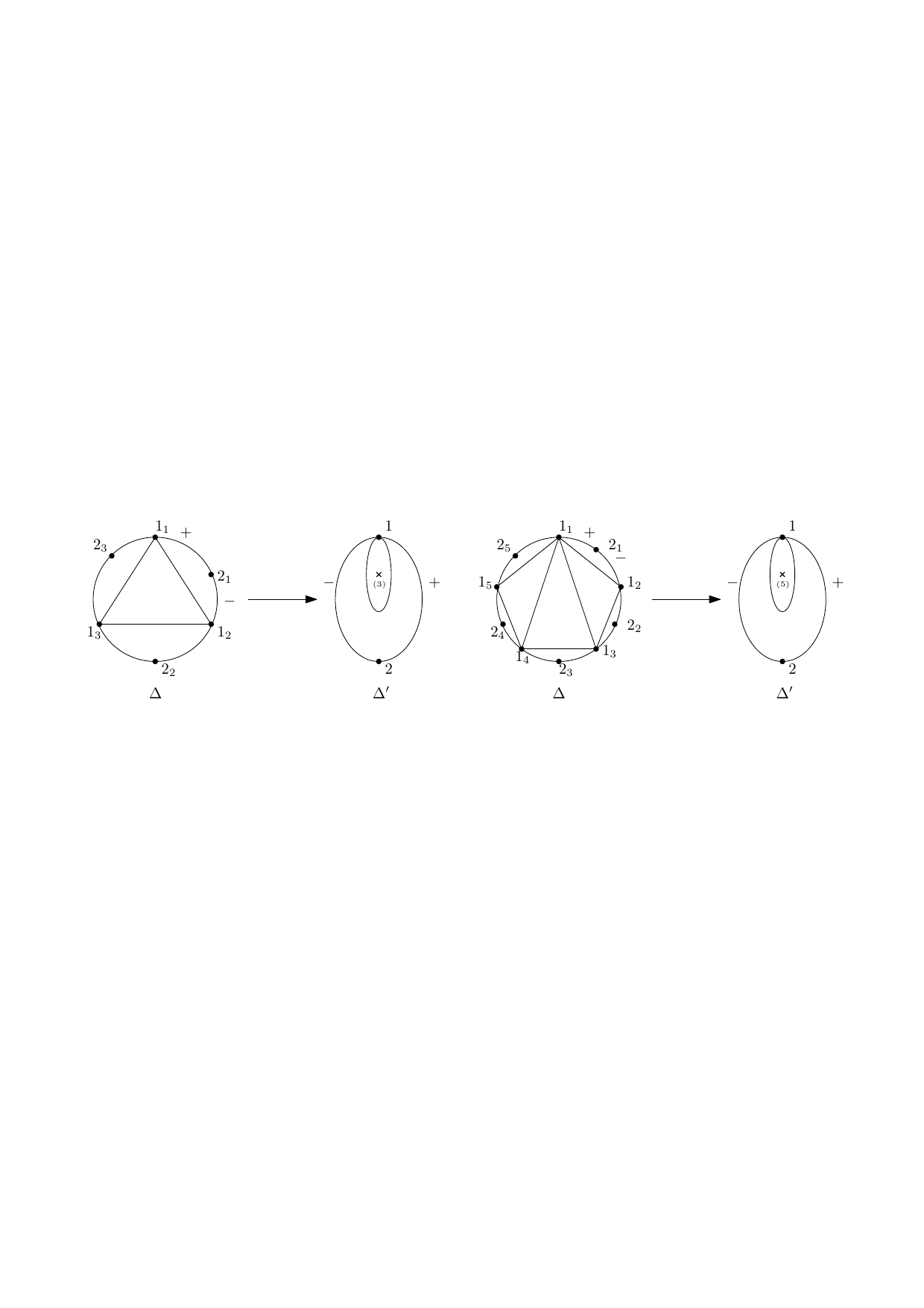}
\caption{}
\label{Fig-morphism}
\end{figure}

For any morphism $f:\Sigma\to \underline\Sigma$ and any curve $\underline \gamma$ in $\underline\Sigma$, the preimage $f^{-1}(\underline\gamma)$ may not consist of curves in $\Sigma$. For example, the loop around $0$ based on $2$ in Figure \ref{Fig-morphism1}.

\begin{figure}[ht]
\includegraphics[width=4cm]{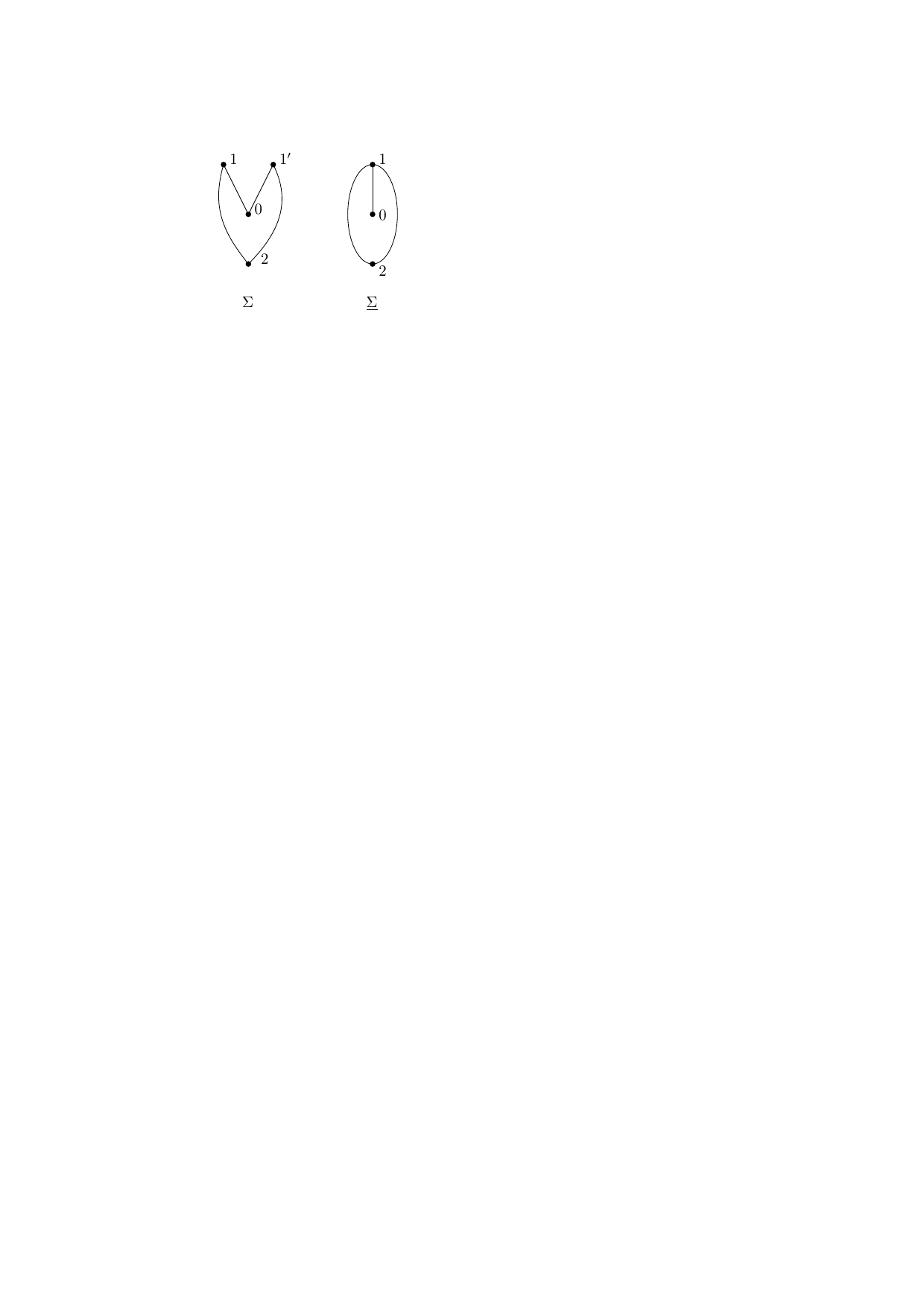}
\caption{}
\label{Fig-morphism1}
\end{figure}

The following is immediate.

\begin{lemma} 
\label{le:f-admissible triangulations}
Let $f:\Sigma\to \underline \Sigma$ be a morphism and $\underline \Delta$ be a triangulation of $f(\Sigma)$. Suppose that the preimage of any $\underline \gamma\in \underline \Delta$ consists of curves in $\Sigma$. Then there exists an $f$-admissible triangulation $\Delta$ of $\Sigma$ such that $f^{-1}(\underline \Delta)\subset \Delta$. Moreover, 

$(a)$ any $f$-admissible triangulation of $\Sigma$ is obtained this way; 

$(b)$ for any non-self-folded arc $\underline\gamma$ in $\underline\Delta$, if $\underline\gamma$ is not a special loop, then any two curves in $f^{-1}(\underline\gamma)$ are not two sides of any triangle in $\Delta$ and $\Delta':=\prod_{\gamma\in f^{-1}(\underline\gamma)}\mu_\gamma(\Delta)$ is $f$-admissible;

$(c)$ for any special loop $\underline\gamma$ around a special puncture  $\underline p$ in $\underline\Delta$, the preimage $f^{-1}(\underline\gamma)$ of $\underline\gamma$ is a $|\underline p|$-gon or an $n$-polygon encloses a special puncture  $p$ such that $|\underline p|=n|p|$;

$(d)$ such $\underline \Delta$ exists for any $f\in {\bf Surf}$.
\end{lemma}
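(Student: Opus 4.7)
The plan is to build $\Delta$ from the preimage $f^{-1}(\underline\Delta)$ and then check each clause by local analysis at marked points.

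First, for the existence of $\Delta$. Since every $\underline\gamma\in\underline\Delta$ has its preimage consisting of curves, the set $\Gamma_0:=\bigcup_{\underline\gamma\in\underline\Delta}f^{-1}(\underline\gamma)$ is a well-defined collection of arcs in $\Sigma$. I would show these arcs are pairwise compatible: any essential intersection of two lifts $\gamma_1\in f^{-1}(\underline\gamma_1)$ and $\gamma_2\in f^{-1}(\underline\gamma_2)$ occurs away from $I^f$ (where $f$ is a local isomorphism), so it would project to an essential intersection of $\underline\gamma_1$ and $\underline\gamma_2$, contradicting that $\underline\Delta$ is a triangulation. So $\Gamma_0$ cuts $\Sigma$ into closed regions, each of which is mapped by $f$ onto some triangle $\underline T$ of $\underline\Delta$. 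By the defining local picture of a morphism in ${\bf Surf}$, each such region is either a triangle, a polygon ramified at a vertex lying over a special puncture, or (around a $0$-puncture in $\Sigma$ projecting to a $0$-puncture of $\underline \Sigma$) a monogon to be filled in by a self-folded triangle. Fill each non-triangle region by a compatible system of diagonals incident to its vertices; these diagonals are $f$-admissible because their $f$-images are curves between marked points of $\underline\Sigma$. The union of $\Gamma_0$ with these diagonals (and the mandatory loops around $0$-punctures of $\Sigma$) is the required triangulation $\Delta$ with $f^{-1}(\underline\Delta)\subset\Delta$.

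Next I would prove clause (a) by the same argument in reverse: given any $f$-admissible $\Delta$, $\underline\Delta:=f(\Delta)$ is a triangulation of $f(\Sigma)$ by definition, and for any $\underline\gamma\in\underline\Delta$ each lift $\gamma'\in f^{-1}(\underline\gamma)$ is compatible with every arc of $\Delta$ (else, projecting the crossing, I would get a crossing in $\underline\Delta$), so maximality of $\Delta$ forces $\gamma'\in\Delta$. For clause (c), the preimage of a special loop $\underline\gamma$ enclosing $\underline p$ is a cycle of arcs above $\underline\gamma$ enclosing one component of $f^{-1}(\underline p)$, and the only possibilities allowed by the pointwise definition of morphism at $\underline p$ are: $f$ being locally an isomorphism at a special puncture $p\in f^{-1}(\underline p)$, which forces a single closed curve (giving a $|\underline p|$-gon of arcs along the boundary once the relevant lifts of $\underline\gamma$ are concatenated), or $f$ being an $n$-fold ramified cover at $p$ with $|\underline p|=n|p|$, producing an $n$-polygon around $p$. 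For clause (d), I would start from any triangulation of $\underline\Sigma$ and, using that a morphism has only finitely many ramification points, refine/flip this triangulation until the preimage of every arc consists of curves; this uses only the fact that flips are transitive (the theorem quoted just before the lemma) and that locally one can avoid self-intersections in lifts by choosing arcs away from ramification locus.

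Finally for clause (b), I would argue that distinct lifts $\gamma,\gamma'\in f^{-1}(\underline\gamma)$ cannot be two sides of a common triangle $T\subset\Delta$: otherwise $f(T)$ would be a triangle of $\underline\Delta$ with two of its sides equal to $\underline\gamma$, which is incompatible with $\underline\gamma$ being non-self-folded and non-special-loop. This disjointness of the adjacent quadrilaterals lets me flip the lifts of $\underline\gamma$ independently; each local flip exchanges a diagonal for the opposite diagonal of its quadrilateral. The resulting $\Delta'$ is still a triangulation, and $f(\Delta')=\mu_{\underline\gamma}(\underline\Delta)$ is a triangulation of $f(\Sigma)$, so $\Delta'$ is $f$-admissible.

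The main obstacle I anticipate is the careful case analysis at points of $I^f$ in clauses (c) and (d): the definition of morphism in ${\bf Surf}$ permits ordinary punctures, special punctures of arbitrary order, and $0$-punctures to be related in several ways, and one has to check systematically that in each local picture the preimage of a special loop (respectively, an arbitrary arc) behaves as claimed. Everything else is essentially a maximality/compatibility argument in the plane.
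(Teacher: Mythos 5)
The paper offers no proof of this lemma at all---it is stated with the preamble ``The following is immediate''---so there is no argument of the authors' to compare against; your write-up is the natural fleshing-out of what they treat as routine. Your construction of $\Delta$ (lift $\underline\Delta$, check pairwise compatibility of lifts by projecting essential crossings, fill the ramified polygonal regions over the degenerate monogons with diagonals), together with your treatment of (a), (b) and (c), is sound. One small point worth tightening in (a): when you project a crossing of a lift $\gamma'$ with an arc $\delta\in\Delta$, the image $f(\delta)$ need not be an arc of $\underline\Delta$ (it can be a self-crossing loop around a special puncture, which is still a ``curve'' and hence $f$-admissible), so the contradiction is not immediately with $\underline\Delta$ being a triangulation; you need to observe separately that such $\delta$ live inside the polygon $f^{-1}(\underline\ell_{\underline p})$, which $\gamma'$ cannot enter if $\underline\gamma$ is compatible with the special loop $\underline\ell_{\underline p}$.

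The genuine gap is clause (d). ``Start from any triangulation and flip until the preimage of every arc consists of curves'' is not an argument: you give no invariant that decreases under your flips and no reason the process terminates in a triangulation with the desired property. Moreover, the failure mode is not local---the paper's own example (the loop around $0$ based at $2$ in Figure \ref{Fig-morphism1}) shows that an arc can fail to lift to curves because of the global monodromy of the covering, not because it passes near the ramification locus, so ``choosing arcs away from the ramification locus'' does not address it. A correct proof of (d) has to exhibit a triangulation directly: for instance, include the special loop around each branch point of $\underline\Sigma$ (whose lifts are the sides of the polygons in (c), hence curves) and then triangulate the complement using only arcs both of whose lifted endpoints land in $I(\Sigma)$, checking from the definition of morphism ($f^{-1}(I'_{p,1})\subset I_{p,1}$, etc.) that such arcs exist and that their lifts are simple paths between marked points. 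As it stands, (d) is asserted rather than proved.
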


We say that a pair $(\Delta,\underline \Delta)$ is \emph{$f$-compatible} if $\Delta$ is $f$-admissible and $\underline \Delta$ is a triangulation of $\underline \Sigma$ such that $f(\Delta)=f(\Sigma)\cap \underline \Delta$.

For any marked surface $\Sigma$, denote by $\overline \Sigma$ the surface $\Sigma$ with the opposite orientation. For any triangulation $\Delta$ of $\Sigma$, denote by $\overline\Delta$ the same triangulation of $\overline \Sigma$.

For any triangulations $\Delta_0,\Delta$ of $\Sigma$ and a non-self-folded and non-pending arc $\alpha\in \Delta$, let 
\begin{equation*}
  \varphi(\Delta_0;\Delta,\mu_\alpha\Delta):=sgn_\alpha(C_{\Delta}^{\Delta_0}), \hspace{5mm} \phi(\Delta_0;\Delta,\mu_\alpha\Delta):=sgn_\alpha(C_{\overline \Delta}^{\overline\Delta_0})
\end{equation*}
be the signs of the $\alpha$-th columns of the $C$ matrices of the (commutative) seeds at $\Delta$ and $\overline\Delta$, respectively, with respect to the initial (commutative) seeds at $\Delta_0$ and $\overline\Delta_0$, respectively. (Thanks to \cite{GHKK}, the $C$-matrices are column sign-coherent, i.e., the sign each column of the $C$-matrices is either positive or negative).

\begin{definition}\label{def:gro}
For any marked surface $\Sigma$ we define the groupoid ${\bf TSurf}_\Sigma$ as the groupoid whose objects are the triangulations of $\Sigma$ and morphisms are generated by $h_{\Delta',\Delta}:\Delta\to \Delta'$ subject to

$\bullet$ 
$h_{\Delta_0,\mu_\alpha\Delta}=h_{\Delta_0,\Delta}h_{\Delta,\mu_\alpha\Delta}^{\varphi(\Delta_0;\Delta,\mu_\alpha\Delta)}$ for any triangulations $\Delta_0,\Delta$ and non-self-folded  and non-pending arc $\alpha\in \Delta$, where for $\varepsilon\in \{\pm\}$
\begin{equation*}
  h_{\Delta',\Delta}^{\varepsilon}=
  \begin{cases}
    h_{\Delta',\Delta}, & \mbox{if $\varepsilon=+$,}  \\
    h_{\Delta,\Delta'}^{-1}, & \mbox{if $\varepsilon=-$}.
  \end{cases}
\end{equation*}

$\bullet$ $h_{\mu_\alpha\Delta,\Delta_0}=h_{\mu_\alpha\Delta,\Delta}^{\phi(\Delta_0;\Delta,\mu_\alpha\Delta)}h_{\Delta,\Delta_0}$ 
for any triangulations $\Delta_0,\Delta$ and non-self-folded and non-pending arc $\alpha\in \Delta$ such that $dist(\Delta,\Delta_0)=2$ and $dist(\mu_\alpha\Delta,\Delta_0)=3$.

$\bullet$ (Once punctured bigon relation) $h_{\Delta,\mu_\alpha\Delta}h_{\mu_\alpha\Delta,\Delta}h_{\Delta,\mu_\beta\Delta}h_{\mu_\beta\Delta,\Delta}=h_{\Delta,\mu_\beta\Delta}h_{\mu_\beta\Delta,\Delta}h_{\Delta,\mu_\alpha\Delta}h_{\mu_\alpha\Delta,\Delta}$
for any once punctured bigon $(\alpha_1,\alpha_2)$ in $\Delta$ such that $\alpha,\beta\in \Delta$ are the two diagonals connecting the puncture with $\beta\neq \alpha, \overline\alpha$.

\end{definition}

We conjecture $h_{\mu_\alpha\Delta,\Delta_0}=h_{\mu_\alpha\Delta,\Delta}^{\phi(\Delta_0;\Delta,\mu_\alpha\Delta)}h_{\Delta,\Delta_0}$ 
for any triangulations $\Delta_0,\Delta$ and non-self-folded and non-pending arc $\alpha\in \Delta$.

Given a triangulation $\Delta$, we say that $(\gamma,\gamma')$ is directed clockwise in $\Delta$ if there exists $\gamma''\in \Delta$ such that $(\gamma,\gamma',\gamma'')$ or $(\overline \gamma,\gamma',\gamma'')$ is a clockwise cyclic triangle in $\Delta$. 

\begin{theorem}\label{th:presentation of Tsurf} The category ${\bf TSurf}_\Sigma$ is a groupoid generated by 
$h_{\Delta',\Delta}, dist(\Delta,\Delta')=1$ subject to

$\bullet$ (Diamond/Pentagon/Hexagon relation) For $k\in \{4,5,6\}$ and distinct triangulations $\Delta_i, i=1,\ldots,k$ of $\Sigma$ such that $dist(\Delta_i,\Delta_{i+1\mod k})=1$ for $i=1,\ldots,k$ with $\Delta_2=\mu_{\alpha}(\Delta_1)$ and $\Delta_3=\mu_{\beta}(\Delta_2)$  then
\begin{equation}
\label{eq:defining relations TDelta}
h_{\Delta_1,\Delta_k}h_{\Delta_k,\Delta_{k-1}}=h_{\Delta_1,\Delta_2} h_{\Delta_{2},\Delta_3}\cdots h_{\Delta_{k-2},\Delta_{k-1}}
\end{equation}
whenever $(\alpha,\beta)$ is not directed clockwise in $\Delta_1$. 

$\bullet$ (Horizontal compatibility) For any triangulation $\Delta$, for any non-self-folded and non-pending arcs $\alpha,\beta\in \Delta$ such that $\alpha$ is non-self-folded in $\mu_\beta\Delta$, if $(\beta,\alpha)$ is directed clockwise in $\Delta$, then we have
$$h_{\mu_\alpha\Delta,\Delta}h_{\Delta,\mu_\beta\Delta}h_{\mu_\beta\Delta,\Delta}=h_{\mu_\alpha\Delta,\mu_\beta\mu_\alpha\Delta}h_{\mu_\beta\mu_\alpha\Delta,\mu_\alpha\Delta}h_{\mu_\alpha\Delta,\Delta}.$$

$\bullet$ (Once punctured bigon relation) $h_{\Delta,\mu_\alpha\Delta}h_{\mu_\alpha\Delta,\Delta}h_{\Delta,\mu_\beta\Delta}h_{\mu_\beta\Delta,\Delta}=h_{\Delta,\mu_\beta\Delta}h_{\mu_\beta\Delta,\Delta}h_{\Delta,\mu_\alpha\Delta}h_{\mu_\alpha\Delta,\Delta}$
for any once punctured bigon $(\alpha_1,\alpha_2)$ in $\Delta$ such that $\alpha,\beta\in \Delta$ are the two diagonals connecting the puncture with $\beta\neq \alpha, \overline\alpha$.
\end{theorem}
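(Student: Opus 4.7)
The strategy is to show that the local presentation asserted here and the global presentation of Definition \ref{def:gro} describe the same groupoid. The direction from Definition \ref{def:gro} to the theorem amounts to verifying that each of the diamond, pentagon, hexagon, horizontal compatibility, and once-punctured bigon relations already holds among flip morphisms in ${\bf TSurf}_\Sigma$. For a $k$-cycle of flips with $k\in \{4,5,6\}$, this reduces to computing the sign pattern $\varphi(\Delta_1;\Delta_i,\Delta_{i+1})$ along the cycle and checking that the alternating composition of flips and inverse flips collapses to the required identity; this uses sign-coherence of $C$-matrices (via \cite{GHKK}) and the known classification of short mutation cycles on surfaces (the square for commuting flips, the pentagon for two flips along two sides of a common triangle, and the hexagon only in the presence of a puncture or orbifold point). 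The hypothesis that $(\alpha,\beta)$ is not directed clockwise distinguishes the pentagon/hexagon from its mirror, which is the content of the horizontal compatibility relation. The once-punctured bigon relation is verified directly against the corresponding axiom in Definition \ref{def:gro}.

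For the converse direction, I would first check that every morphism in ${\bf TSurf}_\Sigma$ is a product of flip morphisms $h_{\Delta',\Delta}$ with $dist(\Delta,\Delta')=1$, by induction on $dist(\Delta,\Delta')$, combining Harer--Felikson--Shapiro--Tumarkin flip-connectedness (\cite[Theorem 4.2]{FST3} in the orbifold case) with iterated application of the transitivity relation from Definition \ref{def:gro}. The main task is then to derive the transitivity relation $h_{\Delta_0,\mu_\alpha\Delta}=h_{\Delta_0,\Delta}h_{\Delta,\mu_\alpha\Delta}^{\varphi(\Delta_0;\Delta,\mu_\alpha\Delta)}$ (and its $\phi$-counterpart under the stated distance condition) from the theorem's local relations. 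I would argue by induction on $dist(\Delta_0,\Delta)$: the inductive step takes a minimal flip path $\Delta_0\to \cdots \to \Delta_{k-1}\to \Delta$ and commutes the final flip $\mu_\alpha$ past $\Delta_{k-1}\to \Delta$ using exactly one of the diamond, pentagon, hexagon, horizontal compatibility, or once-punctured bigon relations, producing the correct sign locally. Each such commutation either keeps $\mu_\alpha$ as a flip or replaces it by an inverse flip, in precisely the pattern encoded by the $C$-matrix sign.

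The main obstacle is that the sign $\varphi(\Delta_0;\Delta,\mu_\alpha\Delta)$ is a priori a global invariant coming from the full $C$-matrix $C^{\Delta_0}_\Delta$, while the local relations only see short cycles of length at most six. Bridging this gap requires showing multiplicativity of $\varphi$ along a flip path, which ultimately rests on sign-coherence and the explicit Fomin--Zelevinsky mutation rule for $C$-vectors; from this viewpoint, the diamond, pentagon, and hexagon relations are exactly the infinitesimal compatibilities needed to assemble the global transitivity relation by induction. A secondary difficulty is the careful treatment of degenerate configurations: self-folded triangles, pending arcs, and once-punctured bigons, where a single arc may not admit a flip or where two distinct diagonals of a bigon produce triangulations with self-folded triangles. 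The once-punctured bigon relation, present in both presentations, is designed precisely to handle this case, and I expect the remaining degeneracies to reduce to the other listed relations after a careful local case analysis in a neighborhood of the edges $\alpha$ and $\beta$ being mutated.
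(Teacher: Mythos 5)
Your proposal is correct and follows essentially the same route as the paper: one direction verifies the diamond/pentagon/hexagon, horizontal compatibility, and bigon relations in ${\bf TSurf}_\Sigma$ by computing the $C$-matrix signs along each short cycle, and the converse defines $h_{\Delta,\Delta'}$ along shortest flip paths and recovers the global transitivity relation of Definition \ref{def:gro} by induction on $dist(\Delta_0,\Delta)$, using sign-coherence, the $C$-matrix mutation recursion (Lemma \ref{lem:dual}), and the fact that the fundamental group of the flip graph is generated by cycles of length $4$, $5$, and $6$. The only point worth making more explicit is that "multiplicativity of $\varphi$ along a flip path" is exactly what the recursion of Lemma \ref{lem:dual} supplies, so your identified obstacle is resolved by the same ingredient the paper uses.
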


We prove Theorem \ref{th:presentation of Tsurf} in Section \ref{sec:presentation of Tsurf}.

\begin{remark}
    In case $\Sigma$ is a marked surface without punctures, the opposite groupoid ${\bf TSurf}_\Sigma^{op}$ is isomorphic to the cluster exchange groupoid defined by King-Qiu \cite{KQ}.
\end{remark}

For any triangulations $\Delta,\Delta'$, assume that $\Delta'=\mu_{\beta_s}\cdots \mu_{\beta_1}(\Delta)$. Then we have
$$h_{\Delta,\Delta'}:=h^{\varepsilon_1}_{\Delta,\mu_{\beta_1}\Delta}\circ h^{\varepsilon_{2}}_{\mu_{\beta_1}\Delta,\mu_{\beta_2}\mu_{\beta_1}\Delta}\circ \cdots \circ 
h^{\varepsilon_{s}}_{\mu_{\beta_{s-1}}\cdots \mu_{\beta_1}\Delta,\Delta'}$$  
with $\varepsilon_i=sgn_{\beta_i}(C^{\Delta}_{\beta_{i-1}\cdots \mu_{\beta_1}\Delta})$.

For any triangulation $\Delta$ of $\Sigma$ we will sometimes use  abbreviation $|\Delta|=\Sigma$.

\begin{definition}\label{def:categoryts}
We define \emph{category of triangulated surfaces} ${\bf TSurf}$ as the category whose objects are triangulations of marked surfaces in ${\bf Surf}$
and the generating morphisms are

$\bullet$ (horizontal) morphism $h_{\Delta', \Delta}:\Delta\to \Delta'$ for any $\Delta$, $\Delta'$ with $|\Delta|=|\Delta'|$,

$\bullet$ (vertical) a unique morphism $v_{f,\Delta, \underline \Delta}:\Delta\to \underline \Delta$ of type $f$, where $f$ is a morphism $|\Delta|\to |\underline \Delta|$ in ${\bf Surf}$ and $(\Delta,\underline \Delta)$ is an $f$-compatible pair

subject to: 

$\bullet$ (Vertical composition relation) $v_{f',\underline \Delta,\underline \Delta'}v_{f,\Delta,\underline \Delta}=v_{f'\circ f,\Delta,\underline \Delta'}$ for any morphisms $f:|\Delta|\to |\underline \Delta|$, $f':|\underline \Delta|\to |\underline \Delta'|$ in ${\bf Surf}$ such that $(\Delta,\underline\Delta)$ is an $f$-compatible pair and $(\underline\Delta,\underline\Delta')$ is an $f'$-compatible pair.

$\bullet$ For any $\Sigma$, the subcategory with objects $\Delta$ for $|\Delta|=\Sigma$ and morphisms generated by $h^{\pm}_{\Delta', \Delta}, |\Delta|=|\Delta'|=\Sigma$ is isomorphic to ${\bf TSurf}_\Sigma$.

\end{definition}

Clearly, the assignments $\Delta\mapsto |\Delta|$ define the forgetful functor ${\bf TSurf}\to {\bf Surf}$ which forgets about triangulation (and all horizontal morphisms collapse to $Id_{|\Delta|}$).

Let $\overline{\cdot}$ be the automorphism of ${\bf Surf}$ which sends $\Sigma$ to $\overline \Sigma$ and identical on morphisms. The following is immediate.

\begin{lemma} 
\label{le:orientation-reversal functor}
$\overline{\cdot}$
extends to an automorphism of $\overline{\cdot }$ of ${\bf Tsurf}$ via 
$h_{\Delta',\Delta}\mapsto h_{\overline \Delta,\overline{\Delta'}}^{-1}$, $dist(\Delta,\Delta')=1$,
$v_{f, \Delta,\underline\Delta}\mapsto v_{f,\overline \Delta,\overline{\underline\Delta}}
$.
\end{lemma}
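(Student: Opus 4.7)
The plan is to verify that the prescribed assignments on generators respect every defining relation of ${\bf TSurf}$, and hence extend uniquely to an endofunctor of ${\bf TSurf}$; then, since the prescribed map is clearly its own inverse on generators, it is an involutive automorphism.

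First I would check this on horizontal morphisms within a single groupoid ${\bf TSurf}_\Sigma$. The key observation is the symmetry between $\varphi$ and $\phi$ built into Definition \ref{def:gro}: by definition,
$\varphi(\Delta_0;\Delta,\mu_\alpha\Delta)=sgn_\alpha(C_\Delta^{\Delta_0})$ while $\phi(\Delta_0;\Delta,\mu_\alpha\Delta)=sgn_\alpha(C_{\overline\Delta}^{\overline{\Delta_0}})$, so applying the orientation-reversal identifies the sign $\varphi$ for $(\Delta_0,\Delta,\mu_\alpha\Delta)$ with the sign $\phi$ for $(\overline{\Delta_0},\overline\Delta,\mu_\alpha\overline\Delta)$. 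Thus, when we apply the prescribed map to the relation $h_{\Delta_0,\mu_\alpha\Delta}=h_{\Delta_0,\Delta}\,h_{\Delta,\mu_\alpha\Delta}^{\varphi(\Delta_0;\Delta,\mu_\alpha\Delta)}$ and invert both sides, we recover exactly the second defining relation $h_{\mu_\alpha\overline\Delta,\overline{\Delta_0}}=h_{\mu_\alpha\overline\Delta,\overline\Delta}^{\phi(\overline{\Delta_0};\overline\Delta,\mu_\alpha\overline\Delta)}\,h_{\overline\Delta,\overline{\Delta_0}}$, and vice versa. Hence the two recursive relations for higher-distance morphisms get interchanged, and $h_{\Delta',\Delta}\mapsto h_{\overline\Delta,\overline{\Delta'}}^{-1}$ is well-defined on all horizontal morphisms, not just distance-one ones.

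Next I would verify the polygon (diamond, pentagon, hexagon) relations from Theorem \ref{th:presentation of Tsurf} and the once-punctured bigon relation. Applying $\overline{\cdot}$ to a relation
$h_{\Delta_1,\Delta_k}h_{\Delta_k,\Delta_{k-1}}=h_{\Delta_1,\Delta_2}\cdots h_{\Delta_{k-2},\Delta_{k-1}}$
inverts both sides and reverses the order of composition, which turns it into the polygon relation written around the reversed cycle $\overline{\Delta_1},\overline{\Delta_k},\dots,\overline{\Delta_2}$ in ${\bf TSurf}_{\overline\Sigma}$; the clockwise/counter-clockwise hypothesis on $(\alpha,\beta)$ flips correctly because orientation reversal sends a counter-clockwise pair to a clockwise one. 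The once-punctured bigon relation is palindromic in each factor $h_{\Delta,\mu_\alpha\Delta}h_{\mu_\alpha\Delta,\Delta}$, so applying $\overline{\cdot}$ produces the same relation on the reversed surface up to the harmless swap of the two factors. The horizontal compatibility relation is handled identically, using that reversing orientation swaps the words on each side of the identity.

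Finally I would handle the vertical morphisms and the mixed axioms of Definition \ref{def:categoryts}. Since a morphism $f:\Sigma\to\underline\Sigma$ in ${\bf Surf}$ is also a morphism $\overline\Sigma\to\overline{\underline\Sigma}$, and $f$-compatibility of a pair $(\Delta,\underline\Delta)$ depends only on the underlying unoriented data, the assignment $v_{f,\Delta,\underline\Delta}\mapsto v_{f,\overline\Delta,\overline{\underline\Delta}}$ is well-defined and preserves the vertical composition relation trivially. Compatibility of $v$ with horizontal morphisms follows from the groupoid case treated above. The main obstacle is the bookkeeping in the polygon relation: one must track how the orientation of a cyclic ordering of $k$ triangulations is reversed and verify that the clockwise conventions used to encode $\varphi$ and $\phi$ match up on $\overline\Sigma$; once this is done, the fact that $\overline{\cdot}$ is an involution on ${\bf Surf}$ and squares to the identity on every generator of ${\bf TSurf}$ immediately gives that it is an automorphism.
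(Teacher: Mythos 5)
The paper offers no argument for this lemma at all (it is labelled ``immediate''), so there is no proof to compare against; your strategy --- define the map on distance-one generators and check that every defining relation of ${\bf TSurf}$ is carried to a valid relation on the orientation-reversed surface, using that composition order and the clockwise conventions both reverse --- is the natural one and is what the authors evidently have in mind. Your treatment of the polygon, horizontal-compatibility, once-punctured-bigon and vertical relations (your second and third paragraphs), run against the presentation of Theorem \ref{th:presentation of Tsurf}, is correct in outline, with the remaining work being exactly the combinatorial bookkeeping you flag.

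One caution about your first paragraph, though. There you verify the map against Definition \ref{def:gro} directly and assert that applying $\overline{\cdot}$ to the first defining relation $h_{\Delta_0,\mu_\alpha\Delta}=h_{\Delta_0,\Delta}h_{\Delta,\mu_\alpha\Delta}^{\varphi(\Delta_0;\Delta,\mu_\alpha\Delta)}$ ``recovers exactly the second defining relation.'' The sign bookkeeping via $\varphi(\Delta_0;\Delta,\mu_\alpha\Delta)=\phi(\overline{\Delta_0};\overline\Delta,\overline{\mu_\alpha\Delta})$ is right, but the image you obtain is the identity $h_{\mu_\alpha\overline\Delta,\overline{\Delta_0}}=h_{\mu_\alpha\overline\Delta,\overline\Delta}^{\phi}h_{\overline\Delta,\overline{\Delta_0}}$ for \emph{arbitrary} $\Delta_0,\Delta$, whereas Definition \ref{def:gro} only imposes it when $dist(\Delta,\Delta_0)=2$ and $dist(\mu_\alpha\Delta,\Delta_0)=3$; the unrestricted version is precisely what the paper states as a conjecture immediately after that definition. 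So the first-paragraph argument, taken on its own, is circular: it assumes the very identity whose general validity is open. The fix is simply to discard that route and rely on the presentation of Theorem \ref{th:presentation of Tsurf} (relations $(A)$, $(B)$, $(C)$ on distance-one generators), as you do in the rest of the proposal; that presentation is stated before the lemma and suffices to carry out the whole verification.
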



Furthermore, for any morphism $f:\Sigma\to \underline \Sigma$ in ${\bf Surf}$, denote by ${\bf TSurf}_\Sigma^f$ the subcategory of ${\bf TSurf}_\Sigma$ whose objects are $f$-admissible triangulations of $\Sigma$ and morphisms are generated by $h_{\Delta',\Delta}:\Delta\to \Delta'$, where $\Delta,\Delta'$ run over all $f$-admissible triangulations of $\Sigma$.

The following is immediate. 

\begin{lemma} 
\label{le:tsurf^f}
Under the assumptions of Lemma \ref{le:f-admissible triangulations}, fix a triangulation $\underline \Delta_0$ of the closure of the complement $\underline \Sigma\setminus f(\Sigma)$. Then the assignments $\Delta\mapsto f(\Delta)\cup \underline \Delta_0$ define a functor $f_*:{\bf TSurf}^f_\Sigma\to {\bf TSurf}_{\underline \Sigma}$, which is covariant if $f$ is orientation preserving and contravariant otherwise.
\end{lemma}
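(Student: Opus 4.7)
The plan is to construct $f_*$ in the standard way for a presented groupoid: define it on objects, then on the generating morphisms $h_{\Delta',\Delta}$ with $\mathrm{dist}(\Delta,\Delta')=1$, and finally verify the defining relations of Theorem~\ref{th:presentation of Tsurf}. The orientation subtlety will be handled by factoring any orientation-reversing $f$ as an orientation-reversal $\Sigma\to\overline\Sigma$ followed by an orientation-preserving morphism, and then invoking the automorphism $\overline{\cdot}$ of Lemma~\ref{le:orientation-reversal functor}, whose action on horizontal generators $h_{\Delta',\Delta}\mapsto h_{\overline\Delta,\overline{\Delta'}}^{-1}$ converts a covariant assignment into a contravariant one.

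On objects, if $\Delta$ is $f$-admissible, then by the definition of $f$-admissibility and Lemma~\ref{le:f-admissible triangulations}, $f(\Delta)$ is a collection of pairwise compatible arcs triangulating $f(\Sigma)$; gluing this with the fixed triangulation $\underline\Delta_0$ of the closure of $\underline\Sigma\setminus f(\Sigma)$ yields a triangulation of $\underline\Sigma$, so the assignment lands in the objects of ${\bf TSurf}_{\underline\Sigma}$. For a distance-one generator $h_{\mu_\gamma\Delta,\Delta}$ in ${\bf TSurf}_\Sigma^f$ (with $\gamma$ non-self-folded and non-pending), both endpoints being $f$-admissible forces $f(\gamma)$ to be an interior non-self-folded non-pending arc of $f(\Delta)\cup\underline\Delta_0$, and Lemma~\ref{le:f-admissible triangulations} gives $f(\mu_\gamma\Delta)\cup\underline\Delta_0=\mu_{f(\gamma)}(f(\Delta)\cup\underline\Delta_0)$. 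We therefore set
$$f_*(h_{\mu_\gamma\Delta,\Delta})\;:=\;h_{\mu_{f(\gamma)}(f(\Delta)\cup\underline\Delta_0),\,f(\Delta)\cup\underline\Delta_0}$$
in the orientation-preserving case, and post-compose with $\overline{\cdot}$ in the orientation-reversing case.

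The core remaining task is to show that the defining relations of Theorem~\ref{th:presentation of Tsurf} are sent to valid relations in ${\bf TSurf}_{\underline\Sigma}$. Since $f$ is a local isomorphism away from the branching locus $I^f$, every flip region (quadrilateral, pentagon, or hexagon of consecutive flips) in $\Sigma$ that avoids $I^f$ embeds isomorphically in $\underline\Sigma$, so the diamond/pentagon/hexagon relations and the horizontal compatibility transport verbatim. The once-punctured bigon relation also transports since any once-punctured bigon in an $f$-admissible $\Delta$ maps to a once-punctured bigon (or, after collapsing a special puncture ramification, to a configuration for which the analogous braid relation reduces to a pentagon already in the target). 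What needs genuine care, and is the \emph{main obstacle}, is the sign-coherence: the exponents $\varphi(\Delta_0;\Delta,\mu_\alpha\Delta)=\mathrm{sgn}_\alpha(C_\Delta^{\Delta_0})$ and the mirror data $\phi$ entering the generating relations of Definition~\ref{def:gro} must be compatible between $\Sigma$ and $\underline\Sigma$. For orientation-preserving $f$ this follows from the functoriality of $g$-vectors/$C$-matrices under boundary-gluing morphisms. For orientation-reversing $f$ it is precisely the swap $\varphi\leftrightarrow\phi$ implemented by $\overline{\cdot}$: Lemma~\ref{le:orientation-reversal functor} shows $\overline{\cdot}$ converts the signs on $C_\Delta^{\Delta_0}$ into those on $C_{\overline\Delta}^{\overline{\Delta_0}}$, which is exactly what is needed to turn the covariant assignment into a contravariant functor. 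Once this compatibility is verified for each generating relation, the universal property of the presentation in Theorem~\ref{th:presentation of Tsurf} yields the functor $f_*$, completing the proof.
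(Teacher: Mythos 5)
The paper itself offers no argument here --- it labels the lemma ``immediate'' --- so the only benchmark is the strategy it implicitly intends, and your overall architecture (define $f_*$ on objects and on generators, verify the presentation of Theorem~\ref{th:presentation of Tsurf}, and use the automorphism $\overline{\cdot}$ of Lemma~\ref{le:orientation-reversal functor} to convert covariance into contravariance when $f$ reverses orientation) is the right one.

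However, your treatment of the generators contains a false step, and it sits exactly where the content of the lemma lies. You assert that if $\Delta$ and $\mu_\gamma\Delta$ are both $f$-admissible then $f(\gamma)$ must be an internal arc of $f(\Delta)\cup\underline\Delta_0$, so that every distance-one generator projects to a flip downstairs. The paper's own discussion of $f$-flips (the paragraph around Figure~\ref{Fig:fflip}) gives a counterexample: for the $4{:}1$ ramified cover of the bigon with an order-$4$ special puncture, flipping a diagonal of the preimage polygon of the special loop is a distance-one morphism between two $f$-admissible triangulations with $f(\Delta)=f(\mu_\gamma\Delta)$; the image of $\gamma$ is a self-crossing loop, not an arc of the image triangulation. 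For such generators $f_*$ has to be declared to be the identity of $f_*(\Delta)$, and one must then check that every relation of ${\bf TSurf}_\Sigma$ holding among $f$-admissible triangulations collapses correctly under this assignment --- your proof does not address this. Symmetrically, by Lemma~\ref{le:f-admissible triangulations}(b) a single flip downstairs typically lifts to the simultaneous flip of the entire fiber $f^{-1}(\underline\gamma)$, whose intermediate stages are \emph{not} $f$-admissible; the corresponding generator $h_{\Delta',\Delta}$ of ${\bf TSurf}_\Sigma^f$ is therefore a composite of distance-one morphisms of ${\bf TSurf}_\Sigma$ carrying the sign exponents of Definition~\ref{def:gro}, and you need to check that this composite is sent to the single flip $h_{\mu_{\underline\gamma}(f(\Delta)\cup\underline\Delta_0),\,f(\Delta)\cup\underline\Delta_0}$ independently of the ordering of the fiber and compatibly with the $C$-matrix signs on both sides. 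Your remark that relations supported away from $I^f$ transport verbatim is correct but does no work: the only relations requiring an argument are precisely those meeting the ramification locus and the preimages of special loops.
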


We say that a morphism $h_{\Delta',\Delta}$ in ${\bf Tsurf}_\Sigma^f$ is an $f$-flip if either $\Delta,\Delta'$ are related by a flip in ${\bf Tsurf}_\Sigma$ or $f_*(\Delta),f_*(\Delta')$ are related by a flip in ${\bf Tsurf}_{\underline\Sigma}$. For example, let $f:\Sigma\to \underline\Sigma$ be the $4:1$ ramified covering from the octahedron to the bigon with a special puncture of order $4$, then the two morphisms in Figure \ref{Fig:fflip} are $f$-flips. It is immediate that for an $f$-flip $h_{\Delta',\Delta}$ with $\Delta,\Delta'$ are related by a flip, we have either $f_*(\Delta)=f_*(\Delta')$ or $f_*(\Delta),f_*(\Delta')$ are related by a flip in ${\bf Tsurf}_{\underline\Sigma}$.

\begin{figure}[ht]
\includegraphics[width=12cm]{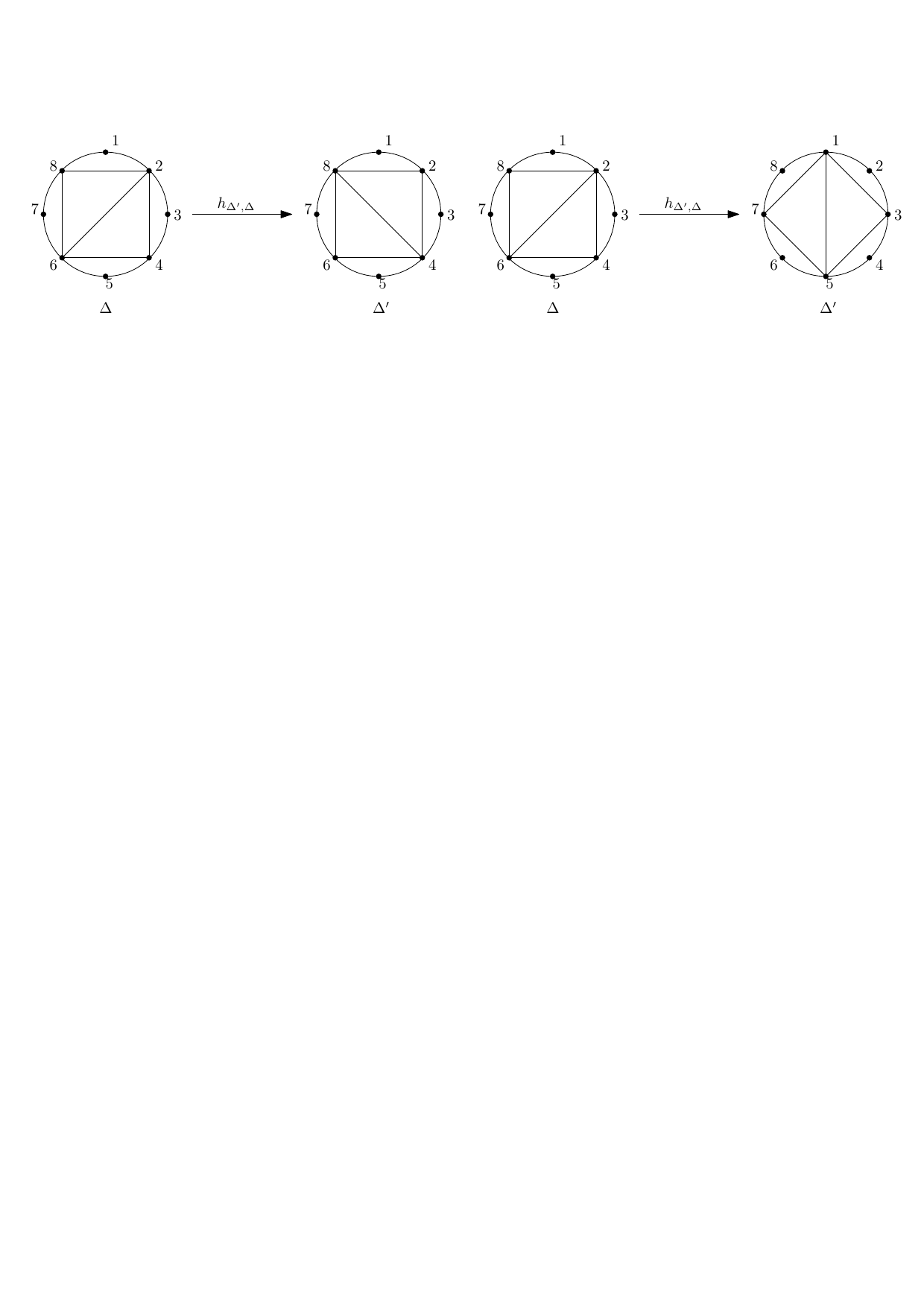}
\caption{}
\label{Fig:fflip}
\end{figure}

The following is immediate.

\begin{lemma}
The groupoid ${\bf Tsurf}_\Sigma^f$ is generated by the $f$-flips.   
\end{lemma}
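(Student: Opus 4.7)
By definition, $\mathbf{TSurf}_\Sigma^f$ is generated by the morphisms $h_{\Delta',\Delta}$ for which both $\Delta$ and $\Delta'$ are $f$-admissible triangulations of $\Sigma$, so it suffices to factor every such generator into $f$-flips. I would do this by exhibiting, for any two $f$-admissible triangulations $\Delta, \Delta'$ of $\Sigma$, a chain of $f$-admissible triangulations
$$\Delta = \Delta_0, \Delta_1, \ldots, \Delta_n = \Delta'$$
in which every consecutive pair is related by an $f$-flip; the relations of Definition \ref{def:gro} will then express $h_{\Delta',\Delta}$ as the corresponding composition.

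The first stage of the chain handles the downstairs geometry. Applying the Harer (respectively Felikson--Shapiro--Tumarkin) connectivity theorem to $\underline \Sigma$, I would connect $f_*(\Delta)$ to $f_*(\Delta')$ by a sequence of single flips in $\mathbf{TSurf}_{\underline \Sigma}$. For each such downstairs flip at a non-self-folded, non-special-loop arc $\underline\alpha$, Lemma \ref{le:f-admissible triangulations}(b) supplies the simultaneous upstairs move $\prod_{\alpha \in f^{-1}(\underline\alpha)} \mu_\alpha$, whose output is again $f$-admissible; by the definition given after Figure \ref{Fig:fflip}, each such move is an $f$-flip of type (b). Downstairs flips at special loops are treated via the preimage description in Lemma \ref{le:f-admissible triangulations}(c). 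This produces a chain of $f$-flips from $\Delta$ to some $f$-admissible $\tilde \Delta$ with $f_*(\tilde \Delta) = f_*(\Delta')$.

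The second stage handles the fiber ambiguity, connecting $\tilde\Delta$ to $\Delta'$ upstairs. Since $f_*(\tilde\Delta) = f_*(\Delta')$, the two $f$-admissible triangulations differ only in choices made within fibers of $f$. A local analysis on each connected component of the preimage of a triangle of $f_*(\Delta')$ should show that any two $f$-admissible lifts of a given downstairs triangulation are connected by a sequence of single-arc flips in $\Sigma$, each of which preserves $f$-admissibility and is hence an $f$-flip of type (a). Concatenating the two stages and invoking the generating relations of $\mathbf{TSurf}_\Sigma$ yields the desired factorization of $h_{\Delta',\Delta}$.

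\textbf{Main obstacle.} The hardest part will be the second, fiber-local step: confirming that within the set of $f$-admissible lifts of a fixed downstairs triangulation, any two are connected by single-arc flips that never destroy $f$-admissibility. The delicate cases occur near ramification points, special punctures, and $0$-punctures, where $f$ fails to be a covering. In those local models one must check by hand, using the combinatorial structure of the preimage polygons encoded in Lemma \ref{le:f-admissible triangulations}(c), that every intermediate triangulation remains $f$-admissible; I would handle this by an explicit case analysis on the type of point (boundary, ordinary puncture, $0$-puncture, or special puncture of order~$k$) about which the flip occurs.
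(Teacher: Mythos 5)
The paper offers no proof to compare against --- it states this lemma with the single word ``immediate'' --- so your outline can only be judged on its own merits. The overall architecture (reduce to factoring each generator $h_{\Delta',\Delta}$, connect $f_*(\Delta)$ to $f_*(\Delta')$ downstairs and lift via Lemma \ref{le:f-admissible triangulations}(b), then resolve the remaining fiber ambiguity by flips inside the preimage polygons of Lemma \ref{le:f-admissible triangulations}(c)) is the natural argument, and your identification of the fiber-local step as requiring a case analysis is reasonable. Two points, however, are glossed over and carry real content.

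First, in your Stage 1 you invoke Harer/Felikson--Shapiro--Tumarkin connectivity of \emph{all} triangulations of $\underline\Sigma$, but every intermediate downstairs triangulation in your chain must itself pull back to an $f$-admissible triangulation upstairs (otherwise the corresponding vertex of your chain is not an object of ${\bf TSurf}_\Sigma^f$ and the edge is not an $f$-flip). The paper's Figure \ref{Fig-morphism1} shows explicitly that a downstairs arc can have a preimage that is not a union of curves, so not every triangulation of $\underline\Sigma$ is liftable; you need the liftable triangulations to be flip-connected among themselves, or an argument that routes around the bad ones. Second, your closing claim that ``the relations of Definition \ref{def:gro} will then express $h_{\Delta',\Delta}$ as the corresponding composition'' is not automatic: ${\bf TSurf}_\Sigma$ is far from a free groupoid, $h_{\Delta',\Delta}\neq h_{\Delta',\Delta''}\circ h_{\Delta'',\Delta}$ in general, and the canonical decomposition of $h_{\Delta',\Delta}$ along a mutation path uses signed factors $h^{\varepsilon}$ with $\varepsilon$ computed from $C$-matrices relative to the \emph{source} $\Delta$. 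For a type (b) $f$-flip, the intermediate triangulations inside the block $\prod_{\gamma\in f^{-1}(\underline\alpha)}\mu_\gamma$ are not $f$-admissible, so you must verify that the signed sub-product over that block equals $h_{\Delta_{i+1},\Delta_i}$ or $h_{\Delta_i,\Delta_{i+1}}^{-1}$ (rather than some morphism passing through non-admissible objects). This does work out --- Lemma \ref{le:f-admissible triangulations}(b) guarantees the fiber arcs are pairwise non-adjacent, so by Lemma \ref{lem:dual} mutating at one does not change the $C$-matrix column of another, the signs within a block are coherent, and the diamond relations let you reorder --- but it is a step you must actually carry out, not a formal consequence of ``the relations of Definition \ref{def:gro}.''
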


Taking into account that ${\bf Tsurf}_\Sigma^f={\bf Tsurf}_\Sigma$ whenever $f$ is an isomorphism, we obtain the following  immediate consequence of Lemma \ref{le:tsurf^f}.

\begin{corollary}

\label{cor:Gamma-action} For any isomorphism $f:\Sigma\simeq \Sigma'$, the assignments $\Delta\mapsto f(\Delta), h_{\Delta',\Delta}\mapsto h^{\varepsilon(f)}_{f(\Delta'),f(\Delta)}$, 
define an isomorphism $f_*$ of groupoids ${\bf Tsurf}_\Sigma\simeq {\bf Tsurf}_{\Sigma‘}$, which is covariant if $f$ is orientation preserving and contravariant otherwise.
\end{corollary}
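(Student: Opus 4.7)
The plan is to reduce the claim to the functoriality already established in Lemma \ref{le:tsurf^f} together with the orientation-reversal automorphism of Lemma \ref{le:orientation-reversal functor}. When $f:\Sigma\simeq\Sigma'$ is an isomorphism, every triangulation of $\Sigma$ is $f$-admissible (since $f$ maps arcs bijectively to arcs, preserves compatibility, and preserves self-folded triangles around $0$-punctures), so ${\bf Tsurf}_\Sigma^f={\bf Tsurf}_\Sigma$. Moreover the complement $\Sigma'\setminus f(\Sigma)$ is empty, so the auxiliary triangulation $\underline\Delta_0$ in Lemma \ref{le:tsurf^f} is trivial. If $f$ is orientation-preserving, Lemma \ref{le:tsurf^f} directly gives the covariant functor $f_*:{\bf Tsurf}_\Sigma\to{\bf Tsurf}_{\Sigma'}$ on the nose.

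For orientation-reversing $f$, I would factor $f$ as $\Sigma\xrightarrow{\iota}\overline\Sigma\xrightarrow{\bar f}\Sigma'$, where $\iota$ reverses orientation on the underlying manifold while fixing marked points, and $\bar f$ is the resulting orientation-preserving isomorphism. Applying Lemma \ref{le:orientation-reversal functor} to $\iota$ produces the contravariant isomorphism ${\bf Tsurf}_\Sigma\simeq{\bf Tsurf}_{\overline\Sigma}$ sending $h_{\Delta',\Delta}$ to $h^{-1}_{\overline\Delta,\overline{\Delta'}}$, and composing with the covariant $\bar f_*$ from the first case yields the required contravariant $f_*$. This matches the formula $h_{\Delta',\Delta}\mapsto h^{\varepsilon(f)}_{f(\Delta'),f(\Delta)}$ with $\varepsilon(f)=-$ in the orientation-reversing case and $\varepsilon(f)=+$ otherwise.

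To verify that $f_*$ is an isomorphism of groupoids, I would construct $(f^{-1})_*$ by the same recipe and check, on generating horizontal morphisms and on objects, that $f_*\circ (f^{-1})_*$ and $(f^{-1})_*\circ f_*$ coincide with the identity; functoriality then promotes this to a genuine two-sided inverse. Well-definedness requires that $f_*$ sends each defining relation of ${\bf TSurf}_\Sigma$ listed in Theorem \ref{th:presentation of Tsurf} to a valid relation in ${\bf TSurf}_{\Sigma'}$. Since $f$ is a homeomorphism, it permutes cyclic triangles, quadrilaterals, self-folded triangles, once-punctured bigons, and sends flips to flips; the only delicate point is clockwise-vs-counterclockwise orientation, which is inverted precisely when $\varepsilon(f)=-$.

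The main obstacle is therefore the sign bookkeeping. The defining relations of ${\bf TSurf}_\Sigma$ involve the signs $\varphi(\Delta_0;\Delta,\mu_\alpha\Delta)$ and $\phi(\Delta_0;\Delta,\mu_\alpha\Delta)$, which are extracted from $C$-matrices of the commutative seeds at $\Delta$ and at $\overline\Delta$ respectively. Since $C$-matrices depend only on the combinatorial exchange pattern, an orientation-preserving $f$ transports $\varphi$ to $\varphi$ and $\phi$ to $\phi$, while an orientation-reversing $f$ exchanges the roles of $\varphi$ and $\phi$ — which is exactly the swap already built into Lemma \ref{le:orientation-reversal functor}. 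This compatibility is what makes the sign $\varepsilon(f)$ consistent simultaneously on all generators; once this is checked for a single flip, the relations in Theorem \ref{th:presentation of Tsurf} are mapped term-by-term to their counterparts (possibly with reversed cyclic order in the pentagon/hexagon relation), completing the proof.
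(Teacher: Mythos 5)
Your proposal is correct and follows essentially the same route as the paper, which derives the corollary as an immediate consequence of Lemma \ref{le:tsurf^f} after observing that ${\bf Tsurf}_\Sigma^f={\bf Tsurf}_\Sigma$ when $f$ is an isomorphism (the covariant/contravariant dichotomy being already built into that lemma). Your additional checks — emptiness of the complement, constructing $(f^{-1})_*$ as the inverse, and the $\varphi$/$\phi$ sign bookkeeping under orientation reversal — are exactly the details the paper leaves implicit in calling the result ``immediate.''
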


The following is immediate in view of the behavior of $f$-admissibility under compositions.

\begin{lemma} 
\label{le:f'-action}
In the notation as above, for any morphisms $f:\Sigma\to \underline \Sigma$ and any surjective $f': \Sigma'\to \Sigma$ 
in ${\bf Surf}$ one has 

$(a)$  the restriction of $f'_*:{\bf TSurf}_{\Sigma'}^{f'}\to {\bf TSurf}_\Sigma$ to ${\bf TSurf}_{\Sigma'}^{f\circ f'}$ is a natural full functor 
${\bf Tsurf}_{\Sigma'}^{f\circ f'}\twoheadrightarrow {\bf Tsurf}_\Sigma^f$. 

$(b)$ Any automorphism $\sigma$ of $\Sigma$ defines an isomorphism of groupoids
${\bf Tsurf}_\Sigma^f\to {\bf Tsurf}_ \Sigma^{f\circ \sigma}$.

$(c)$ The group $\Gamma_f:=\{\sigma\in Aut(\Sigma): f\circ \sigma=f\}$ naturally acts on ${\bf Tsurf}_\Sigma^f$ by automorphisms.

\end{lemma}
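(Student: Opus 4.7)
The plan is to verify each of (a), (b), (c) by tracking how $f$-admissibility interacts with composition of morphisms in ${\bf Surf}$. The unifying observation is that $f$-admissibility of a triangulation $\Delta\subset\Sigma$ is a pointwise condition on arcs ($f(\gamma)$ is a curve for every $\gamma\in\Delta$) together with the condition that $f(\Delta)$ triangulates $f(\Sigma)$, so it behaves well under pre- and post-composition with other surface morphisms.

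For part (a), I first observe that if $\Delta$ is $(f\circ f')$-admissible, then each $\gamma\in\Delta$ already satisfies the weaker condition that $f'(\gamma)$ is a curve, so $\Delta$ is $f'$-admissible; moreover the triangulation $f'(\Delta)=f'_*(\Delta)$ is $f$-admissible because $f(f'(\gamma))$ is a curve by hypothesis. This gives the functor on objects. On generating morphisms, each $(f\circ f')$-flip in ${\bf TSurf}_{\Sigma'}^{f\circ f'}$ maps either to an identity of ${\bf TSurf}_\Sigma^f$ (when the flip occurs inside a preimage fiber over a fixed arc of $\Sigma$) or to an $f$-flip of ${\bf TSurf}_\Sigma^f$; both cases are built into the definition of an $f$-flip given just above the lemma. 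Fullness reduces to showing that every $f$-flip $h_{\mu_\gamma\Delta,\Delta}$ downstairs is the $f'_*$-image of some composition of $(f\circ f')$-flips upstairs, which I would establish by first lifting $\Delta$ to a triangulation $\tilde\Delta$ of $\Sigma'$ via Lemma~\ref{le:f-admissible triangulations}(d) applied to $f'$, and then realizing the simultaneous flip of all arcs in $(f')^{-1}(\gamma)$ as a sequence of single flips using the local procedure of Lemma~\ref{le:f-admissible triangulations}(b).

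For part (b), any automorphism $\sigma$ of $\Sigma$ already produces an isomorphism of groupoids $\sigma_*:{\bf TSurf}_\Sigma\simeq {\bf TSurf}_\Sigma$ by Corollary~\ref{cor:Gamma-action}. The extra content is only that this isomorphism, implemented by $\Delta\mapsto \sigma^{-1}(\Delta)$, interchanges $f$-admissibility with $(f\circ\sigma)$-admissibility: for $\gamma=\sigma^{-1}(\alpha)\in \sigma^{-1}(\Delta)$ one has $(f\circ\sigma)(\gamma)=f(\alpha)$, which is a curve iff $\Delta$ is $f$-admissible, and similarly $(f\circ\sigma)(\sigma^{-1}(\Delta))=f(\Delta)$ triangulates $f(\Sigma)=(f\circ\sigma)(\Sigma)$ iff $f(\Delta)$ does. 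Hence the restriction of $\sigma_*$ supplies the isomorphism ${\bf TSurf}_\Sigma^f\simeq {\bf TSurf}_\Sigma^{f\circ\sigma}$ asserted in (b). Part (c) is then immediate from (b): if $\sigma\in \Gamma_f$ then $f\circ\sigma=f$, so $\sigma_*$ is an automorphism of ${\bf TSurf}_\Sigma^f$, and the functoriality $(\sigma_1\sigma_2)_*=(\sigma_2)_*\circ(\sigma_1)_*$ is inherited from Corollary~\ref{cor:Gamma-action}, producing a genuine $\Gamma_f$-action.

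The main obstacle is fullness in (a). A single flip in $\Sigma$ between $f$-admissible triangulations need not lift to a single flip in $\Sigma'$: when $f'$ is a nontrivial (possibly ramified) cover, the preimage $(f')^{-1}(\gamma)$ of the flipped arc is a multi-element set whose arcs must all be flipped simultaneously, and the ordering of these flips as successive $(f\circ f')$-flips can fail whenever two preimage arcs are simultaneously sides of a common triangle in an intermediate triangulation. I would handle this by a local case analysis following the ramification types allowed in ${\bf Surf}$, patterned on the argument of Lemma~\ref{le:f-admissible triangulations}(b) and the pentagon/hexagon relations of Theorem~\ref{th:presentation of Tsurf}; the remaining steps are routine bookkeeping.
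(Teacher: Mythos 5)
The paper offers no argument for this lemma at all: it is introduced with ``The following is immediate in view of the behavior of $f$-admissibility under compositions,'' so there is no proof to compare routes against. Your treatment of (b) and (c) is exactly the intended one-liner --- $(f\circ\sigma)$-admissibility of $\sigma^{-1}(\Delta)$ is equivalent to $f$-admissibility of $\Delta$, and then Corollary \ref{cor:Gamma-action} does the rest --- and your identification of fullness in (a) as the only point with content is accurate; the paper's surrounding machinery (the $f$-flips, the lemma that ${\bf Tsurf}_\Sigma^f$ is generated by them, and Lemma \ref{le:f-admissible triangulations}(b)) is indeed what makes the lifting of a single $f$-flip work, since the arcs of $(f')^{-1}(\gamma)$ are never two sides of one triangle and hence can be flipped in any order.

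There is, however, one step in your fullness argument that is not yet closed. Lifting each generating $f$-flip starting from a chosen preimage $\tilde\Delta$ determines where the lift of a \emph{composition} ends, and for a loop at $\Delta$ downstairs (an element of $Br^f_\Delta$) the lift may terminate at a preimage $\tilde\Delta''\neq\tilde\Delta$ of the same $\Delta$; since fullness requires surjectivity of $\mathrm{Hom}(\tilde\Delta,\tilde\Delta)\to\mathrm{Hom}(\Delta,\Delta)$, you must still correct this monodromy. The correction is available --- by Lemma \ref{le:f-admissible triangulations}(a), two $(f\circ f')$-admissible preimages of the same triangulation both contain $(f')^{-1}(\Delta)$ and differ only in how the fibers (e.g.\ the polygons over special loops) are triangulated, and flips confined to those fibers are sent by $f'_*$ to identity morphisms, so $\tilde\Delta''$ and $\tilde\Delta$ are joined by a morphism lying over $\mathrm{id}_\Delta$ --- but this observation has to be made explicitly; ``routine bookkeeping'' does not cover it. Separately, your opening claim that $(f\circ f')$-admissibility implies $f'$-admissibility needs the second half of the definition as well (that $f'(\Delta)$ actually triangulates $\Sigma$, not merely that each $f'(\gamma)$ is a curve); this too follows from Lemma \ref{le:f-admissible triangulations} but should be said.
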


\begin{remark} Informally, the algebra ${\mathcal A}_\Sigma^f$ in Theorem \ref{th:functoriality nc-surface} is assigned to ${\bf Tsurf}_\Sigma^f$. The homomorphism ${\mathcal A}_\Sigma^f\to {\mathcal A}_{f(\Sigma)}$ from Theorem \ref{th:functoriality nc-surface} was inspired by the functor $f_*$ from Lemma \ref{le:tsurf^f}.
\end{remark}

\subsection{Tagged triangulated surfaces}\label{sec:taggedtri}
For any $P\subset I_{P,1}(\Sigma)$ 
we denote by $\Delta^P$ the corresponding {\it tagged} triangulation in which we replace all self-folded triangles around points of $P$ in $\Delta$ with {\it tagged} bigons which we define as follows, and tag every remaining point in $P$ (this convention is different from \cite{FST} because we tag vertices rather than arcs).

\begin{figure}[ht]
\includegraphics{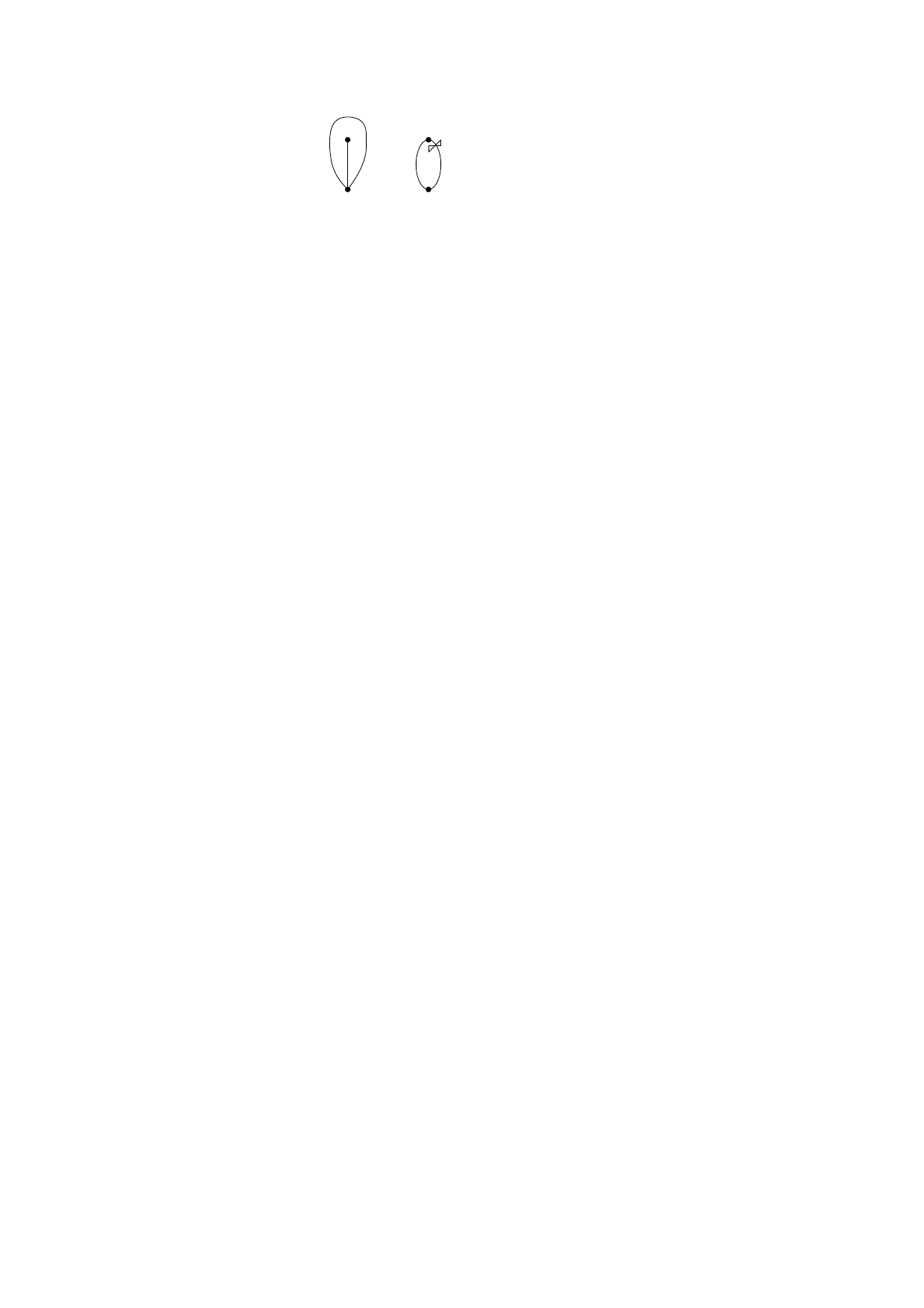}
\caption{Tagged bigon}
\end{figure}

We use the notation $(\gamma,\gamma^{(p)})$ to denote the {\it tagged bigon} corresponding to the self-folded triangle $(\gamma,\gamma,\ell)$ encloses the puncture $p$. 

In particular, if $P$ is empty, then $\Delta=\Delta^P$.

We say that $P$ is the set of tagged vertices of the tagged triangulation $\Delta^P$. Sometimes we denote a tagged triangulation by $\Delta$ and by $tag(\Delta)$ the set $P$ of its tagged vertices and $|\Delta|$ its underlying surface.

Denote by $2^S$ the set of all subsets of $S$. For any set $X$ we define a groupoid $[X]$ whose objects are elements of $X$ with a single arrow between any two elements.

Then for any $\Sigma\in {\bf Surf}$ we abbreviate 
${\bf TSurf}^t_\Sigma:={\bf TSurf}_\Sigma\times [2^{I_{P,1}(\Sigma)}]$, the direct product of categories (see e.g., Appendix \ref{sec:Appendix A}), which is, clearly, a groupoid. That is, the objects of ${\bf TSurf}_\Sigma^t$ are (tagged) triangulations $\Delta$ of $\Sigma$ and the morphisms are generated by $h_{\Delta'^{P'},\Delta^P}:=(h_{P',P},h_{\Delta',\Delta}):\Delta^P\to \Delta'^{P'}, dist(\Delta,\Delta')=1$, where $h_{P',P}:P'\to P$ is the unique morphism in $2^{I_{P,1}(\Sigma)}$.

The following is immediate.

\begin{lemma}
For any $\Sigma\in {\bf Surf}$, the objects of the groupoid ${\bf TSurf}^t_\Sigma$ are tagged triangulations of $\Sigma$ and the morphisms are generated by $h_{\Delta',\Delta}: \Delta\to \Delta'$ for $\Delta,\Delta'\in {\bf TSurf}_\Sigma$ with $dist(\Delta,\Delta')=1$ and $h_{\Delta^{P'},\Delta^P}:\Delta^P\to \Delta^{P'}$ for $\Delta\in {\bf TSurf}_\Sigma, P\subset P'\subset I_{P,1}(\Sigma)$ with $|P'|=|P|+1$, such that
   
$(a)$ the assignments $\Delta\to \Delta$ give a fully faithful functor  $\iota:{\bf TSurf}_\Sigma\to {\bf TSurf}^t_\Sigma$.

$(b)$ For any $P\subset P'\subset I_{P,1}(\Sigma)$ with $|P'|=|P|+1$, we have the following commutative diagram.
    $$\centerline{\xymatrix{
  & \Delta^P \ar[d]_{h_{\Delta'^P,\Delta^P}} \ar[rr]^{h_{\Delta^{P'},\Delta^P}}  &&  \Delta^{P'}\ar[d]
^{h_{\Delta'^{P'},\Delta^{P'}}}           \\
  & \Delta'^P \ar[rr]^{h_{\Delta'^{P'},\Delta'^P}}  && \Delta'^{P'}.}}$$    
\end{lemma}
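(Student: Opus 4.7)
The plan is to unpack the definition ${\bf TSurf}^t_\Sigma := {\bf TSurf}_\Sigma \times [2^{I_{P,1}(\Sigma)}]$ as a direct product of groupoids and verify the three assertions (the generating set, the fully faithful functor $\iota$, and the commutative square) by tracking what a morphism in a product groupoid looks like. First I would recall the general fact that for any two categories ${\mathcal C}_1,{\mathcal C}_2$, a morphism $(f_1,f_2)$ in ${\mathcal C}_1\times {\mathcal C}_2$ admits the two factorizations
\[
(f_1,f_2)=(f_1,\mathrm{id})\circ (\mathrm{id},f_2)=(\mathrm{id},f_2)\circ (f_1,\mathrm{id}),
\]
so that if ${\mathcal C}_i$ is generated by a family $G_i$ ($i=1,2$), then ${\mathcal C}_1\times {\mathcal C}_2$ is generated by $\{(g,\mathrm{id})\mid g\in G_1\}\cup\{(\mathrm{id},g)\mid g\in G_2\}$.

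Next I would apply this with ${\mathcal C}_1={\bf TSurf}_\Sigma$, which is generated by the distance-one flips $h_{\Delta',\Delta}$ (this is exactly the content of Theorem \ref{th:presentation of Tsurf}), and with ${\mathcal C}_2=[2^{I_{P,1}(\Sigma)}]$, a connected groupoid with a unique morphism between any two objects. Since the Hasse diagram of subsets of $I_{P,1}(\Sigma)$ ordered by inclusion is connected and its edges are the adjacency pairs $(P,P')$ with $P\subset P'$ and $|P'|=|P|+1$, the corresponding morphisms $h_{P',P}$ (together with their formal inverses) generate $[2^{I_{P,1}(\Sigma)}]$. Translated back, the two families of generators $(h_{\Delta',\Delta},\mathrm{id}_P)$ and $(\mathrm{id}_\Delta,h_{P',P})$ are exactly the morphisms $h_{\Delta',\Delta}\colon\Delta^P\to\Delta'^P$ and $h_{\Delta^{P'},\Delta^P}\colon\Delta^P\to\Delta^{P'}$ claimed in the statement.

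For part (a), I define $\iota(\Delta):=\Delta^\emptyset$ on objects and $\iota(h):=(h,\mathrm{id}_\emptyset)$ on morphisms. Componentwise composition in the product makes $\iota$ a functor immediately. Full faithfulness is just the product formula for Hom-sets:
\[
\mathrm{Hom}_{{\bf TSurf}^t_\Sigma}(\Delta^\emptyset,\Delta'^\emptyset)=\mathrm{Hom}_{{\bf TSurf}_\Sigma}(\Delta,\Delta')\times \mathrm{Hom}_{[2^{I_{P,1}(\Sigma)}]}(\emptyset,\emptyset),
\]
and the second factor is a singleton because $[2^{I_{P,1}(\Sigma)}]$ is the chaotic groupoid on its object set. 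For part (b), the square commutes by the interchange law in the product: both compositions around the diagram equal the single morphism $(h_{\Delta',\Delta},h_{P',P})\colon\Delta^P\to\Delta'^{P'}$.

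I do not expect a serious obstacle here; the lemma is essentially an unpacking of the product construction. The only point requiring a brief justification (beyond formal manipulation) is that ${\bf TSurf}_\Sigma$ is indeed generated by distance-one flips, which is available from Theorem \ref{th:presentation of Tsurf}, and that $[2^{I_{P,1}(\Sigma)}]$ is generated by adjacency morphisms, which follows from the connectedness of the Boolean lattice of subsets under inclusion.
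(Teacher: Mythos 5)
Your proposal is correct and matches the paper's intent: the paper states this lemma as "immediate" from the definition ${\bf TSurf}^t_\Sigma={\bf TSurf}_\Sigma\times[2^{I_{P,1}(\Sigma)}]$, and your unpacking of the product groupoid (generators from each factor paired with identities, Hom-sets factoring with the singleton Hom of the chaotic groupoid $[2^{I_{P,1}(\Sigma)}]$, and the interchange law giving the commuting square) is exactly the verification being left to the reader. The only inputs you correctly flag as needing citation are the generation of ${\bf TSurf}_\Sigma$ by distance-one flips (Theorem \ref{th:presentation of Tsurf}) and the connectedness of the Boolean lattice under single-element steps.
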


For any marked surface $\Sigma$ and $P\subset I_{P,1}(\Sigma)$, 
denote by ${{\bf TSurf}_\Sigma^{t}}^{P}$ the full subcategory of ${\bf TSurf}_\Sigma^t$ with objects $\Delta^{P},\Delta\in {\bf TSurf}_\Sigma$. 

The following is immediate as well.

\begin{lemma}
For any subset $P\subset I_{P,1}(\Sigma)$, the assignments
$\Delta^{P'}\mapsto \Delta^{P'\ominus P}$
define an involutive auto-equivalence $F_P$ of ${\bf TSurf}_\Sigma^t$. Moreover, 

(a) the restriction of $F_P$ to the subcategory ${\bf TSurf}_\Sigma$ of ${\bf TSurf}_\Sigma^t$ induces an isomorphism of categories $F_P:{\bf TSurf}_\Sigma\cong {{\bf TSurf}_\Sigma^t}^P$.

(b) For any $P_1,P_2\subset I_{P,1}(\Sigma)$, the set of morphisms $\{h_{\Delta^{P'\ominus P_2},\Delta^{P'\ominus P_1}}\mid \Delta^{P'}\in {\bf TSurf}_\Sigma^t\}$ gives a natural isomorphism from $F_{P_1}$ to $F_{P_2}$.
\end{lemma}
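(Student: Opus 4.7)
The plan is to recognize $F_P$ as a product functor induced by the product structure ${\bf TSurf}_\Sigma^t = {\bf TSurf}_\Sigma \times [2^{I_{P,1}(\Sigma)}]$. Since $[S]$ is, by construction, the connected groupoid on a set $S$ (a unique morphism between any two objects), any bijection $\sigma : S \to S$ canonically induces an automorphism of the category $[S]$, sending the unique arrow $s_1 \to s_2$ to the unique arrow $\sigma(s_1)\to\sigma(s_2)$. I would take $\sigma_P : P' \mapsto P' \ominus P$ on $S = 2^{I_{P,1}(\Sigma)}$, which is an involution because symmetric difference is self-inverse, and set $F_P := \mathrm{id}_{{\bf TSurf}_\Sigma}\times \sigma_{P,*}$. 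On the generating morphisms listed in the previous lemma this gives $F_P(h_{\Delta'^{P'},\Delta^{P'}}) = h_{\Delta'^{P'\ominus P},\Delta^{P'\ominus P}}$ and $F_P(h_{\Delta^{P''},\Delta^{P'}}) = h_{\Delta^{P''\ominus P},\Delta^{P'\ominus P}}$, so functoriality and involutivity $F_P\circ F_P = \mathrm{id}$ are immediate: they come for free from the product construction and from $\sigma_P^2=\mathrm{id}$.

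For part (a), note that the image of the functor $\iota : {\bf TSurf}_\Sigma \hookrightarrow {\bf TSurf}_\Sigma^t$ is precisely ${{\bf TSurf}_\Sigma^{t}}^{\emptyset}$, the full subcategory on objects $\Delta^\emptyset$; and $F_P$ sends $\Delta^\emptyset$ to $\Delta^P$, hence maps this subcategory bijectively onto ${{\bf TSurf}_\Sigma^{t}}^{P}$. On morphisms, both subcategories are canonically identified with ${\bf TSurf}_\Sigma$: indeed, since the second factor $[2^{I_{P,1}(\Sigma)}]$ has exactly one arrow between any two of its objects, for any $Q\subset I_{P,1}(\Sigma)$ one has $\mathrm{Hom}_{{\bf TSurf}_\Sigma^t}(\Delta_1^Q,\Delta_2^Q)=\mathrm{Hom}_{{\bf TSurf}_\Sigma}(\Delta_1,\Delta_2)\times\{\mathrm{id}_Q\}$, and $F_P$ respects these identifications.

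For part (b), define $\eta_{\Delta^{P'}} := h_{\Delta^{P'\ominus P_2},\,\Delta^{P'\ominus P_1}} : F_{P_1}(\Delta^{P'}) \to F_{P_2}(\Delta^{P'})$. This is a purely vertical arrow in the product, namely $(\mathrm{id}_\Delta,\ \text{unique arrow } P'\ominus P_1 \to P'\ominus P_2)$. To verify naturality in $\Delta^{P'}$, it suffices to check the naturality square on the two classes of generating morphisms. For a horizontal generator $(h_{\Delta_2,\Delta_1},\mathrm{id}_{P'})$, both compositions equal $(h_{\Delta_2,\Delta_1},\text{unique arrow } P'\ominus P_1\to P'\ominus P_2)$ because $F_{P_i}$ is the identity in the ${\bf TSurf}_\Sigma$-factor; for a vertical generator $(\mathrm{id}_\Delta,h_{P'',P'})$, both compositions agree in the second factor by the uniqueness of arrows in $[2^{I_{P,1}(\Sigma)}]$. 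Thus $\eta$ is a natural isomorphism $F_{P_1}\cong F_{P_2}$.

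The only genuine conceptual step is recognizing $F_P$ as a product functor; once this is in place, every assertion reduces to a formal consequence of the universal property of the product of categories together with the fact that the second factor is a chaotic groupoid, so no combinatorial case analysis on triangulations or on the defining relations of ${\bf TSurf}_\Sigma$ is needed. The main potential pitfall is keeping the bookkeeping of tags straight when translating between the notation $\Delta^{P'}$ and the underlying pair $(\Delta,P')$ in the product, but this is purely a matter of unwinding notation rather than a real obstacle.
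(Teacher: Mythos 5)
Your proof is correct, and it follows exactly the route the paper intends: the paper states this lemma as immediate (giving no written proof) precisely because ${\bf TSurf}^t_\Sigma$ is defined as the product ${\bf TSurf}_\Sigma\times[2^{I_{P,1}(\Sigma)}]$ with second factor a chaotic groupoid, so $F_P=\mathrm{id}\times\sigma_{P,*}$ and all three assertions are formal consequences, as you show. Your write-up is a faithful unwinding of that implicit argument, with no gaps.
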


Given a category ${\mathcal C}$ and an object $p$, denote by $Aut_{\mathcal C}(p)$ the group of all automorphisms of $p$ in ${\mathcal C}$. 

For any $\Delta\in {\bf TSurf}^t_\Sigma$ we abbreviate $Br_{\Delta}:=Aut_{{\bf TSurf}^t_\Sigma}(\Delta)$ and refer to it as the {\it braid group of $\Delta$}. 

We clear have $Br_{\Delta}=Aut_{{\bf TSurf}_\Sigma}(\Delta)$ if $\Delta$ is an ordinary triangulation of $\Sigma$.

As ${\bf Tsurf}_\Sigma^t$ is a connected groupoid, the following is immediate.

\begin{corollary}\label{cor:brinv} $Aut_{{\bf TSurf}^t_\Sigma}(\Delta)\cong Aut_{{\bf TSurf}^t_\Sigma}(\Delta')$
for any $\Delta,\Delta'\in {\bf Tsurf}_\Sigma^t$.
    
\end{corollary}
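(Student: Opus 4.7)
The statement is the standard groupoid fact that in a connected groupoid all isotropy groups are isomorphic, so the plan is just to supply the connectedness and the conjugation isomorphism explicitly in our setting.

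First I would establish that ${\bf TSurf}^t_\Sigma$ is a connected groupoid, i.e.\ that for any pair of tagged triangulations $\Delta^P,\Delta'^{P'}$ there exists a morphism $\Delta^P\to \Delta'^{P'}$. By Harer's theorem (together with the Felikson--Shapiro--Tumarkin extension cited in the paper), any two ordinary triangulations $\Delta,\Delta'$ of $\Sigma$ are connected by a finite sequence of flips, and each such flip gives a generating horizontal morphism $h_{\mu_\gamma\Delta,\Delta}$ in ${\bf TSurf}_\Sigma$; composing these yields a morphism $\Delta\to \Delta'$. For the tagging part, by the very construction ${\bf TSurf}^t_\Sigma={\bf TSurf}_\Sigma\times [2^{I_{P,1}(\Sigma)}]$, and the second factor $[2^{I_{P,1}(\Sigma)}]$ is (tautologically) a connected groupoid with a unique morphism between any two objects. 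The product of connected groupoids is connected, so ${\bf TSurf}^t_\Sigma$ is connected, and in particular the hom-set $\mathrm{Hom}_{{\bf TSurf}^t_\Sigma}(\Delta,\Delta')$ is non-empty for every pair.

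Next, fix any morphism $h\in \mathrm{Hom}_{{\bf TSurf}^t_\Sigma}(\Delta,\Delta')$. Since ${\bf TSurf}^t_\Sigma$ is a groupoid, $h$ is invertible, so I can define
\[
\phi_h:Br_\Delta\longrightarrow Br_{\Delta'},\qquad \phi_h(g)=h\circ g\circ h^{-1}.
\]
That $\phi_h(g)\in Br_{\Delta'}=\mathrm{Aut}_{{\bf TSurf}^t_\Sigma}(\Delta')$ is immediate since source and target of $h\circ g\circ h^{-1}$ are both $\Delta'$; that $\phi_h$ is a group homomorphism is the standard one-line check $\phi_h(g_1g_2)=h g_1g_2 h^{-1}=(h g_1 h^{-1})(h g_2 h^{-1})=\phi_h(g_1)\phi_h(g_2)$. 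The map $\phi_{h^{-1}}:Br_{\Delta'}\to Br_\Delta$ is clearly a two-sided inverse, so $\phi_h$ is a group isomorphism, giving the desired $Br_\Delta\cong Br_{\Delta'}$.

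The only non-formal ingredient is the connectedness of ${\bf TSurf}^t_\Sigma$, and even this is already implicit in the paragraph preceding the corollary where the authors note ``As ${\bf TSurf}_\Sigma^t$ is a connected groupoid''. There is no real obstacle: once the flip-connectedness theorem of Harer/Felikson--Shapiro--Tumarkin is invoked for the ordinary triangulation factor, the result is a purely formal consequence of the groupoid axioms, and the isomorphism is canonical up to the choice of $h$ (different choices differ by an inner automorphism of $Br_{\Delta'}$, which is why ``$\cong$'' rather than a canonical equality is the right statement).
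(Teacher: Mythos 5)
Your proposal is correct and is essentially the paper's own argument: the paper dismisses the corollary as ``immediate'' precisely because ${\bf TSurf}^t_\Sigma$ is a connected groupoid, and your conjugation map $\phi_h(g)=h\circ g\circ h^{-1}$ is the standard isomorphism being invoked. Your explicit justification of connectedness via Harer/Felikson--Shapiro--Tumarkin flip-connectedness for the ordinary factor and the tautological connectedness of $[2^{I_{P,1}(\Sigma)}]$ simply fills in what the paper leaves implicit, and your closing remark that the isomorphism is canonical only up to inner automorphism is an accurate observation consistent with the paper's use of ``$\cong$''.
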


This implies that there is a group $Br_\Sigma$ (up to conjugation) isomorphic to all $Br_\Delta$ for $\Delta\in {\bf TSurf}_\Sigma^t$.

Denote by ${\bf TSurf}^t$ the category whose objects are (tagged)  triangulations $\Delta^P$ of marked surfaces whose morphisms are generated by those of ${\bf TSurf}_{|\Delta|}^t$ as subcategories (we still refer to them as horizontal) together with the vertical morphisms $v_{f,\Delta^P,\Delta'^{f(P)}}: \Delta^P\to \Delta'^{f(P)}$ for any $f:|\Delta|\to |\Delta'|$ in ${\bf Surf}$ such that $f(\Delta)\subset \Delta'$ and $f(P)\subset I'_p(|\Delta'|)$, subject to
$$\centerline{\xymatrix{
  & \Delta \ar[d]_{v_{f,\Delta,\Delta'}} \ar[rr]^{h_{\Delta^{P},\Delta}}  &&  \Delta^{P}\ar[d]
^{v_{f, \Delta^{P},\Delta'^{f(P)}}}           \\
  & \Delta' \ar[rr]^{h_{\Delta'^{f(P)},\Delta'}}  && \Delta'^{f(P)}.}}$$

For any tagged triangulation $\Delta^P\in {\bf TSurf}_\Sigma^t$ and any internal edge $\gamma\in \Delta$, if $\gamma$ is not a side of any self-folded triangle, then denote $\mu_\gamma(\Delta^P)={(\mu_\gamma \Delta)}^{P}$;
if $\gamma$ is a loop of some self-folded triangle in $\Delta$ that surrounds puncture $p\in I_{P,1}(|\Delta|)$, denote $\mu_\gamma(\Delta^P)={(\mu_\gamma \Delta)}^{P\setminus \{p\}}$; if $\gamma$ is a radius of some self-folded triangle encloses puncture $p\in I_{P,1}$ and with loop $\ell$ in $\Delta$, denote $\mu_\gamma(\Delta^P)={(\mu_\ell \Delta)}^{P\cup \{p\}}$. In all cases, we call $\mu_\gamma(\Delta^P)$ the {\it flip} of $\Delta^p$ at $\gamma$.

For any tagged triangulations $\Delta$ and $\Delta'$ of $\Sigma$, we define the distance $dist(\Delta,\Delta')=dist(\Delta',\Delta)$ to be the smallest number of flips from $\Delta$ to $\Delta'$.

More generally, for any morphism $f:\Sigma\to \underline \Sigma$ in ${\bf Surf}$ and any $\Delta\in {\bf TSurf}_\Sigma$ we abbreviate $Br^f_\Delta:=Aut_{{\bf TSurf}^f_\Sigma}(\Delta)$ and refer to it as the {\it relative} braid group of $\Delta$ (with respect to $f$). 

\begin{remark}
In view of Lemma \ref{le:groupoid representation}, 

$(a)$ the assignments $\Delta\mapsto Br_\Delta$ define
a functor $Br:{\bf Tsurf}_\Sigma\to  {\bf Grp}'$.

$(b)$ the assignments $\Delta\mapsto Br^f_\Delta$ define
a sub-functor $Br^f:{\bf Tsurf}_\Sigma^f\to  {\bf Grp}'$ of $Br$.
\end{remark}

The following is an immediate consequence of that ${\bf TSurf}_\Sigma$ is a groupoid and of Lemma \ref{le:tsurf^f}.

\begin{lemma}
\label{le:relative homomorphism} Let $\Sigma,\Sigma'\in {\bf Surf}$, $\Delta,\Delta'$ be two triangulations of $\Sigma$ and $f:\Sigma\to \Sigma'$ be a morphism.

$(a)$ The assignments $g\mapsto (f_*(g))^{\varepsilon(f)}$ define a group homomorphism $f_*:Br^f_\Delta\to Br_{f(\Delta)}$ for any $f$-admissible triangulation $\Delta$. 
In particular, if $f$ is injective , then $Br^f_\Delta=Br_\Delta$ and $f_*$ is injective.

$(b)$ If $\Delta$ and $\Delta'$ are $f$-admissible, then the restriction of the isomorphism $Br_\Delta\simeq Br_{\Delta'}$ to $Br^f_\Delta$ is an isomorphism $Br^f_\Delta\simeq Br^f_{\Delta'}$.
\end{lemma}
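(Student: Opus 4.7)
\smallskip

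\noindent\textbf{Proof proposal.} The plan is to deduce both parts of the lemma from the functoriality statement in Lemma \ref{le:tsurf^f} together with the (to-be-established) connectivity of the subgroupoid ${\bf TSurf}_\Sigma^f$.

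For part $(a)$, I would invoke Lemma \ref{le:tsurf^f}: the assignments $\Delta\mapsto f(\Delta)\cup \underline\Delta_0$ define a (co/contra)variant functor $f_*:{\bf TSurf}_\Sigma^f\to {\bf TSurf}_{\underline\Sigma}$ according to whether $f$ preserves or reverses orientation. Applying $f_*$ to $Aut_{{\bf TSurf}_\Sigma^f}(\Delta)=Br_\Delta^f$ gives a map into $Aut_{{\bf TSurf}_{\underline\Sigma}}(f(\Delta))=Br_{f(\Delta)}$; when $f_*$ is covariant this is the group homomorphism $g\mapsto f_*(g)$, while when $f_*$ is contravariant the assignment $g\mapsto f_*(g)^{-1}$ restores homomorphy since $(f_*(g_1g_2))^{-1}=(f_*(g_2)f_*(g_1))^{-1}=f_*(g_1)^{-1}f_*(g_2)^{-1}$. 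Both cases are summarized uniformly as $g\mapsto f_*(g)^{\varepsilon(f)}$. For the injective case, if $f$ is a morphism of surfaces with no identifications, every triangulation of $\Sigma$ is $f$-admissible (so $Br_\Delta^f=Br_\Delta$), and the functor $f_*$ is fully faithful on ${\bf TSurf}_\Sigma$ because on each horizontal generator $h_{\Delta_1,\Delta_2}$ with $dist(\Delta_1,\Delta_2)=1$ the image is again a single flip (or the inverse of one) and $f$ being a local isomorphism at all points lets one recover the word in $h$'s from its image; hence $f_*$ is injective on automorphism groups.

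For part $(b)$, the key intermediate claim is that ${\bf TSurf}_\Sigma^f$ is a connected subgroupoid of ${\bf TSurf}_\Sigma$, i.e., any two $f$-admissible triangulations $\Delta,\Delta'$ are joined by a zig-zag of $f$-flips. Granting this, any choice of morphism $h:\Delta\to\Delta'$ in ${\bf TSurf}_\Sigma^f$ produces the conjugation isomorphism $c_h:Br_\Delta\simeq Br_{\Delta'}$ underlying Corollary \ref{cor:brinv}; since $h$ and $h^{-1}$ both lie in ${\bf TSurf}_\Sigma^f$, conjugation by $h$ sends $Br_\Delta^f=Aut_{{\bf TSurf}_\Sigma^f}(\Delta)$ bijectively onto $Br_{\Delta'}^f=Aut_{{\bf TSurf}_\Sigma^f}(\Delta')$, proving $(b)$.

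To establish the connectivity claim, I would lift flips from $\underline\Sigma$ to $\Sigma$ by a two-step procedure. First, apply Harer--Felikson--Shapiro--Tumarkin connectivity of flips to $f(\Delta)\cup\underline\Delta_0$ and $f(\Delta')\cup\underline\Delta_0$ in ${\bf TSurf}_{\underline\Sigma}$; for each flip at a non-self-folded, non-pending arc $\underline\gamma$, Lemma \ref{le:f-admissible triangulations}$(b)$ provides the $f$-admissible lift $\prod_{\gamma\in f^{-1}(\underline\gamma)}\mu_\gamma(\Delta)$, yielding an $f$-flip morphism downstairs. This connects $\Delta$ to some $f$-admissible $\Delta''$ with $f(\Delta'')=f(\Delta')$. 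Second, two $f$-admissible triangulations with the same image differ only on fibers over self-folded/pending arcs of $\underline\Delta$, and these finitely many local choices can be changed by flips that are invisible downstairs and hence remain $f$-admissible. The main obstacle I anticipate is this second step: carefully checking that the local reconfiguration inside each fiber (especially over special or zero punctures, where $f$ ramifies) can be carried out by a sequence of $f$-admissible flips, invoking Lemma \ref{le:f-admissible triangulations}$(c)$ to control what happens near special loops, and handling the tagged bigons via the ambient groupoid ${\bf TSurf}_\Sigma^t$ structure.
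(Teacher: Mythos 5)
Your part $(a)$ coincides with what the paper intends (the paper writes no proof, declaring the whole lemma an immediate consequence of Lemma \ref{le:tsurf^f} and of ${\bf TSurf}_\Sigma$ being a groupoid): apply the functor $f_*$ of Lemma \ref{le:tsurf^f} to automorphism groups and insert the exponent $\varepsilon(f)$ to repair contravariance when $f$ reverses orientation. For part $(b)$, however, you take a genuine detour. You reduce $(b)$ to the claim that any two $f$-admissible triangulations are joined by a zig-zag of $f$-flips, and then sketch a two-step lifting argument. But by the paper's definition the morphisms of ${\bf TSurf}^f_\Sigma$ are generated by $h_{\Delta',\Delta}$ for \emph{all} pairs of $f$-admissible triangulations, not only by $f$-flips; since $h_{\Delta',\Delta}$ is already defined in ${\bf TSurf}_\Sigma$ for every pair (as a prescribed composite of elementary flips), the subgroupoid ${\bf TSurf}^f_\Sigma$ is connected by construction. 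Hence $(b)$ is exactly the observation that conjugation by $h_{\Delta',\Delta}\in {\bf TSurf}^f_\Sigma$ carries $Aut_{{\bf TSurf}^f_\Sigma}(\Delta)=Br^f_\Delta$ onto $Aut_{{\bf TSurf}^f_\Sigma}(\Delta')=Br^f_{\Delta'}$ while simultaneously representing the ambient isomorphism $Br_\Delta\simeq Br_{\Delta'}$ of Corollary \ref{cor:brinv}. Your flip-lifting argument is really a proof sketch of the separate (also unproved) lemma asserting that ${\bf TSurf}^f_\Sigma$ is generated by $f$-flips; it is not needed for $(b)$, and importing it makes the statement look substantially harder than it is. What your route would buy, if completed, is that finer generation statement, which the paper does use elsewhere; what the paper's route buys is that $(b)$ follows with no topology at all.

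One caution on the last clause of $(a)$: your justification of injectivity of $f_*$ for injective $f$ --- that one can ``recover the word in $h$'s from its image'' --- only gives injectivity at the level of words, not of the automorphism group, which is a quotient by the relations of Theorem \ref{th:presentation of Tsurf}. To conclude you must also know that no element of $Br_\Delta$ is killed by the extra relations present in ${\bf TSurf}_{\Sigma'}$; the cleanest way is to note that the defining relations are local and that an injective $f$ is a local isomorphism everywhere, so the relevant local configurations in $f(\Sigma)\subset\Sigma'$ are identical to those in $\Sigma$ (and in the bijective case one can simply invoke Corollary \ref{cor:Gamma-action}). The paper offers no argument here either, so this is a soft spot shared with the source rather than an error of yours, but as written your sentence does not establish the claim.
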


Lemma \ref{le:relative homomorphism} implies that there is a unique subgroup up to conjugation $Br_\Sigma^f$ of $Br_\Sigma$.

\begin{proposition} \label{pr:relative braid homomorphism}
Let $\Sigma$ be a surface with boundary, and
let $f:\Sigma\to \Sigma'$ be a surjective morphism of surfaces that only glue boundary arcs of $\Sigma$. Then there is a canonical homomorphism $f_*:Br_{\Sigma}\to Br_{\Sigma'}$ induced by $f$.
\end{proposition}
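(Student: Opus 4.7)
The plan is to apply Lemma~\ref{le:relative homomorphism}(a) to an $f$-admissible triangulation of $\Sigma$ (guaranteed to exist by Lemma~\ref{le:f-admissible triangulations}(d)) and then argue that, for a boundary-gluing $f$, the relative braid group $Br^f_{\Delta_0}$ coincides with the full braid group $Br_{\Delta_0}$. Combined with the canonical identifications $Br_\Sigma \cong Br_{\Delta_0}$ and $Br_{\Sigma'} \cong Br_{f(\Delta_0)}$ from Corollary~\ref{cor:brinv}, this will yield the desired canonical (up to conjugation) homomorphism $f_*: Br_\Sigma \to Br_{\Sigma'}$.

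More concretely, I would first fix an $f$-admissible triangulation $\Delta_0$ of $\Sigma$, so that Lemma~\ref{le:relative homomorphism}(a) immediately produces a group homomorphism $f_*: Br^f_{\Delta_0} \to Br_{f(\Delta_0)}$. The key step is to upgrade this to a homomorphism out of the full braid group $Br_{\Delta_0}$. By Theorem~\ref{th:braid generation}, $Br_{\Delta_0}$ is generated by elements $T_\gamma$ indexed by the internal arcs $\gamma \in \Delta_0$. Since $f$ only identifies boundary arcs, its restriction to the interior of $\Sigma$ is a local homeomorphism, and so every internal arc of $\Sigma$ is mapped by $f$ to an internal arc of $\Sigma'$. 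Failure of $f$-admissibility of a triangulation can occur only when some triangle contains two boundary arcs that are glued by $f$; because the defining loops of $T_\gamma$ in Section~\ref{subsec:cluster braids monoids and groups} are built from flips at \emph{internal} arcs of $\Delta_0$, they preserve the boundary edges of each triangle and hence remain inside the subgroupoid ${\bf TSurf}^f_\Sigma$. This yields $T_\gamma \in Br^f_{\Delta_0}$ for every internal $\gamma$, so $Br^f_{\Delta_0} = Br_{\Delta_0}$.

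The main obstacle is the detailed verification in the previous paragraph that the definitional loops representing each $T_\gamma$ can be kept entirely within $f$-admissible triangulations. This requires a case analysis of how a flip at an internal arc adjacent to one or two glued boundary arcs alters the triangle-configurations at the endpoints of $\gamma$, possibly modifying the given definitional path by a homotopic one using the pentagon/hexagon and once-punctured bigon relations of Theorem~\ref{th:presentation of Tsurf}. An equivalent and perhaps cleaner route is to avoid the loop description altogether: define $f_*$ directly on generators by $T_\gamma \mapsto T_{f(\gamma)}$ for internal $\gamma\in\Delta_0$, and verify that each relation in the presentation of $Br_{\Delta_0}$ coming from Theorem~\ref{th:presentation of Tsurf} maps to a valid relation in $Br_{f(\Delta_0)}$, exploiting that $f$ is a local homeomorphism on the interior so that every local triangulated configuration in $\Delta_0$ has an isomorphic local image in $f(\Delta_0)$. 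Either way, once $Br_{\Delta_0} = Br^f_{\Delta_0}$ is established, passing through Corollary~\ref{cor:brinv} produces the desired canonical $f_*:Br_\Sigma \to Br_{\Sigma'}$, and I would note in passing that $f_*$ is generally not surjective, since generators $T_{f(\gamma)}$ attached to interior arcs of $\Sigma'$ that arise from pairs of glued boundary arcs of $\Sigma$ admit no preimage.
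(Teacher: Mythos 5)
Your proposal is correct and follows the route the paper intends (the paper states this proposition without a written proof, leaving it as a consequence of Lemma~\ref{le:relative homomorphism} and the surrounding groupoid machinery): fix an $f$-admissible $\Delta_0$, show $Br^f_{\Delta_0}=Br_{\Delta_0}$, and apply Lemma~\ref{le:relative homomorphism}(a) together with Corollary~\ref{cor:brinv}. The one place where you overcomplicate matters is the admissibility check that you flag as "the main obstacle": for a morphism that only glues boundary arcs, \emph{every} triangulation of $\Sigma$ is $f$-admissible, since $f$ is injective on the interior, each triangle of $\Delta$ maps to a triangle of $\Sigma'$ (a triangle whose two glued boundary sides are identified simply becomes a self-folded triangle, which is still a legitimate triangle of a triangulation), and hence $f(\Delta)$ tiles $\Sigma'$ by triangles and is a triangulation. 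Consequently ${\bf TSurf}^f_\Sigma={\bf TSurf}_\Sigma$ and $Br^f_{\Delta_0}=Br_{\Delta_0}$ outright; no case analysis of definitional loops, and no appeal to the pentagon/hexagon or once-punctured bigon relations, is needed. Your closing observations — the alternative verification via the presentation of Theorem~\ref{th:brgroup} (which is how the explicit formulas of Corollary~\ref{cor:affineatod} arise) and the non-surjectivity of $f_*$ due to the new generators $T_{f(b)}$ attached to glued boundary arcs — are both accurate.
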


\begin{conjecture} 
 \label{conj:injective relative braid homomorphism} In the assumptions of Lemma \ref{le:relative homomorphism}, the homomorphism $f_*$ is injective. In particular, the canonical homomorphism $f_*:Br_{\Sigma}\to Br_{\Sigma'}$ in Proposition \ref{pr:relative braid homomorphism} is injective.
\end{conjecture}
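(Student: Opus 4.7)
The plan is to prove injectivity of $f_*:Br_\Sigma^f\to Br_{\Sigma'}$ by reducing it to an injectivity statement for an induced homomorphism of triangle groups, where we can exploit the essentially faithful action of $\underline{Br}$ on $\TT$ guaranteed (in the unpunctured case) by Proposition \ref{prop:faithfulequ}. The overall idea is that the relative braid group $Br_\Sigma^f$ sits between two faithful actions, so $f_*$ should be controlled by the induced map on triangle groups, which is itself injective on the "interior part" in the boundary-gluing setting.

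First I would fix an $f$-admissible triangulation $\Delta$ of $\Sigma$ (existing by Lemma \ref{le:f-admissible triangulations}(d)); this identifies $Br_\Sigma^f\cong Br_\Delta^f\subset Br_\Delta$ and $Br_{\Sigma'}\cong Br_{f_*(\Delta)}$, and makes $f_*$ concrete via the functor of Lemma \ref{le:tsurf^f}. Next, I would construct a group homomorphism $\phi_f:\TT_\Delta\to \TT_{f_*(\Delta)}$ by $t_\gamma\mapsto t_{f(\gamma)}$ on arcs $\gamma\in\Delta$ and extending multiplicatively; this is well-defined because $f$ restricts to an isomorphism on each triangle of $\Delta$, so the triangle relations \eqref{eq:triangle relations} in the source map to triangle relations in the target. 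The central technical step is then to verify a compatibility square
\begin{equation*}
\begin{array}{ccc}
\TT_\Delta & \xrightarrow{\;\pi_\Delta(g)\;} & \TT_\Delta \\
\phi_f\downarrow & & \downarrow\phi_f \\
\TT_{f_*(\Delta)} & \xrightarrow{\;\pi_{\Delta'}(f_*(g))\;} & \TT_{f_*(\Delta)}
\end{array}
\end{equation*}
for every $g\in Br_\Delta^f$. By Theorem \ref{th:braid generation} it suffices to check this on the generators $T_\gamma$, where the action is given by the explicit noncommutative cross-ratio formula \eqref{eq:Tij}; since this formula only involves cross-ratios within a single quadrilateral (two glued triangles), and $f$ preserves triangles and quadrilaterals not crossing the identified boundary, the compatibility follows.

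With the square in hand, suppose $g\in Br_\Delta^f$ satisfies $f_*(g)=1$. Then $\pi_\Delta(g)$ acts trivially on the image $\phi_f(\TT_\Delta)\subset \TT_{f_*(\Delta)}$, hence trivially on every generator $t_\gamma$ with $\gamma\in\Delta$ internal (whose image $t_{f(\gamma)}$ in $\TT_{f_*(\Delta)}$ is nonzero since $f$ only glues \emph{boundary} arcs and therefore internal arcs of $\Delta$ remain internal in $f_*(\Delta)$). Thus $\pi_\Delta(g)$ acts trivially on the generators coming from internal arcs of $\Delta$; by the boundary-edge convention that generators $T_{\gamma}$ exist only for internal $\gamma$, this forces $\pi_\Delta(g)=\id$ in $\underline{Br}_\Delta^f$, and then $g=\id$ in $Br_\Delta^f$ by the (conjectural) injectivity of $\pi_\Delta$ on the relative piece (Conjecture \ref{conj:faithful}). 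To complete the unconditional argument, I would first establish this last injectivity for the classes of surfaces where Theorem \ref{th:Br_n 1} already gives $\pi_\Delta$ an isomorphism (polygons, $B_{n-1}$-, $C_{n-1}$-, $D_n$-type), obtaining the proposition in these cases.

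The main obstacle is precisely circumventing Conjecture \ref{conj:faithful}, i.e., controlling the (a priori non-faithful) part of $\pi_\Delta$. A second subtle point is that $\Sigma'$ typically acquires \emph{new} internal arcs arising from the formerly-boundary arcs that $f$ glues: the associated braid generators $T_\gamma\in Br_{\Sigma'}$ have no direct preimage under $f_*$, and one must rule out that the pentagon/hexagon relations of Theorem \ref{th:presentation of Tsurf} involving these new arcs impose accidental relations on elements of $f_*(Br_\Sigma^f)$. I would handle this by lifting any word representing $f_*(g)$ in $Br_{\Sigma'}$ to an element of $Br_\Sigma$ along $f_*$ only after rewriting it, via the groupoid presentation, into a normal form supported on arcs coming from $\Delta$, and then tracking that each defining relation used in the rewriting pulls back to a valid relation in $Br_\Sigma^f$. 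If these technical ingredients combine as expected, injectivity follows, confirming Conjecture \ref{conj:injective relative braid homomorphism} in the boundary-gluing case of Proposition \ref{pr:relative braid homomorphism}.
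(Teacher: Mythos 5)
The statement you are trying to prove is labelled a \emph{conjecture} in the paper: the authors give no proof of it, offering only partial evidence (Proposition \ref{prop:BtoA}, which embeds $Br_{C_{n-1}}$ into $Br_{kn-2}$ via relative braid groups of rotation quotients, and Theorem \ref{thm:inject}, which proves injectivity of $Br_n\to Br_{D_n}$ and $Br_n\to Br_{\widetilde A_n}$ by exhibiting explicit faithful actions on free subgroups of triangle groups, as in Lemma \ref{lem:faithful}). So there is no paper proof to match, and your proposal must be judged as an attempt on an open problem; as such it contains a fatal gap beyond the obstacles you acknowledge. Your key deduction is: if $g\in Br_\Delta^f$ and $f_*(g)=1$, then the compatibility square forces $\pi_\Delta(g)$ to act trivially on the generators $t_\gamma$. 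This is a non sequitur. The square only gives $\phi_f\circ\pi_\Delta(g)=\phi_f$, i.e.\ $\pi_\Delta(g)(t_\gamma)\equiv t_\gamma$ modulo $K_f=\ker\phi_f$, and $\phi_f$ (the paper's $\nu_{f,\Delta,f(\Delta)}$ of Proposition \ref{pro:nu}) is emphatically \emph{not} injective for a boundary gluing: e.g.\ for the gluing of the cylinder $\Sigma_2^2$ onto $\Sigma_{2,2}$, the kernel is normally generated by $(t^+_{pq})^{-1}t^-_{pq}$, $(t^+_{qp})^{-1}t^-_{qp}$. This is precisely the distinction the paper draws between $\underline{Br}_\Delta^f$ and the larger group $\underline{\underline{Br}}_\Delta^f$ of automorphisms merely preserving $K_f$, and Example \ref{ex:sphere with 4 points} shows the induced map $\underline{Br}_{\widetilde\Delta}\to\underline{Br}_\Delta$ on cluster braid groups can genuinely fail to be injective, so no argument routed purely through the actions on triangle groups can work unconditionally.

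Two further points. First, even granting triviality of $\pi_\Delta(g)$, concluding $g=\id$ requires faithfulness of $\pi_\Delta$, i.e.\ Conjecture \ref{conj:faithful}, which is not only open but explicitly \emph{false} for the sphere with $4$ punctures and the projective plane with $2$ punctures; your argument would need to explain why the relative subgroup $Br_\Delta^f$ avoids the kernel of $\pi_\Delta$ in those cases, and nothing in the proposal does so. Second, your closing plan --- rewriting any word for $f_*(g)$ in $Br_{\Sigma'}$ into a normal form supported on arcs coming from $\Delta$ while tracking the pentagon/hexagon relations involving the new interior arcs --- is a restatement of the open problem rather than a method: for the gluing $\Sigma_p^2\to\Sigma_{p,2}$ the map $f_*$ is exactly the homomorphism $Br_{\widetilde A_{p+1}}\to Br_{\widetilde D_{p+2}}$ of Corollary \ref{cor:affineatod}, whose injectivity the authors state they could not locate in the literature. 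What \emph{can} be salvaged from your approach --- the finite-type cases where $\pi_\Delta$ is proved an isomorphism and a faithful action on a free group is available (Theorems \ref{th:Br on Sigman}, \ref{th:Br on Sigman1}, \ref{th:Br_n 1}) --- is essentially the content of the paper's own partial evidence, so in those cases your strategy coincides with theirs; in general the conjecture remains open.
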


Proposition \ref{prop:BtoA}
below provides some partial evidence of the conjecture.

The following is immediate.

\begin{lemma} 
\label{le:full automorphisms of Delta}
The full automorphism group $Aut_{{\bf TSurf}}(\Delta)$ is isomorphic to the semidirect product $Br_\Delta\rtimes \Gamma_\Delta$, where $\Gamma_\Delta$ is the group of automorphisms of $|\Delta|$ that preserve $\Delta$.
\end{lemma}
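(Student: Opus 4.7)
The plan is to realize $Aut_{{\bf TSurf}}(\Delta)$ as a split extension $1\to Br_\Delta\to Aut_{{\bf TSurf}}(\Delta)\to \Gamma_\Delta\to 1$, using the natural ``Grothendieck construction'' shape of ${\bf TSurf}$ over ${\bf Surf}$. First I would introduce the forgetful functor $|\cdot|:{\bf TSurf}\to{\bf Surf}$ defined by $\Delta\mapsto|\Delta|$, $h_{\Delta',\Delta}\mapsto \id_{|\Delta|}$, and $v_{f,\Delta,\underline\Delta}\mapsto f$. This is well-defined because the vertical composition relation from Definition \ref{def:categoryts} corresponds to composition in ${\bf Surf}$, while the horizontal relations live entirely inside single fibres ${\bf TSurf}_\Sigma$ and therefore map to identities. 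Restricting to automorphism groups yields a homomorphism $\pi:Aut_{{\bf TSurf}}(\Delta)\to Aut_{{\bf Surf}}(|\Delta|)$.

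Next I would identify the image and kernel of $\pi$. The image is exactly $\Gamma_\Delta$: on one hand, a vertical morphism $v_{f,\Delta,\Delta}$ is allowed only when $(\Delta,\Delta)$ is $f$-compatible, which forces $f(\Delta)=\Delta$ and hence $f\in\Gamma_\Delta$; on the other hand, collapsing the horizontal factors of any automorphism of $\Delta$ (they vanish under $\pi$) shows every value of $\pi$ lies in $\Gamma_\Delta$, and each $\sigma\in\Gamma_\Delta$ is realized by $v_{\sigma,\Delta,\Delta}$. The kernel contains $Br_\Delta$ since horizontal automorphisms map to identity; conversely any zigzag in $\ker\pi$ can be rewritten, by repeatedly applying the vertical composition relation to collapse consecutive vertical arrows into some $v_{\id_{|\Delta|},\Delta,\Delta}=\id_\Delta$, into a composition of horizontal morphisms $\Delta\to\Delta$, i.e., an element of $Br_\Delta$.

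The splitting $s:\Gamma_\Delta\to Aut_{{\bf TSurf}}(\Delta)$, $\sigma\mapsto v_{\sigma,\Delta,\Delta}$, is a group homomorphism by the vertical composition relation and satisfies $\pi\circ s=\id_{\Gamma_\Delta}$. Finally, I would determine the conjugation action of $s(\Gamma_\Delta)$ on $Br_\Delta$: by Corollary \ref{cor:Gamma-action} each $\sigma\in\Gamma_\Delta$ induces an auto-equivalence $\sigma_*$ of ${\bf TSurf}_{|\Delta|}$ which fixes $\Delta$, hence an automorphism of $Br_\Delta$. The ``commuting square'' relation
$$v_{\sigma,\Delta,\Delta}\circ h_{\Delta',\Delta}=h_{\sigma(\Delta'),\Delta}^{\varepsilon(\sigma)}\circ v_{\sigma,\Delta,\Delta}$$
inherent in the Grothendieck-construction shape of ${\bf TSurf}$ (with fibres ${\bf TSurf}_\Sigma$ and change-of-base functor $\sigma_*$) then gives $s(\sigma)\cdot h\cdot s(\sigma)^{-1}=\sigma_*(h)$ for every $h\in Br_\Delta$, completing the identification $Aut_{{\bf TSurf}}(\Delta)\cong Br_\Delta\rtimes\Gamma_\Delta$. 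The main obstacle is verifying these commuting squares rigorously: they are suggested by but not explicitly listed in Definition \ref{def:categoryts}, so one must show that ${\bf TSurf}\to{\bf Surf}$ is a split opfibration with fibres ${\bf TSurf}_\Sigma$ and cleavage provided by Corollary \ref{cor:Gamma-action} and Lemma \ref{le:tsurf^f}; this also legitimises the kernel-reduction argument in the previous step.
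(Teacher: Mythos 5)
Your overall strategy (fibre the category over ${\bf Surf}$, identify the kernel with $Br_\Delta$, split over the base) is the natural one, and the paper itself offers no argument to compare against: it simply declares the lemma immediate. But there is a genuine gap at the step where you identify the image of $\pi:Aut_{{\bf TSurf}}(\Delta)\to Aut_{{\bf Surf}}(|\Delta|)$ with $\Gamma_\Delta$. Your justification only excludes vertical \emph{endomorphisms} $v_{f,\Delta,\Delta}$ with $f\notin\Gamma_\Delta$; it says nothing about vertical morphisms occurring in the interior of a zigzag. For any automorphism $\sigma$ of $|\Delta|$, even one with $\sigma(\Delta)\neq\Delta$, the pair $(\Delta,\sigma(\Delta))$ is $\sigma$-compatible, so $v_{\sigma,\Delta,\sigma(\Delta)}:\Delta\to\sigma(\Delta)$ is a morphism of ${\bf TSurf}$, and composing it with any horizontal morphism $h_{\Delta,\sigma(\Delta)}:\sigma(\Delta)\to\Delta$ (which exists because ${\bf TSurf}_{|\Delta|}$ is a connected groupoid) produces an element of $Aut_{{\bf TSurf}}(\Delta)$ whose image under $\pi$ is $\sigma\notin\Gamma_\Delta$. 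So the exact sequence your argument actually produces is $1\to Br_\Delta\to Aut_{{\bf TSurf}}(\Delta)\to Aut_{{\bf Surf}}(|\Delta|)\to 1$, whose quotient is in general strictly larger than $\Gamma_\Delta$ (already for $\Sigma_4$ with the single diagonal $(1,3)$ and $\sigma$ the rotation by $\pi/2$).

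What your argument does establish correctly is that $\sigma\mapsto v_{\sigma,\Delta,\Delta}$ splits $\pi$ over $\Gamma_\Delta$, that this copy of $\Gamma_\Delta$ meets the kernel trivially and normalizes $Br_\Delta$ (granting the commuting squares, which you rightly flag as needing verification, since Definition \ref{def:categoryts} does not list them among the relations), and hence that $Br_\Delta\rtimes\Gamma_\Delta$ embeds into $Aut_{{\bf TSurf}}(\Delta)$. To obtain the full statement you would additionally need to show that every automorphism of $\Delta$ lies in this subgroup, i.e.\ that its $\pi$-image preserves $\Delta$ --- and the element $h_{\Delta,\sigma(\Delta)}\circ v_{\sigma,\Delta,\sigma(\Delta)}$ above shows this does not follow from the definition of ${\bf TSurf}$. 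Either the lemma should be read as asserting only the embedding of $Br_\Delta\rtimes\Gamma_\Delta$ (or the corresponding statement with $Aut_{{\bf Surf}}(|\Delta|)$ in place of $\Gamma_\Delta$), or some further relation in ${\bf TSurf}$ must be invoked; in any case the equality $\mathrm{im}(\pi)=\Gamma_\Delta$ as you state it cannot be derived by collapsing the horizontal factors.
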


Clearly, if a group $G$ has an inner automorphism of finite order least $2$, then $G$ has a non-trivial center. The converse for $G=Br_\Delta$ is the following:

\begin{remark}
\label{rem:inner-outer}
Let $\Sigma\in {\bf Surf}$ be connected and $\Delta\in {\bf TSurf}_\Sigma$. Then, based on abundant evidence (Section \ref{subsec:braid groups of finite types}) we expect that the following are equivalent: 

$\bullet$ $Br_\Delta$ has a non-trivial center; 

$\bullet$  $Br_\Delta$ is of finite Artin type; 

$\bullet$ Either $\Sigma=\Sigma_{n+1}$ or $\Sigma_{n,1}$, $n\ge 2$ or $\Sigma$ is the $n$-gon with a special puncture or a $0$-puncture.
\end{remark}
  
\begin{remark} 
Let $\sigma\in \Gamma_\Delta\setminus \{1\}$ (in notation of Lemma \ref{le:full automorphisms of Delta}). Then we expect  (see Section \ref{subsec:braid groups of finite types}) that $\sigma$ is an inner automorphism of $Br_\Delta$ iff $\Sigma$ is either a disk or a once punctured disk and $\sigma$ a rotation. \end{remark}

For a tagged triangulation $\Delta$ and an internal edge $\gamma\in \Delta$, if $\gamma$ is the radius of some self-folded triangle or a side of some tagged bigon in $\Delta$, denote by $\ell(\gamma)=\ell$ the corresponding loop of the self-folded triangle or the arc enclosing the tagged bigon. Otherwise, set $\ell(\gamma)=\gamma$.

We say that $(\gamma,\gamma')$ is \emph{directed clockwise} in $\Delta$ if $(\ell(\gamma),\ell(\gamma'))$
is directed clockwise in the corresponding ordinary triangulation of $\Delta$. 

As a corollary of Theorem \ref{th:presentation of Tsurf}, we have the following.

\begin{lemma}\label{lem:clock}
 Let $\Delta$ be a tagged triangulation and $\gamma,\gamma'\in \Delta$ be two non-pending internal edges with $\gamma'\neq \gamma,\overline\gamma$. If $(\gamma,\gamma')$ is not directed clockwise in $\Delta$, then  
$$h_{\mu_\gamma\Delta,\Delta}h_{\Delta,\mu_{\gamma'}\Delta}h_{\mu_{\gamma'}\Delta,\Delta}=h_{\mu_\gamma\Delta,\mu_{\gamma'}\mu_\gamma\Delta}h_{\mu_{\gamma'}\mu_\gamma\Delta,\mu_\gamma\Delta}h_{\mu_\gamma\Delta,\Delta}.$$ 
\end{lemma}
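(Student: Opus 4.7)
My plan is to prove the lemma by a case analysis on the relative position of $\gamma$ and $\gamma'$ in $\Delta$, applying the three families of defining relations of the groupoid ${\bf TSurf}^t_\Sigma$ provided by Theorem \ref{th:presentation of Tsurf}. Since the notion of "directed clockwise" for a tagged $\Delta$ was defined just before the lemma via $\ell(\cdot)$ on the corresponding ordinary triangulation, the geometric hypothesis depends only on the underlying ordinary picture, while the tagged flip rules from Section \ref{sec:taggedtri} will be absorbed using the auto-equivalence $F_P$ of ${\bf TSurf}^t_\Sigma$ whenever $\gamma$ or $\gamma'$ happens to be a side of a tagged bigon.

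The first case is when $\ell(\gamma)$ and $\ell(\gamma')$ share a triangle in the underlying ordinary triangulation of $\Delta$. Then exactly one of $(\gamma,\gamma')$ and $(\gamma',\gamma)$ is directed clockwise, and by hypothesis it must be $(\gamma',\gamma)$. Setting $(\alpha,\beta)=(\gamma,\gamma')$, the premise "$(\beta,\alpha)$ directed clockwise" in the horizontal compatibility relation of Theorem \ref{th:presentation of Tsurf} is satisfied, and its conclusion is precisely the identity in the lemma. The second case is when $\ell(\gamma)$ and $\ell(\gamma')$ lie in disjoint triangles; then $\mu_\gamma\mu_{\gamma'}\Delta=\mu_{\gamma'}\mu_\gamma\Delta=:\Delta^{\ast}$ and neither direction of the pair is directed clockwise. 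I apply the diamond ($k=4$) relation of Theorem \ref{th:presentation of Tsurf} to the square on $\Delta,\mu_\gamma\Delta,\Delta^{\ast},\mu_{\gamma'}\Delta$, obtaining $h_{\Delta,\mu_{\gamma'}\Delta}h_{\mu_{\gamma'}\Delta,\Delta^{\ast}}=h_{\Delta,\mu_\gamma\Delta}h_{\mu_\gamma\Delta,\Delta^{\ast}}$, and then apply the diamond a second time with initial vertex at $\mu_\gamma\Delta$; combining these two identities using invertibility in the groupoid forces the loop $h_{\Delta,\mu_{\gamma'}\Delta}h_{\mu_{\gamma'}\Delta,\Delta}$ at $\Delta$ to be conjugate via $h_{\mu_\gamma\Delta,\Delta}$ to the loop $h_{\mu_\gamma\Delta,\mu_{\gamma'}\mu_\gamma\Delta}h_{\mu_{\gamma'}\mu_\gamma\Delta,\mu_\gamma\Delta}$ at $\mu_\gamma\Delta$, which is exactly the lemma's identity. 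For the residual subcase in which $\gamma$ or $\gamma'$ is a diagonal of a once-punctured bigon (so that one of the flips crosses between tagged and untagged), the once-punctured bigon relation of Theorem \ref{th:presentation of Tsurf} supplies the analogous coherence to conclude.

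The main obstacle I expect is the careful tracking of the sign function $\varphi$ from Definition \ref{def:gro}: each use of the composition rule and each substitution between $h_{A,B}$ and the groupoid-inverse of $h_{B,A}$ carries a sign determined by a $C$-matrix column, and the manipulations in the disjoint case must be carried out so that the signs on the two sides of the lemma's identity agree after all substitutions. In the shared-triangle case no such sign manipulation is required and the lemma follows immediately from Theorem \ref{th:presentation of Tsurf}, so the bulk of the work concentrates in the disjoint case and in the tagged once-punctured bigon subcase.
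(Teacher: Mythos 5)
The paper offers no argument for this lemma beyond the phrase ``as a corollary of Theorem \ref{th:presentation of Tsurf}'', so your plan of deriving it case by case from the listed relations is the intended route, and your first case is complete: when $\ell(\gamma)$ and $\ell(\gamma')$ bound exactly one common triangle and $(\gamma,\gamma')$ is not directed clockwise, then $(\gamma',\gamma)$ is, and the horizontal compatibility relation with $(\alpha,\beta)=(\gamma,\gamma')$ is verbatim the asserted identity (modulo checking the side condition that $\gamma$ stays non-self-folded in $\mu_{\gamma'}\Delta$, which you should route through the tagged formalism when it fails).

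There are, however, two concrete gaps in the remaining cases. In the disjoint case, your step ``combining these two identities using invertibility in the groupoid'' implicitly treats $h_{\Delta^{\ast},\mu_{\gamma'}\Delta}$ as the inverse of $h_{\mu_{\gamma'}\Delta,\Delta^{\ast}}$; that is false in ${\bf TSurf}^t_\Sigma$, since their composite is a twist $T_{\cdot,\cdot}$, not the identity, so the two diamond instances based at $\Delta$ and at $\mu_\gamma\Delta$ do not by themselves close the computation. The repair is easy because the non-clockwise hypothesis propagates to all four corners of the commuting square: writing $A=\Delta$, $B=\mu_\gamma\Delta$, $C=\Delta^{\ast}$, $D=\mu_{\gamma'}\Delta$, the instances $h_{B,A}h_{A,D}=h_{B,C}h_{C,D}$ (based at $B$) and $h_{C,D}h_{D,A}=h_{C,B}h_{B,A}$ (based at $C$) give $h_{B,A}h_{A,D}h_{D,A}=h_{B,C}h_{C,D}h_{D,A}=h_{B,C}h_{C,B}h_{B,A}$; but as written your combination does not go through. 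In the once-punctured bigon subcase, the relation you cite only yields $T_{\gamma,\Delta}T_{\gamma',\Delta}=T_{\gamma',\Delta}T_{\gamma,\Delta}$, a commutation of two loops at the single object $\Delta$, whereas the lemma asserts the conjugation identity $h_{\mu_\gamma\Delta,\Delta}\,T_{\gamma',\Delta}=T_{\gamma',\mu_\gamma\Delta}\,h_{\mu_\gamma\Delta,\Delta}$ relating loops at two different objects; extracting the latter from the former requires explicitly tracking how the intermediate triangulations acquire tags through the product structure of ${\bf TSurf}^t_\Sigma$, and that is precisely where the content of the lemma sits in this subcase, so it cannot be dismissed as ``analogous coherence.''
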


\begin{lemma}\label{lem:dumbell}
Let $\Delta$ be a tagged triangulation and $\gamma\in \Delta$ be a non-pending internal edge and let $\Delta'=\mu_\gamma(\Delta)$. For any internal edge  $\gamma'(\neq\gamma,\overline\gamma)\in \Delta'$, we have
$$h_{\Delta',\mu_{\gamma'}\Delta'}h_{\mu_{\gamma'}\Delta',\Delta'}=
\begin{cases}
h_{\Delta',\Delta}
h_{\Delta,\mu_{\gamma'}\Delta}h_{\mu_{\gamma'}\Delta,\Delta}
h^{-1}_{\Delta',\Delta} & \text{if $(\gamma,\gamma')$ is not directed clockwise in $\Delta$}\\
h^{-1}_{\Delta,\Delta'}
h_{\Delta,\mu_{\gamma'}\Delta}h_{\mu_{\gamma'}\Delta,\Delta}
h_{\Delta,\Delta'} &\text{otherwise.}
\end{cases}$$
\end{lemma}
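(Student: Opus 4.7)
The plan is to recognize both sides of the desired identity as the conjugation of the braid-type element
$f_{\gamma',\Delta}:=h_{\Delta,\mu_{\gamma'}\Delta}h_{\mu_{\gamma'}\Delta,\Delta}\in Br_\Delta$
(the ``flip and return'' automorphism at $\gamma'$ based at $\Delta$) by a horizontal morphism, and similarly for $f_{\gamma',\Delta'}\in Br_{\Delta'}$. In this notation, the two cases of the Lemma read
\[
f_{\gamma',\Delta'}=h_{\Delta',\Delta}\,f_{\gamma',\Delta}\,h_{\Delta',\Delta}^{-1}\quad\text{and}\quad f_{\gamma',\Delta'}=h_{\Delta,\Delta'}^{-1}\,f_{\gamma',\Delta}\,h_{\Delta,\Delta'}.
\]
Both conjugation identities will be deduced from Lemma~\ref{lem:clock}, but applied at the two different bases $\Delta$ and $\Delta'$ respectively.

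For the first case, when $(\gamma,\gamma')$ is not directed clockwise in $\Delta$, substitute $\Delta'=\mu_\gamma\Delta$ directly into the conclusion of Lemma~\ref{lem:clock}; it rewrites exactly as $h_{\Delta',\Delta}\,f_{\gamma',\Delta}=f_{\gamma',\Delta'}\,h_{\Delta',\Delta}$, and post-multiplying by $h_{\Delta',\Delta}^{-1}$ yields the first formula. For the second case, when $(\gamma,\gamma')$ is directed clockwise in $\Delta$, let $\gamma^*\in\Delta'$ denote the arc that replaces $\gamma$ under the flip, so that $\mu_{\gamma^*}\Delta'=\Delta$, and apply Lemma~\ref{lem:clock} at the base $\Delta'$ to the pair $(\gamma^*,\gamma')$. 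The crucial geometric fact to verify is that $(\gamma^*,\gamma')$ is \emph{not} directed clockwise in $\Delta'$; granting this, Lemma~\ref{lem:clock} gives $h_{\Delta,\Delta'}\,f_{\gamma',\Delta'}=f_{\gamma',\Delta}\,h_{\Delta,\Delta'}$, which rearranges to the second formula.

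To establish the geometric fact, first reduce to ordinary triangulations by replacing each arc with its image under $\ell(\cdot)$. Then analyze the quadrilateral $P$ whose four sides are the external edges of the two triangles of $\Delta$ sharing $\ell(\gamma)$ as a diagonal; labeling its vertices $a,b,c,d$ cyclically clockwise with $\ell(\gamma)$ running from $a$ to $c$, a direct enumeration of the clockwise cyclic triangles through $\ell(\gamma)$ shows that $(\ell(\gamma),\ell(\gamma'))$ is CW-directed precisely when $\ell(\gamma')\in\{\overrightarrow{ab},\overrightarrow{cd}\}$ as oriented arcs, while the analogous enumeration for $\ell(\gamma^*)$ in $\Delta'$ produces the disjoint set $\{\overrightarrow{bc},\overrightarrow{da}\}$. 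Disjointness yields the claim.

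The main obstacle is the accompanying case analysis when $P$ degenerates: when one of the two triangles at $\gamma$ is self-folded, when $\gamma$ is a side of a tagged bigon, or when the flip produces such a degeneracy at $\gamma^*$ in $\Delta'$. In each configuration one must redraw the local picture, redo the (short) enumeration of clockwise cyclic triangles at $\ell(\gamma)$ and $\ell(\gamma^*)$ to verify disjointness, and also check that $\gamma^*$ remains a non-pending internal edge of $\Delta'$ (and of $\mu_{\gamma'}\Delta'$) so that Lemma~\ref{lem:clock} genuinely applies there; these verifications are routine individually, but the proliferation of sub-cases makes this the most laborious ingredient of the proof.
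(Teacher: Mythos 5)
Your proposal is correct and follows essentially the same route as the paper: the first case is a direct rearrangement of Lemma~\ref{lem:clock} applied at the base $\Delta$, and the second case is Lemma~\ref{lem:clock} applied at the base $\Delta'$ to the pair $(\gamma^*,\gamma')$ with $\mu_{\gamma^*}\Delta'=\Delta$. The only difference is that the paper leaves the geometric fact (that $(\gamma^*,\gamma')$ is not directed clockwise in $\Delta'$ when $(\gamma,\gamma')$ is directed clockwise in $\Delta$ --- i.e.\ arrow reversal under mutation with no $2$-cycles) implicit, whereas you verify it explicitly.
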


\begin{proof}
If $(\gamma,\gamma')$ is not directed clockwise in $\Delta$, then by Lemma \ref{lem:clock} we have 
$$h_{\Delta',\Delta}h_{\Delta,\mu_{\gamma'}\Delta}h_{\mu_{\gamma'}\Delta,\Delta}=h_{\Delta',\mu_{\gamma'}\Delta'}h_{\mu_{\gamma'}\Delta',\Delta'}h_{\Delta',\Delta}.$$ 
Thus, $$h_{\Delta',\mu_{\gamma'}\Delta'}h_{\mu_{\gamma'}\Delta',\Delta'}=h_{\Delta',\Delta}
h_{\Delta,\mu_{\gamma'}\Delta}h_{\mu_{\gamma'}\Delta,\Delta}
h^{-1}_{\Delta',\Delta}.$$

Otherwise, we have 
$$h_{\Delta,\mu_{\gamma'}\Delta}h_{\mu_{\gamma'}\Delta,\Delta}h_{\Delta,\Delta'}=h_{\Delta,\Delta'}h_{\Delta',\mu_{\gamma'}\Delta'}h_{\mu_{\gamma'}\Delta',\Delta'}.$$ 
Thus, $$h_{\Delta',\mu_{\gamma'}\Delta'}h_{\mu_{\gamma'}\Delta',\Delta'}=h^{-1}_{\Delta,\Delta'}
h_{\Delta,\mu_{\gamma'}\Delta}h_{\mu_{\gamma'}\Delta,\Delta}
h_{\Delta,\Delta'}.$$

The proof is complete.
\end{proof}

For any $\Delta$ and a non-pending internal edge $\gamma\in \Delta$, denote $T_\gamma=T_{\gamma,\Delta}:=h_{\Delta,\mu_\gamma\Delta} h_{\mu_\gamma\Delta,\Delta}\in Br_\Delta.$

\begin{proposition} \label{thm:generate}
Let $\Delta$ be a tagged triangulation and $\gamma\in \Delta$ be a non-pending internal edge and let $\Delta'=\mu_\gamma(\Delta)$. Then for any non-pending internal edge $\gamma'\in \Delta'$, we have 
$$h_{\Delta,\Delta'}T_{\gamma',\Delta'} h^{-1}_{\Delta,\Delta'}=
\begin{cases}
T_{\gamma,\Delta}, &\text{if $\gamma'\notin \Delta$,}\\
T_{\gamma,\Delta} T_{\gamma',\Delta} (T_{\gamma,\Delta})^{-1}, & \text{if $(\gamma,\gamma')$ is not directed clockwise in $\Delta$,}\\
T_{\gamma',\Delta}, &\text{otherwise.}
\end{cases}$$
\end{proposition}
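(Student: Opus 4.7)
The statement is essentially a repackaging of Lemma \ref{lem:dumbell} once one conjugates both sides by $h_{\Delta,\Delta'}$. My plan is a short case analysis matching the three cases of the piecewise formula.

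First I would dispose of Case 1, where $\gamma'\notin\Delta$. Since $\Delta'=\mu_\gamma(\Delta)$ and $\gamma$ is non-pending, the only non-pending internal edge of $\Delta'$ not already in $\Delta$ is the edge produced by the flip at $\gamma$, so $\gamma'$ must be this flipped edge and $\mu_{\gamma'}(\Delta')=\Delta$. By definition of $T_{\gamma',\Delta'}$ this forces
$$T_{\gamma',\Delta'}=h_{\Delta',\mu_{\gamma'}\Delta'}\,h_{\mu_{\gamma'}\Delta',\Delta'}=h_{\Delta',\Delta}\,h_{\Delta,\Delta'}.$$
Conjugating by $h_{\Delta,\Delta'}$ and cancelling gives $h_{\Delta,\Delta'}\,h_{\Delta',\Delta}=T_{\gamma,\Delta}$, as claimed.

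For Case 2, in which $\gamma'\in\Delta\cap\Delta'$ and $(\gamma,\gamma')$ is not directed clockwise in $\Delta$, Lemma \ref{lem:dumbell} reads
$$T_{\gamma',\Delta'}=h_{\Delta',\Delta}\,T_{\gamma',\Delta}\,h_{\Delta',\Delta}^{-1}.$$
Then
$$h_{\Delta,\Delta'}\,T_{\gamma',\Delta'}\,h_{\Delta,\Delta'}^{-1}=\bigl(h_{\Delta,\Delta'}h_{\Delta',\Delta}\bigr)T_{\gamma',\Delta}\bigl(h_{\Delta,\Delta'}h_{\Delta',\Delta}\bigr)^{-1}=T_{\gamma,\Delta}\,T_{\gamma',\Delta}\,T_{\gamma,\Delta}^{-1},$$
using $T_{\gamma,\Delta}=h_{\Delta,\mu_\gamma\Delta}h_{\mu_\gamma\Delta,\Delta}=h_{\Delta,\Delta'}h_{\Delta',\Delta}$ in the last equality.

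Case 3 is dual: when $(\gamma,\gamma')$ is directed clockwise in $\Delta$, the other branch of Lemma \ref{lem:dumbell} gives
$$T_{\gamma',\Delta'}=h_{\Delta,\Delta'}^{-1}\,T_{\gamma',\Delta}\,h_{\Delta,\Delta'},$$
so conjugation by $h_{\Delta,\Delta'}$ simply cancels and yields $T_{\gamma',\Delta}$. I do not expect any serious obstacles here; the only mild subtlety is verifying in Case 1 that a non-pending $\gamma'\in\Delta'\setminus\Delta$ is forced to be the flipped edge, which I would handle by inspecting the three possibilities in the definition of flip of a tagged triangulation (the ordinary flip, the flip of a loop enclosing a $0$-puncture, and the flip of a radius/loop of a self-folded triangle around an ordinary puncture) and checking in each that no non-pending edge other than the image of $\gamma$ is introduced.
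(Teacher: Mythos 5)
Your proof is correct and follows essentially the same route as the paper's: Case 1 uses $\mu_{\gamma'}(\Delta')=\Delta$ together with the definition $T_{\gamma',\Delta'}=h_{\Delta',\mu_{\gamma'}\Delta'}h_{\mu_{\gamma'}\Delta',\Delta'}$, and Cases 2 and 3 are exactly the two branches of Lemma \ref{lem:dumbell} conjugated by $h_{\Delta,\Delta'}$, with $T_{\gamma,\Delta}=h_{\Delta,\Delta'}h_{\Delta',\Delta}$ appearing in the same way. The extra care you flag in Case 1 (that the unique non-pending edge of $\Delta'\setminus\Delta$ is the flipped edge) is left implicit in the paper but is a harmless verification.
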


\begin{proof}
In case $\gamma'\notin \Delta$, we have 
$h_{\Delta,\Delta'} T_{\gamma',\Delta'} h^{-1}_{\Delta,\Delta'}=h_{\Delta,\Delta'}h_{\Delta',\Delta}=T_{\gamma,\Delta}.$

In case $\gamma'\in \Delta$, if $(\gamma,\gamma')$ is not directed clockwise in $\Delta$, then by Lemma \ref{lem:dumbell} we have
\begin{equation*}
\begin{array}{rcl}
h_{\Delta,\Delta'} T_{\gamma',\Delta'} h^{-1}_{\Delta,\Delta'}&=&h_{\Delta,\Delta'} T_{\gamma',\Delta'} h^{-1}_{\Delta,\Delta'}=h_{\Delta,\Delta'} (h_{\Delta',\mu_{\gamma'}\Delta'}h_{\mu_{\gamma'}\Delta',\Delta'}) h^{-1}_{\Delta,\Delta'}\vspace{2.5pt}\\
&=&
h_{\Delta,\Delta'}(h_{\Delta',\Delta}h_{\Delta,\mu_{\gamma'}\Delta}h_{\mu_{\gamma'}\Delta,\Delta}h^{-1}_{\Delta',\Delta})h^{-1}_{\Delta,\Delta'}=T_{\gamma,\Delta} T_{\gamma',\Delta} (T_{\gamma,\Delta})^{-1}.
\end{array}
\end{equation*}
Otherwise, by Lemma \ref{lem:dumbell} we have
\begin{equation*}
\begin{array}{rcl}
h_{\Delta,\Delta'} T_{\gamma',\Delta'} h^{-1}_{\Delta,\Delta'}&=&h_{\Delta,\Delta'} T_{\gamma',\Delta'} h^{-1}_{\Delta,\Delta'}=h_{\Delta,\Delta'} (h_{\Delta',\mu_{\gamma'}\Delta'}h_{\mu_{\gamma'}\Delta',\Delta'}) h^{-1}_{\Delta,\Delta'}\vspace{2.5pt}\\
&=&
h_{\Delta,\Delta'}(h_{\Delta,\Delta'}^{-1}
h_{\Delta,\mu_{\gamma'}\Delta}h_{\mu_{\gamma'}\Delta,\Delta}
h_{\Delta,\Delta'})h^{-1}_{\Delta,\Delta'}=T_{\gamma',\Delta}.
\end{array}
\end{equation*}
The proof is complete.
\end{proof}

The following is an analog of \cite[Proposition 2.9]{KQ}.

\begin{theorem} 
\label{th:braid generation} for any $\Delta\in {\bf TSurf}_\Sigma^t$, the group
$Br_\Delta$ is generated by all $T_{\gamma,\Delta}$,  $\gamma$ runs over all non-pending internal edges of $\Delta$. 
\end{theorem}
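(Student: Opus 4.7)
The plan is to show that every $\alpha\in Br_\Delta$ lies in the subgroup $G_\Delta\leq Br_\Delta$ generated by the $T_{\gamma,\Delta}$'s. Since ${\bf TSurf}^t_\Sigma$ is defined as the direct product ${\bf TSurf}_\Sigma\times[2^{I_{P,1}(\Sigma)}]$ and the second factor is a groupoid with trivial automorphism groups, the tagged braid group $Br_\Delta=Aut_{{\bf TSurf}^t_\Sigma}(\Delta)$ coincides with $Aut_{{\bf TSurf}_\Sigma}(\Delta')$ where $\Delta'$ is the underlying ordinary triangulation. Hence it suffices to prove the claim for ordinary triangulations using only pure-flip generators. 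Fix a spanning tree $\mathcal T$ in the exchange graph of ${\bf TSurf}_\Sigma$ rooted at $\Delta$, chosen so that every tree edge is a flip at a non-pending internal arc, and for each object $\Delta'$ let $p_{\Delta'}:\Delta\to\Delta'$ denote the unique tree path. Telescoping any factorization $\alpha=\beta_n\circ\cdots\circ\beta_1$ as $\prod_i(p_{\Delta_i}^{-1}\circ\beta_i\circ p_{\Delta_{i-1}})$ (standard groupoid theory) shows $Br_\Delta$ is generated by the loop elements $L(\beta):=p_{\Delta_b}^{-1}\circ\beta\circ p_{\Delta_a}$ for $\beta:\Delta_a\to\Delta_b$ a flip generator, with $L(\beta)=\mathrm{id}$ whenever $\beta$ is a tree edge. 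It therefore suffices to prove $L(\beta)\in G_\Delta$ for each non-tree flip generator.

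The key technical step is a transport lemma: for every $\Delta'$ and every non-pending internal edge $\gamma'\in\Delta'$, the conjugate $p_{\Delta'}^{-1}T_{\gamma',\Delta'}p_{\Delta'}$ lies in $G_\Delta$. The proof is by induction on the length of $p_{\Delta'}$; the base case is trivial. For the inductive step, write $p_{\Delta'}=h\circ p_{\Delta''}$ with $h=h_{\Delta',\Delta''}$ a single tree flip. Proposition \ref{thm:generate} applied at $\Delta''$ yields $h_{\Delta'',\Delta'}T_{\gamma',\Delta'}h_{\Delta'',\Delta'}^{-1}\in G_{\Delta''}$, and the defining identity $T_{\alpha,\Delta''}=h_{\Delta'',\Delta'}\cdot h_{\Delta',\Delta''}$ (where $\Delta'=\mu_\alpha\Delta''$) lets us replace $h_{\Delta'',\Delta'}$ by $T_{\alpha,\Delta''}\cdot h^{-1}$; since $T_{\alpha,\Delta''}\in G_{\Delta''}$, we conclude $h^{-1}T_{\gamma',\Delta'}h\in G_{\Delta''}$. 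Conjugating by $p_{\Delta''}$ and invoking the induction hypothesis (applied to each $T_{\gamma'',\Delta''}$ appearing in the expression for $h^{-1}T_{\gamma',\Delta'}h$) then closes the step.

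With the transport lemma in hand, let $\beta=h_{\Delta_b,\Delta_a}$ be any non-tree flip generator with $\Delta_b=\mu_\gamma\Delta_a$. Using $h_{\Delta_b,\Delta_a}=h_{\Delta_a,\Delta_b}^{-1}\cdot T_{\gamma,\Delta_a}$, we rewrite $L(\beta)=p_{\Delta_b}^{-1}\cdot h_{\Delta_a,\Delta_b}^{-1}\cdot T_{\gamma,\Delta_a}\cdot p_{\Delta_a}$. The composite $h_{\Delta_a,\Delta_b}\cdot p_{\Delta_b}:\Delta\to \Delta_a$ is another path to $\Delta_a$, differing from $p_{\Delta_a}$ by an automorphism of $\Delta_a$; by iterating the identity $h_{\Delta_a,\Delta_b}h_{\Delta_b,\Delta_a}=T_{\gamma,\Delta_a}$ along the unique tree geodesic from $\Delta_a$ to $\Delta_b$ one expresses $L(\beta)$ as a product of conjugates of the form $p_{\Delta''}^{-1}\cdot T_{\gamma'',\Delta''}\cdot p_{\Delta''}$, each of which lies in $G_\Delta$ by the transport lemma. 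Combining this with $L(\beta)=\mathrm{id}$ for tree edges yields $Br_\Delta=G_\Delta$.

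The main obstacle is the last step, namely giving a careful formal reduction of $L(\beta)$ for a non-tree flip to a product of transported $T$'s by iterating the identity $h_{\Delta_a,\Delta_b}h_{\Delta_b,\Delta_a}=T_{\gamma,\Delta_a}$ along the spanning tree. One must verify that the process terminates in finitely many steps (it does, because each iteration strictly reduces the tree-distance from $\Delta$ to the endpoints of the current edge) and that intermediate flips remain along non-pending arcs (forced by the choice of $\mathcal T$). A minor auxiliary point is ensuring that a spanning tree consisting entirely of non-pending flips exists; this follows from the fact that at every triangulation containing a self-folded triangle one can reach the adjacent tagged configuration by first performing a non-pending flip elsewhere and then applying Proposition \ref{thm:generate} on the resulting triangulation.
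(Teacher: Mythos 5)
There is a genuine gap in the final step, and it is not a technicality: your argument never uses the defining relations of the groupoid ${\bf TSurf}_\Sigma^t$ beyond what is packaged in Proposition \ref{thm:generate}, and without those relations the statement you are trying to prove is false. Concretely, after the (correct) telescoping and transport lemma, you must show that the loop element $L(\beta)=p_{\Delta_b}^{-1}\beta\, p_{\Delta_a}$ of a \emph{non-tree} flip $\beta=h_{\Delta_b,\Delta_a}$ lies in $G_\Delta$. Your manipulation $h_{\Delta_b,\Delta_a}=h_{\Delta_a,\Delta_b}^{-1}T_{\gamma,\Delta_a}$ only yields
$L(h_{\Delta_b,\Delta_a})=L(h_{\Delta_a,\Delta_b})^{-1}\cdot\bigl(p_{\Delta_a}^{-1}T_{\gamma,\Delta_a}p_{\Delta_a}\bigr)$,
i.e.\ it trades the loop of an edge for the loop of its reverse; the proposed ``iteration along the tree geodesic'' is circular and does not reduce tree-distance. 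Indeed, if ${\bf TSurf}_\Sigma$ were the free groupoid on the flip graph (with the doubled arrows $h_{\Delta',\Delta}$ and $h_{\Delta,\Delta'}$), then $Aut(\Delta)$ would be free of rank $2E-V+1$ while the subgroup generated by the two-cycles $T_\gamma$ has at most $E$ generators, so generation fails whenever the exchange graph has a cycle. The missing ingredient is precisely that every simple cycle of the exchange graph satisfies a relation (Diamond/Pentagon/Hexagon, horizontal compatibility, once-punctured bigon) of the special ``palindromic'' form that forces the corresponding based loop to be a product of two-cycles. The paper supplies this via Theorem \ref{th:presentation of Tsurf} and then invokes the abstract criterion Theorem \ref{th:two-cycle generation}, whose hypotheses are exactly (i) your transport property $h\,\widetilde{Br}_{\mu_\gamma\Delta}\,h^{-1}\subset\widetilde{Br}_\Delta$ from Proposition \ref{thm:generate} and (ii) the palindromic form of every simple-cycle relation. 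Your proof establishes (i) but omits (ii) entirely, so it cannot close.

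A secondary unproved claim is the existence of a spanning tree consisting only of non-pending flips together with the handling of pending-arc flips (which are generators of the groupoid but carry no $T_\gamma$); your last sentence gestures at this but does not give an argument. This, too, is resolved in the paper by including the tagging arrows $h_{\Delta^{P'},\Delta^P}$ in the generating graph $\underline\Gamma$ and treating them inside the two-cycle framework rather than by choosing a tree.
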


\begin{proof} 
Denote by $\underline \Gamma$ the directed subgraph of ${\bf TSurf}_\Sigma^t$ so that only the arrows of $\underline \Gamma_\Sigma$ are $h_{\Delta'^{P'},\Delta^P}:\Delta^P\to \Delta'^{P'}$ and $h_{\Delta^P,\Delta'^{P'}}^{-1}$ whenever $dist(\Delta^P,\Delta'^{P'})=1$ or $\Delta=\Delta', P'=P\cup\{p\}$ for some $p\in I_{P,1}(\Sigma)$. Thus, $\underline\Gamma$ generates ${\bf TSurf}_\Sigma^t$.

For any triangulation $\Delta\in {\bf TSurf}_\Sigma^t$, denote by $\widetilde Br_\Delta$ the subgroup of $Br_\Delta$ generated by $T_{\gamma,\Delta}$ for all non-pending internal arcs in $\Delta$. For any non-pending internal edge $\gamma\in \Delta$, by Proposition \ref{thm:generate}, we have $h_{\mu_\gamma\Delta,\Delta}\circ \widetilde Br_{\mu_\gamma\Delta}\circ h_{\mu_\gamma\Delta,\Delta}\subset \widetilde Br_{\Delta}$. By Theorem \ref{th:presentation of Tsurf}, each simple cycle in $\underline\Gamma$ corresponds to a relation in ${\bf TSurf}_\Sigma^t$. Therefore, by Theorem \ref{th:two-cycle generation}, we have $Br_\Delta=\widetilde Br_\Delta$.

The proof is complete.
\end{proof}


\subsection{Presentation of braid groups} \label{subsec:cluster braids monoids and groups}
In this section, we provide presentations of the fundamental groups of ${\bf TSurf}_\Sigma$ and ${\bf TSurf}^t_\Sigma$. 

Recall that for any Coxeter group $W=\langle s_i,i\in I:s_i^2=1,(s_is_j)^{m_{ij}}=1\rangle$ the corresponding braid monoid $Br^+_W$ and the (Artin) braid group $Br_W$ are generated by $T_i$, $i\in I$
subject to:
$$\underbrace{T_iT_jT_i\cdots}_{m_{ij}} =\underbrace{T_jT_iT_j\cdots}_{m_{ij}} \ ,$$
whenever $m_{ij}\ne 0$.

In particular, the (standard) braid group $Br_n=Br_{A_{n-1}}$ on the $n$ strands is generated by $T_1,\ldots,T_{n-1}$ subject to the standard braid relations

$\bullet$ $T_iT_jT_i=T_jT_iT_j$ whenever $|i-j|=1$.

$\bullet$ $T_iT_j=T_jT_i$ otherwise.

$Br_{B_n}=Br_{C_n}$ with the singular node $1$ is generated by
$T_1,\cdots,T_n$ and subject to 

$\bullet$ $T_1T_2T_1T_2=T_2T_1T_2T_1$.

$\bullet$ $T_iT_jT_i=T_jT_iT_j$ whenever $|i-j|=1$ and $i,j\geq 2$.

$\bullet$ $T_iT_j=T_jT_i$ whenever $|i-j|\neq 1$.

$Br_{D_n}$ is generated by
$T_1,\cdots,T_{n}$ and subject to 

$\bullet$ $T_1T_3T_1=T_3T_1T_3$.

$\bullet$ $T_1T_i=T_iT_1$ whenever $i\neq 3$.

$\bullet$ $T_iT_jT_i=T_jT_iT_j$ whenever $|i-j|=1$ and $i,j\geq 2$.

$\bullet$ $T_iT_j=T_jT_i$ whenever $|i-j|\neq 1$ and $i,j\geq 2$.

\medskip

For any ordinary triangulation $\Delta$ and any non-pending internal arc $\alpha\in \Delta$, we associate with a word $T_{\alpha}$ with formal inverse $T_\alpha^{-1}$. For a non-self-folded and non-pending internal arc $\alpha\in \Delta$, assume that $\alpha'$ is a non-pending arc in $\mu_\alpha\Delta\setminus \Delta$. For any non-pending internal arc $\beta\in \mu_\alpha \Delta$, denote
$$h^{\mu_\alpha}_{\Delta,\mu_\alpha\Delta}(T_{\beta})=\begin{cases}
    T_{\alpha}, & \text{if $\beta=\alpha'$,}\\
    T_{\alpha} T_\beta T^{-1}_{\alpha}, & \text{if there is an arrow from $\beta$ to $\alpha$ in $Q_{\Delta}$,}\\
    T_\beta, & \text{otherwise}.
\end{cases}$$
and
$$h^{\mu_\alpha}_{\Delta,\mu_\alpha\Delta}(T^{-1}_{\beta})=\begin{cases}
    T^{-1}_{\alpha}, & \text{if $\beta=\alpha'$,}\\
    T_{\alpha} T^{-1}_\beta {T^{-1}_{\alpha}}, & \text{if there is an arrow from $\beta$ to $\alpha$ in $Q_{\Delta}$,}\\
    T^{-1}_\beta, & \text{otherwise}.
\end{cases}$$

For a sequence of mutations $\mu=\mu_{\alpha_m}\cdots \mu_{\alpha_2}\mu_{\alpha_1}$ and words $T^{\epsilon_1}_{\beta_1}T^{\epsilon_2}_{\beta_2}\cdots T^{\epsilon_n}_{\beta_n}$ with $\epsilon_i\in \{\pm 1\}$ and $\beta_1,\cdots, \beta_n\in \mu \Delta$, denote 
$$h^{\mu_\alpha}_{\Delta,\mu_\alpha\Delta}(T^{\epsilon_1}_{\beta_1}T^{\epsilon_2}_{\beta_2}\cdots T^{\epsilon_n}_{\beta_n})=h^{\mu_\alpha}_{\Delta,\mu_\alpha\Delta}(T^{\epsilon_1}_{\beta_1})h^{\mu_\alpha}_{\Delta,\mu_\alpha\Delta}(T^{\epsilon_2}_{\beta_2})\cdots h^{\mu_\alpha}_{\Delta,\mu_\alpha\Delta}(T^{\epsilon_2}_{\beta_n}),$$
and
$$h^{\mu}_{\Delta,\mu\Delta}(T^{\epsilon_1}_{\beta_1}T^{\epsilon_2}_{\beta_2}\cdots T^{\epsilon_n}_{\beta_n})=h^{\mu_{\alpha_1}}_{\Delta,\mu_{\alpha_1}\Delta}\circ h^{\mu_{\alpha_2}}_{\mu_{\alpha_1}\Delta,\mu_{\alpha_2}\mu_{\alpha_1}\Delta}\circ \cdots \circ h^{\mu_{\alpha_n}}_{\mu_{\alpha_{m-1}}\cdots \mu_{\alpha_1}(\Delta),\mu\Delta}
(T^{\epsilon_1}_{\beta_1}T^{\epsilon_2}_{\beta_2}\cdots T^{\epsilon_n}_{\beta_n}).$$

Recall that for any ordinary triangulation $\Delta$ of $\Sigma$, for any non-pending arc $\gamma$,
denote 
$$\ell(\gamma)=\begin{cases}
    \ell, &\mbox{if $\gamma$ is the radius of some self-folded triangle in $\Delta$ with loop $\ell$,}\\
    \gamma, & \mbox{otherwise}.
\end{cases}$$

For any non-pending arc $\alpha$ in $\Sigma$, define the \emph{weight} of $\alpha$ to be
$$w(\alpha)=
\begin{cases}
1, & \text{if $\alpha$ is not a loop around a $0$-puncture or a special puncture,}\\
|p|, & \text{if $\alpha$ is a special loop around some special puncture $p$,}\\
\frac{1}{2}, & \text{if $\alpha$ is a loop around some $0$-puncture}.
\end{cases}$$

We abbreviate $x^y:=yxy^{-1}$ for any $x,y\in Br_\Delta$.

The following result gives a presentation of all $Br_\Delta$.

\begin{theorem} \label{th:brgroup}
Let $\Sigma$ be a marked surface. 
For any ordinary triangulation $\Delta$ of $\Sigma$, $Br_{\Delta}$ has the following presentation (in the notation of Theorem \ref{th:braid generation}). Generators  $T_\gamma:=T_{\gamma,\Delta}$ are indexed by the non-pending internal edges (up to reversal) of $\Delta$. The relations are:
\begin{enumerate}[$(R1)$]
    \item $T_{\alpha}T_{\beta}=T_{\beta} T_{\alpha}$
    if either $\ell(\alpha)$ and $\ell(\beta)$ are not two sides of any triangle in $\Delta$; or $\alpha,\beta$ form a self-folded triangle in $\Delta$; or $\alpha,\beta$ are the diagonals of a once-punctured bigon in $\Delta$. 
    
    \item $\begin{cases}
T_{\alpha}T_{\beta}T_\alpha=T_{\beta}T_{\alpha}T_\beta & \text{if $w(\alpha)=w(\beta)=1$}\\
T_{\alpha}T_{\beta}T_\alpha T_\beta=T_{\beta}T_{\alpha}T_\beta T_\alpha & \text{if $w(\alpha)\neq 1=w(\beta)$ or $w(\beta)\neq 1=w(\alpha)$}
\end{cases}$ 
if 
$\ell(\alpha)$ and $\ell(\beta)$ are two sides of exactly one triangle in $\Delta$.

\item $T_\alpha T_\gamma T_\alpha^{-1} T_\beta=T_\beta T_\alpha T_\gamma T_\alpha^{-1}$ 
if $w(\alpha)=1$ and $(\ell(\alpha),\ell(\beta),\ell(\gamma))$ forms a cyclic clockwise triangle in $\Delta$, and any two of these curves are sides of exactly one triangle in $\Delta$; or none of $\alpha, \beta$ and $\gamma$ is a loop, and they form a complete counterclockwise list of the arcs incident to some puncture (see Figure \ref{Fig:R3}). 
\begin{figure}[ht]
\includegraphics{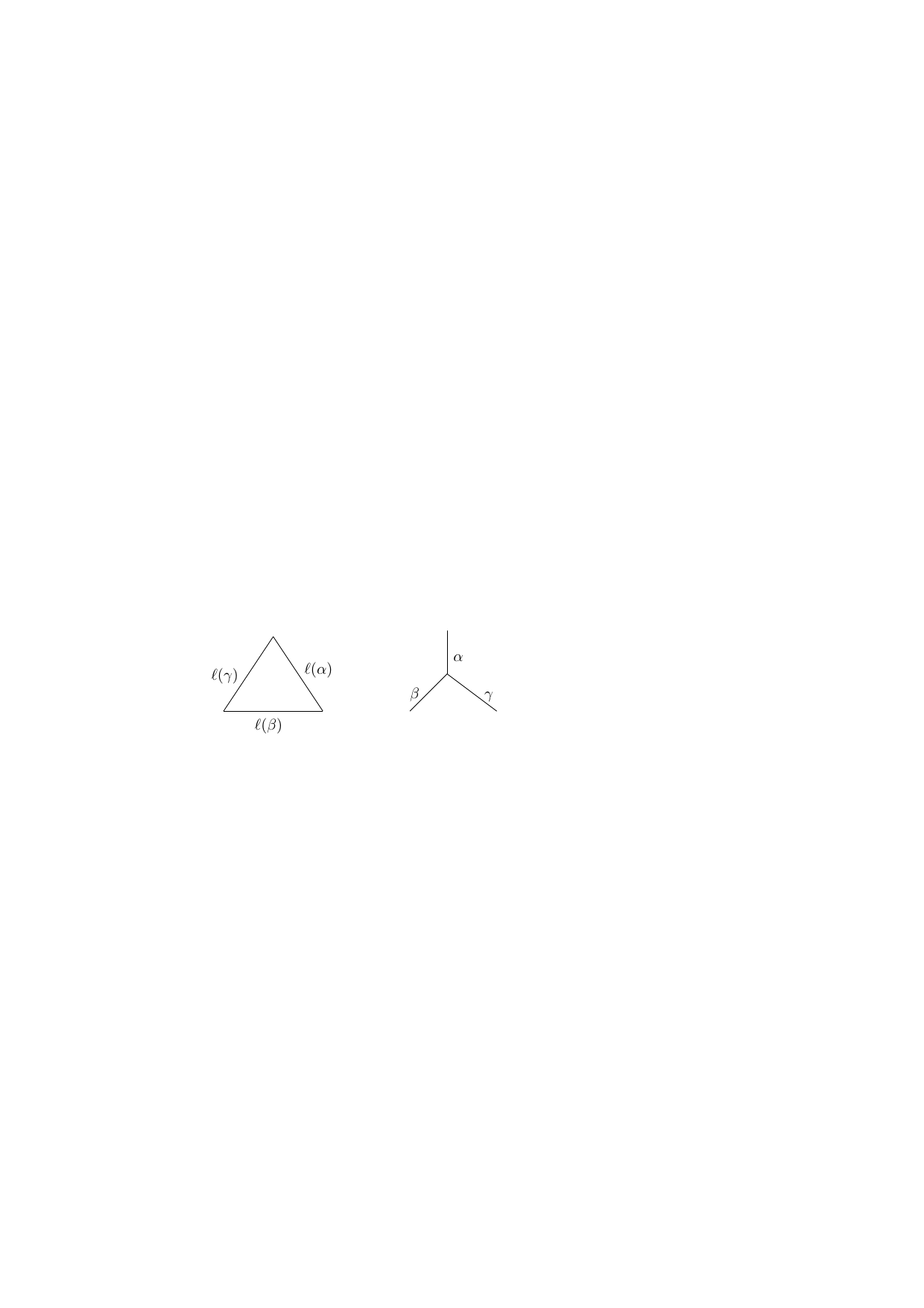}
\caption{Local configuration for relation $R3$}
\label{Fig:R3}
\end{figure}
    \item 
$\begin{cases}
        T_\gamma^{T_\alpha}T_\beta T_\gamma^{T_\alpha}=T_\beta T_\gamma^{T_\alpha} T_\beta,  & \mbox{ if } w(\alpha)=1,\\
        T_\gamma^{T_\alpha}T_\beta =T_\beta T_\gamma^{T_\alpha}, & \mbox{ if } w(\alpha)\neq 1,
    \end{cases}$  
   if there exists $\delta\in \Delta$ such that both $(\ell(\alpha),\beta,\gamma)$ and $(\ell(\delta),\beta,\gamma)$ are cyclic clockwise triangles in $\Delta$ with $\ell(\alpha) \neq \ell(\delta)$ (see Figure \ref{Fig:R4}). 
       \begin{figure}[ht]
\includegraphics[width=8cm]{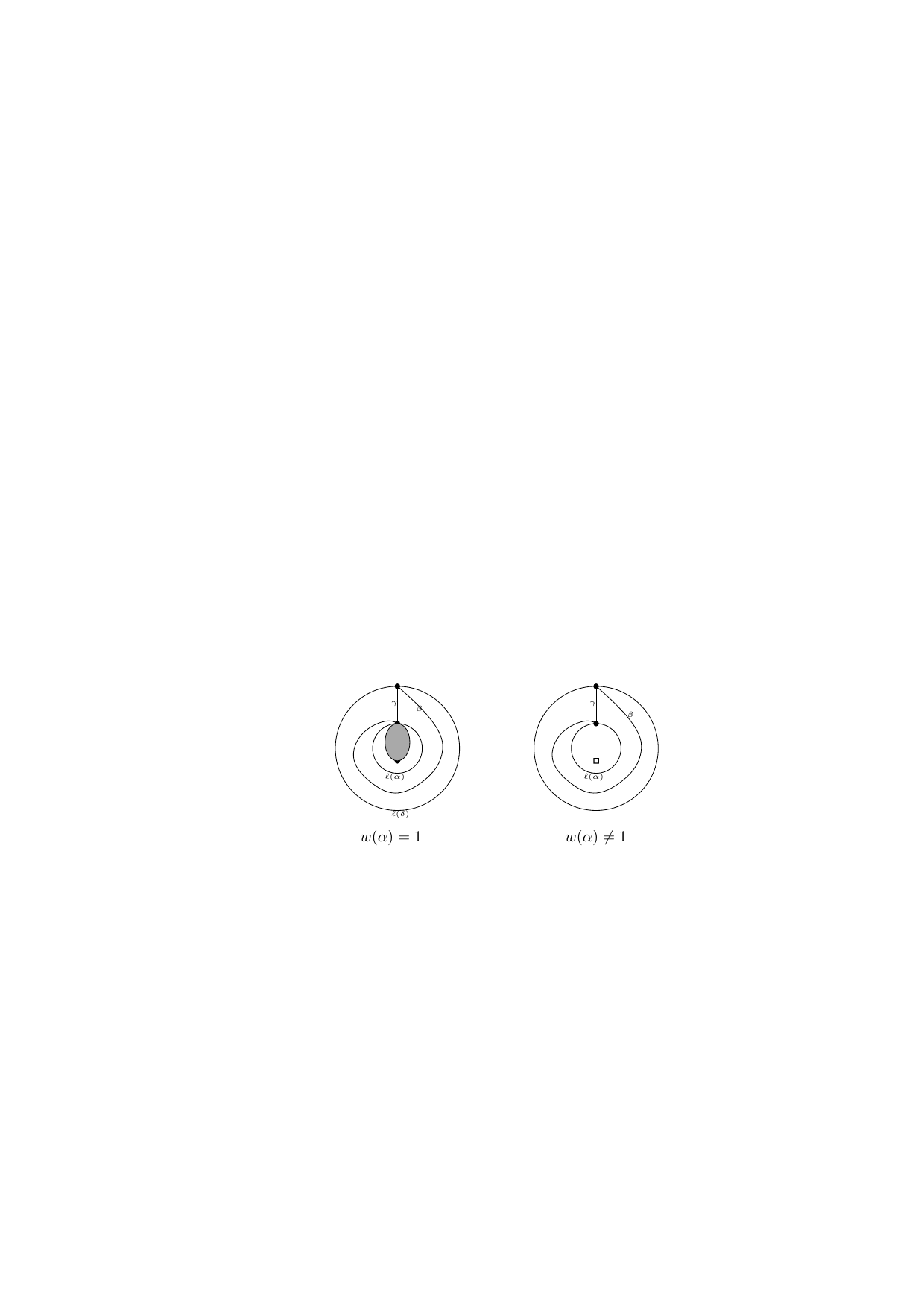}
\caption{Local configuration for relations $R4$ and $R5$, $\Box\in \{\times,\circ\}$}
\label{Fig:R4}
\end{figure}
    \item 
    $T_\delta^{T_\gamma T_\beta}T_\alpha=T_\alpha T_\delta^{T_\gamma T_\beta}$
    if in case $(R4)$, $w(\alpha)=1$ and $\ell(\alpha), \ell(\delta)$ are not two sides of any triangle in $\Delta$ (see the left picture in Figure \ref{Fig:R4}).
    \item 
    $T_\gamma^{T_\alpha T_{\delta}} T_\beta T_\gamma^{T_\alpha T_{\delta}}=T_\beta T_\gamma^{T_\alpha T_{\delta}} T_\beta$ and $T_\gamma^{T_{\delta}T_\alpha} T_\beta T_\gamma^{T_{\delta}T_\alpha}=T_\beta T_\gamma^{T_{\delta}T_\alpha} T_\beta$ if 
    there exists $\zeta\in \Delta$ such that the triples $(\alpha,\beta,\gamma)$, $(\delta,\beta,\gamma)$, and $(\alpha, \delta,\ell(\zeta))$ are three cyclic clockwise triangles in $\Delta$ (see Figure \ref{Fig:R6}).

\begin{figure}[ht]
\includegraphics[width=13cm]{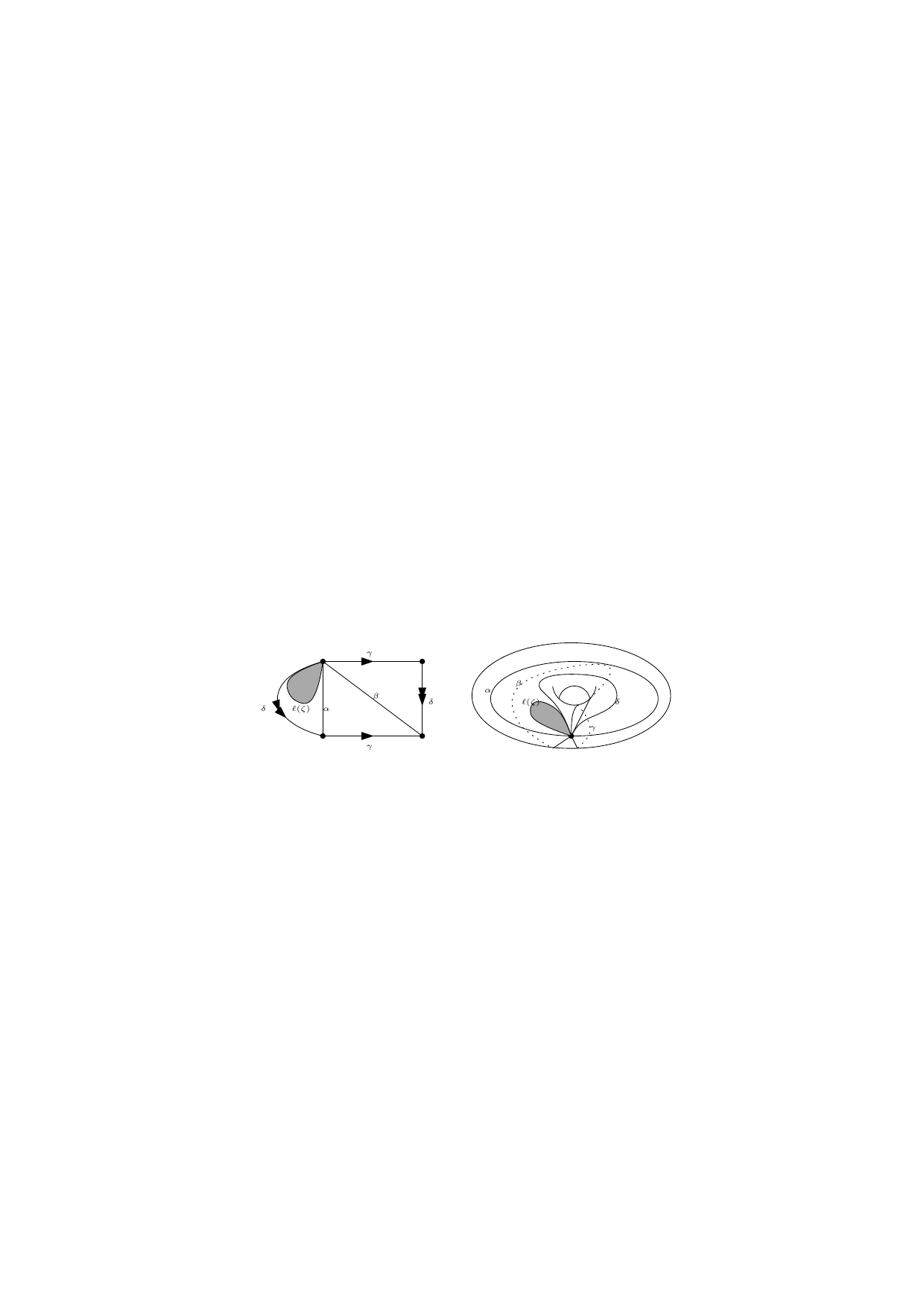}
\caption{Local configuration for relations $R6$ and $R7$}
\label{Fig:R6}
\end{figure}
    
    \item $T_\beta T_{\gamma}^{T_\alpha T_\zeta T_\delta}= T_{\gamma}^{T_\alpha T_\zeta T_\delta} T_\beta$ 
    if in case $(R6)$, $\zeta$ is an internal arc with $w(\zeta)=1$.
   
    \item 
    $T_{\gamma}^{T_\beta^{-1}} T_\delta^{T_\alpha}=T_\delta^{T_\alpha} T_{\gamma}^{T_\beta^{-1}}$ if either none of $\alpha,\beta,\gamma$ and $\delta$ is a loop, and they form a complete counterclockwise list of the arcs incident to some puncture; or $\ell(\alpha)$ and $\ell(\gamma)$ form a once-punctured bigon with diagonals $\beta$ and $\delta$; or $w(\alpha)=w(\beta)=w(\delta)=1$ and $(\ell(\beta),\ell(\delta))$ form a once-punctured bigon with diagonals $\alpha$ and $\gamma$ (see Figure \ref{Fig:R8}).    
\begin{figure}[ht]
\includegraphics{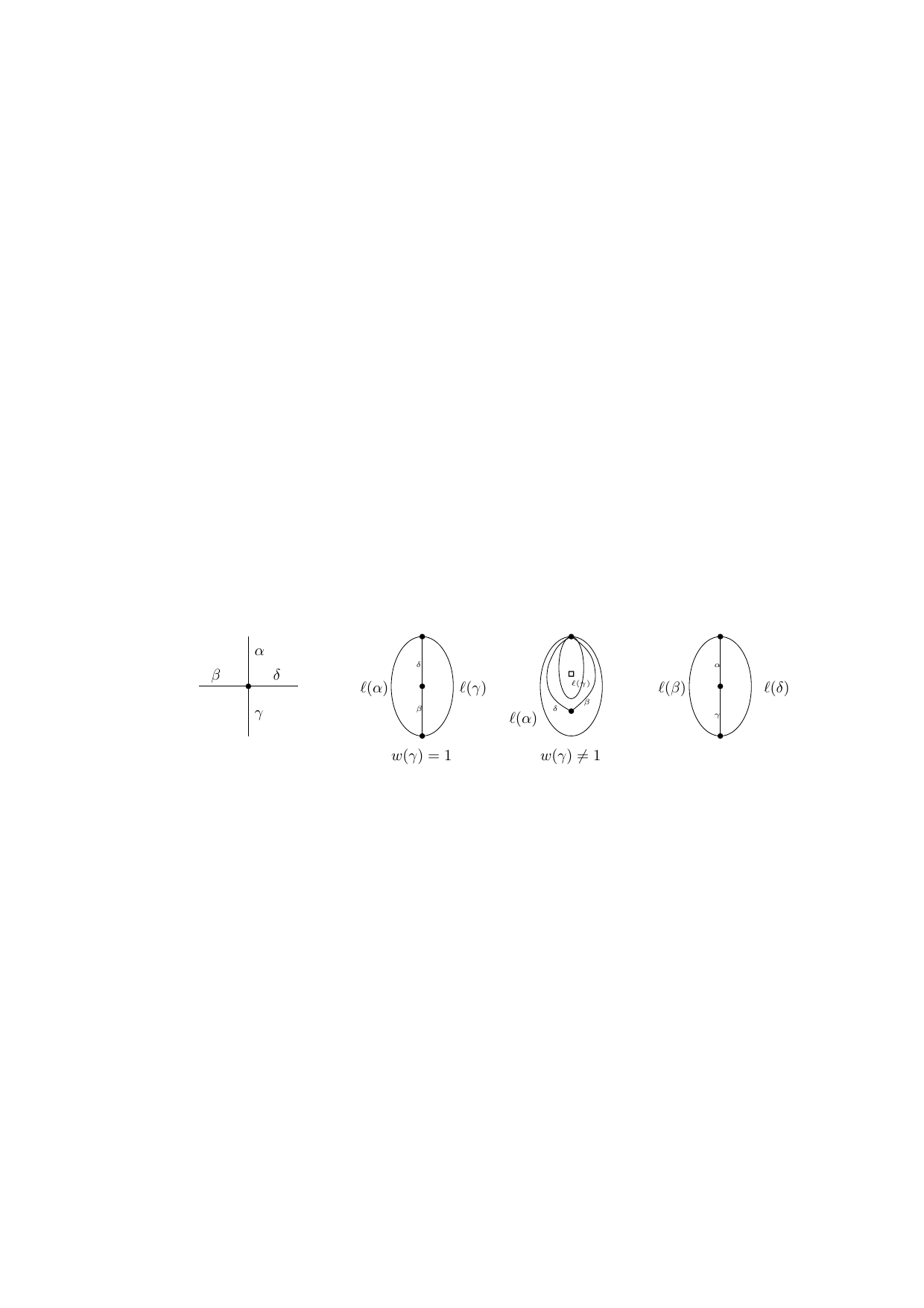}
\caption{Local configuration for relation $R8$, $\Box\in \{\times,\circ\}$}
\label{Fig:R8}
\end{figure}




   \item (Ordinary puncture relations) For any ordinary puncture $p$, let $\alpha_1,\cdots,\alpha_n$ be a complete clockwise list of arcs in $\Delta$ incident to $p$.

   $\bullet$ If there is no loop in $\{\alpha_1,\cdots,\alpha_n\}$, then $Cyl(T_{\alpha_1},\cdots, T_{\alpha_n})$. (We abbreviate the relation $x_1x_2\cdots x_{n}x_1x_2\cdots x_{n-2}=x_2x_3\cdots x_nx_1x_2\cdots x_{n-1}$ by $Cyl(x_1,x_2,\cdots, x_n)$).

   $\bullet$ Otherwise, if there exists a sequence of mutations $\mu$ at some loops in $\{\alpha_1,\cdots,\alpha_n\}$ such that the number of loops incident to $p$ decreases after each step and no loop incident to $p$ in $\mu(\Delta)$, assume that $\alpha_1',\cdots,\alpha'_m$ is the complete clockwise list of arcs incident to $p$ in $\mu(\Delta)$, then $Cyl(h^{\mu}_{\Delta,\mu\Delta}(T_{\alpha'_1}),\cdots, h^{\mu}_{\Delta,\mu\Delta}(T_{\alpha'_m}))$.
\end{enumerate}
\end{theorem}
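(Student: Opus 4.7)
The plan is to derive Theorem~\ref{th:brgroup} from the groupoid presentation of ${\bf TSurf}_\Sigma^t$ in Theorem~\ref{th:presentation of Tsurf}, combined with Theorem~\ref{th:braid generation} (asserting that $Br_\Delta$ is generated by the $T_\gamma$) and Proposition~\ref{thm:generate} (tracking conjugation of $T_\gamma$ by $h_{\Delta,\mu_\alpha\Delta}$). Since $Br_\Delta$ is the vertex group at $\Delta$ of the groupoid ${\bf TSurf}_\Sigma^t$, a presentation is read off by choosing a spanning tree in the flip-graph, translating each cycle relation (diamond/pentagon/hexagon, horizontal compatibility, and once-punctured bigon) into a relation among the $T_\gamma$'s, and invoking Theorem~\ref{th:two-cycle generation} to conclude completeness.

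First I would verify that each of R1--R9 is a consequence of the groupoid relations. R1 (commutation) comes from the 4-cycle (diamond) relation: when $\ell(\alpha),\ell(\beta)$ are not sides of a common triangle, or form a self-folded triangle, or are the two diagonals of a once-punctured bigon, the flips $\mu_\alpha,\mu_\beta$ commute, and pushing $T_\alpha T_\beta$ around the square yields $T_\beta T_\alpha$. R2 (3-braid and 4-braid) come respectively from the pentagon (when $w(\alpha)=w(\beta)=1$) and hexagon (when some weight is $\ne 1$) relations in Theorem~\ref{th:presentation of Tsurf}. For R3--R7, each relation involves an iterated conjugate $T_\gamma^{T_\alpha}$ (or $T_\gamma^{T_\alpha T_\delta}$, etc.), and by Proposition~\ref{thm:generate} such a conjugate equals $h_{\Delta,\mu_\alpha\Delta}\,T_{\gamma'',\mu_\alpha\Delta}\,h_{\mu_\alpha\Delta,\Delta}$ for the appropriate $\gamma''\in \mu_\alpha\Delta$; thus any braid/commutation relation among such conjugates in $Br_\Delta$ is the transport of an R1/R2-type relation in $Br_{\mu_\alpha\Delta}$, delivered by horizontal compatibility. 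R8 follows similarly from the once-punctured bigon relation combined with Proposition~\ref{thm:generate}. R9, the cyclic puncture relation, is obtained by concatenating, for a full counterclockwise rotation around $p$, the pentagon and bigon relations at successive vertices of the punctured cycle; when loops are incident to $p$, we first mutate them away via $\mu$, derive the loop-free cyclic relation on $Br_{\mu\Delta}$, and pull it back through $h^\mu_{\Delta,\mu\Delta}$ using Proposition~\ref{thm:generate}.

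For sufficiency, by Theorem~\ref{th:two-cycle generation} every relation in $Br_\Delta=\mathrm{Aut}_{{\bf TSurf}_\Sigma^t}(\Delta)$ is a product of spanning-tree conjugates of the defining 2-cells of ${\bf TSurf}_\Sigma^t$, and the first half of the argument shows that each such conjugate is expressible in terms of R1--R9 after iterating Proposition~\ref{thm:generate} to move the basepoint. The hardest step will be R9 in the presence of loops at $p$: to see that the claimed cyclic relation is well-defined one must verify that $h^\mu_{\Delta,\mu\Delta}(T_{\alpha_i'})$ is independent, modulo R1--R8, of the chosen loop-removing sequence $\mu$, which amounts to a delicate coherence check combining R3--R7 around the puncture. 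The bulk of the casework lies here, in matching the sign conventions $\varphi,\phi$ from Definition~\ref{def:gro} with the specific conjugation patterns of Proposition~\ref{thm:generate}, and in handling the borderline configurations (a vertex of a self-folded triangle incident to the puncture, and the tagged-bigon/bigon interaction) uniformly.
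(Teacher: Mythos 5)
Your overall architecture (derive R1--R9 from the groupoid relations of ${\bf TSurf}_\Sigma^t$, then argue completeness via the vertex-group structure) is in the right spirit, and you correctly flag the well-definedness of R9 under different loop-removing sequences as a delicate point (this is Lemma \ref{lem:R9unique} in the paper). But there is a genuine gap in your completeness direction. You write that ``the first half of the argument shows that each such conjugate is expressible in terms of R1--R9 after iterating Proposition~\ref{thm:generate} to move the basepoint.'' This conflates soundness with completeness. The first half only shows that R1--R9 \emph{hold} in $Br_\Delta$; it does not show that the defining relations of the groupoid, transported back to the basepoint along arbitrary spanning-tree paths, are \emph{consequences} of R1--R9. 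Proposition \ref{thm:generate} controls how the \emph{generators} $T_{\gamma',\Delta'}$ transport under $h_{\Delta,\Delta'}$, but says nothing about whether a relation among them at $\Delta'$ reduces, after transport, to a consequence of the relations at $\Delta$. Moreover, Theorem \ref{th:two-cycle generation} is a statement about \emph{generation} of the vertex group by two-cycles (it is what underlies Theorem \ref{th:braid generation}), not a device for producing a presentation of the vertex group, so it cannot deliver the reduction you claim.

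The missing ingredient — and the bulk of the paper's actual proof — is the flip-equivariance of the presentation: one must define the abstract group $\widetilde{Br}_\Delta$ by generators $T_\gamma$ and relations R1--R9 and prove (Theorem \ref{thm:quiverbraidgroup}) that the explicit conjugation formulas induce mutually inverse isomorphisms $\widetilde{Br}_\Delta\cong\widetilde{Br}_{\mu_\alpha\Delta}$, i.e., that each relation in R1--R9 at $\mu_\alpha\Delta$, pulled back via these formulas, follows from R1--R9 at $\Delta$. This is a long case analysis over the local quiver configurations (the cases R1 through R9 against each possible position of $\alpha_0$). With that in hand, the paper builds a groupoid $\hat\Gamma_{\Delta_0}$ out of the presented groups, checks path-independence of its morphisms on the $4$-, $5$-, and $6$-cycles of the flip graph (Proposition \ref{prop:welldef}, using Lemma \ref{lem:dual} on $C$-matrix signs), and obtains mutually inverse surjections between $\widetilde{Br}_{\Delta_0}$ and $Aut_{{\bf TSurf}_\Sigma}(\Delta_0)$ that are the identity on generators. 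Without an analogue of Theorem \ref{thm:quiverbraidgroup} your argument cannot rule out that $Br_\Delta$ satisfies additional relations beyond R1--R9 coming from cycles far from the basepoint.
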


We prove Theorem \ref{th:brgroup} in Section \ref{sec:proofthm:fundegroup}.

\begin{remark}
$(a)$ We will see in Lemma \ref{lem:R9unique} that it suffices to choose a single mutation sequence $\mu:\Delta\to \mu\Delta$ to define the relation $R9$ for each puncture $p$ in $Br_\Delta$.

$(b)$ For any tagged triangulation $\Delta$, $Br_{\Delta}$ has the same presentation as that of the corresponding ordinary triangulation.
\end{remark}

\begin{remark}\label{rem:equi}
Let $Q_\Delta$ denote the (valued) quiver associated with the triangulation $\Delta$. More precisely, the vertices of $Q_\Delta$ correspond to the non-pending internal arcs in $\Delta$, considered up to reversal. The number of arrows from a vertex $\alpha$ to a vertex $\beta$ is defined as the number of clockwise cyclic triangles of the form $(\ell(\alpha),\ell(\beta),\gamma)$ in $\Delta$, for some $\gamma\in \Delta$. Then

\begin{enumerate}[$(1)$]
    \item the condition for relation $R1$ is equivalent to that there are no arrows between $\alpha$ and $\beta$ in $Q_\Delta$,
    \item  the condition for relation $R2$ is equivalent to that there is exactly one arrow between $\alpha$ and $\beta$ in $Q_\Delta$,
    \item the condition for relation $R3$ is equivalent to that there is a $3$-cycle between $\alpha,\beta$ and $\gamma$ with no double arrows between them in $Q_\Delta$ (see the first quiver in Figure \ref{Fig:subquiver1}),
    \item the condition for relation $R4$ is equivalent to that there is a $3$-cycle between $\alpha,\beta$ and $\gamma$ with a double arrow from $\beta$ to $\gamma$ in $Q_\Delta$ (see the second quiver in Figure \ref{Fig:subquiver1}),
    \item the condition for relation $R5$ is equivalent to that there are $3$-cycles between $\alpha$, $\beta$ and $\gamma$, and between
    $\delta$, $\beta$ and $\gamma$, with no arrows between $\alpha$ and $\delta$ in $Q_\Delta$ (see the third quiver in Figure \ref{Fig:subquiver1}),
    \item the condition for relation $R6$ is equivalent to that there are $3$-cycles between $\alpha$, $\beta$ and $\gamma$, and between
     $\delta$, $\beta$ and $\gamma$, with an arrow from $\alpha$ to $\delta$ in $Q_\Delta$ (see the first quiver in Figure \ref{Fig:subquiver2}),
    \item the condition for relation $R7$ is equivalent to that in case $(R6)$, there is additionally a $3$-cycle between $\alpha$, $\delta$ and $\zeta$ in $Q_\Delta$ with $w(\zeta)=1$ (see the second quiver in Figure \ref{Fig:subquiver2}),
    \item the condition for relation $R8$ is equivalent to that there is a $4$-cycle between $\alpha,\beta,\gamma$ and $\delta$ with no double arrows between them, no arrows between $\alpha$ and $\gamma$, and no arrows between $\beta$ and $\delta$ in $Q_\Delta$ (see the third quiver in Figure \ref{Fig:subquiver2}).
\end{enumerate}

    \begin{figure}[ht]
\begin{center}
\begin{tikzcd}
& \alpha \arrow[dr] & &  & \alpha \arrow[dr] &  & \alpha \arrow[drr] & & \delta\arrow[d] \\
\gamma \arrow[ur] & & \beta \arrow[ll] & \gamma \arrow[ur] & & \beta \arrow[ll, shift left=0.5ex]\arrow[ll, shift right=0.5ex]  & \gamma \arrow[u] \arrow[urr] & & \beta \arrow[ll, shift left=0.5ex]\arrow[ll, shift right=0.5ex] 
\end{tikzcd}  
\caption{Subquivers of $Q_\Delta$}\label{Fig:subquiver1}
\end{center}
\end{figure}

\begin{figure}[ht]
\begin{center}
\begin{tikzcd}
& & & & & \zeta\arrow[dl] & &\\
&   \alpha\arrow[rr] \arrow[drr] & &\delta\arrow[d] &  \alpha\arrow[rr] \arrow[drr] & &\delta\arrow[d]\arrow[lu] & \alpha\arrow[rr] & & \beta \arrow[d] 
  \\
&  \gamma \arrow[u] \arrow[urr] & & \beta \arrow[ll, shift left=0.5ex]\arrow[ll, shift right=0.5ex]& \gamma \arrow[u] \arrow[urr] & & \beta \arrow[ll, shift left=0.5ex]\arrow[ll, shift right=0.5ex]
 & \delta\arrow[u] & & \gamma\arrow[ll]
\end{tikzcd}  
\caption{Subquivers of $Q_\Delta$}\label{Fig:subquiver2}
\end{center}
\end{figure}

\end{remark}

\begin{remark} 
\label{rem:quivers no special}
$(a)$ If $\Sigma$ has no special punctures, then only the relations $R1$-$R8$ hold, with all arcs of weight $1$. Moreover, if $\Sigma$ has no $0$-punctures, then $(Br_\Delta)^{op}$ coincides with the braid group associated with quivers with potentials from \cite{KQ,QZ}. We will explore this remarkable coincidence elsewhere.

$(b)$ If $\Sigma$ is not an annulus with one marked point on each boundary component, then there exists an ordinary triangulation $\Delta$ of $\Sigma$ such that the defining relations for $Br_\Delta$ are given by $R1$ and $R2$ in Theorem \ref{th:brgroup}.
\end{remark}

\begin{example} (\cite[Theorem 2.12]{BM}, \cite[Definition 10.1]{Q}) 
\label{ex:Dn group}
Let $\Delta_0=\{(0,i),(i,0)\mid i=1,\ldots,n\}$ be the central star-like triangulation of $\Sigma_{n,1}$ and $\sigma$ be the rotation of $\Sigma_{n,1}$ by $\frac{2\pi}{n}$. Then $Br_\Delta$ is generated by $T_i:=T_{0i}$ subject to 
$T_iT_jT_i=T_jT_iT_j$
for any adjacent $i,j$ modulo $n$, $T_iT_j=T_jT_i$ for non-adjacent $i,j$ modulo $n$, and
$(T_1\cdots T_n)(T_1\cdots T_{n-2}) =(T_2\cdots T_{n}T_1)(T_2\cdots T_{n-1})$. 

Let $T:=T_1T_2\cdots T_nT_1\cdots T_{n-2}$. Then one can show that $T^n$ for $n$ odd and $T^{\frac{n}{2}}$ for $n$ even is in the center of $Br_\Delta$. 
\end{example}

\begin{example}
    Let $\Sigma$ be the torus with a disk moved and a single marked point on its boundary (see Figure \ref{Fig:R6}). Then $Br_\Sigma$ is generated by $T_{\alpha},T_\beta,T_\gamma$ and $T_\delta$, subject to:

 $\bullet$ $T_\alpha T_\beta T_\alpha=T_\beta T_\alpha T_\beta$, $T_\alpha T_\gamma T_\alpha=T_\gamma T_\alpha T_\gamma$, $T_\alpha T_\delta T_\alpha=T_\delta T_\alpha T_\delta$, $T_\delta T_\beta T_\delta=T_\beta T_\delta T_\beta$, $T_\delta T_\gamma T_\delta=T_\gamma T_\delta T_\gamma$.

 $\bullet$ $(T_\alpha T_\delta T_\gamma T_\delta^{-1} T_\alpha^{-1}) T_\beta (T_\alpha T_\delta T_\gamma T_\delta^{-1} T_\alpha^{-1})=T_\beta (T_\alpha T_\delta T_\gamma T_\delta^{-1} T_\alpha^{-1}) T_\beta$.

 $\bullet$ $(T_\delta T_\alpha  T_\gamma T_\alpha^{-1} T_\delta^{-1} )T_\beta (T_\delta T_\alpha  T_\gamma T_\alpha^{-1} T_\delta^{-1} )= T_\beta (T_\delta T_\alpha  T_\gamma T_\alpha^{-1} T_\delta^{-1} )T_\beta$.

\end{example}

\begin{example} 
$(a)$
Let $\Delta_1=\{(13),(31),(14),(41),(15),(51)\}\cup \{\text{boundary arcs}\}$.
\begin{figure}[ht]
\includegraphics{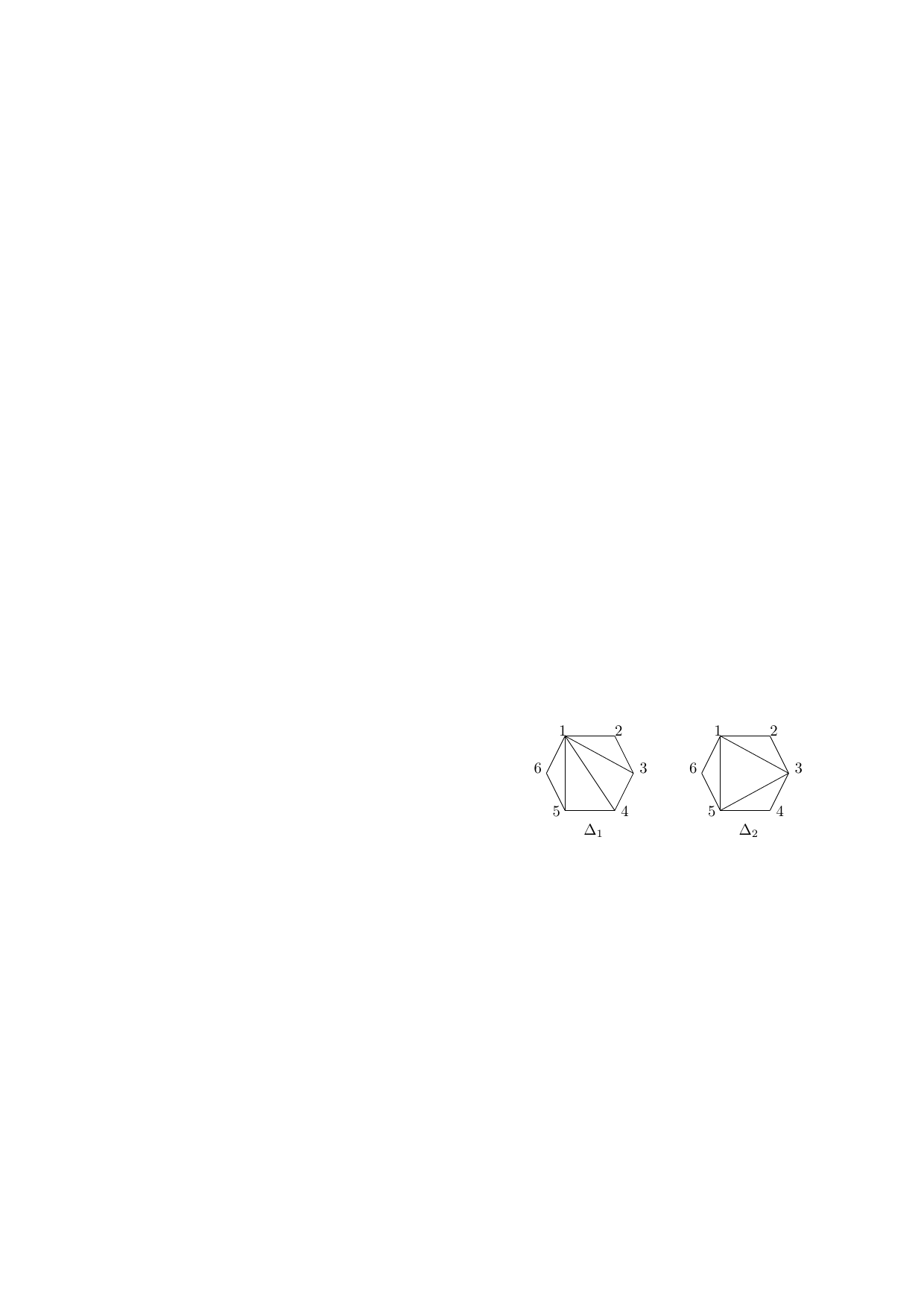}
\caption{Two triangulations of the hexagon}
\end{figure}
In $Br_{\Delta_1}$, we have $T_{13}T_{14}T_{13}=T_{14}T_{13}T_{14}, T_{14}T_{15}T_{14}=T_{15}T_{14}T_{15}$, and $T_{13}T_{15}=T_{15}T_{13}$.

$(b)$ Let $\Delta_2=\{(13),(31),(35),(53),(15),(51)\}\cup \{\text{boundary arcs}\}$. In $Br_{\Delta_2}$, we have $T_{13}T_{35}T_{13}=T_{35}T_{13}T_{35}, T_{35}T_{15}T_{35}=T_{15}T_{35}T_{15}, T_{35}T_{15}T_{35}=T_{15}T_{35}T_{15}$, and 
$T_{13}T_{15}T_{35}T_{13}=T_{15}T_{35}T_{13}T_{15}=T_{35}T_{13}T_{15}T_{35}$.
\end{example}

\begin{corollary}\label{pro:free}
$(a)$ $Br_{\Sigma}$ is isomorphic to the free group of rank $2$ for the annulus with one marked point on each boundary component. 

$(b)$ $Br_{\Sigma}$ is isomorphic to the free group of rank $3$ for the once-punctured torus.
\end{corollary}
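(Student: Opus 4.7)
The plan is to exhibit in each case an explicit triangulation $\Delta$ of $\Sigma$ and then invoke Theorems \ref{th:braid generation} and \ref{th:brgroup} to show that the presentation of $Br_\Delta\cong Br_\Sigma$ collapses to a free group. By Corollary \ref{cor:brinv}, $Br_\Sigma$ is independent of the triangulation, so a single convenient choice of $\Delta$ suffices in each case. The core of the argument is combinatorial: one goes through each of the relations $R1$--$R9$ of Theorem \ref{th:brgroup} and checks that its geometric hypothesis is not met for $\Delta$.

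For part (a), I would take $\Delta$ to consist of two internal arcs $\alpha,\beta$ joining the two boundary marked points of the annulus, together with the two boundary loops $\gamma_{\mathrm{out}},\gamma_{\mathrm{in}}$. An Euler characteristic count shows this triangulation has exactly two triangles, namely $(\alpha,\gamma_{\mathrm{out}},\beta)$ and $(\beta,\gamma_{\mathrm{in}},\alpha)$, so that $Br_\Delta$ is generated by $T_\alpha,T_\beta$ by Theorem \ref{th:braid generation}. Since $\alpha,\beta$ are sides of two distinct triangles in $\Delta$, the hypotheses of $R1$ (no shared triangle) and $R2$ (exactly one shared triangle) both fail; the relations $R3$--$R8$ involve at least three non-pending internal arcs and so cannot apply; and $R9$ is vacuous because the annulus has no ordinary punctures. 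Hence $Br_\Delta$ has no nontrivial relations and is free of rank~$2$.

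For part (b), take the standard Markov triangulation of the once-punctured torus: three internal arcs $\alpha,\beta,\gamma$, each a loop at the unique puncture $p$, decomposing $\Sigma$ into two triangles that share the same clockwise cyclic order $(\alpha,\beta,\gamma)$. By Remark \ref{rem:equi} the associated quiver $Q_\Delta$ is the Markov quiver, with double arrows along each edge of the $3$-cycle $\alpha\to\beta\to\gamma\to\alpha$. I would then check that this quiver is incompatible with each of the subquiver pictures for $R1$--$R8$: every pair of arcs sits in two common triangles (ruling out $R1$ and $R2$), every edge of $Q_\Delta$ is a double arrow (ruling out the single-arrow hypothesis of $R3$ and the exactly-one-double-arrow hypothesis of $R4$), and with only three vertices the four-vertex configurations of $R5$--$R8$ are not present. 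For $R9$ at $p$, the key observation is that since $p$ is the only marked point of $\Sigma$, every arc in every triangulation of $\Sigma$ is a loop at $p$; in particular no mutation sequence can strictly decrease the number of loops incident to $p$, so neither the \emph{no-loop} first bullet nor the \emph{mutate all loops away} second bullet of $R9$ can be realized, and $R9$ contributes no relation. Therefore $Br_\Delta$ is free on $T_\alpha,T_\beta,T_\gamma$.

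The most delicate step is the $R9$ clause in (b): the impossibility of reducing the loop count at $p$ uses the global topology of the surface in an essential way. A useful sanity check is the contrast between the two cases: the two annulus triangles carry opposite rotational orientations about $\{\alpha,\beta\}$, yielding a $2$-cycle in $Q_\Delta$, whereas the two torus triangles share the same orientation and yield double arrows. These are structurally different quiver configurations, but both lie outside the scope of every one of the relations in Theorem \ref{th:brgroup}.
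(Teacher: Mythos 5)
Your argument is correct and is essentially the paper's (implicit) proof: the corollary is stated as a direct consequence of Theorem \ref{th:brgroup}, obtained exactly as you do by choosing the two-arc triangulation of the annulus and the Markov triangulation of the once-punctured torus and checking that none of the hypotheses of $R1$--$R9$ is met (with the $R9$ clause in (b) disposed of precisely because every arc in every triangulation of the once-punctured torus is a loop at the unique marked point). One small correction to your closing ``sanity check'': the two annulus triangles contribute arrows in the \emph{same} direction, so $Q_\Delta$ is the Kronecker quiver with a double arrow between $\alpha$ and $\beta$ (exchange matrix of type $\widetilde A_{1,1}$), not a $2$-cycle; this does not affect your main argument, which correctly rules out $R1$ and $R2$ from the geometric condition that $\alpha,\beta$ are common sides of two distinct triangles.
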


\begin{conjecture} $Br_\Delta$ is torsion-free for any triangulation of any $\Sigma\in {\bf Surf}$.
\end{conjecture}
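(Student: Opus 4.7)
The plan is to combine a reduction via the faithful-action conjecture with a case analysis by topological type of $\Sigma$. By Corollary \ref{cor:brinv} it suffices to verify the statement for a single triangulation $\Delta$ per surface, and by the conjectured isomorphism $\pi_\Delta: Br_\Delta \simeq \underline{Br}_\Delta$---established for polygons and polygons with a special puncture in Theorems \ref{th:Br on Sigman} and \ref{th:Br on Sigman1}---I would reduce torsion-freeness of $Br_\Delta$ to that of $\underline{Br}_\Delta \subset Aut(\TT_\Delta)$. Because $\TT_\Delta$ is either free or a torsion-free one-relator group (Theorems \ref{th:Udelta intro} and \ref{th:Udelta reduced intro}), a periodic element of $\underline{Br}_\Delta$ would give rise to a periodic automorphism of such a group, for which there are strong known restrictions coming from Culler--Vogtmann--Zimmermann-type Nielsen realization in $Out(F_n)$ and their one-relator analogues; in many cases such an automorphism must act trivially on a natural train track/JSJ-tree, which directly contradicts the combinatorial formulas for $T_\gamma$.

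For those surfaces where $Br_\Delta$ is identified with a classical Artin group, I would invoke the existing torsion-free results. Theorem \ref{th:Br_n 1}$(a)$--$(d)$ yields respectively $Br_{n-2}$, $Br_{B_{n-1}}$, $Br_{C_{n-1}}$, and $Br_{D_n}$; Theorem \ref{th:Br_n 1}$(e)(f)$ yields the affine braid groups $Br_{\widetilde A_{p+q}}$ and $Br_{\widetilde D_{n+2}}$. Torsion-freeness of finite-type Artin groups is Deligne's classical theorem, via the $K(\pi,1)$-property of the complement of the associated hyperplane arrangement; for the affine types it follows from the recent resolution of the affine $K(\pi,1)$-conjecture by Paolini--Salvetti, together with the finite cohomological dimension of the (compactified) Salvetti complex. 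This takes care of all ``small'' surfaces.

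For a general $\Sigma$ I see two parallel approaches. The first is to build a Salvetti-type complex directly from the presentation in Theorem \ref{th:brgroup}: generators correspond to $1$-cells and the relations $(R1)$--$(R9)$ to $2$-cells, with higher cells added to fill in syzygies; one then aims to show that the universal cover is contractible, which would yield a finite-dimensional classifying space and immediately rule out torsion. The second, pursued in the forthcoming \cite{BHRR}, realizes $\TT_\Sigma$, $\UU_\Sigma$, and $Br_\Sigma$ via a ramified double cover $\widetilde\Sigma \to \Sigma$ with $\underline{Br}_\Sigma$ appearing as a subgroup of the (virtually torsion-free) mapping class group of $\widetilde\Sigma$; torsion would then be ruled out by analyzing the induced action of any hypothetical periodic element on $H_1(\widetilde\Sigma)$ and showing that no composition of our Dehn-twist-like generators $T_\gamma$ can realize a periodic mapping class. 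The main obstacle, as in both approaches, is a general $K(\pi,1)$-type statement: outside the finite and affine ranges the asphericity of the Salvetti complex is a hard open problem, and the mapping-class-group embedding of \cite{BHRR} must first be set up in full generality before torsion-freeness can be read off from the well-developed theory of surface mapping class groups.
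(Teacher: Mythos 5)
This statement is stated in the paper as a conjecture and is given no proof there, so there is nothing to compare your argument against; the only question is whether your proposal actually closes the conjecture, and it does not. You yourself flag the two decisive steps as open: asphericity of a Salvetti-type complex built from the presentation of Theorem \ref{th:brgroup} outside the finite and affine ranges, and the general construction of the mapping-class-group embedding from the forthcoming work. What you have is a correct treatment of the special cases where $Br_\Delta$ is a finite-type or affine-type Artin group (Theorem \ref{th:Br_n 1}, via Deligne and Paolini--Salvetti), plus two programmes for the general case, neither of which is carried out.

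There is also a concrete logical gap in your first reduction. Passing from $Br_\Delta$ to $\underline{Br}_\Delta\subset Aut(\TT_\Delta)$ requires the conjectural isomorphism $\pi_\Delta$ (Conjecture \ref{conj:faithful}), which is only established for polygons and polygons with one special puncture --- precisely the cases already covered by the Artin-group identification. Worse, even granting that reduction, the fact that $\underline{Br}_\Delta$ acts faithfully on a free or torsion-free one-relator group $\TT_\Delta$ does not restrict its torsion: $Aut(F_n)$ and $Out(F_n)$ contain abundant finite-order elements, and Nielsen-realization results for $Out(F_n)$ say such elements \emph{are} realized on trees or graphs rather than forbidding them. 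So the claim that a periodic element ``must act trivially on a natural train track/JSJ-tree, which directly contradicts the combinatorial formulas for $T_\gamma$'' is an assertion, not an argument; one would have to exhibit a specific invariant (e.g.\ the grading, a translation length, or an explicit ping-pong configuration) that the elements of $\underline{Br}_\Delta$ perturb in a way incompatible with finite order, and no such invariant is identified. As it stands the conjecture remains open after your proposal.
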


\begin{remark}
    \begin{enumerate}
        \item[$(R3')$] The relation $R3$ is equivalent to $Cyl(T_\alpha,T_\beta,T_\gamma)$ if additionally $w(\alpha)=w(\beta)=w(\gamma)=1$.
        \item[$(R4')$] The relation $R4$ is equivalent to 
      $\begin{cases}
        T_\alpha {T_\gamma} T_\beta T_\alpha T_\gamma T_\beta= {T_\gamma} T_\beta T_\alpha T_\gamma T_\beta T_\alpha, & \mbox{ if } w(\alpha)=1,\\
        T_\gamma T_\alpha {T_\gamma} T_\beta T_\alpha =T_\alpha {T_\gamma} T_\beta T_\alpha T_\gamma,  & \mbox{ if } w(\alpha)\neq 1.
    \end{cases}$
     \item[$(R5')$] The relation $R5$ is equivalent to $T_\delta T_\gamma T_\beta T_\delta T_\alpha T_\gamma T_\beta T_\alpha=T_\gamma T_\delta T_\alpha T_\gamma T_\beta T_\delta T_\alpha T_\beta$ if additionally $w(\alpha)=w(\delta)=1$.
     \item[$(R8')$] The relation $R8$ is equivalent to $Cyl(T_\alpha,T_\delta,T_\gamma,T_\beta)$ if additionally $w(\alpha)=w(\beta)=w(\gamma)=w(\delta)=1$.
    \end{enumerate}
\end{remark}

Denote $Br^+_\Delta$ the submonoid of $Br_\Delta$  
generated by all $T_{\gamma,\Delta}$ for every non-pending internal edge $\gamma$ of $\Delta$.

\begin{conjecture} 
\label{conj:injective braid monoid group}  For any oriented marked surface $\Sigma$ and any triangulation $\Delta$ of $\Sigma$, the relations $R1$, $R2$, $R3'$, $R4'$, $R5'$ and $R8'$ give a presentation of ${Br}^+_\Delta$. 
\end{conjecture}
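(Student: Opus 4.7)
The plan is to first observe that the relations R1, R2, R3', R4', R5', R8' all hold in $Br^+_\Delta$, since they are precisely the positive-word reformulations of the corresponding relations R1--R5 and R8 in Theorem \ref{th:brgroup}. Letting $M^+_\Delta$ denote the abstract monoid with these generators and relations, we obtain a canonical surjective monoid homomorphism $\pi: M^+_\Delta \twoheadrightarrow Br^+_\Delta$, and the task reduces to showing $\pi$ is injective. Equivalently, we must derive R6, R7, and the puncture relations R9 (in their positive forms) as consequences of R1--R8' inside $M^+_\Delta$.

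The natural first step is to handle "acyclic" triangulations, i.e., those $\Delta$ whose quiver $Q_\Delta$ (Remark \ref{rem:equi}) contains none of the oriented sub-configurations producing R3--R8. By Remark \ref{rem:quivers no special}(b), such a triangulation $\Delta_0$ exists whenever $\Sigma$ is not the annulus with one marked point on each boundary component. For such $\Delta_0$ the monoid $M^+_{\Delta_0}$ is an Artin--Tits monoid with only braid and commutation relations, so by the classical theorem of Brieskorn--Saito (see \cite{P}) it embeds into its group of fractions, which coincides with $Br_{\Delta_0}$ by Theorem \ref{th:brgroup}. The exceptional annulus case is the rank-$2$ situation and is settled directly via Section \ref{sec:rank2alg}.

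The second step is to propagate the result along flips. For $\Delta' = \mu_\alpha \Delta$, Proposition \ref{thm:generate} expresses each $T_{\gamma',\Delta'}$ as either $T_{\alpha,\Delta}$, $T_{\gamma',\Delta}$, or the conjugate $T_{\alpha,\Delta}T_{\gamma',\Delta}T_{\alpha,\Delta}^{-1}$, depending on whether $(\alpha,\gamma')$ is directed clockwise in $\Delta$. In the distinguished flip direction prescribed by the sign coherence of $c$-vectors (governing the $\varphi$ in Definition \ref{def:gro}), only the first two cases occur, yielding a well-defined monoid map $M^+_{\Delta'} \to M^+_\Delta$; one then verifies that R1--R8' pull back to themselves. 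Iterating and combining with the connectivity of the flip graph translates the conjecture from the acyclic $\Delta_0$ to every triangulation.

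The main obstacle lies in this second step. Unlike the ambient group $Br_\Delta$, which is flip-invariant by Corollary \ref{cor:brinv}, the monoid $Br^+_\Delta$ is \emph{not} preserved under arbitrary flips, so one must carefully control positivity using the sign-coherence data. The hardest part is proving that the cyclic puncture relations R9 already follow from R1--R8' in $M^+_\Delta$: since R9 has a ``long cycle'' form reminiscent of affine-type braid identities, its positive derivation requires establishing cancellativity of $M^+_\Delta$ and the existence of common right multiples for any pair of generators, i.e., a uniform Garside-type structure on $M^+_\Delta$ across all $\Delta$. Building this Garside structure in the presence of ordinary, special, and $0$-punctures, where R6--R9 interact nontrivially, is where the core difficulty of the conjecture resides.
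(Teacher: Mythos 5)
The statement you are addressing is stated in the paper as a \emph{conjecture}: the paper offers no proof, only partial evidence, namely that \cite[Theorem 1.1]{P0} together with Theorem \ref{th:Br_n 1} verifies it for the special triangulations of Remark \ref{rem:quivers no special}(b), whose defining relations are only $R1$ and $R2$ and for which the monoid is an honest Artin--Tits monoid. Your ``first step'' reproduces exactly this partial evidence and nothing more. Everything beyond it in your proposal is an outline rather than an argument: you yourself conclude by locating ``the core difficulty of the conjecture'' (cancellativity, common multiples, a Garside-type structure on $M^+_\Delta$) without resolving it. So what you have written cannot be accepted as a proof; at best it is a research plan, and one that would need substantial repair even as a plan.

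Two concrete problems. First, your flip-propagation step fails as stated. Proposition \ref{thm:generate} sends $T_{\gamma',\Delta'}$ to $T_{\gamma,\Delta}T_{\gamma',\Delta}T_{\gamma,\Delta}^{-1}$ whenever $(\gamma,\gamma')$ is \emph{not} directed clockwise in $\Delta$, and for a single flip $\mu_\gamma$ the arcs $\gamma'$ of $\Delta$ generically fall into both cases (clockwise and not), since the case distinction depends on the pair $(\gamma,\gamma')$, not on a global orientation of the flip. Hence no choice of ``distinguished direction'' governed by $c$-vector signs produces a map of \emph{positive} monoids $M^+_{\Delta'}\to M^+_\Delta$; the conjugates are unavoidable, and this is precisely why $Br^+_\Delta$ is not flip-invariant. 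Second, your reduction of injectivity of $\pi$ to ``deriving $R6$, $R7$, $R9$ in positive form from $R1$--$R8'$'' misstates the target. The remark immediately following the conjecture (the free submonoid of $\langle a,s\mid s^2=1\rangle$) is there to explain that $M^+_\Delta$ is expected to be strictly \emph{freer} than $Br_\Delta$, i.e.\ that the positive consequences of $R6$, $R7$, $R9$ should already be consequences of the smaller list, not that those relations themselves (which involve inverses) admit positive derivations. What injectivity actually requires is that \emph{every} equality of positive words holding in $Br_\Delta$ be derivable from $R1$, $R2$, $R3'$, $R4'$, $R5'$, $R8'$; establishing this uniformly across all triangulations, punctures and orbifold points is the open content of the conjecture, and your proposal leaves it entirely open.
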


In fact, \cite[Theorem 1.1]{P0} and Theorem \ref{th:Br_n 1} below verify this conjecture for appropriate triangulations of the following surfaces: $\Sigma_n$, $\Sigma_{n,1}$, $\Sigma_{n,2}$, the disk with one special puncture, the disk with one $0$-puncture and any unpunctured cylinder.

\begin{remark} It can happen that a non-free group can contain a free submonoid. For instance, if $G$ is a group generated by $a,b$ subject to $a=ba^{-1}b$ (i.e., $(a^{-1}b)^2=1$), then $G=\langle a,s\,|\,s^2=1\rangle$ is the free product of the infinite cyclic group $\langle a\rangle$ and the $2$-element group $\langle s\rangle$ (here $s=a^{-1}b$).
Consider the submonoid $M$ of $G$ generated by $a$ and $b=as$. Clearly, $M$ is free and freely generated by $a$ and $b$. This explains why we dropped relations $R6$, $R7$, and $R8$ in Conjecture \ref{conj:injective braid monoid group}.
    
\end{remark}

The following is an immediate consequence of  Proposition \ref{pr:relative braid homomorphism}, Theorem \ref{th:brgroup} and Theorem \ref{thm:quiverbraidgroup}, or by direct calculation.

\begin{corollary}\label{cor:affineatod}
    The assignments $\tau_i\mapsto
    \begin{cases}
    \sigma_0, & \text{ if $i=0$},\\
    \sigma_1^{\sigma_2},  &\text{ if $i=1$},\\
    \sigma_{i+1}, & \text{ if $i=2,3,\cdots,n-2$},\\
    \sigma_{n+1}, & \text{ if $i=n-1$},\\
    \sigma_n^{\sigma_2\sigma_3\cdots \sigma_{n-1}}, & \text{ if $i=n$,}
    \end{cases}
    $ define a group homomorphism from the Artin braid group $Br_{\widetilde A_{n}}$ of type $\widetilde A_n$ to the Artin braid group $Br_{\widetilde D_{n+1}}$ type $\widetilde D_{n+1}$, where $\sigma_0,\sigma_1,\cdots,\sigma_n$ and $\tau_0,\tau_1,\cdots,\tau_{n+1}$ are the standard generators of $Br_{\widetilde A_n}$ and $Br_{\widetilde D_{n+1}}$, respectively, and $x^y:=yxy^{-1}$ for $x,y$ in a group.
\end{corollary}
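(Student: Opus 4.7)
The plan is to realize the claimed map as the concrete description, in Artin generators, of the braid-group homomorphism $f_*:Br_{\Sigma_p^2}\to Br_{\Sigma_{p,2}}$ produced by Proposition~\ref{pr:relative braid homomorphism} applied to the boundary-gluing morphism $f:\Sigma_p^2\to \Sigma_{p,2}$ discussed just before Corollary~\ref{cor:affineatod}, taking $p=n-1$ so that, via Theorem~\ref{thm:quiverbraidgroup}, one has $Br_{\Sigma_p^2}\cong Br_{\widetilde A_{p+1}}=Br_{\widetilde A_n}$ and $Br_{\Sigma_{p,2}}\cong Br_{\widetilde D_{p+2}}=Br_{\widetilde D_{n+1}}$. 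Since $f_*$ is a group homomorphism by construction, it suffices to compute $f_*(T_{\gamma_i})$ for each non-pending internal arc $\gamma_i$ of a suitable triangulation $\Delta$ of $\Sigma_p^2$ and match the results with the claimed formulas; no separate verification of the $\widetilde A_n$-braid relations is needed.

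First I would fix a triangulation $\Delta$ of $\Sigma_p^2$ whose non-pending internal arcs $\gamma_0,\dots,\gamma_n$ are ordered so that the quiver $Q_\Delta$ is the $\widetilde A_n$-cycle and the isomorphism $Br_\Delta\cong Br_{\widetilde A_n}$ of Theorem~\ref{th:brgroup} sends $T_{\gamma_i}\mapsto \tau_i$. Under $f$, the two boundary arcs of the $2$-point boundary are identified and the marked points on that boundary become the two punctures of $\Sigma_{p,2}$; the resulting tagged triangulation $f(\Delta)$ has $Q_{f(\Delta)}$ equal to the $\widetilde D_{n+1}$-diagram, whose chain $\sigma_2,\dots,\sigma_{n-1}$ arises from the ``middle'' arcs of $\Delta$ and whose two forks $\{\sigma_0,\sigma_1\}$ and $\{\sigma_n,\sigma_{n+1}\}$ arise from pairs of arcs near the two punctures.

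Next I would compute $f_*(T_{\gamma_i})$ arc by arc. For the chain arcs $\gamma_i$ with $2\le i\le n-2$, the image $f(\gamma_i)$ is already a non-pending non-self-folded arc of $f(\Delta)$, and applying $f_*$ to $T_{\gamma_i}=h_{\Delta,\mu_{\gamma_i}\Delta}h_{\mu_{\gamma_i}\Delta,\Delta}$ returns the corresponding generator $\sigma_{i+1}$ directly. Similarly, the extremal arcs $\gamma_0$ and $\gamma_{n-1}$ of $\Delta$ map to ordinary arcs at one of the two forks, producing $\sigma_0$ and $\sigma_{n+1}$ respectively. The nontrivial cases are the two remaining arcs $\gamma_1$ and $\gamma_n$: the flipped arcs in $\Sigma_p^2$ correspond, under $f$, not to single flips of $f(\Delta)$ but to arcs several elementary flips away from the relevant fork generator. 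Unfolding this distance via Proposition~\ref{thm:generate}, which controls exactly how $T_{\gamma'}$ at a flipped arc decomposes into a conjugate of a generator at the original triangulation, yields the conjugate images $\sigma_1^{\sigma_2}$ and $\sigma_n^{\sigma_2\sigma_3\cdots\sigma_{n-1}}$ predicted in the statement.

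The main obstacle is the conjugation bookkeeping for these two fork pieces: one must verify that, inside $f(\Delta)$, the flip-path from the fork arc representing $\sigma_1$ (respectively $\sigma_n$) to the arc $f(\mu_{\gamma_1}\gamma_1)$ (respectively $f(\mu_{\gamma_n}\gamma_n)$) is exactly the word $\sigma_2$ (respectively $\sigma_2\sigma_3\cdots\sigma_{n-1}$), with the correct orientation so that each step of Proposition~\ref{thm:generate} contributes the conjugation rather than leaving the generator fixed. As a cross-check, one can alternatively carry out the \emph{direct calculation} alluded to in the statement: verify by hand that the image assignment respects the standard braid and commutation relations of $Br_{\widetilde A_n}$ inside $Br_{\widetilde D_{n+1}}$, a computation involving only four adjacent $\sigma_j$'s at a time and amounting to manipulations already available from the $\widetilde D_{n+1}$-braid relations.
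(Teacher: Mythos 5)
Your proposal is correct and takes essentially the same route as the paper: the paper offers no written proof beyond declaring the corollary "an immediate consequence of Proposition \ref{pr:relative braid homomorphism}, Theorem \ref{th:brgroup} and Theorem \ref{thm:quiverbraidgroup}, or by direct calculation," which is exactly the combination you describe (realize the map as $f_*$ for the boundary-gluing $\Sigma_p^2\to\Sigma_{p,2}$ with $p=n-1$, identify the source and target via Theorem \ref{th:brgroup}, and track the generators through the flip/conjugation formulas, with the direct relation-check as a fallback). Your arc-by-arc computation and the conjugation bookkeeping for the two fork generators simply flesh out details the paper leaves implicit.
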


\begin{figure}[ht]
\includegraphics[width=10cm]{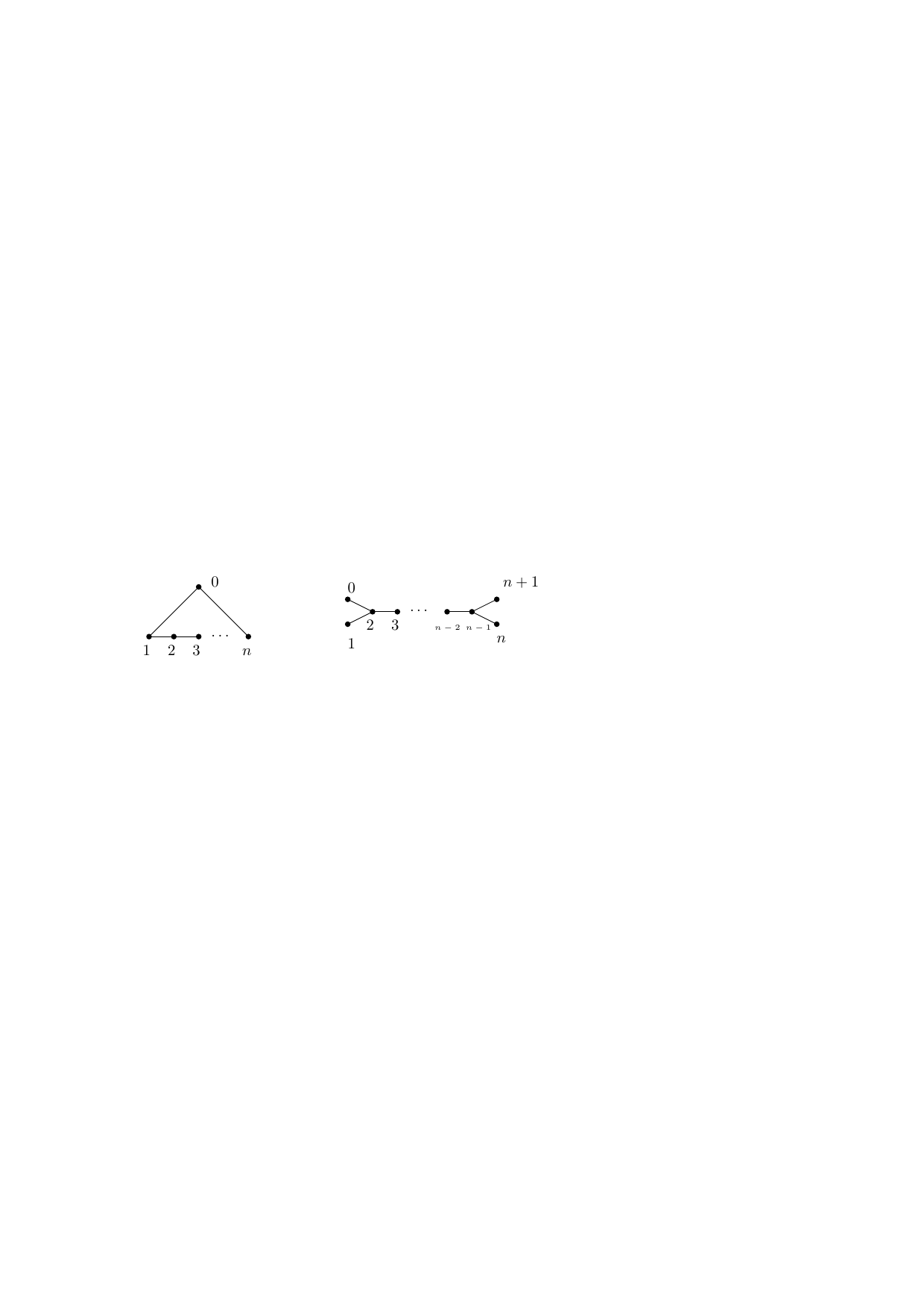}
\caption{Dynkin diagram of type $\widetilde A_n$ and $\widetilde D_{n+1}$}
\end{figure}

\subsection{Cluster braid groups of finite types and their symmetries}
\label{subsec:braid groups of finite types}

The following result is an immediate corollary of Theorem \ref{th:brgroup}.

\begin{theorem} 
\label{th:Br_n 1} 
$(a)$ $Br_{\Sigma_n}\cong Br_{n-2}$, $Br_{n-2}$ is the standard braid group, for the unpunctured disk with $n$ marked boundary points, $n\ge 4$.

$(b)$ $Br_\Sigma\cong Br_{C_{n-1}}$, the Artin group of type $C_{n-1}$, for the disk with $n$ boundary marked points and one special puncture.

$(c)$ $Br_{\Sigma_{n,1}}\cong Br_{D_n}$, the Artin braid group of type $D_n$.

$(d)$ $Br_\Sigma\cong Br_{B_{n-1}}$, the Artin group of type $B_{n-1}$, for the disk with $n$ boundary marked points and one $0$-puncture.

$(e)$ $Br_{\Sigma_{n-2,2}}\cong Br_{\widetilde D_n}$, the Artin braid group of type $\widetilde D_n$, for the $(n-2)$-gon with two punctures.

$(f)$ $Br_{\Sigma}\cong Br_{\widetilde A_{p+q-1}}$, the Artin braid group of type $\widetilde A_{p+q-1}$, for the unpunctured cylinder $\Sigma$ with $p$ points on one boundary and $q$ points on another. 
\end{theorem}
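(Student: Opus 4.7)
The plan is to deduce all six parts as immediate corollaries of the general presentation in Theorem~\ref{th:brgroup}. In each case I would pick a convenient triangulation $\Delta$ of the given surface (the isomorphism class of $Br_\Delta$ is independent of this choice by Corollary~\ref{cor:brinv}), check that the non-pending internal edges of $\Delta$ biject onto the nodes of the claimed Dynkin diagram, and unpack the relations R1--R9 to verify that they reduce exactly to the standard Artin relations of the corresponding type.

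For the finite-type disk cases $(a)$, $(b)$, $(d)$, the fan triangulation rooted at one boundary vertex suffices. In $(a)$ the non-pending internal edges are the $n-3$ weight-$1$ diagonals $(1,k)$, $k=3,\dots,n-1$: consecutive diagonals share a triangle and trigger R2 (giving the length-$3$ braid relation), non-consecutive diagonals share no triangle and trigger R1 (commutation), while R3--R9 never fire because no three diagonals bound a triangle and no puncture is present; this is exactly the Artin presentation of $Br_{n-2}$. Case $(b)$ is obtained by adding the special loop $\ell$ at vertex~$1$, whose weight is $|p|\ge 2$; the only new incidence is between $\ell$ and its unique adjacent diagonal, and since their weights differ R2 gives the length-$4$ relation $T_\ell T_\gamma T_\ell T_\gamma=T_\gamma T_\ell T_\gamma T_\ell$, thereby attaching a type-$C$ node to the chain and yielding $Br_{C_{n-1}}$. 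Case $(d)$ is identical but with the enclosing loop around the $0$-puncture of weight $\tfrac12$; the same mixed-weight R2 produces the length-$4$ relation and hence $Br_{B_{n-1}}$.

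For the once-punctured disk case $(c)$ I would use the central star triangulation of Example~\ref{ex:Dn group}: its $n$ arcs $(0,i)$ give generators $T_i$, R1 and R2 produce the cyclic commutation/length-$3$-braid relations around the puncture, and the single ordinary-puncture relation R9 supplies $\mathrm{Cyl}(T_1,\dots,T_n)$. Identifying this cyclic presentation with the standard $Br_{D_n}$ is classical, see \cite[Thm.~2.12]{BM} or \cite[Def.~10.1]{Q}. For the affine cases $(e)$ and $(f)$ I would pick triangulations whose adjacency quivers (in the sense of Remark~\ref{rem:equi}) are the extended Dynkin diagrams $\widetilde D_n$ and $\widetilde A_{p+q-1}$, respectively; such triangulations exist by the correct count of internal arcs ($n+1$ in $\Sigma_{n-2,2}$ and $p+q$ in the cylinder). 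For $(f)$ the absence of punctures restricts the live relations to R1 and R2 by Remark~\ref{rem:quivers no special}, recovering the $\widetilde A_{p+q-1}$ Artin presentation verbatim; for $(e)$ the puncture relations R8 and R9 close the diagram at the two affine nodes attached to the punctures.

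The main obstacle is the final step in $(c)$ and $(e)$, where one must check that R9 (combined with R1 and R2) yields \emph{exactly} the standard Artin presentation of type $D_n$ or $\widetilde D_n$, rather than a non-equivalent quotient. In both instances the equivalence is a combinatorial Tietze transformation that has already been worked out in the quiver-braid literature \cite{BM,Q,QZ} and can be imported directly; the main bookkeeping concerns the signs and cyclic ordering assigned to the arcs incident to each puncture, which must be tracked consistently to match the standard $D$-type Dynkin embedding.
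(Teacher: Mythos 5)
Your proposal matches the paper's proof: the paper states this theorem as an immediate corollary of the presentation in Theorem~\ref{th:brgroup}, obtained by choosing suitable triangulations (acyclic/fan triangulations for the disk cases, the central star for $\Sigma_{n,1}$ as in Example~\ref{ex:Dn group}, and quiver-shaped triangulations for the affine cases) and reading off that R1--R9 reduce to the standard Artin relations. Your additional care about matching the cyclic R9 presentation of type $D_n$/$\widetilde D_n$ with the standard one via \cite{BM,Q} is exactly the identification the paper imports as well.
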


\begin{remark}
\label{rem:acyclic braid type A}
 We say that a triangulation $\Delta$ of $\Sigma_n$ is acyclic if any triangle of $\Delta$ has a boundary edge.  Then one can show that there is an ordering $\gamma_1,\ldots,\gamma_{n-3}$ of diagonals of $\Delta$ such that the generators $T_i:=T_{\gamma_i}$ of $Br_{\Delta}$ are subject to the standard braid relations.
\end{remark}

\begin{remark} 
\label{rem:Br_4 exceptional}
$Br_{\Sigma_6}\cong Br_{\Sigma_{3,1}}\cong Br_4$, however, we expect that this is the only exceptional isomorphism $Br_\Sigma\cong Br_{\Sigma'}$.
    
\end{remark}

Note that ${\mathcal A}_n={\mathcal A}_{\Sigma_n}$ has a dihedral group $I_2(n)\subset S_n$ of automorphisms so that $\sigma\in I_2(n)$ acts through
$x_{ij}\mapsto 
x_{\sigma(i),\sigma(j)}$.

\begin{theorem}
\label{th:symmetries of n-gon}
$(a)$ Suppose that $\sigma\in \Gamma_\Delta$ reverses the orientation of $|\Delta|$. Then $\sigma$ induces an outer automorphism $T_\gamma\mapsto T_{\sigma(\gamma)}^{-1}$ of $Br_{\Delta}$.

$(b)$ Let $\Delta$ be a triangulation of $\Sigma_{3n}$ which is invariant under rotation $\sigma$ by $\frac{2\pi}{3}$ (e.g., $\Delta=\Delta_0=\{(kn+1,kn+i),(kn+i,kn+1)\mid 0\leq k\leq 2, 3\leq i\leq n+1\}\cup \{\text{boundary arcs}\}$). Then  $\sigma$ induces an inner automorphism $\sigma$ of $Br_\Delta\cong Br_{3n-2}$. Moreover, if $\Delta=\Delta_0$ then $\sigma$ is given by $T_\gamma\mapsto T_{\sigma(\gamma)}=\tau T_\gamma\tau^{-1}$, where 
$$\tau=[(T_{13}T_{2n+1,2n+3}T_{n+1,n+3})(T_{14}T_{2n+1,2n+4}T_{n+1,n+4})\cdots (T_{1,n+1}T_{2n+1,1}T_{n+1,2n+1})]^{n-1}.$$ 
In particular, the center $C(Br_\Delta)$ of $Br_\Delta$ is a cyclic group generated by $\tau^3$.

$(c)$ Let $\Delta$ be a triangulation of $\Sigma_{3n}$ which is invariant under rotation $\sigma$ by $\frac{2\pi}{3}$ (e.g., $\Delta=\Delta_0=\{(kn+1,kn+i),(kn+i,kn+1)\mid 0\leq k\leq 2, 3\leq i\leq n+1\}\cup \{\text{boundary arcs}\}$). Then  $\sigma$ induces an inner automorphism $\sigma$ of $Br_\Delta\cong Br_{3n-2}$. Moreover, if $\Delta=\Delta_0$ then $\sigma$ is given by $T_\gamma\mapsto T_{\sigma(\gamma)}=\tau T_\gamma\tau^{-1}$, where 
$$\tau=[(T_{13}T_{2n+1,2n+3}T_{n+1,n+3})(T_{14}T_{2n+1,2n+4}T_{n+1,n+4})\cdots (T_{1,n+1}T_{2n+1,1}T_{n+1,2n+1})]^{n-1}.$$ 
In particular, the center $C(Br_\Delta)$ of $Br_\Delta$ is a cyclic group generated by $\tau^3$.

$(c)$ In the notation of Example \ref{ex:Dn group}, 
let  $\sigma$ be the rotation of $\Sigma_{n,1}$ by $\frac{2\pi}{n}$. If $n$ is odd then 
$\sigma$ induces an inner automorphism of $Br_\Delta$ given by $T_\gamma\mapsto T_{\sigma(\gamma)}=\tau T_\gamma\tau^{-1}$, 
where 
$\tau=T^{\frac{n-1}{2}}.$  

\end{theorem}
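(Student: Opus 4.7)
The plan splits into the orientation-reversing case (a), which I handle by pure functoriality, and the orientation-preserving cases (b) and (c), which require producing explicit inner conjugators.

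For (a), Corollary~\ref{cor:Gamma-action} applied to $\sigma$ gives a contravariant self-equivalence $\sigma_\ast$ of ${\bf TSurf}_{|\Delta|}^t$ fixing the object $\Delta$. Composing with the contravariant automorphism $\overline{\,\cdot\,}$ of Lemma~\ref{le:orientation-reversal functor} yields a covariant auto-equivalence still fixing $\Delta$, and hence an honest automorphism of $Br_\Delta$. Evaluating on a generator $T_\gamma = h_{\Delta,\mu_\gamma\Delta}\,h_{\mu_\gamma\Delta,\Delta}$ and applying the substitution $h_{\Delta',\Delta}\mapsto h_{\overline{\Delta},\overline{\Delta'}}^{-1}$ produces $T_\gamma\mapsto T_{\sigma(\gamma)}^{-1}$. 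To conclude that this involution is outer, I pass to $Br_\Delta^{ab}$: by the braid relations (R2) of Theorem~\ref{th:brgroup}, adjacent generators in the quiver $Q_\Delta$ of Remark~\ref{rem:equi} acquire equal images in the abelianization, so whenever $Q_\Delta$ is connected $Br_\Delta^{ab}$ is infinite cyclic and each $T_\gamma$ has a nonzero image; the involution then acts as $-1$ on $Br_\Delta^{ab}$, while any inner automorphism acts trivially there.

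For parts (b) and (c), Corollary~\ref{cor:Gamma-action} supplies the automorphism of $Br_\Delta$ induced by the rotation, and by Theorem~\ref{th:braid generation} it suffices to verify $\tau\,T_\gamma\,\tau^{-1} = T_{\sigma(\gamma)}$ on each generator. For (b), the three factors of every triple block $B_i := T_{1,i}T_{2n+1,2n+i}T_{n+1,n+i}$ are arcs living in pairwise-disjoint sectors of $\Delta_0$ and therefore commute by (R1); the cross-sector commutations let me factorise $\Pi := B_3 B_4 \cdots B_{n+1}$ as a product of three ``sector-Coxeter'' elements of type $A_{n-1}$, and the classical identity $(s_1\cdots s_{n-1})^{n-1}s_j = s_{j+1}(s_1\cdots s_{n-1})^{n-1}$, applied in each sector, is exactly what implements the rotation after taking the $(n-1)$-st power. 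The verification is by induction on $j$, combining (R1) and (R2) locally. For (c), working inside the presentation of Example~\ref{ex:Dn group}, a direct computation using (R1), (R2) and the ``ordinary puncture'' relation (R9) shows that conjugation by $T = T_1T_2\cdots T_n T_1\cdots T_{n-2}$ shifts indices of the $T_i=T_{0i}$ by $2$; hence conjugation by $T^{(n-1)/2}$ shifts them by $n-1\equiv -1\pmod{n}$ when $n$ is odd, matching $\sigma^{\pm 1}$ (one replaces $\tau$ by $\tau^{-1}$ if the orientation of $\sigma$ demands it).

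The most delicate point is the centre claim in (b): that $\tau^3$ generates $C(Br_\Delta)$. Since $Br_\Delta\cong Br_{3n-2}$ has centre infinite cyclic, generated by the full twist $\Delta_{\mathrm g}^2$, we already know that $\tau^3 = \Delta_{\mathrm g}^{\pm 2k}$ for some $k\ge 1$, and the task is to pin down $k=1$. Geometrically, under the identification $Br_{3n-2}\cong \mathrm{MCG}(\Sigma_{3n},\partial)$, $\tau$ represents rotation of $\Sigma_{3n}$ by $\tfrac{2\pi}{3} = \tfrac{2\pi}{3n}\cdot n$, so $\tau^3$ represents the $3n$-fold iterate of rotation by $\tfrac{2\pi}{3n}$, which is precisely the boundary Dehn twist and thus a generator of the centre. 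The main obstacle is transferring this mapping-class-group statement into the purely combinatorial ${\bf TSurf}$ framework; I would do so by exhibiting an explicit sequence of flips whose composite horizontal morphism equals $\tau^3$ and whose underlying isotopy class is a full rotation of the $3n$-gon, and then invoke the classical identification of that particular braid with $\Delta_{\mathrm g}^2$.
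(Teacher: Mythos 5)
Your treatment of (a) and (c) is essentially the paper's: for (a) the paper kills innerness by observing that $T_\gamma\mapsto T_{\sigma(\gamma)}^{-1}$ reverses the natural degree (the homomorphism $Br_\Delta\to\mathbb Z$, $T_\gamma\mapsto 1$, which exists because every relation $R1$--$R9$ is homogeneous), while inner automorphisms preserve it; your abelianization argument is the same idea in different clothing. For (c) the paper likewise derives the index shift from $R9$ and takes the $\tfrac{n-1}{2}$-th power. The problems are in (b).

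First, your factorization step is based on a false commutation claim. The three factors of the last block $B_{n+1}=T_{1,n+1}T_{2n+1,1}T_{n+1,2n+1}$ are the three sides of the central triangle $(1,n+1,2n+1)$ of $\Delta_0$, so by $R2$ they pairwise satisfy braid relations and do \emph{not} commute; consequently $\Pi$ does not split as a product of three mutually commuting type-$A_{n-1}$ Coxeter elements, and the sector-by-sector application of $(s_1\cdots s_{n-1})^{n-1}s_j=s_{j+1}(s_1\cdots s_{n-1})^{n-1}$ does not go through. This is exactly where the computation is delicate: the paper's proof replaces the last block by $\sigma_{n+1}=T_{1,n+1}T_{2n+1,1}T_{n+1,2n+1}T_{1,n+1}$ and verifies $\sigma_{[3,n+1]}T_{1,i}=T_{1,i+1}\sigma_{[3,n+1]}$ and $\sigma_{[3,n+1]}^2T_{1,n}=T_{n+1,n+3}\sigma_{[3,n+1]}^2$ by a direct manipulation that genuinely uses the braid and $R3$ relations at the central triangle.

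Second, the centre claim is not proved. You correctly reduce it to showing $\tau^3=\Delta_{\mathrm g}^{2}$ rather than a higher power, but the route you propose--identifying $\tau$ with a rotation in the mapping class group and $\tau^3$ with the boundary Dehn twist--rests on an identification of the combinatorially defined $Br_\Delta=Aut_{{\bf TSurf}^t_\Sigma}(\Delta)$ with a mapping class group that is nowhere established in this framework, and you explicitly leave the transfer ("exhibit an explicit sequence of flips\dots") as a to-do. The missing idea is much more elementary: since $\phi^3=\mathrm{id}$, $\tau^3$ is central, hence equals $\Delta_{\mathrm g}^{2k}$ for some $k$; applying the degree homomorphism, $\deg\Delta_{\mathrm g}^{2}=(3n-2)(3n-3)$ while $\deg\tau^3=3(n-1)(3n-2)$, which coincide, forcing $k=1$. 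This length comparison is how the paper concludes, and it closes the gap without any appeal to mapping class groups.
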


We expect $\sigma$ induces an outer automorphism of $Br_\Delta$ in the case $n$ is even in part $(c)$.

We will prove Theorem \ref{th:symmetries of n-gon} in Section \ref{sec:symmetries of n-gon}.

In particular, $Br_4$ has an automorphism $\sigma$ of order $3$ given by $\sigma(T_{ij})=T_{i+3,j+3}$ (both indices are modulo $6$). However, according to Dyer-Grossman theorem (\cite{DG}), all automorphisms of odd order of $Br_n$, $n\ge 3$,  must be inner as in Theorem \ref{th:symmetries of n-gon}(b), which is quite surprising (we could not find this result in the literature and obtained it only by looking at invariant triangulations of the $3n$-gons).

For example, in the hexagon, let $\tau=T_1T_2T_3T_1=\tau_1\tau_2(\tau_2^{-1}\tau_3\tau_2)\tau_1=\tau_1\tau_3\tau_2\tau_1$. Then we have $\tau^3=(\tau_1\tau_2\tau_3)^4$.

\begin{figure}[ht]
\includegraphics{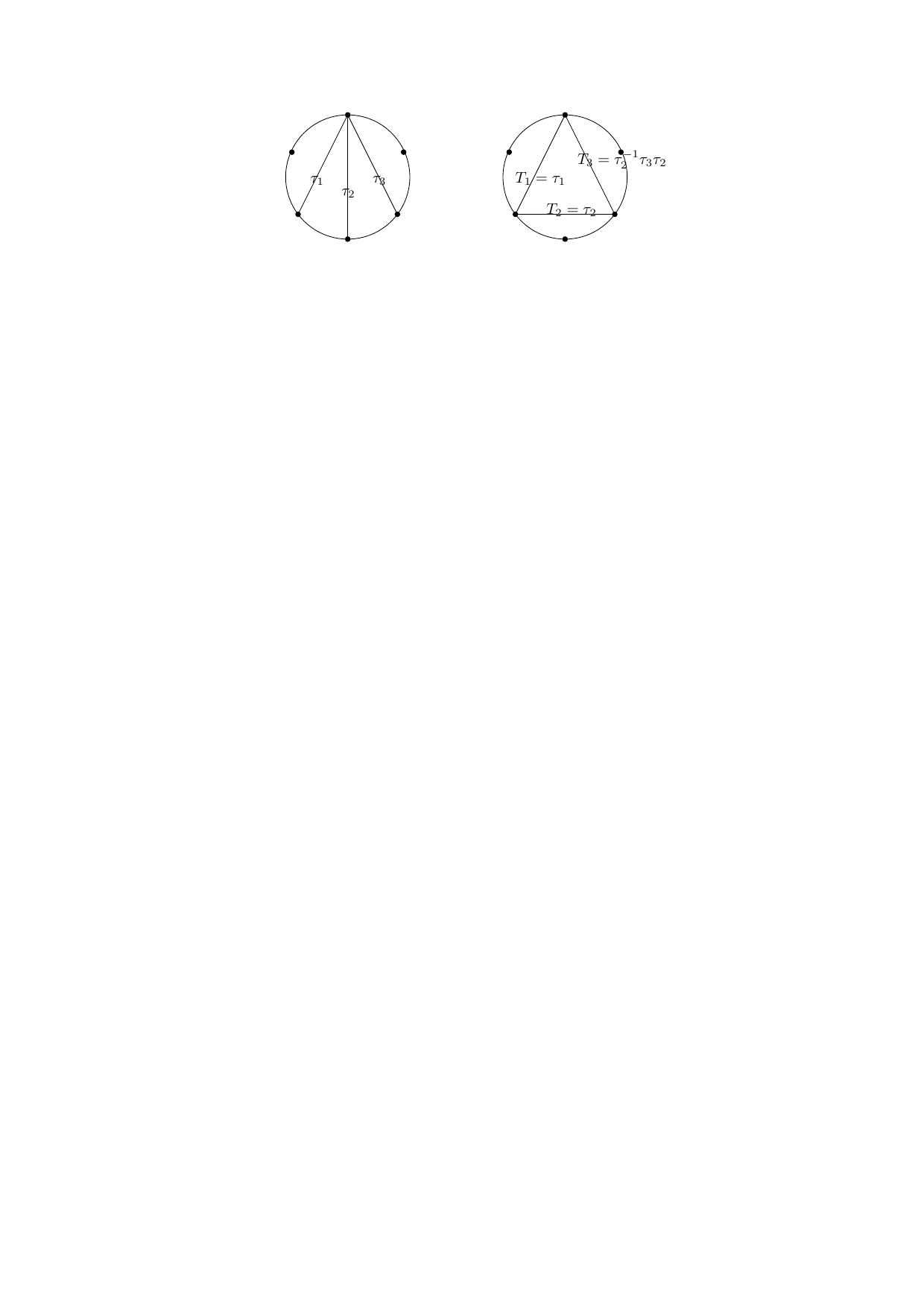}
\caption{}
\end{figure}

\begin{proposition}\label{prop:BtoA}
For any $k,n\geq 2$, let $\sigma$ be the clockwise rotation of $\Sigma_{kn}$ by $2\pi/k$, and let $f_{\sigma}:\Sigma_{kn}\to \Sigma_{kn}/\sigma$ be the quotient map. Consider the $\sigma$-invariant triangulation $\Delta$ of $\Sigma_{kn}$ defined as: $\Delta=\{(sn+1,sn+i), (sn+i,sn+1), (1,tn+1), (tn+1,1)\mid 0\leq s\leq k-1, 2\leq i\leq n+1, 1\leq t\leq k\}\cup \{\text{boundary arcs}\}$. Denote 

$\bullet$ $\tau_i=T_{1,i}T_{n+1,n+i}\cdots T_{(k-1)n+1,(k-1)n+i}$ for $i=3,4,\cdots,n$,

$\bullet$
$\rho=T_{1+n,1+2n}T_{1+2n,1+3n}\cdots T_{1+(k-2)n,1+(k-1)n}$,

$\bullet$
$\phi=T_{1,n+1}T_{1,2n+1}\cdots T_{1,(k-1)n+1}$,

$\bullet$ $\tau_{n+1}=\rho\phi\rho$.

Then we have
$$Br_\Delta^{f_\sigma}\supseteq 
     \langle \tau_i\mid i=3,\cdots,n+1\rangle 
     \cong Br_{C_{n-1}}\cong Br_{\Sigma_{kn}}/\sigma.$$
    In particular, there is an embedding $Br_{C_{n-1}}\hookrightarrow Br_{kn-2}$ for any $k,n\geq 2$.
 \end{proposition}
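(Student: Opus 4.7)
The strategy is to exhibit the subgroup $\langle \tau_3,\ldots,\tau_{n+1}\rangle$ as a split subgroup of $Br_\Delta^{f_\sigma}$ by constructing a homomorphism $Br_{C_{n-1}}\to Br_\Delta^{f_\sigma}$ via $\tau_i$, and then using the functorial map $f_{\sigma *}$ from Lemma \ref{le:relative homomorphism} to produce a retraction that realises $\pi:Br_{C_{n-1}}\twoheadrightarrow \langle\tau_i\rangle$ as an isomorphism.

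\textbf{Step 1 (admissibility).} First I would check that each $\tau_i$ lies in $Br_\Delta^{f_\sigma}$. For $i=3,\ldots,n$, the $k$ edges $(sn+1,sn+i)$, $s=0,\ldots,k-1$, form a single $\sigma$-orbit of pairwise non-adjacent internal arcs of $\Delta$; by relation $R1$ of Theorem \ref{th:brgroup} the corresponding $T_{sn+1,sn+i}$ commute, and their product realises a simultaneous flip of the whole $\sigma$-orbit. The output is again $\sigma$-invariant, so $f_\sigma$-admissible, and $\tau_i\in Br_\Delta^{f_\sigma}$. For $i=n+1$, the word $\rho\phi\rho$ is an ordered composition of flips in the central $k$-gon spanned by $1,n+1,\ldots,(k-1)n+1$; its underlying sequence of $f_\sigma$-flips (in the sense of Lemma \ref{le:f-admissible triangulations} and the surrounding discussion) assembles into the unique orbifold flip at the special loop in $f_\sigma(\Delta)$, giving $\tau_{n+1}\in Br_\Delta^{f_\sigma}$.

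\textbf{Step 2 (image under $f_{\sigma *}$).} Lemma \ref{le:relative homomorphism}(a) supplies a homomorphism $f_{\sigma *}:Br_\Delta^{f_\sigma}\to Br_{f_\sigma(\Delta)}$; by Theorem \ref{th:Br_n 1}(b), $Br_{f_\sigma(\Delta)}\cong Br_{C_{n-1}}$, with canonical generators $T_{(1,i)}$ ($i=3,\ldots,n$) together with $T_\ell$, where $\ell$ is the special loop in $f_\sigma(\Delta)$. The central computation is
\begin{equation*}
f_{\sigma *}(\tau_i)=T_{(1,i)}\ \ (i=3,\ldots,n),\qquad f_{\sigma *}(\tau_{n+1})=T_\ell.
\end{equation*}
The first identity follows because each factor $T_{sn+1,sn+i}$ of $\tau_i$ is an $f_\sigma$-flip over the single flip at $(1,i)$, and because these factors commute in $Br_\Delta$ and project to the same element. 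The second identity requires tracking that the ordered sequence of ordinary flips encoded by $\rho\phi\rho$ in $\Sigma_{kn}$ descends to exactly one orbifold flip at $\ell$ in $\Sigma_{kn}/\sigma$, reflecting that $|\ell|=k$ under $f_\sigma$.

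\textbf{Step 3 (relations and conclusion).} Next I would verify that $\tau_3,\ldots,\tau_{n+1}$ satisfy the Artin relations of type $C_{n-1}$ inside $Br_\Delta$. The adjacency pattern of the $\sigma$-orbits of edges used for the $\tau_i$ mirrors the Dynkin diagram of $C_{n-1}$: each commutation or three-term braid relation among $\tau_3,\ldots,\tau_n$ reduces, via commutativity of generators in disjoint orbits, to the corresponding relation of Theorem \ref{th:brgroup} ($R1$ or $R2$) applied inside a single sector, while the four-term relation between $\tau_n$ and $\tau_{n+1}$ is obtained by packaging the relations $R2$--$R4$ across adjacent sectors. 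This yields a surjection $\pi:Br_{C_{n-1}}\twoheadrightarrow\langle\tau_3,\ldots,\tau_{n+1}\rangle$; the composite $f_{\sigma *}\circ\pi$ is the identity on generators by Step 2, hence $\pi$ is an isomorphism. Combined with $Br_\Delta\cong Br_{kn-2}$ from Theorem \ref{th:Br_n 1}(a), this gives the embedding $Br_{C_{n-1}}\hookrightarrow Br_{kn-2}$.

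\textbf{Main obstacle.} The hard part is the geometric identification $f_{\sigma *}(\tau_{n+1})=T_\ell$ in Step 2: one must match the specific ordered word $\rho\phi\rho$ with a single orbifold flip at a special puncture of order $k$, which is a genuinely non-trivial matching of an ordered sequence of $f_\sigma$-flips with the $k$-to-$1$ branching of $f_\sigma$ over the puncture. A close second is verifying the four-term relation $\tau_n\tau_{n+1}\tau_n\tau_{n+1}=\tau_{n+1}\tau_n\tau_{n+1}\tau_n$, whose proof requires keeping careful track of how flips that live inside the central $k$-gon interact with those spanning the sectors adjacent to it.
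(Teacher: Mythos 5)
Your overall architecture matches the paper's: show the $\tau_i$ lie in $Br_\Delta^{f_\sigma}$, verify the type-$C_{n-1}$ Artin relations to get a surjection $\pi\colon Br_{C_{n-1}}\twoheadrightarrow\langle\tau_3,\dots,\tau_{n+1}\rangle$, and then use descent to the quotient orbifold to invert $\pi$. (The paper realizes the last step by letting the $\tau_i$ act on the quotient triangle group $\TT_{f_\sigma(\Delta)}$ as the standard generators and invoking faithfulness from Theorem \ref{th:Br on Sigman1}, rather than your retraction $f_{\sigma*}\circ\pi=\mathrm{id}$ through $Br_{f_\sigma(\Delta)}$; both are logically sound, and your left-inverse formulation is arguably cleaner once the generator computation is in place. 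One small correction: the individual factors $T_{sn+1,sn+i}$ are not themselves in $Br_\Delta^{f_\sigma}$ and do not each "project to $T_{(1,i)}$" — that would give $T_{(1,i)}^k$; the correct mechanism is that the full product equals the two-cycle $h_{\Delta,\Delta_i}h_{\Delta_i,\Delta}$ for the simultaneous orbit flip $\Delta_i$, which descends to a single flip.)

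There are, however, two genuine gaps at exactly the places you flag as obstacles, and for one of them your proposed method diverges from what actually works. First, the identification of $\tau_{n+1}=\rho\phi\rho$ with a two-cycle $h_{\Delta,\Delta'}h_{\Delta',\Delta}$ for an explicit $f_\sigma$-admissible $\Delta'$ (equivalently, that it descends to the single orbifold flip at the special loop) is not something that "assembles" formally: the paper proves it as Lemma \ref{lem:2kgon} by an explicit computation of the action of $\rho\phi\rho$ on the generators $t_{1,2i+1}$, $t_{2i-1,2i+1}$ of the triangle group of the central $2k$-gon, matched against $h_{\Delta_0,\Delta_0'}h_{\Delta_0',\Delta_0}$. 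Without this computation your Step 2 has no content. Second, your plan to derive the four-term relation $\tau_n\tau_{n+1}\tau_n\tau_{n+1}=\tau_{n+1}\tau_n\tau_{n+1}\tau_n$ by "packaging relations $R2$--$R4$" of Theorem \ref{th:brgroup} is not how the paper proceeds and is unlikely to be tractable: $\tau_{n+1}$ is a word of length $2k-3$ in the $T_\gamma$, and no direct derivation from the presentation is attempted. Instead the paper computes the actions of both sides on every generator $t_{1,\ell n+1}$, $t_{\ell n+1,(\ell+1)n+1}$, $t_{\ell n+1,(\ell+1)n}$ of $\TT_\Delta$ (again via Lemma \ref{lem:2kgon}) and then concludes equality in $Br_\Delta$ from the faithfulness of the $Br_\Delta$-action on $\TT_\Delta$ for polygons (Theorem \ref{th:Br on Sigman}). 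You would need to either import that faithfulness argument or supply a presentation-theoretic derivation that you have not sketched.
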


We prove Proposition \ref{prop:BtoA} in Section \ref{sec:proof of prop:BtoA}.

\begin{example}
For a $3n$-gon $\Sigma_{3n}$, let $\sigma$ be the clockwise rotation of $\Sigma_{3n}$ by $2\pi/3$. Then $\Sigma_{3n}/\sigma$ is an $n$-gon with a special puncture of order $3$. 

Let $\Delta=\{(1,i),(i,1), (n+1,n+i),(n+i,n+1),(2n+1,2n+i),(2n+i,2n+1)\mid i=3,4,\cdots, n+1\}\cup\{\text{boundary arcs}\}$. Then $\Delta$ is invariant under $\sigma$. Thus, $\sigma$ induces automorphisms of $T_{\Delta}$ and $Br_{\Delta}$. By abuse of notation, we still denote these induced automorphisms by $\sigma$. For any $i\in \{3,4,\cdots, n\}$, denote $\sigma_i=T_{1,i}T_{2n+1,2n+i}T_{n+1,n+i}$ and
$\sigma_{n+1}=T_{1,n+1}T_{2n+1,1}T_{n+1,2n+1}T_{1,n+1}.$ 

Then $Br_{\Delta}^{\sigma}\supseteq \langle \sigma_i\mid i=3,\cdots,n\rangle\cong Br_{\Sigma/\sigma}$. 
\end{example} 


\begin{theorem}\label{thm:inject}
    The natural homomorphisms $\iota:Br_{n}\to Br_{D_{n}}$ and $\iota:Br_{n}\to Br_{\widetilde A_{n}}$ are injective.
\end{theorem}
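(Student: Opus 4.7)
The plan is to identify both $\iota$'s with the standard parabolic inclusions of Artin groups and then to invoke Van der Lek's classical theorem on injectivity of parabolic subgroups.

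First I would identify the homomorphisms. By Theorem \ref{th:Br_n 1}, the groups $Br_n$, $Br_{D_n}$ and $Br_{\widetilde A_n}$ are canonically isomorphic to the Artin braid groups of types $A_{n-1}$, $D_n$ and $\widetilde A_n$ respectively. The Dynkin diagram $A_{n-1}$ sits inside $D_n$ as the parabolic sub-diagram obtained by removing the branching node, and inside $\widetilde A_n$ by removing any two adjacent nodes of the affine cycle. In each case the natural homomorphism $\iota$ is the one sending standard generators of $Br_n$ to the corresponding standard generators. Van der Lek's theorem asserts that for any Coxeter system $(W,S)$ and any subset $T\subset S$, the canonical map of Artin groups $A_{W_T}\to A_W$ is injective; applied to $(W,T)=(D_n,A_{n-1})$ and $(W,T)=(\widetilde A_n,A_{n-1})$, this immediately yields both claims.

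Within the paper's own framework one can also give a self-contained surface-theoretic proof. Using Lemma \ref{le:relative homomorphism}, the $D_n$ case follows from the faithfulness of the $Br_{D_n}$-action on $\TT_{\Sigma_{n,1}}$ (which is part of Theorem \ref{th:Br on Sigman1}), once one identifies $\iota(Br_n)$ as the subgroup stabilizing an appropriate free subgroup $\TT'\subset\TT_{\Sigma_{n,1}}$ on which the restricted action agrees with the standard faithful $Br_n$-action on $\TT_{\Sigma_{n+2}}$. Concretely, for a star-like triangulation $\Delta_0=\{(p,i)\mid i\in[n]\}$ of $\Sigma_{n,1}$, the subgroup $\iota(Br_n)$ is generated by $T_{(p,2)},\ldots,T_{(p,n)}$, and the key point is that by the explicit formula \eqref{eq:Tij} each such $T_\gamma$ fixes the generator $t_{(p,1)}$ of $\TT_{\Delta_0}$ pointwise and acts on the quotient free group precisely by the standard braid action. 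For the $\widetilde A_n$ case, one can either reduce to the $D_n$ case by composing with the injection of Corollary \ref{cor:affineatod} or work directly with a suitable faithful representation of $Br_{\widetilde A_n}$, for example via the action on a triangle subgroup of the cylinder.

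The main obstacle in any purely surface-theoretic approach is the exhibition of a free subgroup $\TT'\subset\TT_\Delta$ on which $\iota(Br_n)$ acts as the standard $Br_n$; since the target surfaces have fewer marked points than $\Sigma_{n+2}$, no literal sub-$(n+2)$-gon is embedded in the target, and $\TT'$ must be constructed intrinsically (e.g.\ using the sector generators of Theorem \ref{th:Udelta intro}). For the $\widetilde A_n$ case a further complication is that faithfulness of the full $Br_{\widetilde A_n}$-action on $\TT_\Delta$ is only conjectural (Conjecture \ref{conj:faithful}), so one must either work with an alternative faithful representation of $Br_{\widetilde A_n}$ or simply invoke Van der Lek directly, which is why the Van der Lek route is the cleaner and essentially inevitable one.
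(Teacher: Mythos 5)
Your primary route (Van der Lek) is correct and genuinely different from the paper's. The paper does not cite the parabolic-subgroup theorem; instead it carries out, in full, exactly the surface-theoretic argument you sketch as your second route: for the once-punctured $n$-gon it takes the triangulation $\Delta=\{(0,1),(1,0),(1,i),(i,1)\}$ (resp.\ the star-like triangulation $\{(0,i),(i,0)\}$ at the puncture), builds explicit degree-zero elements $y_j=t_{\gamma_j}^{-1}t_j$ (with $t_j$ a product of boundary generators) spanning a free subgroup of $\TT_\Delta$ of rank $n$, checks that the relevant $T_\gamma$ act on the $y_j$ by the standard Artin formulas $y_i\mapsto y_{i-1}y_{i+1}^{-1}y_i$, and then invokes Lemma \ref{lem:faithful} (ultimately \cite[Theorem 3.2]{P}, faithfulness of this $Br_{n}$-action on a free group) to conclude that $Br_n\to\underline{Br}_\Delta$ is injective; since this map factors through $\iota$, injectivity of $\iota$ follows. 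So the ``main obstacle'' you identify -- producing the free subgroup intrinsically -- is precisely what the paper resolves, and it does so with sector-type elements much as you predict. What Van der Lek buys you is brevity and uniformity (no case-by-case construction); what the paper's argument buys is self-containedness within its own machinery and an explicit faithful model of $\iota(Br_n)$ inside $Aut(\TT_\Delta)$, which it reuses elsewhere (e.g.\ in the proofs of Theorems \ref{th:Br on Sigman} and \ref{th:Br on Sigman1}). Two small corrections to your discussion: the proposed reduction of the $\widetilde A_n$ case via Corollary \ref{cor:affineatod} does not work as stated, since injectivity of that homomorphism $Br_{\widetilde A_n}\to Br_{\widetilde D_{n+1}}$ is only conjectural in the paper (Conjecture \ref{conj:injective relative braid homomorphism}); and your worry about Conjecture \ref{conj:faithful} is moot, because the argument only needs faithfulness of the restricted action of the parabolic on the constructed free subgroup, not faithfulness of the full $Br_{\widetilde A_n}$-action on $\TT_\Delta$.
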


We prove Theorem \ref{thm:inject} in Section \ref{sec:proof of thm:inject}.

\subsection{Braid groups of surfaces with orientation-reversing involutions}
\label{subsec:non-orientable}

Throughout this section, $\sigma$ is an orientation-reversing automorphism of $\Sigma$.

The following is an immediate consequence of Corollary \ref{cor:Gamma-action} (with $f=\sigma$).

\begin{lemma} 
\label{le:anti-action on braid groups}
For any orientation-reversing automorphism $\sigma$ of $\Sigma$ and any $\Delta\in {\bf Tsurf}_\Sigma$ such that $\sigma(\Delta)=\Delta$, the corresponding automorphism of $Br_\Delta$ is given by $T_{\gamma,\Delta}\mapsto T_{\sigma(\gamma),\Delta}^{-1}$ for all non-pending internal edges of $\Delta$.

\end{lemma}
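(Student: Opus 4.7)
The plan is to unwind the claim directly from Corollary \ref{cor:Gamma-action} applied to $f=\sigma$. Since $\sigma$ is orientation-reversing, that corollary produces an auto-equivalence $\sigma_*$ of ${\bf Tsurf}_\Sigma$ determined on generating horizontal morphisms by
\[
\sigma_*(h_{\Delta',\Delta}) \;=\; h^{-}_{\sigma(\Delta'),\sigma(\Delta)} \;=\; h^{-1}_{\sigma(\Delta),\sigma(\Delta')},
\]
using the sign convention from Definition \ref{def:gro}. Under the hypothesis $\sigma(\Delta)=\Delta$, this restricts to an automorphism of $Br_\Delta=Aut_{{\bf Tsurf}_\Sigma}(\Delta)$, which is the automorphism whose effect on the generators $T_{\gamma,\Delta}=h_{\Delta,\mu_\gamma\Delta}\circ h_{\mu_\gamma\Delta,\Delta}$ we need to identify.

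Next, since flips are equivariant under homeomorphisms of $\Sigma$, I have $\sigma(\mu_\gamma\Delta)=\mu_{\sigma(\gamma)}(\sigma(\Delta))=\mu_{\sigma(\gamma)}\Delta$. Substituting into the displayed formula gives the two elementary images
\[
\sigma_*(h_{\Delta,\mu_\gamma\Delta}) = h^{-1}_{\mu_{\sigma(\gamma)}\Delta,\Delta},\qquad
\sigma_*(h_{\mu_\gamma\Delta,\Delta}) = h^{-1}_{\Delta,\mu_{\sigma(\gamma)}\Delta}.
\]
Using that $\sigma_*$ is a functor, I then compute
\[
\sigma_*(T_{\gamma,\Delta}) \;=\; h^{-1}_{\mu_{\sigma(\gamma)}\Delta,\Delta}\circ h^{-1}_{\Delta,\mu_{\sigma(\gamma)}\Delta}
\;=\; \bigl(h_{\Delta,\mu_{\sigma(\gamma)}\Delta}\circ h_{\mu_{\sigma(\gamma)}\Delta,\Delta}\bigr)^{-1}
\;=\; T_{\sigma(\gamma),\Delta}^{-1},
\]
where the middle step is the standard identity $(AB)^{-1}=B^{-1}A^{-1}$ in the group $Br_\Delta$, and the last step is the definition of $T_{\sigma(\gamma),\Delta}$.

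There is essentially no obstacle; the content is a formal unwinding of the definitions. The only bit requiring care is the bookkeeping of the sign convention $h^{-}_{\cdot,\cdot}=h^{-1}_{\cdot,\cdot}$: combined with the reversal $(AB)^{-1}=B^{-1}A^{-1}$, it automatically converts the composite of the two $\sigma_*$-images into a single outer inverse, producing exactly $T_{\sigma(\gamma),\Delta}^{-1}$. Since Theorem \ref{th:braid generation} asserts that the $T_{\gamma,\Delta}$ generate $Br_\Delta$, this formula on generators determines the induced automorphism completely, so the proof is complete.
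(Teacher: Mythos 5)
Your proof is correct and is exactly the route the paper takes: the paper gives no separate argument, stating the lemma as an immediate consequence of Corollary \ref{cor:Gamma-action} with $f=\sigma$, and your computation is precisely that unwinding — applying $\sigma_*$ to the two factors of $T_{\gamma,\Delta}=h_{\Delta,\mu_\gamma\Delta}h_{\mu_\gamma\Delta,\Delta}$, using $\sigma(\mu_\gamma\Delta)=\mu_{\sigma(\gamma)}\Delta$ and the sign convention $h^{-}_{\Delta',\Delta}=h^{-1}_{\Delta,\Delta'}$, and invoking Theorem \ref{th:braid generation} to conclude. Your handling of the one delicate point (that the assignment $h_{\Delta',\Delta}\mapsto h^{-}_{\sigma(\Delta'),\sigma(\Delta)}$ preserves sources, targets, and composition order, so that the two inverted factors reassemble into a single outer inverse $T_{\sigma(\gamma),\Delta}^{-1}$) is consistent with the formula in the corollary and with the intended Dyer--Grossman-type automorphism.
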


It is well-known that any orientation-reversing automorphism of any oriented surface factors into an orientation-reversing involution and an orientation-preserving automorphism. However,  orientation-reversing involutions are not always conjugate to each other. On the other hand, if such an involution has no fixed points, it is unique up to conjugation (because $\Sigma/\sigma$ is unique up to isomorphisms). If $\Sigma$ is closed, then such an involution $\sigma$ always exists (we sometimes refer to it as the anti-involution of $\Sigma$). 

Denote by $\underline \Sigma$ a non-oriented surface, whose (unramified) double cover is $\Sigma$, i.e., $\underline \Sigma=\Sigma/\sigma$, where $\sigma$ is an anti-involution of $\Sigma$. We denote by ${\bf TSurf}_{\underline \Sigma}$ the subgroupoid of ${\bf TSurf}_\Sigma$ whose objects are $\sigma$-invariant triangulations of $\Sigma$ and morphisms are those morphisms of $h$ in ${\bf TSurf}_\Sigma$ such that $\sigma(h)=h^{-1}$.

Finally, for any $\underline \Delta\in {\bf TSurf}_{\underline \Sigma}$, denote $Br_{\underline \Delta}:=Aut_{{\bf TSurf}_{\underline \Sigma}}(\underline \Delta)$ and refer to it as the braid group of $\underline \Delta$. 

The following is an immediate consequence of Lemma \ref{le:anti-action on braid groups}.

\begin{corollary} 
\label{cor:anti-involution}
In the assumptions as above, one has

$(a)$ The action of $\sigma$ lifts to $Br_\Delta$ via  $\sigma(T_\gamma)= T_{\sigma(\gamma)}^{-1}$ for all non-pending internal edges $\gamma$ of $\Delta$. 

$(b)$ $Br_{\underline \Delta}=(Br_\Delta)^\sigma$,  the $\sigma$-fixed point subgroup of $\sigma$ in $Br_\Delta$. 
    
\end{corollary}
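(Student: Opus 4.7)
The plan is to deduce both parts directly from Lemma \ref{le:anti-action on braid groups}, by carefully tracking how the contravariance of the functor $\sigma_*:{\bf TSurf}_\Sigma\to {\bf TSurf}_\Sigma$ (Corollary \ref{cor:Gamma-action}) interacts with the group $Br_\Delta=\mathrm{Aut}_{{\bf TSurf}_\Sigma}(\Delta)$. The key conceptual point is to distinguish the anti-automorphism $\widetilde{\sigma}$ of $Br_\Delta$ coming directly from the contravariant auto-equivalence $\sigma_*$ from the genuine group automorphism $g\mapsto \widetilde{\sigma}(g)^{-1}$ that packages the lifted action.

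For part (a), since $\sigma$ is orientation-reversing and $\sigma(\Delta)=\Delta$, Corollary \ref{cor:Gamma-action} gives a contravariant auto-equivalence of ${\bf TSurf}_\Sigma$ fixing $\Delta$, which restricts to an anti-automorphism $\widetilde{\sigma}$ of $Br_\Delta$. Composing with inversion produces the desired group automorphism $g\mapsto \widetilde{\sigma}(g)^{-1}$, and its value on the generators is read off directly from Lemma \ref{le:anti-action on braid groups} as $T_{\gamma,\Delta}\mapsto T_{\sigma(\gamma),\Delta}^{-1}$. This is all that is required.

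For part (b), I would unravel the definition of ${\bf TSurf}_{\underline\Sigma}$: a morphism $h$ sits in this subgroupoid precisely when $\sigma_*(h)=h^{-1}$. The contravariance of $\sigma_*$ reverses the order of composition, so if $g=h_n\circ\cdots\circ h_1$ is built from such morphisms (with $\sigma(\Delta)=\Delta$), then $\sigma_*(g)=\sigma_*(h_1)\circ\cdots\circ\sigma_*(h_n)=h_1^{-1}\circ\cdots\circ h_n^{-1}=g^{-1}$, showing membership in ${Br}_{\underline{\Delta}}$ is characterized by $\widetilde{\sigma}(g)=g^{-1}$. Translating this via part (a), this condition is exactly $\widetilde{\sigma}(g)^{-1}=g$, i.e., $g$ is fixed by the group action of $\sigma$ on $Br_\Delta$. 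Hence $Br_{\underline\Delta}=(Br_\Delta)^\sigma$. There is no real obstacle here; the only subtlety is correctly accounting for the covariant/contravariant bookkeeping, after which both statements are formal.
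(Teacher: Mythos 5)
Your proof is correct and follows the same route the paper intends: the paper gives no written proof, declaring the corollary an immediate consequence of Lemma \ref{le:anti-action on braid groups}, and your argument simply makes that "immediate" precise. Your careful separation of the anti-automorphism $\widetilde{\sigma}$ induced by the contravariant functor from the genuine automorphism $g\mapsto\widetilde{\sigma}(g)^{-1}$, together with the observation that contravariance makes the condition $\sigma(h)=h^{-1}$ closed under composition, is exactly the bookkeeping needed for both parts.
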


\begin{remark} It is natural to expect that the subgroup 
$ Br_{\underline \Delta}$ from Corollary \ref{cor:anti-involution}(b) is generated by $T_\gamma T_{\sigma(\gamma)}^{-1}= T_{\sigma(\gamma)}^{-1}T_\gamma$, where $\gamma$ runs over all non-pending internal edges of $\Delta$. 
    
\end{remark}

\begin{example}
[Projective plane] Let $\Sigma$ be a sphere with $2n+2$ punctures (which we place uniformly at the equator). Let $\Delta$ be the triangulation of $\Sigma$ as shown below.
Then $Br_\Delta$ is generated by $T_i^+$ and $T_i^-$ for $i=1,\ldots,2n-1$, and $T_j^0$ for $j=1,\ldots, 2n+2$ subject to:

$\bullet$ $T_i^{\pm}T_{i+1}^\pm T_i^{\pm}=T_{i+1}^\pm T_i^{\pm}T_{i+1}^\pm$ for all $i=1,\cdots,2n-2$.

$\bullet$ $T_i^{\pm}T_j^\pm=T_j^{\pm} T_i^\pm$ for all $i,j$ with $|i-j|\neq 1$.

$\bullet$ $T_i^{\pm}T_{i+1}^0 T_i^{\pm}=T_{i+1}^0 T_i^{\pm}T_{i+1}^0 $ and $T_i^{\pm}T_{i+2}^0 T_i^{\pm}=T_{i+2}^0 T_i^{\pm}T_{i+2}^0$ for $i=1,2,\cdots,2n-1$.

$\bullet$ $T_1^{\pm} T_{1}^0 T_1^{\pm}=T_{1}^0 T_1^{\pm}T_{1}^0$ and $T_{2n-1}^{\pm} T_{2n+1}^0 T_{2n-1}^{\pm}=T_{2n+1}^0 T_{2n-1}^{\pm}T_{2n+1}^0$.

$\bullet$ $T_i^{\pm}T_{j}^0=T_{j}^0 T_i^{\pm}$ for all $i=2,\cdots, 2n-2$ and $j\neq i+1,i+2$.

$\bullet$ $T_1^{\pm}T_{j}^0=T_{j}^0 T_1^{\pm}$ for all $j\neq 1,2,3$.

$\bullet$ $T_{2n-1}^{\pm}T_{j}^0=T_{j}^0 T_{2n-1}^{\pm}$ for all $j\neq 2n,2n+1,2n+2$.

$\bullet$ $Cyl(T_i^{\pm},T_{i+1}^{\pm},T_{i+2}^0)$ for all $i=1,2,\cdots, 2n-1$.

$\bullet$ $Cyl(T_1^{\pm},T_{2}^0,T_{1}^0)$ and $Cyl(T_{2n-1}^{\pm},T_{2n+2}^0,T_{2n+1}^0)$.

$\bullet$ $T_i^+T_j^-=T_j^-T_i^+$ for all $i,j$.

\begin{figure}[ht]
\includegraphics{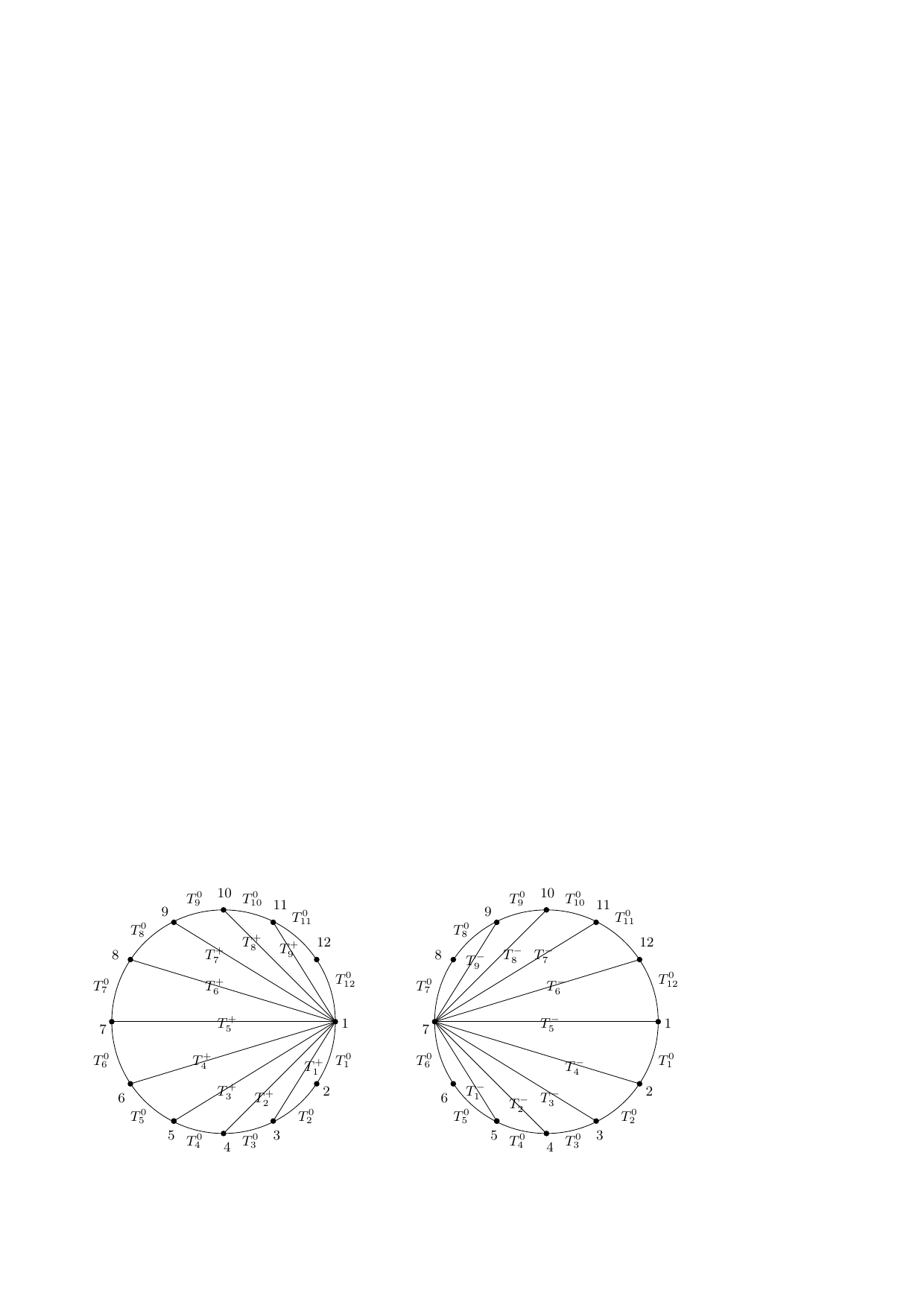}
\caption{The triangulation for the sphere in the case $n=5$}
\label{Fig:trisph}
\end{figure}

Let $\sigma$
be the central symmetry of $\Sigma$, i.e., the only orientation-reversing involution interchanging the punctures (that is, $\underline \Sigma:=\Sigma/\sigma$ is the projective plane with $n$ punctures). Clearly, $\Delta$ is $\sigma$-invariant.

According to Corollary \ref{cor:anti-involution}(a), $\sigma$ acts on $Br_\Delta$ via $\sigma(T_i^\pm )=(T_{2n-i}^{\mp})^{-1}$ and $\sigma(T_j^0)=(T_{n+1+j}^0)^{-1}$ for $j=1,\ldots,2n+2$ (modulo $2n+2$).

Then the $\sigma$-fixed point subgroup $Br_{\underline \Delta}:=(Br_\Delta)^\sigma$ can be viewed as the braid group of the corresponding triangulation $\underline \Delta$ of the projective plane $\underline \Sigma=\Sigma/\sigma$ (we will discuss non-orientable surface elsewhere). 

We expect that  $Br_{\underline \Delta}=(Br_\Delta)^\sigma$ is generated by $T_i:=T_i^+(T_{2n-i}^-)^{-1}$ for $i=1,2,\cdots,2n-1$ and $\tau_j:=T_j^0(T_{n+1+j}^0)^{-1}$ for $j=1,\ldots,n+1$. One can show that the following relations hold (we expect them to be defining): 

$\bullet$ $T_i T_{i+1} T_i=T_{i+1} T_iT_{i+1}$ for all $i=1,2,\cdots, 2n-2$.

$\bullet$ $T_iT_j=T_{j} T_i$ for all $i,j$ with $|i-j|\neq 1$.

$\bullet$ $T_i\tau_j=\tau_j T_i$ for all $i=2,\cdots, 2n-2$ and $j\neq i,i+1 (\text{mod } n+1)$.

$\bullet$ $T_i\tau_j T_i=\tau_j T_i\tau_j$ for all $i=2,\cdots, 2n-2$ and $j=i,i+1 (\text{mod } n+1)$.

$\bullet$ $T_1\tau_j=\tau_j T_1$ for all $j\neq 1,2,3$.

$\bullet$ $T_1\tau_j T_1=\tau_j T_1\tau_j$ for $j= 1,2,3$.

$\bullet$ $T_{2n-1}\tau_j=\tau_j T_{2n-1}$ for all $j\neq n-1,n,n+1$.

$\bullet$ $T_{2n-1}\tau_j T_{2n-1}=\tau_jT_{2n-1} \tau_j$ for $j=n-1,n,n+1$.

$\bullet$ $Cyl(T_i,T_{i+1},\tau_{i+2 (\text{mod n+1})})$ for all $i=1,\cdots,2n-2$.

$\bullet$ $Cyl(T_1,\tau_{2},\tau_{1})$ and $Cyl(T_{2n-1},\tau_{n+1},\tau_n)$.

\end{example}

\subsection{Rank 2 groupoids}
\label{subsec:rank 2 braid}

For any $m\in \mathbb{Z}_{\ge 0}$ let $\Gamma_m$ be the groupoid whose object set is $\Z$ and its set of morphisms is generated by $h_{i+1,i}:i\to i+1$, $h_{i,i+1}:i+1\to i$ and $\sigma_i: i\to i+m+2$ 
subject to: for any $i$,

$\bullet$ $h_{i+2,i+1}h_{i+1,i}=\sigma_{i-m}h_{i-m,i-m+1}\cdots h_{i-2,i-1}h_{i-1,i}.$

$\bullet$ $\sigma_{i+1}h_{i+1,i}=h_{i+1+m,i+m}\sigma_{i}.$ 

$\bullet$ $\sigma_{i}h_{i,i+1}=h_{i+m,i+1+m}\sigma_{i+1}$.

\medskip

Denote by $Br_{\Gamma_m}=Aut_{\Gamma_m}(i)$ the fundamental group of $\Gamma_m$.

In abuse of notation, we denote $x_i=h_{i,i-1}:i-1\to i$ and $y_i=h_{i-1,i}:i\to i-1$ for all $i\in \mathbb Z$. Then $\Gamma_m$ is generated by $x_i,y_i,\sigma_i$ subject to: for any $i$,

$\bullet$ $x_{i+1}x_i=\sigma_{i-m-1} y_{i-m}\cdots y_{i-2}y_{i-1}$.

$\bullet$ $\sigma_{i} x_i=x_{i+m}\sigma_{i-1}, \sigma_{i-1} y_i=y_{i+m}\sigma_i$.

Denote $b_1=y_1x_1,b_0=x_0y_0\in Aut_{\Gamma_m}(0)$. Assume $R$ are the relations that $b_0,b_1$ are satisfied. Set $a_i=x_i\cdots x_2x_1: 0\to i$ for all $i>0$ and $a_i:y_0y_{-1}\cdots y_{-i+1}:0\to i$ for all $i<0$. In particular, let $a_0=0$. Thus,   $\Gamma_m$ is generated by $a_i,i\in \mathbb Z, b_0,b_1$, subject to relations in $R$.

\begin{lemma}\label{lem:xysigma}

(a) For any $i>0$, $y_1y_2\cdots y_i=(\underbrace{b_1b_0b_1\cdots}_i)a_i^{-1}$, 
  $y_1y_2\cdots y_ix_i\cdots x_2x_1=\underbrace{b_1b_0b_1\cdots}_i$.

(b) For any $i\leq 0$, $x_0x_{-1}\cdots x_i=\underbrace{\cdots b_0b_1b_0}_{-i+1}a^{-1}_{i-1}$ and 
  $x_0x_{-1}\cdots x_iy_i\cdots y_{-1}y_0=\underbrace{\cdots b_0b_1b_0}_{-i+1}$.

(c) For any $i>0$, $x_i=a_ia_{i-1}^{-1}$ and 
$y_i=\begin{cases}
    a_{i-1}b_1a_i^{-1}, \text{ if $i$ is odd}\\
    a_{i-1}b_0a_i^{-1}, \text{ if $i$ is even}.  
\end{cases}$

(d) For any $i\leq 0$, $y_i=a_{i-1}a_i^{-1}$ and
$x_i=a_i(\underbrace{\cdots b_0b_1b_0}_{-i})^{-1}(\underbrace{\cdots b_0b_1b_0}_{-i+1})a_{i-1}^{-1}$.

(e)  $\sigma_i=\begin{cases}
  a_{m+2+i}(\underbrace{b_1b_0b_1\cdots }_{m})^{-1}a_i^{-1} & \text{ if $i\geq 0$}\\
  a_{m+2+i}(\underbrace{b_1b_0b_1\cdots}_{m+i+2})^{-1}b_1b_0a_i^{-1} & \text{ if $-m-2\leq i<0$}\\
  a_{m+2+i}b_1b_0a_i^{-1} & \text{ if $i<-m-2$}.
\end{cases}$
\end{lemma}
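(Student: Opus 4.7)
The plan is to establish the five assertions in the order (c), (d), (a), (b), (e), with the whole argument organized by induction on $|i|$.

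\emph{Step 1 (Part (c)).} The identity $x_i = a_i a_{i-1}^{-1}$ for $i>0$ is immediate from the recursive definition $a_i = x_i a_{i-1}$ together with $a_0 = \mathrm{id}_0$. The formula $y_i = a_{i-1} b_{\epsilon_i} a_i^{-1}$ (with $\epsilon_i = 1$ for $i$ odd and $\epsilon_i = 0$ for $i$ even) is proved by induction on $i$. The base case $i=1$ reduces to the defining identity $b_1 = y_1 x_1$, which rearranges to $y_1 = b_1 x_1^{-1} = a_0 b_1 a_1^{-1}$. For the inductive step from $i$ to $i+1$, it suffices to verify that $y_{i+1} x_{i+1} = a_i b_{\epsilon_{i+1}} a_i^{-1}$ in $\mathrm{Aut}(i)$: we use the first defining relation of $\Gamma_m$ to rewrite $x_{i+1} x_i$ as $\sigma_{i-m-1} y_{i-m}\cdots y_{i-1}$, then repeatedly apply the third defining relation $\sigma_{j-1} y_j = y_{j+m}\sigma_j$ to commute $y_{i+1}$ through the $\sigma$-factor; the resulting expression telescopes to a conjugate of $y_1 x_1$ or $x_0 y_0$ by $a_i$, depending on the parity of $i+1$.

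\emph{Step 2 (Part (d)).} Part (d) is proved by the mirror-image argument, with the roles of $x$ and $y$, of $+$ and $-$ indices, and of $b_1$ and $b_0$ interchanged; the second defining relation $\sigma_i x_i = x_{i+m}\sigma_{i-1}$ plays the role that (R3) played in Step~1.

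\emph{Step 3 (Parts (a) and (b)).} With (c) in hand, part (a) follows by a telescoping calculation:
\[
y_1 y_2 \cdots y_i = \prod_{j=1}^{i}\bigl(a_{j-1} b_{\epsilon_j} a_j^{-1}\bigr) = \underbrace{b_1 b_0 b_1 \cdots}_{i} \cdot a_i^{-1},
\]
where the cancellation uses $a_0 = \mathrm{id}_0$ and $a_j^{-1}\cdot a_j = \mathrm{id}_j$ between adjacent factors. Right-multiplying by $a_i = x_i x_{i-1}\cdots x_1$ yields the second identity of (a). Part (b) is analogous, using (d).

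\emph{Step 4 (Part (e)).} We solve the first defining relation of $\Gamma_m$ for $\sigma_{i-m-1}$:
\[
\sigma_{i-m-1} = x_{i+1} x_i \cdot (y_{i-m} y_{i-m+1} \cdots y_{i-1})^{-1},
\]
and then substitute the closed-form expressions from (c) and (d). The three regimes in (e) ($i \ge 0$, $-m-2 \le i < 0$, $i < -m-2$) arise precisely because the index range $[i-m, i-1]$ of the $y$'s appearing on the right may lie entirely in $\mathbb{Z}_{>0}$, straddle zero, or lie entirely in $\mathbb{Z}_{\le 0}$; the formulas for $y_j$ differ accordingly between (c) and (d), and in the middle regime the product $y_{i-m}\cdots y_{i-1}$ splits into a non-positive-index piece followed by a positive-index piece, producing the extra $b_1 b_0$ factor in the middle case.

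\emph{Main obstacle.} The hardest part will be the inductive step in Step 1, specifically showing that $y_{i+1} x_{i+1} = a_i b_{\epsilon_{i+1}} a_i^{-1}$. This requires careful bookkeeping to commute $y_{i+1}$ past the $\sigma$-generator emerging from (R1) using repeated applications of (R3), and to track the alternation between $b_0$ and $b_1$ as parity changes. A secondary difficulty is the transition between the positive and non-positive indexing regimes in Step 4, where one must verify that the telescoping of (c) and the telescoping of (d) dovetail correctly across $i=0$ to produce the alternating words $\underbrace{b_1 b_0 b_1 \cdots}_{m}$ and $\underbrace{b_1 b_0 b_1 \cdots}_{m+i+2} b_1 b_0$ appearing in (e).
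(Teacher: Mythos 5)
Your treatment of (a)--(d) is essentially the paper's argument with the parts reordered: the single identity doing the work in both versions is $y_ix_i=a_{i-1}b_{\epsilon_i}a_{i-1}^{-1}$, which the paper phrases as the recursion $y_1\cdots y_ix_i\cdots x_1=(y_1\cdots y_{i-1}x_{i-1}\cdots x_1)\,b_{\epsilon_i}$ and justifies by the same manipulation of the defining relations that you sketch in Step 1; whether one then reads (c) off from (a) or telescopes (c) into (a) is immaterial.

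Your Step 4, however, has a genuine gap in the regime $i\ge 0$. Solving $x_{i+1}x_i=\sigma_{i-m-1}y_{i-m}\cdots y_{i-1}$ for $\sigma_k$ (with $k=i-m-1\ge 0$) and substituting (c) gives $\sigma_k=a_{k+m+2}\,(b_{\epsilon_{k+1}}b_{\epsilon_{k+2}}\cdots b_{\epsilon_{k+m}})^{-1}a_k^{-1}$, and the alternating word $b_{\epsilon_{k+1}}\cdots b_{\epsilon_{k+m}}$ begins with $b_1$ only when $k$ is even; for odd $k$ your route yields $a_{k+m+2}(\underbrace{b_0b_1b_0\cdots}_{m})^{-1}a_k^{-1}$ (e.g.\ for $m=2$, $k=1$ it gives $a_5(b_0b_1)^{-1}a_1^{-1}$ rather than the asserted $a_5(b_1b_0)^{-1}a_1^{-1}$). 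This agrees with the stated formula only modulo the relation $\underbrace{b_0b_1b_0\cdots}_{m}=\underbrace{b_1b_0b_1\cdots}_{m}$, which is not available at this point: it is precisely what Theorem \ref{th:rank 2 faithful} later extracts by comparing the formulas of this lemma against the remaining defining relations, so invoking it here would be circular. The paper sidesteps this by never using the first defining relation except once, at the even index $0$, where $\sigma_0=a_{m+2}(\underbrace{b_1b_0b_1\cdots}_{m})^{-1}$ comes out with the correct orientation, and by then propagating via the intertwining relations $\sigma_ix_i\cdots x_1=x_{i+m+2}\cdots x_{m+3}\sigma_0$ and $\sigma_iy_{i-1}\cdots y_0=y_{m+1+i}\cdots y_{m+2}\sigma_0$, which express every $\sigma_i$ through $\sigma_0$ alone. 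Your middle and lower regimes do come out correctly by your method (the straddling product and the telescoping of the formulas in (d) produce exactly the extra $b_1b_0$ factor), but the positive regime must be rerouted through $\sigma_0$ as above.
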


\begin{proof}

(a) It suffices to show that $y_1y_2\cdots y_ix_i\cdots x_2x_1=\underbrace{b_1b_0b_1\cdots}_i$. It is clear that $i=1$. 
For $i\geq 2$, as $x_{j+1}x_j=\sigma_{j-m-1} y_{j-m}\cdots y_{j-2}y_{j-1}$ for all $j$, we have 
$$y_1y_2\cdots y_ix_i\cdots x_2x_1=
\begin{cases}
 y_1y_2\cdots y_{i-1}x_{i-1}\cdots x_2x_1 x_0y_0 & \text{ if $i$ is even}\\
 y_1y_2\cdots y_{i-1}x_{i-1}\cdots x_2x_1 y_1x_1 & \text{ if $i$ is odd}.\\ 
\end{cases}
$$
Thus we have $y_1y_2\cdots y_ix_i\cdots x_2x_1=\underbrace{b_1b_0b_1\cdots}_i$ by induction.

(b) can by proved similarly to (a).

(c) follows from (a) and (d) follows from (b).

(e) As $x_{m+2}x_{m+1}=\sigma_0y_1y_2\cdots y_{m}$, by (a) we have 
$\sigma_0=a_{m+2}(\underbrace{b_1b_0b_1\cdots }_{m})^{-1}$.

For any $i>0$, as $\sigma_ix_i\cdots x_2x_1=x_{i+m+2}\cdots x_{m+4}x_{m+3}\sigma_0$, we have
$$\sigma_i=a_{m+2+i}a_{m+2}^{-1}\sigma_0a_i^{-1}=a_{m+2+i}(\underbrace{b_1b_0b_1\cdots }_{m})^{-1}a_i^{-1}.$$

If $-m-2\leq i<0$, as $\sigma_iy_{i-1}\cdots y_1y_0=y_{m+1+i}\cdots y_{m+1}y_{m+2}\sigma_0$, we have

\begin{equation*}
\begin{array}{rcl}
\sigma_i &=& \left((\underbrace{b_1b_0b_1\cdots}_{m+2+i})a_{m+2+i}^{-1}\right)^{-1}       (\underbrace{b_1b_0b_1\cdots}_{m+2})a_{m+2}^{-1} \sigma_0 a_i^{-1}\vspace{2.5pt}\\
&=&
\left((\underbrace{b_1b_0b_1\cdots}_{m+2+i})a_{m+2+i}^{-1}\right)^{-1}       (\underbrace{b_1b_0b_1\cdots}_{m+2})(\underbrace{b_1b_0b_1\cdots }_{m})^{-1} a_i^{-1}\vspace{2.5pt}\\
&=& a_{m+2+i}(\underbrace{b_1b_0b_1\cdots}_{m+2+i})^{-1}b_1b_0a_i^{-1}.
\end{array}
\end{equation*}


If $i<-m-2$, as $\sigma_iy_{i-1}\cdots y_1y_0=y_{m+1+i}\cdots y_{m+1}y_{m+2}\sigma_0$, we have 
\begin{equation*}
\begin{array}{rcl}
\sigma_i &=& a_{m+2+i}\underbrace{b_1b_0b_1\cdots }_{m+2}a_{m+2}^{-1}\sigma_0a_i^{-1}\vspace{2.5pt}\\
&=&
a_{m+2+i}\underbrace{b_1b_0b_1\cdots }_{m+2}(\underbrace{b_1b_0b_1\cdots }_{m})^{-1}a_i^{-1}\vspace{2.5pt}\\
&=& a_{m+2+i}b_1b_0a_i^{-1}.
\end{array}
\end{equation*}

The proof is complete.
\end{proof}

The following remark is easy to see.

\begin{remark}\label{rmk:def}
   The objects of $\Gamma_m$ are $i,i\in \mathbb Z$, the morphisms are generated by $x_i: i-1\to i,y_i:i\to i-1,\sigma_i: i\to i+m+2, i\in \mathbb Z$, subject to

   $\bullet$ $x_{m+2}x_{m+1}=\sigma_0 y_1\cdots y_{m-1}y_m$;
    
   $\bullet$ $\sigma_ix_i\cdots x_2x_1=x_{i+m+2}\cdots x_{m+4}x_{m+3}\sigma_0$ for all $i>0$;

   $\bullet$ $\sigma_0 y_1y_2\cdots y_i=y_{m+3}y_{m+4}\cdots y_{m+i+2}\sigma_i$ for all $i>0$;

   $\bullet$ $\sigma_iy_{i-1}\cdots y_1y_0=y_{m+1+i}\cdots y_{m+1}y_{m+2}\sigma_0$ for all $i<0$;

   $\bullet$ $\sigma_0 x_{-1}x_{-2}\cdots x_{i-1}=x_{m+2}x_{m+1}\cdots x_{m+i+3}\sigma_i$ for all $i<0$.
   \end{remark}

\begin{theorem} 
\label{th:rank 2 faithful}
For any $m\geq 0$, $Br_{\Gamma_m}$ is isomorphic to the Artin braid group corresponding to the dihedral group $I_2(m)$. 
\end{theorem}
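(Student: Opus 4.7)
My plan is to identify two explicit generators $b_0, b_1$ of $Br_{\Gamma_m} = Aut_{\Gamma_m}(0)$, derive the single dihedral braid relation of length $m$ between them, and then argue that there are no further relations, thereby matching $Br_{\Gamma_m}$ with the Artin braid group of type $I_2(m)$. Generation is immediate from Lemma \ref{lem:xysigma}: parts (c)--(e) show that every generating morphism $x_i, y_i, \sigma_i$ of $\Gamma_m$ admits a normal form $a_j\, w\, a_k^{-1}$ for some word $w$ in $b_0^{\pm 1}, b_1^{\pm 1}$, so any closed loop at $0$ reduces, after telescopic cancellation of the $a_\bullet$'s, to a word in $b_0, b_1$.

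To obtain the braid relation, I will first deduce from the defining relations of $\Gamma_m$ the shift identities $\sigma_i x_i = x_{i+m+2}\sigma_{i-1}$ and $\sigma_{i-1} y_i = y_{i+m+2}\sigma_i$. Applying these twice gives $\sigma_0 b_0 \sigma_0^{-1} = x_{m+2} y_{m+2}$ and $\sigma_0 b_1 \sigma_0^{-1} = y_{m+3} x_{m+3}$. On the other hand, Lemma \ref{lem:xysigma}(c) yields
\[
a_{m+2}^{-1}(x_{m+2} y_{m+2})\,a_{m+2} = b_\varepsilon, \qquad a_{m+2}^{-1}(y_{m+3} x_{m+3})\,a_{m+2} = b_{1-\varepsilon},
\]
where $\varepsilon = 0$ if $m$ is even and $\varepsilon = 1$ if $m$ is odd. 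Since $\sigma_0^{-1} a_{m+2} = W_m := \underbrace{b_1 b_0 b_1 \cdots}_m$ by Lemma \ref{lem:xysigma}(e), the two transports from $m+2$ back to $0$ differ by conjugation by $W_m$, so
\[
W_m\, b_\varepsilon\, W_m^{-1} = b_0, \qquad W_m\, b_{1-\varepsilon}\, W_m^{-1} = b_1.
\]
When $m$ is even this says $W_m$ commutes with both $b_0$ and $b_1$; when $m$ is odd conjugation by $W_m$ swaps $b_0 \leftrightarrow b_1$. In either case this is equivalent to the single dihedral relation $\underbrace{b_0 b_1 b_0 \cdots}_m = \underbrace{b_1 b_0 b_1 \cdots}_m$, so there is a well-defined surjection $\pi\colon Br_{I_2(m)} \twoheadrightarrow Br_{\Gamma_m}$ sending the standard Artin generators to $b_0, b_1$.

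For the injectivity of $\pi$, I will construct an inverse by exhibiting $\Gamma_m$ as an explicit groupoid model built from $G := Br_{I_2(m)}$: take objects $\Z$, declare $\Hom(j, i)$ to consist of formal expressions $\tilde a_i\, g\, \tilde a_j^{-1}$ for $g \in G$, and define $\tilde\sigma_i, \tilde x_i, \tilde y_i$ using the formulas of Lemma \ref{lem:xysigma}(c)--(e); the braid relation in $G$ is exactly what is needed to make these formulas consistent. One then verifies that every defining relation of $\Gamma_m$ holds in this model, producing a functor $\Gamma_m \to \widetilde\Gamma_m$ whose restriction at $0$ is the desired inverse of $\pi$. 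The main obstacle is precisely this Step: the interplay between the $\sigma_i$'s and the $x_i, y_i$'s could in principle force extra relations on $b_0, b_1$, and ruling this out requires a careful normal-form analysis (or the explicit model verification sketched above). Steps 1 and 2 are essentially mechanical coherence computations, whereas Step 3 is where the content lies.
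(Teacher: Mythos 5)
Your proposal is correct and is essentially the paper's own route: both re-coordinatize $\Gamma_m$ in the generators $a_i, b_0, b_1$ via Lemma \ref{lem:xysigma} and show that the induced relations on $b_0,b_1$ collapse to the single dihedral relation $\underbrace{b_1b_0b_1\cdots}_{m}=\underbrace{b_0b_1b_0\cdots}_{m}$. Your Step 2 is a clean (and verifiably correct) shortcut for the surjection direction — conjugation by $\sigma_0$ together with $\sigma_0^{-1}a_{m+2}=W_m$ does give $W_m b_\varepsilon W_m^{-1}=b_0$ and $W_m b_{1-\varepsilon}W_m^{-1}=b_1$, each equivalent to the braid relation. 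But the verification you defer in Step 3 — that the model $\widetilde\Gamma_m$ built on $Br_{I_2(m)}$ satisfies all defining relations of $\Gamma_m$ — is precisely the case-by-case computation over the relation families of Remark \ref{rmk:def} (for $i>0$, $-m-2\leq i<0$, and $i<-m-2$) that constitutes the entire body of the paper's proof, so as written the plan is sound but the main calculation is still owed.
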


\begin{proof}
From the proof of Lemma \ref{lem:xysigma}, we see that $\sigma_ix_i\cdots x_2x_1=x_{i+m+2}\cdots x_{m+4}x_{m+3}\sigma_0$ for all $i>0$ and $\sigma_iy_{i-1}\cdots y_1y_0=y_{m+1+i}\cdots y_{m+1}y_{m+2}\sigma_0$ for all $i<0$.

For $i>0$, from the relation $\sigma_0 y_1y_2\cdots y_i=y_{m+3} y_{m+4}\cdots y_{m+2+i}\sigma_i$ and Lemma \ref{lem:xysigma}, we see that

$(a_{m+2}(\underbrace{b_1b_0b_1\cdots }_{m})^{-1})(\underbrace{b_1b_0b_1\cdots}_i)a_i^{-1}=\left((\underbrace{b_1b_0b_1\cdots}_{m+2})a_{m+2}^{-1}\right)^{-1} (\underbrace{b_1b_0b_1\cdots}_{m+2+i})(\underbrace{b_1b_0b_1\cdots }_{m})^{-1}a_i^{-1},$ equivalently,
$(\underbrace{b_1b_0b_1\cdots}_i)=b_0^{-1}b_1^{-1}(\underbrace{b_1b_0b_1\cdots}_{m+2+i})(\underbrace{b_1b_0b_1\cdots }_{m})^{-1},$ that is,
$$\begin{cases}
 1=(\underbrace{b_1b_0b_1\cdots}_{m})(\underbrace{b_1b_0b_1\cdots }_{m})^{-1} & \text{if $i$ is even}\\
 1=(\underbrace{b_0b_1b_0\cdots}_{m})(\underbrace{b_1b_0b_1\cdots }_{m})^{-1} 
 &\text{if $i$ is odd}.
\end{cases}
$$

Thus, we have $\underbrace{b_0b_1b_0\cdots}_{m}=\underbrace{b_1b_0b_1\cdots}_{m}$.

In case $-m-2\leq i<0$, from the relation $\sigma_0x_0x_{-1}\cdots x_{i+1}=x_{m+2} x_{m+1}\cdots x_{m+3+i}\sigma_i$ and Lemma \ref{lem:xysigma}, we have 

$(\underbrace{b_1b_0b_1\cdots }_{m})^{-1}(\underbrace{\cdots b_0b_1b_0}_{-i})=(\underbrace{b_1b_0b_1\cdots}_{m+2+i})^{-1}(\underbrace{b_1b_0b_1\cdots}_{m+2})(\underbrace{b_1b_0b_1\cdots }_{m})^{-1}.$

Thus, $(\underbrace{b_1b_0b_1\cdots }_{m})^{-1}(\underbrace{\cdots b_0b_1b_0}_{-i})=(\underbrace{b_1b_0b_1\cdots}_{m+2+i})^{-1}b_1b_0=(\underbrace{b_1b_0b_1\cdots}_{m+i})^{-1},$
that is
$$\begin{cases}
 \underbrace{b_1b_0b_1\cdots}_{m}=\underbrace{b_0b_1b_0\cdots }_{m} & \text{if $i$ is odd}\\
\underbrace{b_1b_0b_1\cdots}_{m}=\underbrace{b_1b_0b_1\cdots }_{m} 
 &\text{if $i$ is even}.
\end{cases}
$$

In case $i<-m-2$, from the relation $\sigma_0x_0x_{-1}\cdots x_{i+1}=x_{m+2} x_{m+1}\cdots x_{m+3+i}\sigma_i$ and Lemma \ref{lem:xysigma}, we have 
$(\underbrace{b_1b_0b_1\cdots }_{m})^{-1}(\underbrace{\cdots b_0b_1b_0}_{-i})=(\underbrace{\cdots b_0b_1b_0}_{-m-2-i})(\underbrace{b_1b_0b_1\cdots }_{m+2})(\underbrace{b_1b_0b_1\cdots }_{m})^{-1}.$

Thus, $(\underbrace{b_1b_0b_1\cdots }_{m})^{-1}(\underbrace{\cdots b_0b_1b_0}_{-i})=\underbrace{\cdots b_0b_1b_0}_{-m-i},$
that is
$$\begin{cases}
 \underbrace{b_1b_0b_1\cdots}_{m}=\underbrace{b_0b_1b_0\cdots }_{m} & \text{if $i$ is odd}\\
\underbrace{b_1b_0b_1\cdots}_{m}=\underbrace{b_1b_0b_1\cdots }_{m} 
 &\text{if $i$ is even}.
\end{cases}
$$
Therefore, by Remark \ref{rmk:def} the defining relation for $b_0$ and $b_1$ is 
$$\underbrace{b_1b_0b_1\cdots}_{m}=\underbrace{b_0b_1b_0\cdots }_{m}.$$
The proof is complete.
\end{proof}

\section{Triangle groups, monomial mutations, and the triangular functor}\label{Sec:triangleg}

\subsection{Triangle groups and their functoriality}

\begin{definition}
Generalizing \cite{BR}, for any (tagged) triangulation $\Delta$, we define the {\it triangle group} $\mathbb T_{\Delta}$ to be generated by $t_\gamma,t_{\overline\gamma}, \gamma\in \Delta$ subject to the following relations:

$\bullet$ $t_{\overline \gamma}=t_\gamma$ for any special loop $\gamma\in \Delta$.

$\bullet$ $t_{\alpha_1}t^{-1}_{\overline \alpha_2}t_{  \alpha_3}=t_{\overline \alpha_3}t^{-1}_{ \alpha_2}t_{\overline \alpha_1}$
 for any cyclic triangle $(\alpha_1,  \alpha_2,  \alpha_3)$ in $\in \Delta$. 

$\bullet$ $t_\ell=t_\gamma t_{\overline \gamma}$ if $\ell$ is a loop encloses a pending arc $\gamma$ with $s(\gamma)=s(\ell)$. 

$\bullet$ $t_{\gamma_1}t_{ \gamma_2}=t_{\overline \gamma_2}t_{\overline \gamma_1}$ for any tagged cyclic bigon $(\gamma_1,\gamma_2)$ in $\Delta$ with $t(\gamma)\in tag(\Delta)$ of valency $2$.

$\bullet$ $t_{\alpha}(t_{\gamma_1}t_{\gamma_2})^{-1}t_{\alpha'}=t_{\overline \alpha'}(t_{\gamma_1}t_{\gamma_2})^{-1}t_{\overline \alpha}$ for any once-punctured cyclic bigon $(\alpha,\alpha')$ which encloses a tagged cyclic bigon $(\gamma_1,\gamma_2)$ in $\Delta$ with $s(\alpha)=s(\gamma)$. 
\end{definition}

The following is immediate.

\begin{lemma}\label{lem:quo}
$(a)$ The assignments $t_{\gamma}\mapsto t_{\overline\gamma}$ give a involutive automorphism $\overline{\cdot}: T_\Delta\to T_\Delta.$

$(b)$ For any surface $\Sigma$ with $I_{p,0}(\Sigma)\neq \emptyset$ and a triangulation $\Delta$, let $\widetilde\Sigma$ denote the surface obtained from $\Sigma$ by converting the points in $I_{p,0}(\Sigma)$ into ordinary punctures, and let $\widetilde\Delta$ be the triangulation of $\widetilde\Sigma$ corresponding to $\Delta$. Then 
$$\mathbb T_\Delta\cong \mathbb T_{\widetilde\Delta}/\langle t_\ell=t_\gamma t_{\overline\gamma} \rangle,$$
where $(\ell,\gamma)$ runs over all pairs such that $(\ell,\gamma,\overline\gamma)$ forms a self-folded triangle enclosing a point in $I_{p,0}(\Sigma)$.
\end{lemma}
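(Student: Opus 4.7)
Both parts reduce to direct comparisons of presentations, so the plan is to check relations one family at a time rather than run an ad hoc argument.

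For part (a), I will first interpret $\overline{\cdot}$ as an involutive \emph{anti}-automorphism, in analogy with the bar anti-involution on ${\mathcal A}_\Sigma$ from Theorem~\ref{th:functoriality nc-surface}(a); a genuine group homomorphism $t_\gamma\mapsto t_{\overline\gamma}$ cannot exist in general, as it would force $t_\gamma$ and $t_{\overline\gamma}$ to commute for a pending arc at a $0$-puncture, and no defining relation implies this. The verification then amounts to checking that each defining relation is stable under reversing products and barring generators. The special loop relation $t_{\overline\gamma}=t_\gamma$ is manifestly self-symmetric. The triangle relation $t_{\alpha_1}t_{\overline\alpha_2}^{-1}t_{\alpha_3}=t_{\overline\alpha_3}t_{\alpha_2}^{-1}t_{\overline\alpha_1}$ is mapped to itself with the two sides swapped. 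The $0$-puncture relation $t_\ell=t_\gamma t_{\overline\gamma}$ for the pair $(\ell,\gamma)$ is sent to $t_{\overline\ell}=t_\gamma t_{\overline\gamma}$, which is the same relation applied to $(\overline\ell,\gamma)$ (the hypothesis $s(\gamma)=s(\overline\ell)$ is automatic since $\ell$ is a loop at $s(\gamma)$). The tagged and once-punctured cyclic bigon relations are handled by the same bookkeeping. Involutivity is immediate from $\overline{\overline\gamma}=\gamma$.

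For part (b), the basic observation is that $\Delta$ and $\widetilde\Delta$ consist of exactly the same arcs, so $\TT_\Delta$ and $\TT_{\widetilde\Delta}$ share a common generating set. Comparing defining relations: the special loop, tagged, and once-punctured cyclic bigon relations are identical in both groups; the triangles of $\widetilde\Delta$ are those of $\Delta$ together with the self-folded triangles around the former $0$-punctures (these configurations only qualify as cyclic triangles in $\widetilde\Sigma$, since the apex is now in $I_{p,1}(\widetilde\Sigma)$ rather than $I_{p,0}(\Sigma)$); and the $0$-puncture relations appear in $\TT_\Delta$ only.

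My plan is therefore to construct mutually inverse homomorphisms. The assignment $\psi:\TT_\Delta\to \TT_{\widetilde\Delta}/\langle t_\ell=t_\gamma t_{\overline\gamma}\rangle$ via $t_\gamma\mapsto t_\gamma$ is well-defined because the only relations of $\TT_\Delta$ not already present in $\TT_{\widetilde\Delta}$ are the $0$-puncture relations, which hold in the quotient by construction. Conversely, the assignment $\phi:\TT_{\widetilde\Delta}\to \TT_\Delta$ via $t_\gamma\mapsto t_\gamma$ requires that the extra self-folded triangle relations of $\widetilde\Delta$ hold in $\TT_\Delta$; direct substitution into the cyclic triangle relation shows they collapse to $t_\ell=t_{\overline\ell}$, which in $\TT_\Delta$ follows from the two $0$-puncture instances $t_\ell=t_\gamma t_{\overline\gamma}$ and $t_{\overline\ell}=t_\gamma t_{\overline\gamma}$ applied to $(\ell,\gamma)$ and $(\overline\ell,\gamma)$ respectively. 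Since $\phi$ vanishes on the added relators, it descends to an inverse $\overline\phi$ of $\psi$ on generators, yielding the claimed isomorphism. The single delicate step — and the only nontrivial one in the whole argument — is the derivation of $t_\ell=t_{\overline\ell}$ inside $\TT_\Delta$, which ultimately depends on the $0$-puncture relation being applicable to both orientations of the enclosing loop.
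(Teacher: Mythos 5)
Your proof is correct, and since the paper states this lemma as ``immediate'' with no written proof, the direct comparison of presentations you carry out is exactly the argument being suppressed. Your reading of (a) as an \emph{anti}-automorphism is the right one: the literal homomorphism $t_\gamma\mapsto t_{\overline\gamma}$ already fails on the plain triangle relation (for $\Sigma_3$, the word $t_{\overline\alpha_1}t_{\alpha_2}^{-1}t_{\overline\alpha_3}t_{\alpha_1}^{-1}t_{\overline\alpha_2}t_{\alpha_3}^{-1}$ is not a consequence of the single defining relator in the rank-$5$ free group), so the obstruction is not confined to $0$-punctures; this reading is also the one forced by compatibility with the anti-involution of ${\mathcal A}_\Sigma$ in Theorem~\ref{th:functoriality nc-surface}(a). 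Two small caveats: your stated obstruction to the homomorphism reading (that $t_\gamma$ and $t_{\overline\gamma}$ would have to commute) is only a genuine contradiction once one knows they do \emph{not} commute in $\TT_\Delta$, which requires the structure results (e.g.\ Theorem~\ref{thm:1-relator}) rather than merely the absence of a defining relation saying so; and in (b) the key collapse of the self-folded triangle relation to $t_\ell=t_{\overline\ell}$, together with its derivation in $\TT_\Delta$ from the two oriented instances of the $0$-puncture relation, is exactly the delicate step, and you have it right.
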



Given a marked surface $\Sigma$ and an ordinary triangulation $\Delta$ of $\Sigma$, denote by $I_{P,1}(\Delta)$ the set of all $p\in I_{P,1}$ which are centers of self-folded triangles.

The following is immediate from the definition.
\begin{lemma} 
(Microtagging)\label{lem:micro} 
Let $\Sigma$ be an oriented punctured surface, $\Delta$ be an ordinary triangulation of $\Sigma$. Then for any subset $P\subset I_{P,1}(\Sigma)$ the assignments $$t_{\gamma^{P\setminus I_{P,1}(\Delta)}}\mapsto 
\begin{cases} 
t_{\gamma_1}t_{\gamma_2}, &\text{if $\gamma$ is a loop of a self-folded triangle in $\Delta$ around a puncture in $P$,}\\
t_{\gamma^{P\setminus I_{P,1}(\Delta)}}, &\text{otherwise.}\\
\end{cases}
$$
define an isomorphism $\mu_{\Delta^P}:\TT_{\Delta^{P\setminus I_{P,1}(\Delta)}}\simeq \TT_{\Delta^P}$, 
where in the first case, $(\gamma_1,\gamma_2)$ is a tagged cyclic bigon enclosed by $\gamma$ in $\Delta^P$ with $s(\gamma_1)=s(\gamma)$. 
\end{lemma}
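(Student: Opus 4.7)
The plan is to prove the lemma in three steps: set up the bijection of triangulations, verify well-definedness of the proposed homomorphism, and exhibit an inverse.

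First, set $P' := P \setminus I_{P,1}(\Delta)$. The tagged triangulations $\Delta^{P'}$ and $\Delta^P$ differ in exactly one respect: for each puncture $p \in P \cap I_{P,1}(\Delta)$, the self-folded triangle around $p$ in $\Delta^{P'}$, with radius $r_p$ and loop $\ell_p$, is replaced in $\Delta^P$ by the tagged cyclic bigon $(\gamma_{1,p},\gamma_{2,p})$ with $s(\gamma_{1,p})=s(\ell_p)$. Outside these local configurations, arcs and their taggings correspond identically between $\Delta^{P'}$ and $\Delta^P$, and the assignment on such generators is literally the identity on symbols.

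Second, I would verify that the assignment respects every defining relation of $\TT_{\Delta^{P'}}$. Relations not involving any loop $\ell_p$ are inherited unchanged. For a self-folded triangle $(\ell_p, r_p, \bar r_p)$, the triangle relation in $\TT_{\Delta^{P'}}$ collapses to $t_{\ell_p} = t_{\bar\ell_p}$; under the assignment this becomes
\[
t_{\gamma_{1,p}} t_{\gamma_{2,p}} = t_{\overline{\gamma}_{2,p}} t_{\overline{\gamma}_{1,p}},
\]
which is precisely the tagged cyclic bigon relation in $\TT_{\Delta^P}$. Any once-punctured cyclic bigon relation in $\TT_{\Delta^{P'}}$ that surrounds the loop $\ell_p$ translates, via the substitution $t_{\ell_p} \leftrightarrow t_{\gamma_{1,p}}t_{\gamma_{2,p}}$, into the once-punctured cyclic bigon relation of $\TT_{\Delta^P}$ enclosing a tagged cyclic bigon. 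Monogon and zero-puncture relations involve none of the $\ell_p$ and so are preserved verbatim.

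Third, to construct the inverse, I would send $t_{\gamma_{1,p}} \mapsto t_{r_p}$ and $t_{\gamma_{2,p}} \mapsto t_{r_p}^{-1} t_{\ell_p}$ (with the symmetric choice on the reversed arcs enforced by the tagged bigon relation), leaving all other generators fixed. A direct check shows that this inverse assignment respects the tagged cyclic bigon relation (since $t_{r_p}\cdot t_{r_p}^{-1} t_{\ell_p} = t_{\ell_p}$) and the once-punctured cyclic bigon-enclosing-a-tagged-bigon relation (since this reverses the earlier translation). Both compositions are then identity on generators.

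The one subtle point, and what I expect to be the main obstacle, is confirming that this inverse is genuinely compatible with \emph{all} relations of $\TT_{\Delta^P}$ simultaneously, especially where a tagged bigon shares an arc with an ordinary triangle in $\Delta^P$. The cleanest resolution is Tietze: adjoin formal symbols $t_{\gamma_{1,p}}, t_{\gamma_{2,p}}$ to $\TT_{\Delta^{P'}}$ with defining relations $t_{\gamma_{1,p}} t_{\gamma_{2,p}} = t_{\ell_p}$ (and the reversed analogue), derive the tagged bigon relation as a consequence of $t_{\ell_p}=t_{\bar\ell_p}$, and then eliminate $t_{\ell_p}, t_{r_p}$ to land in the presentation of $\TT_{\Delta^P}$. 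Since each Tietze move is invertible, this shows the two groups are isomorphic and identifies the isomorphism with $\mu_{\Delta^P}$.
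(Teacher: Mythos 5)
The paper itself offers no argument here — it simply declares the lemma ``immediate from the definition'' — so there is nothing to compare against except the definitions, and your proof correctly supplies the routine verification: the substitution $t_{\ell_p}\mapsto t_{\gamma_1}t_{\gamma_2}$ turns the self-folded triangle relation $t_{\ell_p}=t_{\overline\ell_p}$ into the tagged-bigon relation and the triangle relation of the adjacent outer triangle into the once-punctured-bigon relation, and the Tietze move (adjoin $t_{\gamma_2}:=t_{r_p}^{-1}t_{\ell_p}$, then eliminate $t_{\ell_p}$) is the cleanest way to see both maps are well defined and mutually inverse. Two small slips worth fixing: the relation you translate lives in $\TT_{\Delta^{P'}}$ as the \emph{ordinary triangle relation} of the outer triangle $(\alpha,\ell_p,\alpha')$ (in $\Delta^{P'}$ the puncture still carries a self-folded triangle, not a tagged bigon), and in the final Tietze step you should eliminate only $t_{\ell_p}$, keeping $t_{r_p}=t_{\gamma_1}$ as a generator of $\TT_{\Delta^P}$.
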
 

The following is an immediate refinement of \cite[Theorem 3.26]{BR}, obtained by combining that result with Lemmas \ref{lem:quo} and \ref{lem:micro}.

 \begin{theorem}\label{thm:1-relator}
   Let $\Sigma$ be an oriented marked surface with the Euler characteristic $\chi(\Sigma)$, the set $I=I(\Sigma)\ne \emptyset$ of marked points,  the set $I_b\subseteq I$ of marked boundary points, and $h=| I_{p,\ge 2}|$ special punctures. For any triangulation $\Delta$ of $\Sigma$ one has:

\noindent (a) If $\Sigma$ has a boundary or special punctures, then $\TT_\Delta$ is a free group in:

$\bullet$ $2$ generators if $\Sigma$ is a disk with $|I_b|=1$,  $|I_p|=0$ or a sphere with $|I_{p,0}|=|I_{p,1}|=h=1$, or a sphere with $|I_{p,0}|=h=0, |I_{p,1}|=2$.

$\bullet$ $3$ generators if $\Sigma$ is a sphere with $|I_{p,1}|=2$, $h=1$.

$\bullet$ $2h+3|I_{p,0}|+3|I_b|+4(|I_{p,1}|-\chi(\Sigma))$ generators otherwise.

\noindent (b) If $\Sigma$ is a closed surface with $h=0$, then $\TT_\Delta$ is isomorphic to:

$\bullet$ Trivial if $\Sigma$ is the sphere with $|I_{p,1}|=1$, $|I_{p,0}|=0$.

$\bullet$ A free group in $2|I_{p,0}|+3|I_{p,1}|-4$ generators if $\Sigma$ is the sphere with $|I_{p,0}|+|I_{p,1}|\in \{2,3\}$.

$\bullet$ A $1$-relator torsion free group in $3|I_{p,0}|+4(|I_{p,1}|-\chi(\Sigma))+1$ generators otherwise. 
\end{theorem}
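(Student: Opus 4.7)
The plan is to derive the refinement by composing the already-established \cite[Theorem 3.26]{BR} with the two reductions provided by Lemmas \ref{lem:quo} and \ref{lem:micro}. First, \cite[Theorem 3.26]{BR} supplies the assertion for ordinary triangulations of surfaces with $I_{p,0}(\Sigma)=\emptyset$ (including the contribution $2h$ of special punctures, which arises from the monogon identifications $t_{\overline\ell}=t_\ell$); this will serve as the base case. The tagged case will be reduced to the untagged case as follows: for any tagged triangulation $\Delta^P$ of $\Sigma$ with $P\subset I_{P,1}(\Sigma)$, I would choose an ordinary triangulation $\Delta$ of $\Sigma$ with $P\subset I_{P,1}(\Delta)$, i.e., each tagged puncture sits at the center of a self-folded triangle of $\Delta$, which is always possible. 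Then Lemma \ref{lem:micro} yields an isomorphism $\TT_\Delta=\TT_{\Delta^\emptyset}\simeq \TT_{\Delta^P}$, so the structural and rank assertions transfer verbatim, using Remark \ref{rm:monomial mutation surfaces} to pass freely between different ordinary triangulations of $\Sigma$.

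Next, I will treat surfaces with $I_{p,0}(\Sigma)\neq\emptyset$ via Lemma \ref{lem:quo}(b), which asserts
$$\TT_\Delta\cong \TT_{\widetilde\Delta}\big/\langle t_\ell=t_\gamma t_{\overline\gamma}\rangle,$$
where $\widetilde\Sigma$ promotes every $0$-puncture to an ordinary puncture and the defining relations range over one pair $(\ell,\gamma)$ per $0$-puncture. Each such relation is a Tietze transformation that eliminates the generator $t_\ell$ (the loop around the $0$-puncture), so quotienting a free group by these relations again yields a free group whose rank drops by exactly $|I_{p,0}(\Sigma)|$, and similarly the one-relator torsion-free class is preserved with the same rank decrement (after rewriting the single defining relator by substituting $t_\gamma t_{\overline\gamma}$ for $t_\ell$). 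Comparison with \cite[Theorem 3.26]{BR} applied to $\widetilde\Sigma$ (for which $|I_{p,1}(\widetilde\Sigma)|=|I_{p,1}(\Sigma)|+|I_{p,0}(\Sigma)|$ while all other parameters are unchanged) then converts the contribution $4|I_{p,0}|$ (viewing $0$-punctures as ordinary) into $3|I_{p,0}|$, matching the claimed formula in both (a) and (b).

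The main obstacle will be twofold. The first is the handful of small exceptional cases listed explicitly in (a) and (b) (the disk with $|I_b|=1$; spheres with two or three punctures of various types; the twice-punctured sphere with one special puncture), where the generic Euler-characteristic counting degenerates and the associated triangulation is too small to apply the reductions as stated. For each such configuration I plan either to quote the corresponding small-case output of \cite[Theorem 3.26]{BR} or to compute $\TT_\Delta$ directly from its short presentation and then apply the $0$-puncture reduction above. The second and more delicate point will be verifying in case (b) that the Tietze elimination of $t_\ell$ preserves torsion-freeness: after rewriting, the defining relator of the one-relator group is the image of the original relator under the substitution $t_\ell\mapsto t_\gamma t_{\overline\gamma}$, and one must check that this image is still not a proper power in the resulting free group. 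Since $t_\gamma$ and $t_{\overline\gamma}$ are distinct free generators and $t_\ell$ occurs in the original closed-surface relator only in the positions dictated by the self-folded-triangle relation, I expect this to follow from a direct inspection of the explicit form of the relator given in \cite[Theorem 3.26]{BR}, and this is the only step in the argument that is not entirely formal.
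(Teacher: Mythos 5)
Your proposal matches the paper's own argument: the paper states the theorem as "an immediate refinement of \cite[Theorem 3.26]{BR}, obtained by combining that result with Lemmas \ref{lem:quo} and \ref{lem:micro}," which is precisely the reduction you describe (microtagging for the tagged case, Tietze elimination of the loop generators $t_\ell$ via Lemma \ref{lem:quo}(b) for the $0$-punctures, with the rank bookkeeping $4|I_{p,0}|\rightsquigarrow 3|I_{p,0}|$). Your explicit attention to the small exceptional cases and to preservation of torsion-freeness under the substitution $t_\ell\mapsto t_\gamma t_{\overline\gamma}$ fills in details the paper leaves implicit, but the route is the same.
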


From \cite[Lemma 3.50, Section 3.12]{BR}, we have the following. 

\begin{remark}\label{rmk:generator}
As in Theorem \ref{thm:1-relator}, if $\Delta$ is an ordinary triangulation, then the generators can be chosen to be of two types: either of the form $t_{\gamma}$ for some $\gamma\in \Delta$, or of the form $t_{\overline\gamma_1}^{-1}t_{\gamma_2}t_{\overline \gamma_3}^{-1}$ for some triangulations $(\gamma_1,\gamma_2,\gamma_3)$ in $\Delta$. Moreover, for every ordinary puncture $i\in I_{p,1}$, there exists a generator $t_\gamma$ such that $s(\gamma)=i$. Furthermore, in the case $I_{p,0}=\emptyset$, if $\mathbb T_\Delta$ is a 1-relator torsion free group, then the single defining relation is of the form $t_{\overline \gamma_1}^{-1}t_{\gamma_2}t_{\overline\gamma_3}^{-1}t_{\gamma_4}\cdots t_{\overline\gamma_{2n-1}}^{-1}t_{\gamma_{2n}}$ for some composable sequence $(\gamma_1,\gamma_2,\cdots, \gamma_{2n})$ in $\Delta$. 
\end{remark}

\begin{proposition}[Tagging/untagging automorphisms]  \label{pro:tag/untag} 
Let $\Sigma$ be an oriented punctured surface, $\Delta$ be an ordinary triangulation of $\Sigma$, and $P\subset I_{P,1}(\Sigma)\setminus I_{P,1}(\Delta)$. Then the assignments 
$$t_\gamma\mapsto 
\begin{cases} 
t_{\overline\gamma}^{-1}, &\text{if $s(\gamma),t(\gamma)\in P$,}\\
t_{\alpha_4}t^{-1}_{\overline\alpha_3}, &\text{if $s(\gamma)\notin P, t(\gamma)\in P$,}\\
t^{-1}_{\overline\alpha_1}t_{\alpha_2}, &\text{if $t(\gamma)\notin P, s(\gamma)\in P,$}\\
t_{\gamma}, &\text{otherwise},\\
\end{cases}
$$
define an automorphism $\varphi_{P,\Delta}$ of $\TT_\Delta$,
where in the second case, $(\alpha_3,\alpha_4,\gamma)$ is the first cyclic triangle that $\gamma$ passes by rotation counterclockwise along $t(\gamma)$, in the third case, $(\alpha_1,\alpha_2,\overline\gamma)$ is the first cyclic triangle that $\gamma$ passes by rotation counterclockwise along $s(\gamma)$.
\end{proposition}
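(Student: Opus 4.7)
The plan is to show that the formulas define a well-defined group homomorphism $\varphi_{P,\Delta}\colon \TT_\Delta \to \TT_\Delta$ by checking compatibility with each defining relation of $\TT_\Delta$, and then upgrade this to an automorphism by verifying that $\varphi_{P,\Delta}$ is involutive. The hypothesis $P \subset I_{p,1}(\Sigma) \setminus I_{p,1}(\Delta)$ ensures that no puncture in $P$ is the center of a self-folded triangle of $\Delta$, so for each $p\in P$ and each edge $\gamma \in \Delta$ incident to $p$ the ``first cyclic triangle at $p$ counterclockwise" appearing in the formulas is unambiguous. The easy relations are quickly dispatched: the special-loop relation $t_{\overline\gamma}=t_\gamma$ is preserved by the built-in symmetry of the first-case formula (when the base point is in $P$, both $t_\gamma$ and $t_{\overline\gamma}$ are sent to $t_\gamma^{-1}$); the pending-arc loop relation $t_\ell = t_\gamma t_{\overline\gamma}$ involves a $0$-puncture endpoint lying in $I_{p,0}$, which is disjoint from $P$, so the verification reduces to a short local check at the base point in the case when the base point lies in $P$.

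The core of the proof is verification of the triangle relation $t_{\alpha_1} t_{\overline\alpha_2}^{-1} t_{\alpha_3} = t_{\overline\alpha_3} t_{\alpha_2}^{-1} t_{\overline\alpha_1}$ for each cyclic triangle $(\alpha_1,\alpha_2,\alpha_3)$ of $\Delta$. I would proceed by case analysis on $Q = P \cap \{s(\alpha_1), s(\alpha_2), s(\alpha_3)\}$. The case $Q=\emptyset$ is trivial. When $|Q|=1$, say $p=s(\alpha_1)$, three of the six generators occurring in the relation are replaced by the two-term angle-piece expressions dictated by the formulas; the crucial observation is that the first-CCW triangle at $p$ seen from (the incoming or outgoing version of) $\alpha_1$ is distinct from the one seen from $\overline\alpha_3$, so after substitution both sides of the relation can be rewritten, using the triangle relations in those two adjacent triangles, to a common element of $\TT_\Delta$. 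The cases $|Q|=2$ and $|Q|=3$ proceed by the same philosophy, the simplification at each punctured vertex being driven by the triangle relations at the first-CCW adjacent triangles chosen by the formula.

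For the automorphism property I would verify $\varphi_{P,\Delta}^{2} = \mathrm{id}$ on each generator. When both endpoints of $\gamma$ lie in $P$, this is immediate from the first-case formula; in the two mixed cases a second application of $\varphi_{P,\Delta}$ introduces further angle-piece factors which telescope back to $t_\gamma$ by invoking the triangle relation of the first-CCW triangle read ``in reverse." The main obstacle I expect is the $|Q|\geq 2$ subcases of the triangle-relation verification: the images then involve up to six modified factors and their simplification requires juggling triangle relations at up to three distinct adjacent triangles simultaneously. I expect this to become tractable by fixing a coherent counterclockwise orientation convention for the star of each $p\in P$ and then simplifying one punctured vertex at a time, so that only a bounded (in fact, two-triangle) local configuration around each such $p$ is in play at any given step.
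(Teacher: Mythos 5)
Your proposal is correct and follows essentially the same route as the paper's proof: a case analysis on how many vertices of each cyclic triangle lie in $P$, with the key simplification in the mixed cases coming from the triangle relation of the adjacent ("first counterclockwise") triangle at the punctured vertex. The paper's own proof only writes out the triangle-relation verification (leaving the remaining relations and invertibility implicit), so your additional checks of the monogon/pending-arc relations and of $\varphi_{P,\Delta}^2=\mathrm{id}$ are harmless refinements of the same argument rather than a different approach.
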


\begin{proof} 
For any clockwise cyclic triangle $(\gamma_1,\gamma_2,\gamma_3)$,

if $s(\gamma_1),s(\gamma_2),s(\gamma_3)\notin P$, then $$\varphi_{P,\Delta}(t_{\gamma_1}t^{-1}_{\overline\gamma_2}t_{\gamma_3})= t_{\gamma_1}t^{-1}_{\overline\gamma_2}t_{\gamma_3}=t_{\overline\gamma_3}t_{\gamma_2}^{-1}t_{\overline\gamma_1}=\varphi_{P,\Delta}(t_{\overline\gamma_3}t_{\gamma_2}^{-1}t_{\overline\gamma_1}),$$

if $|\{s(\gamma_1),s(\gamma_2),s(\gamma_3)\}\cap P|=1$, we may assume that $s(\gamma_1)\in P, s(\gamma_2), s(\gamma_3)\notin P$, then 
$$\varphi_{P,\Delta}(t_{\gamma_1}t^{-1}_{\overline\gamma_2}t_{\gamma_3})= 
(t_{\beta_2}^{-1}t_{\overline \beta_1})t^{-1}_{\overline\gamma_2}(t_{\overline \gamma_2}t^{-1}_{\gamma_1})
=t_{\beta_2}^{-1}t_{\overline \beta_1}t^{-1}_{\gamma_1}=t_{\overline\gamma_1}^{-1}t_{ \beta_1}t^{-1}_{\overline\beta_2}=\varphi_{P,\Delta}(t_{\overline\gamma_3}t_{\gamma_2}^{-1}t_{\overline\gamma_1}),$$
where $(\gamma_1,\beta_1,\beta_2)$ is the other cyclic triangle in $\Delta$, 

if $|\{s(\gamma_1),s(\gamma_2),s(\gamma_3)\}\cap P|=2$, we may assume that $s(\gamma_1), s(\gamma_2)\in P, s(\gamma_3)\notin P$, then

$$\varphi_{P,\Delta}(t_{\gamma_1}t^{-1}_{\overline\gamma_2}t_{\gamma_3})= t^{-1}_{\overline \gamma_1}
(t_{\beta_1}t_{\overline \beta_2})^{-1}(t_{\overline \gamma_2}t^{-1}_{\gamma_1})
=t^{-1}_{\overline \gamma_1}t_{\overline \beta_2}
t_{\beta_1}^{-1}t_{\overline \gamma_2}t^{-1}_{\gamma_1}=\varphi_{P,\Delta}(t_{\overline\gamma_3}t_{\gamma_2}^{-1}t_{\overline\gamma_1}),$$

if $|\{s(\gamma_1),s(\gamma_2),s(\gamma_3)\}\cap P|=3$, i.e., $s(\gamma_1), s(\gamma_2) s(\gamma_3)\in P$, then 

$$\varphi_{P,\Delta}(t_{\gamma_1}t^{-1}_{\overline\gamma_2}t_{\gamma_3})= t_{\overline\gamma_1}^{-1}t_{\gamma_2}t^{-1}_{\overline\gamma_3}=t^{-1}_{\gamma_3}t_{\overline\gamma_2}t^{-1}_{\gamma_1}=\varphi_{P,\Delta}(t_{\overline\gamma_3}t_{\gamma_2}^{-1}t_{\overline\gamma_1}).$$

The proof is complete.  
\end{proof}

The following is immediate.

\begin{lemma} In the assumptions of Proposition \ref{pro:tag/untag}, the assignments 
$t_{\gamma^P} \mapsto \varphi_{P,\Delta}(t_\gamma)$
define an isomorphism $\mu_{\Delta,\Delta^P}:\TT_{\Delta^P}\simeq \TT_{\Delta}$.
\end{lemma}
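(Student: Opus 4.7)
The plan is to exploit the fact that the hypothesis $P\subset I_{P,1}(\Sigma)\setminus I_{P,1}(\Delta)$ forces no point of $P$ to sit at the center of a self-folded triangle of $\Delta$. Because the tagged bigons in the definition of a tagged triangulation arise precisely from replacing self-folded triangles around tagged punctures, this means that no tagged bigon is introduced when passing from $\Delta$ to $\Delta^P$: the underlying set of arcs of $\Delta^P$ is in canonical bijection with that of $\Delta$ via $\gamma\mapsto \gamma^P$. Consequently, the tagged-bigon relation and the once-punctured-cyclic-bigon relation in the definition of $\TT_{\Delta^P}$ are both vacuous, so $\TT_{\Delta^P}$ admits a presentation with generators $t_{\gamma^P}$, $\gamma\in\Delta$, subject solely to the triangle relations, the monogon relations at special loops, and the zero-puncture relations inherited from $\Delta$.

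The second step is to package this as a relabeling isomorphism. Since the presentation of $\TT_{\Delta^P}$ obtained above is literally the presentation of $\TT_\Delta$ with $t_\gamma$ replaced by $t_{\gamma^P}$, the assignment $\iota : \TT_{\Delta^P}\to \TT_\Delta,\ t_{\gamma^P}\mapsto t_\gamma$ is a well-defined isomorphism of groups. Composing with the automorphism $\varphi_{P,\Delta}$ of $\TT_\Delta$ produced in Proposition \ref{pro:tag/untag}, one obtains the group isomorphism
\[
\mu_{\Delta,\Delta^P} \;:=\; \varphi_{P,\Delta}\circ \iota \;:\; \TT_{\Delta^P}\;\xrightarrow{\ \sim\ }\;\TT_\Delta,
\]
whose effect on generators is exactly $t_{\gamma^P}\mapsto \varphi_{P,\Delta}(t_\gamma)$, as required.

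The only conceivable obstacle is verifying that the triangle relations in $\TT_{\Delta^P}$ really do map to valid relations in $\TT_\Delta$, but that verification was already the content of Proposition \ref{pro:tag/untag}: there the compatibility of $\varphi_{P,\Delta}$ with each of the four cases (according to how many endpoints of the triangle lie in $P$) was checked explicitly. Given that proposition, the present lemma is a formal consequence and requires no further case analysis.
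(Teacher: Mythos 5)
Your proof is correct and is exactly the unpacking of what the paper labels ``immediate'': since $P\subset I_{P,1}(\Sigma)\setminus I_{P,1}(\Delta)$ rules out tagged bigons in $\Delta^P$, the tagged triangle group $\TT_{\Delta^P}$ is just a relabeling of $\TT_\Delta$, and composing that relabeling with the automorphism $\varphi_{P,\Delta}$ from Proposition~\ref{pro:tag/untag} gives the claimed isomorphism. The only minor imprecision is in your closing paragraph: the check that relations are preserved under the relabeling $\iota$ is trivial (the triangles of $\Delta^P$ are those of $\Delta$), while Proposition~\ref{pro:tag/untag} is what guarantees $\varphi_{P,\Delta}$ is a well-defined automorphism — but this does not affect the validity of the argument.
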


Based on this, for any ordinary triangulation $\Delta$ of $\Sigma$ and any $P\subset I_{P,1}(\Sigma)$ define an isomorphism $\mu_{\Delta,\Delta^P}:\TT_{\Delta^P}\simeq \TT_{\Delta}$ by 
$$\mu_{\Delta,\Delta^P}:= \mu_{\Delta,\Delta^{P\setminus I_{P,1}(\Delta)}} \circ (\mu_{\Delta^P})^{-1}.$$

Then for any
(tagged) triangulations $\Delta^P$, ${\Delta}^{P'}$ of $\Sigma$ define the isomorphism $\mu_{{\Delta}^{P'},\Delta^P}:\TT_{\Delta^P}\simeq T_{{\Delta}^{P'}}$ by
$$\mu_{{\Delta}^{P'},\Delta^P}:=(\mu_{{\Delta},\Delta^{P'}})^{-1}\circ \mu_{\Delta,\Delta^P}\ .$$

For any two ordinary triangulations $\Delta,\Delta'$ of $\Sigma$ related by a flip, we assume that $\Delta'=\mu_{\alpha}(\Delta)$ and $\alpha'\in \Delta'$ is not a pending arc.

\begin{lemma}
$(a)$ If $\alpha$ is not a loop around some pending arcs, then the following assignments
$$t_\gamma\mapsto \begin{cases}
    t_{\alpha_1}t^{-1}_{\overline \alpha'}t_{\overline \alpha_3}, & \text{ if } \gamma=\alpha,\\
    t_{\alpha_3}t^{-1}_{ \alpha'}t_{\overline \alpha_1}, & \text{ if } \gamma=\overline\alpha,\\ 
    t_\gamma, & \text{ otherwise}
\end{cases}$$
give an isomorphism 
$\mu_{\Delta',\Delta}:\mathbb T_\Delta\to \mathbb T_{\Delta'}$, where $(\alpha_1,\alpha_2,\alpha_3,\alpha_4)$ is the cyclic quadrilateral in $\Delta$ such that $(\alpha_3,\alpha_4,\alpha)$ is a cyclic triangle in $\Delta$ and $(\alpha_4,\alpha_1,\alpha')$ is a cyclic triangle in $\Delta'$. 

$(b)$ If $\alpha$ is a loop around a pending arc $\beta$ with $s(\alpha)=s(\beta)$, then the following assignments
$$t_\gamma\mapsto \begin{cases}
    t_{\alpha_1}t^{-1}_{\overline \beta'}, & \text{ if } \gamma=\beta,\\
    t^{-1}_{ \beta'}t_{\overline \alpha_1}, & \text{ if } \gamma=\overline\beta,\\
    t_{\alpha_1}t^{-1}_{\alpha'}t_{\overline \alpha_1}, & \text{ if } \gamma=\alpha \text{ or } \overline\alpha,\\
    t_\gamma, & \text{ otherwise}
\end{cases}$$
give an isomorphism 
$\mu_{\Delta',\Delta}:\mathbb T_\Delta\to \mathbb T_{\Delta'}$, where $(\alpha_1,\alpha_2,\beta,\overline\beta)$ is the cyclic quadrilateral in $\Delta$ such that $(\alpha_1,\alpha_2,\overline\alpha)$ is a cyclic triangle in $\Delta$ and $\beta'$ is the pending arc enclosed by $\alpha'$. 
\end{lemma}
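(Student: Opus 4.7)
The plan is to verify that the formulas define a well-defined group homomorphism $\mathbb T_\Delta \to \mathbb T_{\Delta'}$ and then construct an inverse by applying the mirror formulas for the flip $\Delta' \stackrel{\alpha'}{\to} \Delta$. Since $\mu_{\Delta',\Delta}$ is the identity on $t_\gamma$ for $\gamma \notin \{\alpha,\overline\alpha\}$ (in case (a)) or $\gamma \notin \{\alpha,\overline\alpha,\beta,\overline\beta\}$ (in case (b)), the only defining relations of $\mathbb T_\Delta$ that require serious verification are those involving $\alpha$ or $\overline\alpha$. The candidate inverse is obtained from the flip $\mu_{\alpha'}\Delta'=\Delta$ by the same recipe with the roles of unprimed and primed letters interchanged; checking that it is a two-sided inverse on generators is a straightforward telescoping once the homomorphism property is established.

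For part (a), the only defining relations of $\mathbb T_\Delta$ that mention $\alpha$ are the two triangle relations attached to the triangles $(\alpha_3,\alpha_4,\alpha)$ and $(\overline\alpha,\alpha_1,\alpha_2)$. I will check the first; the second is symmetric. The relation reads
$$t_{\alpha_3}\,t_{\overline\alpha_4}^{-1}\,t_{\alpha} = t_{\overline\alpha}\,t_{\alpha_4}^{-1}\,t_{\overline\alpha_3}.$$
Substituting the proposed images $t_\alpha \mapsto t_{\alpha_1} t_{\overline{\alpha'}}^{-1} t_{\overline{\alpha_3}}$ and $t_{\overline\alpha} \mapsto t_{\alpha_3} t_{\alpha'}^{-1} t_{\overline{\alpha_1}}$ and canceling the common factor $t_{\alpha_3}$ on the left and $t_{\overline{\alpha_3}}$ on the right reduces the required identity to
$$t_{\overline\alpha_4}^{-1}\,t_{\alpha_1}\,t_{\overline{\alpha'}}^{-1} = t_{\alpha'}^{-1}\,t_{\overline{\alpha_1}}\,t_{\alpha_4}^{-1},$$
which is precisely the triangle relation of the cyclic triangle $(\alpha_4,\alpha_1,\alpha')$ in $\Delta'$, rewritten. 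The other defining relations of $\mathbb T_\Delta$ not mentioning $\alpha$ map identically to defining relations of $\mathbb T_{\Delta'}$; if $\alpha$ sits inside a special-puncture bigon, tagged cyclic bigon, or once-punctured bigon relation, one checks each case by the same direct substitution using the corresponding relations in $\Delta'$.

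For part (b), the flip exchanges the self-folded triangle $(\alpha_1,\alpha_2,\overline\alpha)$ with its self-folded counterpart in $\Delta'$ around the same $0$-puncture, replacing the pending arc $\beta$ by $\beta'$. The relations of $\mathbb T_\Delta$ involving $\alpha$ or $\beta$ are the triangle relation of $(\alpha_1,\alpha_2,\overline\alpha)$ and the $0$-puncture relation $t_\alpha = t_\beta t_{\overline\beta}$ (and $t_\alpha = t_{\overline\alpha}$). Under the proposed map,
$$t_\beta t_{\overline\beta} \mapsto t_{\alpha_1} t_{\overline{\beta'}}^{-1} t_{\beta'}^{-1} t_{\overline{\alpha_1}} = t_{\alpha_1}\bigl(t_{\beta'} t_{\overline{\beta'}}\bigr)^{-1} t_{\overline{\alpha_1}} = t_{\alpha_1} t_{\alpha'}^{-1} t_{\overline{\alpha_1}},$$
using the $0$-puncture relation in $\Delta'$ applied to $\alpha'$, and this coincides with the image of $t_\alpha$; the equality $t_\alpha = t_{\overline\alpha}$ is preserved for the same reason. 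The triangle relation of $(\alpha_1,\alpha_2,\overline\alpha)$ becomes the triangle relation of $(\alpha_2,\overline{\alpha_1},\overline{\alpha'})$ in $\Delta'$ after the same cancellation as in (a).

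The main obstacle I anticipate is bookkeeping rather than a genuine difficulty: one must enumerate the situations in which the flipped edge $\alpha$ participates in a tagged cyclic bigon, a once-punctured cyclic bigon, a special loop, or a zero-puncture loop, and verify the corresponding relation in each case. These cases are not deep but proliferate; handling them uniformly will likely use the microtagging isomorphism of Lemma \ref{lem:micro} and the observation that $\mu_{\Delta',\Delta}$ preserves the subgroup generated by boundary edges and fixes all $t_\gamma$ with $\gamma$ not incident to the two triangles being flipped, so no ``far-away'' relation can be violated. Once all cases are cleared, injectivity and surjectivity follow from the explicit inverse $\mu_{\Delta,\Delta'}$, and one immediately obtains that the collection $\{\mu_{\Delta',\Delta}\}$ satisfies the defining relations of ${\bf TSurf}_\Sigma$ when combined with the flipless isomorphisms of Lemma \ref{lem:micro} and Proposition \ref{pro:tag/untag}.
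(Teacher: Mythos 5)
Your verification that the assignments respect the defining relations of $\TT_\Delta$ is correct and is essentially the paper's own argument: the two triangle relations of $\Delta$ involving $\alpha$ reduce, after the cancellation you describe, to the triangle relations of the two new triangles in $\Delta'$, and all other relations are untouched.

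However, your construction of the inverse contains a genuine gap. The map obtained by applying ``the same recipe with the roles of unprimed and primed letters interchanged'' to the reverse flip is \emph{not} the inverse of $\mu_{\Delta',\Delta}$, and the composite does not telescope. Concretely, in case (a) the reverse-flip recipe gives $t_{\overline{\alpha'}}\mapsto t_{\alpha_4}t^{-1}_{\overline\alpha}t_{\overline\alpha_2}$, so
$$\mu_{\Delta,\Delta'}\bigl(\mu_{\Delta',\Delta}(t_\alpha)\bigr)=t_{\alpha_1}\bigl(t_{\alpha_4}t^{-1}_{\overline\alpha}t_{\overline\alpha_2}\bigr)^{-1}t_{\overline\alpha_3}=t_{\alpha_1}t^{-1}_{\overline\alpha_2}t_{\overline\alpha}t^{-1}_{\alpha_4}t_{\overline\alpha_3}=t_{\alpha_1}t^{-1}_{\overline\alpha_2}t_{\alpha_3}t^{-1}_{\overline\alpha_4}t_{\alpha},$$
which is exactly $T_{\alpha,\Delta}(t_\alpha)$ from \eqref{eq:Tij} (equivalently Theorem \ref{th:action}), i.e.\ $t_\alpha$ scaled by a noncommutative cross-ratio, not $t_\alpha$ itself. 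The non-triviality of this composite is the entire reason the cluster braid groups of the paper are interesting, so it cannot be argued away. The correct inverse is a genuinely different homomorphism, namely the one built from the \emph{other} decomposition of the flipped arc around the quadrilateral: in the paper's notation $\mu^{-}_{\Delta,\Delta'}$ sends $t_{\overline{\alpha'}}\mapsto t_{\overline\alpha_3}t^{-1}_{\alpha}t_{\alpha_1}$ (and $t_{\alpha'}\mapsto t_{\overline\alpha_1}t^{-1}_{\overline\alpha}t_{\alpha_3}$); with this choice one gets $t_{\alpha_1}\bigl(t_{\overline\alpha_3}t^{-1}_{\alpha}t_{\alpha_1}\bigr)^{-1}t_{\overline\alpha_3}=t_\alpha$ by pure cancellation, and similarly for the other composite. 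The two candidate formulas for $t_{\overline{\alpha'}}$ are the two monomial ``halves'' of the Ptolemy relation; they coincide in no way inside the triangle group, and only the second one inverts $\mu_{\Delta',\Delta}$. You must therefore introduce this second homomorphism (and check it is well defined by the same triangle-relation computation) before concluding bijectivity; the same correction is needed in case (b).
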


\begin{proof}
$(a)$ For triangle $(\alpha_1,\alpha_2,\overline\alpha)$ in $\Delta$, we have

$$\mu_{\Delta',\Delta}(t_{\alpha_1}t^{-1}_{\overline\alpha_2}t_{\overline\alpha})=t_{\alpha_1}t^{-1}_{\overline\alpha_2}t_{\alpha_3}t^{-1}_{ \alpha'}t_{\overline \alpha_1}=t_{\alpha_1}t^{-1}_{\overline\alpha'}t_{\overline\alpha_3}t^{-1}_{ \alpha_2}t_{\overline \alpha_1}=\mu_{\Delta',\Delta}(t_{\alpha}t^{-1}_{\alpha_2}t_{\overline\alpha_1}).$$ 

Similarly, we have $\mu_{\Delta',\Delta}(t_{\alpha_3}t^{-1}_{\overline\alpha_4}t_{\alpha})=\mu_{\Delta',\Delta}(t_{\overline\alpha}t^{-1}_{\alpha_4}t_{\overline\alpha_3})$.
Thus, we have a group homomorphism $\mu_{\Delta',\Delta}:\mathbb T_\Delta\to \mathbb T_{\Delta'}$.

Similarly, assignments
$$t_\gamma\mapsto \begin{cases}
    t_{\overline\alpha_4}t^{-1}_{\alpha'}t_{\alpha_2}, & \text{ if } \gamma=\alpha,\\
    t_{\overline\alpha_2}t^{-1}_{\overline\alpha'}t_{\alpha_4}, & \text{ if } \gamma=\overline\alpha,\\ 
    t_\gamma, & \text{ otherwise}
\end{cases}$$
give a group homomorphism 
$\mu^{-}_{\Delta',\Delta}:\mathbb T_\Delta\to \mathbb T_{\Delta'}$.

Moreover, we have $\mu^-_{\Delta,\Delta'}\circ \mu_{\Delta',\Delta}=id_{\TT_\Delta}$ and $\mu_{\Delta,\Delta'}\circ \mu^{-}_{\Delta',\Delta}=id_{\TT_{\Delta'}}$. Thus, $\mu_{\Delta',\Delta}$ is an isomorphism.

Our proof of the statement $(b)$ is similar to $(a)$, so we omit it.

The proof is complete.
\end{proof}

For any $P\subset I_{P,1}(\Sigma)$, define $\mu_{\Delta'^P,\Delta^{P}}:=\mu_{\Delta'^{P},\Delta'}\mu_{\Delta',\Delta}(\mu_{\Delta^P,\Delta})^{-1}:\TT_{\Delta'^P}\to \mathbb T_{\Delta^{P}}$. It follows that the following diagram commutes
$$\centerline{\xymatrix{
  & \TT_{\Delta} \ar[d]_{\mu_{\Delta',\Delta}} \ar[rr]^{\mu_{\Delta^{P},\Delta}}  &&  \TT_{\Delta^P}\ar[d]^{\mu_{\Delta'^{P},\Delta^{P}}}           \\
  & \TT_{\Delta'} \ar[rr]^{\mu_{\Delta'^{P},\Delta'}}  && \TT_{\Delta'^{P}}.}}$$ 

\begin{proposition}
\label{pro:nu}
$(a)$ For any vertical morphism $v_{f,\Delta,\underline\Delta}$ in ${\bf TSurf}$ the assignments 
$$t_{\gamma}\mapsto 
\begin{cases}
  t_{f(\gamma)}, & \text{if $f(\gamma)$ is $f$-admissible,}\\
  t_{\ell}, & \text{if $f(\gamma)$ is a loop around a special puncture with self-crossing,}
\end{cases}
$$
where $\ell$ is the special loop around the special puncture in $\Delta$,
define a homomorphism of groups
$\nu_{f,\Delta,\underline\Delta}:\TT_{\Delta}\to  \TT_{\underline\Delta}$.

$(b)$ $\nu_{f',\underline \Delta,\underline \Delta'}\nu_{f,\Delta,\underline \Delta}=\nu_{f'\circ f,\Delta,\underline \Delta'}$ for any morphisms $f:|\Delta|\to |\underline \Delta|$, $f':|\underline \Delta|\to |\underline \Delta'|$ in ${\bf Surf}$ such that $(\Delta,\underline\Delta)$ is an $f$-compatible pair and $(\underline\Delta,\underline\Delta')$ is an $f'$-compatible pair.
\end{proposition}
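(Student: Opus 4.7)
The plan is to verify (a) by checking that the proposed assignment respects each defining relation of $\TT_\Delta$: the triangle relations, the monogon relations $t_{\overline\gamma}=t_\gamma$ for special loops, the zero puncture relations $t_\ell=t_\gamma t_{\overline\gamma}$, and the tagged bigon relations. The starting observation is that since $(\Delta,\underline\Delta)$ is $f$-compatible, $\Delta$ is $f$-admissible and hence every $\gamma\in\Delta$ maps to a curve $f(\gamma)$ in $|\underline\Delta|$. However, $f(\gamma)$ may fail to be $\bigsqcup I_{p,\geq 2}$-admissible precisely when $\gamma$ is an edge of an $n$-gon preimage of a special loop around a special puncture $\underline p$ with $|\underline p|=n|p|$, as described in Lemma~\ref{le:f-admissible triangulations}(c); in that case the special loop $\ell\in\underline\Delta$ around $\underline p$ is the unambiguous replacement.

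For a triangle $(\alpha_1,\alpha_2,\alpha_3)\in\Delta$, the verification of $t_{\alpha_1}t_{\overline\alpha_2}^{-1}t_{\alpha_3}=t_{\overline\alpha_3}t_{\alpha_2}^{-1}t_{\overline\alpha_1}$ splits into three subcases. First, if $f$ restricts to a local isomorphism on a neighborhood of the triangle, then $(f(\alpha_1),f(\alpha_2),f(\alpha_3))$ is a triangle in $\underline\Delta$ and the identity transports verbatim. Second, if $f$ has a ramification point at a vertex of the triangle, the image collapses onto a bigon or loop around a special puncture and the identity reduces to a combination of monogon, bigon, and triangle relations in $\TT_{\underline\Delta}$. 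Third, if some $f(\alpha_i)$ is a self-crossing loop, Lemma~\ref{le:f-admissible triangulations}(c) forces all arcs in the relevant preimage $n$-gon to share the replacement $t_\ell$, and the resulting identity collapses to $t_\ell=t_{\overline\ell}$ (the monogon relation). The monogon, zero puncture, and tagged bigon relations are then handled by analogous (and shorter) case analyses, again controlled by Lemma~\ref{le:f-admissible triangulations}.

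The main obstacle will be the third subcase above, where one must check that the triangle relation collapses consistently after the substitution of $t_\ell$ for several of its terms simultaneously. Resolving this requires an explicit description, via Lemma~\ref{le:f-admissible triangulations}(c), of how the branched cover $f$ organizes the preimage polygon of $\underline\ell$ together with its surrounding triangulation, and a careful accounting of the orientations, so that both sides of the triangle relation reduce to the same expression in $\TT_{\underline\Delta}$.

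For (b), once (a) is established, the functoriality $\nu_{f',\underline\Delta,\underline\Delta'}\circ\nu_{f,\Delta,\underline\Delta}=\nu_{f'\circ f,\Delta,\underline\Delta'}$ is verified on each generator $t_\gamma\in\TT_\Delta$ by distinguishing four cases according to whether $f(\gamma)$ and $(f'\circ f)(\gamma)$ are admissible or self-crossing loops around special punctures. In each case a direct computation on the generator shows that the left- and right-hand sides agree, using the transitivity of the $n$-gon preimage structure of Lemma~\ref{le:f-admissible triangulations}(c) when both $f$ and $f'$ have ramification at nested special punctures.
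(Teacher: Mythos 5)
Your proposal is correct and follows essentially the same route as the paper: one checks the defining relations of $\TT_\Delta$ generator by generator, the only nontrivial case being a triangle with an edge mapping to a self-crossing loop around a special puncture, where (by Lemma \ref{le:f-admissible triangulations}(c)) the whole triangle lies in the preimage polygon so all three edges are sent to $t_\ell$ and the relation collapses to an identity via the monogon relation $t_\ell=t_{\overline\ell}$; part $(b)$ is then a direct check on generators, which the paper simply records as clear. The one caveat is that your intermediate subcase of ``ramification at a vertex of the triangle'' is vacuous --- ramification points of morphisms in ${\bf Surf}$ lie in $I^f$, hence are special punctures, and special punctures are never endpoints of arcs --- so that branch of your case analysis can be dropped without loss.
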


\begin{proof}
We shall only prove (a), as (b) is clear. For any triangle $(\gamma_1,\gamma_2,\gamma_3)$ in $\Delta$, if $f(\gamma_1),f(\gamma_2)$, $f(\gamma_3)$ are $f$-admissible then $(f(\gamma_1),f(\gamma_2),f(\gamma_3))$ is a triangle in $\underline\Delta$. Thus
$t_{f(\gamma_1)}t_{\overline{f(\gamma_2)}}^{-1}t_{f(\gamma_3)}=t_{\overline{f(\gamma_3)}}t_{{f(\gamma_2)}}^{-1}t_{\overline{f(\gamma_1)}}$. If one of $f(\gamma_1),f(\gamma_2),f(\gamma_3)$ is a loop around a special puncture with self-crossing, assume that $\ell$ is the special loop around the special puncture, then we have $\nu_{f,\Delta,\underline\Delta}(t_{\gamma_i})=t_{\ell}$ for all $i=1,2,3$. Thus $t_{f(\gamma_1)}t_{\overline{f(\gamma_2)}}^{-1}t_{f(\gamma_3)}=t_{\overline{f(\gamma_3)}}t_{{f(\gamma_2)}}^{-1}t_{\overline{f(\gamma_1)}}$. Therefore, we obtain a group homomorphism $\nu_{f,\Delta,\underline\Delta}$.

The proof is complete.
\end{proof}

For any $f:|\Delta|\to |\underline \Delta|$ in ${\bf Surf}$ and $P\subset I_{p,1}(|\Delta|)$ such that $f(\Delta)\subset \underline\Delta$ and $f(P)\subset I_{p,1}(|\underline\Delta|)$, define $\nu_{f,\Delta^P,\underline\Delta^{f(P)}}:=\mu_{\underline\Delta^{f(P)},\underline\Delta}\nu_{f,\Delta,\underline\Delta}(\mu_{\Delta^P,\Delta})^{-1}:\TT_{\Delta^P}\to \mathbb T_{\underline\Delta^{f(P)}}$. It follows that the following diagram commutes
$$\centerline{\xymatrix{
  & \TT_{\Delta} \ar[d]_{\nu_{f,\Delta,\underline\Delta}} \ar[rr]^{\mu_{\Delta^{P},\Delta}}  &&  \TT_{\Delta^{P}}\ar[d]^{\nu_{f,\Delta^{P},\underline\Delta^{f(P)}}}           \\
  & \TT_{\underline\Delta} \ar[rr]^{\mu_{\underline\Delta^{f(P)},\underline\Delta}}  && \TT_{\underline\Delta^{f(P)}}.}}$$ 

\begin{theorem} [Triangular functor]
\label{th:monomial mutation surfaces}
The  assignments $\Delta\mapsto \TT_\Delta$, $h_{\Delta',\Delta}\mapsto \mu_{\Delta',\Delta}$ for $\Delta,\Delta'\in {\bf Tsurf}$ with $dist(\Delta,\Delta')=1$, $h_{\Delta,\Delta^P}\mapsto \mu_{\Delta,\Delta^P}$ for all $P\subset I_{p,1}(|\Delta|)$ and  
$v_{f,\Delta^P,\underline\Delta^{f(P)}}\mapsto \nu_{f,\Delta^P,\underline\Delta^{f(P)}}$ for all $f:|\Delta|\to |\underline\Delta|\in {\bf Surf}$ and $P\subset I_{p,1}(|\Delta|)$ such that $f(\Delta)\subset \underline\Delta$ and $f(P)\subset I_{p,1}(|\underline\Delta|)$ define a functor ${\bf F}:{\bf TSurf}^t\to {\bf Grp}$, the category of groups. 
\end{theorem}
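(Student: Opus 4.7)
My plan is to verify that the prescribed assignments respect every defining relation of the category ${\bf TSurf}^t$. Since each of $\mu_{\Delta',\Delta}$, $\mu_{\Delta,\Delta^P}$, and $\nu_{f,\Delta^P,\underline\Delta^{f(P)}}$ has already been established as a group homomorphism in Lemmas \ref{lem:quo}, \ref{lem:micro}, Proposition \ref{pro:tag/untag}, and Proposition \ref{pro:nu}, the remaining task is purely to check functoriality on compositions. By the very construction of $\mu_{\Delta'^P,\Delta^P}$ and $\nu_{f,\Delta^P,\underline\Delta^{f(P)}}$ as conjugates by the tagging isomorphisms $\mu_{\Delta^P,\Delta}$, the commutative squares in Definition \ref{def:categoryts} that govern the interaction of tagging with horizontal and vertical morphisms hold tautologically at the level of triangle groups. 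It therefore suffices to check the relations of Theorem \ref{th:presentation of Tsurf} for $\mu_{\Delta',\Delta}$ on ordinary triangulations, together with the vertical composition relation $\nu_{f'\circ f}=\nu_{f'}\nu_f$ of Proposition \ref{pro:nu}(b).

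Next, I would verify the relations of Theorem \ref{th:presentation of Tsurf} one at a time using the explicit flip formulas from the preceding lemma. The diamond case ($k=4$) is immediate: two flips at arcs not sharing a triangle act on disjoint generators of $\TT_\Delta$. The pentagon case ($k=5$) is a Ptolemy identity in the triangle group, proved by repeatedly applying the defining relation $t_{\alpha_1}t_{\overline\alpha_2}^{-1}t_{\alpha_3}=t_{\overline\alpha_3}t_{\alpha_2}^{-1}t_{\overline\alpha_1}$ in the three triangles that appear in each of the intermediate triangulations; after expansion, the cyclic composition of five monomial mutations collapses to the identity. The hexagon case ($k=6$) arises inside a once-punctured bigon and, after using Proposition \ref{pro:tag/untag} to handle the self-folded/tagged bigon configuration, reduces to a calculation in the tagged triangle group. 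Horizontal compatibility follows by expanding both sides as the same conjugation automorphism of $\TT_\Delta$, and the once-punctured bigon relation follows from the tagged bigon relation $t_{\gamma_1}t_{\gamma_2}=t_{\overline\gamma_2}t_{\overline\gamma_1}$, which forces the conjugations induced by the two diagonals through the puncture to commute.

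The main obstacle will be the pentagon (and its hexagonal analogue), where one must track five (resp.\ six) triangulations together with the sign data $\varphi(\Delta_0;\Delta_i,\mu_{\alpha}\Delta_i)$ from Definition \ref{def:gro}, which determines whether the contribution at each step is $\mu_{\Delta_{i+1},\Delta_i}$ or its inverse. To control this bookkeeping, I plan to first verify the relation in a fundamental domain where every intermediate mutation is taken in the positive flip direction; the general case then follows from the invertibility of a single flip mutation, already encoded in the lemma preceding the theorem, so that all sign choices collapse consistently. Once these relations are verified, the functor ${\bf F}$ is well-defined on compositions and its images on non-generating morphisms are determined by functoriality.
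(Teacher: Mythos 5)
Your overall strategy coincides with the paper's: the proof there reduces everything to the presentation of ${\bf TSurf}^t$ from Theorem \ref{th:presentation of Tsurf} and checks the diamond, pentagon and hexagon relations by explicit computation with the monomial mutations, exactly as you propose for the first two. However, you have misplaced the hexagon relation. A length-$6$ cycle in the flip graph does \emph{not} arise inside a once-punctured bigon; it arises precisely when one of the two arcs being flipped has weight $\ne 1$, i.e.\ is a special loop or a loop around a $0$-puncture, and the local model in which it must be verified is the triangle with one special puncture (resp.\ one $0$-puncture) — this is the content of Lemma \ref{lem21}. The once-punctured bigon contributes a different, separately listed relation of ${\bf TSurf}^t$ (a commutation of the two squares $T_\alpha$, $T_\beta$ attached to the diagonals through the puncture), and the exchange graph there is a square, not a hexagon. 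So the computation you plan to do "in the tagged triangle group of a once-punctured bigon" would verify the wrong relation and leave the genuine hexagon case — which involves no tagging at all, only special loops and pending arcs — unchecked.

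A second gap: you claim that, besides the relations of Theorem \ref{th:presentation of Tsurf}, only the vertical composition relation $\nu_{f'\circ f}=\nu_{f'}\nu_f$ needs checking. The paper also needs, and proves as Lemma \ref{lem3}, the compatibility of vertical and horizontal morphisms: if $\underline\gamma\in\underline\Delta$ has preimage $f^{-1}(\underline\gamma)=\{\gamma_1,\dots,\gamma_s\}$, then the composite of the $s$ monomial mutations $\mu_{\gamma_s}\cdots\mu_{\gamma_1}$ upstairs must intertwine, via $\nu_f$, with the single monomial mutation $\mu_{\underline\gamma}^{\varepsilon(f)}$ downstairs. This is not a tautological consequence of the conjugation-by-tagging construction (that only handles the tagging squares), nor of vertical composition; it is a genuine computation, including the case where $f(\gamma)$ fails to be admissible and $\nu_f$ sends $t_\gamma$ to the special loop generator. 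Without it the functor is not well defined on the mixed horizontal–vertical words of ${\bf TSurf}^t$.
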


We prove Theorem \ref{th:monomial mutation surfaces} in Section \ref{Proof of Theorem th:monomial mutation surfaces}.

\begin{remark} 
\label{rem:canonical triangle group}
In the notation before Lemma \ref{le:unique up to conjugation homomorphism}, we abbreviate $\TT_\Sigma:=G({\bf F}_\Sigma)$, where ${\bf F}_\Sigma$ is the restriction of ${\bf F}$ to ${\bf Tsurf}_\Sigma$ and think of it as a canonical triangle group, which is obviously a topological invariant. Thus Lemma \ref{le:unique up to conjugation homomorphism} guarantees that the assignments $\Sigma\mapsto \TT_\Sigma$ is almost a functor ${\bf Surf}\to {\bf Grp}$.

\end{remark}

 We expect this functor is ``almost faithful."

\begin{conjecture}
\label{conj:faithful}
Let $\Sigma$ be a connected oriented marked surface different from a sphere with $4$ punctures or projective plane with $2$ punctures. The restriction of {\bf F} to ${\bf Tsurf}_\Sigma^t$ is faithful.
\end{conjecture}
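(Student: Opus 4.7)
The plan is to show that the canonical surjection $\pi_\Delta : Br_\Delta \twoheadrightarrow \underline{Br}_\Delta$ from the abstract braid group to its image in $\mathrm{Aut}(\TT_\Delta)$ is an isomorphism for some (hence by Corollary \ref{cor:brmuta} every) triangulation $\Delta \in {\bf TSurf}_\Sigma^t$. Since ${\bf F}$ is identity on the vertices of ${\bf TSurf}_\Sigma^t$ and is determined on horizontal generators $h_{\mu_\gamma\Delta,\Delta}$ by the explicit automorphisms $\mu_{\mu_\gamma\Delta,\Delta}$, faithfulness of ${\bf F}|_{{\bf TSurf}_\Sigma^t}$ is equivalent to the injectivity of $\pi_\Delta$ on $Br_\Delta$.

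First I would eliminate the tagging from the picture. By Proposition \ref{pro:tag/untag} the tagging automorphisms $\varphi_{P,\Delta}$ give \emph{isomorphisms} $\mu_{\Delta^{P'},\Delta^P}$ between triangle groups of differently tagged versions of $\Delta$, and these conjugate the braid action on $\TT_{\Delta^P}$ to that on $\TT_{\Delta^{P'}}$. Hence it suffices to verify faithfulness on ordinary triangulations, and the horizontal tagging morphisms $h_{\Delta^{P'},\Delta^P}$ contribute no new kernel. Next, I would proceed by induction on the complexity $c(\Sigma) := |I_b| + |I_p| - \chi(\Sigma)$. The base cases are supplied by Theorems \ref{th:Br on Sigman}, \ref{th:Br on Sigman1}, and \ref{th:Br_n 1}: the finite-Artin-type surfaces (disk, once-punctured disk, polygon with one special or $0$-puncture, twice-punctured disk, and unpunctured cylinder) have $\pi_\Delta$ already known to be an isomorphism, using either the classical faithfulness of the $A$-, $C$-, and $D$-type Artin braid actions on free groups (e.g. Artin's theorem) or the explicit quiver-braid presentations and their known embeddings into $\mathrm{Aut}(\TT_\Delta)$.

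For the inductive step I would combine Proposition \ref{pr:relative braid homomorphism} with Conjecture \ref{conj:injective relative braid homomorphism}: given a boundary-gluing surjection $f : \Sigma \to \Sigma'$, the homomorphism $f_* : Br_\Sigma \to Br_{\Sigma'}$ is (conjecturally) injective, and via $\nu_{f,\Delta,\underline\Delta}$ of Proposition \ref{pro:nu} it intertwines the two braid actions on the relevant triangle groups. Thus any kernel element of $\pi_\Delta$ pushes forward to a kernel element of $\pi_{f(\Delta)}$, allowing faithfulness to descend from a less-complex surface. For surfaces with boundary the inductive step reduces to opening up a triangulated disk region; for closed surfaces one first cuts along a non-separating arc to introduce boundary.

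The main obstacle — and the reason this remains conjectural — is twofold. First, Conjecture \ref{conj:injective relative braid homomorphism} is itself open, so the reduction strategy above is conditional. Second, and more seriously, for closed surfaces (especially spheres with few punctures) the triangle group $\TT_\Sigma$ is by Theorem \ref{thm:1-relator}(b) a $1$-relator group of relatively small rank, and one must rule out the existence of non-trivial braid words acting trivially. The cleanest route we foresee is the forthcoming double-cover construction of \cite{BHRR}, which identifies $\TT_\Sigma$ and the $\underline{Br}_\Sigma$-action with classical mapping-class-group data on a ramified double cover $\widetilde\Sigma \to \Sigma$; one can then invoke the known faithfulness of pure mapping class groups on $\pi_1$ of surfaces of genus $\ge 1$ (Dehn--Nielsen--Baer) to conclude. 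The exceptional cases (the sphere with four punctures and the projective plane with two punctures) are precisely those for which $\widetilde\Sigma$ is a torus or low-genus surface admitting a non-trivial \emph{hyperelliptic-type} involution acting trivially on the relevant coinvariants, which accounts for the expected failure of $\pi_\Delta$ to be injective and explains why these cases must be excluded from the conjecture.
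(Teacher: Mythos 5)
There is nothing in the paper to compare your proposal against: the statement is Conjecture \ref{conj:faithful}, and the paper does not prove it. It only supplies partial evidence (faithfulness for $\Sigma_n$ and for the polygon with one special puncture, via Theorems \ref{th:Br on Sigman} and \ref{th:Br on Sigman1}), exhibits the failure in the two excluded cases in Example \ref{ex:sphere with 4 points}, and defers the general case to the forthcoming work \cite{BHRR}. Your reduction of faithfulness of ${\bf F}|_{{\bf TSurf}_\Sigma^t}$ to injectivity of $\pi_\Delta:Br_\Delta\to\underline{Br}_\Delta$ for a single $\Delta$ is correct (the groupoid is connected, so all Hom-sets are torsors over $Br_\Delta$), and your overall strategy is consistent with what the authors themselves gesture at. But what you have written is a research program, not a proof, and you are candid about that.

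Two concrete gaps beyond the admitted conditionality. First, your base cases are overclaimed: Theorem \ref{th:Br_n 1} only identifies $Br_\Delta$ abstractly as an Artin group of type $D_n$, $B_{n-1}$, $\widetilde D_n$, or $\widetilde A_{p+q-1}$; it says nothing about faithfulness of the action on $\TT_\Delta$. The paper explicitly states that it only \emph{expects} $Br_{B_{n-1}}\twoheadrightarrow\underline{Br}_\Delta$ and $Br_{D_n}\twoheadrightarrow\underline{Br}_\Delta$ to be isomorphisms, verifying it only for $\mathcal{D}_2$; Theorem \ref{thm:inject} produces a faithfully acting $Br_n$-subgroup inside $Br_{D_n}$ and $Br_{\widetilde A_n}$, which does not give faithfulness of the full group. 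So the only genuine base cases available are the types $A$ and $C$ of Theorems \ref{th:Br on Sigman} and \ref{th:Br on Sigman1}. Second, the inductive descent via $f_*$ and $\nu_{f,\Delta,\underline\Delta}$ only controls the relative subgroup $Br_\Delta^f\subset Br_\Delta$: an element of $\ker\pi_\Delta$ need not lie in $Br_\Delta^f$, so even granting Conjecture \ref{conj:injective relative braid homomorphism} you cannot push an arbitrary kernel element down to $\Sigma'$. Worse, Example \ref{ex:sphere with 4 points} shows that the induced map $\underline{Br}_{\widetilde\Delta}\to\underline{Br}_\Delta$ for the gluing of the twice-punctured bigon onto the four-punctured sphere is \emph{not} injective, so the intertwining you rely on can genuinely lose information; this is precisely how the excluded case fails, and it shows the descent mechanism needs a substantially finer analysis than "kernel elements push forward to kernel elements."
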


We will see in Example \ref{ex:sphere with 4 points} that the restriction of {\bf F} to ${\bf Tsurf}_\Sigma^t$ is not faithful in case $\Sigma$ is a sphere with $4$ punctures or projective plane with $2$ punctures.

\begin{remark}
 \label{rm:monomial mutation surfaces} 
Given triangulations $\Delta$ and $\Delta'$ of a marked surface $\Sigma$, we denote by $\mu_{\Delta,\Delta'}={\bf F}(h_{\Delta,\Delta'}):\TT_{\Delta'}\simeq \TT_\Delta$ and call it the \emph{monomial mutation} from $\TT_{\Delta'}$ to $\TT_{\Delta}$.
\end{remark}



Thus, we obtain a group homomorphism $\pi_\Delta:Br_\Delta\to Aut(\TT_\Delta)$. Denote its image by $\underline{Br}_\Delta$ and call it \emph{cluster braid group} of $\Delta$.

\begin{corollary} \label{cor:brmuta}
$\underline{Br}_{\Delta'}=\mu_{\Delta',\Delta} \,\underline{Br}_{\Delta}\,\mu_{\Delta',\Delta}^{-1}$ for any triangulations $\Delta$ and $\Delta'$ of any $\Sigma$.
\end{corollary}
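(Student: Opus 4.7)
The plan is to use the triangular functor ${\bf F}:{\bf TSurf}^t\to {\bf Grp}$ from Theorem \ref{th:monomial mutation surfaces} together with the groupoid structure of ${\bf TSurf}_\Sigma^t$ to transport the $Br_\Delta$-action to a $Br_{\Delta'}$-action via conjugation by $h_{\Delta',\Delta}$, and then apply ${\bf F}$ to recover the desired relation between the cluster braid groups.

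More precisely, recall that $Br_\Delta=Aut_{{\bf TSurf}_\Sigma^t}(\Delta)$ and $\pi_\Delta={\bf F}\!\mid_{Br_\Delta}:Br_\Delta\to Aut(\TT_\Delta)$ by the definition of $\underline{Br}_\Delta$. Since ${\bf TSurf}_\Sigma^t$ is a connected groupoid, conjugation by the morphism $h_{\Delta',\Delta}:\Delta\to\Delta'$ induces an isomorphism of groups
\[
c_{\Delta',\Delta}:Br_\Delta\xrightarrow{\ \sim\ }Br_{\Delta'},\qquad g\mapsto h_{\Delta',\Delta}\,g\,h_{\Delta',\Delta}^{-1}.
\]
This is exactly the isomorphism underlying Corollary \ref{cor:brinv}, and every element of $Br_{\Delta'}$ arises this way.

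Now I apply the functor ${\bf F}$ to both sides. Since ${\bf F}$ is a functor, for any $g\in Br_\Delta$ one has
\[
{\bf F}(h_{\Delta',\Delta}\,g\,h_{\Delta',\Delta}^{-1})={\bf F}(h_{\Delta',\Delta})\,{\bf F}(g)\,{\bf F}(h_{\Delta',\Delta})^{-1}=\mu_{\Delta',\Delta}\,\pi_\Delta(g)\,\mu_{\Delta',\Delta}^{-1},
\]
while by definition the left-hand side is $\pi_{\Delta'}(c_{\Delta',\Delta}(g))$. Taking images as $g$ runs over $Br_\Delta$ and using the surjectivity of $c_{\Delta',\Delta}$,
\[
\underline{Br}_{\Delta'}=\pi_{\Delta'}(Br_{\Delta'})=\pi_{\Delta'}\bigl(c_{\Delta',\Delta}(Br_\Delta)\bigr)=\mu_{\Delta',\Delta}\,\pi_\Delta(Br_\Delta)\,\mu_{\Delta',\Delta}^{-1}=\mu_{\Delta',\Delta}\,\underline{Br}_\Delta\,\mu_{\Delta',\Delta}^{-1}.
\]

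There is essentially no obstacle here beyond unwinding the definitions: the only substantive inputs are (i) connectedness of ${\bf TSurf}_\Sigma^t$ (used in Corollary \ref{cor:brinv}) and (ii) the functoriality statement of Theorem \ref{th:monomial mutation surfaces}, both of which are established earlier. The argument is formal once one observes that $\mu_{\Delta',\Delta}={\bf F}(h_{\Delta',\Delta})$ intertwines $\pi_\Delta$ and $\pi_{\Delta'}$ in the sense above.
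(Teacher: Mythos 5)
Your proof is correct and is precisely the argument the paper intends: the corollary is stated without proof as an immediate consequence of the functoriality of ${\bf F}$ (Theorem \ref{th:monomial mutation surfaces}) and the connectedness of the groupoid ${\bf TSurf}_\Sigma^t$ (Corollary \ref{cor:brinv}), and your computation simply makes that unwinding explicit, with the direction conventions $\mu_{\Delta',\Delta}={\bf F}(h_{\Delta',\Delta}):\TT_\Delta\simeq\TT_{\Delta'}$ handled correctly.
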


Given a morphism $f:\Sigma\to \Sigma'$ in ${\bf Surf}$, for any $f$-admissible $\Delta\in {\bf TSurf}_\Sigma^t$, denote by $\underline Br_\Delta^f$ the image $\pi_\Delta(Br_\Delta^f)$ in $Aut(\TT_\Delta)$ (we sometimes refer to it as the relative cluster braid group of $\Delta$).

Denote by $\underline{\underline {Br}}_\Delta^f$ the set of all $g\in \underline Br_\Delta$ preserving the kernel $K_f$ of the structure homomorphism 
$\TT(f):\TT_\Delta\to \TT_{\Delta'}$. Clearly,  $\underline Br_\Delta^f\subset \underline{\underline {Br}}_\Delta^f$.

We can conjecture that this is an equality. The indirect verification is the following immediate.

\begin{lemma} For any $f:\Sigma\to \Sigma'$, any $f$-admissible triangulation $\Delta$ of $\Sigma$ and any triangulation $\Delta'$ of $\Sigma'$ containing $f(\Delta)$ one has:

$(a)$ A functorial homomorphism of groups $\underline {Br}_\Delta^f\to \underline {Br}_{f(\Delta)}$.

$(b)$ A functorial homomorphism of groups $\underline{\underline {Br}}_\Delta^f\to Aut(\TT_{f(\Delta)})$ given by $g\mapsto g\cdot K_f$ define a homomorphism of groups. Its restriction to the subgroup $\underline Br_\Delta^f\subset \underline{\underline {Br}}_\Delta^f$ is the homomorphism from (a).
    
\end{lemma}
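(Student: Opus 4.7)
The plan is to leverage the triangular functor ${\bf F}:{\bf TSurf}^t\to{\bf Grp}$ from Theorem \ref{th:monomial mutation surfaces} to transport the relative-braid homomorphism of Lemma \ref{le:relative homomorphism}(a) to the level of $Aut(\TT)$.

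For part (a), I would start from the homomorphism $f_*:Br_\Delta^f\to Br_{f(\Delta)}$ supplied by Lemma \ref{le:relative homomorphism}(a) together with Lemma \ref{le:tsurf^f}. To descend it to $\underline{Br}_\Delta^f\to \underline{Br}_{f(\Delta)}$, the only thing to check is that $\pi_\Delta(g)=\mathrm{id}_{\TT_\Delta}$ forces $\pi_{f(\Delta)}(f_*(g))=\mathrm{id}_{\TT_{f(\Delta)}}$. The key tool is the commutative square
\[
\nu_{f,\Delta,f(\Delta)}\circ \pi_\Delta(g)=\pi_{f(\Delta)}(f_*(g))\circ \nu_{f,\Delta,f(\Delta)},
\]
obtained by applying ${\bf F}$ to the vertical/horizontal compatibility $v_{f,\Delta,f(\Delta)}\circ h = f_*(h)\circ v_{f,\Delta,f(\Delta)}$ built into Definition \ref{def:categoryts} for every loop $h$ at $\Delta$ in ${\bf TSurf}_\Sigma^f$. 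Coupled with surjectivity of $\nu_{f,\Delta,f(\Delta)}$ (immediate from the generator-level formula in Proposition \ref{pro:nu}, as each $t_{\gamma'}\in \TT_{f(\Delta)}$ lifts along any chosen preimage $\gamma$ of $\gamma'$ in $\Delta$), this forces the conclusion, and functoriality in $f$ then follows from the vertical composition axiom of Definition \ref{def:categoryts}.

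For part (b), surjectivity of $\nu_f$ identifies $\TT_\Delta/K_f$ with $\TT_{f(\Delta)}$, so any $g\in \underline{\underline{Br}}_\Delta^f$, which preserves $K_f$ by definition, descends to a well-defined automorphism $\bar g$ of the quotient, and the rule $g\mapsto \bar g$ is patently a group homomorphism. Compatibility with part (a) is again the same intertwining identity: for $g=\pi_\Delta(\tilde g)$ with $\tilde g\in Br_\Delta^f$, the descended $\bar g$ is characterised uniquely by $\bar g\circ \nu_f=\nu_f\circ g$, and $\pi_{f(\Delta)}(f_*(\tilde g))$ satisfies the same equation by the commutative square above — so the two agree.

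The single technical step I expect to take genuine care is verifying the square $v\circ h=f_*(h)\circ v$ for arbitrary loops $h\in Br_\Delta^f$. It holds on elementary $f$-flips by the very definition of $f_*$ (Lemma \ref{le:tsurf^f}) and the compatibility axioms of Definition \ref{def:categoryts}, but one must confirm that it propagates through the braid and once-punctured bigon relations of Theorem \ref{th:presentation of Tsurf}. This is essentially bookkeeping and is the only place where something could in principle fail; once it is in hand, everything else in the argument is formal.
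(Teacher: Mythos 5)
Your argument is correct and is exactly the verification the paper leaves implicit when it labels this lemma ``immediate'': the intertwining $\nu_{f}\circ \pi_\Delta(g)=\pi_{f(\Delta)}(f_*(g))\circ \nu_{f}$ coming from the triangular functor, plus surjectivity of $\nu_f$ onto $\TT_{f(\Delta)}$, gives well-definedness in (a), and preservation of $K_f$ gives (b) together with the compatibility of the two maps. Your closing worry about propagating the square through the relations of Theorem \ref{th:presentation of Tsurf} is unnecessary: since ${\bf F}$ (Theorem \ref{th:monomial mutation surfaces}) and $f_*$ (Lemma \ref{le:tsurf^f}) are already established as functors, checking the square on generating $f$-flips forces it for all compositions formally.
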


\begin{theorem} 
\label{th:functor Sigma}
Let $\Sigma=\Sigma_n$ or $\Sigma_n$ with one special puncture. Then the restriction of ${\bf F}$ to ${\bf TSurf}_\Sigma^t$ is a faithful functor of groupoids ${
\bf F}_\Sigma: {\bf TSurf}_\Sigma^t\to {\bf Grp}'$, the groupoid whose objects are groups and arrows are group isomorphisms.

\end{theorem}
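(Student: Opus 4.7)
The strategy is to reduce faithfulness of ${\bf F}_\Sigma$ to injectivity of the single surjection $\pi_\Delta\colon Br_\Delta\twoheadrightarrow \underline{Br}_\Delta$, and then to deduce this injectivity by invoking the parallel computations of $Br_\Delta$ and $\underline{Br}_\Delta$ already carried out in the paper, combined with the Hopf property of the relevant Artin braid groups.

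First, since $\Sigma=\Sigma_n$ (or $\Sigma_n$ with one special puncture) has no ordinary punctures, tagging is trivial and ${\bf TSurf}_\Sigma^t={\bf TSurf}_\Sigma$. The groupoid ${\bf TSurf}_\Sigma$ is connected, so any two parallel morphisms $\Delta\to\Delta'$ differ by precomposition with an element of $Br_\Delta=Aut_{{\bf TSurf}_\Sigma}(\Delta)$. Combined with Corollary \ref{cor:brmuta}, which shows that the groups $\underline{Br}_\Delta\subset \operatorname{Aut}(\TT_\Delta)$ are compatibly conjugate under monomial mutations, faithfulness of ${\bf F}_\Sigma$ reduces to the injectivity of $\pi_\Delta$ for one (equivalently, every) triangulation $\Delta$ of $\Sigma$.

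Next, I would appeal to the already established dual identifications. For $\Sigma=\Sigma_n$, Theorem \ref{th:Br_n 1}(a) gives $Br_\Delta\cong Br_{n-2}$ (the standard Artin braid group), and Theorem \ref{th:braid triangular polygon} gives $\underline{Br}_\Delta\cong Br_{n-2}$ under the same generators $T_{\gamma,\Delta}$. Hence $\pi_\Delta$ is a surjective endomorphism of $Br_{n-2}$. Similarly, for $\Sigma=\Sigma_n$ with one special puncture, Theorem \ref{th:Br_n 1}(b) and the second half of Theorem \ref{th:braid triangular once punctured polygon} produce matching isomorphisms $Br_\Delta\cong \underline{Br}_\Delta\cong Br_{C_{n-1}}$, so $\pi_\Delta$ is a surjective endomorphism of $Br_{C_{n-1}}$. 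In both cases, the Artin braid group is linear over $\mathbb{Z}[q^{\pm 1}]$ (via the Lawrence--Krammer representation, extended to type $C$ by Digne and Cohen--Wales), hence residually finite, hence Hopfian. Any surjective endomorphism of a Hopfian group is an isomorphism, so $\pi_\Delta$ is injective, which is the desired faithfulness.

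The key technical step--and the main obstacle--is to make the identifications in the previous paragraph genuinely compatible: one must check that when $Br_\Delta$ and $\underline{Br}_\Delta$ are both identified with $Br_{n-2}$ (respectively $Br_{C_{n-1}}$), the natural surjection $\pi_\Delta$ corresponds to the identity on generators rather than to some abstractly isomorphic but a priori unrelated map. This is built in by construction: both theorems use the same set of generators $\{T_{\gamma,\Delta}\}_\gamma$ indexed by non-pending internal edges of $\Delta$, and $\pi_\Delta$ sends each such $T_{\gamma,\Delta}$ to its image in $\underline{Br}_\Delta$ given by formula \eqref{eq:Tij}. An alternative route that avoids Hopfianness altogether would be to match the presentations of $Br_\Delta$ from Theorem \ref{th:brgroup} directly with the Artin presentation of the image; this is effectively what the cited companion theorems establish.
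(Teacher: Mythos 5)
Your reduction is correct and matches the paper's implicit structure: since $\Sigma$ has no ordinary punctures, ${\bf TSurf}_\Sigma^t={\bf TSurf}_\Sigma$, the groupoid is connected, and faithfulness of ${\bf F}_\Sigma$ is equivalent to injectivity of $\pi_\Delta\colon Br_\Delta\to \underline{Br}_\Delta\subset Aut(\TT_\Delta)$ for a single $\Delta$.

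The gap is in how you establish that injectivity. Your Hopfian argument needs the input ``$\underline{Br}_\Delta\cong Br_{n-2}$'' (resp.\ $Br_{C_{n-1}}$), and you take it from Theorem \ref{th:braid triangular polygon} (resp.\ Theorem \ref{th:braid triangular once punctured polygon}). But in this paper those statements are themselves corollaries of Theorems \ref{th:Br on Sigman} and \ref{th:Br on Sigman1}, i.e.\ of the very faithfulness you are trying to prove: $\underline{Br}_\Delta$ is \emph{defined} as the image of $Br_\Delta$ in $Aut(\TT_\Delta)$, and identifying that image with the full braid group is exactly the assertion that no relations collapse under $\pi_\Delta$. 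So as written the argument is circular. To make the Hopfian route work you would need an independent lower bound on $\underline{Br}_\Delta$ --- some explicitly computable quotient or subrepresentation of the action on $\TT_\Delta$ that you can identify with $Br_{n-2}$ without already knowing faithfulness --- and you do not supply one. (Once you have such a subrepresentation, Hopfianness is superfluous anyway: a faithful subrepresentation gives injectivity directly.)

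That independent lower bound is precisely what the paper's proof constructs. For $\Sigma_n$ with the star-like triangulation it exhibits elements $y_2,\dots,y_{n-1}$ generating a free subgroup $G\subset\UU_\Delta$ of rank $n-2$ that is invariant under all $T_{\gamma,\Delta}$, computes the action on the $y_i$ explicitly, and recognizes it (Lemma \ref{lem:faithful}) as the standard Artin-type action of $Br_{n-2}$ on a free group, whose faithfulness is a known theorem cited from \cite{P}. For the $n$-gon with one special puncture the same scheme is used together with the embedding $Br_{C_{n-1}}\hookrightarrow Br_n$ from \cite{Cri}. You should either adopt this strategy or replace your appeal to Theorems \ref{th:braid triangular polygon}/\ref{th:braid triangular once punctured polygon} by an argument that computes $\underline{Br}_\Delta$ without presupposing injectivity of $\pi_\Delta$.
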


\begin{proof} 
    It follows by Theorems \ref{th:Br on Sigman} and \ref{th:Br on Sigman1} in Section \ref{subsec:triangle groups}.
\end{proof}

\subsection{Braid monoid and group actions on triangle groups}
Theorem \ref{th:functor Sigma} implies that the braid group $Br_\Delta$ acts on $\TT_\Delta$, we explicitly compute this action here.

\begin{theorem} \label{th:action}   
For any (tagged) triangulation $\Delta$, $Br_\Delta$ acts on $\TT_\Delta$ as follows.  For any non-pending internal edge $\gamma\in \Delta$,

$(a)$ if $\gamma$ is not a loop around some pending arc, then
\[\begin{array}{ccl} T_{\gamma,\Delta}(t_\beta)= 
\left\{\begin{array}{ll}
t_{\beta}, &\mbox{if $\beta\neq \gamma, \overline \gamma$}, \vspace{1mm}\\
t_{\alpha_1}t_{\overline{\alpha}_2}^{-1}t_{\alpha_3}t_{\overline{\alpha}_4}^{-1}t_{\gamma}, &\mbox{if $\beta=\gamma$},\vspace{1mm}\\
t_{\overline\gamma}t_{\alpha_4}^{-1}t_{\overline\alpha_3}t_{\alpha_2}^{-1}t_{\overline\alpha_1}, &\mbox{if $\beta=\overline\gamma$},
\end{array}\right.
\end{array}\]
where $\gamma\in \Delta$ is a diagonal of some clockwise quadrilateral $(\alpha_1,\alpha_2,\alpha_3,\alpha_4)$ in $\Delta$ such that 
$(\gamma,\alpha_3,\alpha_4)$ is a cyclic triangle in $\Delta$.

$(b)$ if $\gamma$ is a loop around some pending arc $\alpha$ with $s(\gamma)=s(\alpha)$, then
\[\begin{array}{ccl} T_{\gamma,\Delta}(t_\beta)= 
\left\{\begin{array}{ll}
t_{\beta}, &\mbox{if $\beta\neq \alpha,\overline\alpha, \gamma, \overline \gamma$}, \vspace{1mm}\\
t_{\alpha_1}t_{\overline{\alpha}_2}^{-1}t_{\alpha}, &\mbox{if $\beta=\alpha$},\vspace{1mm}\\
t_{\overline\alpha}t_{\alpha_2}^{-1}t_{\overline\alpha_1}, &\mbox{if $\beta=\overline\alpha$},\vspace{1mm}\\
t_{\alpha_1}t_{\overline{\alpha}_2}^{-1}t_{\gamma}t_{\alpha_2}^{-1}t_{\overline\alpha_1}, &\mbox{if $\beta=\gamma$ or $\overline \gamma$},
\end{array}\right.
\end{array}\]
where $\gamma\in \Delta$ is a diagonal of some clockwise quadrilateral $(\alpha_1,\alpha_2,\alpha,\overline\alpha)$ in $\Delta$ such that 
$(\alpha_1,\alpha_2,\overline\gamma)$ is a cyclic triangle in $\Delta$.

\end{theorem}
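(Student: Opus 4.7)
My plan is to compute the action of $T_{\gamma,\Delta}$ on $\TT_\Delta$ directly by unpacking the definition through the triangular functor $\mathbf{F}$ of Theorem \ref{th:monomial mutation surfaces}. Writing $\Delta':=\mu_\gamma\Delta$, the identity $T_{\gamma,\Delta}=h_{\Delta,\Delta'}\circ h_{\Delta',\Delta}$ in $Br_\Delta$ yields, under $\mathbf{F}$, the composition of group isomorphisms
$$T_{\gamma,\Delta}=\mu_{\Delta,\Delta'}\circ\mu_{\Delta',\Delta}\in\mathrm{Aut}(\TT_\Delta).$$
The crucial point is that $\mu_{\Delta,\Delta'}$ is \emph{not} the inverse of $\mu_{\Delta',\Delta}$ in the groupoid ${\bf TSurf}_\Sigma^t$: the Lemma preceding Proposition \ref{pro:tag/untag} writes down two distinct isomorphisms $\mu$ and $\mu^-$ relating $\TT_\Delta$ and $\TT_{\Delta'}$, satisfying $\mu^-_{\Delta,\Delta'}\circ\mu_{\Delta',\Delta}=\mathrm{id}$, so the composition in the ``wrong'' direction produces a nontrivial automorphism of $\TT_\Delta$, which we identify with $T_{\gamma,\Delta}$.

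In case $(a)$, both mutations act as the identity on $t_\beta$ for $\beta\notin\{\gamma,\overline\gamma\}$ by construction, which immediately handles the ``otherwise'' branch; the same is true in case $(b)$ for $\beta\notin\{\gamma,\overline\gamma,\alpha,\overline\alpha\}$. The remaining computation is to apply the Lemma twice. First, $\mu_{\Delta',\Delta}(t_\gamma)=t_{\alpha_1}t_{\overline{\gamma'}}^{-1}t_{\overline{\alpha_3}}$, where $\gamma'$ is the new diagonal of $\Delta'$. Second, to compute $\mu_{\Delta,\Delta'}$ on this expression, I invoke the same Lemma applied to the reverse flip $\mu_{\gamma'}:\Delta'\to\Delta$: here $\gamma'$ plays the role of the flipped diagonal and $\gamma$ the role of the new one, and the cyclic quadrilateral in $\Delta'$ is re-indexed so that the requisite cyclic triangle $(\hat\alpha_3,\hat\alpha_4,\gamma')$ in $\Delta'$ and $(\hat\alpha_4,\hat\alpha_1,\gamma)$ in $\Delta$ are produced from $(\alpha_1,\alpha_2,\alpha_3,\alpha_4)$. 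Substituting the resulting expression for $\mu_{\Delta,\Delta'}(t_{\overline{\gamma'}})$ back into $\mu_{\Delta,\Delta'}(t_{\alpha_1}t_{\overline{\gamma'}}^{-1}t_{\overline{\alpha_3}})$ and simplifying via the two cyclic triangle relations in $\TT_\Delta$ arising from $(\gamma,\alpha_3,\alpha_4)$ and $(\alpha_1,\alpha_2,\overline\gamma)$ should produce the asserted normal form $t_{\alpha_1}t_{\overline{\alpha_2}}^{-1}t_{\alpha_3}t_{\overline{\alpha_4}}^{-1}t_\gamma$, with the formula for $t_{\overline\gamma}$ following by applying the anti-involution $\overline{\,\cdot\,}$ of Lemma \ref{lem:quo}(a).

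For case $(b)$, where $\gamma$ is the loop around a pending arc $\alpha$, I would follow the same template using part $(b)$ of the Lemma preceding Proposition \ref{pro:tag/untag}, which encodes how a flip at $\gamma$ simultaneously changes the loop and the pending arc. The zero-puncture relation $t_\gamma=t_\alpha t_{\overline\alpha}$ together with the identity $t_\gamma=t_{\overline\gamma}$ (from the loop-around-a-$0$-puncture relation in the triangle group) will be used to reconcile the images on $t_\gamma$ and $t_{\overline\gamma}$ and to verify the coincidence $T_\gamma(t_\gamma)=T_\gamma(t_{\overline\gamma})$ in the claimed formula.

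The main obstacle will be the careful bookkeeping of arc directions and cyclic orientations when applying the Lemma to the reverse flip: one must identify the correct re-indexing of $(\alpha_1,\alpha_2,\alpha_3,\alpha_4)$ in $\Delta'$ so that both cyclic triangle conditions of the Lemma are simultaneously met (this amounts to choosing a consistent orientation of the quadrilateral on both sides of the flip, which is forced by the data but requires verification). Once this geometric setup is fixed, the subsequent algebraic manipulation is a mechanical application of the two triangle relations in $\TT_\Delta$ and yields the explicit formulas stated in the theorem.
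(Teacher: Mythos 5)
Your proposal is correct and is exactly the computation the paper intends (the paper states the theorem as the output of "we explicitly compute this action here" and gives no separate proof): one applies the functor $\mathbf{F}$ to $T_{\gamma,\Delta}=h_{\Delta,\mu_\gamma\Delta}h_{\mu_\gamma\Delta,\Delta}$ and composes the two flip isomorphisms, which is nontrivial precisely because $\mu_{\Delta,\Delta'}\neq\mu^-_{\Delta,\Delta'}=(\mu_{\Delta',\Delta})^{-1}$. The only point requiring care, which you correctly flag, is that when the flip lemma is applied to the reverse flip the re-indexed quadrilateral $(\alpha_2,\alpha_3,\alpha_4,\alpha_1)$ makes $\overline\gamma$ (not $\gamma$) the "new" arc, so $\mu_{\Delta,\Delta'}(t_{\overline{\gamma'}})=t_{\alpha_4}t_{\overline\gamma}^{-1}t_{\overline{\alpha_2}}$ and a single application of the triangle relation for $(\gamma,\alpha_3,\alpha_4)$ converts $t_{\alpha_1}t_{\overline{\alpha_2}}^{-1}t_{\overline\gamma}t_{\alpha_4}^{-1}t_{\overline{\alpha_3}}$ into the stated normal form $t_{\alpha_1}t_{\overline{\alpha_2}}^{-1}t_{\alpha_3}t_{\overline{\alpha_4}}^{-1}t_{\gamma}$ (and dually for $t_{\overline\gamma}$ via the bar anti-involution).
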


\begin{remark} Conjecture \ref{conj:faithful} implies that $Br_\Delta=\underline{Br}_\Delta$, that is, the above action of $Br_\Delta$ on $\TT_\Delta$ is faithful for any triangulation $\Delta$ of $\Sigma$ (with the aforementioned exception).    
\end{remark}

The following is immediate from Theorem \ref{th:Br_n 1}.

\begin{corollary}\label{Cor:CD}
$(a)$ For any triangulation $\Delta$ of the $n$-gon with one $0$-puncture, the group $\underline{Br}_\Delta$ is isomorphic to a quotient of the Artin braid group $Br_{B_{n-1}}$.

$(b)$ 
For any triangulation $\Delta$ of the once-punctured $n$-gon, the group $\underline{Br}_\Delta$ is isomorphic to a quotient of the Artin braid group $Br_{D_{n}}$.
\end{corollary}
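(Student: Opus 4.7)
The plan is to combine the identifications of $Br_\Delta$ as finite-type Artin braid groups from Theorem \ref{th:Br_n 1} with the tautological fact that $\underline{Br}_\Delta$ is defined as the image of $Br_\Delta$ under the canonical representation $\pi_\Delta:Br_\Delta\to Aut(\TT_\Delta)$. This makes the corollary immediate, so the proof is essentially bookkeeping.

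First I would invoke Theorem \ref{th:Br_n 1}(d), which gives $Br_\Sigma\cong Br_{B_{n-1}}$ for $\Sigma$ the $n$-gon with one $0$-puncture, and Theorem \ref{th:Br_n 1}(c), which gives $Br_{\Sigma_{n,1}}\cong Br_{D_n}$ for the once-punctured $n$-gon. Strictly speaking, these theorems are phrased for $Br_\Sigma$, but by Corollary \ref{cor:brinv} the group $Br_\Sigma$ is a representative (up to conjugation by the isomorphisms $\mu_{\Delta',\Delta}$) of any $Br_\Delta$ with $|\Delta|=\Sigma$. Hence for every triangulation $\Delta$ in the two cases at hand, we obtain a fixed isomorphism $Br_\Delta\cong Br_{B_{n-1}}$ respectively $Br_\Delta\cong Br_{D_n}$.

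Next I would apply the triangular functor ${\bf F}:{\bf TSurf}^t\to {\bf Grp}$ from Theorem \ref{th:monomial mutation surfaces}. Functoriality produces, for each $\Delta\in{\bf TSurf}_\Sigma^t$, a group homomorphism $\pi_\Delta:Br_\Delta=Aut_{{\bf TSurf}_\Sigma^t}(\Delta)\to Aut(\TT_\Delta)$, and the cluster braid group $\underline{Br}_\Delta$ is \emph{by definition} its image $\pi_\Delta(Br_\Delta)$. Consequently $\underline{Br}_\Delta\cong Br_\Delta/\ker\pi_\Delta$ is a quotient of $Br_\Delta$. Precomposing with the isomorphisms of the previous paragraph yields surjective homomorphisms $Br_{B_{n-1}}\twoheadrightarrow\underline{Br}_\Delta$ in case (a) and $Br_{D_n}\twoheadrightarrow\underline{Br}_\Delta$ in case (b), exactly as asserted.

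There is essentially no obstacle in this argument: the substantive input is Theorem \ref{th:Br_n 1}, and the passage from $Br_\Delta$ to $\underline{Br}_\Delta$ is by construction a quotient map. The genuinely hard question lurking behind the statement is whether $\pi_\Delta$ is an \emph{isomorphism} in the two cases, i.e., whether the displayed surjections are actually isomorphisms; this is precisely the content of Conjecture \ref{conj:faithful} for these surfaces and is not claimed here.
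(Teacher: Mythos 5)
Your proposal is correct and matches the paper's argument, which simply declares the corollary "immediate from Theorem \ref{th:Br_n 1}": one identifies $Br_\Delta$ with $Br_{B_{n-1}}$ (resp. $Br_{D_n}$) via Theorem \ref{th:Br_n 1}(d) (resp. (c)) and observes that $\underline{Br}_\Delta=\pi_\Delta(Br_\Delta)$ is by definition a quotient. Your extra care with Corollary \ref{cor:brinv} to pass from $Br_\Sigma$ to a specific $Br_\Delta$ is a reasonable elaboration of the same route.
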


\begin{example}
\label{ex:D_2} Let $\Sigma=\Sigma_{2,1}$ be the once-punctured bigon with boundary marked points are labeled $1,2$ and puncture labeled $0$. For triangulation $\Delta=\{(0,1), (0,2), (1,2)^+, (1,2)^-\}$, the triangle group $\mathbb T_\Delta$ is generated by $t^{\pm}_{12}, t^{\pm}_{21}, t_{10}, t_{01}, t_{02}, t_{20}$ subject to $t_{01}(t_{21}^{\pm})^{-1}t_{20}=t_{02}(t_{12}^{\pm})^{-1}t_{10}$. The automorphism $T_{01}, T_{02}\in Aut(\mathbb T_{\Delta})$ are given by
 \begin{equation*}
 T_{01}(t_{\gamma})=\begin{cases}
     t_{12}^{-}(t^{+}_{12})^{-1}t_{10} & \text{if $\gamma=(1,0)$}\\
     t_{01}(t^{+}_{21})^{-1}t^{-}_{21} & \text{if $\gamma=(0,1)$}\\

          t_\gamma & \text{otherwise}\\

 \end{cases},\;\;
 T_{02}(t_{\gamma})=\begin{cases}
     t_{21}^{+}(t^{-}_{21})^{-1}t_{20} & \text{if $\gamma=(2,0)$}\\
     t_{02}(t^{-}_{12})^{-1}t^{+}_{12} & \text{if $\gamma=(0,2)$}\\

          t_\gamma & \text{otherwise}\\

 \end{cases},
 \end{equation*}

The corresponding braid monoid is the monoid generated by $T_{01}, T_{02}$, which is isomorphic to $\mathbb Z_{+}^2$, the braid group $\langle T_{01}, T_{02}\rangle \subset Aut(\mathbb T_\Delta)$ is isomorphic to ${\mathbb Z}^2\cong Br_{D_2}$.

\end{example}

\begin{example} Let $\widetilde\Sigma=\Sigma_2^2$ be the cylinder with $2$ marked points on each boundary and $\Sigma=\Sigma_{2,2}$ be the bigon with $2$-punctures, and let $\pi:\widetilde\Sigma\to \Sigma$ be the map by gluing two boundary segments. Let $\widetilde\Delta$ and $\Delta$ be the triangulations of $\widetilde\Sigma$ and $\Sigma$, respectively, shown in Figure \ref{Fig:glu}.
Then the kernel of $\pi:\TT_{\widetilde \Delta}\to \TT_{\Delta}$ is the normal subgroup of $\TT_{\widetilde\Delta}$ generated by $(t^+_{pq})^{-1}t^{-}_{pq}, (t^+_{qp})^{-1}t^{-}_{qp}$.

\begin{figure}[ht]
\includegraphics{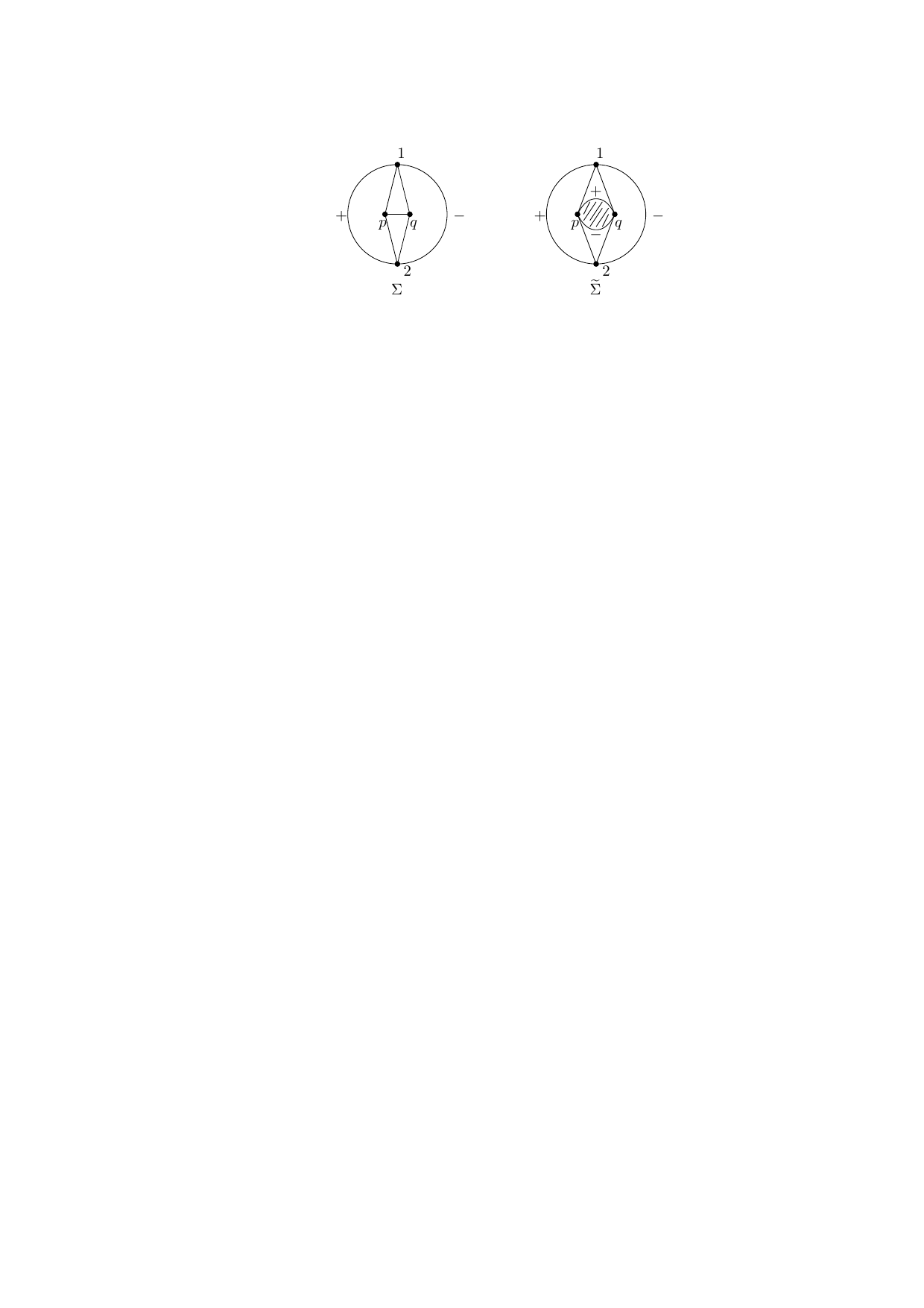}
\caption{}
\label{Fig:glu}
\end{figure}

Since the action of $\underline {Br}_{\widetilde \Delta}$ on $\TT_{\widetilde \Delta}$ fixes $(t^+_{pq})^{-1}t^{-}_{pq}$, $(t^+_{qp})^{-1}t^{-}_{qp}$, it induces an action of $\underline {Br}_\Delta$ on $\TT_{\Delta}$ via a a group homomorphism $\underline{Br}_{\widetilde\Delta}\to \underline{Br}_{\Delta}$ given by $T_{\widetilde \gamma}\mapsto T_{\gamma}$ for any $\widetilde \gamma\in \widetilde\Delta$.

\end{example}


\begin{example}\label{ex:sphere with 4 points}
Let $\Sigma$ be the sphere $S^2$ with 4 marked points. Let $\Delta=\{\gamma_0,\gamma'_0,\gamma_i, \overline{\gamma_0},\overline{\gamma'_0},\overline{\gamma_i}\mid i=1,2,3,4\}$ be the triangulation of $\Sigma$, as shown in the picture on the left of Figure \ref{Fig:4sphere}. 

\begin{figure}[ht]
\includegraphics{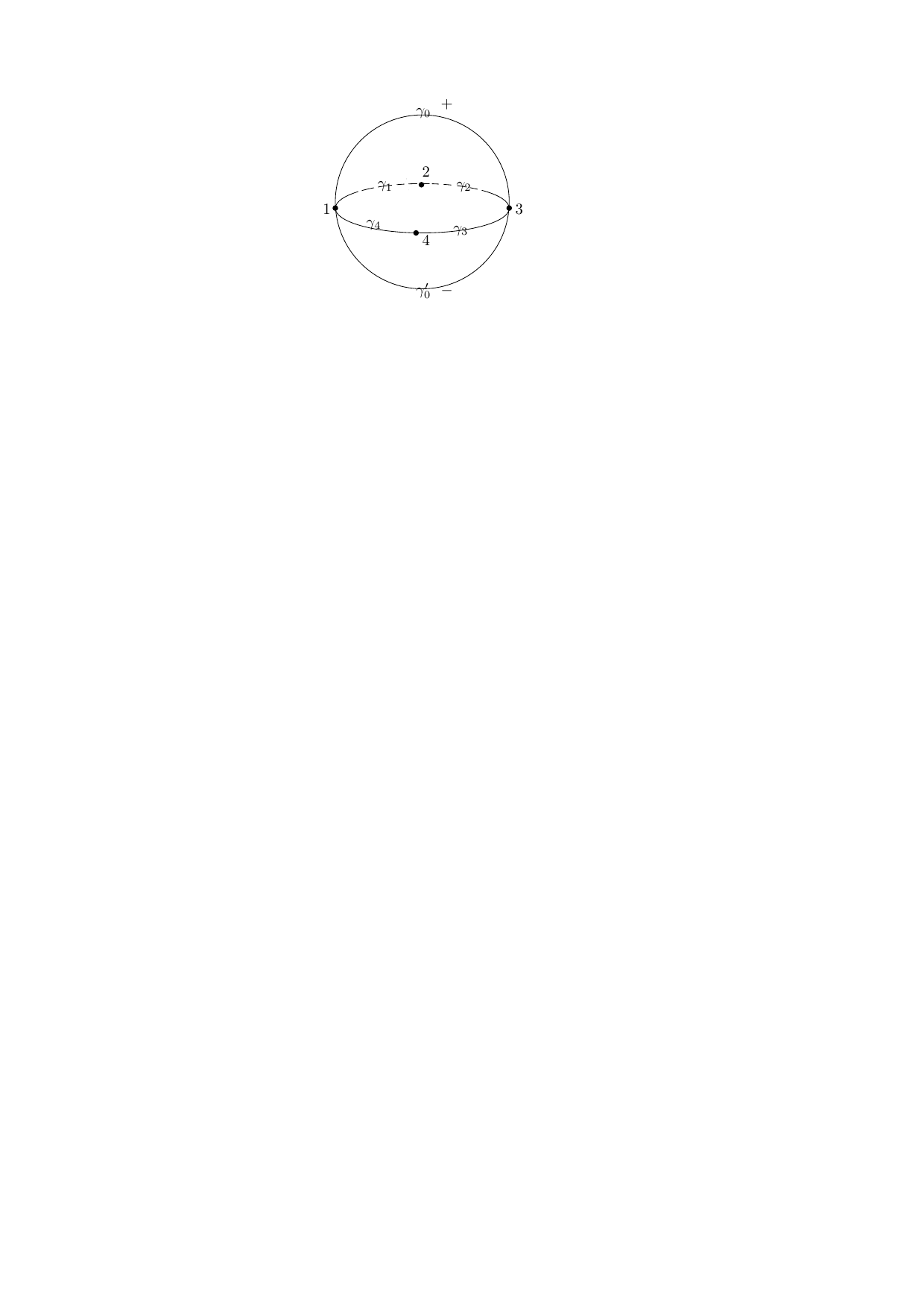}
\caption{4-punctured sphere with triangulation $\Delta$}
\label{Fig:4sphere}
\end{figure}

By calculation, we see that the actions of $T_{\gamma_0,\Delta}(T'_{\gamma_0,\Delta})^{-1}, T_{\gamma_1,\Delta}T_{\gamma_3,\Delta}^{-1}, T_{\gamma_2,\Delta}(T_{\gamma_4,\Delta})^{-1}$ on $\mathbb T_\Delta$ are pairwise commutative. Therefore, $\pi_\Delta: {Br}_{\Delta}\to \underline{Br}_{\Delta}$ is not an isomorphism in this case.

Let $\widetilde \Sigma$ be the twice punctured bigon with triangulation $\widetilde \Delta$, as shown in Figure \ref{Fig:22gon}. Then $\Sigma$ can be obtained from $\widetilde \Sigma$ by gluing $13^+$
and $13^-$.

\begin{figure}[ht]
\includegraphics{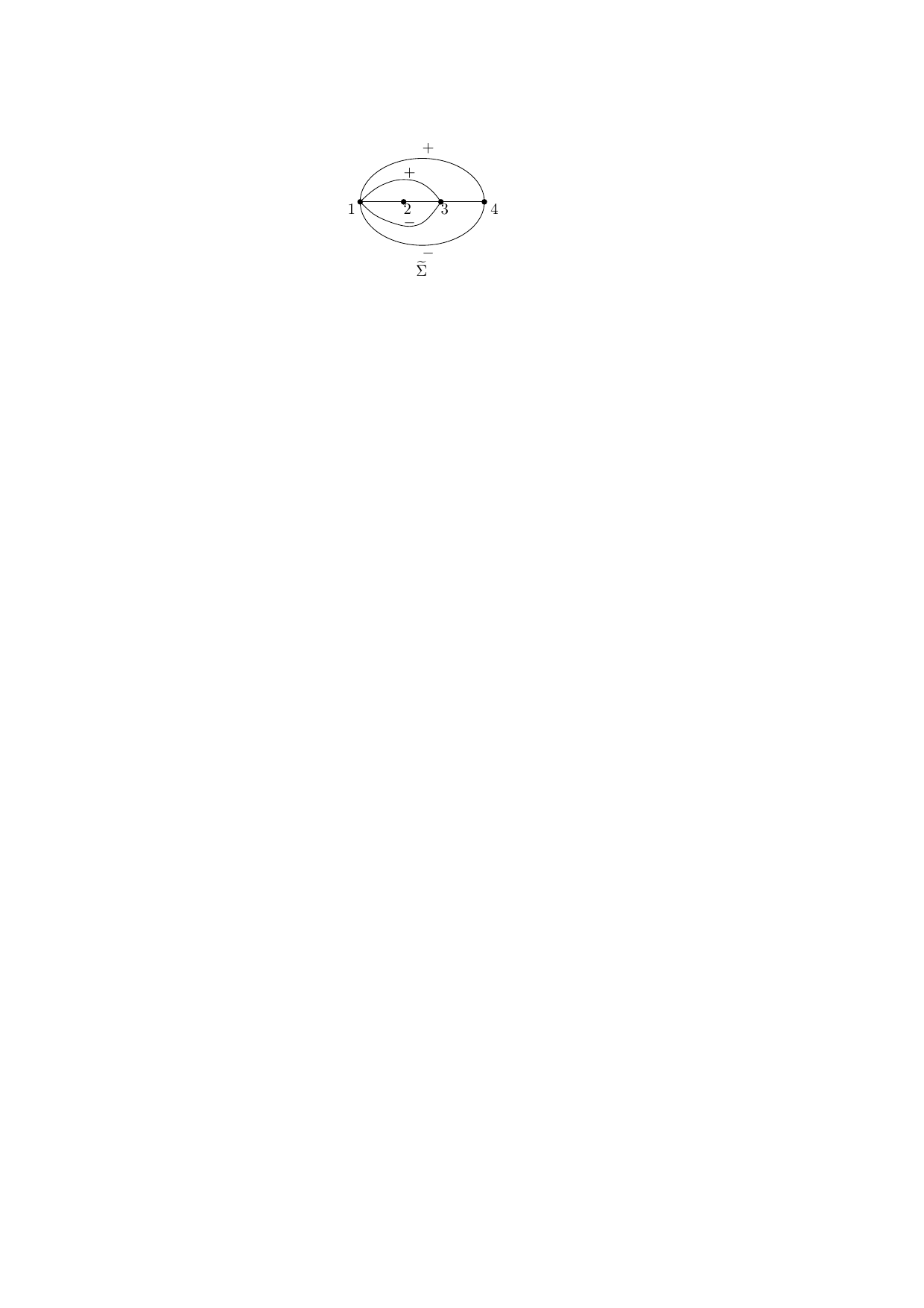}
\caption{Twice punctured bigon}
\label{Fig:22gon}
\end{figure}

By calculation, we have $T_{13^+,\widetilde\Delta}(T_{13^-,\widetilde\Delta})^{-1}T_{12,\widetilde\Delta}T_{34,\widetilde\Delta}^{-1}\neq T_{12,\widetilde\Delta}T_{34,\widetilde\Delta}^{-1}T_{13^+,\widetilde\Delta}(T_{13^-,\widetilde\Delta})^{-1}$. It follows that $\underline{Br}_{\widetilde\Delta}\to \underline{Br}_{\Delta}$ is not injective in this case.

\end{example}

\subsection{Sector groups and their reduced counterparts} 
\label{subsec:triangle groups}

For any pair of curves $\gamma,\gamma'$ in $\Sigma$ with $t(\gamma)=s(\gamma')$, denote $u_{\gamma,\gamma'}=t^{-1}_{\overline\gamma}t_{{\gamma'}}.$

For any ordinary triangulation $\Delta$ of $\Sigma$, define \emph{reduced triangle group} $\underline \TT_\Delta$ as the quotient of $\TT_\Delta$ by relations $t_\gamma=1$ for all boundary arcs $\gamma$. 

We also define \emph{sector group} $\UU_\Delta$ of $\Delta$ to be  subgroup 
$$\mathbb{U}_{\Delta}:=\langle u_{\gamma,\gamma'}\mid t(\gamma)=s(\gamma')\text{ and } \gamma,\gamma' \in\Delta  \rangle.$$

\emph{Reduced sector group} $\underline{\mathbb{U}}_{\Delta}$ associated with $\Delta$ is defined as the quotient of $\mathbb{U}_{\Delta}$ obtained by specializing $t_{\gamma}$ to $1$ for any boundary segments $\gamma$.

\begin{proposition}\label{prop:invariant}
Assignments $\Delta\mapsto \mathbb U_\Delta$ give a subfunctor of ${\bf F}|_{{\bf TSurf}}$, the restriction of ${\bf F}$ on ${\bf Tsurf}$, where ${\bf F}$ is the functor given in Theorem \ref{th:monomial mutation surfaces}. In particular, 
       $\UU_{\Delta}\cong \UU_{\Delta'}$ for any ordinary triangulations $\Delta,\Delta'$ of $\Sigma$ and $\UU_\Delta$ is invariant under the action of $Br_\Delta$ on $\TT_\Delta$. 
\end{proposition}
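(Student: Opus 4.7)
The plan is to verify that for every generating morphism $h$ in ${\bf TSurf}$ the corresponding group homomorphism ${\bf F}(h)$ sends $\UU_{\text{source}}$ into $\UU_{\text{target}}$. Since ${\bf TSurf}$ (restricted to the relevant subcategory) is generated by flips $h_{\Delta',\Delta}$ with $\Delta'=\mu_\alpha\Delta$ together with the vertical morphisms $v_{f,\Delta,\underline\Delta}$, it suffices to do the check on these two families. Once this is established, the first consequence ($\UU_\Delta\cong \UU_{\Delta'}$ for any two triangulations of $\Sigma$) follows because ${\bf TSurf}_\Sigma$ is a connected groupoid, so every horizontal morphism is an isomorphism and thus restricts to an isomorphism between the sector subgroups. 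The braid invariance follows because $Br_\Delta=Aut_{{\bf TSurf}_\Sigma}(\Delta)$ acts on $\TT_\Delta$ precisely through ${\bf F}$.

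The core computation is on horizontal flips. Fix a flip $\Delta'=\mu_\alpha\Delta$ with the cyclic quadrilateral $(\alpha_1,\alpha_2,\alpha_3,\alpha_4)$ where $\alpha$ and $\alpha'$ are the two diagonals. I would pick an arbitrary generator $u_{\gamma,\gamma'}=t_{\overline\gamma}^{-1}t_{\gamma'}$ of $\UU_\Delta$ and split into cases according to whether $\gamma,\gamma'\in\{\alpha,\overline\alpha\}$. The trivial case $\gamma,\gamma'\neq\alpha,\overline\alpha$ is immediate. For $\gamma=\alpha$ the flip formula gives
\[
\mu_{\Delta',\Delta}(u_{\alpha,\gamma'})=(t_{\alpha_3}t_{\alpha'}^{-1}t_{\overline\alpha_1})^{-1}t_{\gamma'}
=(t_{\overline\alpha_1}^{-1}t_{\alpha'})(t_{\overline{\overline\alpha_3}}^{-1}t_{\gamma'})=u_{\alpha_1,\alpha'}\,u_{\overline\alpha_3,\gamma'},
\]
and one checks that the endpoint conditions ($t(\alpha_1)=s(\alpha')$, $t(\overline\alpha_3)=s(\gamma')=t(\alpha)$) are forced by the quadrilateral geometry, so both factors are genuine generators of $\UU_{\Delta'}$. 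The cases $\gamma'=\alpha$, $\gamma=\overline\alpha$, $\gamma'=\overline\alpha$ are analogous, and the double case $\gamma=\alpha,\gamma'=\overline\alpha$ (or $\gamma=\overline\alpha,\gamma'=\alpha$) gives a product of three/four sector generators in $\UU_{\Delta'}$. The case in which $\alpha$ is a loop around a pending arc uses the second flip formula and is handled by the same substitution-and-telescoping argument, producing again elements built from $u$'s at the quadrilateral $(\alpha_1,\alpha_2,\beta,\overline\beta)$.

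For vertical morphisms I would use the explicit formula for $\nu_{f,\Delta,\underline\Delta}$ given in Proposition~\ref{pro:nu}: each $t_\gamma$ goes to $t_{f(\gamma)}$ or to the $t_\ell$ around the relevant special puncture. Since the formula is multiplicative on composable pairs in the obvious sense, a generator $u_{\gamma,\gamma'}$ either maps to $u_{f(\gamma),f(\gamma')}\in\UU_{\underline\Delta}$ when both images are $f$-admissible, or (in the degenerate branch) to $t_\ell^{-1}t_\ell=1$ or to another generator $u_{\ell,f(\gamma')}$, $u_{f(\gamma),\ell}$ of $\underline\UU_{\underline\Delta}$; in all subcases the image stays inside $\UU_{\underline\Delta}$. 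Composing a vertical with the tagging isomorphism $\mu_{\Delta^P,\Delta}$ is automatic because that isomorphism is built from the same kind of short substitutions (Proposition~\ref{pro:tag/untag}), which again preserve sectors.

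The hard part will be the bookkeeping in the flip case when $\alpha$ lies inside a self-folded triangle, a tagged bigon, or a once-punctured bigon: here the two ``new'' arcs of the flip are not the plain $\alpha'$ and one must invoke the exchange isomorphisms from Proposition~\ref{pro:tag/untag} and Lemma~\ref{lem:micro} to pass between the naive and the microtagged presentations before recognising the image as a word in $u_{\cdot,\cdot}$ of $\Delta'$. A clean way to organise this is to first prove the statement for ordinary triangulations with no $0$-punctures and no special punctures (where only the basic flip formula is needed), then deduce the remaining cases by post-composing with the tagging/untagging and microtagging isomorphisms, both of which visibly send sectors to sectors. Once the two generating families are handled, functoriality of ${\bf F}$ gives the subfunctor assertion, and the ``In particular'' statements are immediate, since the restriction of ${\bf F}$ to $Aut_{{\bf TSurf}_\Sigma}(\Delta)=Br_\Delta$ is exactly the action $\underline{Br}_\Delta\curvearrowright\TT_\Delta$ of Theorem~\ref{th:action}.
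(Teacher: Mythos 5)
Your proposal is correct and follows essentially the same route as the paper: the paper's own proof simply asserts that $\mu_{\Delta',\Delta}(\UU_\Delta)=\UU_{\Delta'}$ for flips and $\nu_{f,\Delta,\underline\Delta}(\UU_\Delta)\subset\UU_{\underline\Delta}$ for vertical morphisms, and then concludes by functoriality exactly as you do. Your generator-by-generator verification (e.g.\ $\mu_{\Delta',\Delta}(u_{\alpha,\gamma'})=u_{\alpha_1,\alpha'}\,u_{\overline\alpha_3,\gamma'}$) is just the detailed check the paper leaves implicit, and your deduction of the two corollaries from connectedness of the groupoid and the identification $Br_\Delta=Aut_{{\bf TSurf}_\Sigma}(\Delta)$ matches the intended argument.
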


\begin{proof}
    For any $\Delta,\Delta'\in {\bf TSurf}$ with $dist(\Delta,\Delta')=1$, we have $\mu_{\Delta',\Delta}(\UU_\Delta)=\UU_{\Delta'}$. For any $f: |\Delta|\to |\underline \Delta|$ with $f(\Delta)\subset \underline\Delta$, we have $\nu_{f,\Delta,\underline\Delta}(\UU_\Delta)\subset \UU_{\underline\Delta}$. Therefore, the assignments $\Delta\mapsto \mathbb U_\Delta$ give a subfunctor of ${\bf F}|_{{\bf TSurf}}$. 

The proof is complete.
\end{proof}

\begin{theorem}
\label{th:Udelta2} Let $\Sigma$ be a marked surface with the Euler characteristic $\chi(\Sigma)$, the set $I=I(\Sigma)\ne \emptyset$ of marked points,  the set $I_b\subseteq I$  of marked boundary points, and $h=|\bigsqcup I_{p,\geq 2}|$ special punctures. Assume that $I_{p,0}=\emptyset$. Then for any triangulation $\Delta$ of $\Sigma$ one has:

\noindent (a) If $\Sigma$ has a boundary or special punctures, then $\UU_\Delta$ is:

$\bullet$ A free group in $1$ generators if $\Sigma$ is a disk with $|I_b\sqcup I_{p,1}|+|I_b|=2$, $h=0$.

$\bullet$ Trivial if $\Sigma$ is a disk with $|I_b\sqcup I_{p,1}|=|I_b|=h=1$.

$\bullet$ A free group in $2h-2$ generators if $\Sigma$ is a disk with $|I_b\sqcup I_{p,1}|=|I_b|=1$, $h>1$. 

$\bullet$ A free group in $2h+3|I|-4\chi(\Sigma)-|I_b|$ generators otherwise.

\noindent (b) If $\Sigma$ is a closed surface without special punctures, then $\UU_\Delta$ is:

$\bullet$ Trivial if $\Sigma$ is the sphere with $|I_b\sqcup I_{p,1}|\in \{1,2\}$.

$\bullet$ A free group in $2|I_b\sqcup I_{p,1}|-4$ generators if $\Sigma$ is the sphere with $|I_b\sqcup I_{p,1}|=3$.

$\bullet$ A $1$-relator torsion free group in $3|I_b\sqcup I_{p,1}|-4\chi(\Sigma)+1$ generators otherwise.
\end{theorem}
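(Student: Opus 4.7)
The plan is to reduce, using Proposition \ref{prop:invariant}, to a single conveniently chosen triangulation $\Delta$ for each surface $\Sigma$, and then extract a presentation of $\UU_\Delta$ from the presentation of the ambient $\TT_\Delta$ supplied by Theorem \ref{thm:1-relator}. The crucial computational input is the identity
\[ u_{\gamma,\gamma'} \cdot u_{\overline{\gamma'},\gamma''} = u_{\gamma,\gamma''} \]
valid whenever $t(\gamma) = s(\gamma') = s(\gamma'')$, which implies that $\UU_\Delta$ is generated by just the \emph{consecutive} sectors at each marked point: at a point $i \in I$ of valency $d_i$ in $\Delta$, one gets $d_i$ generators $u_{\overline{e_k}, e_{k+1}}$ subject to a single star relation that their cyclic product equals $1$. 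This yields $2E - |I|$ generators overall before the triangle and special-puncture relations are imposed.

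For part $(a)$, since $\TT_\Delta$ is free by Theorem \ref{thm:1-relator}$(a)$, Nielsen--Schreier gives immediately that the subgroup $\UU_\Delta$ is free, and so only the rank needs to be computed. The triangle relations of Definition \ref{def:ASigma} contribute one independent relation per triangle of $\Delta$, and the bigon special-puncture relations contribute $2h$ additional relations (one for each side of each of the $h$ bigons surrounding a special puncture, after absorbing the monogon relation). Combining the Euler identity $V - E + F = \chi(\Sigma)$ with $3F = 2E - E_b$ (where $E_b = |I_b|$), I would compute the rank as $2E - |I| - F + 2h = 2h + 3|I| - 4\chi(\Sigma) - |I_b|$, matching the generic case of $(a)$. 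The degenerate small-surface cases --- a disk with one marked boundary point and at most one special puncture --- must be verified separately by direct calculation because the counting argument above breaks down when $\Delta$ has too few triangles or no internal arcs.

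For part $(b)$, where $\Sigma$ is closed without special punctures, Theorem \ref{thm:1-relator}$(b)$ presents $\TT_\Delta$ as a one-relator torsion-free group, and Remark \ref{rmk:generator} lets us select the defining relator in the form $R = t_{\overline\gamma_1}^{-1} t_{\gamma_2} t_{\overline\gamma_3}^{-1} \cdots t_{\gamma_{2n}}$ for some composable cyclic sequence. This word is, by construction, a product of sector generators and therefore already lies inside $\UU_\Delta$. The plan is to present $\UU_\Delta$ on the consecutive-sector generators modulo the star, triangle, and Ptolemy relations together with this single extra relator $R$, and then invoke Magnus's Freiheitssatz to conclude that $\UU_\Delta$ is itself one-relator torsion-free. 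The generator count, via the closed-surface Euler identity $V - E + F = \chi(\Sigma)$ and $3F = 2E$, simplifies to $3|I| - 4\chi(\Sigma) + 1$. The cases of the sphere with $|I| \in \{1,2,3\}$ are exceptional and are handled by direct inspection of the corresponding triangulations.

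The main obstacle will be to verify the completeness of the proposed presentations: one must show that no hidden relation --- beyond the star, triangle, Ptolemy, and bigon special-puncture relations (and, in case $(b)$, the single global relator $R$) --- is forced on $\UU_\Delta$ by the ambient $\TT_\Delta$. Concretely, one needs a Reidemeister--Schreier style argument showing that the image of every $\TT_\Delta$-relation in the sector subgroup is a consequence of the listed sector relations, with the most delicate bookkeeping occurring around self-folded triangles and special-puncture bigons. Combined with the careful tabulation of the exceptional small-surface cases, this constitutes the bulk of the work.
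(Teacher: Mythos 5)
Your route is genuinely different from the paper's, and it has a real gap at its core. You propose to write down a presentation of $\UU_\Delta$ by consecutive sectors modulo star, triangle, Ptolemy and bigon relations, and then read off the rank as (number of generators) minus (number of relations). Two things are missing and neither is routine. First, completeness: you must show that \emph{every} relation the ambient $\TT_\Delta$ imposes on the sector subgroup is a consequence of the listed ones; you correctly flag this as ``the bulk of the work'' but do not carry it out, and a Reidemeister--Schreier computation for an infinite-index subgroup of a one-relator group is not a formality. Second, even granting completeness, for a free group a presentation with $g$ generators and $r$ relators only yields $\mathrm{rank}\ge g-r$ (abelianize); to get equality you must additionally prove the relators independent. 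Neither Nielsen--Schreier (which only tells you $\UU_\Delta$ is free, not its rank) nor the Freiheitssatz (which gives freeness of subgroups generated by proper subsets of the generating set, not a one-relator presentation of a complementary factor) closes these gaps.

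The paper avoids all of this with a retraction argument. Proposition \ref{prop:quotient1} produces an idempotent endomorphism $\pi$ of $\TT_\Delta$ with image $\UU_\Delta$, killing one chosen generator $t_{\gamma_i}$ per point of $I_b\cup I_{p,1}$; Remark \ref{rmk:generator} guarantees these $t_{\gamma_i}$ can be taken as part of the free (or one-relator) generating set of Theorem \ref{thm:1-relator}, with the single relator lying in the sector part. Lemmas \ref{lem:freeprodut} and \ref{lem:freeproduct1} then give $\TT_\Delta\cong \UU_\Delta * F_{|I_b\cup I_{p,1}|}$, so the rank of $\UU_\Delta$ is simply the rank of $\TT_\Delta$ minus $|I_b\cup I_{p,1}|$, and the one-relator torsion-free assertion in (b) is inherited directly from Theorem \ref{thm:1-relator}. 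If you want to salvage your approach, the cleanest fix is exactly this: exhibit the retraction and the resulting free-product splitting, rather than trying to certify a full presentation of $\UU_\Delta$ from scratch and count its deficiency.
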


The following statements are the main results of this section.

\begin{theorem}\label{thm:sectorgroup} 
For any surface $\Sigma$ with $I_{p,0}=\emptyset$ and triangulation $\Delta$, we have 
\begin{equation}
\label{eq:free sector factorization}
\mathbb T_{\Delta}\cong \mathbb U_{\Delta}* F_{|I_b\cup I_{p,1}|}.
\end{equation}
Moreover, for any ordinary triangulation $\Delta$, $\mathbb U_\Delta$ is generated by $t_{\overline\gamma,\gamma'}$, where $(\gamma,\gamma')$ runs over all the pair of arcs in $\Delta$ having the same starting point and forming two sides of some triangle in $\Delta$, subject to
\begin{enumerate}[$(1)$]
    \item $u_{\overline \gamma,\gamma'}u_{\overline\gamma',\gamma}=1$.
    
    \item (Triangle relations) $u_{\gamma_1,\gamma_2}u_{\gamma_2,\gamma_3}u_{\gamma_3,\gamma_1}=1$ for any triangle $(\gamma_1,\gamma_2,\gamma_3)$ in $\Delta$.
    
    \item (Star relations) $u_{\overline \gamma_1,\gamma_2}u_{\overline \gamma_2,\gamma_3}\cdots u_{\overline \gamma_k,\gamma_1}=1$ for any puncture $i$, where $\gamma_1,\gamma_2,\cdots,\gamma_t$ are the arcs in $\Delta$ incident to $i$ in clockwise order with $s(\gamma_1)=s(\gamma_2)=\cdots =s(\gamma_k)=i.$  
\end{enumerate}
    
\end{theorem}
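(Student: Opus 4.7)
My strategy is to reduce both assertions to a single change of variables in the presentation of $\TT_\Delta$. For each marked point $i\in I_b\cup I_{p,1}$, I fix a base arc $\alpha_i\in\Delta$ with $s(\alpha_i)=i$, and introduce new generators
$$s_i := t_{\alpha_i},\qquad v_\gamma := s_{s(\gamma)}^{-1}t_\gamma,$$
so that $v_{\alpha_i}=1$ and $t_\gamma = s_{s(\gamma)}v_\gamma$ for every oriented arc $\gamma\in\Delta$. The key observation is that every defining relation of $\TT_\Delta$ (as listed above Lemma~\ref{lem:quo}) translates to a relation purely in the $v$'s, with the $s_i$'s cancelling. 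For the triangle relation $t_{\gamma_1}t_{\overline\gamma_2}^{-1}t_{\gamma_3}=t_{\overline\gamma_3}t_{\gamma_2}^{-1}t_{\overline\gamma_1}$ on a cyclic triangle $(\gamma_1,\gamma_2,\gamma_3)$, the identifications $s(\gamma_1)=s(\overline\gamma_3)$, $s(\gamma_2)=s(\overline\gamma_1)$, $s(\gamma_3)=s(\overline\gamma_2)$ cause both sides to begin with $s_{s(\gamma_1)}$ and the remaining $s_i$'s to telescope, yielding
$$v_{\gamma_1}v_{\overline\gamma_2}^{-1}v_{\gamma_3}=v_{\overline\gamma_3}v_{\gamma_2}^{-1}v_{\overline\gamma_1}.$$
The monogon, tagged bigon, and once-punctured bigon relations are handled by the same direct computation; the zero-puncture relation, which would \emph{not} reduce this way, is excluded by the hypothesis $I_{p,0}=\emptyset$.

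Once this is verified, the resulting presentation of $\TT_\Delta$ has two disjoint families of generators: the $\{s_i\}_{i\in I_b\cup I_{p,1}}$ occur in \emph{no} relation, and the $\{v_\gamma\}$ are subject only to the reduced relations above. Hence, by the universal property of free products, $\TT_\Delta\cong\langle v_\gamma\rangle*F_{|I_b\cup I_{p,1}|}$. To identify $\langle v_\gamma\rangle$ with $\UU_\Delta$ I observe that, on one hand, $v_\gamma = u_{\overline{\alpha_{s(\gamma)}},\gamma}\in\UU_\Delta$ and, on the other, every generator $u_{\gamma_1,\gamma_2} = t_{\overline\gamma_1}^{-1}t_{\gamma_2}$ of $\UU_\Delta$ equals $v_{\overline\gamma_1}^{-1}v_{\gamma_2}\in\langle v_\gamma\rangle$ (the $s$'s cancel because $t(\gamma_1)=s(\gamma_2)$). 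This establishes the free product decomposition \eqref{eq:free sector factorization}.

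For the explicit presentation of $\UU_\Delta$ in the ordinary case, I would perform Tietze transformations on the $v$-presentation just obtained. Introduce the symbols $u_{\overline\gamma,\gamma'}:=v_\gamma^{-1}v_{\gamma'}$ for pairs $(\gamma,\gamma')$ with common starting vertex and lying in a common triangle. Relation (1) is immediate from this definition; relation (2) follows algebraically from the reduced triangle relation on the $v$'s together with (1); and the star relation (3) is the telescoping identity $v_{\gamma_1}^{-1}v_{\gamma_2}\cdot v_{\gamma_2}^{-1}v_{\gamma_3}\cdots v_{\gamma_k}^{-1}v_{\gamma_1}=1$. Conversely, I eliminate each $v_\gamma$ by choosing a walk through adjacent triangles at $s(\gamma)$ from $\alpha_{s(\gamma)}$ to $\gamma$ and expressing $v_\gamma$ as the corresponding product of $u$-generators; the star relations at $s(\gamma)$ make this expression independent of the walk modulo (1)--(3).

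The main obstacle is the sufficiency direction of this last step: showing that \emph{every} relation among the $u$-generators is a consequence of (1)--(3). Combinatorially this amounts to showing that the triangle relations at each triangle, together with the star relations at each vertex, account for all ambiguity in the choice of walk and so exhaust the relations inherited from the $v$-presentation. Additional care is required at special loops (where the monogon relation $v_\ell=v_{\overline\ell}$ must be shown to already follow from (1)--(3)) and at tagged/once-punctured bigons. A final consistency check via Theorem~\ref{th:Udelta2}, comparing the predicted (free, or one-relator) rank with the difference of generator and relation counts in the $u$-presentation, confirms that no defining relation has been missed.
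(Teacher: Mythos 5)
Your derivation of the splitting \eqref{eq:free sector factorization} is correct for \emph{ordinary} triangulations and takes a genuinely different route from the paper. The paper first builds the projection $\pi:\TT_\Delta\to\UU_\Delta$ (Proposition \ref{prop:quotient1}), then invokes the explicit rank formulas of Theorem \ref{thm:1-relator} (and Remark \ref{rmk:generator}) together with the rank-counting Lemmas \ref{lem:freeprodut} and \ref{lem:freeproduct1} to force $\TT_\Delta=\UU_\Delta * F_{|I_b\cup I_{p,1}|}$. Your rewriting $t_\gamma=s_{s(\gamma)}v_\gamma$ gets the free-product structure directly from the presentation, without knowing in advance that $\TT_\Delta$ is free or one-relator; the telescoping of the $s_i$'s in the triangle and monogon relations is exactly as you claim, and your identification $u_{\gamma,\gamma'}=v_{\overline\gamma}^{-1}v_{\gamma'}$ correctly shows $\langle v_\gamma\rangle=\UU_\Delta$. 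This is cleaner and more self-contained than the paper's argument. One genuine error, though: your assertion that the tagged-bigon and once-punctured-bigon relations ``are handled by the same direct computation'' is false. For a tagged cyclic bigon $(\gamma_1,\gamma_2)$ with tagged puncture $p=t(\gamma_1)$, the relation $t_{\gamma_1}t_{\gamma_2}=t_{\overline\gamma_2}t_{\overline\gamma_1}$ becomes $v_{\gamma_1}s_p v_{\gamma_2}=v_{\overline\gamma_2}s_p v_{\overline\gamma_1}$: the interior $s_p$ does not cancel, so the $s_i$'s do occur in relations and the free-product conclusion does not follow from your argument in the tagged case. You should either restrict to ordinary triangulations (as the paper's own proof silently does) and deduce the tagged case from the isomorphisms $\mu_{\Delta^P,\Delta}$ and Proposition \ref{prop:invariant}, or handle these relations separately.

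For the presentation of $\UU_\Delta$, your plan (introduce the local $u$'s, use walks through the fan of triangles at each vertex, use star relations for well-definedness) is in substance the paper's argument, but you have left the decisive step — that relations (1)–(3) generate \emph{all} relations inherited from the $v$-presentation — as a description rather than a proof. In the paper this is precisely the short computation showing that (1)–(3) imply the reduced triangle relations of Corollary \ref{cor:relationBC}, after which both relation sets generate the same normal subgroup; you should carry out that computation (it is three or four lines using the star relation at $s(\alpha_1)$). Your proposed ``final consistency check'' against Theorem \ref{th:Udelta2} cannot substitute for it: matching the deficiency of a presentation against the expected rank does not certify that no relation is missing (a proper quotient of a free group can have the same deficiency), so that check carries no logical weight. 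With the tagged caveat addressed and the implication (1)–(3) $\Rightarrow$ (reduced triangle relations) actually written out, your proof is complete.
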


We prove Theorems \ref{th:Udelta2} and \ref{thm:sectorgroup} in Section \ref{sec:proof of thm:sectorgroup}.

\begin{theorem} 
\label{th:Br on Sigman}
For any triangulation $\Delta$ of $\Sigma_n$, 

$(a)$ the action of $Br_\Delta$ on $\UU_\Delta$ is faithful.

$(b)$ The action of $Br_\Delta$ on $\TT_\Delta$ is faithful.

$(c)$ $Br_\Delta$ is isomorphic to $Br_{n-2}$. Moreover, if all internal edges of $\Delta$ are $(1,i)$, $i=3,\ldots,n-1$, then the assignments $T_i\mapsto T_{(1,i+2)}$, $i=1,\ldots,n-3$ define an isomorphism of groups $Br_\Delta\simeq Br_{n-2}$.
    
\end{theorem}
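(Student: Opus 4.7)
The plan is to exploit the monomial mutation isomorphisms of Corollary \ref{cor:brmuta} to reduce to a single convenient triangulation, extract the standard braid presentation from Theorem \ref{th:brgroup}, and deduce faithfulness by comparing the induced action on sector generators with (a variant of) Artin's representation. Fix the fan triangulation $\Delta_1$ of $\Sigma_n$ with internal edges $\gamma_i=(1,i+2)$, $i=1,\ldots,n-3$. Its triangles are $(1,j,j+1)$ for $j=2,\ldots,n-1$, so the associated quiver $Q_{\Delta_1}$ is a directed path of type $A_{n-3}$: there are no internal $3$-cycles, no double arrows, no punctures, no $0$-punctures, and no tagged bigons. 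Consequently, among the relations $(R1)$--$(R9)$ of Theorem \ref{th:brgroup} only $(R1)$ and $(R2)$ can trigger, and they specialize to the standard braid relations $T_{\gamma_i}T_{\gamma_{i+1}}T_{\gamma_i}=T_{\gamma_{i+1}}T_{\gamma_i}T_{\gamma_{i+1}}$ for $|i-j|=1$ and $T_{\gamma_i}T_{\gamma_j}=T_{\gamma_j}T_{\gamma_i}$ for $|i-j|\geq 2$. This yields a surjective homomorphism $\psi:Br_{n-2}\twoheadrightarrow Br_{\Delta_1}$ sending $T_i\mapsto T_{\gamma_i}$ and proves the presentation claim of (c).

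To prove (a), (b), and the injectivity of $\psi$, it suffices (by Proposition \ref{prop:invariant}) to exhibit a faithful action of $Br_{n-2}$ on the $Br_{\Delta_1}$-invariant subgroup $\UU_{\Delta_1}\subset\TT_{\Delta_1}$. At the central vertex $1$ introduce the sector elements $v_j:=t_{\gamma_{j-1}}^{-1}t_{\gamma_j}$, $j=1,\ldots,n-2$, with the convention that $\gamma_0=(1,2)$ and $\gamma_{n-2}=(1,n)$ are the two boundary edges at vertex $1$. Applying Theorem \ref{th:action}(a) to the clockwise quadrilateral $(\gamma_{i-1},(i+1,i+2),(i+2,i+3),\overline\gamma_{i+1})$ around each $\gamma_i$ and collapsing the boundary generators in $\underline\TT_{\Delta_1}$ yields, by direct computation,
\begin{equation*}
T_i(v_i)=v_{i+1}^{-1},\qquad T_i(v_{i+1})=v_{i+1}v_iv_{i+1},\qquad T_i(v_j)=v_j\text{ for }j\neq i,i+1,
\end{equation*}
and these automorphisms preserve the star-relation product $v_1v_2\cdots v_{n-2}$. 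The resulting $Br_{n-2}$-representation on the free group $\langle v_1,\ldots,v_{n-2}\rangle$ can be related to the classical Artin representation of $Br_{n-2}$ on $F_{n-2}$ via an explicit $Br_{n-2}$-equivariant isomorphism of free groups, whence faithfulness follows from Artin's theorem (see e.g.~\cite{P}). This forces $\psi$ to be an isomorphism (completing (c)) and shows that the action on $\UU_{\Delta_1}$—and therefore on the larger $\TT_{\Delta_1}$—is faithful. For an arbitrary triangulation $\Delta$, the result is transported from $\Delta_1$ via the monomial mutation $\mu_{\Delta,\Delta_1}$ using Corollary \ref{cor:brmuta}.

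The main obstacle is the detailed computation above together with the subsequent identification with Artin's representation. Theorem \ref{th:action} describes only the $T_{\gamma_i}$-action on the single-edge generators $t_\gamma$, while the $v_j$ are products $t_{\gamma_{j-1}}^{-1}t_{\gamma_j}$, so unwinding the effect of each $T_i$ requires repeated use of the triangle relations and careful handling of the boundary edges $\gamma_0$ and $\overline\gamma_{n-2}$ at the extreme cases $i=1$ and $i=n-3$. Exhibiting the $Br_{n-2}$-equivariant change of basis that converts the above formulas into Artin's standard ones is the crux; once this is in place, the classical faithfulness of the Artin representation delivers all three statements of the theorem simultaneously.
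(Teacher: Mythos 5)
Your outline coincides with the paper's own proof: the paper also works with the star-like (fan) triangulation at vertex $1$, reads the braid presentation off Theorem \ref{th:brgroup}, and establishes faithfulness by finding a rank-$(n-2)$ free, $Br_\Delta$-invariant subgroup of $\UU_\Delta$ on which the $T_{(1,i)}$ act by explicit free-group automorphisms that are then matched with a known faithful braid representation. The only real difference is your choice of generators: you use the consecutive sectors $v_j=t_{\gamma_{j-1}}^{-1}t_{\gamma_j}$, under which each $T_i$ moves two generators ($T_i(v_i)=v_{i+1}^{-1}$, $T_i(v_{i+1})=v_{i+1}v_iv_{i+1}$ — these do check out in the reduced group), whereas the paper uses the cumulative elements $y_i=t_{i,1}^{-1}t_i$ with $t_i$ a recursively built word in boundary generators, so that each $T_i$ moves a single generator: $T_i(y_i)=y_{i-1}y_{i+1}^{-1}y_i$. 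The two are equivalent, since the $y$'s are (up to the convention at the endpoints) partial products of your $v$'s, so your formulas transform into the paper's under $u_j=v_1\cdots v_j$. The one place where your proposal is genuinely short of a proof is exactly the step you flag as the crux: you assert an ``explicit $Br_{n-2}$-equivariant isomorphism'' onto the classical Artin representation and stop there. This is not a routine relabeling — the paper's Lemma \ref{lem:faithful} has to perform a further nontrivial change of variables ($z_1=y_{n-2}^{-1}$, $z_{i}=\tau_i(z_{i-1})$), verify invariance of the resulting rank-$(n-3)$ subgroup, recompute the action there, and only then invoke a known faithfulness theorem from \cite{P}; the faithful model is a representation on $F_{n-3}$, not the Artin representation on $F_{n-2}$ that you name. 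Also note that once Theorem \ref{th:brgroup} gives the presentation, your $\psi$ is already an isomorphism, so the presentation step buys (c) outright; the faithfulness computation is needed only for (a) and (b) (i.e., injectivity of $\pi_\Delta:Br_\Delta\to\underline{Br}_\Delta$), which is how the paper organizes the argument.
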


\begin{theorem} 
\label{th:Br on Sigman1}
For any triangulation $\Delta$ of $\Sigma$, the $n$-gon with one special puncture, 

$(a)$ the action of $Br_\Delta$ on $\UU_\Delta$ is faithful.

$(b)$ The action of $Br_\Delta$ on $\TT_\Delta$ is faithful.

$(c)$ $Br_\Delta$ is isomorphic to $Br_{C_{n-1}}$, the Artin braid group of type $C_{n-1}$.
\end{theorem}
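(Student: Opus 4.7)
The plan is to establish (c) first via the explicit presentation from Theorem~\ref{th:brgroup}, then derive (a) by comparing with the known faithful action in the unpunctured case (Theorem~\ref{th:Br on Sigman}(a)) through the $d$-fold branched cover $f = f_{\sigma_d}:\Sigma_{nd}\twoheadrightarrow \Sigma$, where $d=|p|\ge 2$ is the order of the special puncture; (b) then follows at once from (a) since $\UU_\Delta\subset \TT_\Delta$. By Corollary~\ref{cor:brmuta} and Proposition~\ref{prop:invariant} it suffices to treat one triangulation, so I set $\Delta := f(\tilde \Delta)$, where $\tilde \Delta$ is the $\sigma_d$-invariant triangulation of $\Sigma_{nd}$ from Proposition~\ref{prop:BtoA}. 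This $\Delta$ has exactly $n-1$ non-pending internal arcs: one special loop $\gamma_1$ around $p$, of weight $w(\gamma_1)=d$, and $n-2$ weight-$1$ arcs $\gamma_2,\ldots,\gamma_{n-1}$ arranged linearly, with $(\gamma_i,\gamma_{i+1})$ sharing exactly one triangle for each $i$.

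For part (c), I apply Theorem~\ref{th:brgroup} to $\Delta$. Because $\Sigma$ has no ordinary punctures and no $0$-punctures, and $\Delta$ admits no cyclic triangle of three non-boundary arcs, the only nontrivial constraints come from $R1$ and $R2$: non-adjacent pairs commute, adjacent pairs $(\gamma_i,\gamma_{i+1})$ for $i\ge 2$ satisfy the standard braid relation by the $w=1$ case of $R2$, and $(\gamma_1,\gamma_2)$ satisfies the length-$4$ braid relation by the $w(\gamma_1)\ne 1$ case of $R2$. These are exactly the defining relations of $Br_{C_{n-1}}$ (with $\gamma_1$ playing the role of the short-root node), giving $Br_\Delta\cong Br_{C_{n-1}}$.

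For part (a), Proposition~\ref{prop:BtoA} (with $k=d$) supplies an injection $Br_{C_{n-1}}\hookrightarrow Br_{\tilde\Delta}$ landing inside the relative braid group $Br_{\tilde\Delta}^f$. By Lemma~\ref{le:relative homomorphism}(a), the composite
\[
  Br_{C_{n-1}}\hookrightarrow Br_{\tilde\Delta}^f\xrightarrow{\,f_*\,}Br_\Delta
\]
sends the explicit generators $\tau_i$ to $T_{\gamma_i}$ and, by part (c), is an isomorphism. The triangular functor ${\bf F}$ from Theorem~\ref{th:monomial mutation surfaces} furnishes a $Br_{\tilde\Delta}^f$-equivariant surjection $\nu_f:\TT_{\tilde\Delta}\twoheadrightarrow \TT_\Delta$ that restricts, via Proposition~\ref{prop:invariant}, to a $Br_{C_{n-1}}$-equivariant surjection $\UU_{\tilde\Delta}\twoheadrightarrow \UU_\Delta$. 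Since Theorem~\ref{th:Br on Sigman}(a) gives faithfulness of the action of $Br_{\tilde\Delta}$ on $\UU_{\tilde\Delta}$, the subgroup $Br_{C_{n-1}}\subset Br_{\tilde\Delta}$ in particular acts faithfully on $\UU_{\tilde\Delta}$.

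The main obstacle is to promote this faithfulness through the surjection $\nu_f:\UU_{\tilde\Delta}\twoheadrightarrow \UU_\Delta$: a priori, some $g\in Br_{C_{n-1}}$ might act nontrivially on $\UU_{\tilde\Delta}$ but trivially on the quotient $\UU_\Delta=\UU_{\tilde\Delta}/K$, where $K:=\ker(\nu_f|_{\UU_{\tilde\Delta}})$ is $Br_{C_{n-1}}$-invariant. I plan to describe $K$ explicitly via the presentation of Theorem~\ref{thm:sectorgroup}: it is normally generated by the $\sigma_d$-orbit identifications $u_{\overline\gamma,\gamma'}\sim u_{\overline{\sigma_d(\gamma)},\sigma_d(\gamma')}$, together with the relations coming from the degeneration of loops with self-crossings under $f$. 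I would then select a finite family of sector generators whose $Br_{C_{n-1}}$-orbits project into pairwise distinct cosets of $K$ (using the explicit action from Theorem~\ref{th:action}), so that any $g$ trivializing the action on $\UU_\Delta$ must already trivialize the action on $\UU_{\tilde\Delta}$ and hence be trivial. A possible shortcut is to construct, directly from the formulas in Theorem~\ref{th:action}, a faithful Artin-type representation of $Br_{C_{n-1}}$ on the free group $\UU_\Delta$ and identify it with a known faithful (e.g.\ Lawrence--Krammer-compatible) representation, bypassing the cover.
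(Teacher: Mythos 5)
Your part (c) and the reduction of (b) to (a) are fine and match the paper (the presentation of $Br_\Delta$ in Theorem~\ref{th:brgroup} gives $Br_{C_{n-1}}$ exactly as in Theorem~\ref{th:Br_n 1}(b)). The problem is part (a), which is the actual content of the theorem, and there your argument has a genuine gap that you yourself flag but do not close: faithfulness of a group action does \emph{not} descend along an equivariant surjection of modules. Knowing that $Br_{C_{n-1}}\subset Br_{\tilde\Delta}$ acts faithfully on $\UU_{\tilde\Delta}$ and that $\nu_f:\UU_{\tilde\Delta}\twoheadrightarrow\UU_\Delta$ is equivariant only tells you that an element acting trivially on $\UU_\Delta$ moves every $u\in\UU_{\tilde\Delta}$ within its coset of $K=\ker\nu_f$; it gives no contradiction. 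Your proposed repair --- describe $K$ by generators and exhibit sector elements whose orbits separate cosets --- is exactly the hard step, and it is left entirely as a plan. As written, the covering-space route proves nothing about (a).

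The paper's proof is, in effect, the ``shortcut'' you mention in your last sentence, carried out concretely and without any cover. For the star-like triangulation $\Delta=\{(1,i),(i,1)\}$ one builds explicit elements $y_0,y_1,\dots,y_{n-1}\in\UU_\Delta$ (products of $t_{(1,i)}^{-1}$ with words $D_i$ in the boundary generators, on which $Br_\Delta$ acts trivially) generating a $Br_\Delta$-invariant free subgroup $G'$ of rank $n$, and computes $T_i(y_i)=y_{i-1}y_{i+1}^{-1}y_i$ for $i\ge 2$ while $T_1(y_1)=y_0y_2^{-1}y_0y_2^{-1}y_1$, i.e.\ the square of the type-$A$ transformation. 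Faithfulness then follows by combining Crisp's embedding $Br_{C_{n-1}}\hookrightarrow Br_n$, $\sigma_{n-1}\mapsto\tau_{n-1}^2$, with the known faithfulness of the type-$A$ Artin representation on a free group (Lemma~\ref{lem:faithful}). If you want to salvage your write-up, replace the descent-through-$\nu_f$ argument by this explicit computation inside $\UU_\Delta$ itself; the branched cover and Proposition~\ref{prop:BtoA} are not needed for this theorem.
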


We prove Theorems \ref{th:Br on Sigman}, \ref{th:Br on Sigman1} in Sections \ref{subsec:Proof of Theorem th:Br_n 1} and \ref{subsec:proof of Theorem clusterbraidgrouptypeB}, respectively.



\begin{theorem}
\label{th:reduced sector=reduced triangle}
Let $\Sigma$ be a marked surface with $I_{p,0}=\emptyset$ and $\Delta$ be a triangulation. The reduced sector group $\underline{\mathbb{U}}_{\Delta}$ coincides with the reduced triangle group $\underline{\mathbb{T}}_{\Delta}$ if and only if $I_{p,1}=\emptyset$.
\end{theorem}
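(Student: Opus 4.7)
I would reduce to an ordinary triangulation $\Delta$ via monomial mutations, since $\UU_\Delta$ is invariant under them by Proposition \ref{prop:invariant} and reduction (killing boundary arcs) commutes with them. I then treat the two directions of the biconditional separately.

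\emph{$(\Leftarrow)$ Suppose $I_{p,1}=\emptyset$.} Combined with the standing hypothesis $I_{p,0}=\emptyset$, this forces every marked point of $\Sigma$ to be a boundary point, so $\Sigma$ has non-empty boundary and every $i\in I_b$ lies on two boundary arcs. At each $i\in I_b$ I would fix a boundary arc $\beta_i$ with $s(\beta_i)=i$. For every generator $t_\gamma$ of $\TT_\Delta$ ($\gamma\in\Delta$) I would use the trivial identity
\[ t_\gamma \;=\; t_{\beta_{s(\gamma)}}\cdot u_{\overline{\beta_{s(\gamma)}},\gamma}. \]
After killing boundary arcs, the image of $t_\gamma$ in $\underline\TT_\Delta$ equals the image of the sector element $u_{\overline{\beta_{s(\gamma)}},\gamma}\in\UU_\Delta$, so it lies in $\underline\UU_\Delta$. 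These $t_\gamma$ generate $\TT_\Delta$, hence their images generate $\underline\TT_\Delta$, giving $\underline\TT_\Delta=\underline\UU_\Delta$.

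\emph{$(\Rightarrow)$ Suppose $I_{p,1}\neq\emptyset$.} Pick $p\in I_{p,1}$; I will construct a homomorphism $\phi_p:\TT_\Delta\to\Z$ that vanishes on $\underline\UU_\Delta$ but not on all of $\underline\TT_\Delta$. Define $\phi_p$ on generators by $\phi_p(t_\gamma):=[s(\gamma)=p]$ (Iverson bracket, valued in $\Z$). The triangle relation $t_{\alpha_1}t_{\overline{\alpha_2}}^{-1}t_{\alpha_3}=t_{\overline{\alpha_3}}t_{\alpha_2}^{-1}t_{\overline{\alpha_1}}$ is preserved because, using $s(\alpha_{i+1})=t(\alpha_i)$ cyclically, both sides evaluate to $[s(\alpha_1)=p]$; the monogon relation $t_\ell=t_{\overline\ell}$ for a special loop is preserved since $s(\ell)=t(\ell)$; and the pending arc relations are vacuous as $I_{p,0}=\emptyset$. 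Since $s(\beta)\in I_b\neq p$ for every boundary arc $\beta$, we have $\phi_p(t_\beta)=0$, so $\phi_p$ descends to $\underline\TT_\Delta\to\Z$. For any sector generator $u_{\gamma,\gamma'}=t_{\overline\gamma}^{-1}t_{\gamma'}$ with $t(\gamma)=s(\gamma')$ one computes $\phi_p(u_{\gamma,\gamma'})=-[t(\gamma)=p]+[s(\gamma')=p]=0$, so $\phi_p$ kills $\underline\UU_\Delta$. Finally, $p$ is an interior vertex of $\Delta$, so some $\gamma\in\Delta$ has $s(\gamma)=p$ and $\phi_p(t_\gamma)=1$, exhibiting an element of $\underline\TT_\Delta$ outside $\underline\UU_\Delta$.

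The main obstacle will be verifying that $\phi_p$ is well-defined on $\TT_\Delta$: this requires a careful enumeration of the defining relations, with particular attention to the degenerate case of a self-folded triangle around an ordinary puncture, where the cyclic triangle relation collapses to $t_\ell=t_{\overline\ell}$ on the enclosing loop (still preserved because $s(\ell)=t(\ell)$). Once that case check is in hand, both directions follow as bookkeeping arguments.
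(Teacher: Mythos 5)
Your proof is correct, and the ``only if'' direction takes a genuinely different (and in one respect more robust) route than the paper's. The paper proves $(\Leftarrow)$ by invoking the free-product decomposition $\TT_\Delta\cong\UU_\Delta * F_{|I_b\cup I_{p,1}|}$ of Theorem \ref{thm:sectorgroup}, whose proof rests on exactly your identity $t_\gamma=t_{\beta_{s(\gamma)}}u_{\overline{\beta_{s(\gamma)}},\gamma}$ with the free factor generated by boundary arcs when $I_{p,1}=\emptyset$; so that half is essentially the same argument, just unpacked. For $(\Rightarrow)$ the paper argues only in the closed case, where $\underline\TT_\Delta=\TT_\Delta$ and the global grading $\deg t_\gamma=1$ separates $\UU_\Delta$ (the degree-$0$ part) from $\TT_\Delta$; that grading does not descend to $\underline\TT_\Delta$ when there are boundary arcs, since those have degree $1$ and are killed. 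Your homomorphism $\phi_p(t_\gamma)=[s(\gamma)=p]$ is a ``local degree at $p$'' that vanishes on boundary arcs and on all sectors $u_{\gamma,\gamma'}$, so it descends and detects $t_\gamma\notin\underline\UU_\Delta$ for any arc emanating from $p$; this covers surfaces with boundary and punctures uniformly, which the paper's written argument does not explicitly address. Your well-definedness check is the right one: the triangle relation is preserved because both sides evaluate to $[s(\alpha_1)=p]$ using only the cyclic adjacencies $t(\alpha_i)=s(\alpha_{i+1})$, and this computation already subsumes the self-folded case.

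Two small points to tidy. First, $I_{p,0}=I_{p,1}=\emptyset$ does not force every marked point to lie on the boundary: special punctures in $I_{p,\ge 2}$ may remain. What your argument actually needs is only that $s(\gamma)\in I_b$ for every arc $\gamma\in\Delta$, which holds because curves in this paper start only at points of $I_b\cup I_{p,1}$ (special punctures are enclosed by loops, never endpoints); state it that way. Second, when reducing to an ordinary triangulation, note explicitly that the monomial mutations $\mu_{\Delta,\Delta^P}$ fix the generators attached to boundary arcs, so the reduction ideal is preserved and the identification $\underline\UU_{\Delta^P}=\underline\UU_\Delta$, $\underline\TT_{\Delta^P}=\underline\TT_\Delta$ is legitimate.
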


\begin{proof}
It follows from Theorem \ref{thm:sectorgroup} that $\underline{\mathbb{U}}_{\Delta}=\underline{\mathbb{T}}_{\Delta}$ if $I_{p,1}=\emptyset$.

Suppose that $\Sigma$ is a closed surface. Then $\underline{\mathbb{U}}_{\Delta}=\mathbb{U}_{\Delta}$ and $\underline{\mathbb{T}}_{\Delta}=\mathbb{T}_{\Delta}$. According to the definition, $\mathbb{U}_{\Delta}$ is the degree $0$ part of $\mathbb{T}_{\Delta}$. It follows that $\mathbb{U}_{\Delta}$ is a proper subgroup of $\mathbb{T}_{\Delta}$.

This completes the proof.
\end{proof}

\begin{proposition}\label{prop:faithfulequ} If $I_{p,0}(\Sigma)\cup I_{p,1}(\Sigma)=\emptyset$ then the following statements  are equivalent.

\noindent $(1)$ $Br_\Delta$-action on $T_\Delta$ is faithful.

\noindent $(2)$ The induced $Br_\Delta$-action on $\UU_\Delta$ is faithful.
\end{proposition}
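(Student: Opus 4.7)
The plan is to prove the equivalence by exploiting that, under the hypothesis $I_{p,0}(\Sigma)\cup I_{p,1}(\Sigma)=\emptyset$, every arc in $\Delta$ has both endpoints in $I_b$ (no arc is incident to a special puncture, and ordinary/$0$-punctures are absent), while $\Sigma$ necessarily has non-empty boundary (closed components would require an ordinary puncture). In particular, no pending arcs exist, so only case (a) of Theorem \ref{th:action} applies throughout.

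The direction $(2)\Rightarrow(1)$ is immediate: by Proposition \ref{prop:invariant}, $\UU_\Delta$ is a $Br_\Delta$-invariant subgroup of $\TT_\Delta$, so the kernel of the $Br_\Delta$-action on $\TT_\Delta$ is contained in the kernel of the restricted action on $\UU_\Delta$; faithfulness of the latter then forces the former.

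For the nontrivial direction $(1)\Rightarrow(2)$, I would proceed as follows. For each $i\in I_b$, fix a boundary arc $\beta_i\in\Delta$ with $s(\beta_i)=i$; such $\beta_i$ exists because every boundary component carries at least one marked point and $\beta_i\in\Delta$ by definition of a triangulation. Inspection of Theorem \ref{th:action}(a) shows that each generator $T_{\delta,\Delta}$ of $Br_\Delta$ (indexed by non-pending internal edges $\delta\in\Delta$) fixes $t_\beta$ for every $\beta\neq\delta,\overline\delta$; since $\beta_i$ is a boundary arc, it is never equal to a non-pending internal edge, so $T_{\delta,\Delta}(t_{\beta_i})=t_{\beta_i}$ for every generator, and hence the whole group $Br_\Delta$ pointwise fixes each $t_{\beta_i}$. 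Now suppose $g\in Br_\Delta$ acts trivially on $\UU_\Delta$. For any $\gamma\in\Delta$, setting $i:=s(\gamma)\in I_b$, the sector element
\[
u_{\overline{\beta_i},\gamma}=t_{\beta_i}^{-1}t_\gamma
\]
lies in $\UU_\Delta$; combining $g(t_{\beta_i})=t_{\beta_i}$ with $g(u_{\overline{\beta_i},\gamma})=u_{\overline{\beta_i},\gamma}$ yields $g(t_\gamma)=t_\gamma$. Since the $t_\gamma$, $\gamma\in\Delta$, generate $\TT_\Delta$, the element $g$ acts trivially on $\TT_\Delta$, and hypothesis $(1)$ forces $g=1$, establishing $(2)$.

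The only technical point requiring verification is that every generator $T_{\delta,\Delta}$ fixes each boundary $t_{\beta_i}$, including the case where $\delta$ is a special loop around a special puncture. For $\delta$ not a special loop this is a direct read-off from Theorem \ref{th:action}(a). For a special loop, I would, if necessary, fall back on the intrinsic definition $T_{\delta,\Delta}=h_{\Delta,\mu_\delta\Delta}h_{\mu_\delta\Delta,\Delta}$ together with the explicit description of the monomial mutations $\mu_{\Delta',\Delta}$ from Theorem \ref{th:monomial mutation surfaces}, which leave $t_\beta$ unchanged for every boundary arc $\beta$. Modulo this routine check, no further obstacles are expected, so the argument is short and essentially structural once the decomposition $\TT_\Delta=\langle\UU_\Delta,\{t_{\beta_i}\}_{i\in I_b}\rangle$ and the invariance of boundary generators are in place.
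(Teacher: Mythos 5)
Your proof is correct, but it reaches the conclusion by a different mechanism than the paper. The paper first invokes the free-product decomposition $\TT_\Delta\cong \UU_\Delta * F_{|I_b\cup I_{p,1}|}$ of Theorem \ref{thm:sectorgroup} and then applies a general lemma asserting that $Aut(H)\times Aut(\UU)$ embeds into $Aut(H*\UU)$, taking $H$ to be the free factor on the chosen generators $t_{\gamma_i}$ and $B_H=1$; both directions of the equivalence drop out of the injectivity of that embedding. You instead bypass the free-product structure entirely: you only use that $\TT_\Delta$ is \emph{generated} by $\UU_\Delta$ together with the boundary elements $t_{\beta_i}$, that each generator $T_{\delta,\Delta}$ fixes every boundary $t_{\beta_i}$ (a direct read-off from Theorem \ref{th:action}(a), since boundary arcs are never internal edges), and the identity $t_\gamma=t_{\beta_{s(\gamma)}}\,u_{\overline{\beta_{s(\gamma)}},\gamma}$ to propagate triviality on $\UU_\Delta$ to triviality on all of $\TT_\Delta$. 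Your route is more elementary and more self-contained (it does not depend on the rank computations behind Theorem \ref{thm:sectorgroup}, only on generation and invariance), and it makes explicit the fact --- left implicit in the paper's application of its lemma --- that $Br_\Delta$ fixes the complementary generators pointwise; what the paper's argument buys in exchange is a reusable statement about automorphisms of free products. Your flagged ``technical point'' about special loops is indeed routine: case (a) of Theorem \ref{th:action} covers special loops (they are not loops around pending arcs, which are absent here), and the formula fixes $t_\beta$ for every $\beta\neq\gamma,\overline\gamma$, in particular for all boundary arcs.
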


\begin{proof} We need the following

\begin{lemma}

 Let $\TT$ be a group, $\UU$ be a subgroup such that $\TT=H*\UU$ for some other subgroup $H$. Then for any subgroup $B_H\subset Aut(H)$ and $B_{\UU}\subset Aut(\UU)$ the natural action of $B_\TT=B_H\times B_\UU$ on $\TT$ is faithful.

 \end{lemma}

 \begin{proof} Clearly, any homomorphism $f_H:H\to H$ lifts uniquely to a homomorphism $\hat f_H:\TT\to \TT$ and any homomorphism $f_\UU:\UU\to \UU$ lifts uniquely to a  homomorphism $\hat f_\UU:\TT\to \TT$ and $\hat f_H\circ \hat f_\UU=\hat f_\UU\circ \hat f_H$.
 
 In particular, $\hat f_H=Id_{\TT}$ iff $f_H=Id_H$ and $\hat f_\UU=Id_{\TT}$ iff $f_\UU=Id_\UU$. This implies that homomorphism $Aut(H)\times Aut(\UU)\to Aut(\TT)$ taking $(f_H,f_\UU)$ to $(\hat f_H,\hat f_\UU)$ is injective.

 This complete the proof.
\end{proof}

Applying it to the case $\UU=\UU_\Delta$, $H=F_{I_b\sqcup I_p}$, $B_H=1$, and $B_\UU$ is the restriction of $Br_\Delta$ to $\UU$ (Proposition \ref{prop:invariant}), using \eqref{eq:free sector factorization}, we finish the proof of the proposition.
\end{proof}

We conjecture that Proposition \ref{prop:faithfulequ} holds for all surfaces (which may contain punctures).

\begin{theorem} 
\label{th:reduced oddgons}
For any $g\ge 0$ the group 
    $\underline \TT_{\Sigma_{2g+3}}=\underline \UU_{\Sigma_{2g+3}}$ is isomorphic to the fundamental group of the closed surface of genus $g$.
\end{theorem}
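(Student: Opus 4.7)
The strategy is in two parts: first reduce the statement to a concrete presentation of $\underline{\TT}_{\Sigma_{2g+3}}$, then recognize this presentation as the standard one for the closed orientable surface group of genus $g$ via the classification of quadratic words.

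Since $\Sigma_{2g+3}$ is an unpunctured disk, in particular $I_{p,0}=I_{p,1}=\emptyset$, Theorem~\ref{th:reduced sector=reduced triangle} already gives $\underline{\UU}_{\Sigma_{2g+3}}=\underline{\TT}_{\Sigma_{2g+3}}$, so it suffices to identify the latter group. Moreover, Theorem~\ref{th:Udelta reduced intro} predicts that $\underline{\TT}_{\Sigma_{2g+3}}$ is a one-relator torsion-free group on $|I_b|+1-4\chi(\Sigma_{2g+3})=2g$ generators, already the correct number.

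To obtain the single defining relation I would work with the fan triangulation $\Delta$ of $\Sigma_{2g+3}$, with internal diagonals $(1,i),(i,1)$ for $i=3,\ldots,2g+2$. Writing $a_i:=t_{1,i}$ and $b_i:=t_{i,1}$, the triangle relations of $\TT_\Delta$ applied to the $2g+1$ triangles $(1,i,i{+}1)$, after specialization of all boundary generators to $1$, reduce to
\[
b_3=a_3,\qquad a_{2g+2}=b_{2g+2},\qquad a_i b_{i+1}=a_{i+1}b_i\quad(3\le i\le 2g+1).
\]
The recursion $b_{i+1}=a_i^{-1}a_{i+1}b_i$ starting from $b_3=a_3$ writes each $b_i$ as an explicit alternating product $a_{i-1}^{-1}a_i\,a_{i-2}^{-1}a_{i-1}\cdots a_3^{-1}a_4\,a_3$ in the $a_j$'s, and substituting into $a_{2g+2}=b_{2g+2}$ yields a single relation $R=1$ in the $2g$ generators $a_3,\ldots,a_{2g+2}$. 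A direct inspection of $R$ shows that it is cyclically reduced of length exactly $4g$, and that each generator appears in $R$ exactly twice, once with exponent $+1$ and once with $-1$; in particular $R$ is an orientable quadratic word of maximal rank.

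The concluding step is the classical theorem on orientable quadratic words (Zieschang--Vogt--Coldewey; see also Lyndon--Schupp, \emph{Combinatorial Group Theory}, Ch.~I): every cyclically reduced word in $F(x_1,\ldots,x_{2g})$ in which each generator appears once with exponent $+1$ and once with $-1$ lies in the $\mathrm{Aut}(F_{2g})$-orbit of the standard surface relator $\prod_{i=1}^g[x_{2i-1},x_{2i}]$. Applied to our $R$, this yields the desired isomorphism with the fundamental group of the closed genus $g$ surface. The only non-routine step is precisely this final identification; the cleanest route is the classification theorem above, but one may also construct the required Nielsen transformation by hand inductively on $g$, using as base cases $g=0$ (trivial group) and $g=1$ (the pentagon, where $R$ simplifies directly to the single commutator $[a_3,a_4]$).
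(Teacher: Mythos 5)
The paper states Theorem \ref{th:reduced oddgons} without supplying a proof, so there is no argument of the authors to compare yours against; I can only assess your proposal on its own terms. Your setup is correct: the appeal to Theorem \ref{th:reduced sector=reduced triangle} to reduce to $\underline\TT_{\Sigma_{2g+3}}$, the fan presentation, the elimination of the $b_i$ via $b_{i+1}=a_i^{-1}a_{i+1}b_i$, and the resulting single relator $R=a_{2g+2}^{-1}a_{2g+1}^{-1}a_{2g+2}a_{2g}^{-1}a_{2g+1}\cdots a_3^{-1}a_4a_3$ of length $4g$ in which each $a_i$ occurs once with each sign are all right (and consistent with the generator count $|I_b|+1-4\chi=2g$ from Theorem \ref{th:Udelta reduced}).

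The gap is in the last step. The classification statement you invoke — that \emph{every} cyclically reduced word in $F(x_1,\dots,x_{2g})$ in which each generator occurs once with exponent $+1$ and once with $-1$ lies in the $\mathrm{Aut}(F_{2g})$-orbit of $\prod_{i=1}^g[x_{2i-1},x_{2i}]$ — is false. For instance, $w=[x_1x_3,\,x_2x_4]=x_1x_3x_2x_4x_3^{-1}x_1^{-1}x_4^{-1}x_2^{-1}$ is cyclically reduced, orientable quadratic, and uses all four generators, yet it is the image of $[x_1,x_2]$ under the automorphism $x_1\mapsto x_1x_3,\ x_2\mapsto x_2x_4$, so $F_4/\langle\langle w\rangle\rangle\cong \mathbb Z^2 * F_2$, not the genus-$2$ surface group. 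The correct form of the Zieschang/Lyndon--Schupp classification produces $\prod_{i=1}^{h}[x_{2i-1},x_{2i}]$ for a genus $h$ that is an invariant of the word and may be strictly less than half the number of generators appearing. So writing ``$R$ is an orientable quadratic word of maximal rank'' begs exactly the point at issue: you must still compute the genus of $R$. The standard way is to check that the boundary identification of the $4g$-gon labelled by $R$ identifies all $4g$ vertices to a single point, giving $\chi=1-2g+1=2-2g$ and hence genus $g$; for your $R$ the identifications chain the vertices into one cycle (I verified this for $g=1,2$, and the pattern of the relator makes the general case a short induction), but this verification must appear in the proof. Alternatively, carry out in full the inductive Nielsen reduction you sketch in your last sentence — either repair closes the gap.
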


\begin{theorem}  
\label{th:Udelta reduced}
In the notation of Theorem \ref{th:Udelta2}, if $I_{p,0}(\Sigma)\cup I_{p,1}(\Sigma)=\emptyset$, then $\underline \UU_\Sigma=\underline \TT_\Sigma$ is a one-relator torsion free group in  $|I_b| +1 -4\chi(\Sigma)$ generators.
\end{theorem}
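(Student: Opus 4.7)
My plan begins by invoking Theorem~\ref{th:reduced sector=reduced triangle}: under the hypothesis $I_{p,0}(\Sigma)\cup I_{p,1}(\Sigma)=\emptyset$ we get $\underline{\UU}_\Sigma=\underline{\TT}_\Sigma$, so the task reduces to showing that $\underline{\TT}_\Delta$ is a one-relator torsion-free group of the claimed rank for any (and hence some fixed) triangulation $\Delta$ of $\Sigma$. By Theorem~\ref{thm:1-relator}(a), $\TT_\Delta$ is a free group of rank $N=2h+3|I_b|-4\chi(\Sigma)$ with $h=|I_{p,\ge 2}(\Sigma)|$, and $\underline{\TT}_\Delta$ is its quotient by the relations $t_\gamma=1$ for every oriented boundary arc $\gamma$.

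The main step is to work with the natural (non-reduced) presentation of $\TT_\Delta$---one generator $t_\gamma$ per oriented arc, together with the triangle relations and the monogon relations $t_\ell=t_{\overline\ell}$ for special loops---adjoin the boundary-kill relations, and perform Tietze transformations organised by the triangles of $\Delta$. Each triangle with two boundary sides collapses, after the two sides are set to $1$, into a single identification $t_\alpha=t_{\overline\alpha}$ of the two orientations of its unique internal side $\alpha$; each triangle with exactly one boundary side yields, after cancellation, a four-term word that eliminates one further internal generator; each fully internal triangle retains its full six-term relation; and each monogon contributes another identification $t_\ell=t_{\overline\ell}$. Counting the triangles of each type via the Euler characteristic of the CW-structure of $\Delta$ (treating each $p\in I_{p,\ge 2}$ as an interior vertex of its monogon) and iterating these eliminations, I will verify that exactly $|I_b|+1-4\chi(\Sigma)$ generators and a single relation survive. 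The polygon case $\Sigma=\Sigma_n$ with the fan triangulation from vertex~$1$ is the template: the two end triangles $(1,2,3)$ and $(1,n-1,n)$ each identify an orientation, the $n-4$ middle triangles each eliminate one further generator via a four-term relation, and the last triangle contributes the surviving relation---recovering Theorem~\ref{th:reduced oddgons} when $n$ is odd and giving $\mathbb{Z}$ when $n=4$.

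Torsion-freeness will then follow from Magnus's theorem on one-relator groups: it suffices to check that the surviving relator $R$ is not a proper power in the free group on the surviving generators. By construction $R$ is a composite of triangle substitutions traversing $\partial\Sigma$ once in the dual graph of $\Delta$, so in $R$ every surviving generator has exponent sum $0$ but a strictly bounded total valency, ruling out a factorisation $R=W^k$ with $k\ge 2$. The hard part will be the combinatorial bookkeeping when $\Sigma$ has several boundary components, positive genus, or multiple special punctures: the order of elimination matters; the monogon relation for a special loop $\ell$ and the triangle relation for the triangle containing $\ell$ overlap in a way that must be resolved without over-counting; and one must choose $\Delta$ so that the sequential elimination terminates at exactly one relation of the stated form. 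The natural strategy is to pick $\Delta$ containing a spanning family of internal arcs that reduces $\Sigma$ to a single polygon, after which the polygon analysis above applies directly.
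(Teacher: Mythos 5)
Your opening moves are sound: reducing to $\underline{\TT}_\Delta$ via Theorem \ref{th:reduced sector=reduced triangle}, and starting from the free group $\TT_\Delta$ of Theorem \ref{thm:1-relator}(a), is the right frame (the paper does not print a separate proof of this theorem, but its surrounding machinery --- Theorem \ref{thm:sectorgroup}, Remark \ref{rmk:generator} --- points to exactly this kind of argument). However, the core of your proof is deferred rather than done, and as described it is internally inconsistent. You classify the triangles and say that each fully internal triangle ``retains its full six-term relation,'' yet you also claim that a single relation survives; on a surface of positive genus or with several boundary components there are many fully internal triangles, so your elimination scheme must also use each such six-term relation to solve for one oriented internal generator, and you must show that this can be done coherently (no generator eliminated twice, no circular substitutions) until exactly one relation remains. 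This is precisely the ``combinatorial bookkeeping'' you postpone, and it is the theorem. Relatedly, ``cut along a spanning family of internal arcs to reduce to a single polygon'' does not let the polygon analysis ``apply directly'': the cutting identifies internal arcs in pairs, so $\underline{\TT}_\Sigma$ is a proper quotient-with-identifications of $\underline{\TT}_{\Sigma_n}$, not the polygon group itself (compare the unpunctured cylinder, where the answer $\langle a,b,c\mid cba=abc\rangle$ is not a polygon group). Finally, note that your generator count must also explain why the number $h$ of special punctures drops out of the formula $|I_b|+1-4\chi(\Sigma)$ even though it enters the rank $2h+3|I_b|-4\chi(\Sigma)$ of $\TT_\Delta$; a naive deficiency count of ``free group minus $2|I_b|$ boundary relations'' does not give this.

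The torsion-freeness step also has a genuine gap. The Karrass--Magnus--Solitar criterion requires the cyclically reduced relator $R$ not to be a proper power, but ``every surviving generator has exponent sum $0$'' does nothing to exclude $R=W^k$ (the commutator word $[a,b]^2$ has all exponent sums $0$), and ``strictly bounded total valency'' is not an argument: if each generator occurs exactly twice in $R$, the factorization $R=W^2$ with each generator occurring once in $W$ is combinatorially possible (e.g.\ $abab=(ab)^2$) and must be excluded by inspecting the actual cyclic word $R$, not its letter counts. You need to exhibit the surviving relator explicitly (in the spirit of Remark \ref{rmk:generator}, where it has the alternating form $t_{\overline\gamma_1}^{-1}t_{\gamma_2}\cdots t_{\gamma_{2n}}$) and give a concrete reason it is not a proper power.
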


\subsection{Rank $2$ cluster groups and braid action}\label{sec:Rank $2$ cluster groups and braid action}

Given $r_1,r_2\in {\mathbb Z}_{\ge 0}$ such that $r_1=0$ if and only if $r_2=0$. Denote $m=\begin{cases}
    2, & \text{ if } r_1r_2=0,\\
    3, & \text{ if } r_1r_2=1,\\
    4, & \text{ if } r_1r_2=2,\\
    6, & \text{ if } r_1r_2=3,\\
    0, & \text{ if } r_1r_2\geq 4.
\end{cases}$

Denote by $\TT_k:=\langle t_k,t_{k+1}\rangle$ the free group freely generated by $t_k,t_{k+1}$, $k\in \Z$.

If $k$ is odd, let 
$$\mu_{k,k+1}: \TT_k\to \TT_{k+1},\quad t_k\mapsto t_{k+2}t_{k+1}^{r_2}, t_{k+1}\mapsto t_{k+1}$$ 
$$\mu_{k+1,k}: \TT_{k+1}\to \TT_{k},\quad t_{k+1}\mapsto t_{k+1}, t_{k+2}\mapsto t_{k+1}^{-r_1}t_{k}$$ 
 be the group isomorphisms.

If $k$ is even, let 
$$\mu_{k,k+1}: \TT_k\to \TT_{k+1},\quad t_k\mapsto t_{k+2}, t_{k+1}\mapsto t_{k+1}$$ 
$$\mu_{k+1,k}: \TT_{k+1}\to \TT_{k},\quad t_{k+1}\mapsto t_{k+1}, t_{k+2}\mapsto t_k$$ 
 be the group isomorphisms.

Let $\sigma_k: \TT_k\to \TT_{k+m+2}$ be the isomorphism given by 
$$t_k\mapsto \begin{cases}
    t_{k+m+2}, & \text{ if } m \text{ is even,}\\
    t_{k+m+3}, & \text{ if } m \text{ is odd,}
\end{cases} \hspace{3mm}\quad t_{k+1}\mapsto \begin{cases}
    t_{k+m+3}, & \text{ if } m \text{ is even,}\\
    t_{k+m+2}, & \text{ if } m \text{ is odd}.
\end{cases}$$

\begin{theorem}
\label{th:faithful rank 2}
In the notation of Section \ref{subsec:rank 2 braid},
assignments $k\mapsto \TT_k$ and $h_{i,i+1}\mapsto \mu_{i,i+1},~h_{i+1,i}\mapsto \mu_{i+1,i}, \sigma_i\mapsto\sigma_i$ define a faithful functor from $\Gamma_{m}$ to ${\bf Grp}'$, the groupoid of groups with isomorphisms. In particular, assignments $T^k_i\mapsto \underline T^k_i$, $i=1,2$ define a faithful action of $Br(I_2(m))$ on each $\TT_k\cong F_2$, where $T^k_1=h_{k,k+1}h_{k+1,k}, T^k_2=h_{k,k-1}h_{k-1,k}$, $\underline T^k_1=\mu_{k,k+1}\mu_{k+1,k}$ and $\underline T^k_2=\mu_{k,k-1}\mu_{k-1,k}$.
\end{theorem}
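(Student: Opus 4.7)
The plan is to combine the abstract groupoid presentation of $\Gamma_m$ (Theorem \ref{th:rank 2 faithful}) with an explicit analysis of the induced action on $\TT_0\cong F_2$.

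First I would verify that $\mu_{i,i+1}$, $\mu_{i+1,i}$, and $\sigma_i$ satisfy the three families of defining relations of $\Gamma_m$ listed in Remark \ref{rmk:def}. The nontrivial identity
$$\mu_{i+2,i+1}\mu_{i+1,i}=\sigma_{i-m}\mu_{i-m,i-m+1}\cdots\mu_{i-1,i}$$
is essentially the quasi-periodicity of the rank $2$ noncommutative cluster variables $y_k$ established in \cite{BR0}, and reduces to a direct case analysis on the parity of $i$ and on the value of $r_1r_2$; the two $\sigma$-conjugation relations are immediate from the index-shift definition of $\sigma_k$. This gives a well-defined functor $\mathbf F:\Gamma_m\to \mathbf{Grp}'$.

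Next, since $\Gamma_m$ is a connected groupoid, faithfulness of $\mathbf F$ is equivalent to injectivity of the induced homomorphism $Br_{\Gamma_m}=\mathrm{Aut}_{\Gamma_m}(0)\to \mathrm{Aut}(\TT_0)$. By Theorem \ref{th:rank 2 faithful}, $Br_{\Gamma_m}\cong Br(I_2(m))$ is generated by $b_0,b_1$ subject only to the dihedral braid relation. A direct computation identifies the images $\underline b_0,\underline b_1\in \mathrm{Aut}(\TT_0)$ (after the natural identification $\TT_0\cong F_2$) with the automorphisms $T_1,T_2$ from \eqref{eq:braid rank 2}, so it suffices to show that the subgroup $\langle T_1,T_2\rangle\subset \mathrm{Aut}(F_2)$ is isomorphic to $Br(I_2(m))$.

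For $r_1r_2\ge 4$ (hence $m=0$), I would apply the ping-pong lemma to the abelianized matrices
$$T_1^{ab}=\begin{pmatrix}1 & 0\\ r_2 & 1\end{pmatrix},\qquad T_2^{ab}=\begin{pmatrix}1 & -r_1\\ 0 & 1\end{pmatrix};$$
these generate a free subgroup of $SL_2(\mathbb Z)$ of rank $2$ precisely when $r_1r_2\ge 4$, and any relation among $T_1,T_2$ in $\mathrm{Aut}(F_2)$ would descend to a relation among their abelianizations, forcing $\langle T_1,T_2\rangle\cong F_2=Br(I_2(0))$. For $r_1r_2\in\{1,2,3\}$ (hence $m\in\{3,4,6\}$), the abelianization is non-injective by Lemma \ref{lem:finiteorder}, so this argument fails. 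Instead I would verify the braid identity $\underbrace{T_1T_2T_1\cdots}_m=\underbrace{T_2T_1T_2\cdots}_m$ directly on the generators of $F_2$, which gives a surjection $Br(I_2(m))\twoheadrightarrow \langle T_1,T_2\rangle$, and then deduce injectivity from the noncommutative Laurent phenomenon in $\mathcal A_{r_1,r_2}$ of Section \ref{sec:rank2alg}: distinct reduced expressions in $Br(I_2(m))$ produce distinct leading monomials on the cluster variables $y_k$, which provides the separation needed.

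The main obstacle is the small dihedral case $r_1r_2\in\{1,2,3\}$, where the abelianized action is degenerate and one must rely on the genuinely noncommutative structure. The key technical input is that the monomial mutations of Remark \ref{rm:monomial mutation surfaces} (specialized to rank $2$) admit an injective leading-term map into the free monoid generated by the $y_k$; combined with a Garside-type normal form on $Br(I_2(m))^+$, this promotes the dihedral braid relation to the \emph{only} relation and completes the proof.
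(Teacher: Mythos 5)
Your reduction of faithfulness to injectivity of $Br_{\Gamma_m}=\mathrm{Aut}_{\Gamma_m}(0)\to \mathrm{Aut}(\TT_0)$ via Theorem \ref{th:rank 2 faithful} is correct, and your treatment of the case $r_1r_2\ge 4$ (ping-pong on $T_1^{ab},T_2^{ab}$ in $SL_2(\mathbb Z)$, then Hopficity of $F_2$ to lift freeness from the abelianization) is sound and is essentially the same mechanism the paper relies on through its citation of \cite{FM}. The problem is the dihedral case $r_1r_2\in\{1,2,3\}$. Your argument there is not a proof: the assertion that ``distinct reduced expressions in $Br(I_2(m))$ produce distinct leading monomials on the cluster variables $y_k$'' is nowhere established (Section \ref{sec:rank2alg} records the Laurent phenomenon for the $y_k$ but contains no injective leading-term map on $\mathrm{Aut}(F_2)$ or on words in $T_1^{\pm 1},T_2^{\pm 1}$), and a Garside normal form on the positive monoid $Br(I_2(m))^+$ does not by itself separate arbitrary group elements, which involve inverses. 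As stated, this step assumes precisely what has to be proved, namely that the surjection $Br(I_2(m))\twoheadrightarrow\langle T_1,T_2\rangle$ has trivial kernel.

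The paper closes this gap differently and more cheaply. For $r_1=r_2=1$ it does \emph{not} abandon the abelianization: the image $\langle T_1^{ab},T_2^{ab}\rangle\subset SL_2(\mathbb Z)$ has the known presentation $\langle\sigma_1,\sigma_2\mid \sigma_1\sigma_2\sigma_1=\sigma_2\sigma_1\sigma_2,\ (\sigma_1\sigma_2\sigma_1)^4=1\rangle$, so the kernel of $Br_3\to\langle T_1,T_2\rangle$ is contained in the central cyclic subgroup generated by $(\sigma_1\sigma_2\sigma_1)^4$; a one-line computation that $(\underline T_1\underline T_2\underline T_1)^n\neq 1$ in $\mathrm{Aut}(F_2)$ for all $n>0$ then kills the kernel and gives $\langle T_1,T_2\rangle\cong Br_3$. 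For general $r_1,r_2$ one observes $\underline T_1^{r_1,r_2}=(\underline T_1^{1,1})^{r_2}$ and $\underline T_2^{r_1,r_2}=(\underline T_2^{1,1})^{r_1}$, so $\langle T_1,T_2\rangle$ is the subgroup of $Br_3$ generated by powers of the standard generators, whose isomorphism type (free for $r_1r_2\ge 4$, the Artin group of $I_2(m)$ for $r_1r_2\le 3$) is the classical result cited from \cite{FM}. If you want to keep your structure, replace the leading-monomial step by this two-part argument: identify the kernel of the abelianization with a central cyclic group, and check that its generator acts with infinite order on $F_2$; then reduce $r_1r_2\in\{2,3\}$ to the case $r_1r_2=1$ by passing to powers of the generators inside $Br_3$.
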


\begin{proof}
    Denote $\underline T^{r_1,r_2}_1=\mu_{12}\mu_{21}, \underline  T^{r_1,r_2}_2=\mu_{10}\mu_{01}$. Then $\underline  T^{r_1,r_2}_i(t_j)=t_j$ if $i\ne j$ and
$$\underline  T^{r_1,r_2}_1(t_1)=t_1t_2^{r_2},~\underline  T^{r_1,r_2}_2(t_2)=t_1^{-r_1}t_2.$$

In case $r_1r_2=1$, abelianizing $\underline  T^{1,1}_1, \underline  T^{1,1}_2$, we obtain automorphisms $T^{ab}_1=\begin{pmatrix}
     1 & 0\\
     1 & 1
 \end{pmatrix}, T^{ab}_2=\begin{pmatrix}
     1 & -1\\
     0 & 1
 \end{pmatrix}\in Aut(\mathbb Z^2),$ it follows by \cite[Section 1.1.4]{KV} that $$\langle T_1^{ab}, T_2^{ab}\rangle \cong \langle \sigma_1,\sigma_2\mid \sigma_1\sigma_2\sigma_1=\sigma_2\sigma_1\sigma_2, (\sigma_1\sigma_2\sigma_1)^4=1\rangle.$$ It is easy to check directly that $(\underline T^{1,1}_1\underline  T^{1,1}_2\underline  T^{1,1}_1)^n\neq 1$ for any $n\in \mathbb Z_{>0}$. Therefore, $$\langle \underline  T^{1,1}_1, \underline  T^{1,1}_2\rangle \cong \langle \tau_1, \tau_2\mid \tau_1\tau_2\tau_1=\tau_2\tau_1\tau_2\rangle=Br_3.$$

For any $r_1,r_2$, we see that $\langle \underline  T_1^{r_1,r_2}, \underline  T_2^{r_1,r_2}\rangle\cong \langle (\underline  T_1^{1,1})^{r_1}, (\underline  T_2^{1,1})^{r_2}\rangle \subseteq Br_{1,1}$. Then the result follows by \cite[Page 82]{FM}.

The proof is complete.
\end{proof}

\section{Noncommutative Laurent Phenomenon and the expansion formula}\label{sec:NC Laurent surfaces}

\subsection{Laurent phenomenon for noncommutative surfaces}\label{sec:noncomLaurent}

Generalizing \cite[Section 3]{BR}, we establish the following. 

\begin{theorem} 
\label{th:iotaDelta}
Let $\Sigma\in {\bf Surf}$, $\Delta\in {\bf Tsurf}_\Sigma^t$. Then

$(a)$ the assignments $t_\gamma\mapsto x_\gamma$, $\gamma\in \Delta$ define homomorphism $\iota_\Delta:\kk  \TT_\Delta\to {\mathcal A}_\Sigma$.

$(b)$ $\mathcal A_\Sigma=\mathcal A_\Delta[{\bf S}^{-1}]$,  where $\mathcal A_\Delta$ is the subalgebra of $\mathcal A_\Sigma$ generated by $x_\gamma,\gamma\in \Gamma(\Sigma)$ and all $x^{-1}_\alpha, \alpha\in \Delta$, and ${\bf S}$ is the submonoid of $\mathcal A_\Delta$ generated by all $x_\gamma, \gamma\in [\Gamma(\Sigma)]$.

$(c)$ $\iota_\Delta$ is injective.
\end{theorem}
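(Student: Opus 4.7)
Parts (a) and (b) will be relatively direct. For (a), I will check that $\iota_\Delta$ sends each of the five families of defining relations of $\TT_\Delta$ to zero in $\mathcal{A}_\Sigma$. The triangle, monogon, and zero-puncture relations are identical to relations (1)--(3) of Definition \ref{def:ASigma}; the tagged cyclic bigon relation $t_{\gamma_1} t_{\gamma_2} = t_{\overline\gamma_2}t_{\overline\gamma_1}$ and the once-punctured cyclic bigon relation follow from the bigon special puncture relation (5) together with the triangle relations, after a short rearrangement. For (b), $\mathcal{A}_\Delta[\mathbf{S}^{-1}] \subseteq \mathcal{A}_\Sigma$ is immediate (each element of $\mathbf{S}$ is invertible in $\mathcal{A}_\Sigma$), while the reverse inclusion holds because $\mathcal{A}_\Delta$ already contains $x_\gamma$ for every $\gamma \in \Gamma(\Sigma)$ and inverting $\mathbf{S}$ adds precisely the remaining generators $x_\gamma^{-1}$, $\gamma \in [\Gamma(\Sigma)]$. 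The two algebras are then presented on the same generators and the same relations, and agree.

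For (c), the plan is to construct a left inverse $\pi_\Delta \colon \mathcal{A}_\Sigma \to \kk \TT_\Delta[\mathbf{S}_\Delta^{-1}]$ of $\iota_\Delta$ via an explicit noncommutative Laurent expansion. For any curve $\gamma$ in $\Sigma$, I will define an element $\widetilde x_\gamma \in \kk\TT_\Delta[\mathbf{S}_\Delta^{-1}]$ by induction on the crossing number $n_\gamma = |\gamma \cap \Delta|$. When $n_\gamma = 0$, the curve $\gamma$ is either an arc of $\Delta$, a boundary arc, a special loop at a puncture of $\Delta$, or a loop around a $0$-puncture, and we set $\widetilde x_\gamma$ equal to the obvious element of $\TT_\Delta$ (or to the appropriate product $t_{\alpha} t_{\overline\alpha}$ in the zero-puncture case). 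When $n_\gamma > 0$, we pick any crossing of $\gamma$ with some $\alpha \in \Delta$ and apply one of the Ptolemy or bigon skein relations (4)--(6) of Definition \ref{def:ASigma} to rewrite $\widetilde x_\gamma$ as a $\kk$-linear combination of products of $\widetilde x_{\gamma'}$ for strictly shorter curves $\gamma'$. Once well defined, $\widetilde x_\gamma$ extends by the universal property to an algebra homomorphism $\pi_\Delta$, and $\pi_\Delta \circ \iota_\Delta = \mathrm{Id}$ will be immediate from the base case.

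The main obstacle, as in the analogous \cite[Theorem 3.30]{BR}, is proving that $\widetilde x_\gamma$ does not depend on the sequence of skein resolutions used. I would establish this by a diamond/confluence argument: given two one-step reductions $\gamma \rightsquigarrow \gamma_1$ and $\gamma \rightsquigarrow \gamma_2$, exhibit a common further reduction and verify that the two resulting elements of $\kk \TT_\Delta$ agree modulo the defining relations of $\TT_\Delta$. For surfaces without punctures or orbifold points, this reduces to the classical triangle relation as in \cite{BR}. The genuinely new cases involve configurations producing once-punctured bigons, tagged bigons at ordinary punctures, and bigons enclosing $0$- or special punctures; each such local configuration must be matched exactly by one of the extra defining relations we imposed on $\TT_\Delta$ (the tagged cyclic bigon and once-punctured cyclic bigon relations, together with the zero-puncture and monogon relations). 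I expect a finite case analysis, stratified by the local type of the configuration around the chosen crossing and inducted on $n_\gamma$ (and a secondary measure such as the number of self-intersections), to complete this verification, after which injectivity of $\iota_\Delta$ follows at once from $\pi_\Delta \circ \iota_\Delta = \mathrm{Id}$.
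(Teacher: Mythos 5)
The strategy of parts (a) and (b) is fine, and your plan for (c) --- realize every $x_\gamma$ inside a localization of $\kk\TT_\Delta$ via a noncommutative Laurent expansion and use the resulting map as a retraction --- is in the same spirit as \cite[Theorem 3.36, Corollary 3.37]{BR}, which is exactly what the paper's one-line proof invokes. But your final step hides the real content. The map $\pi_\Delta$ you build lands in $\kk\TT_\Delta[\mathbf{S}_\Delta^{-1}]$, so the composite $\pi_\Delta\circ\iota_\Delta$ is not the identity of $\kk\TT_\Delta$ but the canonical localization homomorphism $\lambda\colon\kk\TT_\Delta\to\kk\TT_\Delta[\mathbf{S}_\Delta^{-1}]$. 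For noncommutative rings, universal (Cohn) localization at a set of nonzero elements is \emph{not} automatically injective, so injectivity of $\iota_\Delta$ does not ``follow at once''; you must separately prove that $\lambda$ is injective, and for that you must first know that each element of $\mathbf{S}_\Delta$ is invertible in some overring actually containing $\kk\TT_\Delta$. This is precisely why the paper's proof begins with ``Use Theorem \ref{thm:1-relator}'': one needs $\TT_\Delta$ to be a free group (or a one-relator torsion-free group), so that $\kk\TT_\Delta$ is a (semi)fir embedding into a skew field of fractions in which the Laurent expansions $\widetilde x_\gamma$ are nonzero and hence invertible (cf.\ Lemma \ref{lem:lift}). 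Your proposal never uses the structure of $\TT_\Delta$ at all, so this step is genuinely missing.

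Two smaller points. First, for tagged $\Delta$ the tagged-bigon and once-punctured-bigon relations of $\TT_\Delta$ do not follow from the special-puncture bigon relation (5): they concern ordinary punctures and must be checked against $x_{\gamma^{tag}}=\varphi_P(x_\gamma)$ using the tagging automorphism and Proposition \ref{pr:tagging angles} (the paper sidesteps this by defining $\iota_{\Delta^P}=\varphi_P\circ\iota_\Delta\circ\mu_{\Delta,\Delta^P}$ and reducing to the untagged case). Second, the paper does not run your confluence argument on a general surface: well-definedness of the expansion (Proposition \ref{expansion1}) is obtained by lifting $\gamma$ to its canonical polygon and to ramified covers, where the polygon case of \cite{BR} applies, and then pushing down via the functoriality of Theorem \ref{th:functoriality nc-surface}(b). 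Your direct diamond-lemma route could in principle work, but it is substantially heavier and unnecessary once functoriality is available.
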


\begin{proof}

Use Theorem \ref{thm:1-relator}, the proof is similar to the proof of  \cite[Theorem 3.36, Corollary 3.37]{BR}.

\end{proof}

Recall that $T_i$ is the total angle at $i$ given by Proposition \ref{pro:total angle} and the following is immediate.

\begin{proposition}\label{prop:total}
    For any ordinary triangulation $\Delta$ of $\Sigma$, we have 
     $$T_i=\sum T_{(\gamma_1,\gamma_2,\gamma_3)}+\sum 2\cos(\frac{\pi}{|p|}) x^{-1}_{\ell_p},$$
where the first summation is over all clockwise triangles 
$(\gamma_1, \gamma_2, \gamma_3)$ in $\Delta$ such that $s(\gamma_1) = i$ and $T_{(\gamma_1,\gamma_2,\gamma_3)}=x_{\overline\gamma_1}^{-1}x_{\gamma_2}x_{\overline\gamma_3}^{-1}$, the second summation is over all clockwise loops $\ell_p$ enclose a special puncture $p$ with $s(\ell_p)=i$. 
\end{proposition}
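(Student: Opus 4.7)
The plan is to compute $T_i$ by applying the additivity property of Proposition \ref{pro:total angle} to the triangulation $\Delta$ near $i$, iterating Lemma \ref{le:angle=ptolemy}(e) until one obtains a sum of elementary pieces: one per triangle of $\Delta$ at $i$ and one per special loop at $i$. First I would reduce to the case where $i$ is a boundary marked point: if $i \in I_{p,1}$, pick any $\gamma \in \Delta$ incident to $i$ and cut along it, so that the canonical morphism $f_\gamma : \Sigma_\gamma \to \Sigma$ is a local isomorphism at $i$ and Proposition \ref{pro:total angle} writes $T_i = \sum_{i' \in f_\gamma^{-1}(i)} (f_\gamma)_*(T_{i'})$ with all $i'$ boundary points; if $i \in I_{p,\ge 2}$, pass instead to the appropriate ramified cover of a neighborhood of $i$, which by definition is also a local isomorphism at $i$.

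For the boundary case, the arcs of $\Delta$ incident to $i$ (together with the two boundary arcs at $i$) bound a neighborhood polygon $P$ of $i$ that is triangulated by $\Delta$. By iterating Lemma \ref{le:angle=ptolemy}(e), one splits $T_i = T_i^P$ along each internal $\Delta$-arc at $i$ into a sum of angles at $i$ of the elementary subregions. These elementary subregions are of two types: genuine triangles $(\gamma_1,\gamma_2,\gamma_3)$ of $\Delta$ with $s(\gamma_1)=i$, whose angle contribution $T_i^{(\gamma_1,\gamma_2,\gamma_3)} = x_{\overline\gamma_1}^{-1}x_{\gamma_2}x_{\overline\gamma_3}^{-1}$ follows directly from the triangle relation in Definition \ref{def:ASigma}(1); and bigons around a special puncture $p$ whose middle side is the special loop $\ell_p \in \Delta$ based at $i$, for which the bigon special puncture relation (5) in Definition \ref{def:ASigma} expresses the associated angle as a sum of three terms---two ordinary triangle-type terms that regroup with the adjacent genuine triangles of $\Delta$, plus the single cross term $2\cos(\pi/|p|)\,x_{\ell_p}^{-1}$. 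Summing these elementary contributions gives exactly the right-hand side of the proposition.

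The main obstacle I expect is the careful bookkeeping required to isolate the $2\cos(\pi/|p|)\,x_{\ell_p}^{-1}$ term: one must verify that the three-term structure of relation (5) cleanly splits into the loop contribution plus triangle contributions that are absorbed into the first sum, without double-counting or sign errors. A secondary technical point is handling the neighborhoods of $0$-punctures, where relation (3) in Definition \ref{def:ASigma} rewrites $x_\ell = x_\gamma x_{\overline\gamma}$ for a pending arc $\gamma$; since $0$-punctures lie outside $\bigsqcup I_{p,\ge 2}$ they do not contribute a cosine term, so the relevant factorization must be shown to be invisible in the final formula, producing only the standard triangle summands. Finally, in the puncture cases, one must check that the reduction via $f_\gamma$ (or via the ramified cover) correctly identifies triangles of $\Delta$ at $i$ in $\Sigma$ with triangles at the lifted boundary points $i'$ in the cover, and that special loops at $i$ lift to their counterparts; this amounts to a straightforward bijection argument.
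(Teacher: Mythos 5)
Your proposal is correct, and it is essentially the argument the paper has in mind: the paper states this proposition with no proof at all ("the following is immediate" after Proposition \ref{pro:total angle}), so what you have written is precisely the intended unwinding of the inductive definition of $T_i$ against the triangulation $\Delta$. The one computation that genuinely needs checking — that the bigon special puncture relation $(5)$ splits the angle at the base of a special loop $\ell_p$ into the two corner angles of the adjacent triangle plus the single term $2\cos(\pi/|p|)x_{\ell_p}^{-1}$ (using the monogon relation $x_{\overline{\ell}_p}=x_{\ell_p}$ to put the triangle terms in the stated form $x_{\overline\gamma_1}^{-1}x_{\gamma_2}x_{\overline\gamma_3}^{-1}$) — does work out exactly as you describe. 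Two minor points: the additivity you attribute to "Lemma \ref{le:angle=ptolemy}(e)" is only stated in the introduction and is actually established inside the proof of Proposition \ref{pro:total angle} via the Ptolemy relations, so you should cite that instead; and the case $i\in I_{p,\ge 2}$ need not be treated, since no curve has a special puncture as an endpoint and $T_i$ is only used for $i\in I_b\cup I_{p,0}\cup I_{p,1}$.
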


For any curve $\gamma$ and $P\subset I_{p,1}(\Sigma)$, recall that we have the noncommutative tagged curve $x_{\gamma^P}=\varphi_P(x_\gamma)=T_{s(\gamma)}^{\chi_P(s(\gamma))}x_\gamma T_{t(\gamma)}^{\chi_P(t(\gamma))}$. 

The following is immediate.
\begin{lemma}\label{lem:varphi}
    $\varphi_{P'}(x_{\gamma^P})=x_{\gamma^{P''}}$ where $P''=P\ominus P'$ is the symmetric difference of $P$ and $P'$.
\end{lemma}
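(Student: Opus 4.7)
The plan is to derive the identity from the two properties of the tagging automorphisms already recorded in Corollary \ref{cor:tagged automorphism}: the composition law $\varphi_{P_1 \cup P_2} = \varphi_{P_1}\circ\varphi_{P_2}$ valid when $P_1 \cap P_2 = \emptyset$, and the involutivity $\varphi_Q\circ\varphi_Q = \mathrm{id}$. Since the symmetric difference $P \ominus P'$ can be written as $(P\setminus P')\sqcup (P'\setminus P)$, and since both $P$ and $P'$ admit the disjoint decompositions $P = (P\setminus P')\sqcup(P\cap P')$ and $P' = (P'\setminus P)\sqcup(P\cap P')$, the whole statement should fall out of a formal manipulation.

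The first step is to apply the composition law to each of these disjoint decompositions to write $\varphi_P = \varphi_{P\setminus P'}\circ\varphi_{P\cap P'}$ and $\varphi_{P'} = \varphi_{P'\setminus P}\circ\varphi_{P\cap P'}$. Next, I would observe that the composition law, being symmetric in its two arguments because set-theoretic union is, forces $\varphi_{Q_1}$ and $\varphi_{Q_2}$ to commute whenever $Q_1\cap Q_2 = \emptyset$. Applied to the pairwise disjoint sets $P\cap P'$, $P\setminus P'$ and $P'\setminus P$, this lets me rearrange
\[
\varphi_{P'}\circ\varphi_P
=\varphi_{P'\setminus P}\circ\varphi_{P\cap P'}\circ\varphi_{P\setminus P'}\circ\varphi_{P\cap P'}
=\varphi_{P'\setminus P}\circ\varphi_{P\setminus P'}\circ\bigl(\varphi_{P\cap P'}\bigr)^2.
\]
Involutivity collapses the last factor to the identity, and a final application of the composition law (now to the disjoint pair $P'\setminus P$ and $P\setminus P'$) gives $\varphi_{P'}\circ\varphi_P = \varphi_{P\ominus P'}$. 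Evaluating both sides at $x_\gamma$ and using the definition $x_{\gamma^Q}:=\varphi_Q(x_\gamma)$ yields the claim.

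If one prefers a hands-on verification instead, the same identity can be checked from the explicit formula $\varphi_P(x_\gamma) = T_{s(\gamma)}^{\chi_P(s(\gamma))}x_\gamma T_{t(\gamma)}^{\chi_P(t(\gamma))}$ together with Proposition \ref{pr:tagging angles}, which computes $\varphi_{P'}(T_i^{\chi_P(i)}) = T_i^{(-1)^{\chi_{P'}(i)}\chi_P(i)}$; then it only remains to verify at each endpoint $i\in\{s(\gamma),t(\gamma)\}$ the elementary identity $(-1)^{\chi_{P'}(i)}\chi_P(i)+\chi_{P'}(i) = \chi_{P\ominus P'}(i)$, by a four-case check according to whether $i$ lies in $P\cap P'$, $P\setminus P'$, $P'\setminus P$, or none. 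There is no real obstacle: the lemma is purely formal bookkeeping, and the only care needed is in confirming either the commutativity of disjoint-support tagging automorphisms (first approach) or the signed exponent arithmetic (second approach).
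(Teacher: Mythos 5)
Your argument is correct on both routes: the commutativity of disjoint-support tagging automorphisms does follow from the symmetry of the union in Corollary \ref{cor:tagged automorphism}, and the exponent identity $(-1)^{\chi_{P'}(i)}\chi_P(i)+\chi_{P'}(i)=\chi_{P\ominus P'}(i)$ checks out in all four cases. The paper offers no written proof (it declares the lemma immediate), and your derivation from the composition law, involutivity, and Proposition \ref{pr:tagging angles} is exactly the intended justification.
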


For any $P\subset I_{p,1}(\Sigma)$ and any ordinary triangulation $\Delta$, we extend $\iota_\Delta$ to a tagged triangulation $\Delta^P$ of $\Sigma$ by 
$$\iota_{\Delta^P}:=\varphi_P\circ \iota_\Delta\circ \mu_{\Delta,\Delta^P}\ $$
and refer to it as a non-commutative \emph{tagged} cluster.
By definition and Theorem \ref{th:iotaDelta}, $\iota_{\Delta^P}$ is a well-defined injective homomorphism from $\kk \TT_{\Delta^P}$ to ${\mathcal A}_\Sigma$.

In particular, $x_{\gamma}=\iota_{\Delta}(t_{\gamma})$ for all (tagged or ordinary) triangulation $\Delta$ and $\gamma\in \Delta$.

\begin{proposition}
    \label{pr:G-Laurent} For any $\Sigma$ and any $\Delta\in {\bf TSurf}^t_\Sigma$ one has:

$(a)$ The restriction of $\iota_\Delta$ to $\kk\UU_\Delta$ is a well defined injective homomorphism $\kk \UU_\Delta\hookrightarrow {\mathcal B}_\Sigma$, the sector subalgebra of ${\mathcal A}_\Sigma$ defined in Section \ref{subsec:From surfaces to their noncommutative versions}.

$(b)$ $\iota_\Delta$ naturally induces an injective homomorphism  of reduced algebras
$\underline {\iota}_\Delta: \kk \underline{\mathbb T}_\Delta\hookrightarrow \underline{\mathcal A}_\Sigma$ 

In turn, $\underline \iota_\Delta$ restricts to an injective homomorphism  $\kk \underline{\mathbb U}_\Delta\hookrightarrow{} \underline{\mathcal B}_\Sigma$.

\end{proposition}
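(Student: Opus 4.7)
Part (a) is immediate: each generator $u_{\gamma,\gamma'} = t_{\overline\gamma}^{-1} t_{\gamma'}$ of $\UU_\Delta$ is sent by $\iota_\Delta$ to $x_{\overline\gamma}^{-1} x_{\gamma'} = y_{\gamma,\gamma'}$, which is by definition a generator of $\mathcal B_\Sigma \subset \mathcal A_\Sigma$. Hence $\iota_\Delta(\kk \UU_\Delta) \subset \mathcal B_\Sigma$, and injectivity of the restriction is inherited from Theorem~\ref{th:iotaDelta}(c). For part (b), well-definedness of $\underline\iota_\Delta$ is equally immediate: for any boundary arc $\gamma$ of $\Delta$, $\iota_\Delta(t_\gamma - 1) = x_\gamma - 1$ lies in the defining ideal of $\underline{\mathcal A}_\Sigma$, so the composition $\kk \TT_\Delta \xrightarrow{\iota_\Delta} \mathcal A_\Sigma \twoheadrightarrow \underline{\mathcal A}_\Sigma$ factors through $\kk \underline{\TT}_\Delta$.

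For injectivity of $\underline\iota_\Delta$, the plan is to prove that the kernel $K$ of the above composition coincides with the two-sided ideal $J \subset \kk \TT_\Delta$ generated by $\{t_\gamma - 1 : \gamma \text{ a boundary arc}\}$; the inclusion $J \subset K$ is clear. For the reverse, I will first reduce to the case $I_{p,0}(\Sigma) = \emptyset$ via Lemma~\ref{lem:quo}, and then invoke the free product decomposition $\TT_\Delta \cong \UU_\Delta * F_{|I_b \cup I_{p,1}|}$ from Theorem~\ref{thm:sectorgroup}, together with Remark~\ref{rmk:generator}, to choose free generators of $\TT_\Delta$ so that, for each $i \in I_b$, exactly one free generator has the form $t_{\gamma_i}$ with $\gamma_i$ a boundary arc incident to $i$, while all remaining free generators are either sector elements of $\UU_\Delta$ or arcs $t_\gamma$ with $s(\gamma) \in I_{p,1}$. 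This produces a group-theoretic retraction $\TT_\Delta \twoheadrightarrow H$ onto the complementary free factor, which I intend to mirror on the algebra side by a section of $\mathcal A_\Sigma \twoheadrightarrow \underline{\mathcal A}_\Sigma$ constructed from the localization presentation $\mathcal A_\Sigma = \mathcal A_\Delta[{\bf S}^{-1}]$ provided by Theorem~\ref{th:iotaDelta}(b). Combining the two, any $f \in K$ reduces modulo $J$ to an element of $\kk H$ whose $\iota_\Delta$-image vanishes in $\mathcal A_\Sigma$, and Theorem~\ref{th:iotaDelta}(c) then forces $f \in J$. The restriction $\kk \underline{\UU}_\Delta \hookrightarrow \underline{\mathcal B}_\Sigma$ follows from part (a) and the definition $\underline{\mathcal B}_\Sigma := \mathrm{Im}(\mathcal B_\Sigma \to \underline{\mathcal A}_\Sigma)$.

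The main obstacle will be constructing the algebra-side section and reconciling it with the group-theoretic retraction on $\TT_\Delta$. Because boundary arcs are coupled with interior arcs through the triangle and Ptolemy relations of Definition~\ref{def:ASigma}, the specialization $x_\gamma = 1$ for boundary $\gamma$ is not manifestly ``free'' at the algebra level, and I will need to verify, case by case according to the alternatives in Theorem~\ref{thm:1-relator} (free vs.\ $1$-relator, with or without special punctures), that no extra identifications arise beyond those prescribed by the group-level quotient. The closed-surface case ($I_b = \emptyset$) is vacuous, since then $J = 0$ and $\underline{\mathcal A}_\Sigma = \mathcal A_\Sigma$, so the substantive work will concern the mixed setting of boundary marked points together with ordinary or special punctures.
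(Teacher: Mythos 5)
Your part (a) and the well-definedness of $\underline\iota_\Delta$ in (b) are correct and coincide with the paper's argument. The gap is in the injectivity of $\underline\iota_\Delta$, which is the only substantive point. You correctly reduce it to showing that the kernel $K$ of $\kk\TT_\Delta\to\underline{\mathcal A}_\Sigma$ is contained in the ideal $J$ generated by the elements $t_\gamma-1$, but the route you propose --- a group-theoretic retraction of $\TT_\Delta$ onto a factor complementary to the boundary generators, mirrored by an algebra-side section of $\mathcal A_\Sigma\twoheadrightarrow\underline{\mathcal A}_\Sigma$ --- is both unfinished (you yourself flag the construction of the section as the ``main obstacle'' and never resolve it) and structurally problematic. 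The free factor $F_{|I_b\cup I_{p,1}|}$ of Theorem \ref{thm:sectorgroup} has only one generator per marked point, whereas $\underline\TT_\Delta$ kills \emph{every} boundary arc; since $t_\gamma=t_{\gamma_{s(\gamma)}}\,u_{\overline{\gamma_{s(\gamma)}},\gamma}$, setting all boundary $t_\gamma$ equal to $1$ also forces relations inside $\UU_\Delta$ (this is precisely why $\underline\UU_\Delta$ is in general a proper quotient of $\UU_\Delta$), so the quotient is not ``killing the complementary free factor'' and no retraction of the kind you describe exists. In addition, a multiplicative section of an algebra quotient by a two-sided ideal is not something one can expect to construct, and the announced case analysis over the alternatives of Theorem \ref{thm:1-relator} never materializes.

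The paper's mechanism is entirely different and bypasses all of this. It proves a general contraction statement (Lemma \ref{lem:ideal}): if $\kk G[{\bf S}^{-1}]$ is a localization of a group algebra and $J$ is the ideal of the localization generated by a given set of elements of $\kk G$, then $J\cap\kk G$ equals the ideal of $\kk G$ generated by the same set; the proof is a minimality argument on the number of inverted denominators appearing in an expression of an element of $J\cap\kk G$. Applying this with $G=\TT_\Delta$ and ${\bf S}$ the monoid generated by all $x_\gamma$ --- legitimate because Theorem \ref{th:iotaDelta}(b) together with the Laurent phenomenon exhibits $\mathcal A_\Sigma$ as a localization of $\iota_\Delta(\kk\TT_\Delta)$ --- yields $\iota_\Delta^{-1}(J)\cap\kk\TT_\Delta=I$ at once, with no case distinctions and no appeal to the free product decomposition. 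If you want to complete your argument, this contraction lemma is the missing ingredient you should prove in place of the section.
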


\begin{proof} 
$(a)$ As $\iota_\Delta(\UU_\Delta)\subset \mathcal B_\Sigma$, we have $\iota_\Delta(\kk \UU_\Delta)\subset \mathcal B_\Sigma$ and
the following commutative diagram 
    $$\centerline{\xymatrix{
   \kk \UU_\Delta \ar@{^{(}->}[d]_{} \ar[r]^{\iota_\Delta}  &  \mathcal B_\Sigma\ar@{^{(}->}[d]
^{}           \\
    \kk \TT_\Delta \ar@{^{(}->}[r]^{\iota_\Delta}  & \mathcal A_\Sigma.}}$$ 
Thus $\iota_\Delta:\kk \UU_\Delta\to \mathcal B_\Sigma$ is injective.

$(b)$ As $\iota_\Delta(t_\gamma)=x_{\gamma}$ for any boundary arc $\gamma$, $\iota_\Delta$ induces an algebra homomorphism $\underline{\iota}_\Delta: \kk \underline \TT_\Delta\to \underline{\mathcal A}_\Sigma$ and the following commutative diagram.
   $$\centerline{\xymatrix{
  \kk \TT_\Delta \ar@{->>}[d]_{} \ar@{^{(}->}[r]^{\iota_\Delta}  &  \mathcal A_\Sigma\ar@{->>}[d]
^{}           \\
    \kk \underline{\TT}_\Delta \ar[r]^{\underline{\iota}_\Delta}  & \underline{\mathcal A}_\Sigma.}}$$

To show that $\underline \iota_\Delta$ is injective, we need the following lemma.

\begin{lemma}\label{lem:ideal}
    Let $G$ be a group, $G_0\subset G$ be a subset and ${\bf S}\subseteq \kk G\setminus \{0\}$ be a submonoid. Denote by $I$ and $J$ the ideal of $\kk G$ and $\kk  G[{\bf S}^{-1}]$, respectively, generated by $G_0$. Then $I=J\cap \kk  G$.
\end{lemma}

\begin{proof}
It is clear that $I\subset J\cap \kk G$. 

Assume that $x$ is the element in $J\cap \kk G$ such that the number $N$ of $s^{-1}, s\in S\setminus G$ appearing in the expression $x=\sum k_i g_{i,1}s_{i,1}^{-1}g_{i,2}s_{i,2}^{-1}\cdots g_{i,n_i}s_{i,n_i}^{-1}g_{i,n_i+1}\in J$ is minimum, where $k_i\in kk^\times, s_{i,j}\in S\setminus G$ and $g_{i,j}\in G$. To prove $I\subset J\cap \kk G$, it suffices to show that $N=0$. Otherwise, we may assume that $n_1\geq 1$. Then 
\begin{equation}\label{eq:exp}
 g_{1,1}s_{1,1}g_{1,1}^{-1}x=k_1g_{1,1}g_{1,2}s_{1,2}^{-1}\cdots g_{1,n_i}s_{1,n_i}^{-1}g_{1,n_i+1}+\sum_{i\neq 1} k_ig_{1,1}s_{1,1}g_{1,1}^{-1}g_{i,1}s_{i,1}^{-1}g_{i,2}s_{i,2}^{-1}\cdots g_{i,n_i}s_{i,n_i}^{-1}g_{i,n_i+1}.   
\end{equation}

Thus, $g_{1,1}s_{1,1}g_{1,1}^{-1}x\in J\cap \kk G$ has less $s^{-1}, s\in S\setminus G$ in the expression (\ref{eq:exp}), which contradicts the choice of $x$.

The proof is complete.
\end{proof}

Denote by $I$ the ideal of $\kk \TT_\Delta$ and $\mathcal A_\Sigma$, respectively, generated by $t_\gamma$ for all boundary arcs $\gamma$. Denote by $J$ the ideal of $\mathcal A_\Sigma$ generated by $x_\gamma$ for all boundary arcs $\gamma$. Then $Ker(\underline\iota_\Delta)=\kk \TT_\Delta\cap \iota_\Delta^{-1}(J)/I$. By Lemma \ref{lem:ideal}, we have $Ker~\underline\iota_\Delta=\{0\}$. Thus $\underline\iota_\Delta$ is injective.

The proof is complete. 
\end{proof}

The following generalizes \cite[Definition 2.9]{BR}.

\begin{lemma}
\label{le:canonical points}
Given a curve $\gamma$ in $\Sigma$  and a triangulation $\Delta$ of $\Sigma$, there is a unique sequence of   $\vec \gamma^\bullet=(\gamma^1,\ldots,\gamma^r)$  of edges of $\Delta$ (possibly with repetitions) such that
there are exactly $r$ intersection points $p_1,\ldots,p_r$ of $\gamma$ with $\Delta$ so that $p_k\in \gamma\cap \gamma^k$ for $k=1,\ldots, r$ (here $p_1$ is closest to $s(\gamma)$, $p_2$ is next closest to $s(\gamma)$,  etc., $p_r$ is the farthest from $s(\gamma)$, i.e., closest to $t(\gamma)$).
\end{lemma}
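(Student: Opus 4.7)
The plan is to prove existence and uniqueness of the sequence $\vec\gamma^\bullet$ by combining a standard bigon-criterion argument with a reduction that handles orbifold subtleties. First I would choose a smooth representative $\gamma'$ of the isotopy class of $\gamma$ that is transverse to every $\alpha\in\Delta$ and that minimizes the total intersection number $\sum_{\alpha\in\Delta}|\gamma'\cap \alpha|$ within the isotopy class. Transversality guarantees finitely many crossings $p_1,\ldots,p_r$, naturally ordered along $\gamma'$ from $s(\gamma)$ to $t(\gamma)$, and reading off the edge of $\Delta$ at each crossing yields the desired sequence. Thus existence is clear as soon as the minimum is finite.

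For uniqueness I would invoke the bigon criterion in the following form: a transverse representative $\gamma'$ minimizes intersections with $\Delta$ if and only if no subarc of $\gamma'$ together with a subarc of some $\alpha\in\Delta$ bounds an embedded disk in $\Sigma\setminus I$. Given two minimal transverse representatives $\gamma',\gamma''$, I would connect them by a generic homotopy and analyze the local moves: only the "triangle move" (pushing $\gamma'$ across a vertex of a triangle of $\Delta$) can occur, since a Reidemeister-II move would either create or destroy a bigon, contradicting minimality of one side. A triangle move replaces two consecutive crossings on a triangle $(\alpha,\beta,\delta)$ with a single crossing on the third side, but — crucially — if it were to change the crossing sequence it would either create a bigon on the preceding or following triangle or decrease the total intersection number, neither of which is possible for a minimal representative. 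Hence the ordered sequence of edges is an invariant of the isotopy class of $\gamma$.

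To handle the orbifold features of $\Sigma\in{\bf Surf}$ — special punctures, $0$-punctures, and the self-folded triangles or special loops forced in $\Delta$ around them — I would pass to the orbifold double/ramified cover $\widetilde\Sigma\to\Sigma$ that unfolds each orbifold point, where $\Delta$ lifts to an ordinary triangulation $\widetilde\Delta$ in the usual sense. The curve $\gamma$ lifts (uniquely once a starting sheet is chosen) to a curve $\widetilde\gamma$ whose canonical sequence with respect to $\widetilde\Delta$ exists and is unique by the argument above; projecting back recovers a sequence for $\gamma$ and $\Delta$, and the projection is independent of the choice of lift because the deck group permutes the lifts compatibly. For curves ending at a $0$-puncture or a special puncture this also pins down which pending arc or special loop the final crossing sits on.

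The main obstacle will be the careful local analysis of triangle moves near self-folded triangles and loops around orbifold points: in these regions a single "triangle" has two sides identified, so a naive bigon criterion can misfire and create apparent non-uniqueness. Passing to the ramified cover cleanly separates these identified sides and is the step I expect to take the most care to set up, but once set up it reduces the lemma to the classical surface case.
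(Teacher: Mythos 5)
Your proposal is correct and is essentially the argument the paper leaves implicit: the lemma is stated without proof (it is presented as an immediate generalization of \cite[Definition 2.9]{BR}), and the intended justification is exactly the minimal-position/bigon-criterion fact you describe. One small simplification: since curves and isotopies are taken in $\Sigma\setminus I$ (rel endpoints) and the vertices of $\Delta$ are marked points, the ``triangle move'' you worry about cannot occur at all — a generic isotopy only produces tangencies with edges of $\Delta$, so the analysis reduces to showing that an isotopy between two minimal transverse representatives cannot involve a net bigon creation/cancellation, which is the standard innermost-bigon argument. Your reduction to a ramified cover near special and $0$-punctures is a reasonable way to handle the self-folded triangles and special loops, and is consistent with how the paper treats these configurations elsewhere (e.g.\ in the construction of the canonical polygon following the lemma).
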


Clearly, $\gamma^k$ and $\gamma^{k+1}$ are two edges of a single triangle ${\mathcal T}_k$ in $\Delta$ containing the arc of $\gamma$ from $p_k$ to $p_{k+1}$,  $k=1,\ldots,r-1$. Denote by $\gamma^{[k]}$ the third edge of $\mathcal T_k$. We also denote by ${\mathcal T}_0$ (resp. ${\mathcal T}_r$) the triangle in $\Delta$ containing the arc of $\gamma$ from $s(\gamma)$ to $p_1$ (resp. from $p_r$ to $t(\gamma)$). In fact, if $\gamma^k$ and $\gamma^{k+1}$ are same and comprise a loop $\ell_i$ around $i\in \bigsqcup\limits_{k\neq 1}I_{p,k}(\Sigma)$ then ${\mathcal T}_k=(\ell_i,\ell_i,\ell_i)$ is degenerate, i.e., $\gamma^{[k]}=\ell_i$.

If we glue these triangles $\mathcal T_0,\mathcal T_1,\cdots, \mathcal T_r$, we obtain an $n$-polygon with $O$ special punctures $\Sigma_{\gamma,\Delta}$ and a triangulation $\widetilde\Delta$, where $O$ is the number of degenerated triangles and $n=r+1-2(O-1)$. We call $(\Sigma_{\gamma,\Delta},\widetilde\Delta)$ the \emph{canonical polygon} of $\gamma$ with respect to $\Delta$. Then $\gamma$ lifts uniquely to an arc $\widetilde\gamma$ of $\Sigma_{\gamma,\Delta}$.

If $s(\gamma)=p\in I_{p,1}, t(\gamma)\notin I_{p,1}$, denote by $\mathcal T_1(p),\cdots, \mathcal T_s(p)$ the triangles incident to $p$ in $\Delta$ in clockwise order such that $\mathcal T_1(p)=\mathcal T_0$. We glue $\mathcal T_2(p),\cdots, \mathcal T_s(p)$ to $\Sigma_{\gamma,\Delta}$, we obtain an $(n+s-3)$-polygon $\Sigma_{\gamma^{(p)},\Delta}$ with $1$-puncture, $O$ special punctures and a triangulation $\widetilde\Delta^{(p)}$. We call $(\Sigma_{\gamma^{(p)},\Delta},\widetilde\Delta^{(p)})$ the \emph{canonical once-punctured polygon} of $\gamma^{(p)}$ with respect to $\Delta$.

If $s(\gamma)=p,t(\gamma)=q\in I_{p,1}$, denote by $\mathcal T_1(p),\cdots, \mathcal T_s(p)$ the triangles incident to $p$ in $\Delta$ in clockwise order such that $\mathcal T_1(p)=\mathcal T_0$ and $\mathcal T_1(q),\cdots, \mathcal T_t(q)$ the triangles incident to $q$ in $\Delta$ in clockwise order such that $\mathcal T_1(p)=\mathcal T_r$. We glue $\mathcal T_2(p),\cdots, \mathcal T_s(p), \mathcal T_2(q),\cdots, \mathcal T_t(q)$ to $\Sigma_{\gamma,\Delta}$, we obtain an $(n+s+t-6)$-polygon $\Sigma_{\gamma^{(p,q)},\Delta}$ with $2$-punctures, $O$ special punctures and a triangulation $\widetilde\Delta^{(p,q)}$. We call $(\Sigma_{\gamma^{(p,q)},\Delta},\widetilde\Delta^{(p,q)})$ the \emph{canonical twice-punctured polygon} of $\gamma^{(p,q)}$ with respect to $\Delta$.

\begin{definition} (Admissible sequences) Let $\Delta$ be an ordinary triangulation of $\Sigma$ and $P\subset I$. For a curve $\gamma$ in $\Sigma$, fix the corresponding sequence $\vec \gamma^\bullet=(\gamma^1,\ldots,\gamma^r)$ of edges of $\Delta$. 

(1) If $s(\gamma),t(\gamma)\notin P$, we say that a sequence $\vec \gamma=(\gamma_1,\cdots,\gamma_{2m+1})$ in $\Delta$ (possibly with repetitions) is $(\gamma,\Delta)$-admissible if:
\begin{enumerate}
\item $s(\gamma_1)=s(\gamma)$, $t(\gamma_{2m+1})=t(\gamma)$ and $t(\gamma_k)=s(\gamma_{k+1})$ for $k=1,\ldots,2m$.

\item $(\gamma_2,\gamma_4,\ldots,\gamma_{2m})$ is a subsequence of $(\gamma^1,\ldots,\gamma^r)$. Assume that $\gamma_k=\gamma^{i_k}$ for all $k=2,4,\cdots, 2m$.

\item Each $\gamma_{2k+1}$ belongs to a triangle ${\mathcal T}_\ell$.

\item For each even $k=2,\cdots, 2m$, the arc of $\gamma$ between $p_{i_k}$ and $p_{i_{k+1}}$ is isotopic (up to $\Sigma\setminus I$) to the arc of the path starting at the point $p_{i_k}$, following
first $\gamma_k$, then $\gamma_{k+1}$, and then $\gamma_{k+2}$ until the point $p_{i_{k+1}}$; moreover, the arc of $\gamma$ between $s(\gamma)$ and $p_{i_2}$ (respectively $p_{i_{2m}}$ and $t(\gamma)$) is isotopic to the
arc of the path starting at $s(\gamma)$ (respectively $p_{i_{2m}}$), following first
$\gamma_1$ then $\gamma_2$ (respectively $\gamma_{2m}$ then $\gamma_{2m+1}$) until the point $p_{i_2}$ (respectively $t(\gamma)$).
\end{enumerate}

(2) If $s(\gamma)\in P,t(\gamma)\notin P$ we say that $\vec \gamma=(\gamma_1,\ldots,\gamma_{2m})$ is a $(\gamma^P,\Delta)$-admissible sequence if either $\gamma_1=\ell_p(s(\gamma))$ is a special loop based at $s(\gamma)$ and $(\gamma_2,\ldots,\gamma_{2m})$ is $(\gamma,\Delta)$-admissible or $(\gamma_1,\gamma_2,\gamma_3)$ is a clockwise cyclic triangle with $s(\gamma_1)=s(\gamma)$ and $(\gamma_4,\ldots,\gamma_{2m})$ is $(\gamma,\Delta)$-admissible.

(3) If $s(\gamma)\notin P,t(\gamma)\in P$ we say that $\vec \gamma=(\gamma_1,\ldots,\gamma_{2m})$ is a $(\gamma^P,\Delta)$-admissible sequence if either $\gamma_{2m}=\ell_q(t(\gamma))$ is a special loop based at $t(\gamma)$ and $(\gamma_1,\ldots,\gamma_{2m-1})$ is $(\gamma,\Delta)$-admissible or $(\gamma_{2m-2},\gamma_{2m-1},\gamma_{2m})$ is a clockwise cyclic triangle with $t(\gamma_{2m})=t(\gamma)$ and $(\gamma_1,\ldots,\gamma_{2m-3})$ is $(\gamma,\Delta)$-admissible.

(4) If $s(\gamma),t(\gamma)\in P$ we say that $\vec \gamma=(\gamma_1,\ldots,\gamma_{2m+1})$ is a $(\gamma^P,\Delta)$-admissible sequence if either $\gamma_{1}=\ell_p(s(\gamma))$ is a special loop based at $s(\gamma)$ or $(\gamma_1,\gamma_2,\gamma_3)$ is a clockwise cyclic triangle with $s(\gamma_1)=s(\gamma)$, and either $\gamma_{2m+1}=\ell_q(t(\gamma))$ is a special loop based at $t(\gamma)$ or  $(\gamma_{2m-1},\gamma_{2m},\gamma_{2m+1})$ is a clockwise cyclic triangle with $t(\gamma_{2m+1})=t(\gamma)$, moreover, correspondingly $(\gamma_2,\ldots,\gamma_{2m})$, $(\gamma_4,\ldots,\gamma_{2m})$, $(\gamma_2,\ldots,\gamma_{2m-2})$ or $(\gamma_4,\ldots,\gamma_{2m-2})$ is $(\gamma,\Delta)$-admissible.

\end{definition}

We denote by $Adm(\gamma^P,\Delta)$ the set of all $(\gamma^P,\Delta)$-admissible sequences. 

It is clear that $Adm(\gamma^P,\Delta)$ and $Adm(\widetilde\gamma^P,\widetilde\Delta)$ are in one-to-one correspondence under the canonical map from $\Sigma_{\gamma^{(p,q)},\Delta}$ to $\Sigma$.

For any $(\gamma^P,\Delta)$-admissible sequence  $\vec \gamma=(\gamma_1,\ldots,\gamma_r)$  and a monomial $x_{\vec \gamma}\in {\mathcal A}_\Sigma$ 
by
$$x_{\vec\gamma}=x_{\gamma_1,-\varepsilon} x_{\gamma_2,\varepsilon}\cdots x_{\gamma_r,(-1)^r\varepsilon}$$
with $\varepsilon=\begin{cases}
1, & \text{if $s(\gamma_1)\in P$,}\\ 
-1, & \text{otherwise,}\\ 
\end{cases}$
where we abbreviate 
$x_{\gamma,\delta}:=
\begin{cases}
 x_\gamma, & \text{if $\delta=1$,}\\ 
  x_{\overline \gamma}^{-1}, & \text{if $\delta=-1$.}\\ 
\end{cases}$

For any arcs $\gamma,\gamma'\in \Delta$ with $s(\gamma)=s(\gamma')=i$, in the case $i\in I_b(\Sigma)$, if $\gamma'$ is in clockwise direction of $\gamma$ and the boundary curves $\gamma^-$ and $\gamma^+$ originating at $i$ are such that $\gamma^-$ is on the left of $\gamma$ and $\gamma^+$ is on the right of $\gamma'$, we denote by $(\gamma,\gamma')$ (resp. $(\gamma',\gamma)$ ) the sector based at $i$ by traveling from $\gamma$ to $\gamma'$ (resp. $\gamma'$ to $\gamma$) (in a tight neighborhood of $i$) in the clockwise (resp. counter-clockwise) direction. In the case when $i\in I_{p,1}(\Sigma)$, we denote by $(\gamma,\gamma')$ the sector based at $i$ by traveling from $\gamma$ to $\gamma'$  (in a tight neighborhood of $i$) in the clockwise direction. In both cases, we say that $(\gamma,\gamma')$ is a clockwise sector. See Figure \ref{fig:sector}.

\begin{figure}[ht]
\includegraphics{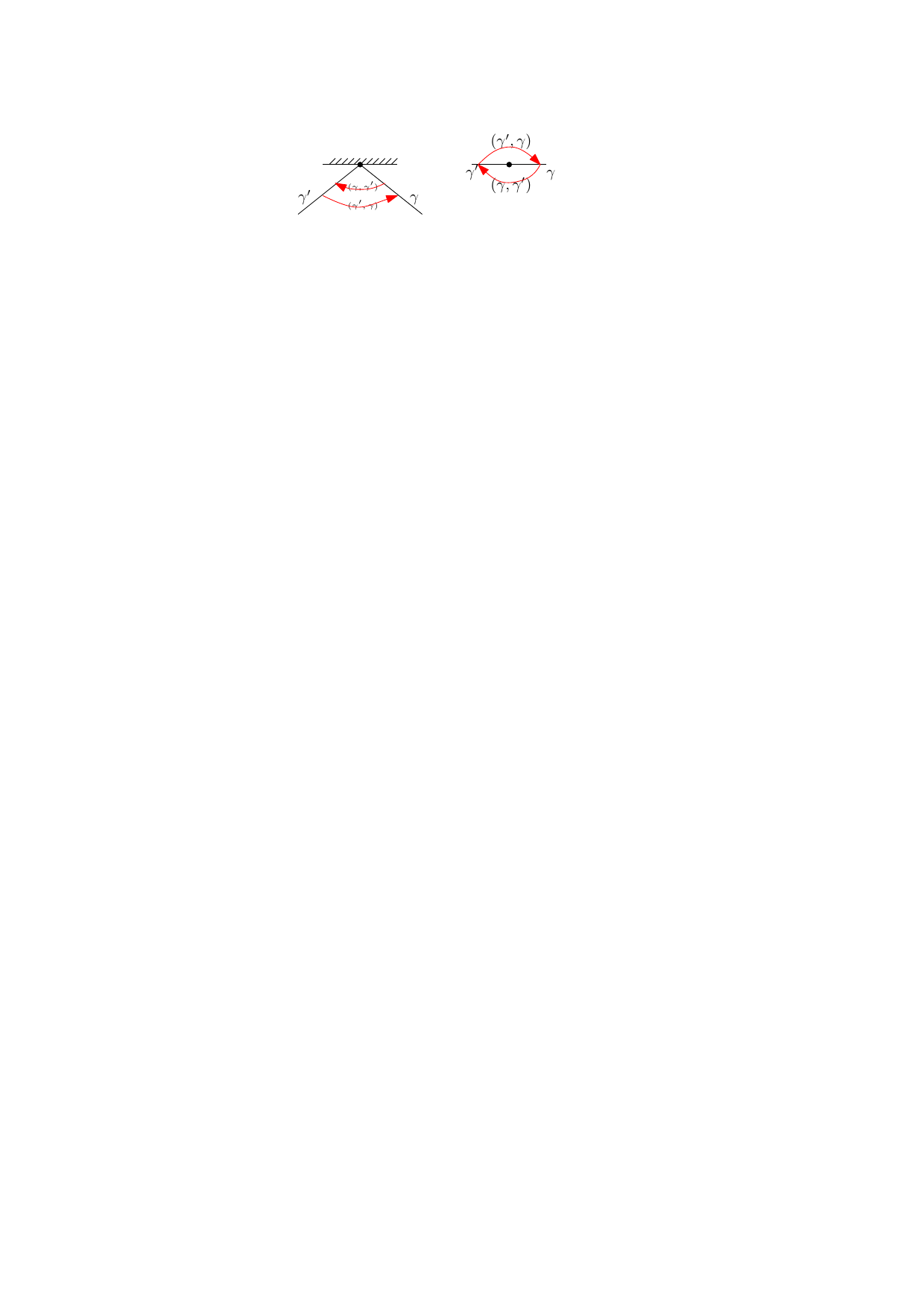}
\caption{}
\label{fig:sector}
\end{figure}

For any $(\gamma^P,\Delta)$-admissible sequence $\vec \gamma=(\gamma_1,\ldots,\gamma_r)$ of curves in $\Sigma$ and a point $p_-$ in $\gamma$,
we say that $k\in [2,r-2]$ is \emph{$p_-$-special} if $\gamma_k$ is a simple loop around some $i_{p_-}\in \bigsqcup\limits_{k\neq 1}I_{p,k}(\Sigma)$ crossing $\gamma$ at $p_-$ as an entrance point.

$\bullet$ either $\gamma_k=\gamma_{k+1}=\gamma_{k+2}$, $\gamma_{k-1}\ne \gamma_k$ 

$\bullet$ or $\gamma_{k+1}=\gamma_k$, and $\gamma_{k-1}\ne \gamma_k\neq \gamma_{k+2}$, and $\widetilde\gamma_k$ is not in the sector  $(\widetilde\gamma_{k-1},\widetilde\gamma_{k+2})$, where $\widetilde\gamma_{i}$ is the preimage of $\gamma_i$ in $\widetilde \Delta^P$ for $i=k-1,k,k+2$.

\medskip

For any triangulation $\Delta$ and any $(\gamma^P,\Delta)$-admissible sequence $\vec \gamma=(\gamma_1,\cdots,\gamma_m)$, we define the weight $c_{\vec \gamma}\in \kk^\times$ by
$c_{\vec\gamma,p_-}=\left ( 
2\cos(\frac{\pi}{|i_{p_-}|})\right )^{N_{p_-}}$
 where $N_{p-}$ is the number of all $p_-$-special $k\in [2,r-2]$ and
 $$c_{\gamma_i}=\begin{cases}
 2\cos(\frac{\pi}{|o|}), & \text{if $s(\gamma)\in P$ and $i=1$ with $\gamma_1$ the loop encloses a special puncture $o$,}\\
  2\cos(\frac{\pi}{|o|}), & \text{if $t(\gamma)\in P$ and $i=m$ with $\gamma_m$ the loop encloses a special puncture $o$,}\\
  1, & \text{otherwise}.
 \end{cases}
 $$

Then 
$$c_{\vec\gamma}:=\prod c_{\vec\gamma,p_-}\prod c_{\gamma_i}$$
where the product is over all such special $p_-$ in the canonical sequence $p_1,\ldots,p_r$ attached to $(\gamma,\Delta)$ in lemma \ref{le:canonical points}.

The following is a generalization of \cite[Theorem 3.30]{BR}

\begin{theorem}\label{thm:laurent} Let $\Delta$ be an ordinary triangulation of $\Sigma$. For any $\gamma\in [\Gamma(\Sigma)]$ and $P,P'\subset I_{p,1}(\Sigma)$, we have
\begin{equation*}
   x_{\gamma^P}=\sum\limits_{\vec\gamma\in Adm(\gamma^P,\Delta)} c_{\vec \gamma} x_{\vec \gamma}. 
\end{equation*}
    
\end{theorem}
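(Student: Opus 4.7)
The plan is to reduce the statement to a concrete inductive computation on the number of crossings of $\gamma$ with $\Delta$ in a suitably simplified surface. First, I would dispose of the tagging: by Lemma~\ref{lem:varphi} we have $x_{\gamma^P}=\varphi_P(x_\gamma)$, and Proposition~\ref{prop:total} expresses each total angle $T_i$ as a sum of terms $T_{(\gamma_1,\gamma_2,\gamma_3)}=x_{\overline\gamma_1}^{-1}x_{\gamma_2}x_{\overline\gamma_3}^{-1}$ over clockwise triangles at $i$ plus the $2\cos(\pi/|p|)\,x_{\ell_p}^{-1}$ contributions coming from clockwise special loops. Once the untagged expansion $x_\gamma=\sum_{\vec\gamma\in Adm(\gamma,\Delta)} c_{\vec\gamma} x_{\vec\gamma}$ is in hand, multiplying it on the left by $T_{s(\gamma)}^{\chi_P(s(\gamma))}$ and on the right by $T_{t(\gamma)}^{\chi_P(t(\gamma))}$ and distributing produces exactly the extra prefixes/suffixes dictated by cases (2)--(4) in the definition of $(\gamma^P,\Delta)$-admissible sequences, with the correct weights $c_{\vec\gamma}$ at each endpoint.

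Second, for the untagged case I would pass to the canonical polygon $(\Sigma_{\gamma,\Delta},\widetilde\Delta)$ associated with $\gamma$, obtained by gluing the triangles $\mathcal T_0,\mathcal T_1,\ldots,\mathcal T_r$ that $\gamma$ traverses. The canonical map $f:\Sigma_{\gamma,\Delta}\to\Sigma$ is a morphism in ${\bf Surf}$ which is a local isomorphism along $\gamma$, so by Theorem~\ref{th:functoriality nc-surface} the induced algebra homomorphism $f_*:\mathcal A_{\Sigma_{\gamma,\Delta}}\to\mathcal A_\Sigma$ carries $x_{\widetilde\gamma}$ to $x_\gamma$ and transports the natural bijection $Adm(\widetilde\gamma,\widetilde\Delta)\simeq Adm(\gamma,\Delta)$ to a term-by-term identification of the would-be right-hand sides. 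Hence it is enough to prove the expansion inside $\mathcal A_{\Sigma_{\gamma,\Delta}}$, where $\widetilde\gamma$ is a single arc crossing $r$ edges of $\widetilde\Delta$ in a polygon which has at most degenerate triangles surrounding special and $0$-punctures.

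Third, inside the canonical polygon I would induct on $r$. The base case $r=0$ is tautological, and $r=1$ is either the Ptolemy relation or the bigon special-puncture/$0$-puncture relation from Definition~\ref{def:ASigma}, which matches exactly the two-term (respectively three-term with $2\cos(\pi/|p|)$ coefficient) formula predicted by $Adm(\widetilde\gamma,\widetilde\Delta)$. For the inductive step I would look at the quadrilateral cut out in $\widetilde\Delta$ by the first crossing edge $\gamma^1$, apply the Ptolemy relation at $\gamma^1$ to write $x_{\widetilde\gamma}$ as a sum of two monomials in which each factor is represented by a curve crossing $\widetilde\Delta$ in strictly fewer edges; the inductive hypothesis then yields the claimed expansion, and the partition of $Adm(\widetilde\gamma,\widetilde\Delta)$ according to which third edge $\gamma_3$ of $\mathcal T_1$ is taken in the first admissible step matches the two summands of the Ptolemy decomposition. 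Injectivity of $\iota_\Delta$ from Theorem~\ref{th:iotaDelta}(c) lets us treat these identities as identities among well-defined monomials in $\kk\TT_\Delta$, so that no cancellations are lost.

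The hardest step will be the bookkeeping around degenerate triangles, i.e., whenever $\gamma$ crosses a loop $\ell_p$ around a special or $0$-puncture (so $\gamma^k=\gamma^{k+1}=\ell_p$). Each such crossing triggers the bigon special-puncture or bigon $0$-puncture relation of Definition~\ref{def:ASigma}, which contributes the middle term with coefficient $2\cos(\pi/|p|)$; iterated crossings through the same special loop combine to produce the power $(2\cos(\pi/|i_{p_-}|))^{N_{p_-}}$ appearing in $c_{\vec\gamma,p_-}$. Verifying that the $p_-$-special indices $k$ are precisely those giving rise to the extra coefficient under repeated application of the bigon relation, and that the two geometric sub-cases (whether $\widetilde\gamma_k$ lies inside or outside the sector $(\widetilde\gamma_{k-1},\widetilde\gamma_{k+2})$) are correctly captured by the admissibility definition, requires a careful local analysis around each degenerate triangle. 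Once this matching is established, assembling the three reductions above completes the proof.
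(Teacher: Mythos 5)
Your reduction strategy coincides with the paper's for the outer layers: the tagged cases are obtained from the untagged one by multiplying by total angles via Lemma~\ref{lem:varphi} and Proposition~\ref{prop:total}, and the general surface is reduced to the canonical polygon of $\gamma$ through the functoriality of $\Sigma\mapsto{\mathcal A}_\Sigma$ (Theorem~\ref{th:functoriality nc-surface}(b)), with $0$-punctures handled at the end. Where you genuinely diverge is the core polygon computation. The paper does not run an induction on the crossing number at all: for the unpunctured polygon it invokes \cite[Theorem 3.30]{BR} directly, and for a polygon with special punctures it inducts on the \emph{number of special punctures}, unfolding a chosen special puncture $o$ to an $n|o|$-gon via the canonical $|o|$-fold ramified cover, lifting the triangulation and each admissible sequence uniquely, and pushing the expansion down through the surjection $\pi:{\mathcal A}_{\Sigma'}\to{\mathcal A}_\Sigma$ of Proposition~\ref{prop:symmetries and orbifolds}; the coefficients $(2\cos(\pi/|p|))^{N_{p_-}}$ then fall out of the identification of lifted arcs in the quotient rather than from any local skein manipulation. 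Your proposed direct induction on $r$ via Ptolemy resolutions, with iterated bigon special-puncture relations at the degenerate triangles, is a plausible and more self-contained alternative, but the step you yourself flag as hardest --- verifying that iterating the three-term bigon relation around a special puncture reproduces exactly the power $(2\cos(\pi/|i_{p_-}|))^{N_{p_-}}$ and that the sector condition in the definition of $p_-$-special indices picks out precisely the surviving terms --- is exactly the analysis the paper's unfolding trick is designed to avoid, and it is left unexecuted in your sketch. Additionally, your inductive step should be stated more carefully: resolving the first crossing produces products of curves sharing endpoints, and one must check that concatenating their expansions yields admissible sequences for $\gamma$ with no spurious cross terms; this is where the subsequence condition (2) in the definition of admissibility does real work.
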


\begin{proof} We need the following

\begin{proposition}
\label{expansion1}
Let $\Delta$ be a (tagged) triangulation of $\Sigma$. For any $\gamma\in [\Gamma(\Sigma)]$ and  $P\subset I_{p,1}(\Sigma)$, we have

$(1)$ If $\Delta$ is ordinary (i.e., $tag(\Delta)=\emptyset$) then
\begin{equation}
   x_{\gamma^P}=T_{s(\gamma)}^{\chi_{P}(s(\gamma))}\textstyle(\sum\limits_{\vec\gamma\in Adm(\gamma,\Delta)}
  c_{\vec \gamma} x_{\vec \gamma}) T_{t(\gamma)}^{\chi_{P}(t(\gamma))}. 
\end{equation}
To be precise, 

$(1.1)$ If $s(\gamma),t(\gamma)\notin P$, then 
\begin{equation*}
   x_{\gamma}=\sum\limits_{\vec\gamma\in Adm(\gamma,\Delta)} c_{\vec \gamma}x_{\vec \gamma}. 
\end{equation*}

$(1.2)$ If $s(\gamma)\in P,t(\gamma)\notin P$, then 
\begin{equation*}
   x_{\gamma^{P}}=(\sum T_{(\gamma_1,\gamma_2,\gamma_3)}+\sum 2\cos(\frac{\pi}{|p|}x^{-1}_{\ell_p}))(\textstyle\sum\limits_{\vec\gamma\in Adm(\gamma,\Delta)}
  c_{\vec \gamma} x_{\vec \gamma}). 
\end{equation*}

$(1.3)$ If $s(\gamma)\notin P,t(\gamma)\in P$, then 
\begin{equation*}
   x_{\gamma^{P}}=(\textstyle\sum\limits_{\vec\gamma\in Adm(\gamma,\Delta)}
  c_{\vec \gamma} x_{\vec \gamma}) (\sum T_{(\gamma'_1,\gamma'_2,\gamma'_3)}+\sum 2\cos(\frac{\pi}{|p|}x^{-1}_{\ell'_p})). 
\end{equation*}

$(1.4)$ If $s(\gamma),t(\gamma)\in P$, then 
\begin{equation*}
   x_{\gamma^{P}}=(\sum T_{(\gamma_1,\gamma_2,\gamma_3)}+\sum 2\cos(\frac{\pi}{|p|}x^{-1}_{\ell_p}))(\textstyle\sum\limits_{\vec\gamma\in Adm(\gamma,\Delta)}
  c_{\vec \gamma} x_{\vec \gamma}) (\sum T_{(\gamma'_1,\gamma'_2,\gamma'_3)}+\sum 2\cos(\frac{\pi}{|p|}x^{-1}_{\ell'_p})).
\end{equation*}

$(2)$ Suppose that $tag(\Delta)=P'$
and $\gamma$ be a curve with $t(\gamma)=j\in I_{P,1}$. Then 

\begin{equation*}
   x_{\gamma^P}=(T^{\Delta}_{s(\gamma)})^{\chi_{P\ominus P'}(s(\gamma))}(\sum\limits_{\vec\gamma\in Adm(\gamma,\Delta)}c_{\vec \gamma} x_{\vec \gamma})(T^\Delta_{t(\gamma)})^{\chi_{P\ominus P'}(s(\gamma))}.
\end{equation*}

To be precise,

$(2.1)$ If $s(\gamma),t(\gamma)\notin P\ominus P'$, then 

\begin{equation*}
   x_{\gamma^P}=\sum\limits_{\vec\gamma\in Adm(\gamma,\Delta)} c_{\vec \gamma} x_{\vec \gamma}.
\end{equation*}

$(2.2)$ If $s(\gamma)\in P\ominus P' ,t(\gamma)\notin P\ominus P'$, then 
\begin{equation*}
   x_{\gamma^P}=(\sum T^\Delta_{(\gamma_1,\gamma_2,\gamma_3)}+\sum 2\cos(\frac{\pi}{|p|}x^{-1}_{\ell_p}))\sum\limits_{\vec\gamma\in Adm(\gamma,\Delta)}c_{\vec \gamma} x_{\vec \gamma}.
\end{equation*}

$(2.3)$ If $s(\gamma)\notin P'\ominus P,t(\gamma)\in P\ominus P'$, then 
\begin{equation*}
   x_{\gamma^P}=\sum\limits_{\vec\gamma\in Adm(\gamma,\Delta)}c_{\vec \gamma} x_{\vec \gamma}(\sum T^\Delta_{(\gamma'_1,\gamma'_2,\gamma'_3)}+\sum 2\cos(\frac{\pi}{|p|}x^{-1}_{\ell'_p})).
\end{equation*}

$(2.4)$ If $s(\gamma),t(\gamma)\in P\ominus P'$, then 
\begin{equation*}
   x_{\gamma^P}=(\sum T^\Delta_{(\gamma_1,\gamma_2,\gamma_3)}+\sum 2\cos(\frac{\pi}{|p|}x^{-1}_{\ell_p}))\sum\limits_{\vec\gamma\in Adm(\gamma,\Delta)}c_{\vec \gamma} x_{\vec \gamma}(\sum T^\Delta_{(\gamma'_1,\gamma'_2,\gamma'_3)}+\sum 2\cos(\frac{\pi}{|p|}x^{-1}_{\ell'_p})), 
\end{equation*}
where in all the cases, $(\gamma_1, \gamma_2, \gamma_3)/(\gamma'_1, \gamma'_2, \gamma'_3)$ runs over all clockwise triangles 
in $\Delta$ such that $s(\gamma_1) =s(\gamma)/t(\gamma)$ and $\ell_p/\ell'_p$ runs over all clockwise special loops enclose a special puncture $p$ with $s(\ell_p)=s(\gamma)/t(\gamma)$. 
\end{proposition}


\begin{proof} 
(2) is followed by (1) and Lemma \ref{lem:varphi}. (1.2) and (1.3) are followed by (1.1) and Proposition \ref{prop:total}. Thus we shall only prove (1.1).

We first assume that $\Sigma$ is an $n$-gon  with $m$ special punctures. 

The case that $m=0$ is proved in \cite[Theorem 3.30]{BR}. For $m>0$, fix an special puncture $o$ with order $|o|$, let $\Sigma'$ be the $n|o|$-gon with $m-1$ special punctures such that orders are the same with the orders of the rest orbifold points in $\Sigma$. Then there is a canonical surjective morphism $f_o:\Sigma'\to \Sigma$. Assume that $\ell$ is the loop enclose $o$ in $\Delta$. Then we can lift $\ell$ to an $|o|$-gon $\Sigma_{|o|}$ of $\widetilde\Sigma$. Lift $\Delta$ to a triangulation $\widetilde\Delta$ of $\widetilde\Sigma$ such that $\widetilde\Delta \cap [\Gamma(\Sigma_{|o|})]$ contains the arcs $(1,3),(3,5),(5,7)\cdots$. Then each $(\gamma,\Delta)$ admissible sequence $\overrightarrow\gamma$ can be lift to a unique   $(\widetilde\gamma,\widetilde\Delta)$ admissible sequence $\overrightarrow{\widetilde\gamma}$. 

Under the surjective morphism $\pi:\mathcal A_{\Sigma'}\to \mathcal A_\Sigma$, we have $\pi(c_{\overrightarrow{\widetilde\gamma}}x_{\overrightarrow{\widetilde\gamma}})=c_{\overrightarrow{\gamma}}x_{\overrightarrow{\gamma}}$ for any $\gamma\in Adm(\gamma,\Delta)$. Therefore, by induction we have
$$x_{\gamma}=\pi(x_{\widetilde\gamma})=\pi(\sum\limits_{\widetilde\gamma\in Adm(\widetilde\gamma,\widetilde\Delta)} c_{\vec {\widetilde\gamma}}x_{\vec {\widetilde\gamma}})=\sum\limits_{\gamma\in Adm(\gamma,\Delta)} c_{\vec \gamma}x_{\vec \gamma}.$$

For general marked surface $\Sigma$ with ordinary triangulation $\Delta$ such that $I_{p,0}(\Sigma)=\emptyset$, the result follows by using the canonical polygon and Theorem \ref{th:functoriality nc-surface} (b).

For general marked surface $\Sigma$ with ordinary triangulation $\Delta$ such that $I_{p,0}(\Sigma)\neq \emptyset$, the result follows by Corollary \ref{cor:Tp=1}.

The proposition is proved.   
\end{proof}

The theorem is proved. 

\end{proof}

The following is immediate from Theorem \ref{thm:laurent}.

\begin{proposition} For any curve $\gamma\in \Gamma(\Sigma)$ and any $P\subset I_{p,1}(\Sigma)$, both $x_\gamma$ and $x_{\gamma^P}$ are in the image of both $\iota_\Delta$ and $\iota_{\Delta^P}$. 
\end{proposition}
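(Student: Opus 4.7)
The plan is to derive the proposition as an immediate corollary of the noncommutative Laurent expansion (Theorem \ref{thm:laurent}), together with the definition of the tagged cluster $\iota_{\Delta^P}$ as the twist of the ordinary cluster $\iota_\Delta$ by the involutive tagging automorphism $\varphi_P$ from Corollary \ref{cor:tagged automorphism}.

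First I would appeal directly to Theorem \ref{thm:laurent} (applied to the ordinary triangulation $\Delta$, which is the case to which the theorem is stated). The expansion
$$x_{\gamma^P} = \sum_{\vec\gamma \in Adm(\gamma^P,\Delta)} c_{\vec\gamma}\, x_{\vec\gamma}$$
writes $x_{\gamma^P}$ as a finite $\kk$-linear combination of monomials in $x_\alpha^{\pm 1}$ with $\alpha\in \Delta$. Since $\iota_\Delta(t_\alpha^{\pm 1}) = x_\alpha^{\pm 1}$ for every $\alpha\in\Delta$ and each such $t_\alpha$ lies in $\TT_\Delta$, every admissible monomial $x_{\vec\gamma}$ belongs to $\iota_\Delta(\kk\TT_\Delta)$. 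Hence $x_{\gamma^P}\in \mathrm{Im}(\iota_\Delta)$, and specialization to $P=\emptyset$ gives $x_\gamma\in \mathrm{Im}(\iota_\Delta)$.

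To transfer this conclusion to $\iota_{\Delta^P}$, I would use the defining formula $\iota_{\Delta^P} = \varphi_P \circ \iota_\Delta \circ \mu_{\Delta,\Delta^P}$. Since $\mu_{\Delta,\Delta^P}\colon \TT_{\Delta^P}\to \TT_\Delta$ is a group isomorphism, its $\kk$-linear extension is bijective on group algebras, so $\mathrm{Im}(\iota_{\Delta^P}) = \varphi_P(\mathrm{Im}(\iota_\Delta))$. Combining this with the identity $x_{\gamma^P} = \varphi_P(x_\gamma)$ and the involutivity $\varphi_P^2 = \mathrm{id}$ (Corollary \ref{cor:tagged automorphism}), I obtain
$$x_{\gamma^P}=\varphi_P(x_\gamma)\in \varphi_P(\mathrm{Im}(\iota_\Delta)) = \mathrm{Im}(\iota_{\Delta^P}),\qquad x_\gamma = \varphi_P(x_{\gamma^P})\in \mathrm{Im}(\iota_{\Delta^P}),$$
which finishes the proof.

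There is essentially no obstacle here, since the statement is a short formal consequence of Theorem \ref{thm:laurent}. The only small points of care are to verify that every admissible monomial $x_{\vec\gamma}$ really involves only edges of $\Delta$ (so that it pulls back to $\kk\TT_\Delta$ under $\iota_\Delta$) and to track that composing $\iota_\Delta$ with the isomorphism $\mu_{\Delta,\Delta^P}$ does not shrink its image, so that $\mathrm{Im}(\iota_{\Delta^P})$ coincides with $\varphi_P(\mathrm{Im}(\iota_\Delta))$.
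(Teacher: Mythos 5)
Your proof is correct and follows exactly the route the paper takes: the paper states this proposition as an immediate consequence of Theorem \ref{thm:laurent}, and your argument simply fills in the (routine) details — the Laurent expansion places $x_{\gamma^P}$ in $\iota_\Delta(\kk\TT_\Delta)$, and the definition $\iota_{\Delta^P}=\varphi_P\circ\iota_\Delta\circ\mu_{\Delta,\Delta^P}$ together with the involutivity of $\varphi_P$ transfers the conclusion to $\iota_{\Delta^P}$. No gaps.
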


Chose a point $p$ on $\gamma$ close to $s(\gamma)$, we say that the curve from $s(\gamma)$ to $p$ along $\gamma$ a  \emph{starting end} of $\gamma$.

For $\alpha,\alpha'\in \Delta$ and curve $\gamma$ with $s(\alpha)=s(\alpha')=s(\gamma)$, we say that $\alpha$ is on the \emph{left} of $\alpha'$ with respect to $\gamma$ if $(\alpha,\alpha')$ is a clockwise sector and the starting end of $\gamma$ lies $(\alpha,\alpha')$. See Figure \ref{fig:left}.

\begin{figure}[ht]
\includegraphics{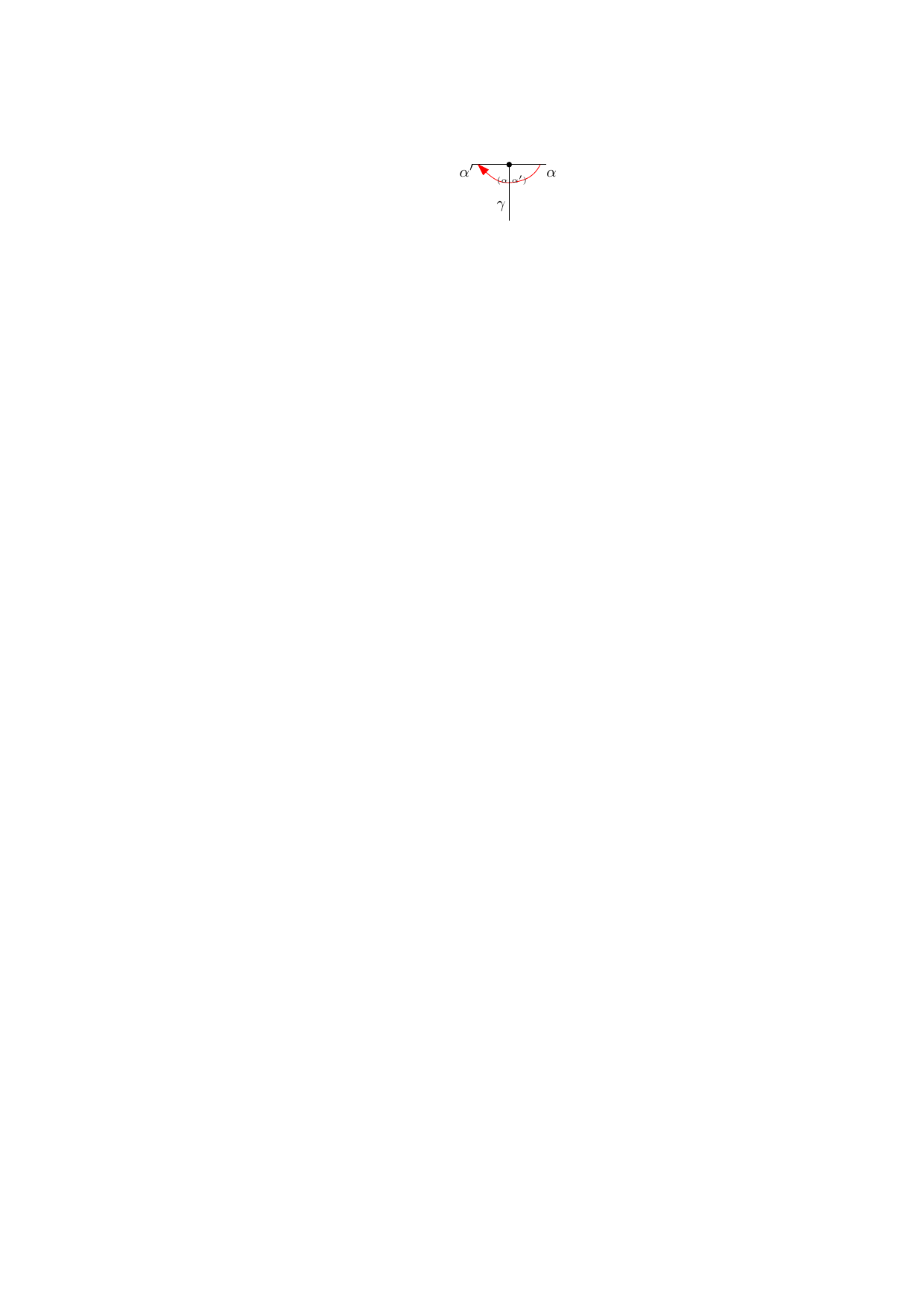}
\caption{}
\label{fig:left}
\end{figure}

Then define a partial order on $Adm(\gamma,\Delta)$ by saying that $\vec \gamma'\prec \vec \gamma$ if

$\bullet$ $\gamma_1\ne \gamma'_1$
and $\gamma'_1$ is on the left of $\gamma_1$ with respect to $\gamma$; or

$\bullet$ if $\gamma_1=\gamma'_1$ and $\gamma'_2\neq \gamma_2$, $\widetilde {p}_2$ is closer to $\widetilde s(\gamma)$ than ${\widetilde p}'_2$, where $\widetilde p_2$ (resp. $\widetilde p'_2$) is the preimage of the crossing point $p_2$ (resp. $p'_2$) of $\gamma$ and $\gamma_2$ (resp. $\gamma'_2$) and $\widetilde s(\gamma)$ is the preimage of $s(\gamma)$ in $\widetilde\Sigma_{\gamma,\Delta}$; or

$\bullet$ if $\gamma_1=\gamma'_1,\gamma_2=\gamma'_2$ and $(\gamma'_3,\ldots,\gamma'_{k'})\prec (\gamma_3,\ldots,\gamma_k)$ in $Adm(\gamma',\Delta)$, where $\gamma'=\gamma\circ \overline\gamma_1\circ \overline \gamma_2$, as shown in Figure \ref{fig:compose}.

\begin{figure}[ht]
\includegraphics{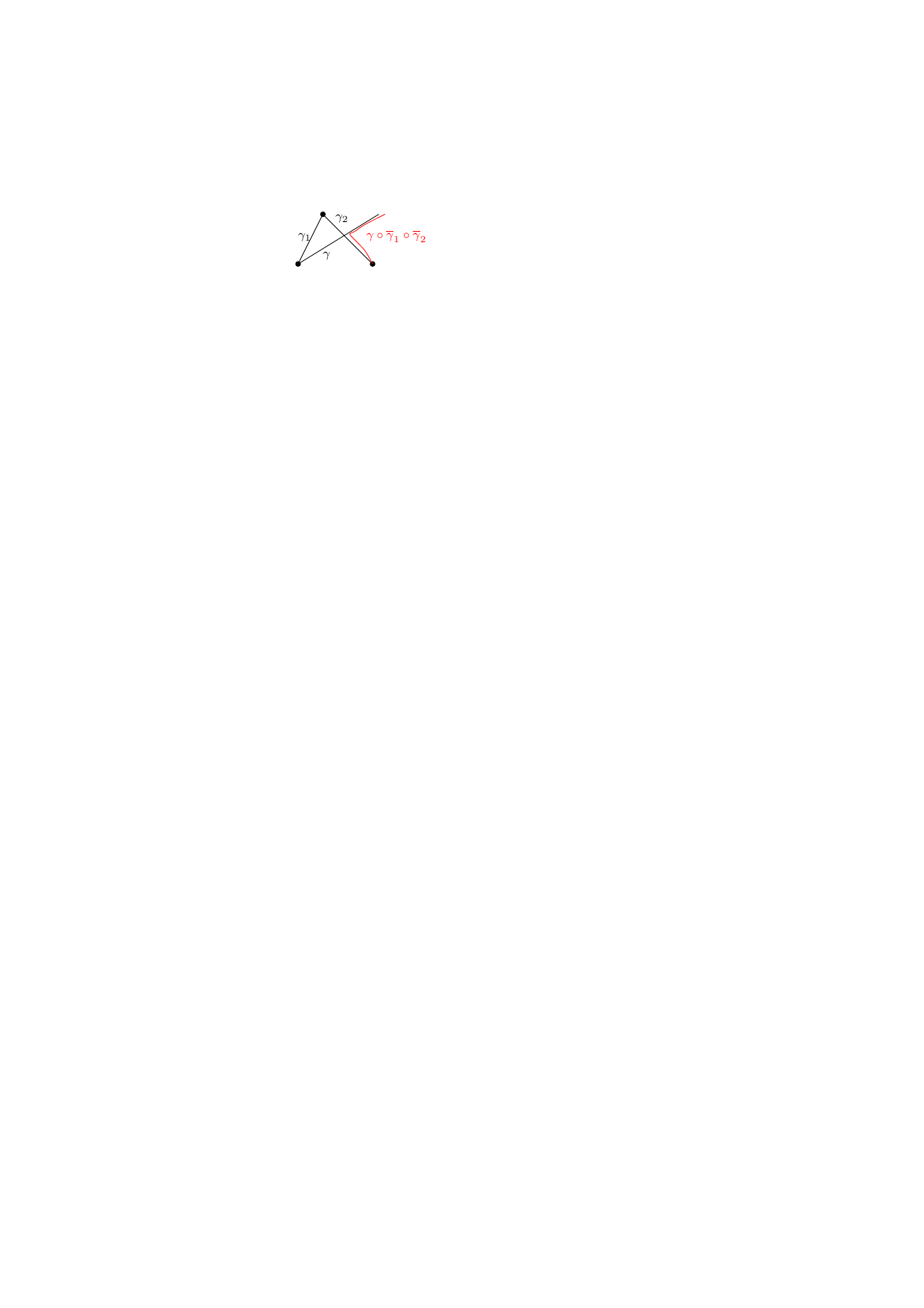}
\caption{}
\label{fig:compose}
\end{figure}

It is immediate the restriction of the above partial order to the (finite) set $Adm(\gamma,\Delta)$ is a total order.

We denote by $\vec\gamma^L$ the largest and $\vec\gamma^R$ the smallest elements of $Adm(\gamma,\Delta)$ and refer to them as the {\it leftmost} and the {\it rightmost}
$(\gamma,\Delta)$-admissible sequences respectively.

\begin{corollary}\label{Cor:NCexpan} For any triangulations $\Delta$ and $\Delta'$ of $\Sigma$ one has
$$x_\gamma=\iota_{\Delta'}(\mu_{\Delta',\Delta}(t_\gamma))+\sum_{R(\Delta',\gamma)\prec \vec \gamma'\prec \vec L(\Delta',\gamma)} c_{\vec \gamma'} x_{\vec \gamma'}+\iota_{\Delta'}(\mu_{\Delta',\Delta}^-(t_\gamma))$$
for all $\gamma\in \Delta$.
\end{corollary}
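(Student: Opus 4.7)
The plan is to apply Theorem~\ref{thm:laurent} (with $P=\emptyset$), which expands $x_\gamma$ as a $\kk$-linear combination of monomials $x_{\vec\gamma'}$ indexed by $(\gamma,\Delta')$-admissible sequences $\vec\gamma'$, and then to identify the two extremal terms of this sum (relative to the total order $\prec$ on $Adm(\gamma,\Delta')$) with $\iota_{\Delta'}(\mu_{\Delta',\Delta}(t_\gamma))$ and $\iota_{\Delta'}(\mu_{\Delta',\Delta}^{-}(t_\gamma))$ respectively. All remaining terms $R(\Delta',\gamma)\prec \vec\gamma'\prec L(\Delta',\gamma)$ are then automatically captured by the middle sum, so the corollary is entirely a statement about the leading and trailing monomials.

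The main step is the identification $c_{\vec\gamma^L}\,x_{\vec\gamma^L}=\iota_{\Delta'}(\mu_{\Delta',\Delta}(t_\gamma))$, which I would prove by induction on $d=dist(\Delta,\Delta')$. The base case $d=0$ is immediate: $\vec\gamma^L=(\gamma)$ has weight $c_{\vec\gamma^L}=1$, and $\mu_{\Delta,\Delta}(t_\gamma)=t_\gamma$, so both sides equal $x_\gamma$. For the inductive step, factor $\Delta'=\mu_\alpha\Delta''$ with $dist(\Delta,\Delta'')=d-1$ and use the explicit flip formula for $\mu_{\Delta',\Delta''}:\TT_{\Delta''}\to\TT_{\Delta'}$ given in Section~\ref{Sec:triangleg}, which on the single affected generator $t_\alpha$ reads $t_\alpha\mapsto t_{\alpha_1}t_{\overline\alpha'}^{-1}t_{\overline\alpha_3}$ (or its pending-arc analog). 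On the admissible-sequence side, rewriting $\vec\gamma^L(\Delta'')$ so that each crossing with $\alpha$ is replaced by the canonical detour through the quadrilateral $(\alpha_1,\alpha_2,\alpha_3,\alpha_4)$ containing $\alpha'$ in $\Delta'$ produces precisely $\vec\gamma^L(\Delta')$, up to the weight $c_{\vec\gamma^L}$ accounting for any new special crossings. Composing with the inductive hypothesis $\iota_{\Delta''}(\mu_{\Delta'',\Delta}(t_\gamma))=c_{\vec\gamma^L(\Delta'')}x_{\vec\gamma^L(\Delta'')}$ and using functoriality $\mu_{\Delta',\Delta}=\mu_{\Delta',\Delta''}\circ\mu_{\Delta'',\Delta}$ from Theorem~\ref{th:monomial mutation surfaces} then yields the desired identification after one flip.

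The rightmost term is handled by the same argument applied to the bar involution of Theorem~\ref{th:functoriality nc-surface}(a): the map $\mu_{\Delta',\Delta}^{-}$ is obtained by conjugating $\mu_{\overline{\Delta'},\overline\Delta}$ by $\overline{\cdot}$ (cf.\ Lemma~\ref{le:orientation-reversal functor}), and reversing the orientation of $\Sigma$ reverses the total order $\prec$ on admissible sequences, so the rightmost $(\gamma,\Delta')$-admissible sequence for $\Sigma$ becomes the leftmost one for $\overline\Sigma$. Applying the already-proven leftmost identification over $\overline\Sigma$ and pulling back through $\overline{\cdot}$ gives the rightmost case.

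The main obstacle is the inductive step of the leftmost identification: one must verify rigorously, at the level of the combinatorics in Lemma~\ref{le:canonical points}, that replacing $\alpha\in \Delta''$ by $\alpha'\in \Delta'$ transforms the leftmost admissible sequence precisely according to the flip formula for $\mu_{\Delta',\Delta''}$, including the correct placement of the detour in the clockwise direction, the correct behavior when $\alpha$ is a side of a self-folded triangle, a pending loop, or a loop around a special puncture (where the weight $2\cos(\pi/|p|)$ enters through $c_{\vec\gamma^L}$), and the correct tracking of how crossings with special loops contribute the factor $N_{p_-}$ in $c_{\vec\gamma}$. Once these local configurations are each checked against the triangle and Ptolemy relations defining $\TT_{\Delta'}$, the global statement follows from transitivity of flip sequences and the groupoid structure of ${\bf TSurf}_\Sigma$.
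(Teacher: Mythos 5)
Your overall route is the one the paper intends: Corollary~\ref{Cor:NCexpan} is stated there with no separate proof, as an immediate consequence of the expansion in Theorem~\ref{thm:laurent} (with $P=\emptyset$) together with the identification of the leftmost and rightmost admissible sequences with the two monomial mutations. Two steps in your sketch, however, would fail as written and need repair.

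First, the inductive step invokes ``functoriality $\mu_{\Delta',\Delta}=\mu_{\Delta',\Delta''}\circ\mu_{\Delta'',\Delta}$,'' but this identity is not unconditional. In the groupoid ${\bf TSurf}_\Sigma$ the defining relation is $h_{\Delta_0,\mu_\alpha\Delta}=h_{\Delta_0,\Delta}\,h_{\Delta,\mu_\alpha\Delta}^{\varphi(\Delta_0;\Delta,\mu_\alpha\Delta)}$, where the exponent is a $C$-matrix sign; when that sign is negative the functor ${\bf F}$ produces $\mu_{\Delta,\mu_\alpha\Delta}^{-1}=\mu^{-}_{\mu_\alpha\Delta,\Delta}$, i.e.\ the \emph{other} half of the Ptolemy exchange, and your induction would then be matching the leftmost detour against the wrong flip formula. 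You must either run the induction along a geodesic flip sequence from $\Delta$ to $\Delta'$ and use column sign-coherence of the $C$-matrices to guarantee that all the relevant signs are $+$, or explicitly track at each flip which half of the Ptolemy relation the sign selects and show that this is exactly the half realized by the leftmost (resp.\ rightmost) detour. Second, your phrase ``up to the weight $c_{\vec\gamma^L}$'' leaves the extremal coefficient undetermined, whereas $\iota_{\Delta'}(\mu_{\Delta',\Delta}(t_\gamma))$ is the image of a group element and carries no scalar; for the corollary as stated you must actually verify $c_{\vec\gamma^L}=c_{\vec\gamma^R}=1$, i.e.\ that the leftmost and rightmost admissible sequences contain no $p_-$-special indices contributing factors $2\cos(\pi/|p|)$. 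Both points are repairable, and your treatment of the rightmost term via the orientation-reversal functor of Lemma~\ref{le:orientation-reversal functor} is sound, but without them the inductive identification does not close.
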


\begin{theorem} For any 
$\Sigma\in {\bf Surf}$ the algebra $\mathcal A_\Sigma$ admits a (generalized) noncommutative cluster structure with group $\mathbb T_\Sigma$.

\end{theorem}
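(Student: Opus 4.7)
The plan is to bundle together the results established throughout the paper and verify, one by one, the axioms of a noncommutative cluster structure laid out in the introduction. First, I would take the group $G=\TT_\Sigma$ (canonical up to conjugation by Remark \ref{rem:canonical triangle group}), and the family of embeddings to be $\iota_{\Delta^P}: \TT_{\Delta^P}\hookrightarrow \mathcal A_\Sigma^\times$ for every tagged triangulation $\Delta^P\in {\bf TSurf}_\Sigma^t$. By Theorem \ref{th:iotaDelta} each $\iota_\Delta$ extends to an injective algebra homomorphism $\kk\TT_\Delta\hookrightarrow \mathcal A_\Sigma$ whose image generates a localization of $\mathcal A_\Sigma$ (the set ${\bf S}$ inverted there coincides precisely with the edge variables of $\Delta$); and the tagging/untagging automorphisms $\varphi_P$ of Corollary \ref{cor:tagged automorphism}, together with Proposition \ref{pro:tag/untag}, guarantee that $\iota_{\Delta^P}=\varphi_P\circ\iota_\Delta\circ\mu_{\Delta,\Delta^P}$ is well-defined and injective too. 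So the first cluster axiom is satisfied with cluster group $\TT_\Sigma$.

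Next, the monomial mutations and the groupoid structure are supplied by the triangular functor ${\bf F}:{\bf TSurf}^t_\Sigma\to {\bf Grp}$ of Theorem \ref{th:monomial mutation surfaces}: for any two (tagged) triangulations $\Delta,\Delta'$ the isomorphism $\mu_{\Delta',\Delta}={\bf F}(h_{\Delta',\Delta}):\TT_\Delta\simeq \TT_{\Delta'}$ provides the required unique morphism between clusters, with $\mu_{\Delta,\Delta}=\id$ and transitivity by functoriality. The cluster braid group is $Br_\Sigma$ (Corollary \ref{cor:brinv}), acting on each $\TT_\Delta$ through the canonical surjection $\pi_\Delta:Br_\Delta\twoheadrightarrow \underline{Br}_\Delta\subset Aut(\TT_\Delta)$ (Corollary \ref{cor:brmuta} ensures this action is intertwined with $\mu_{\Delta,\Delta'}$, so the action ``at one cluster'' determines it at all). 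For the cluster homomorphism axiom, I would use Theorem \ref{th:functoriality nc-surface}: any morphism $f:\Sigma\to\Sigma'$ in ${\bf Surf}$ induces $f_*:\mathcal A_\Sigma^f\to \mathcal A_{\Sigma'}$, and the subgroupoid $\Gamma_{\mathcal A_\Sigma}^{f}\subset \Gamma_{\mathcal A_\Sigma}$ is taken to be the image of ${\bf TSurf}_\Sigma^f$, so that Lemmas \ref{le:tsurf^f} and \ref{le:relative homomorphism} give the required functor on clusters. The distinguished case of a cluster automorphism $\sigma$ (tagging/untagging) produces the quotient $\mathcal A_\Sigma\twoheadrightarrow \mathcal A_{\Sigma^P}$ by the relations $T_p=1$, which matches Corollary \ref{cor:Tp=1}; the invariant clusters are exactly the $\iota_\Delta$ for which $\Delta$ does not carry a self-folded triangle at any $p\in P$, and $\varphi_P(\iota_\Delta(\kk\TT_\Delta))$ is the appropriate $\TT_{\Sigma^P}$.

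Finally, the noncommutative Laurent Phenomenon --- the last axiom on the list in the introduction --- is given in strong form by Theorem \ref{thm:laurent}, Proposition \ref{expansion1}, and Corollary \ref{Cor:NCexpan}: for any two triangulations $\Delta,\Delta'$ and any generator $t_\gamma$ of $\TT_{\Delta'}$, the element $\iota_{\Delta'}(t_\gamma)=x_\gamma$ lies in $\iota_\Delta(\kk\TT_\Delta)$ and expands as
$$\iota_{\Delta'}(t_\gamma)=\iota_\Delta(\mu_{\Delta,\Delta'}(t_\gamma))+\text{lower terms in }\iota_\Delta(\TT_\Delta),$$
with the lower terms lying in the semiring $\mathbb Z_{\ge 0}\iota_\Delta(\TT_\Delta)$ (up to the nonnegative coefficients $2\cos(\pi/|p|)$, which live in $\kk_\Sigma$); here the submonoid $M_{\iota_{\Delta'}}\subset \TT_{\Delta'}$ generating $\TT_{\Delta'}$ is the one generated by the $t_\gamma, \gamma\in \Delta'$ (together with boundary edge generators), and the leading-term identification is the content of Theorem \ref{th:monomial mutation surfaces intro}.

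The routine but nontrivial verification is bookkeeping: one has to check that all the axioms are satisfied \emph{compatibly} --- that the groupoid of clusters, the braid action, the Laurent expansion, and the cluster homomorphism structure all match, including the case of tagged clusters at punctures and of morphisms in ${\bf Surf}$ that ramify at special punctures. The genuine obstacle is the cluster homomorphism axiom for a general $f:\Sigma\to\Sigma'$: while Lemma \ref{le:relative homomorphism} and Proposition \ref{prop:symmetries and orbifolds} give the required surjections and the functor $f_*$ on the level of groupoids, proving that the restriction of $f_*$ to the automorphism group of each object is injective is Conjecture \ref{conj:injective relative braid homomorphism}, which is only verified in special cases in the paper. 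Thus strictly speaking what we obtain is a ``generalized'' noncommutative cluster structure in which the injectivity part of the cluster homomorphism axiom is proved only in those cases where Conjecture \ref{conj:injective relative braid homomorphism} is known --- notably for isomorphisms and for the boundary-gluing morphisms of Proposition \ref{pr:relative braid homomorphism} where checked --- which is why the word ``generalized'' appears in the statement.
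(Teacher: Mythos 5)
Your proposal is correct and matches the paper's (essentially implicit) proof: the paper states this theorem without argument as a summary of the preceding development, and the ingredients you assemble — injectivity of the (tagged) clusters $\iota_{\Delta^P}$ from Theorem \ref{th:iotaDelta} and Proposition \ref{pr:G-Laurent}, monomial mutations from the triangular functor of Theorem \ref{th:monomial mutation surfaces}, the braid actions, and the Laurent expansion of Theorem \ref{thm:laurent} and Corollary \ref{Cor:NCexpan} — are exactly the ones the authors intend. The only minor caveat is interpretive: "generalized" in the statement most plausibly refers to the Chekhov–Shapiro-type exchange relations with the extra $2\cos(\pi/|p|)$ terms at special punctures rather than (only) to the conjectural status of the cluster-homomorphism injectivity, but you address both readings, so nothing is missing.
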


\begin{remark} In \cite{MS} admissible sequences were called $\Delta$-paths and were identified with perfect matchings. Under this bijection, the leftmost/rightmost admissible sequence corresponds to the minimal/maximal perfect matching.
\end{remark}

 The untagged version follows from  \cite[Theorem 3.36]{BR}.

\begin{conjecture}\label{conj:clusterauto} In notation of  Remark \ref{rem:automorphisms}, the group of cluster automorphisms of ${\mathcal A}_\Sigma$ is generated by the surface ones and $\varphi_p$, $p\in I_{p,1}(\Sigma)$
    
\end{conjecture}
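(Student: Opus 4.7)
\textbf{Proof plan for Conjecture \ref{conj:clusterauto}.}
My plan is to reduce an arbitrary cluster automorphism $\sigma\in \mathrm{Aut}(\mathcal{A}_\Sigma)$ to a surface automorphism after composing with a suitable tagging automorphism $\varphi_P$. The strategy decomposes into three stages: reduction to ordinary clusters, extraction of a combinatorial bijection on curves, and recognition of that bijection as a surface automorphism via functoriality.

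First I would use the axiomatic framework from the introduction: a cluster automorphism must permute noncommutative clusters. Fixing an ordinary triangulation $\Delta$, the composition $\sigma\circ \iota_\Delta$ is again a cluster of $\mathcal{A}_\Sigma$, so by the classification of noncommutative clusters as (possibly tagged) triangulations of $\Sigma$ it coincides with $\iota_{\Delta'^{P}}$ for a unique tagged triangulation $\Delta'^{P}$. Since by definition $\iota_{\Delta'^{P}}=\varphi_{P}\circ\iota_{\Delta'}\circ\mu_{\Delta',(\Delta')^{P}}$ and $\varphi_P$ is involutive, replacing $\sigma$ by $\varphi_P\circ \sigma$ reduces matters to the case where $\sigma$ sends every ordinary cluster $\iota_\Delta$ to an ordinary cluster $\iota_{\Delta'}$ for some $\Delta'$ depending on $\Delta$.

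Next I would extract combinatorial data. For $\gamma\in \Delta$, the element $\sigma(x_\gamma)$ is an invertible element of $\mathcal{A}_\Sigma$ that lies inside $\iota_{\Delta'}(\kk\TT_{\Delta'})$. Granting Remark \ref{rem:automorphisms} (that the unit group of $\mathcal{A}_\Sigma$ is generated by $\kk^\times$ and the $x_\gamma$), one concludes $\sigma(x_\gamma)=c_\gamma\cdot x_{f(\gamma)}$ for scalars $c_\gamma\in\kk^\times$ and a bijection $f:\Delta\to\Delta'$. Imposing that $\sigma$ preserves the triangle relations from Definition \ref{def:ASigma} forces $f$ to carry cyclic triangles to cyclic triangles, and the Ptolemy relations---being inhomogeneous sums---rigidify the cocycle $(c_\gamma)$ and pin down $c_\gamma\in \{1\}$ after a short normalization argument using the bar-equivariance from Theorem \ref{th:functoriality nc-surface}(a). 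By varying $\Delta$ and invoking transitivity of flips together with the Laurent expansion of Theorem \ref{thm:laurent}, the local bijections $f$ fit together to a globally defined bijection $F:\Gamma(\Sigma)\to\Gamma(\Sigma)$ compatible with triangle incidence.

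Finally, $F$ respects the combinatorial structure of triangulations, so it comes from an automorphism $\tau$ of $\Sigma$ in ${\bf Surf}$; by the fully faithful functoriality $\Sigma\mapsto\mathcal{A}_\Sigma$ (Corollary \ref{cor:BSigma functorial} and the surrounding discussion), the induced $\tau_*\in\mathrm{Aut}(\mathcal{A}_\Sigma)$ must equal $\sigma$, giving the desired decomposition $\sigma=\varphi_P\circ \tau_*$. The main obstacle is the second stage: proving Remark \ref{rem:automorphisms} and the rigidity of the scalars $c_\gamma$. Concretely, one needs to show that a unit of $\mathcal{A}_\Sigma$ that admits a Laurent expansion with respect to some cluster $\iota_\Delta$ must be a monomial in the $x_\gamma$, $\gamma\in\Delta$; I expect this to follow from a degree/leading-term analysis using the canonical grading $\deg x_\gamma=1$ from Theorem \ref{th:functoriality nc-surface}(a) (unavailable when $I_{p,0}(\Sigma)\neq\emptyset$, where a separate argument via Corollary \ref{cor:Tp=1} and reduction to the $0$-puncture-free case will be needed).
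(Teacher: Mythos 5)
The statement you are proving is Conjecture \ref{conj:clusterauto}: the paper offers no proof of it, so there is nothing to compare your argument against --- the authors themselves leave it open. Your proposal is therefore best read as a plausible attack plan rather than something that can be checked against an intended argument, and as a plan it has the right overall shape (reduce to ordinary clusters by composing with a tagging automorphism $\varphi_P$, extract a bijection on arcs, recognize it as a surface automorphism via the fully faithful functor $\Sigma\mapsto\mathcal{A}_\Sigma$).

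However, the plan as written does not close the conjecture; it reduces it to other statements that are themselves only expectations in the paper. First, your second stage leans on Remark \ref{rem:automorphisms} (that $\mathcal{A}_\Sigma^\times$ is generated by $\kk^\times$ and the $x_\gamma$), which the paper explicitly only ``expects''; you acknowledge this, but it is the crux, and the degree/leading-term sketch you offer does not address units of degree $0$ such as products $x_\gamma x_{\gamma'}^{-1}$, nor the case $I_{p,0}(\Sigma)\neq\emptyset$ where the grading is unavailable. Second, the identification of $\sigma\circ\iota_\Delta$ with some $\iota_{\Delta'^{P}}$ presupposes that tagged triangulations exhaust all noncommutative clusters of $\mathcal{A}_\Sigma$, which for a general $\Sigma$ is part of the claimed cluster structure rather than an established classification. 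Third, the assertion that the Ptolemy relations force $c_\gamma=1$ is in tension with Lemma \ref{le:scaling automorphisms}: any family of central scalars $c_i\in\kk^\times$ yields an automorphism $x_\gamma\mapsto c_{s(\gamma)}c_{t(\gamma)}x_\gamma$ preserving all defining relations, so cocycles of the form $c_\gamma=c_{s(\gamma)}c_{t(\gamma)}$ survive your rigidification; you would need either to build these into the statement (as the paper does for $\varphi_P$, which is itself a scaling automorphism with $c_i=T_i^{\chi_P(i)}$) or to argue separately that a cluster automorphism cannot carry such a nontrivial scalar twist. Until these three points are resolved, the argument is a conditional reduction, not a proof.
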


\begin{example} ${\mathcal D}_2$ is generated by $x_{12}^{\pm}$, $x_{21}^{\pm}$, $x_{10}$, $x_{01}$, $y_{10}$, $y_{01}$, $x_{20}$, $x_{02}$, $y_{20}$, and $y_{02}$ subject to the relations 
$$x_{i0}y_{0i}=y_{i0}x_{0i}, i\in \{1,2\}, x_{21}^+y_{01}^{-1}x_{10}^{-1}x_{12}^-=x_{21}^-x_{01}^{-1}y_{10}^{-1}x_{12}^+, x_{12}^+y_{02}^{-1}x_{20}^{-1}x_{21}^-=x_{12}^-x_{02}^{-1}y_{20}^{-1}x_{21}^+,$$
$$x_{21}^{\pm}x_{01}^{-1}x_{02}=x_{20}x_{10}^{-1}x_{12}^{\pm}, x_{21}^{\pm}y_{01}^{-1}x_{02}=x_{20}y_{10}^{-1}x_{12}^{\pm}$$ and 
$y_{10}=(x_{12}^{+}+x_{12}^-)x_{02}^{-1},~y_{20}=(x_{21}^{+}+x_{21}^-)x_{01}^{-1},
y_{01}=x_{20}^{-1}(x_{21}^{+}+x_{21}^-),
y_{02}=x_{10}^{-1}(x_{12}^{+}+x_{12}^-)$.

The algebra has exactly four noncommutative clusters (each of them also has frozen variables $x_{12}^{\pm},x_{21}^{\pm}$):  
 $\{ x_{10}, x_{01},x_{20}, x_{02}\}$, $\{ x_{10}, x_{01},y_{10}, y_{01}\}$, $\{y_{20}, y_{02},x_{20}, x_{02}\}$, $\{ y_{10}, y_{01},y_{20}, y_{02}\}$, one of which cannot be reduced to the ordinary triangulation similarly to the commutative or quantum case.
    
\end{example}

\subsection{Noncommutative rank $2$ cluster algebras and their Laurent phenomenon}

Denote by ${\mathcal A}_{r_1,r_2}$ the subalgebra of ${\mathcal Frac}({\mathbb Q}\langle y_1,y_2\rangle$ generated by all $y_k$, $k\in \Z$ given by the recursion
$$y_{k+1}=y_{k-1}^{-1}z^{-1}+y_k^{r_k}y_{k-1}^{-1}z^{-1}$$

It was proved in \cite{BR0} that ${\mathcal A}_{r_1,r_2}$ is generated by any quadruple $y_{k-1},y_k,y_{k+1},y_{k+2}$, in particular, taking $k=0$, we see that ${\mathcal A}_{r_1,r_2}\subset \mathbb{Q}\langle y_1^{\pm 1},y_2^{\pm 1}\rangle$.

For $k=2$ this is the mutation from the cluster $(y_1,y_2)$ to $(y_3,y_2)$.
Set $z:=[y_k^{-1},y_{k-1}]=y_k^{-1}y_{k-1}y_ky_{k-1}^{-1}$.
Then we have isomorphisms $f_1,f_2:\TT_2\to \TT_1$ given by 
$f_i(y_2)=y_2$ and
$$f_1(y_3)=y_1^{-1}z^{-1},~f_2(y_3)=y_2^{r_2}y_1^{-1}z^{-1}$$
$$f_1^{-1}(y_1)=z^{-1}y_3^{-1},~f_2^{-1}(y_1)=z^{-1}y_3^{-1}y_2^{r_2}$$
In this case the $k$-th noncommutative cluster is the free group generated by $t_1,t_2$ and the noncommutative Laurent Phenomenon can be deduced from \cite[Theorem 6]{R}.

  \begin{corollary}
\label{th:leading term rank 2}
For any $k\in \mathbb{Z}$ one has
$$y_k=\iota_k(x^{\mu_{1k}(g_k)})+lower ~terms$$

\end{corollary}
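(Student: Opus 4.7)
The plan is to prove this by induction on $|k-2|$, using the recursion $y_{k+1} z y_{k-1}=1+y_k^{r_k}$ from Section \ref{sec:rank2alg}. The base cases $k=1,2$ are immediate because $\mu_{1,1}=Id_{\TT_1}$ and $\mu_{1,2}=Id_{\TT_1}$ send $t_1, t_2$ to themselves, matching $y_1, y_2$. The case $k=3$ (and symmetrically $k=0$) will be verified directly from $y_3=zy_2=y_1y_2y_1^{-1}y_2^{-1}y_2=y_1 y_2 y_1^{-1}$ and comparing with $\mu_{1,3}(t_3)=\mu_{1,2}\circ\mu_{2,3}(t_3)$ via the explicit formulas for $\mu_{k,k+1}$ in Section \ref{sec:Rank $2$ cluster groups and braid action}.

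For the inductive step, I would rewrite the recursion as
$$y_{k+1}=y_k^{r_k}y_{k-1}^{-1}z^{-1}+y_{k-1}^{-1}z^{-1},$$
and show that the first summand carries the leading term while the second is lower. Using the inductive hypotheses $y_{k-1}=\iota_1(\mu_{1,k-1}(t_{k-1}))+\text{lower}$ and $y_k=\iota_1(\mu_{1,k}(t_k))+\text{lower}$, together with the transitivity $\mu_{1,k+1}=\mu_{1,k}\circ\mu_{k,k+1}$, the leading term of $y_k^{r_k}y_{k-1}^{-1}z^{-1}$ should match $\iota_1(\mu_{1,k+1}(t_{k+1}))$. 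Here the explicit formulas for $\mu_{k,k+1}$ (parity-dependent) are used: in both parities $\mu_{k,k+1}(t_{k+1})=t_{k+1}$ and $\mu_{k,k+1}(t_k)$ is a monomial which, after composition with $\mu_{1,k}$, produces a word whose leading part is precisely that of $y_k^{r_k}y_{k-1}^{-1}z^{-1}$.

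The main obstacle is making the notion of \emph{leading term} precise in the free group algebra $\kk\langle y_1^{\pm 1},y_2^{\pm 1}\rangle$, since there is no canonical monomial order. I would invoke the grading on ${\mathcal A}_{r_1,r_2}$ from \cite{BR0} (which assigns Fibonacci-type degrees to the $y_k$) so that ``lower terms'' means strictly smaller graded degree in the $\iota_1$-expansion. The essential verification is then the non-cancellation of the top-degree component when one multiplies out $y_k^{r_k}y_{k-1}^{-1}z^{-1}$; this is controlled by the fact that $z=[y_1,y_2]$ has degree zero and commutes trivially with the top monomials in the appropriate sense, so the top part of the product is the free-group product of top parts. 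The remaining summand $y_{k-1}^{-1}z^{-1}$ is seen to have strictly smaller degree than $y_k^{r_k}y_{k-1}^{-1}z^{-1}$ whenever $r_k\ge 1$, which completes the induction.
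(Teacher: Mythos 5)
Your strategy diverges from the paper's: the paper does not prove this corollary by induction at all. It writes down the explicit monomial maps $f_1,f_2:\TT_2\to\TT_1$ (e.g.\ $f_1(y_3)=y_1^{-1}z^{-1}$, $f_2(y_3)=y_2^{r_2}y_1^{-1}z^{-1}$) and then invokes the already-established noncommutative Laurent expansion of $y_k$ in $y_1^{\pm1},y_2^{\pm1}$ from \cite{BR0} and \cite[Theorem 6]{R}; the corollary is read off from that explicit expansion by identifying the distinguished monomial with $\iota_1(\mu_{1k}(t_k))$. A self-contained induction would be a genuinely different (and more elementary) route, but as written it has a gap that I do not think can be patched without essentially reproving the Laurent phenomenon.

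The fatal step is the inductive substitution into $y_{k+1}=y_k^{r_k}y_{k-1}^{-1}z^{-1}+y_{k-1}^{-1}z^{-1}$. The inductive hypothesis gives you an expansion of $y_{k-1}$ as a sum of group elements of $F_2=\langle y_1,y_2\rangle$ with a leading term, but the recursion requires $y_{k-1}^{-1}$. A sum of two or more distinct elements of a torsion-free group is not invertible in the group algebra, so $y_{k-1}^{-1}$ lives only in the skew field of fractions and has no noncommutative Laurent expansion from which a ``leading term'' could be extracted; in particular the leading term of $y_k^{r_k}y_{k-1}^{-1}z^{-1}$ is not the product of leading terms in any naive sense. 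The fact that the right-hand side nevertheless lands in $\kk\langle y_1^{\pm1},y_2^{\pm1}\rangle$ \emph{is} the Laurent phenomenon, which your argument implicitly assumes rather than proves --- this is exactly why the paper outsources it to \cite{R}. Secondary problems: ``lower terms'' is never pinned down (the relevant order is dominance of exponent vectors in the abelianization, as in Theorem \ref{th:leading term}, not a ``Fibonacci-type grading''); the non-cancellation claim about $z$ is asserted, not proved; and the base case computation is wrong --- the relations of Section \ref{sec:rank2alg} give $x_3=zy_2$, not $y_3=zy_2$, while $y_3=y_1^{-1}z^{-1}+y_2^{r_2}y_1^{-1}z^{-1}$.
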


\section{Proofs of main results}

\subsection{Proof of Theorem \ref{th:Chebyshev}}\label{sec:proofofthcheby}

The coinvariant algebra ${\mathcal A}_n/I_2(n)$ is the quotient of ${\mathcal A}_n$ by the ideal $I$ which is generated by $x_{ij}-x_{ji}$ for any $i,j\in [n]$ and $x_{ij}-x_{kl}$ for any $i,j,k,l\in [n]$ with $j-i\equiv l-k\; (mod\; n)$. As ${\mathcal A}_n$ is generated by $x_{1i},x_{i1}, 1<i\leq n$, we have ${\mathcal A}_n/I_2(n)$ is generated by $x_{1i}+I, 1<i\leq n$. The relations for $x_{1i}+I$ are $x_{1i}+I=x_{1,(n+2-i)}+I$. 

For any $i$ with $4\leq i\leq n$, we have $x_{1i}=x_{1,i-1}x_{i-2,i-1}^{-1}x_{i-2,i}-x_{1,i-2}x_{i-1,i-2}^{-1}x_{i-1,i}$. Denote by $a=x_{13}+I$ and $b=x_{12}+I$. Therefore, in ${\mathcal A}_n$ we have 
$$(x_{1,i}+I)b^{-1}=\left((x_{1,i-1}+I)b^{-1}\right)\left(ab^{-1}\right)-(x_{1,i-2}+I)b^{-1}.$$
It follows that $(x_{1,i}+I)b^{-1}=U_{i-2}(\frac{ab^{-1}}{2}), 2\leq i\leq n$, where $U_i$ are Chebyshev polynomials of the second kind.

In case $n$ is odd, $U_{\frac{n-1}{2}}(\frac{ab^{-1}}{2})=U_{\frac{n-1}{2}-1}(\frac{ab^{-1}}{2})$ implies $U_{i}(\frac{ab^{-1}}{2})=U_{n-2-i}(\frac{ab^{-1}}{2})$ for all $1<i\leq n-2$. It shows that ${\mathcal A}_n/I_2(n)$ is generated by $a^{\pm}, b^{\pm}$ subjects to $(U_{\frac{n-1}{2}}-U_{\frac{n-1}{2}-1})(\frac{ab^{-1}}{2})=0$.

In case $n$ is even, $U_{\frac{n}{2}}(\frac{ab^{-1}}{2})=U_{\frac{n}{2}-2}(\frac{ab^{-1}}{2})$ implies $U_{i}(\frac{ab^{-1}}{2})=U_{n-2-i}(\frac{ab^{-1}}{2})$ for all $1<i\leq n-2$. It shows that ${\mathcal A}_n/I_2(n)$ is generated by $a^{\pm}, b^{\pm}$ subjects to $(U_{\frac{n}{2}}-U_{\frac{n}{2}-2})(\frac{ab^{-1}}{2})=0$.

The proof is complete.
\endproof

\subsection{Proofs of Theorem \ref{th:sector presentation} and Theorem \ref{th:reduced surfaces}}
\label{subsec:Proof of Theorem sector presentation and reduced surfaces}

\begin{proposition}\label{prop:quotient} Let $\Sigma$ be a marked surface
with $I_{p,0}(\Sigma)=\emptyset$.
For any $i\in I_b\cup I_{p,1}$, fix a curve $\gamma_i$ with $s(\gamma_i)=i$ (all these curves are automatically distinct). Then the assignments $x_{\gamma}\mapsto y_{\overline{\gamma_{s(\gamma)}},\gamma}$ (e.g., $x_{\gamma_i}\mapsto 1$) define an algebra homomorphism
$\pi: \mathcal A_\Sigma\rightarrow \mathcal A_\Sigma$ which is a projection onto $\mathcal B_\Sigma$.
\end{proposition}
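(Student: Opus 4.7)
The plan is to write $\pi(x_\gamma)=c_{s(\gamma)}^{-1}x_\gamma$ with $c_i:=x_{\gamma_i}$, so that $\pi(x_\gamma)=x_{\gamma_{s(\gamma)}}^{-1}x_\gamma=y_{\overline{\gamma_{s(\gamma)}},\gamma}$ matches the formula in the statement, and verify three things: (i) $\pi$ respects all relations of Definition \ref{def:ASigma}; (ii) the image lies in $\mathcal B_\Sigma$; (iii) $\pi$ is the identity on $\mathcal B_\Sigma$. One should think of $\pi$ as the one-sided (left) analogue of the scaling automorphism of Lemma \ref{le:scaling automorphisms} — it is not itself an automorphism, but the same telescoping will make it a well-defined algebra homomorphism. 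The hypothesis $I_{p,0}(\Sigma)=\emptyset$ is used to guarantee that every endpoint of every curve lies in $I_b\cup I_{p,1}$, so that $c_{s(\gamma)}$ is defined for each generator $x_\gamma$.

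The key combinatorial input for (i) is that every defining relation of $\mathcal A_\Sigma$ has the shape of an alternating product $x_{\beta_1}x_{\overline{\beta_2}}^{-1}x_{\beta_3}\cdots$ in which consecutive factors share a marked vertex, because the relations come from triangles, polygons, or bigons. Applying $\pi$ to such a product produces internal scalar factors $c_{s(\overline{\beta_k})}\cdot c_{s(\beta_{k+1})}^{-1}$, all of which cancel thanks to $s(\overline{\beta_k})=t(\beta_k)=s(\beta_{k+1})$. The only surviving scalar is $c_v^{-1}$ on the far left, where $v$ is the initial source of the monomial. I will then check that both sides of each relation share the same initial vertex $v$: for the triangle relation $x_{\alpha_1}x_{\overline{\alpha_2}}^{-1}x_{\alpha_3}=x_{\overline{\alpha_3}}x_{\alpha_2}^{-1}x_{\overline{\alpha_1}}$ this is $v=s(\alpha_1)=t(\alpha_3)=s(\overline{\alpha_3})$; for the Ptolemy relation with diagonal $\alpha'$ it is $v=s(\alpha')=t(\alpha_1)=s(\alpha_2)=s(\overline{\alpha_1})$; for the bigon special-puncture relation it is again $v=s(\alpha')$, and the uniform scalar $c_v^{-1}$ multiplies the entire sum, so the $2\cos(\pi/|p|)$ coefficient is left untouched. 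The monogon relation $x_{\overline\ell}=x_\ell$ is trivial since $s(\ell)=s(\overline\ell)$.

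For (ii), the formula $\pi(x_\gamma)=x_{\gamma_{s(\gamma)}}^{-1}x_\gamma=y_{\overline{\gamma_{s(\gamma)}},\gamma}$ immediately places $\pi(x_\gamma)$ in $\mathcal B_\Sigma$ (the pair $(\overline{\gamma_{s(\gamma)}},\gamma)$ is composable since $t(\overline{\gamma_{s(\gamma)}})=s(\gamma)$), and a symmetric computation gives $\pi(x_\gamma^{-1})=x_\gamma^{-1}x_{\gamma_{s(\gamma)}}=y_{\gamma,\gamma_{s(\gamma)}}\in\mathcal B_\Sigma$ for invertible generators. For (iii), given any composable pair $(\gamma,\gamma')$ with $t(\gamma)=s(\gamma')$, I compute $\pi(y_{\gamma,\gamma'})=\pi(x_{\overline\gamma})^{-1}\pi(x_{\gamma'})=x_{\overline\gamma}^{-1}c_{s(\overline\gamma)}c_{s(\gamma')}^{-1}x_{\gamma'}$, and the middle factor is trivial because $s(\overline\gamma)=t(\gamma)=s(\gamma')$. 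Thus $\pi|_{\mathcal B_\Sigma}=\mathrm{id}_{\mathcal B_\Sigma}$, and since $\mathrm{im}(\pi)\subseteq\mathcal B_\Sigma$ it follows that $\pi^2=\pi$, making $\pi$ a projection onto $\mathcal B_\Sigma$.

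The main obstacle is purely bookkeeping: one must track the source and target of every reversed arc that appears in the defining relations and verify that the ``input'' vertex of each side agrees. The bigon special-puncture relation, with its three loop terms and the central scalar coefficient $2\cos(\pi/|p|)$, is the most notationally delicate, but because all three monomials on its right-hand side begin at the same vertex $s(\alpha')$, the check again reduces to the same telescoping used for triangles and Ptolemy.
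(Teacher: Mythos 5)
Your proposal is correct and follows essentially the same route as the paper: apply $\pi$ to each defining relation, let the internal scalars telescope so that only $x_{\gamma_v}^{-1}$ survives on the left for the common initial vertex $v$ of both sides, and then verify $\pi^2=\pi$ and $\pi(y_{\gamma,\gamma'})=y_{\gamma,\gamma'}$ to conclude it is a projection onto $\mathcal B_\Sigma$. (Only a trivial index slip: $\pi(x_\gamma)^{-1}=x_\gamma^{-1}x_{\gamma_{s(\gamma)}}$ is $y_{\overline\gamma,\gamma_{s(\gamma)}}$, not $y_{\gamma,\gamma_{s(\gamma)}}$.)
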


\begin{proof} 
First, we prove that $\pi$ is a homomorphism.

(Triangle relations) For each cyclic triangle $(\alpha_1,\alpha_2,\alpha_3)$ in $\Sigma$, we have $$\pi(x_{\alpha_1}x_{\overline\alpha_2}^{-1}x_{\alpha_3})=y_{\overline{\gamma_{s(\alpha_1)}},\alpha_1}(y_{\overline{\gamma_{s(\overline\alpha_2)}},\overline\alpha_2})^{-1}y_{\overline{\gamma_{s(\alpha_3)}},\alpha_3}=x_{ \gamma_{s(\alpha_1)}}^{-1}x_{\alpha_1}x_{\overline\alpha_2}^{-1}x_{\alpha_3},$$
$$\pi(x_{\overline \alpha_3}x_{\alpha_2}^{-1}x_{\overline\alpha_3})=x_{\gamma_{s(\overline \alpha_3)}}^{-1}x_{\overline \alpha_3}x_{\alpha_2}^{-1}x_{\overline \alpha_1}.$$
Thus $\pi(x_{\alpha_1}x_{\overline\alpha_2}^{-1}x_{\alpha_3})=\pi(x_{\overline \alpha_3}x_{\alpha_2}^{-1}x_{\overline\alpha_1})$ follows by $s(\alpha_1)=s(\overline\alpha_3)$.

(Ptolemy relations) For each cyclic quadrilateral $( \alpha_{ 1}, \alpha_{ 2}, \alpha_{ 3}, \alpha_{ 4})$ with diagonals $ \alpha$ and $ \alpha'$  such that $s(\alpha)=s(\alpha_1), s(\alpha')=t(\alpha_1)$, we have
$$\pi(x_{\overline\alpha_{1}}x^{-1}_{\overline\alpha}x_{ \alpha_{ 3}}+x_{ \alpha_{ 2}}x^{-1}_{ \alpha}x_{\overline\alpha_{ 4}})=x^{-1}_{\gamma_{s(\overline\alpha_1)}}x_{\overline\alpha_{1}}x^{-1}_{\overline\alpha}x_{ \alpha_{ 3}}+x^{-1}_{\gamma_{s(\alpha_2)}}x_{ \alpha_{ 2}}x^{-1}_{ \alpha}x_{\overline\alpha_{ 4}}=\pi(x_{ \alpha'}).$$

(Monogon relations) For each special loop $\gamma$,
 $\pi(x_{\overline\gamma})=x^{-1}_{ \gamma_{s(\overline\gamma)}}x_{\overline\gamma}=x^{-1}_{ \gamma_{s(\gamma)}}x_{\gamma}=\pi(x_\gamma)$.

(Bigon special puncture relations) For each bigon $(\alpha_1,\alpha_2)$ around a special puncture $p$, assume that $\alpha$ is the loop around $p$ such that $(\alpha_1,\alpha_2,\alpha)$ is a triangle and $\alpha'$ is the loop around $p$ such that $(\alpha',\alpha_2,\alpha_1)$ is a triangle, we have
\begin{equation*}
\begin{array}{rcl}
& &\pi(x_{\overline\alpha_{ 1}}x^{-1}_{\alpha}x_{ \alpha_{ 1}}+2\cos(\frac{\pi}{|p|})x_{\overline\alpha_{ 1}}x^{-1}_{\alpha}x_{\overline\alpha_2}+x_{ \alpha_{ 2}}x^{-1}_{ \alpha}x_{\overline\alpha_{2}})\vspace{2.5pt}\\
&=&
x^{-1}_{\gamma_{t(\alpha_{ 1})}}(x_{\overline\alpha_{ 1}}x^{-1}_{\alpha}x_{ \alpha_{ 1}}+2\cos(\frac{\pi}{|p|})x_{\overline\alpha_{ 1}}x^{-1}_{\alpha}x_{\overline\alpha_2}+x_{ \alpha_{ 2}}x^{-1}_{ \alpha}x_{\overline\alpha_{2}}) \vspace{2.5pt}\\
&=& x^{-1}_{\gamma_{s(\alpha')}}x_{\alpha'}=\pi(x_{\alpha'}).
\end{array}
\end{equation*}

Therefore, we obtain an algebra homomorphism $\pi:\mathcal A_\Sigma\to \mathcal A_\Sigma$.


Next, show that $\pi^2=\pi$. Indeed, 
$$\pi^2(x_\gamma)=\pi(y_{\overline{\gamma_{s(\gamma)}},\gamma})=y_{\overline{\gamma_{s(\gamma)}},\gamma}$$
for any $\gamma$.

Finally, prove that the image of $\pi$ is $\mathcal B_\Sigma$. Indeed,
$$\pi(y_{\gamma,\gamma'})=\pi(x_{\overline\gamma}^{-1}x_{\gamma'})=y^{-1}_{\overline{\gamma_{s(\overline\gamma)}},\overline\gamma} y_{\overline{\gamma_{s(\gamma')}},\gamma'}=(x^{-1}_{\gamma_{s(\overline\gamma)}}x_{\overline\gamma})^{-1}(x^{-1}_{ \gamma_{s(\gamma')}}x_{\gamma'})=x_{\overline\gamma}^{-1}x_{\gamma'}=y_{\gamma,\gamma'}$$
for any $y_{\gamma,\gamma'}\in \mathcal B_\Sigma$. 

The proof is complete.
\end{proof}

The following follows immediately from Proposition \ref{prop:quotient}.

\begin{corollary}\label{cor:relationB}
For any $\Sigma\in {\bf Surf}$
with $I_{p,0}(\Sigma)=\emptyset$ 
and ordinary triangulation $\Delta$ of $\Sigma$, the sector subalgebra $\mathcal B_\Sigma$ 
has the following presentation:
\begin{itemize}
    \item For each cyclic triangle $(\alpha_1,\alpha_2,\alpha_3)$ in $\Sigma$, we have $\pi(x_{\alpha_1}x_{\overline\alpha_2}^{-1}x_{\alpha_3})=\pi(x_{\overline \alpha_3}x_{\alpha_2}^{-1}x_{\overline\alpha_1})$, i.e., $y_{\gamma_{s(\alpha_1)},\alpha_1}(y_{\gamma_{s(\overline\alpha_2)},\overline\alpha_2})^{-1}y_{\gamma_{s(\alpha_3)},\alpha_3}=y_{\gamma_{s(\overline\alpha_3)},\overline\alpha_3}(y_{\gamma_{s(\alpha_2)},\alpha_2})^{-1}y_{\gamma_{s(\overline\alpha_1)},\overline\alpha_1}.$

    \item For each loop $\gamma$ cuts out a monogon which contains only a special puncture, we have $y_{\gamma_{s(\overline \gamma)},\overline\gamma}=y_{\gamma_{s(\gamma)},\gamma}$.

   \item  For each cyclic quadrilateral $( \alpha_{ 1}, \alpha_{ 2}, \alpha_{ 3}, \alpha_{ 4})$ with diagonals $ \alpha$ and $ \alpha'$  such that $s(\alpha)=s(\alpha_1), s(\alpha')=t(\alpha_1)$, we have $\pi(x_{\overline\alpha_{1}}x^{-1}_{\overline\alpha}x_{ \alpha_{ 3}}+x_{ \alpha_{ 2}}x^{-1}_{ \alpha}x_{\overline\alpha_{ 4}})=\pi(x_{ \alpha'}),$ i.e., 
$$y_{\gamma_{s(\overline\alpha_{1})},\overline\alpha_{1}}y^{-1}_{\gamma_{s(\overline\alpha)},\overline\alpha}y_{\gamma_{s( \alpha_{ 3})}, \alpha_{ 3}}+y_{\gamma_{s(\alpha_{ 2})},\alpha_{ 2}}y^{-1}_{\gamma_{s(\alpha)},\alpha}y_{\gamma_{s(\overline\alpha_{4})}, \overline\alpha_{4}}=y_{\gamma_{s(\alpha')},\alpha'}.$$

   \item For each bigon $(\alpha_1,\alpha_2)$ around a special puncture $p$, assume that $\alpha$ is the loop around $p$ such that $(\alpha_1,\alpha_2,\alpha)$ is a triangle and $\alpha'$ is the loop around $p$ such that $(\alpha',\alpha_2,\alpha_1)$ is a triangle, we have
\begin{equation*}
\begin{array}{rcl}
& &y_{\gamma_{s(\alpha')},\alpha'}\vspace{2.5pt}\\
&=&
y_{\gamma_{s(\overline\alpha_1)},\overline\alpha_{1}}y^{-1}_{\gamma_{s(\alpha)},\alpha}y_{\gamma_{s(\alpha_1)},\alpha_{1}}+2\cos(\frac{\pi}{|p|})y_{\gamma_{s(\overline\alpha_1)},\overline\alpha_{1}}y^{-1}_{\gamma_{s(\alpha)},\alpha}y_{\gamma_{s(\overline\alpha_2)},\overline\alpha_2}+y_{\gamma_{s(\alpha_2)},\alpha_{2}}y^{-1}_{\gamma_{s(\alpha)},\alpha}y_{\gamma_{s(\overline\alpha_2)},\overline\alpha_{2}}.
\end{array}
\end{equation*}
\end{itemize}

\end{corollary}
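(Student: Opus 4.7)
\textbf{Proof plan for Corollary \ref{cor:relationB}.}

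The plan is to systematically apply the projection $\pi:\mathcal{A}_\Sigma \twoheadrightarrow \mathcal{B}_\Sigma$ from Proposition \ref{prop:quotient} to each of the four families of defining relations of $\mathcal{A}_\Sigma$ listed in Definition \ref{def:ASigma}. Since $\pi$ is an algebra homomorphism, the image of any relation is automatically a relation in $\mathcal{B}_\Sigma$, and since $\pi$ restricts to the identity on $\mathcal{B}_\Sigma$ (as $\pi^2=\pi$ with image $\mathcal{B}_\Sigma$), it suffices to compute $\pi$ on each generator, namely $\pi(x_\gamma) = y_{\overline{\gamma_{s(\gamma)}},\gamma}$, and substitute.

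The first step is to verify the triangle-type identities. Given a cyclic triangle $(\alpha_1,\alpha_2,\alpha_3)$, I would apply $\pi$ to both sides of $x_{\alpha_1}x_{\overline{\alpha_2}}^{-1}x_{\alpha_3} = x_{\overline{\alpha_3}}x_{\alpha_2}^{-1}x_{\overline{\alpha_1}}$, using multiplicativity of $\pi$ and the formula $\pi(x_\gamma)=y_{\overline{\gamma_{s(\gamma)}},\gamma}$ (equivalently, $x_{\gamma_{s(\gamma)}}^{-1}x_\gamma$, as computed inside the proof of Proposition \ref{prop:quotient}) to obtain the first displayed identity of the corollary. The monogon-type identity $y_{\gamma_{s(\overline\gamma)},\overline\gamma}=y_{\gamma_{s(\gamma)},\gamma}$ for a special loop $\gamma$ falls out by applying $\pi$ to $x_{\overline\gamma}=x_\gamma$ and noting $s(\gamma)=s(\overline\gamma)$ for a loop.

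Next, for a cyclic quadrilateral $(\alpha_1,\alpha_2,\alpha_3,\alpha_4)$ with diagonals $\alpha,\alpha'$, I would apply $\pi$ term-by-term to the Ptolemy relation $x_{\alpha'}=x_{\overline{\alpha_1}}x_{\overline\alpha}^{-1}x_{\alpha_3}+x_{\alpha_2}x_\alpha^{-1}x_{\overline{\alpha_4}}$. Every $x_\gamma$ becomes the corresponding $y_{\gamma_{s(\gamma)},\gamma}$, yielding the stated Ptolemy relation in the sector variables; this step is routine calculation. The bigon-at-a-special-puncture relation is handled identically: the three-term expression for $x_{\alpha'}$ in Definition \ref{def:ASigma}(5) pulls back to the three-term formula stated in the corollary once $\pi$ is applied.

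There is essentially no obstacle here since the content is a straightforward substitution, and all the nontrivial work was already done in the verification that $\pi$ is a well-defined algebra homomorphism in the proof of Proposition \ref{prop:quotient}. The only point that requires mild care is notational consistency: one should check that the base curves $\gamma_{s(\gamma)}$ chosen in Proposition \ref{prop:quotient} appear correctly on both sides of each identity, which is immediate from the fact that $s(\alpha_1)=s(\overline{\alpha_3})$ in a triangle, and similar matching of source vertices in the quadrilateral and bigon configurations. Completeness of these relations as a presentation (as opposed to merely a list of identities) follows from Theorem \ref{th:sector presentation} combined with the observation that the star relations there can be derived from the triangle and Ptolemy images under $\pi$, so the two sets of relations define the same algebra.
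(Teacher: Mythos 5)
Your first step is fine and is essentially the paper's entire argument for the "these identities hold" direction: Proposition \ref{prop:quotient} makes $\pi$ an algebra homomorphism with $\pi(x_\gamma)=y_{\overline{\gamma_{s(\gamma)}},\gamma}=x_{\gamma_{s(\gamma)}}^{-1}x_\gamma$, so applying $\pi$ to each defining relation of Definition \ref{def:ASigma} and using multiplicativity gives each displayed identity by direct substitution (the source-point bookkeeping you mention is indeed the only thing to check).

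The gap is in your completeness argument, which is circular. You invoke Theorem \ref{th:sector presentation} to conclude that the listed relations are defining, but the paper proves Theorem \ref{th:sector presentation} in the opposite direction: it takes Corollary \ref{cor:relationB} as an already-established presentation of $\mathcal{B}_\Sigma$ and then shows that the relations (1)--(5) of that theorem imply the relations of the corollary. So the corollary must be established without appeal to the theorem; moreover, even a two-way derivability between the two lists of identities would only show that the two presented algebras coincide, not that either is isomorphic to $\mathcal{B}_\Sigma$. The missing argument is purely formal and uses only Proposition \ref{prop:quotient}: since $\pi$ is an idempotent algebra endomorphism of $\mathcal{A}_\Sigma$ with image $\mathcal{B}_\Sigma$, one has $\mathcal{B}_\Sigma\cong\mathcal{A}_\Sigma/\ker\pi$, and $\ker\pi$ is the ideal generated by the elements $x_\gamma-\pi(x_\gamma)$ as $\gamma$ ranges over the generators, equivalently by $x_{\gamma_i}-1$ for $i\in I_b\cup I_{p,1}$, because $x_\gamma-\pi(x_\gamma)=x_{\gamma_{s(\gamma)}}^{-1}\bigl(x_{\gamma_{s(\gamma)}}-1\bigr)x_\gamma$. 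Transporting the presentation of $\mathcal{A}_\Sigma$ from Definition \ref{def:ASigma} through this quotient --- renaming the class of $x_\gamma$ as $\pi(x_\gamma)$ and noting that $\pi(x_{\gamma_i})=1$ --- produces exactly the relations you computed, now as a complete set of defining relations. That is the content of the paper's remark that the corollary "follows immediately from Proposition \ref{prop:quotient}," and it is the step your proposal replaces with a forward reference it is not entitled to use.
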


\medskip

{\bf Proof of Theorem \ref{th:sector presentation}}.

We first prove the relations in Theorem \ref{th:sector presentation} hold.

For the Triangle relations,
we have 
$$y_{\alpha_1,\alpha_2}y_{\alpha_3,\alpha_1}y_{\alpha_2,\alpha_3}=x_{\overline\alpha_1}^{-1}x_{\alpha_2}x_{\overline\alpha_3}^{-1}x_{\alpha_1}x^{-1}_{\overline\alpha_2}x_{\alpha_3}=1.$$

For the Ptolemy relations,
we have
$$y_{\alpha_1,\alpha'}=x_{\overline\alpha_1}^{-1}x_{\alpha'}=x_{\overline\alpha_1}^{-1}(x_{\overline\alpha_{1}}x^{-1}_{\overline\alpha}x_{ \alpha_{ 3}}+x_{ \alpha_{ 2}}x^{-1}_{ \alpha}x_{\overline\alpha_{ 4}})=x^{-1}_{\overline\alpha}x_{\alpha_3}+x^{-1}_{\overline\alpha_1}x_{\alpha_2}x^{-1}_{\alpha}x_{\overline\alpha_4}=
y_{\alpha,\alpha_3}+y_{\alpha_1,\alpha_2}y_{\overline \alpha,\overline\alpha_4}.$$
    
For the Monogon relations,
we have 
$y_{\gamma,\gamma}=x_{\overline\gamma}^{-1}x_{\gamma}=1$.

For the Bigon special puncture relations,
as
$x_{\alpha'}=x_{\overline\alpha_{1}}x^{-1}_{\alpha}x_{ \alpha_{ 1}}+2\cos(\frac{\pi}{|p|})x_{\overline\alpha_{ 1}}x^{-1}_{\alpha}x_{\overline\alpha_2}+x_{\alpha_{ 2}}x^{-1}_{\alpha}x_{\overline\alpha_{2}}$, we have
$$1=y_{\overline\alpha',\overline\alpha_1}y_{\overline\alpha,\alpha_1}+2\cos(\frac{\pi}{|p|})y_{\overline\alpha',\overline\alpha_1}y_{\overline\alpha,\overline\alpha_2}+y_{\overline\alpha',\alpha_2}y_{\overline\alpha,\overline\alpha_2}.$$

For the Star relations,
we have 
$$y_{\overline \gamma_1,\gamma_2}y_{\overline \gamma_2,\gamma_3}\cdots y_{\overline \gamma_k,\gamma_1}=x_{\gamma_1}^{-1}x_{\gamma_2}x^{-1}_{\gamma_2}x_{\gamma_3}\cdots x_{\gamma_k}^{-1}x_{\gamma_1}=1.$$ 

Thus the relations in Theorem \ref{th:sector presentation} hold.

We then show these are the defining relations.
It suffices to prove that the relations in Theorem \ref{th:sector presentation} imply the relations in Corollary \ref{cor:relationB}.

For any cyclic triangle $(\alpha_1,\alpha_2,\alpha_3)$ in $\Sigma$, we have 
\begin{equation*}
\begin{array}{rcl}
&& y_{\gamma_{s(\alpha_1)},\alpha_1}(y_{\gamma_{s(\overline\alpha_2)},\overline\alpha_2})^{-1}y_{\gamma_{s(\alpha_3)},\alpha_3}y^{-1}_{\gamma_{s(\overline\alpha_1)},\overline\alpha_1}y_{\gamma_{s(\alpha_2)},\alpha_2}y^{-1}_{\gamma_{s(\overline\alpha_3)},\overline\alpha_3}\vspace{2.5pt}\\
&=& y_{\gamma_{s(\alpha_1)},\alpha_1}y_{\alpha_2, \overline \gamma_{s(\overline\alpha_2)}}y_{\gamma_{s(\alpha_3)},\alpha_3}
y_{\alpha_1,\overline\gamma_{s(\overline\alpha_1)}}y_{\gamma_{s(\alpha_2)},\alpha_2}
y_{\alpha_3,\overline\gamma_{s(\overline\alpha_3)}} \vspace{2.5pt}\\
&=& y_{\gamma_{s(\alpha_1)},\alpha_1}y_{\alpha_2, \alpha_3}
y_{\alpha_1,\alpha_2}
y_{\alpha_3,\overline\gamma_{s(\overline\alpha_3)}}\vspace{2.5pt}\\
&=& y_{\gamma_{s(\alpha_1)},\alpha_1}
y_{\overline\alpha_1,\overline \alpha_3}
y_{\alpha_3,\overline\gamma_{s(\overline\alpha_3)}}
=
1,
\end{array}
\end{equation*}
where 
the last equality is followed by the Star relation. 

For any cyclic quadrilateral $( \alpha_{ 1}, \alpha_{ 2}, \alpha_{ 3}, \alpha_{ 4})$ with diagonals $ \alpha$ and $ \alpha'$  such that $s(\alpha)=s(\alpha_1)$ and 
$s(\alpha')=t(\alpha_1)$, as 
$y_{\alpha_1,\alpha'}=
y_{\alpha,\alpha_3}+y_{\alpha_1,\alpha_2}y_{\overline \alpha,\overline\alpha_4},$
we have
\begin{equation*}
\begin{array}{rcl}
y_{\gamma_{s(\alpha')},\alpha'}&=&y_{\gamma_{s(\alpha')},\overline\alpha_1}y_{\alpha_1,\alpha'}\vspace{2.5pt}\\
&=& y_{\gamma_{s(\alpha')},\overline\alpha_1}(y_{\alpha,\alpha_3}+y_{\alpha_1,\alpha_2}y_{\overline \alpha,\overline\alpha_4}) \vspace{2.5pt}\\
&=& y_{\gamma_{s(\alpha')},\overline\alpha_1}y_{\alpha,\alpha_3}+y_{\gamma_{s(\alpha')},\alpha_2}y_{\overline \alpha,\overline\alpha_4}
\vspace{2.5pt}\\
&=& y_{\gamma_{s(\overline\alpha_1)},\overline\alpha_1}y_{\alpha,\alpha_3}+y_{\gamma_{s(\alpha_2)},\alpha_2}y_{\overline \alpha,\overline\alpha_4}\vspace{2.5pt}\\
&=& 
y_{\gamma_{s(\overline\alpha_1)},\overline\alpha_1}y_{\alpha,\alpha_3}+y_{\gamma_{s(\alpha_2)},\alpha_2}y_{\overline \alpha,\overline\alpha_4},
\end{array}
\end{equation*}
and 
\begin{equation*}
\begin{array}{rcl}
&& y_{\gamma_{s(\overline\alpha_{1})},\overline\alpha_{1}}y^{-1}_{\gamma_{s(\overline\alpha)},\overline\alpha}y_{\gamma_{s( \alpha_{ 3})}, \alpha_{ 3}}+y_{\gamma_{s(\alpha_{ 2})},\alpha_{ 2}}y^{-1}_{\gamma_{s(\alpha)},\alpha}y_{\gamma_{s(\overline\alpha_{4})}, \overline\alpha_{4}}\\
&=& y_{\gamma_{s(\overline\alpha_{1})},\overline\alpha_{1}}y_{\alpha,\overline\gamma_{s(\overline\alpha)}}y_{\gamma_{s( \alpha_{ 3})}, \alpha_{ 3}}+y_{\gamma_{s(\alpha_{ 2})},\alpha_{ 2}}y_{\overline\alpha,\overline\gamma_{s(\alpha)}}y_{\gamma_{s(\overline\alpha_{4})}, \overline\alpha_{4}} \vspace{2.5pt}\\
&=& y_{\gamma_{s(\overline\alpha_{1})},\overline\alpha_{1}}y_{\alpha,\alpha_3}+y_{\gamma_{s(\alpha_{ 2})},\alpha_{ 2}}y_{\overline \alpha,\overline\alpha_4}.
\end{array}
\end{equation*}
Thus, we have
$$y_{\gamma_{s(\overline\alpha_{1})},\overline\alpha_{1}}y^{-1}_{\gamma_{s(\overline\alpha)},\overline\alpha}y_{\gamma_{s( \alpha_{ 3})}, \alpha_{ 3}}+y_{\gamma_{s(\alpha_{ 2})},\alpha_{ 2}}y^{-1}_{\gamma_{s(\alpha)},\alpha}y_{\gamma_{s(\overline\alpha_{4})}, \overline\alpha_{4}}=y_{\gamma_{s(\alpha')},\alpha'}.$$

For each loop $\gamma$ cuts out a monogon which contains only a special puncture, by the Star relations, we have
$y_{\gamma,\overline\gamma_{s(\gamma)}}y_{\gamma_{s(\gamma)},\gamma}=y_{\gamma,\gamma}=1$. Thus $y_{\gamma_{s(\gamma)},\gamma}=y^{-1}_{\gamma,\overline\gamma_{s(\gamma)}}=y_{\gamma_{s(\overline \gamma)},\overline\gamma}$.

For each bigon $(\alpha_1,\alpha_2)$ around a special puncture $p$ of order $3$, assume that $\alpha$ is the loop around $p$ such that $(\alpha_1,\alpha_2,\alpha)$ is a triangle and $\alpha'$ is the loop around $p$ such that $(\alpha',\alpha_2,\alpha_1)$ is a triangle, we have 
\begin{equation*}
\begin{array}{rcl}
 y_{\gamma_{s(\alpha')},\alpha'}
&=& y_{\gamma_{s(\alpha')},\alpha'}
y_{\overline\alpha',\overline\alpha_1}y_{\overline\alpha,\alpha_1}+
2\cos(\frac{\pi}{|p|})y_{\gamma_{s(\alpha')},\alpha'}y_{\overline\alpha',\overline\alpha_1}y_{\overline\alpha,\overline\alpha_2}+
y_{\gamma_{s(\alpha')},\alpha'}y_{\overline\alpha',\alpha_2}y_{\overline\alpha,\overline\alpha_2}
\vspace{2.5pt}\\
&=& y_{\gamma_{s(\overline\alpha_1)},\overline\alpha_{1}}
y_{\overline \alpha,\alpha_1}
+2\cos(\frac{\pi}{|p|})y_{\gamma_{s(\overline\alpha_1)},\overline\alpha_{1}}y_{\overline\alpha,\overline\alpha_2}
+y_{\gamma_{s(\alpha_2)},\alpha_{2}}
y_{\overline\alpha,\overline\alpha_{2}}.
\end{array}
\end{equation*}
\begin{equation*}
\begin{array}{rcl}
&& y_{\gamma_{s(\overline\alpha_1)},\overline\alpha_{1}}y^{-1}_{\gamma_{s(\alpha)},\alpha}y_{\gamma_{s(\alpha_1)},\alpha_{1}}+
2\cos(\frac{\pi}{|p|})y_{\gamma_{s(\overline\alpha_1)},\overline\alpha_{1}}y^{-1}_{\gamma_{s(\alpha)},\alpha}y_{\gamma_{s(\overline\alpha_2)},\overline\alpha_2}
+y_{\gamma_{s(\alpha_2)},\alpha_{2}}y^{-1}_{\gamma_{s(\alpha)},\alpha}y_{\gamma_{s(\overline\alpha_2)},\overline\alpha_{2}}\\
&=& y_{\gamma_{s(\overline\alpha_1)},\overline\alpha_{1}}y_{\overline \alpha,\overline\gamma_{s(\alpha)}}y_{\gamma_{s(\alpha_1)},\alpha_{1}}+
2\cos(\frac{\pi}{|p|})y_{\gamma_{s(\overline\alpha_1)},\overline\alpha_{1}}y_{\overline\alpha,\overline\gamma_{s(\alpha)}}
y_{\gamma_{s(\overline\alpha_2)},\overline\alpha_2}
+
y_{\gamma_{s(\alpha_2)},\alpha_{2}}y_{\overline\alpha,\overline\gamma_{s(\alpha)}}y_{\gamma_{s(\overline\alpha_2)},\overline\alpha_{2}}\vspace{2.5pt}\\
&=& y_{\gamma_{s(\overline\alpha_1)},\overline\alpha_{1}}
y_{\overline \alpha,\alpha_1}
+2\cos(\frac{\pi}{|p|})y_{\gamma_{s(\overline\alpha_1)},\overline\alpha_{1}}y_{\overline\alpha,\overline\alpha_2}
+y_{\gamma_{s(\alpha_2)},\alpha_{2}}
y_{\overline\alpha,\overline\alpha_{2}}.
\end{array}
\end{equation*}
Thus, \begin{equation*}
y_{\gamma_{s(\alpha')},\alpha'}=
y_{\gamma_{s(\overline\alpha_1)},\overline\alpha_{1}}y^{-1}_{\gamma_{s(\alpha)},\alpha}y_{\gamma_{s(\alpha_1)},\alpha_{1}}+2\cos(\frac{\pi}{|p|})y_{\gamma_{s(\overline\alpha_1)},\overline\alpha_{1}}y^{-1}_{\gamma_{s(\alpha)},\alpha}y_{\gamma_{s(\overline\alpha_2)},\overline\alpha_2}+y_{\gamma_{s(\alpha_2)},\alpha_{2}}y^{-1}_{\gamma_{s(\alpha)},\alpha}y_{\gamma_{s(\overline\alpha_2)},\overline\alpha_{2}}.
\end{equation*}

The proof is complete.
\endproof

\medskip

{\bf Proof of Theorem \ref{th:reduced surfaces}.} Denote by $\mathcal I$ the kernel of the canonical homomorphism ${\mathcal A}_\Sigma\twoheadrightarrow \underline {\mathcal A}_\Sigma$ (i.e., the ideal of  generated by $\{ x_\gamma-1 \mid \gamma \text{ is a boundary arc}\}$). 

In Proposition \ref{prop:quotient}, choose $\gamma_i$, $i\in I_b\cup I_{p,1}$ in such a way that $\gamma_i$ is a boundary arc iff $i\in I_b$. 

Since $\pi(\mathcal I)\subset \mathcal I$, there is a unique algebra homomorphism $\underline\pi: \underline {\mathcal A}_\Sigma\to \underline {\mathcal A}_\Sigma$ such that $\underline \pi(\underline x)=\underline{\pi(x)}$ for all $x\in \mathcal A_\Sigma$.
Clearly, $\underline \pi$ is a projection onto $\mathcal B_\Sigma$. It is also clear that $Ker~\underline \pi$ is generated by all $\underline x_{\gamma_i}$, $i\in I_{p,1}$. If $I_{p,1}= \emptyset$, then $Ker~\underline \pi=\{0\}$. Otherwise, 
Lemma \ref{le:abelianization of a surface} implies that $\underline x_{\gamma_i}\ne 1$ because it is a cluster variable in the abelianization/symmetrization of $\mathcal A_\Sigma$. Therefore, $Ker~\underline \pi \ne \{0\}$ if $I_{p,1}\ne \emptyset$.

Thus $\underline \pi$ is an isomorphism $\underline{\mathcal A}_\Sigma\cong \underline {\mathcal B}_\Sigma$ iff $I_{p,1}= \emptyset$.

The proof is complete.
\endproof

\subsection{Proof of Theorem \ref{th:presentation of Tsurf}}\label{sec:presentation of Tsurf}

\begin{lemma}\label{lem:dual}(\cite{FZ4}, \cite[Proposition 1.3]{NZ}) We have    $C^{\Delta}_{\mu_\alpha\Delta_0}=C^{\Delta}_{\Delta_0}(J_\alpha+[-\varepsilon_\alpha B_{\Delta_0}]_+^{\bullet \alpha})$, where $B_{\Delta_0}$ is the exchange matrix associated with $\Delta_0$, the notation $[M]^{\bullet \alpha}$ means all columns of the matrix $M$ are set to zero except the $\alpha$-th column, $\varepsilon_\alpha$ is the sign of the $\alpha$-th column of the $C$-matrix $C_{\Delta_0}^\Delta$.
\end{lemma}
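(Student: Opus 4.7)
The statement is essentially the mutation formula for $C$-matrices when one mutates the initial seed (as opposed to the final seed), which is well documented in cluster algebra theory. The plan is to derive it from the standard recursion for $C$-matrices combined with sign-coherence, rather than attempting an independent combinatorial proof.

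First I would recall that $C^{\Delta}_{\Delta_0}$ is, by definition, the matrix whose columns are the $c$-vectors of the seed $\Delta$ expressed in the basis of the (tropical semifield attached to the) initial seed $\Delta_0$. A mutation of $\Delta_0$ at $\alpha$ amounts to a change of the initial basis, so the two matrices $C^{\Delta}_{\Delta_0}$ and $C^{\Delta}_{\mu_\alpha \Delta_0}$ must differ by right-multiplication with a single transition matrix $M_\alpha$, depending only on $\Delta_0$ and on the sign data of the $\alpha$-th column of $C^{\Delta}_{\Delta_0}$ (this is where sign-coherence of the $C$-matrix, established in \cite{GHKK}, enters and makes $\varepsilon_\alpha \in \{\pm 1\}$ well-defined).

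Next I would identify $M_\alpha$ with $J_\alpha + [-\varepsilon_\alpha B_{\Delta_0}]_+^{\bullet\alpha}$. This is the standard mutation matrix: it coincides with the identity outside the $\alpha$-th column, flips the sign of $e_\alpha$, and adjusts by the appropriate positive part of $\pm B_{\Delta_0}$ according to the sign $\varepsilon_\alpha$. The identification follows directly from \cite[Proposition 1.3]{NZ} (equivalently, from the $c$-vector mutation rule in \cite{FZ4}); one can either cite those results, or verify the formula by checking it on the generating mutations, using that a $c$-vector $c_j^{\Delta}$ transforms under mutation of $\Delta_0$ at $\alpha$ precisely via the rule encoded by $M_\alpha$.

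The only real subtlety is bookkeeping with sign conventions: different sources normalize $B$-matrices and $c$-vectors with various sign choices, and the notation $[\,\cdot\,]_+^{\bullet\alpha}$ needs to be matched carefully with the convention used in \cite{NZ}. Thus the main obstacle is not conceptual but a careful sign/convention check to confirm that the formula as stated is precisely the one that emerges from sign-coherence and the $C$-matrix recursion, after which the lemma follows immediately.
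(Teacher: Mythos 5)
Your proposal is correct and matches the paper's treatment: the paper gives no independent proof of this lemma, simply citing \cite{FZ4} and \cite[Proposition 1.3]{NZ} as the source of the initial-seed-mutation formula for $C$-matrices (with sign-coherence from \cite{GHKK} making $\varepsilon_\alpha$ well-defined), which is exactly what you do. Nothing further is needed.
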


We first prove that ${\bf TSurf}_\Sigma$ satisfies the relations.

For the Diamond/Pentagon/Hexagon relation, assume that $\Delta_{i+1}=\mu_{\alpha_i}(\Delta_i)$, $\Delta_k=\mu_{\beta_1}(\Delta_1)$, and $\Delta_{k-1}=\mu_{\beta_2}(\Delta_k)$. Then we have $$sgn_{\alpha_1}(C_{\Delta_1}^{\Delta_1})=sgn_{\alpha_2}(C_{\Delta_2}^{\Delta_1})=\cdots =sgn_{\alpha_{k-2}}(C_{\Delta_{k-1}}^{\Delta_1})=sgn_{\beta_1}(C_{\Delta_1}^{\Delta_1})=sgn_{\beta_2}(C_{\Delta_1}^{\Delta_1})=+.$$

Thus, $h_{\Delta_1,\Delta_{k-1}}=h_{\Delta_1,\Delta_k}h_{\Delta_k,\Delta_{k-1}}=h_{\Delta_1,\Delta_2} h_{\Delta_{2},\Delta_3}\cdots h_{\Delta_{k-2},\Delta_{k-1}}.$

For horizontal compatibility, suppose $(\beta,\alpha)$ is directed clockwise in $\Delta$. Then we have 
$$sgn_{\beta}(C_{\mu_\beta\mu_\alpha\Delta}^{\mu_\beta\mu_\alpha\Delta})=sgn_{\alpha}(C_{\mu_\beta\Delta}^{\mu_\beta\mu_\alpha\Delta})=sgn_{\beta}(C_{\overline{\mu_\beta\Delta}}^{\overline{\mu_\beta\Delta}})=sgn_{\alpha}(C_{\overline{\Delta}}^{\overline{\mu_\beta\Delta}})=+,$$
$$sgn_{\beta}(C_{\Delta}^{\mu_\beta\mu_\alpha\Delta})=sgn_{\beta}(C_{\overline{\mu_\alpha\Delta}}^{\overline{\mu_\beta\Delta}})=-.$$

Therefore, $h_{\mu_\beta\mu_\alpha\Delta,\mu_\beta\Delta}= h^{-1}_{\mu_\alpha\Delta,\mu_\beta\mu_\alpha\Delta}h_{\mu_\alpha\Delta,\Delta}h_{\Delta,\mu_\beta\Delta}=h_{\mu_\beta\mu_\alpha\Delta,\mu_\alpha\Delta}h_{\mu_\alpha\Delta,\Delta}h^{-1}_{\mu_\beta\Delta,\Delta}.$

It follows that $h_{\mu_\alpha\Delta,\Delta}h_{\Delta,\mu_\beta\Delta}h_{\mu_\beta\Delta,\Delta}=h_{\mu_\alpha\Delta,\mu_\beta\mu_\alpha\Delta}h_{\mu_\beta\mu_\alpha\Delta,\mu_\alpha\Delta}h_{\mu_\alpha\Delta,\Delta}.$

\medskip

Let ${\bf TSurf}'_\Sigma$ be the groupoid defined by the presentation in this theorem. For any ordinary triangulations $\Delta,\Delta'$, we define a morphism $h_{\Delta,\Delta'}$ in ${\bf TSurf}'_\Sigma$ as follows:

$\bullet$ $h_{\Delta,\Delta'}=id_\Delta$ if $dist(\Delta,\Delta')=0$,

$\bullet$ $h_{\Delta,\Delta'}=h_{\Delta,\mu_\alpha\Delta'}h_{\mu_\alpha\Delta',\Delta'}^{sgn_\alpha(C^{\Delta}_{\mu_\alpha\Delta'})}$ for $\alpha$ such that $dist(\Delta,\mu_\alpha\Delta')<dist(\Delta,\Delta')$.

We claim $h_{\Delta,\Delta'}$ is well-defined. Let $\mu^1:\Delta\to \Delta'$ and $\mu^2:\Delta\to \Delta'$ be two shortest mutation sequences from $\Delta$ to $\Delta'$. As the fundamental group of the graph of flips is generated by cycles of lengths $4$, $5$ and $6$, it suffices to consider the case where $(\mu^2)^{-1}\circ \mu^1:\Delta\to \Delta$ forms a simple cycle. Then $(\mu^2)^{-1}\circ \mu^1$ has length $4$ or length $6$, and the well-defined of $h_{\Delta,\Delta'}$ follows by the diamond and hexagonal relations, and Lemma \ref{lem:dual}.

We now prove that the morphisms $h_{\Delta,\Delta'}$ satisfy the relations in Definition \ref{def:gro}. 

Suppose that $\Delta_0,\Delta$ are two triangulations and $\alpha$ is a non-self-folded internal arc in $\Delta$. 

If $dist(\Delta_0,\Delta)\neq dist(\Delta_0,\mu_\alpha\Delta)$, then $$h_{\Delta_0,\mu_\alpha\Delta}=h_{\Delta_0,\Delta}h_{\Delta,\mu_\alpha\Delta}^{sgn_\alpha(C^{\Delta_0}_{\Delta})}=h_{\Delta_0,\Delta}h_{\Delta,\mu_\alpha\Delta}^{\varphi(\Delta_0;\Delta,\mu_\alpha\Delta)}.$$

If $dist(\Delta_0,\Delta)=dist(\Delta_0,\mu_\alpha\Delta)$. Assume that $dist(\Delta_0,\Delta)=dist(\Delta_0,\mu_\beta\Delta)+1$ for some $\beta$. Then 
$\mu_\alpha\mu_\beta\Delta=\mu_{\alpha'}\mu_\beta\mu_\alpha\Delta$ for $\alpha'\in \mu_\alpha\Delta\setminus \Delta$ and
$\mu_\beta\mu_\alpha\Delta,\mu_\alpha\Delta,\Delta,\mu_\beta\Delta,\mu_\alpha\mu_\beta\Delta$ form a $5$-cycle. We further have
$dist(\Delta_0,\mu_\beta\Delta)=dist(\Delta_0,\mu_\alpha\mu_\beta\Delta)+1$,
$dist(\Delta_0,\mu_\alpha\Delta)=dist(\Delta_0,\mu_\beta\mu_\alpha\Delta)+1$, and
$dist(\Delta_0,\mu_\beta\mu_\alpha\Delta)=dist(\Delta_0,\mu_\alpha\mu_\beta\Delta)+1$.

Thus, we have $$h_{\Delta_0,\Delta}=h_{\Delta_0,\mu_\beta\Delta}h_{\mu_\beta\Delta,\Delta}^{sgn_\beta(C^{\Delta_0}_{\mu_\beta\Delta})}=h_{\Delta_0,\mu_\alpha\mu_\beta\Delta}h_{\mu_\alpha\mu_\beta\Delta,\Delta}^{sgn_\alpha(C^{\Delta_0}_{\mu_\alpha\mu_\beta\Delta})}h_{\mu_\beta\Delta,\Delta}^{sgn_\beta(C^{\Delta_0}_{\mu_\beta\Delta})},$$
and 
\begin{equation*}
\begin{array}{rcl}
h_{\Delta_0,\mu_\alpha\Delta}
&=&h_{\Delta_0,\mu_\beta\mu_\alpha\Delta}h_{\mu_\beta\mu_\alpha\Delta,\Delta}^{sgn_\beta(C^{\Delta_0}_{\mu_\beta\mu_\alpha\Delta})}=h_{\Delta_0,\mu_{\alpha'}\mu_\beta\mu_\alpha\Delta}h_{\mu_{\alpha'}\mu_\beta\mu_\alpha\Delta,\Delta}^{sgn_{\alpha'}(C^{\Delta_0}_{\mu_{\alpha'}\mu_\beta\mu_\alpha\Delta})}
h_{\mu_\beta\mu_\alpha\Delta,\Delta}^{sgn_\beta(C^{\Delta_0}_{\mu_\beta\mu_\alpha\Delta})}\\
&=&
h_{\Delta_0,\mu_\alpha\mu_\beta\Delta}h_{\mu_\alpha\mu_\beta\Delta,\Delta}^{sgn_{\alpha'}(C^{\Delta_0}_{\mu_\alpha\mu_\beta\Delta})}
h_{\mu_\beta\mu_\alpha\Delta,\Delta}^{sgn_\beta(C^{\Delta_0}_{\mu_\beta\mu_\alpha\Delta})}.
\end{array}
\end{equation*}

By Lemma \ref{lem:dual} and the pentagon relation, we have 
$$h_{\Delta_0,\mu_\alpha\Delta}=h_{\Delta_0,\Delta}h_{\Delta,\mu_\alpha\Delta}^{sgn_\alpha(C^{\Delta_0}_{\Delta})}=h_{\Delta_0,\Delta}h_{\Delta,\mu_\alpha\Delta}^{\varphi(\Delta_0;\Delta,\mu_\alpha\Delta)}.$$

For any triangulations $\Delta_0,\Delta$ and non-self-folded arc $\alpha\in \Delta$ such that $dist(\Delta,\Delta_0)=2$ and $dist(\mu_\alpha\Delta,\Delta_0)=3$, assume that $\Delta=\mu_{\beta_2}\mu_{\beta_1}\Delta_0$. 

If $\alpha\in \Delta_0$, then $h_{\Delta,\Delta_0}=h_{\Delta,\mu_{\beta_1}\Delta_0}h_{\mu_{\beta_1}\Delta_0,\Delta_0},$ $h_{\mu_\alpha\Delta,\Delta_0}=h_{\mu_\alpha\Delta,\Delta}h_{\Delta,\mu_{\beta_1}\Delta_0}h_{\mu_{\beta_1}\Delta_0,\Delta_0}$, and $sgn_{\alpha}(C_{\overline\Delta}^{\overline\Delta_0})=+$. Thus, $h_{\mu_\alpha\Delta,\Delta_0}=h_{\mu_\alpha\Delta,\Delta}^{\phi(\Delta_0;\Delta,\mu_\alpha\Delta)}h_{\Delta,\Delta_0}$.

If $\alpha\notin \Delta_0$, then $\alpha\in \mu_{\beta_1}\Delta_0\setminus \Delta_0$. Suppose $(\alpha,\beta_2)$ is not directed clockwise in $\Delta$. Then $h_{\Delta,\Delta_0}=h_{\Delta,\mu_{\beta_1}\Delta_0}h_{\mu_{\beta_1}\Delta_0,\Delta_0},$ $h_{\mu_\alpha\Delta,\Delta_0}=h_{\mu_\alpha\Delta,\Delta}h_{\Delta,\mu_{\beta_1}\Delta_0}h_{\mu_{\beta_1}\Delta_0,\Delta_0}$, and $sgn_{\alpha}(C_{\overline\Delta}^{\overline\Delta_0})=+$. Thus, $h_{\mu_\alpha\Delta,\Delta_0}=h_{\mu_\alpha\Delta,\Delta}^{\phi(\Delta_0;\Delta,\mu_\alpha\Delta)}h_{\Delta,\Delta_0}$.

Suppose $(\alpha,\beta_2)$ is directed clockwise in $\Delta$. Then $h_{\Delta,\Delta_0}=h_{\Delta,\mu_{\beta_1}\Delta_0}h_{\mu_{\beta_1}\Delta_0,\Delta_0},$ $h_{\mu_\alpha\Delta,\Delta_0}=h_{\mu_\alpha\Delta,\mu_{\beta_1}\Delta}h_{\mu_{\beta_1}\Delta_0,\Delta_0}^{-1}=h_{\mu_\alpha\Delta,\Delta}h_{\Delta,\mu_{\beta_1}\Delta}h_{\mu_{\beta_1}\Delta_0,\Delta_0}^{-1}$. Thus, $h_{\mu_\alpha\Delta,\Delta_0}=h_{\mu_\alpha\Delta,\Delta}^{\phi(\Delta_0;\Delta,\mu_\alpha\Delta)}h_{\Delta,\Delta_0}$, and $sgn_\alpha(C_{\Delta}^{\overline{\Delta_0}})=-$.

By horizontal compatibility, we have $h_{\mu_\alpha\Delta,\Delta_0}=h_{\Delta,\mu_\alpha\Delta}^{-1}h_{\Delta,\Delta_0}=h_{\mu_\alpha\Delta,\Delta}^{\phi(\Delta_0;\Delta,\mu_\alpha\Delta)}h_{\Delta,\Delta_0}$.

The proof is complete.
\endproof

\subsection{Proof of Theorem \ref{th:brgroup}}\label{sec:proofthm:fundegroup}

For any ordinary triangulation $\Delta$, denote $\widetilde {Br}_\Delta$ the group generated by $T_\gamma$, $\gamma$ runs over all non-pending internal edges (up to reversal) of $\Delta$, and subject to the relations in Theorem \ref{th:brgroup}.

Given a group $G$, denote $x^y:=yxy^{-1}$ for $x,y\in G$. We use the following notation:

$\bullet$ $Co(x,y)$ if $xy=yx$, 

$\bullet$ $Br_3(x,y)$ if $xyx=yxy$, 

$\bullet$ $Br_4(x,y)$ if $xyxy=yxyx$,

$\bullet$ $Cyl(x_1,x_2,\cdots, x_n)$ if $x_1x_2\cdots x_nx_1\cdots x_{n-2}=x_2x_3\cdots x_nx_1\cdots x_{n-1}$.

It is easy to verify that $Cyl(x_1,x_2,\cdots, x_n)$ holds if and only if $Cyl(x_2,x_3,\cdots, x_n,x_1)$ holds, provided that $Br_{3}(x_{i},x_{i+1})$ for $i=1,2,\cdots,n-1$ and $Br_3(x_1,x_n)$ hold, see \cite{Q}. 
 
Before proceeding, we first establish the following result. Throughout, we will repeatedly appeal to the equivalence given in Remark \ref{rem:equi}.

\begin{theorem}\label{thm:quiverbraidgroup}
    Let $\Delta,\Delta'$ be two ordinary triangulations of $\Sigma$. Assume that $\Delta'=\mu_{\alpha_0}(\Delta)$ and $\alpha_0'\in \Delta'\setminus \Delta$. There are mutually inverse canonical group isomorphisms 
    $$h_{\Delta',\Delta}: \widetilde{Br}_{\Delta}\cong \widetilde{Br}_{\Delta'}, \hspace{5mm} h^{-}_{\Delta',\Delta}: \widetilde{Br}_{\Delta'}\cong \widetilde{Br}_{\Delta}$$
    satisfying
    \begin{equation}\label{eq:h}
       h_{\Delta',\Delta}(T_{\beta})=\begin{cases}
    T_{\alpha'_0} & \text{if $\beta=\alpha_0$}\\
    T_{\alpha'_0} T_\beta {T^{-1}_{\alpha'_0}} & \text{if there is an arrow from $\alpha_0$ to $\beta$ in $Q_{\Delta}$}\\
    T_\beta & \text{otherwise}. 
    \end{cases}
    \end{equation}
\begin{equation}\label{eq:h^-}
       h^-_{\Delta,\Delta'}(T_{\beta})=\begin{cases}
    T_{\alpha_0} & \text{if $\beta=\alpha'_0$}\\
    T^{-1}_{\alpha_0} T_\beta {T_{\alpha_0}} & \text{if there is an arrow from $\alpha_0$ to $\beta$ in $Q_\Delta$}\\
    T_\beta & \text{otherwise}. 
    \end{cases}
    \end{equation}
\end{theorem}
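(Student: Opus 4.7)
The plan is to define $h_{\Delta',\Delta}$ and $h^-_{\Delta,\Delta'}$ on generators via the formulas (\ref{eq:h}) and (\ref{eq:h^-}), extend them to homomorphisms from the free groups on $\{T_\gamma\}_{\gamma\in\Delta}$ and $\{T_\gamma\}_{\gamma\in\Delta'}$, and then verify two things: first, that both maps descend to well-defined group homomorphisms on $\widetilde{Br}_\Delta$ and $\widetilde{Br}_{\Delta'}$ respectively; second, that they are mutually inverse. The mutual inverseness is a short generator-wise check based on the Fomin--Zelevinsky quiver mutation rule together with the involution $\mu_{\alpha'_0}\mu_{\alpha_0}=\mathrm{id}$: if $\beta\notin\{\alpha_0,\alpha'_0\}$ is not adjacent to $\alpha_0$ in $Q_\Delta$, then both maps fix $T_\beta$; if $\alpha_0\to\beta$ in $Q_\Delta$, then by the mutation rule $\alpha'_0\to\beta$ persists in $Q_{\Delta'}$, so the conjugation by $T_{\alpha'_0}$ in $h_{\Delta',\Delta}$ is undone by the conjugation by $T_{\alpha_0}^{-1}$ in $h^-_{\Delta,\Delta'}$; and when $\beta=\alpha_0$ the generators $T_{\alpha_0}$ and $T_{\alpha'_0}$ are simply exchanged.

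The substantial step is well-definedness of $h_{\Delta',\Delta}$, which is established by a relation-by-relation check leveraging the quiver-theoretic reformulation of R1--R9 given in Remark \ref{rem:equi}. For each relation $R$ of $\widetilde{Br}_\Delta$ with support $\{\alpha,\beta,\gamma,\delta,\ldots\}$, I will enumerate the possibilities for the position of the flipped arc $\alpha_0$ relative to this support (equal to one of the arcs, target of an arrow from $\alpha_0$ in $Q_\Delta$, source of one, or unrelated), apply (\ref{eq:h}), and identify the configuration of the images in $Q_{\Delta'}$ using the FZ mutation rule. The resulting word identity in the free group on $\{T_\gamma\}_{\gamma\in\Delta'}$ is then matched against the appropriate defining relation of $\widetilde{Br}_{\Delta'}$. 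The natural order in which to handle the cases is R1 and R2 first, then R8, then R3, then R4--R7 as a block, and finally R9. A key bookkeeping device is the explicit normal form $T_{\alpha'_0} T_\beta T_{\alpha'_0}^{-1}$ for the image of a generator $T_\beta$ with $\alpha_0\to\beta$, which allows each induced identity to be reduced to a short word identity verifiable from already-treated relations.

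The main obstacle is that mutation at a vertex inside the support of a relation can change the local quiver type, so the image of an R3 relation in $\widetilde{Br}_\Delta$ may well be an R4 or R5 relation in $\widetilde{Br}_{\Delta'}$; similarly, the classes R4--R7 can interconvert under mutation. This will be handled by treating R3--R7 as a linked block: after symbolically computing the image of each relation, the resulting word identity is classified by the new quiver configuration in $Q_{\Delta'}$, and the matching relation is invoked, using R1, R2 and R8 (already established) to reduce the identity to a short, combinatorially verifiable form. The puncture relation R9 is then handled via the microtagging procedure of Lemma \ref{lem:micro} to reduce to the case where no loop incident to the puncture $p$ is flipped, in which situation R9 is manifestly preserved; the general case follows by iterating this reduction together with the already-proved cases. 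With well-definedness in hand, the mutual-inverse computation from the first paragraph completes the proof.
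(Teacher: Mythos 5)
Your overall strategy for R1--R8 is the same as the paper's: extend the formulas to the free groups, observe that mutual inverseness is a short generator-wise check, and then verify relation by relation, with a case analysis on the position of $\alpha_0$ relative to the support of each relation, using the quiver dictionary of Remark \ref{rem:equi} and accepting that the relation types R1--R8 interconvert under mutation. One small point: there is no need to establish the relations of $\widetilde{Br}_{\Delta'}$ ``in order'' before using them --- they all hold by definition in the target group, so the ordering R1, R2, R8, R3, R4--R7 is purely organizational, and in fact the paper's verification of R3 freely invokes R8 in the target and vice versa.

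The genuine gap is in your treatment of R9. Lemma \ref{lem:micro} (microtagging) is a statement about the triangle groups $\TT_{\Delta^P}$ and gives you no leverage on the braid presentation; it cannot be used to ``reduce to the case where no loop incident to $p$ is flipped.'' The real difficulties with R9 are twofold. First, R9 is itself defined via a choice of mutation sequence $\mu$ eliminating the loops at $p$, so before anything else you must prove that the resulting relation is independent of that choice (the paper's Lemma \ref{lem:R9unique}, an induction on the number of loops at $p$ that splits into the cases where the third side of the relevant triangle is or is not a special loop). Second, when the flipped arc $\alpha_0$ is incident to $p$, or changes the number of arcs incident to $p$, the image of R9 is not ``manifestly'' an instance of R9 in $\Delta'$: one needs the explicit word-combinatorial identity of Lemma \ref{lem:R90} (showing $Cyl(x_1,\dots,x_n)$ is equivalent to the cyclic relation obtained by substituting $x_k=z^y$ under suitable braid and commutation hypotheses), together with the explicit computations of Lemmas \ref{lem:R9a1}, \ref{lem:R9a}, \ref{lem:R9b}, \ref{lem:R9c} covering the configurations where one, two, or all four quadrilateral sides meet $p$. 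Without these ingredients the R9 step of your plan would not go through as written.
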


\subsubsection{Proof of Theorem \ref{thm:quiverbraidgroup}}\label{sec:proofofquiverbraidgroup}

For $\alpha,\beta\in \Delta$, denote by $Q_\Delta(\alpha,\beta)$ the difference of the number of arrows from $\beta$ to $\alpha$ and the number of arrows from $\alpha$ to $\beta$ in $Q_\Delta$.

Equations (\ref{eq:h}) and (\ref{eq:h^-}) define a pair of mutually inverse isomorphisms between the free groups generated by the sets of arcs in $\Delta$ and $\Delta'$, with arcs identified up to their reversed directions.
So we only need to prove that the relations in Theorem \ref{th:brgroup} are preserved under $h_{\Delta',\Delta}$, i.e., $h_{\Delta',\Delta}(R)$ holds in $Br_{\Delta'}$.

We may assume that the arcs are non-self-folded, as we can replace self-folded arcs with loops around them otherwise.

\textbf{For $R1$}: if $\alpha_0=\alpha$ or $\beta$ then $h_{\Delta',\Delta}(Co(T_\alpha,T_\beta;\Delta))\Leftrightarrow Co(T_{\alpha'_0},T_\beta;\Delta'): R1$ or $Co(T_{\alpha},T_{\alpha'_0};\Delta'): R1$.

We then assume that $\alpha_0\neq \alpha,\beta$.

(Case 1) $Q_\Delta(\alpha_0,\alpha),Q_\Delta(\alpha_0,\beta)\geq 0$. Then $h_{\Delta',\Delta}(Co(T_\alpha,T_\beta;\Delta))\Leftrightarrow Co(T_\alpha,T_\beta;\Delta'): R1$. 

(Case 2)  $Q_\Delta(\alpha_0,\alpha)<Q_\Delta(\alpha_0,\beta)=0$ or $Q_\Delta(\alpha_0,\beta)<Q_\Delta(\alpha_0,\alpha)=0$. We may assume that $Q_\Delta(\alpha_0,\alpha)<Q_\Delta(\alpha_0,\beta)=0$. Then $h_{\Delta',\Delta}(Co(T_\alpha,T_\beta;\Delta))$ follows by $Co(T_{\alpha},T_\beta;\Delta')$ and $Co(T_{\alpha'_0},T_\beta;\Delta')$. 

(Case 3) $Q_\Delta(\alpha_0,\alpha)<0<Q_\Delta(\alpha_0,\beta)$ or $Q_\Delta(\alpha_0,\beta)<0<Q_\Delta(\alpha_0,\alpha)$. We may assume that $Q_\Delta(\alpha_0,\alpha)<0<Q_\Delta(\alpha_0,\beta)$. Then there is a 3-cycle between $\alpha'_0,\beta,\alpha$ in $Q_{\Delta'}$.

(Case 3.1) $w(\alpha_0)\neq 1$. As $Q_\Delta(\alpha,\beta)=0$, we have $w(\alpha)=w(\beta)=1$ and there is a double arrow from $\beta$ to $\alpha$ in $Q_{\Delta'}$. Then $h_{\Delta',\Delta}(Co(\alpha,\beta;\Delta))\Leftrightarrow Co(T_{\alpha}^{T_{\alpha'_0}},T_\beta;\Delta'): R4$.

(Case 3.2) $w(\alpha_0)=1$. Then there are no double arrows between $\alpha'_0,\alpha$, and $\beta$. Thus $h_{\Delta',\Delta}(Co(\alpha,\beta;\Delta))\Leftrightarrow Co(T_{\alpha}^{T_{\alpha'_0}},T_\beta;\Delta'): R3$. 

\medskip

\textbf{For $R2$}: If $\alpha_0=\alpha$ or $\beta$ then $h_{\Delta',\Delta}(Br_*(T_\alpha,T_\beta;\Delta))\Leftrightarrow Br_*(T_{\alpha_0'},T_\beta;\Delta'): R2$ or $Br_*(T_\alpha,T_{\alpha_0'};\Delta'): R2$ for $*\in \{3,4\}$.

We now consider the case $\alpha_0\neq \alpha,\beta$.

(Case 1) $Q_\Delta(\alpha_0,\alpha),Q_\Delta(\alpha_0,\beta)\geq 0$. Then $h_{\Delta',\Delta}(Br_*(\alpha,\beta;\Delta))\Leftrightarrow Br_*(T_{\alpha},T_\beta;\Delta'): R2$.

(Case 2)  $Q_\Delta(\alpha_0,\alpha)<Q_\Delta(\alpha_0,\beta)=0$ or $Q_\Delta(\alpha_0,\beta)<Q_\Delta(\alpha_0,\alpha)=0$. Then $h_{\Delta',\Delta}(Br_*(\alpha,\beta;\Delta))$ follows by $Br_*(T_{\alpha},T_\beta;\Delta')$ and $Co(T_{\alpha'_0},T_\beta;\Delta')$ or $Co(T_{\alpha'_0},T_\alpha;\Delta')$. 

(Case 3) $Q_\Delta(\alpha_0,\alpha)<0<Q_\Delta(\alpha_0,\beta)$.

We first assume that there is no double arrow between $\alpha_0,\beta$, and $\alpha$ in $Q_\Delta$. If $w(\alpha_0)=1$, then there is no arrow between $\alpha,\beta$ in $Q_{\Delta'}$. Thus, $h_{\Delta',\Delta}(Br_*(\alpha,\beta;\Delta))$ follows by Lemma \ref{lem:r21}. If $w(\alpha_0)\neq 1$, then there is a three cycle between $\alpha0',\beta$ and $\alpha$ but there is no double arrow between them. Then $h_{\Delta',\Delta}(Br_*(\alpha,\beta;\Delta))$ follows by Lemma \ref{lem:r23}.

We then assume that there is a double arrow between $\alpha_0,\beta$, and $\alpha$ in $Q_\Delta$. Then there is $3$-cycle between $\alpha'_0,\beta,\alpha$ and a double arrow between $\alpha'_0,\beta$ and $\alpha$ but no double arrow from $\beta$ to $\alpha$ in $Q_{\Delta'}$. Thus, $h_{\Delta',\Delta}(Br_*(\alpha,\beta;\Delta))$ follows by Lemma \ref{lem:R231}.

(Case 4) $Q_\Delta(\alpha_0,\beta)<0<Q_\Delta(\alpha_0,\alpha)$. Then $w(\alpha)=w(\beta)=1$ and there is a $3$-cycle between $\alpha_0',\alpha,\beta$ and a double arrow from $\alpha$ to $\beta$ in $Q_{\Delta'}$. We thus have $w(\alpha_0)=1$ (otherwise, there is no arrow between $\alpha$ and $\beta$ in $Q_\Delta$). Therefore, $h_{\Delta',\Delta}(Br_3(\alpha,\beta;\Delta))\Leftrightarrow Br_3(T_\beta^{T_{\alpha'_0}},T_\beta;\Delta'): R4$.

\medskip

\textbf{For $R3$}: If $\alpha_0=\alpha$, then $h_{\Delta',\Delta}(Co(T_\gamma^{T_\alpha},T_\beta;\Delta))\Leftrightarrow Co(T_\gamma,T_\beta;\Delta')$. 

If $\alpha_0=\beta$, then
$h_{\Delta',\Delta}(Co(T_\gamma^{T_\alpha},T_\beta;\Delta))$ follows by $Br_3(T_\alpha,T_{\alpha'_0};\Delta')$ and $Co(T_\alpha,T_\gamma;\Delta')$ in case $w(\beta)=1$ and $Br_4(T_\alpha,T_{\alpha'_0};\Delta')$ and $Co(T_{\alpha'_0}^{T_{\alpha}},T_\gamma;\Delta')$ in case $w(\beta)\neq 1$. 

If $\alpha_0=\gamma$, then $h_{\Delta',\Delta}(Co(T_\gamma^{T_\alpha},T_\beta;\Delta))$ follows by $Br_3(T_\alpha,T_{\alpha_0'};\Delta')$ and $Co(T_\alpha,T_\beta;\Delta')$ in case $w(\gamma)=1$ and $Br_4(T_\alpha,T_{\alpha'_0};\Delta')$ and $Co(T_{\alpha'_0},T_\beta^{T_{\alpha}};\Delta')$ in case $w(\beta)\neq 1$.

As there is a $3$-cycle between $\alpha,\beta,\gamma$ in $Q_\Delta$ but no double arrow between them, we have $\alpha,\beta,\gamma$ form a triangle in $\Delta$ or $\{\alpha,\beta,\gamma\}$ is a complete counter-clockwise list of the arcs incident to some puncture. If the latter case occurs, then $w(\alpha)=w(\beta)=w(\gamma)=1$ and $Co(T_\gamma^{T_\alpha},T_\beta;\Delta)\xLeftrightarrow{Br_3(T_\alpha,T_\gamma)} Cyl(T_{\alpha},T_\gamma,T_\beta;\Delta)$. We defer the proof of this case to the proof for the relation $R9$.

We now consider the case that $\alpha_0\neq \alpha,\beta,\gamma$ and $\alpha,\beta,\gamma$ form a triangle in $\Delta$.

(Case 1) $Q_\Delta(\alpha_0,\alpha),Q_\Delta(\alpha_0,\beta),Q_\Delta(\alpha_0,\gamma)\geq 0$. Then $$h_{\Delta',\Delta}(Co(T_\gamma^{T_\alpha},T_\beta;\Delta))\Leftrightarrow Co(T_\gamma^{T_\alpha},T_\beta;\Delta'): R3.$$

(Case 2) $Q_\Delta(\alpha_0,\alpha_1)<0=Q_\Delta(\alpha_0,\alpha_2)=Q_\Delta(\alpha_0,\alpha_3)=0$ for $\{\alpha_1,\alpha_2,\alpha_3\}=\{\alpha,\beta,\gamma\}$. Then 
$h_{\Delta',\Delta}(Co(T_\gamma^{T_\alpha},T_\beta;\Delta))$ follows by 
$Co((T_\gamma)^{T_\alpha},T_\beta;\Delta')$, $Co(T_{\alpha_0'},T_{\alpha_2};\Delta')$ and $Co(T_{\alpha_0'},T_{\alpha_3};\Delta')$.

As there is no double arrow between $\alpha,\beta$, and $\gamma$, we have any two of $\{\alpha,\beta,\gamma\}$ cannot be two sides of two different triangles in $\Delta$. Therefore, $\alpha_0$ connects at most two of $\alpha,\beta,\gamma$ in $Q_\Delta$. We have the remaining cases to be considered:

(Case 3) $Q_\Delta(\alpha_0,\alpha), Q_\Delta(\alpha_0,\beta)\neq 0=Q_\Delta(\alpha_0,\gamma)$. 

(Case 3.1) $Q_\Delta(\alpha_0,\alpha), Q_\Delta(\alpha_0,\beta)<0$. Then $h_{\Delta',\Delta}(Co(T_\gamma^{T_\alpha},T_\beta;\Delta))$ follows by 
$Co(T_\gamma^{T_\alpha},T_\beta;\Delta')$ and $Co(T_{\alpha_0'},T_{\gamma};\Delta')$.

(Case 3.2) $Q_\Delta(\alpha_0,\alpha)<0 <Q_\Delta(\alpha_0,\beta)$. Then $Q_\Delta(\alpha_0,\alpha)=-1, Q_\Delta(\alpha_0,\beta)=1$ and $w(\alpha_0)=w(\alpha)=w(\beta)=1$. Thus, the subquiver of $Q_{\Delta'}$ formed by $\alpha'_0,\beta,\gamma,\alpha$ is isomorphic to the third quiver in Figure \ref{Fig:subquiver2}. Then 
$$
\begin{array}{rcl}
h_{\Delta',\Delta}(Co(T_\gamma^{T_\alpha},T_\beta;\Delta))\Leftrightarrow Co(T_\gamma^{T_{\alpha_0'}T_{\alpha}},T_\beta;\Delta')
&\xLeftrightarrow{Br_3({T_\beta,T_{\alpha'_0}};\Delta')}& Co(T_\gamma^{T_{\alpha}},{T_{\alpha_0'}^{T_\beta}};\Delta')  \\
&\Leftrightarrow&
Co(T_\gamma,{T_{\alpha_0'}^{{T_{\alpha}^{-1}}T_\beta}};\Delta')  \\
&\xLeftrightarrow[Co({T_\beta,T_{\alpha}};\Delta')]{Br_3({T_\alpha,T_{\alpha'_0}};\Delta')}&
Co(T_\gamma,T_{\alpha}^{T_\beta T_{\alpha_0'}};\Delta')  \\
&\Leftrightarrow& 
Co(T_\gamma^{T_\beta^{-1}},T_{\alpha}^{T_{\alpha_0'}};\Delta'): R8.
\end{array}
 $$
 
(Case 3.3) $Q_\Delta(\alpha_0,\beta)<0 <Q_\Delta(\alpha_0,\alpha)$. Then $Q_\Delta(\alpha_0,\alpha)=1, Q_\Delta(\alpha_0,\beta)=-1$ and $w(\alpha_0)=w(\alpha)=w(\beta)=1$. Thus, the subquiver of $Q_{\Delta'}$ formed by $\alpha'_0,\alpha,\beta,\gamma$ is isomorphic to the third quiver in Figure \ref{Fig:subquiver1}. Then $h_{\Delta',\Delta}(Co(T_\gamma^{T_\alpha},T_\beta;\Delta))$ follows by $Co(T_\gamma^{T_\beta T_{\alpha}},T_{\alpha'_0};\Delta'): R5$ and $Br_3(T_{\alpha_0'},T_\beta;\Delta')$. 

(Case 4) $Q_\Delta(\alpha_0,\alpha), Q_\Delta(\alpha_0,\gamma)\neq 0=Q_\Delta(\alpha_0,\beta)$. 

(Case 4.1) $Q_\Delta(\alpha_0,\alpha), Q_\Delta(\alpha_0,\gamma)<0$. Then $h_{\Delta',\Delta}(Co(T_\gamma^{T_\alpha},T_\beta;\Delta))$ follows by 
$Co((T_\gamma)^{T_\alpha},T_\beta;\Delta')$, $Co(T_{\alpha_0'},T_{\beta};\Delta')$.

(Case 4.2) $Q_\Delta(\alpha_0,\alpha)<0 <Q_\Delta(\alpha_0,\gamma)$. Then $Q_\Delta(\alpha_0,\alpha)=-1, Q_\Delta(\alpha_0,\gamma)=1$ and $w(\alpha_0)=w(\alpha)=w(\gamma)=1$. Thus, the subquiver of $Q_{\Delta'}$ formed by $\alpha'_0,\gamma,\alpha,\beta$ is isomorphic to the third quiver in Figure \ref{Fig:subquiver1}. Then $h_{\Delta',\Delta}(Co(T_\gamma^{T_\alpha},T_\beta;\Delta))$ follows by $Br_3(T_\alpha^{T_{\alpha'_0}},T_\gamma;\Delta'): R4$, $Co(T_\beta^{T_\alpha T_\gamma}, T_{\alpha'_0};\Delta'): R5$ and $Br_3(T_\alpha, T_{\alpha'_0};\Delta')$. 

(Case 4.3) $Q_\Delta(\alpha_0,\gamma)<0 <Q_\Delta(\alpha_0,\alpha)$. Then $Q_\Delta(\alpha_0,\alpha)=1, Q_\Delta(\alpha_0,\gamma)=-1$ and $w(\alpha_0)=w(\alpha)=w(\gamma)=1$. Thus, the subquiver of $Q_{\Delta'}$ formed by $\alpha'_0,\alpha,\beta,\gamma$ is isomorphic to the third quiver in Figure \ref{Fig:subquiver2}. Then $$
\begin{array}{rcl}
h_{\Delta',\Delta}(Co(T_\gamma^{T_\alpha},T_\beta;\Delta)) \Leftrightarrow Co(T_\gamma^{T_\alpha T_{\alpha'_0}}, T_\beta;\Delta') \Leftrightarrow Co(T_\gamma^{T_{\alpha'_0}}, T_\beta^{T_\alpha ^{-1}};\Delta'): R8.
\end{array}
$$

(Case 5) $Q_\Delta(\alpha_0,\beta), Q_\Delta(\alpha_0,\gamma)\neq 0=Q_\Delta(\alpha_0,\alpha)$.

(Case 5.1) $Q_\Delta(\alpha_0,\beta), Q_\Delta(\alpha_0,\gamma)<0$. Then $h_{\Delta',\Delta}(Co(T_\gamma^{T_\alpha},T_\beta;\Delta))$ follows by 
$Co(T_\gamma^{T_\alpha},T_\beta;\Delta')$, $Co(T_{\alpha_0'},T_{\alpha};\Delta')$.

(Case 5.2) $Q_\Delta(\alpha_0,\beta)<0 <Q_\Delta(\alpha_0,\gamma)$. Then $Q_\Delta(\alpha_0,\beta)=-1, Q_\Delta(\alpha_0,\gamma)=1$ and $w(\alpha_0)=w(\beta)=w(\gamma)=1$. Thus, the subquiver of $Q_{\Delta'}$ formed by $\alpha'_0,\gamma,\alpha,\beta$ is isomorphic to the third quiver in Figure \ref{Fig:subquiver2}. Then 
$$
\begin{array}{rcl}
h_{\Delta',\Delta}(Co(T_\gamma^{T_\alpha},T_\beta;\Delta)) \Leftrightarrow Co(T_\gamma^{T_\alpha}, T_\beta^{T_{\alpha'_0}};\Delta') \xLeftrightarrow{Br_3(T_\alpha,T_\gamma;\Delta')} Co(T_\alpha^{T_{\gamma}^{-1}}, T_\beta^{T_{\alpha'_0}};\Delta'): R8.
\end{array}
$$

(Case 5.3) $Q_\Delta(\alpha_0,\gamma)<0 <Q_\Delta(\alpha_0,\beta)$. Then $Q_\Delta(\alpha_0,\beta)=1, Q_\Delta(\alpha_0,\gamma)=-1$ and $w(\alpha_0)=w(\beta)=w(\gamma)=1$. Thus, the subquiver of $Q_{\Delta'}$ formed by $\alpha,\beta,\gamma,\alpha'_0$ is isomorphic to the third quiver in Figure \ref{Fig:subquiver1}. Then $h_{\Delta',\Delta}(Co(T_\gamma^{T_\alpha},T_\beta;\Delta))$ 
follows by $Co(T_\alpha^{T_\gamma T_{\beta}},T_{\alpha'_0};\Delta'): R5$ and $Br_3(T_{\alpha},T_\beta;\Delta')$. 

\medskip

\textbf{For $R4$}: If $\alpha_0=\alpha$, then 
$$h_{\Delta',\Delta}(Br_3(T_\gamma^{T_\alpha},T_\beta;\Delta))\Leftrightarrow Br_3(T_\gamma,T_\beta;\Delta'): R2$$ and $$h_{\Delta',\Delta}(Co(T_\gamma^{T_\alpha},T_\beta;\Delta))\Leftrightarrow Co(T_\gamma,T_\beta;\Delta'): R1.$$ 

If $\alpha_0=\beta$, then
$h_{\Delta',\Delta}(R4)$ follows by $Br_3(T_\alpha,T_{\alpha'_0};\Delta')$ and $Br_3(T_\alpha,T_\gamma;\Delta')$ in the case $w(\alpha)=1$ and $Br_4(T_\alpha,T_{\alpha'_0};\Delta')$ and $Co(T_{\alpha'_0}^{T_{\alpha}},T_\gamma;\Delta')$ in the case $w(\alpha)\neq 1$. 

If $\alpha_0=\gamma$, then $h_{\Delta',\Delta}(R4)$ follows by $Br_3(T_\alpha,T_{\alpha_0'};\Delta')$ and $Br_3(T_\alpha,T_\beta;\Delta')$ in the case $w(\alpha)=1$ and $Br_4(T_\alpha,T_{\alpha'_0};\Delta')$ and $Co(T_{\alpha'_0},T_\beta^{T_{\alpha}};\Delta')$ in the case $w(\alpha)\neq 1$.

We now consider the case $\alpha_0\neq \alpha,\beta,\gamma$. As there is a double arrow from $\beta$ to $\gamma$, we have $Q_{\Delta}(\alpha_0,\beta)\leq 0\leq Q_{\Delta}(\alpha_0,\gamma)$ and $Q_{\Delta}(\alpha_0,\beta)>0$ if and only if $Q_{\Delta}(\alpha_0,\gamma)<0$.

(Case 1) $Q_\Delta(\alpha_0,\alpha),Q_\Delta(\alpha_0,\beta),Q_\Delta(\alpha_0,\gamma)\geq 0$. Then 
$$h_{\Delta',\Delta}(Br_3(T_\gamma^{T_\alpha},T_\beta;\Delta))\Leftrightarrow Br_3(T_\gamma^{T_\alpha},T_\beta;\Delta'): R4$$ and 
$$h_{\Delta',\Delta}(Co(T_\gamma^{T_\alpha},T_\beta;\Delta))\Leftrightarrow Co(T_\gamma^{T_\alpha},T_\beta;\Delta'): R4.$$

(Case 2) $Q_\Delta(\alpha_0,\alpha)<0$, $Q_\Delta(\alpha_0,\beta)=Q_\Delta(\alpha_0,\gamma)=0$. Then 
$h_{\Delta',\Delta}(R4)$ follows by 
$Br_3(T_\gamma^{T_\alpha},T_\beta;\Delta')$, $Co(T_{\alpha_0'},T_{\beta};\Delta')$ and $Co(T_{\alpha_0'},T_{\gamma};\Delta')$ in the case $w(\alpha)=1$ and $Co(T_\gamma^{T_\alpha},T_\beta;\Delta')$, $Co(T_{\alpha_0'},T_{\beta};\Delta')$ and $Co(T_{\alpha_0'},T_{\gamma};\Delta')$ in the case $w(\alpha)\neq 1$.

(Case 3) $Q_\Delta(\alpha_0,\beta)< 0<Q_\Delta(\alpha_0,\gamma)$. 

(Case 3.1) $Q_\Delta(\alpha_0,\alpha)=0$. If $w(\alpha_0)=w(\alpha)=1$, then 
$$
\begin{array}{rcl}
 h_{\Delta',\Delta}(Br_3(T_\gamma^{T_\alpha},T_\beta;\Delta)) &\Leftrightarrow&  Br_3(T_\gamma^{T_\alpha},T_\beta^{T_{\alpha'_0}};\Delta') \xLeftrightarrow{Br_3(T_{\alpha_0'},T_\beta;\Delta')}  Br_3(T_\gamma^{T_\beta T_\alpha},T_{\alpha'_0};\Delta')    \\
     &\xLeftrightarrow{Co(T_{\gamma}^{T_\alpha},T_\beta;\Delta')}  &  Br_3(T_\gamma^{T_\alpha},T_{\alpha'_0};\Delta') \xLeftrightarrow{Co({T_\alpha},T_{\alpha'_0};\Delta')} Br_3(T_\gamma,T_{\alpha'_0};\Delta'): R2.
\end{array}
$$
If $w(\alpha_0)=1<w(\alpha)$, then 
$$
\begin{array}{rcl}
h_{\Delta',\Delta}(Co(T_\gamma^{T_\alpha},T_\beta;\Delta)) &\Leftrightarrow&  Co(T_\gamma^{T_\alpha},T_\beta^{T_{\alpha'_0}};\Delta') \xLeftrightarrow{Br_3(T_{\alpha_0'},T_\beta;\Delta')}  Co(T_\gamma^{T_\beta T_\alpha},T_{\alpha'_0};\Delta')
\end{array}
$$
follows by Lemma \ref{lem:R9}.

If $w(\alpha_0)\neq 1$, then $h_{\Delta',\Delta}(R4)$ follows by Lemma \ref{lem:R41}.

(Case 3.2) $Q_\Delta(\alpha_0,\alpha)<0$. Then $w(\alpha_0)=w(\alpha)=1$ and the subquiver of $Q_{\Delta'}$ formed by $\beta,\gamma,\alpha,\alpha'_0$ is isomorphic to the first quiver in Figure \ref{Fig:subquiver2}. Thus,
$$
\begin{array}{rcl}
h_{\Delta',\Delta}(Br_3(T_\gamma^{T_\alpha},T_\beta;\Delta)) &\Leftrightarrow&  Br(T_\gamma^{T_{\alpha'_0}T_\alpha T_{\alpha'_0}^{-1}},T_\beta^{T_{\alpha'_0}};\Delta') \xLeftrightarrow{Br_3(T_{\alpha},T_\beta;\Delta')}  Br_3(T_\gamma^{T_{\alpha'_0}^{-1}},T_{\alpha}^{T_\beta};\Delta')\\
&\Leftrightarrow& 
Br_3(T_\gamma,T_{\alpha}^{T_{\alpha'_0} T_\beta};\Delta'): R6.
\end{array}
$$

(Case 3.3) $Q_\Delta(\alpha_0,\alpha)>0$. This case is similar to the Case 3.2.

\medskip

\textbf{For $R5$}: If $\alpha_0=\alpha$, then   
      $$
\begin{array}{rcl}
 h_{\Delta',\Delta}( Co(T_{\delta}^{T_\gamma T_\beta}, T_{\alpha};\Delta))=Co(T_\delta^{T_\gamma T_{\alpha'_0}T_\beta T^{-1}_{\alpha'_0}}, T_{\alpha'_0};\Delta') &\xLeftrightarrow {Co(T_{\delta},T_{\alpha'_0};\Delta')}&  Co(T_\delta^{T_{\alpha'_0}T_\beta},T_{\gamma}^{-1}T_{\alpha_0'}T_{\gamma};\Delta')    \\
     &\xLeftrightarrow{Br_3(T_{\gamma},T_{\alpha'_0};\Delta')}  &  Co(T_\delta^{T_{\alpha'_0}T_\beta},T_{\gamma}^{T_{\alpha_0'}};\Delta') \\
     &\Leftrightarrow& Co(T_\delta^{T_\beta},T_{\gamma};\Delta'): R3.
\end{array}
$$
If $\alpha_0=\beta$ or $\gamma$, then
$h_{\Delta',\Delta}( Co(T_{\delta}^{T_\gamma T_\beta}, T_{\alpha};\Delta))$ follows by $Co(T_\delta^{T_\beta T_\gamma}, T_{\alpha'_0};\Delta'): R5$.
If $\alpha_0=\delta$ with $w(\delta)=1$, then 
      $$
\begin{array}{rcl}
 h_{\Delta',\Delta}( Co(T_{\delta}^{T_\gamma T_\beta}, T_{\alpha};\Delta))=Co(T_{\alpha'_0}^{T_\gamma T_{\alpha'_0}T_\beta T^{-1}_{\alpha_0'}}, T_{\alpha};\Delta') &\xLeftrightarrow{Br_3(T_\beta,T_{\alpha'_0};\Delta')} &  Co(T_\beta,T_{\gamma}^{-1}T_{\alpha}T_{\gamma};\Delta'): R3. 
\end{array}$$

If $\alpha_0=\delta$ with $w(\delta)\neq 1$, then the subquiver of $Q_{\Delta'}$ formed by $\alpha,\beta,\alpha'_0,\gamma$ is isomorphic to the third quiver in Figure \ref{Fig:subquiver2}. Then
      $$
\begin{array}{rcl}
 h_{\Delta',\Delta}( Co(T_{\delta}^{T_\gamma T_\beta}, T_{\alpha};\Delta))=Co(T_{\alpha'_0}^{T_\gamma T_{\alpha'_0}T_\beta T^{-1}_{\alpha'_0}}, T_{\alpha};\Delta') &\xLeftrightarrow {Br_4(T_{\beta},T_{\alpha'_0};\Delta')}&  Co(T_{\alpha'_0}^{T_\gamma T^{-1}_\beta }, T_{\alpha};\Delta')    \\
&\Leftrightarrow  &  Co(T_{\alpha'_0}^{T_\beta^{-1}},  T_\gamma^{-1}T_{\alpha} T_\gamma ;\Delta')\\
     &\xLeftrightarrow{Br_3(T_\alpha,T_\gamma;\Delta')}&  Co(T_{\alpha'_0}^{T_\beta^{-1}},  T_\gamma^{ T_\alpha};\Delta'): R8.
\end{array}
$$

We now consider the case that $\alpha_0\neq \alpha,\beta,\gamma,\delta$. Then we have $Q_{\Delta}(\alpha_0,\beta)=Q_{\Delta}(\alpha_0,\gamma)=0$.

(Case 1) $Q_\Delta(\alpha_0,\alpha),Q_\Delta(\alpha_0,\delta)\geq 0$. Then 
$$h_{\Delta',\Delta}( Co(T_{\delta}^{T_\gamma T_\beta}, T_{\alpha};\Delta))\Leftrightarrow Co(T_\delta^{T_\gamma T_\beta},T_\alpha;\Delta'): R5.$$

(Case 2) $Q_\Delta(\alpha_0,\delta)=0> Q_\Delta(\alpha_0,\alpha)$ or $Q_\Delta(\alpha_0,\alpha)=0> Q_\Delta(\alpha_0,\delta)$. Then 
$$h_{\Delta',\Delta}( Co(T_{\delta}^{T_\gamma T_\beta}, T_{\alpha};\Delta))\Leftrightarrow Co(T_{\alpha'_0}^{T_\gamma T_{\alpha'_0}T_\beta T^{-1}_{\alpha_0'}}, T_{\alpha};\Delta')$$ follows by $Co(T_\delta^{T_\gamma T_\beta},T_\alpha;\Delta')$, $Co(T_\beta;T_{\alpha_0'};\Delta')$, $Co(T_\gamma;T_{\alpha_0'};\Delta')$, and $Co(T_\delta;T_{\alpha_0'};\Delta')$ or $Co(T_\alpha;T_{\alpha_0'};\Delta')$.

(Case 3) $0> Q_\Delta(\alpha_0,\alpha),Q_\Delta(\alpha_0,\delta)$. Then $ h_{\Delta',\Delta}( Co(T_{\delta}^{T_\gamma T_\beta}, T_{\alpha};\Delta))$ follows by $Co(T_\delta^{T_\gamma T_\beta},T_\alpha;\Delta')$, $Co(T_\beta;T_{\alpha_0'};\Delta')$, and $Co(T_\gamma;T_{\alpha_0'};\Delta')$.

(Case 4) $Q_\Delta(\alpha_0,\alpha)>0> Q_\Delta(\alpha_0,\delta)$ or $Q_\Delta(\alpha_0,\delta)>0> Q_\Delta(\alpha_0,\alpha)$. Then we have $w(\delta)=w(\alpha_0)=1$. 
As 
$$
\begin{array}{rcl}
 Co(T_{\delta}^{T_\gamma T_\beta}, T_{\alpha};\Delta) \xLeftrightarrow {Br_3(T_{\beta},T_{\delta};\Delta):R2}  Co(T_{\beta}^{T^{-1}_\delta}, T_{\alpha}^{T_{\gamma}^{-1}};\Delta)  
\xLeftrightarrow{Br_3(T_{\alpha},T_{\gamma};\Delta):R2}    Co(T_\beta, T_{\gamma}^{T_\delta T_\alpha} ;\Delta), 
\end{array}
$$ it suffices to prove that $h_{\Delta',\Delta}(Co(T_\beta, T_{\gamma}^{T_\delta T_\alpha} ;\Delta))$ holds.
We may assume that $Q_\Delta(\alpha_0,\alpha)>0> Q_\Delta(\alpha_0,\delta)$, as $Co(T_\beta, T_{\gamma}^{T_\delta T_\alpha} ;\Delta)$ does not depend on the order of $T_\alpha$ and $T_\delta$. Thus, the subquiver of $Q_{\Delta'}$ formed by 
$\alpha,\beta,\gamma,\delta,\alpha'_0$ isomorphic to the second quiver in Figure \ref{Fig:subquiver2}. Then, 
$$
\begin{array}{rcl}
 h_{\Delta',\Delta}(Co(T_\beta, T_{\gamma}^{T_\delta T_\alpha} ;\Delta))\Leftrightarrow Co(T_{\beta}, T_{\gamma}^{T_{\alpha'_0}T_\delta T^{-1}_{\alpha'_0}T_\alpha};\Delta') &\xLeftrightarrow {Co(T_{\delta}^{T_{\alpha'_0}},T_{\alpha};\Delta')}&  Co(T_{\beta}, T_{\gamma}^{T_\alpha T_{\alpha'_0}T_\delta T^{-1}_{\alpha'_0}};\Delta')    \\
&\xLeftrightarrow{Co(T_{\alpha'_0},T_\gamma;\Delta')}  &  Co(T_\beta, T_{\gamma}^{T_\alpha T_{\alpha'_0} T_\delta} ;\Delta'): R7. 
\end{array}
$$

\medskip

\textbf{For $R6$:} We have $w(\alpha)=w(\beta)=w(\gamma)=w(\delta)=1$. 

If $\alpha_0=\alpha$, then 
$$
\begin{array}{rcl}
  h_{\Delta',\Delta}(Br_3(T_{\gamma}^{T_\alpha T_\delta },T_\beta ;\Delta)) &\Leftrightarrow& Br_3(T_{\gamma}^{(T_{\alpha'_0}T^{T_{\alpha'_0}}_\delta) }, T_\beta^{T_{\alpha'_0}};\Delta')\\
  &\Leftrightarrow &  Br_3(T_{\gamma}^{(T^{T_{\alpha'_0}}_\delta) }, T_\beta;\Delta')    \\
&\xLeftrightarrow[\text{Lemma \ref{lem:basic}}]{Br_3(T_\delta^{T_{\alpha'_0}},T_\gamma;\Delta')}  &  Br_3({T^{T_{\alpha'_0}}_\delta}, T_\beta^{T_{\gamma}};\Delta') \\
&\xLeftrightarrow{Br_3(T_\beta,T_\gamma;\Delta')} &  
Br_3({T^{T_\beta T_{\alpha'_0}}_\delta}, {T_{\gamma}};\Delta'): R6.
\end{array}
$$
$$
\begin{array}{rcl}
  h_{\Delta',\Delta}(Br_3(T_{\gamma}^{T_\delta T_\alpha},T_\beta ;\Delta))
  \Leftrightarrow Br_3(T_{\gamma}^{T_{\alpha'_0}T_\delta}, T_\beta^{T_{\alpha'_0}};\Delta') &\Leftrightarrow &  Br_3(T_{\gamma}^{T_\delta}, T_\beta;\Delta'): R4.
\end{array}
$$

The case that $\alpha_0=\delta$ is dual to the case that $\alpha_0=\alpha$, so we omit it.

If $\alpha_0=\beta$, then 
$$
\begin{array}{rcl}
  h_{\Delta',\Delta}(Br_3(T_{\gamma}^{T_\alpha T_\delta },T_\beta ;\Delta))
  &\Leftrightarrow& Br_3(T_{\gamma}^{T_{\alpha}T_\delta T_{\alpha'_0}}, T_{\alpha'_0};\Delta')\\
 &\Leftrightarrow &  Br_3(T_{\gamma}^{T_\delta T_{\alpha'_0}}, T_{\alpha}^{-1}T_{\alpha'_0} T_{\alpha};\Delta')    \\
&\xLeftrightarrow{Br_3(T_\alpha,T_{\alpha'_0};\Delta')}  &  Br_3(T_{\gamma}^{T_\delta T_{\alpha'_0}}, T_{\alpha}^{T_{\alpha'_0}};\Delta') \\
&\Leftrightarrow &  
Br_3(T_{\gamma}^{T_{\alpha'_0}^{-1}T_\delta T_{\alpha'_0}}, T_{\alpha};\Delta')\\
&\xLeftrightarrow[\text{Lemma } \ref{lem:basic}]{Br_3(T_\gamma,T_{\alpha'_0}^{-1}T_\delta T_{\alpha'_0};\Delta')}  & 
Br_3(T_{\alpha'_0}^{-1}T_\delta T_{\alpha'_0}, T_{\alpha}^{T_\gamma};\Delta')
\\
&\xLeftrightarrow[Br_3(T_{\alpha'_0},T_\delta;\Delta')]{Br_3(T_\alpha,T_\gamma;\Delta')}  & 
Br_3(T_{\alpha'_0}^{T_\alpha T_\delta}, T_\gamma;\Delta'): R6.
\end{array}
$$
We can similarly prove that $ h_{\Delta',\Delta}(Br_3(T_{\gamma}^{T_\delta T_\alpha },T_\beta ;\Delta))$ holds in $Br_{\Delta'}$.

The case that $\alpha_0=\gamma$ is dual to the case that $\alpha_0=\beta$, so we omit it.

We now consider the case that $\alpha_0\neq \alpha,\beta,\gamma,\delta$. Then we have $Q_{\Delta}(\alpha_0,\beta)=Q_{\Delta}(\alpha_0,\gamma)=0$ and $Q_{\Delta}(\alpha_0,\delta)\geq 0 \geq Q_{\Delta}(\alpha_0,\alpha)$. Moreover, $Q_{\Delta}(\alpha_0,\delta)\neq 0$ if and only if $0\neq Q_{\Delta}(\alpha_0,\alpha)$.

(Case 1) $Q_{\Delta}(\alpha_0,\delta)=0=Q_{\Delta}(\alpha_0,\alpha)$. Then 
$$h_{\Delta',\Delta}(Br_3(T_{\gamma}^{T_\alpha T_\delta },T_\beta ;\Delta))\Leftrightarrow Br_3(T_{\gamma}^{T_{\alpha}T_\delta}, T_{\beta};\Delta')$$ and
$$ h_{\Delta',\Delta}(Br_3(T_{\gamma}^{T_\delta T_\alpha},T_\beta ;\Delta))\Leftrightarrow Br_3(T_{\gamma}^{T_\delta T_{\alpha}}, T_{\beta};\Delta').$$

(Case 2) $Q_{\Delta}(\alpha_0,\alpha)>0>Q_{\Delta}(\alpha_0,\delta)$. Then 
$$
\begin{array}{rcl}
  h_{\Delta',\Delta}(Br_3(T_{\gamma}^{T_\alpha T_\delta },T_\beta ;\Delta))
  &\Leftrightarrow& Br_3(T_{\gamma}^{(T_{\alpha}^{T_{\alpha'_0}}T_\delta)}, T_{\beta};\Delta')\\ &\xLeftrightarrow{{Co(T_{\alpha'_0},T_\beta;\Delta')}} &  Br_3(T_{\gamma}^{T_\alpha T^{-1}_{\alpha'_0} T_\delta}, T_{\beta};\Delta')    \\
&\xLeftrightarrow{Br_3(T_\alpha,T_{\beta};\Delta')}  &  Br_3(T_{\gamma}^{T^{-1}_{\alpha'_0} T_\delta}, T_\alpha^{T_{\beta}};\Delta') \\
&\xLeftrightarrow[Co(T_{\alpha'_0},T_\delta;\Delta')]{Br_3(T_\gamma,T_\delta;\Delta')} &  
Br_3({T_{\alpha'_0}^{-1}T_\delta T_{\alpha'_0}}, T_{\alpha}^{T_\gamma T_\beta};\Delta')\\
&\xLeftrightarrow[Br_3(T_\delta, T_{\alpha'_0};\Delta')]{Co(T_\alpha^{T_\gamma T_\beta},T_\delta;\Delta')}  & 
Br_3(T_{\alpha'_0}, T_{\alpha}^{T_\gamma T_\beta};\Delta')
\\
&\xLeftrightarrow{Co(T_{\alpha'_0}, T_\gamma T_\beta;\Delta')} & 
Br_3(T_{\alpha'_0}, T_{\alpha};\Delta'): R2.
\end{array}
$$
We can similarly prove that $ h_{\Delta',\Delta}(Br_3(T_{\gamma}^{T_\delta T_\alpha },T_\beta ;\Delta))$ holds in $Br_{\Delta'}$.

\medskip

\textbf{For $R7$:} If $\alpha_0=\alpha$, then  
$$
\begin{array}{rcl}
 h_{\Delta',\Delta}(Co(T_\beta, T_{\gamma}^{T_\alpha T_\zeta T_\delta } ;\Delta))
&\Leftrightarrow& 
Co(T_\beta^{T_{\alpha'_0}}, T_{\gamma}^{(T_{\alpha'_0} T_\zeta T_\delta^{T_{\alpha'_0}})};\Delta') \\ &\Leftrightarrow &  Co(T_\beta, T_{\gamma}^{(T_\zeta T_\delta^{T_{\alpha'_0}})};\Delta')    \\
&\xLeftrightarrow{Br_3(T_\beta,T_{\zeta};\Delta')}  &  Co(T_\zeta^{T_\beta}, T_{\gamma}^{T_{\alpha'_0} T_\delta {T^{-1}_{\alpha'_0}}};\Delta') \\
&\Leftrightarrow &  
Co(T_\zeta^{T_{\alpha'_0} T^{-1}_\delta {T^{-1}_{\alpha'_0}} T_\beta}, T_{\gamma};\Delta')\\
&\xLeftrightarrow{Co(T_\zeta^{T_\beta},T_{\alpha'_0};\Delta')}  & 
Co(T_\zeta^{T_{\alpha'_0} T^{-1}_\delta T_\beta}, T_{\gamma};\Delta')
\\
&\xLeftrightarrow[Br_3(T_{\beta},T_\zeta;\Delta')]{Co(T_\delta,T_\zeta;\Delta')}  & 
Co(T_\delta^{T_{\alpha'_0} T^{-1}_\zeta T_\beta}, T_{\gamma};\Delta')\\
&\xLeftrightarrow{Co(T_\gamma,T_\zeta;\Delta')}  & 
Co(T_\delta^{T_\zeta T_{\alpha'_0} T^{-1}_\zeta T_\beta}, T_{\gamma};\Delta')
\\
&\xLeftrightarrow{Co(T_\beta,T_{\alpha_0'}^{T_\zeta};\Delta')}  & 
Co(T_\delta^{T_\beta T_\zeta T_{\alpha'_0} T^{-1}_\zeta }, T_{\gamma};\Delta')\\
&\xLeftrightarrow{Co(T_\delta, T_{\delta};\Delta')}  & 
Co(T_\delta^{T_\beta T_\zeta T_{\alpha'_0}}, T_{\gamma};\Delta'): R7.
\end{array}
$$

The case that $\alpha_0=\delta$ is dual to the case that $\alpha_0=\alpha$, so we omit it.

If $\alpha_0=\beta$, as $$
\begin{array}{rcl}
 Co(T_\beta, T_{\gamma}^{T_\alpha T_\zeta T_\delta } ;\Delta)    &\xLeftrightarrow[Br_3(T_\delta,T_\gamma,\Delta): R2]{Br_3(T_\alpha,T_\beta,\Delta): R2} & Co(T_\zeta^{-1} T_\alpha^{T_\beta} T_\zeta, T_{\delta}^{T_\gamma^{-1}} ;\Delta) \\
   &\xLeftrightarrow[Co(T_\zeta,T_\beta,\Delta): R2]{Br_3(T_\alpha,T_\beta,\Delta): R2} & 
   Co(T_\zeta^{T_\beta T_\alpha}, T_{\delta}^{T_\gamma^{-1}} ;\Delta) \\
   &\Leftrightarrow&
   Co(T_\zeta^{T_\gamma T_\beta T_\alpha}, T_{\delta};\Delta)
\end{array}
$$ and $$h_{\Delta',\Delta}(Co(T_\zeta^{T_\gamma T_\beta T_\alpha}, T_{\delta};\Delta))\Leftrightarrow Co(T_\zeta^{T_{\alpha'_0} T_\gamma T_\alpha}, T_{\delta};\Delta'): R7,$$ we have $h_{\Delta',\Delta}(Co(T_\beta, T_{\gamma}^{T_\alpha T_\zeta T_\delta } ;\Delta))$ holds in $Br_{\Delta'}$.

The case that $\alpha_0=\gamma$ is dual to the case that $\alpha_0=\beta$, so we omit it.

We now consider the case that $\alpha_0\neq \alpha,\beta,\gamma,\delta,\zeta$. Then $Q_{\Delta}(\alpha_0,\alpha)=Q_{\Delta}(\alpha_0,\beta)=Q_{\Delta}(\alpha_0,\gamma)=Q_{\Delta}(\alpha_0,\delta)=0$.

(Case 1) $Q_{\Delta}(\alpha_0,\zeta)\geq 0$. Then $ h_{\Delta',\Delta}(Co(T_\beta, T_{\gamma}^{T_\alpha T_\zeta T_\delta } ;\Delta))\Leftrightarrow  Co(T_\beta, T_{\gamma}^{T_\alpha T_\zeta T_\delta } ;\Delta'): R7$.

(Case 2) $Q_{\Delta}(\alpha_0,\zeta)<0$. Then $ h_{\Delta',\Delta}(Co(T_\beta, T_{\gamma}^{T_\alpha T_\zeta T_\delta } ;\Delta))\Leftrightarrow  Co(T_\beta, T_{\gamma}^{(T_\alpha T_\zeta^{T_{\alpha'_0}} T_\delta) } ;\Delta')$ follows by $Co(T_\beta, T_{\gamma}^{T_\alpha T_\zeta T_\delta } ;\Delta')$, $Co(T_{\alpha'_0},T_\alpha;\Delta')$, $Co(T_{\alpha'_0},T_\beta;\Delta')$ and $Co(T_{\alpha'_0},T_\gamma;\Delta')$.

\textbf{For $R8$:} If $\alpha_0=\alpha$, then 
$$
\begin{array}{rcl}
   h_{\Delta',\Delta}(Co(T_\gamma^{T_\beta^{-1}},T_\delta^{T_\alpha};\Delta))  &\Leftrightarrow & 
  Co(T_\gamma^{T_{\alpha'_0}T_\beta^{-1} T^{-1}_{\alpha'_0}},T_\delta^{T_{\alpha'_0}};\Delta')  \xLeftrightarrow{Co(T_{\alpha'_0},T_\gamma;\Delta')}  Co(T_\gamma^{T_\beta^{-1}},T_\delta;\Delta') \\
     &\Leftrightarrow & 
Co(T_\gamma,T_\delta^{T_\beta};\Delta'): R3.    
\end{array}
$$

If $\alpha_0=\beta$, then 
$
\begin{array}{rcl}
   h_{\Delta',\Delta}(Co(T_\gamma^{T_\beta^{-1}},T_\delta^{T_\alpha};\Delta))  \Leftrightarrow  
  Co(T_\gamma,T_\delta^{T_{\alpha}};\Delta'): R3.    
\end{array}$

If $\alpha_0=\gamma$ and $w(\gamma)=1$, then 
$$
\begin{array}{rcl}
  h_{\Delta',\Delta}( Co(T_\gamma^{T_\beta^{-1}},T_\delta^{T_\alpha};\Delta) ) \Leftrightarrow  
  Co(T_{\alpha'_0}^{T_\beta^{-1}},T_\delta^{T_\alpha T_{\alpha'_0}};\Delta')  &\xLeftrightarrow[Co(T_\alpha,T_{\alpha'_0};\Delta')]{Br_3(T_{\alpha'_0},T_\beta;\Delta')} &  Co(T_{\beta}^{T_{\alpha'_0}},T_\delta^{T_{\alpha'_0}T_\alpha };\Delta')  \\
     &\Leftrightarrow & 
Co(T_\beta,T_\delta^{T_\alpha};\Delta'): R3.    
\end{array}
$$

If $\alpha_0=\gamma$ and $w(\gamma)\neq 1$, then 
$$
\begin{array}{rcl}
  h_{\Delta',\Delta}( Co(T_\gamma^{T_\beta^{-1}},T_\delta^{T_\alpha};\Delta) ) \Leftrightarrow  
  Co(T_{\alpha'_0}^{T_\beta^{-1}},T_\delta^{T_\alpha T_{\alpha'_0}};\Delta')  &\xLeftrightarrow{Co(T_\alpha,T_{\alpha'_0};\Delta')} &  Co(T_{\alpha'_0}^{T_\beta^{-1}},T_\delta^{T_{\alpha'_0}T_\alpha };\Delta')  \\
     &\Leftrightarrow & 
Co(T_{\alpha'_0}^{T_{\alpha'_0}^{-1}T_\beta^{-1}},T_\delta^{T_\alpha };\Delta')\\
&\xLeftrightarrow[Br_3(T_\alpha,T_\delta;\Delta')]{Br_4(T_\beta,T_{\alpha'_0};\Delta')}&
Co(T_{\alpha'_0}^{T_\beta},T_\alpha^{T_\delta^{-1} };\Delta')\\
&\Leftrightarrow&
Co(T_{\alpha'_0}^{T_\delta T_\beta},T_\alpha;\Delta'): R5.
\end{array}
$$

If $\alpha_0=\delta$, then 

$$
\begin{array}{rcl}
  h_{\Delta',\Delta}( Co(T_\gamma^{T_\beta^{-1}},T_\delta^{T_\alpha};\Delta))  \Leftrightarrow  
Co(T_\gamma,T_{\alpha'_0}^{(T_{\alpha}^{T_{\alpha'_0}})};\Delta')\xLeftrightarrow{Br_3(T_\alpha,T_{\alpha'_0};\Delta')} Co(T_\gamma,T_{\alpha};\Delta'): R1.    
\end{array}$$

We now consider the case $\alpha_0\neq \alpha,\beta,\gamma,\delta$. 

If $\alpha$ and $\gamma$ are not two sides of any triangle in $\Delta$, and $\beta$ and $\delta$ are not two sides of any triangle in $\Delta$, then $\alpha,\beta,\gamma,\delta$ form a complete counter-clockwise list of the arcs incident to some puncture $p$. In this case, we have $ Co(T_\gamma^{T_\beta^{-1}},T_\delta^{T_\alpha};\Delta)  \Leftrightarrow  
Cyl(T_\alpha, T_\delta, T_\gamma,T_\beta;\Delta)$. We defer the proof of this case to the proof for the relation $R9$. 

Note that $ h_{\Delta',\Delta}(Co(T_\gamma^{T_\beta^{-1}},T_\delta^{T_\alpha};\Delta))  \Leftrightarrow  
Co(T_\gamma^{T_\beta^{-1}},T_\delta^{T_\alpha};\Delta')$ if $Q_{\Delta}(\alpha_0,\zeta)\geq 0$ for any $\zeta\in \{\alpha,\beta,\gamma,\delta\}$. Therefore, we can exclude this case in the subsequent discussion.

(Case 1) $\alpha$ and $\gamma$ are two sides of some triangle in $\Delta$. Then $(\beta,\delta)$ forms a once-punctured bigon with diagonals $\alpha,\gamma$ and $Q_{\Delta}(\alpha_0,\alpha)=Q_{\Delta}(\alpha_0,\gamma)=0$. Then $h_{\Delta',\Delta}(Co(T_\gamma^{T_\beta^{-1}},T_\delta^{T_\alpha};\Delta))$ follows by $Co(T_\gamma^{T_\beta^{-1}},T_\delta^{T_\alpha};\Delta'): R8$, $Co(T_{\alpha'_0},T_\alpha)$, and $Co(T_{\alpha'_0},T_\gamma)$.

(Case 2) $\beta$ and $\delta$ are two sides of some triangle in $\Delta$. Then $(\alpha,\gamma)$ forms a once-punctured bigon with diagonals $\beta,\delta$ and $Q_{\Delta}(\alpha_0,\beta)=Q_{\Delta}(\alpha_0,\delta)=0$. Then $h_{\Delta',\Delta}(Co(T_\gamma^{T_\beta^{-1}},T_\delta^{T_\alpha};\Delta))$ follows by $Co(T_\gamma^{T_\beta^{-1}},T_\delta^{T_\alpha};\Delta'): R8$, $Co(T_{\alpha'_0},T_\beta)$, and $Co(T_{\alpha'_0},T_\delta)$.

\textbf{For $R9$:} Assume that $\alpha$ is not a self-folded arc and a diagonal of some clockwise cyclic quadrilateral $(\alpha_1,\alpha_2,\alpha_3,\alpha_4)$ in $\Delta$ such that $(\alpha_1,\alpha_2,\overline\alpha)$ forms a triangle. 

If none of $\alpha,\alpha_1,\alpha_2,\alpha_3,\alpha_4$ is incident to the ordinary puncture $p$, then the relation $R9$ is clearly preserved by the map $h_{\Delta',\Delta}$. 

If the number of arcs incident to $p$ in $\Delta$ differs from that in $\Delta'$, then the result follows by Lemma \ref{lem:R9a1}.

Thus, we may assume that $\alpha$ incident to $p$, without loss of generality, assume $s(\alpha)=p$.

{\bf Case 1}: Suppose $s(\alpha)=s(\alpha_4)=p\neq t(\alpha),t(\alpha_1)$. Let $\mu$ be a mutation sequence at loops incident to $p$ such that the number of loops incident to $p$ decreases at each step, and $\alpha_4$ is the only loop incident to $p$ in $\mu\Delta$. Then we have $\mu \mu_\alpha=\mu_\alpha \mu$ and 
$$R9(\Delta)=h^{\mu_{\alpha_4}\mu}_{\Delta,\mu_{\alpha_4}\mu\Delta}(R9(\mu_{\alpha_4}\mu \Delta)), \quad R9(\Delta')=h^{\mu_{\alpha_4}\mu}_{\Delta',\mu_{\alpha_4}\mu\Delta'}(R9(\mu_{\alpha_4}\mu \Delta')),$$
$$R9(\mu\Delta)=h^{\mu_{\alpha_4}}_{\mu\Delta,\mu_{\alpha_4}\mu\Delta}(R9(\mu_{\alpha_4}\mu \Delta)), \quad R9(\mu\Delta')=h^{\mu_{\alpha_4}}_{\mu\Delta',\mu_{\alpha_4}\mu\Delta'}(R9(\mu_{\alpha_4}\mu \Delta')).$$ 
Therefore, $$
\begin{array}{rcl}
   h_{\Delta',\Delta}(R9(\Delta))  & =& h_{\Delta',\Delta}h^{\mu_{\alpha_4}\mu}_{\Delta,\mu_{\alpha_4}\mu\Delta}(R9(\mu_{\alpha_4}\mu \Delta))=h^{\mu}_{\mu_{\alpha}\Delta,\mu_\alpha\mu\Delta}h_{\mu_\alpha\mu\Delta,\mu\Delta}h^{\mu_{\alpha_4}}_{\mu\Delta,\mu_{\alpha_4}\mu\Delta}(R9(\mu_{\alpha_4}\mu \Delta))\\
   & = & h^{\mu}_{\mu_{\alpha}\Delta,\mu_\alpha\mu\Delta}h_{\mu_\alpha\mu\Delta,\mu\Delta}(R9(\mu\Delta)).
\end{array}
$$

By Lemma \ref{lem:R9a}, $h_{\mu_\alpha\mu\Delta,\mu\Delta}(R9(\mu\Delta))$ holds in $Br_{\mu_\alpha\mu\Delta}$. Applying Lemma \ref{lem:R9a1}, it follows that $h^{\mu}_{\mu_{\alpha}\Delta,\mu_\alpha\mu\Delta}h_{\mu_\alpha\mu\Delta,\mu\Delta}(R9(\mu\Delta))$ holds in $Br_{\mu_\alpha\Delta}$.

{\bf Case 2}: Suppose that $s(\alpha)=s(\alpha_2)=p\neq t(\alpha),t(\alpha_3)$. The result follows similarly by applying Lemmas \ref{lem:R9a1} and \ref{lem:R9b}.

{\bf Case 3}:
Suppose that $s(\alpha_1)=s(\alpha_2)=s(\alpha_3)=s(\alpha_4)=p$. The result can also be established using Lemmas \ref{lem:R9a1} and \ref{lem:R9c} in an analogous way.

\begin{lemma}\label{lem:R9}
 Let $\alpha,\beta,\gamma,\delta\in \Delta$. Suppose that there is a $4$-cycle among $\alpha,\beta,\gamma$, and $\delta$, with an arrow from $\beta$ to $\delta$, no double arrows between any of these vertices, and no arrow between $\alpha$ and $\gamma$; see the quiver in Figure \ref{Fig:subquiver3}. If $w(\alpha)\neq 1$, then the relation $Co(T_\delta^{T_\beta T_\alpha},T_\gamma)$ holds. 
\end{lemma}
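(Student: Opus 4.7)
The plan is to establish $Co(T_\delta^{T_\beta T_\alpha},T_\gamma)$ by first identifying the local geometric configuration in $\Delta$ that realizes the prescribed subquiver, and then performing a short sequence of rewrites using only the earlier relations $R1$--$R4$ of Theorem \ref{th:brgroup}.

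Because $w(\alpha)\neq 1$, the arc $\alpha$ must be a special loop at some vertex $v$ enclosing a special puncture $p$ of order at least $2$. The $3$-cycle $\alpha\to\beta\to\delta\to\alpha$ present inside the prescribed $4$-cycle then forces $(\alpha,\beta,\delta)$ to be a clockwise cyclic triangle in $\Delta$, so that $\beta$ and $\delta$ are two arcs between $v$ and a second vertex $u$ bounding (together with $\alpha$) the special-puncture region at $p$. On the opposite side of $(\beta,\delta)$, the arrows $\beta\to\gamma$ and $\gamma\to\delta$ come from two further triangles $(\beta,\gamma,X)$ and $(\gamma,\delta,Y)$ sharing the arc $\gamma$ and forming a quadrilateral in which $\gamma$ plays the role of a diagonal; the hypothesis that there is no arrow between $\alpha$ and $\gamma$ guarantees that $\gamma$ lies entirely outside of the puncture region.

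With this picture fixed, my strategy proceeds in three steps. First, by $R1$ we have $Co(T_\alpha,T_\gamma)$; conjugating the target identity by $T_\alpha$ and sliding $T_\alpha$ past $T_\gamma$ reduces the claim to a relation of the form $Co(T_\delta^{T_\alpha T_\beta T_\alpha^{-1}},T_\gamma)$, which involves only one block in which $T_\alpha$ sits between two copies of $T_\beta$. Second, I invoke the $B/C$-type braid relation $Br_4(T_\alpha,T_\beta)$ supplied by $R2$ (available because $w(\alpha)\neq 1$ and $\alpha,\beta$ are two sides of the triangle $(\alpha,\beta,\delta)$) to rewrite $T_\alpha T_\beta T_\alpha^{-1}$ as $T_\beta^{-1}T_\alpha T_\beta$; an analogous use of $Br_4(T_\alpha,T_\delta)$ coming from the same triangle then eliminates the remaining $T_\alpha$, leaving a conjugate of $T_\delta$ by a word in $T_\beta^{\pm 1}$ alone. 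Finally, the resulting reduced identity $Co(T_\delta^{T_\beta},T_\gamma)$ follows from $R3$ applied inside the quadrilateral on the far side of the bigon, using whichever arc among $\beta,\gamma,\delta,X,Y$ has weight $1$ as the conjugating factor in $R3$.

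The main obstacle will be the careful bookkeeping in the second step: depending on how $Br_4$ is applied, the intermediate expressions may require several interlocking uses of $R2$ together with $R1$-commutations to return to a form involving only $T_\beta,T_\gamma,T_\delta$, and the clockwise versus counterclockwise orientations of the triangles $(\alpha,\beta,\delta)$, $(\beta,\gamma,X)$ and $(\gamma,\delta,Y)$ must be tracked to ensure each $R2,R3$-instance is legitimate. I expect no appeal to the longer relations $R5$--$R9$; Lemma \ref{lem:R9} is fundamentally a consequence of the $B/C$-type braid relation at the special loop $\alpha$ coupled with a triangle commutation on the far side of the special-puncture bigon.
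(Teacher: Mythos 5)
There is a genuine gap here, and it starts with the geometry. The configuration of Figure \ref{Fig:subquiver3} is not ``a special-puncture bigon with a quadrilateral on the far side'': it is realized by an \emph{ordinary} puncture $p$ such that $\alpha,\alpha,\beta,\gamma,\delta$ is the complete counterclockwise list of arcs incident to $p$ (the loop $\alpha$, of weight $\neq 1$, occurs twice, and the triangle on its outside is $(\alpha,\beta,\delta)$, which is where the extra arrow $\beta\to\delta$ comes from). Because all four arcs meet at $p$, the identity $Co(T_\delta^{T_\beta T_\alpha},T_\gamma)$ is at bottom an instance of the cyclic puncture relation, and the paper's proof goes exactly that way: flip the loop $\alpha$ so that $\beta,\gamma,\delta$ become the full list of arcs at $p$ with no loops, read off the relation $R9$ in the form $Cyl(T_\delta^{T_\alpha},T_\beta,T_\gamma)$, and convert this to $Co(T_\delta^{T_\beta T_\alpha},T_\gamma)$ using $Br_3(T_\delta^{T_\alpha},T_\beta)$. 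Your explicit expectation that ``no appeal to the longer relations $R5$--$R9$'' is needed is therefore the central missing idea: the relations $R1$--$R4$ are local to triangles and quadrilaterals and do not see the global cyclic constraint at $p$, so the commutation cannot be derived from them alone.

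There is also a concrete algebraic error in your second step: $Br_4(T_\alpha,T_\beta)$, i.e.\ $T_\alpha T_\beta T_\alpha T_\beta=T_\beta T_\alpha T_\beta T_\alpha$, does \emph{not} imply $T_\alpha T_\beta T_\alpha^{-1}=T_\beta^{-1}T_\alpha T_\beta$. That last identity is equivalent to $T_\alpha T_\beta T_\alpha=T_\beta T_\alpha T_\beta$, i.e.\ to $Br_3$, and it already fails in the abelianization of the type-$B_2$ Artin group (the left side maps to the class of $T_\beta$, the right side to that of $T_\alpha$). So the reduction that is supposed to eliminate $T_\alpha$ from the conjugating word collapses, and the final appeal to $R3$ would in any case be illegitimate, since $\beta,\gamma,\delta$ do not form a $3$-cycle in $Q_\Delta$ (the arrows are $\beta\to\gamma$, $\gamma\to\delta$, $\beta\to\delta$). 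To repair the argument you should identify the puncture $p$, invoke $R9$ through the flip at $\alpha$, and then use a single $Br_3$ to pass from the $Cyl$ relation to the stated commutation.
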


\begin{figure}[ht]
\begin{center}
\begin{tikzcd}
 \alpha\arrow[rr] & & \beta \arrow[d]\arrow[dll]  
  \\
 \delta\arrow[u] & & \gamma\arrow[ll]
\end{tikzcd}  
\caption{}
\label{Fig:subquiver3}
\end{center}
\end{figure}

\begin{proof}
    We have $w(\beta)=w(\gamma)=w(\delta)=1$, and the arcs $\alpha, \alpha,\beta,\gamma,\delta$ form a complete counterclockwise cyclic list of the arcs incident to some puncture $p$ in $\Delta$. In $\mu_\alpha(\Delta)$, the arcs $\beta,\gamma,\delta$ form a complete counterclockwise cyclic list of the arcs incident to $p$. By $R9$, we see that the relation $Cyl(T_\delta^{T_\alpha},T_\beta,T_\gamma)$ holds. Furthermore, applying the braid relation $Br_3(T_\delta^{T_\alpha},T_\beta)$, it follows that $Co(T_\delta^{T_\beta T_\alpha},T_\gamma)$ holds.

    The proof is complete.
\end{proof}

\begin{lemma}\label{lem:basic}
In a group $G$, if $Br_3(y,z)$ then $Br_3(x,y^z)\Leftrightarrow Br_3(x^y,z)$.
\end{lemma}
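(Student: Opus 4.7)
The plan is to exploit a small ambiguity in notation created by the braid relation: while by definition $y^z = zyz^{-1}$, the hypothesis $yzy=zyz$ also forces $y^z = y^{-1}zy$. First I would verify this identity by a one-step rearrangement of $yzy=zyz$ (multiply on the left by $y^{-1}$ and on the right by $z^{-1}$), obtaining $y^{-1}zy = zyz^{-1}$. So under $Br_3(y,z)$, the symbol $y^z$ may be read in either way.

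With that identity in hand, I would prove $Br_3(x^y,z) \Leftrightarrow Br_3(x,y^z)$ by a single conjugation. Starting from the defining equation of $Br_3(x^y,z)$,
\[
(yxy^{-1})\,z\,(yxy^{-1}) \;=\; z\,(yxy^{-1})\,z,
\]
I would multiply both sides on the left by $y^{-1}$ and on the right by $y$. The $y$'s flanking each copy of $x$ cancel, and every $z$ becomes $y^{-1}zy$, producing
\[
x\,(y^{-1}zy)\,x \;=\; (y^{-1}zy)\,x\,(y^{-1}zy),
\]
which is exactly $Br_3(x,\,y^{-1}zy)$. Applying the identity from the first paragraph rewrites $y^{-1}zy$ as $y^z$, so this is $Br_3(x,y^z)$. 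Since the conjugation used is invertible, the reverse direction is obtained by the same manipulation run backwards.

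There is no genuine obstacle here; the lemma is essentially the observation that under the braid relation the two natural readings of ``conjugate $z$ by $y$'' coincide. The statement is useful mainly for organizational reasons, because it legitimizes the frequent moves of the form $Br_3(x,y^z)\leftrightsquigarrow Br_3(x^y,z)$ that appear in the case analyses for R4, R6 and R7 in the proof of Theorem~\ref{thm:quiverbraidgroup}.
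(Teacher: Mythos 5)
Your proof is correct. The key identity $y^{-1}zy = zyz^{-1}$ does follow from $yzy=zyz$ exactly as you say, and the conjugation by $y$ is an unconditional group-theoretic equivalence $Br_3(x^y,z)\Leftrightarrow Br_3(x,\,y^{-1}zy)$, so combining the two gives the lemma. The paper's own proof is the same elementary verification but organized differently: it rewrites both $Br_3(x,y^z)$ and $Br_3(x^y,z)$ as word identities and checks that each is equivalent to the single common identity $zyxzy=yxzyz^{-1}xz$. Your version buys a little clarity by isolating the only place the hypothesis $Br_3(y,z)$ is used (the coincidence of the two readings $zyz^{-1}=y^{-1}zy$ of the conjugate), whereas the paper's reduction to a common normal form hides that point inside the computation; substantively the two arguments are the same.
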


\begin{proof}
 As $Br_3(y,z)$, we have both $Br_3(x,y^z): xzyz^{-1}x=zyz^{-1}xzyz^{-1}$ and $Br_3(x^y,z): yxy^{-1}z yxy^{-1}=z yxy^{-1}z$ are equivalent to   
  $zyxzy=yxzyz^{-1}xz$. 

  The proof is complete.
\end{proof}

\begin{lemma}\label{lem:r21}
    Assume that $Q_{\Delta'}(\alpha'_0,\beta)=Q_{\Delta'}(\alpha,\alpha_0')=-1$ and $Q_{\Delta'}(\alpha,\beta)=0$. If $w(\alpha'_0)=1$, then 
    $\begin{cases}
        Br_3(T_\alpha^{T_{\alpha'_0}},T_\beta;\Delta'), & \text{if $w(\alpha)=w(\beta)=1$,}\\
        Br_4(T_\alpha^{T_{\alpha'_0}},T_\beta;\Delta'), & \text{if $w(\alpha)\neq 1=w(\beta)$ or $w(\beta)\neq 1=w(\alpha)$}.
    \end{cases}$
\end{lemma}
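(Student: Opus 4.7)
The plan is to verify the two stated braid identities as algebraic consequences of the relations $R1$ and $R2$ already established in $\widetilde{Br}_{\Delta'}$. From the quiver hypotheses together with $w(\alpha'_0)=1$, one reads off exactly three relations among the three generators $T_\alpha,T_{\alpha'_0},T_\beta$: the commutation $Co(T_\alpha,T_\beta)$ coming from $Q_{\Delta'}(\alpha,\beta)=0$ via $R1$; a braid relation $Br_{m_1}(T_\alpha,T_{\alpha'_0})$ coming from $Q_{\Delta'}(\alpha,\alpha'_0)=-1$ via $R2$, with $m_1=3$ if $w(\alpha)=1$ and $m_1=4$ if $w(\alpha)\neq 1$; and similarly $Br_{m_2}(T_{\alpha'_0},T_\beta)$ from $Q_{\Delta'}(\alpha'_0,\beta)=-1$. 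These are precisely the defining relations of the Artin braid group of type $A_3$ (when $w(\alpha)=w(\beta)=1$) or of type $C_3$ (when exactly one of $w(\alpha),w(\beta)$ differs from $1$), with $T_\alpha,T_{\alpha'_0},T_\beta$ playing the roles of three Artin generators arranged in linear order along the Dynkin diagram.

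Thus the lemma reduces to two classical identities in these Artin groups, each provable in a few lines. For the type-$A_3$ case, I verify $Br_3(T_\alpha^{T_{\alpha'_0}},T_\beta)$ by rewriting $T_{\alpha'_0}T_\alpha T_{\alpha'_0}^{-1}=T_\alpha^{-1}T_{\alpha'_0}T_\alpha$ (from $Br_3(T_\alpha,T_{\alpha'_0})$), then moving the two resulting factors of $T_\alpha$ past $T_\beta$ using $Co(T_\alpha,T_\beta)$, and finally applying $Br_3(T_{\alpha'_0},T_\beta)$ in the middle to reassemble the right-hand side. For the type-$C_3$ case we may assume $w(\alpha)\neq 1=w(\beta)$ (the other subcase is symmetric under interchanging the roles of $\alpha$ and $\beta$); here one uses $T_{\alpha'_0}^{-1}T_\beta T_{\alpha'_0}=T_\beta T_{\alpha'_0}T_\beta^{-1}$ from $Br_3(T_{\alpha'_0},T_\beta)$, together with $Co(T_\alpha,T_\beta)$ to gather the $T_\alpha$-factors, to reduce the target $Br_4(T_\alpha^{T_{\alpha'_0}},T_\beta)$ to the identity $T_\alpha T_{\alpha'_0}T_\alpha T_{\alpha'_0}=T_{\alpha'_0}T_\alpha T_{\alpha'_0}T_\alpha$, which is exactly $Br_4(T_\alpha,T_{\alpha'_0})$.

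The argument is essentially mechanical and I do not foresee any substantive obstacle, since both cases amount to well-known identities about conjugates of standard generators in braid groups of types $A_3$ and $C_3$ (in particular, that $\sigma_2\sigma_1\sigma_2^{-1}$ and $\sigma_3$ satisfy a length-$3$, respectively length-$4$, braid relation in the corresponding Artin group). The only point requiring care is selecting the correct inverse form of each braid relation before each substitution, so that the product on the left-hand side telescopes into the product on the right-hand side without leaving residual factors.
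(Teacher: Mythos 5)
Your proposal is correct and follows essentially the same route as the paper: both proofs work purely inside the subgroup generated by $T_\alpha$, $T_{\alpha'_0}$, $T_\beta$ using only the three relations $Co(T_\alpha,T_\beta)$, $Br_{m_1}(T_\alpha,T_{\alpha'_0})$, $Br_{m_2}(T_{\alpha'_0},T_\beta)$ supplied by $R1$ and $R2$, and your reductions (telescoping the conjugates so that the target collapses onto one of the given braid relations) check out; the paper instead expands both sides of the target relation directly, which is the same computation organized differently.

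One small imprecision: the subcase $w(\beta)\neq 1=w(\alpha)$ is not literally obtained from the subcase $w(\alpha)\neq 1=w(\beta)$ by interchanging $\alpha$ and $\beta$, because the conjugation in $T_\alpha^{T_{\alpha'_0}}$ sits on $\alpha$ only; the swap would yield $Br_4(T_\beta^{T_{\alpha'_0}},T_\alpha)$ rather than the desired $Br_4(T_\alpha^{T_{\alpha'_0}},T_\beta)$. This is harmless, though, since in that subcase $w(\alpha)=1$ gives $Br_3(T_\alpha,T_{\alpha'_0})$, hence $T_\alpha^{T_{\alpha'_0}}=T_\alpha^{-1}T_{\alpha'_0}T_\alpha$, and conjugating by $T_\alpha$ (which commutes with $T_\beta$) reduces the target at once to the given relation $Br_4(T_{\alpha'_0},T_\beta)$ — so that subcase is in fact the easiest of the three.
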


\begin{proof}
    We abbreviate $T_1=T_\beta, T_2=T_{\alpha'_0}$ and $T_3=T_{\alpha}$. Then $Co(T_1,T_3)$.

 We first assume that $w(\alpha)=w(\beta)=1$, then we have $Br_3(T_1,T_2), Br_3(T_2,T_3)$. Thus, 
 $$
 \begin{array}{rcl}
(T_2T_3T_2^{-1})T_1(T_2T_3T_2^{-1}) &=& T_2T_3(T_1T_2T_1^{-1})T_3T_2^{-1}\\
&=& T_2T_1T_3T_2T_3T_1^{-1}T_2^{-1}\\
&=& T_2T_1T_2T_3T_2T_1^{-1}T_2^{-1}\\
&=& T_1T_2T_1T_3T_2T_1^{-1}T_2^{-1}\\
&=& T_1(T_2T_3T_2^{-1})T_1.
 \end{array}
 $$ That is $Br_3((T_\alpha)^{T_{\alpha'_0}},T_\beta;\Delta')$ holds.

We then assume that $w(\alpha)\neq 1=w(\beta)$, then we have $Br_3(T_1,T_2), Br_4(T_2,T_3)$.
Thus, 
 $$
 \begin{array}{rcl}
(T_2T_3T_2^{-1})T_1(T_2T_3T_2^{-1})T_1 &=& T_2T_3(T_1T_2T_1^{-1})T_3T_2^{-1}T_1\\
&=& T_2T_1T_3T_2T_3T_1^{-1}T_2^{-1}T_1\\
&=& T_2T_1T_2T_3T_2T_3T_2^{-1}T_1^{-1}T_2^{-1}T_1\\
&=& T_1T_2T_1T_3T_2T_3T_2^{-1}T_1^{-1}T_2^{-1}T_1\\
&=& T_1T_2T_3T_1T_2T_3T_2^{-1}T_1^{-1}T_2^{-1}T_1\\
&=& T_1T_2T_3(T_2^{-1}T_1T_2T_1)T_3T_2^{-1}T_1^{-1}T_2^{-1}T_1\\
&=& T_1T_2T_3T_2^{-1}T_1T_2T_3T_1T_2^{-1}T_1^{-1}T_2^{-1}T_1\\
&=& T_1T_2T_3T_2^{-1}T_1T_2T_3T_2^{-1}\\
&=& T_1(T_2T_3T_2^{-1})T_1(T_2T_3T_2^{-1}).
 \end{array}
 $$ That is $Br_3((T_\alpha)^{T_{\alpha'_0}},T_\beta;\Delta')$ holds.

We can prove similarly that $Br_3((T_\alpha)^{T_{\alpha'_0}},T_\beta;\Delta')$ holds in case $w(\beta)\neq 1=w(\alpha)$.

The proof is complete.
\end{proof}

\begin{lemma}\label{lem:r23}
Assume that there is a $3$-cycle between $\alpha'_0,\beta,\alpha$ but there is no double arrow among them.

$(a)$ If $w(\alpha_0)\neq 1=w(\alpha)=w(\beta)$, then $Br_3((T_\alpha)^{T_{\alpha_0'}},T_\beta)$ holds in $Br_{\Delta'}$.

$(b)$ If $w(\alpha'_0),w(\beta)\neq 1=w(\alpha)$ or $w(\alpha'_0),w(\alpha)\neq 1=w(\beta)$, then $Br_4((T_\alpha)^{T_{\alpha_0'}},T_\beta)$ holds in $Br_{\Delta'}$.
\end{lemma}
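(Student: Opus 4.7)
The plan is to adapt the proof of Lemma \ref{lem:r21} verbatim. I would set $T_1 := T_\beta$, $T_2 := T_{\alpha_0'}$, $T_3 := T_\alpha$ and verify $Br_3(T_2T_3T_2^{-1},T_1)$ in case (a), respectively $Br_4(T_2T_3T_2^{-1},T_1)$ in case (b), as a direct word identity in $Br_{\Delta'}$.

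The ingredients are twofold. First, the pairwise relations among $T_1,T_2,T_3$ dictated by R1 and R2: since $\{\alpha_0',\beta,\alpha\}$ forms a $3$-cycle with no double arrows in $Q_{\Delta'}$, each pair is a pair of sides of exactly one triangle of $\Delta'$, so R2 supplies $Br_3$ when both arcs have weight $1$ and $Br_4$ when exactly one does. Second, a ternary relation coming from the $3$-cycle itself, namely R3 in case (a) --- where $\alpha_0'$ is the only weight-$\neq 1$ vertex, so the conjugator must be a weight-$1$ arc --- and R4 in case (b), where the unique weight-$1$ arc again plays the role of the conjugator. These ternary inputs give identities of the shape $Co(T_2^{T_3},T_1)$ (respectively a $Br_3$- or $Co$-type identity involving $T_2^{T_3}$ and $T_1$), which serve as the key bridge between the two sides of the target.

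With these ingredients in place, the verification itself is a direct word computation: set $U := T_2T_3T_2^{-1}$, expand $UT_1U$ (respectively $UT_1UT_1$) and reduce by interleaving the pairwise $Br_3$ and $Br_4$ moves with a single insertion of the ternary R3/R4 identity at the middle step, exactly as in the seven-line chain at the end of the proof of Lemma \ref{lem:r21}. The second sub-case of (b), in which $w(\beta)\neq 1 = w(\alpha)$ rather than $w(\alpha)\neq 1 = w(\beta)$, follows from the first by the symmetric computation obtained by swapping the roles of $T_1$ and $T_3$.

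The main obstacle is the bookkeeping in case (b): the pairwise structure among $T_1,T_2,T_3$ is more complex than in Lemma \ref{lem:r21}, since two of the three pairs are $Br_4$ rather than $Br_3$, and the ternary R4 identity must be invoked at precisely the right moment to convert a subword of the form $T_2T_3T_2^{-1}T_1$ into its mirror. I expect no new conceptual input beyond careful ordering of relation applications, but the calculation is likely to run to 10--12 lines rather than the 7 in Lemma \ref{lem:r21}.
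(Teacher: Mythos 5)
Your overall strategy coincides with the paper's: reduce the claim to a word computation in the subgroup generated by $T_\alpha$, $T_\beta$, $T_{\alpha_0'}$, using the pairwise relations supplied by $R1$/$R2$ together with one ternary relation coming from the $3$-cycle. However, there is a concrete error in your list of ingredients for case $(b)$: you propose to use $R4$ as the ternary input, but $R4$'s defining condition is a $3$-cycle \emph{with} a double arrow (see Remark \ref{rem:equi}), which is excluded by the hypothesis of this lemma. The ternary relation available here is $R3$ in \emph{both} cases, with the weight-$1$ arc $\alpha$ as conjugator; it yields the commutation $Co\bigl((T_\beta)^{T_\alpha},T_{\alpha_0'}\bigr)$, not a $Br_3$-type identity. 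Your hedge ``a $Br_3$- or $Co$-type identity'' indicates this was not pinned down; if you fed a $Br_3$-type ternary relation into the computation in case $(b)$, it would not close up, since the target $Br_4\bigl((T_\alpha)^{T_{\alpha_0'}},T_\beta\bigr)$ genuinely needs the commutation.

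A secondary point: your choice $T_1=T_\beta$ leaves the ternary relation in conjugated form and makes the reduction longer than necessary. The paper instead sets $T_1=(T_\beta)^{T_\alpha}$, $T_2=T_{\alpha_0'}$, $T_3=T_\alpha$, so that $T_\beta=T_3^{-1}T_1T_3$ and all three inputs become pairwise relations on $T_1,T_2,T_3$: $Co(T_1,T_2)$ (from $R3$), $Br_3(T_1,T_3)$ or $Br_4(T_1,T_3)$ (from $R2$ applied to $\alpha,\beta$), and $Br_4(T_2,T_3)$ (from $R2$ applied to $\alpha_0',\alpha$). After substituting $T_3T_2T_3T_2^{-1}T_3^{-1}=T_2^{-1}T_3T_2$, the target collapses to a short identity that follows immediately from $Co(T_1,T_2)$ and $Br_{3/4}(T_1,T_3)$. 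Your coordinates can be made to work, but you would need to correct the case-$(b)$ input as above and actually carry out the reduction; as written, the proposal has a gap.
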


\begin{proof}
As $w(\alpha)=1$, we have $Co((T_\beta)^{T_{\alpha}},T_{\alpha'_0})$ by $(R3)$. We abbreviate $T_1=(T_\beta)^{T_{\alpha}}, T_2=T_{\alpha'_0}$ and $T_3=T_{\alpha}$. Then $T_{\beta}=T_3^{-1}T_1T_3$.

$(a)$ Then we have $Co(T_1,T_2)$, $Br_3(T_1,T_3)$ and $Br_4(T_2,T_3)$. Therefore, $Br_3((T_\alpha)^{T_{\alpha_0'}},T_\beta)$ is equivalent to 
\begin{equation}\label{eq:long'}
    (T_2T_3T_2^{-1})(T_3^{-1}T_1T_3)(T_2T_3T_2^{-1})=(T_3^{-1}T_1T_3)(T_2T_3T_2^{-1})(T_3^{-1}T_1T_3).
\end{equation}

By $Br_4(T_2,T_3)$, we have $T_3T_2T_3T_2^{-1}T_3^{-1}=T_2^{-1}T_3T_2$. Thus, (\ref{eq:long'}) is equivalent to 
\begin{equation}\label{eq:long1'}
T_2^{-1}T_3T_2T_1T_2^{-1}T_3T_2=T_1T_2^{-1}T_3T_2T_1.
\end{equation}

It is easy to see that (\ref{eq:long1'}) follows by $Br_3(T_1,T_3)$ and $Co(T_1,T_2)$.

$(b)$ We may assume that $w(\alpha'_0),w(\beta)\neq w(\alpha)=1$.
 Then $Co(T_1,T_2)$, $Br_4(T_1,T_3)$ and $Br_4(T_2,T_3)$. Therefore, $Br_4((T_\alpha)^{T_{\alpha_0'}},T_\beta)$ is equivalent to 
\begin{equation}\label{eq:long}
    (T_2T_3T_2^{-1})(T_3^{-1}T_1T_3)(T_2T_3T_2^{-1})(T_3^{-1}T_1T_3)=(T_3^{-1}T_1T_3)(T_2T_3T_2^{-1})(T_3^{-1}T_1T_3)(T_2T_3T_2^{-1}).
\end{equation}

By $Br_4(T_2T_3)$, we have $T_3T_2T_3T_2^{-1}T_3^{-1}=T_2^{-1}T_3T_2$. Thus, (\ref{eq:long}) is equivalent to 
\begin{equation}\label{eq:long1}
T_2^{-1}T_3T_2T_1T_2^{-1}T_3T_2T_1=T_1T_2^{-1}T_3T_2T_1T_2^{-1}T_3T_2.
\end{equation}

It is easy to see that (\ref{eq:long1}) follows by $Br_4(T_1,T_3)$ and $Co(T_1,T_2)$. 

The proof is complete.
\end{proof}

\begin{lemma}\label{lem:R231}
Assume that there is a $3$-cycle between $\alpha'_0,\beta,\alpha$ and there is no double arrow from $\beta$ to $\alpha$ in $Q_{\Delta'}$. 

$(1)$ If there is a double arrow from $\alpha$ to $\alpha'_0$, then $Br_3((T_\alpha)^{T_{\alpha_0'}},T_\beta)$ holds in $Br_{\Delta'}$ in case $w(\beta)=1$ and $Br_4((T_\alpha)^{T_{\alpha_0'}},T_\beta)$ holds in $Br_{\Delta'}$ in case $w(\beta)\neq 1$.

$(2)$ If there is a double arrow from $\alpha'_0$ to $\beta$ in $Q_{\Delta'}$, then $Br_3((T_\alpha)^{T_{\alpha_0'}},T_\beta)$ holds in $Br_{\Delta'}$ in case $w(\alpha)=1$ and $Br_4((T_\alpha)^{T_{\alpha_0'}},T_\beta)$ holds in $Br_{\Delta'}$ in case $w(\alpha)\neq 1$.
\end{lemma}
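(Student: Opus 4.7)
The plan is to mimic the algebraic approach of Lemma \ref{lem:r23}, which treats the analogous ``no double arrow'' version of this situation. First I would abbreviate $T_1:=T_\beta$, $T_2:=T_{\alpha'_0}$, $T_3:=T_\alpha$ and rephrase the goal as establishing $T_2T_3T_2^{-1}T_1T_2T_3T_2^{-1}=T_1T_2T_3T_2^{-1}T_1$ in the $Br_3$ case, or the length-$8$ analogue in the $Br_4$ case. Since $Br_3(x^y,z)\Leftrightarrow Br_3(y^{-1}xy,z)$ under the same word, this reformulation makes the symmetry with the R4 relation transparent.

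In case (1), the $3$-cycle $\alpha'_0\to\beta\to\alpha\to\alpha'_0$ has a double arrow $\alpha\to\alpha'_0$, so applying R4 to this triangle with $\beta$ as the single-arrow vertex (i.e., taking our $\beta$ as the ``$\alpha$'' of R4, our $\alpha$ as the ``$\beta$'' of R4 and our $\alpha'_0$ as the ``$\gamma$'' of R4) yields $Br_3(T_2^{T_1},T_3)$ when $w(\beta)=1$ and $Co(T_2^{T_1},T_3)$ when $w(\beta)\ne 1$. Moreover, the double arrow on $\alpha\to\alpha'_0$ forces exactly one of $w(\alpha),w(\alpha'_0)$ to be $\ne 1$, and I would check using the triangulation conventions of Section \ref{subsec:triangulated surfaces and triangle groups} that this propagates to $Br_4(T_2,T_3)$. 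The single-arrow edges $\alpha'_0\to\beta$ and $\beta\to\alpha$ then supply the remaining R2 relations $Br_3$ or $Br_4$ for the pairs $(T_1,T_2)$ and $(T_1,T_3)$. With all these relations in hand, I would transform the target word by repeatedly applying the identity $T_2T_3T_2^{-1}=T_3^{-1}T_2^{-1}T_3T_2T_3$ (from $Br_4(T_2,T_3)$) to reduce the expression to one in which the R4 relation $Br_3(T_2^{T_1},T_3)$ (resp.\ $Co(T_2^{T_1},T_3)$) closes the computation.

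Case (2) is dual: the double arrow $\alpha'_0\to\beta$ lets me apply R4 to the same $3$-cycle in a different orientation, producing $Br_3(T_1^{T_3},T_2)$ when $w(\alpha)=1$ and $Co(T_1^{T_3},T_2)$ when $w(\alpha)\ne 1$, while now it is $Br_4(T_1,T_2)$ that is forced by R2. After interchanging the roles of $T_1$ and $T_3$, the same algebraic pattern from case (1) then delivers the desired relation between $T_3^{T_2}$ and $T_1$.

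The main obstacle will be the $Br_4$ subcases ($w(\beta)\ne 1$ in (1), $w(\alpha)\ne 1$ in (2)): there the target is a length-$8$ alternating word and its reduction requires carefully sequencing one R4 $Co$-relation, one $Br_4$-from-R2 relation, and one mixed-weight $Br_4$-from-R2 relation of the remaining single-arrow pair. The computation is essentially an elaboration of the one displayed in Lemma \ref{lem:r23}(b), and the only genuinely new step is verifying that the weight assignments at the double-arrow endpoint (which may be a pending arc, a special loop, or a radius of a self-folded triangle) are indeed consistent with the hypotheses of the R2 and R4 relations being invoked; this is a finite, local check on each of the three possible local quiver configurations giving rise to the double arrow.
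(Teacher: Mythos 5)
Your overall strategy (invoke R4 on the $3$-cycle to get a relation involving the conjugate $T_{\alpha'_0}^{T_\beta}$, and R2 on the single-arrow edges, then push the target word through Lemma \ref{lem:basic}-type manipulations) is the right one and matches the paper's in outline, but there are two concrete errors that break the execution. First, your weight analysis at the double arrow is backwards: a double arrow between $\alpha$ and $\alpha'_0$ means $\ell(\alpha)$ and $\ell(\alpha'_0)$ are common sides of \emph{two} triangles, which is impossible for a special loop or a loop around a $0$-puncture (each of those bounds only one non-degenerate triangle); hence the double arrow forces $w(\alpha)=w(\alpha'_0)=1$, not ``exactly one of them $\ne 1$.'' The only weight that can differ from $1$ in case (1) is $w(\beta)$, which is exactly why the dichotomy in the statement is on $w(\beta)$.

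Second, and more seriously, the relation $Br_4(T_2,T_3)=Br_4(T_{\alpha'_0},T_\alpha)$ that you plan to ``repeatedly apply'' does not exist: the presentation of $Br_{\Delta'}$ in Theorem \ref{th:brgroup} attaches no direct $Br_3$ or $Br_4$ relation to a double-arrow pair (R1 requires no arrows, R2 requires exactly one), and indeed no such relation is derivable in general — the only defining relation touching the pair $(\alpha,\alpha'_0)$ is the R4 relation, which involves conjugation by $T_\beta$. The same objection applies to your ``$Br_4(T_1,T_2)$ forced by R2'' in case (2), where $(T_\beta,T_{\alpha'_0})$ is again the double-arrow pair. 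The paper's proof works around precisely this absence: in the $w(\beta)\ne 1$ subcase it sets $T_1=(T_{\alpha'_0})^{T_\beta}$, $T_2=T_\beta$, $T_3=T_\alpha$, so that the R4 commutation $Co(T_1,T_3)$ serves as the substitute for the missing direct relation, while the two genuine $Br_4$'s used ($Br_4(T_1,T_2)$ and $Br_4(T_2,T_3)$) both live on the \emph{single}-arrow pairs $(\alpha'_0,\beta)$ and $(\beta,\alpha)$, where R2 applies because $w(\beta)\ne 1=w(\alpha'_0)=w(\alpha)$. You need to redo your reduction with that substitution (or an equivalent one); as written, the central rewriting step has no relation to justify it.
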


\begin{proof}
    We only give the proof of $(1)$, as $(2)$ can be proved similarly. Since there is a double arrow from $\alpha$ to $\alpha'_0$, we see that $w(\alpha)=w(\alpha'_0)=1$.

    If $w(\beta)=1$, then $Br_3((T_\alpha)^{T_{\alpha_0'}},T_\beta)$ follows by $Br_3((T_{\alpha'_0})^{T_{\beta}},T_\alpha)$ and $Br_3(T_{\alpha'_0},T_\beta)$.

    If $w(\beta)\neq 1$, then $Co((T_{\alpha'_0})^{T_\beta},T_\alpha)$ by $(R3)$. We abbreviate $T_1=(T_{\alpha'_0})^{T_\beta}, T_2=T_\beta$ and $T_3=T_\alpha$. Then $T_{\alpha_0'}=T_2^{-1}T_1T_2$, $Co(T_1,T_3)$, $Br_4(T_1,T_2)$ and $Br_4(T_2,T_3)$. Thus, $(T_\alpha)^{T_{\alpha_0'}}=T_{\alpha_0'}T_\alpha(T_{\alpha_0'})^{-1}=T_2^{-1}T_1T_2T_3T_2^{-1}T^{-1}_1T_2$.
Therefore, 
$$\begin{array}{rcl}(T_\alpha)^{T_{\alpha_0'}}T_\beta(T_\alpha)^{T_{\alpha_0'}}T_\beta &=& (T_2^{-1}T_1T_2T_3T_2^{-1}T^{-1}_1T_2)T_2(T_2^{-1}T_1T_2T_3T_2^{-1}T^{-1}_1T_2)T_2\vspace{2.5pt}\\ &=&T_2^{-1}T_1T_2T_3
T_1T_2T^{-1}_1
T_3T_2^{-1}T^{-1}_1T_2T_2\vspace{2.5pt}\\ 
&=& T_2^{-1}T_1T_2T_1T_3
T_2T_3T^{-1}_1
T_2^{-1}T^{-1}_1T_2T_2\vspace{2.5pt}\\
&=& T_1T_2T_1T_2^{-1}
T_3
T_2T_3T_2T^{-1}_1
T_2^{-1}T^{-1}_1T_2\vspace{2.5pt}\\ 
&=& T_1T_2T_1
T_3
T_2T_3T^{-1}_1
T_2^{-1}T^{-1}_1T_2.
\end{array}$$

$$\begin{array}{rcl}T_\beta(T_\alpha)^{T_{\alpha_0'}}T_\beta(T_\alpha)^{T_{\alpha_0'}} &=& T_2(T_2^{-1}T_1T_2T_3T_2^{-1}T^{-1}_1T_2)T_2(T_2^{-1}T_1T_2T_3T_2^{-1}T^{-1}_1T_2)\vspace{2.5pt}\\ &=&
T_1T_2T_3T_2^{-1}T^{-1}_1T_2T_1T_2T_3T_2^{-1}T^{-1}_1T_2\vspace{2.5pt}\\ 
&=& T_1T_2T_3T_1T_2T^{-1}_1T_3T_2^{-1}T^{-1}_1T_2\vspace{2.5pt}\\ 
&=& T_1T_2T_1
T_3
T_2T_3T^{-1}_1
T_2^{-1}T^{-1}_1T_2.
\end{array}$$

Thus, $Br_4((T_\alpha)^{T_{\alpha_0'}},T_\beta)$ holds in $Br_{\Delta'}$.

The proof is complete.
\end{proof}

\begin{lemma}\label{lem:R41}
    Assume that the subquiver of $Q_{\Delta'}$ formed by $\alpha,\beta,\alpha'_0,\gamma$ is isomorphic to the third quiver in Figure \ref{Fig:subquiver2}. If $w(\alpha'_0)\neq 1$, then 
    $\begin{cases}
        Br(T_\gamma^{T_\alpha},T_\beta^{T_{\alpha'_0}};\Delta'), & \text{if $w(\alpha)=1$},\\
        Co(T_\gamma^{T_\alpha},T_\beta^{T_{\alpha'_0}};\Delta'), & \text{if $w(\alpha)\neq 1$}.
    \end{cases}$
\end{lemma}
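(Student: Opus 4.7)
Abbreviate $T_1:=T_\alpha$, $T_2:=T_\beta$, $T_3:=T_\gamma$, $T_4:=T_{\alpha'_0}$, so that the hypothesis says the subquiver on $\{T_1,T_2,T_3,T_4\}$ is the pure $4$-cycle $T_1\to T_2\to T_3\to T_4\to T_1$, with no arrows between the diagonal pairs $\{T_1,T_3\}$ and $\{T_2,T_4\}$. The plan is to reduce both branches of the claim to defining relations of $Br_{\Delta'}$ already recorded in Theorem~\ref{th:brgroup}, in the same style as the proofs of Lemmas~\ref{lem:r21}, \ref{lem:r23} and \ref{lem:R231}.

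\emph{Step 1 (collect the relations).} From $R1$, the absence of arrows between the diagonal pairs gives $Co(T_1,T_3)$ and $Co(T_2,T_4)$. From $R2$, each edge of the $4$-cycle contributes a braid relation whose length is determined by the weights: $Br_3(T_2,T_3)$ since $w(T_2)=w(T_3)=1$; on the two edges incident to $T_4$ the mixed weight $w(T_4)\neq 1$ forces $Br_4(T_3,T_4)$ and either $Br_4(T_4,T_1)$ (when $w(T_1)=1$) or the degenerate commutation via $R1$ (when $w(T_1)\neq 1$); analogously $Br_3(T_1,T_2)$ or $Br_4(T_1,T_2)$ on the remaining edge. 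Finally, the simple $4$-cycle together with the weight profile matches one of the three geometric scenarios permitted by $R8$ (puncture star or once-punctured bigon involving the special/zero-puncture loop $\alpha'_0$), yielding the key identity $Co\bigl(T_3^{T_2^{-1}},\,T_4^{T_1}\bigr)$ in $Br_{\Delta'}$.

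\emph{Step 2 (transform $R8$ into the target relation).} Lemma~\ref{lem:basic} lets us transport a conjugation across any $Br_3$-pair: $Br_3(y,z)$ implies $Br_3(x,y^z)\Leftrightarrow Br_3(x^y,z)$. Using $Br_3(T_2,T_3)$ together with $Co(T_1,T_3)$ one rewrites $T_3^{T_2^{-1}}$ as $T_3^{T_1}$ modulo the available commutations; symmetrically, combining $Br_3(T_1,T_2)$ (or $Br_4(T_1,T_2)$) with $Br_4(T_3,T_4)$ and the braid/commutation relations on the $(T_1,T_2,T_4)$-triple, one rewrites $T_4^{T_1}$ as $T_2^{T_4}$. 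When $w(T_1)=1$ all the needed relations have braid length $3$ on the ``rewriting side,'' and the resulting identity is precisely $Br_3(T_3^{T_1},T_2^{T_4})$; when $w(T_1)\neq 1$ the longer $Br_4$-relations propagate through and collapse to the desired commutation $Co(T_3^{T_1},T_2^{T_4})$.

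\emph{Step 3 (word-level verification).} The manipulations in Step~2 are carried out as explicit word calculations in $Br_{\Delta'}$, closely modelled on Lemma~\ref{lem:r23}: write each side of the claim as a word in $T_1,T_2,T_3,T_4$, repeatedly apply the relations of Step~1 (together with the instance of $R8$), and check equality. The $w(T_1)=1$ branch parallels Case~3.2 already handled in the proof of $R4$ for the relation $h_{\Delta',\Delta}(Br_3(T_\gamma^{T_\alpha},T_\beta;\Delta))$; the $w(T_1)\neq 1$ branch uses the $Br_4$-identity $T_iT_jT_iT_j^{-1}T_i^{-1}=T_j^{-1}T_iT_j$ (cf.\ the derivation inside Lemma~\ref{lem:r23}) to shuttle the $T_4$-conjugation across $T_1$.

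\emph{Main obstacle.} The hardest part is conceptual rather than computational: one must identify which of the three geometric cases of $R8$ is realised by the given $4$-cycle --- this depends on whether $\alpha'_0$ is a special loop or a loop around a $0$-puncture, and on whether the four arcs sit around a puncture or bound a once-punctured bigon (these possibilities are exhaustively classified by the construction of $Q_\Delta$ given before Remark~\ref{rem:equi}). Once the correct instance of $R8$ is in hand, the word calculation is long but entirely mechanical, with the $w(T_1)\neq 1$ branch demanding the most care because it combines two $Br_4$-relations in a single chain of substitutions.
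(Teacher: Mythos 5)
Your reading of the quiver is off, and this derails the whole argument. In the third quiver of Figure \ref{Fig:subquiver2} the cyclic order of the four vertices is $\alpha\to\beta\to\alpha'_0\to\gamma\to\alpha$, so the non-adjacent (commuting) pairs are $\{\alpha,\alpha'_0\}$ and $\{\beta,\gamma\}$, while $\alpha$ and $\gamma$ \emph{are} joined by an arrow (giving $Br_3(T_\gamma,T_\alpha)$ when $w(\alpha)=1$) and $\beta$ and $\alpha'_0$ are joined by an arrow (giving $Br_4(T_\beta,T_{\alpha'_0})$ since $w(\alpha'_0)\neq 1$). You instead place the cycle as $\alpha\to\beta\to\gamma\to\alpha'_0\to\alpha$ and assert $Co(T_\alpha,T_\gamma)$, $Co(T_\beta,T_{\alpha'_0})$ and $Br_3(T_\beta,T_\gamma)$ --- exactly the opposite adjacency structure. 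Consequently essentially every relation collected in your Step~1 is wrong, and the instance of $R8$ you extract, $Co(T_\gamma^{T_\beta^{-1}},T_{\alpha'_0}^{T_\alpha})$, is not the one this configuration provides (the correct role assignment gives $Co(T_{\alpha'_0}^{T_\beta^{-1}},T_\gamma^{T_\alpha})$ up to cyclic rotation). The paper's actual proof confirms this: it uses $Br_4(T_\beta,T_{\alpha'_0})$, $Br_3(T_\gamma,T_\alpha)$ and $Co(T_\beta,T_\gamma)$.

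There is also a missing ingredient that no amount of relabelling fixes. For the $w(\alpha)=1$ branch the paper's chain of equivalences pivots on the identity $Co(T_\gamma^{T_\beta T_\alpha},T_{\alpha'_0})$ supplied by Lemma \ref{lem:R9}, which is \emph{not} a consequence of the $R1$--$R8$ relations on these four vertices: it is extracted from the ordinary-puncture relation $R9$ (a $Cyl$ relation in the mutated triangulation $\mu_\alpha(\Delta)$). Your proposal never invokes $R9$ or Lemma \ref{lem:R9}; instead Step~2 tries to reach the conclusion by ``rewriting $T_3^{T_2^{-1}}$ as $T_3^{T_1}$ modulo the available commutations'' and ``rewriting $T_4^{T_1}$ as $T_2^{T_4}$.'' These are not legitimate moves --- a conjugator cannot be swapped for a different group element merely because some braid and commutation relations are available, and Lemma \ref{lem:basic} only transports a conjugation across a genuine $Br_3$-pair, which $(T_\beta,T_\gamma)$ is not here. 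The $w(\alpha)\neq 1$ branch does, as you suspect, reduce directly to $R8$, but the $w(\alpha)=1$ branch requires the puncture input, and your argument as written cannot produce it.
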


\begin{proof}
  If $w(\alpha)=1$, 
  then 
  $$
\begin{array}{rcl}
 Br_3(T_\gamma^{T_\alpha},T_\beta^{T_{\alpha'_0}};\Delta') 
 &\xLeftrightarrow{Br_4(T_\beta,T_{\alpha'_0})}&  Br_3(T_\gamma^{T_\alpha},T_{\beta}^{T_\beta^{-1}T_{\alpha'_0}^{-1}};\Delta')    \\
 &\xLeftrightarrow{\text{Lemma \ref{lem:R9}: } Co(T_{\gamma}^{T_\beta T_\alpha },T_{\alpha'_0};\Delta')}&  Br_3(T_\gamma^{T_\alpha},T_{\beta};\Delta')    \\
     &\xLeftrightarrow{Br_3(T_{\gamma},T_\alpha;\Delta')}  &  Br_3(T^{-1}_\gamma T_\alpha T_\gamma,T_{\beta};\Delta') \\
     &\xLeftrightarrow{Co({T_\beta},T_{\gamma};\Delta')}& Br_3(T_\alpha,T_{\beta};\Delta'): R2.
\end{array}
$$  

If $w(\alpha)\neq 1$, then $Co(T_\gamma^{T_\alpha},T_\beta^{T_{\alpha'_0}};\Delta')$ follows by the relation $R8$.

The proof is complete.
\end{proof}

The following lemma is important for us to prove that $h_{\Delta,\mu_\alpha\Delta}$ preserves the relations $R9$.

\begin{lemma}\label{lem:R90}
    Assume that $x_1,x_2\cdots x_n, y$ and $z$ satisfy the following relations:

$\bullet$  $Br_3(x_i,x_{i+1}) \;\; \text{mod } n$ and $Co(x_i,x_j)$ for $i-j\neq \pm 1 (\text{mod } n)$, 

$\bullet$ $x_k=z^y$ for some $k>3$,

$\bullet$ $Br_{3}(y,z), Br_{3}(x_{k-1},y), Br_3(z,x_{k+1})$, $Co(y,x_i)$ for $i\neq k-1,k$ and $Co(z,x_i)$ for $i\neq k,k+1$. 

Then $Cyl(x_1,\cdots,x_n)$ holds if and only if $Cyl(x_2^{x_1}, x_3,\cdots,x_{k-1},y,z,x_{k+1}\cdots, x_n)$.
\end{lemma}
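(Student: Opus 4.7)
The plan is to reduce the cyclic relation $Cyl(x_1,\cdots,x_n)$ step by step to the cyclic relation for the new sequence via three successive rewrites: replace the leading pair $x_1 x_2$ by $x_2^{x_1}\cdot x_1$ using $Br_3(x_1,x_2)$ in the form $x_1 x_2 = x_2^{x_1}\, x_1$; transport the stray $x_1$ to the right past $x_3,\cdots,x_{k-1}$ via the commutation relations $Co(x_1,x_j)$ for $3\le j\le k-1$; and then substitute $x_k=yzy^{-1}$ to expose the pair $(y,z)$. Concretely, these manipulations rewrite the leftmost block
\[
x_1 x_2 x_3 \cdots x_{k-1} x_k = x_2^{x_1}\, x_3 \cdots x_{k-1}\cdot x_1 x_k
\]
on both sides of $Cyl$, and analogous rewrites apply to the tail blocks $x_1 x_2 \cdots x_{n-2}$ and $x_1 x_2 \cdots x_{n-1}$.

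Next, using $x_k=yzy^{-1}$ together with $Br_3(x_{k-1},y)$, $Br_3(y,z)$, $Br_3(z,x_{k+1})$ and the commutation hypotheses on $y,z$, I would collapse the three-term middle segment $x_{k-1}\cdot x_1 x_k\cdot x_{k+1}$ into $y\cdot x_1 z\cdot x_{k+1}\cdot y^{-1}$ and then pull the trailing $y^{-1}$ to the far right of the entire product using $Co(y,x_j)$ for $j>k$. Because the trailing $y^{-1}$ appears symmetrically on both sides of $Cyl$, it cancels, and what remains is precisely $Cyl(x_2^{x_1},x_3,\cdots,x_{k-1},y,z,x_{k+1},\cdots,x_n)$. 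Every rewrite above uses only the relations listed in the hypotheses and is reversible, so the two cyclic identities are logically equivalent.

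A cleaner conceptual route would be to first verify that the assignment $(x_1,\cdots,x_n)\mapsto (x_2^{x_1},x_3,\cdots,x_{k-1},y,z,x_{k+1},\cdots,x_n)$ preserves every braid relation $Br_3$ and commutation relation $Co$ between consecutive and non-consecutive entries of the tuple; this is a direct check analogous to the relation-preservation arguments in Section \ref{sec:proofofquiverbraidgroup}. The lemma then reduces to the statement that under such a substitution the single identity $Cyl$ transforms correctly. The main obstacle will be the careful bookkeeping of the conjugation $y^{-1}$ introduced by $x_k=yzy^{-1}$: this spurious factor appears once in the top-level product $x_1\cdots x_n$ and also, depending on the relative positions of $k$ and $n$, inside the tail blocks $x_1\cdots x_{n-2}$ or $x_1\cdots x_{n-1}$. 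Tracking that each occurrence is handled symmetrically on the two sides of $Cyl$, and that the available commutations $Co(y,x_j)$ and $Co(z,x_j)$ genuinely suffice to transport every $y^{\pm 1}$ to a position where it can be canceled, will be the technical heart of the argument.
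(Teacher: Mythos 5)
Your plan of turning one $Cyl$ word into the other by purely local moves (splitting $x_1x_2=x_2^{x_1}x_1$, substituting $x_k=yzy^{-1}$, and transporting the stray factors by commutations) cannot be completed as described, for two concrete reasons. First, the stray $x_1$ can indeed be pushed past $x_3,\dots,x_{k-1}$ and, via $Co(x_1,y)$, $Co(x_1,z)$, past $y,z,x_{k+1},\dots,x_{n-1}$, but it then meets $x_n$, with which it satisfies $Br_3$, not $Co$; likewise the trailing $y^{-1}$ produced by $x_k=yzy^{-1}$ commutes with everything until it reaches the copy of $x_{k-1}$ in the \emph{second} block of the $Cyl$ word, where $Br_3(x_{k-1},y)$ stops it. So the spurious factors do not ``appear symmetrically and cancel''; resolving them at the seam is the whole difficulty. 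Second --- and this is the essential missing idea --- your closing claim that ``every rewrite uses only the relations listed in the hypotheses'' is too strong: the paper's proof necessarily invokes the hypothesis $Cyl(x_1,\dots,x_n)$ itself, in the rotated form $Cyl(x_3,\dots,x_n,x_1,x_2)$, as a substitution in the middle of the derivation in order to cancel a long common sub-word of the two sides; only after that cancellation does the problem collapse to the single commutation $Co(x_{n-1}^{-1}\cdots x_{k-1}^{-1}yx_{k-1}x_kx_{k+1}\cdots x_{n-1},\,x_n)$, which is then verified using $x_k^{-1}x_{k-1}^{-1}yx_{k-1}x_k=yx_{k-1}y^{-1}$ together with $Co(z,x_{k-1})$. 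An argument that never uses one $Cyl$ relation to derive the other cannot establish the implication.

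Two further points. The paper first cyclically rotates both relations (using the standard fact that $Cyl(x_1,\dots,x_n)$ is equivalent to $Cyl(x_2,\dots,x_n,x_1)$ in the presence of the adjacent braid relations), so that the conjugated entry $x_1x_2x_1^{-1}$ sits at the end of the tuple; this is what makes the bookkeeping of the stray $x_1^{\pm 1}$ tractable and is absent from your outline. And your ``cleaner conceptual route'' --- checking that the substitution preserves all $Br_3$ and $Co$ relations among the entries --- does not suffice: preservation of the pairwise relations says nothing about how the length-$(2n-2)$ relation $Cyl$ transforms, which is precisely the content of the lemma and has to be computed directly.
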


\begin{proof}
It suffices to prove that the first relation implies the second one. Recall we have 
$$Cyl(x_1,\cdots,x_n)\sim Cyl(x_3,\cdots, x_n,x_1,x_2)$$ and $$Cyl(x_2^{x_1}, x_3,\cdots,x_{k-1},y,z,x_{k+1}\cdots, x_n)\sim Cyl(x_3,\cdots,x_{k-1},y,z,x_{k+1}\cdots, x_n,x_2^{x_1}),$$
we shall prove that 
$$
\begin{array}{rcl}
     & & (x_3\cdots x_{k-1}yzx_{k+1}\cdots x_n x_1x_2x_1^{-1})  x_3\cdots x_{k-1}yzx_{k+1}\cdots x_{n-2}x_{n-1}  \\
     &=& (x_4\cdots x_{k-1}yzx_{k+1}\cdots x_n x_1x_2x_1^{-1}x_3)\cdots x_{k-1}yzx_{k+1}\cdots x_{n-1}x_n\\
     &\Leftrightarrow& 
     x_3\cdots x_{k-1}x_{k}yx_{k+1}\cdots x_n x_1x_2x_1^{-1}x_3\cdots x_{k-1}x_{k}yx_{k+1}\cdots x_{n-1}\\
     &=& x_4\cdots x_{k-1}x_k y x_{k+1}\cdots x_n x_1x_2 x_1^{-1}x_3\cdots x_{k-1}x_ky x_{k+1}\cdots x_{n}\\
     &\xLeftrightarrow[Co(x_1,x_3\cdots x_{k-2})]{Co(y,x_{k+1}\cdots x_n x_2^{x_1} x_{k-2})}& 
      x_3\cdots x_{k-1}x_{k}x_{k+1}\cdots x_n x_1x_2x_3\cdots x_1^{-1}y x_{k-1}x_{k}yx_{k+1}\cdots x_{n-1}\\
     &=& x_4\cdots x_{k-1}x_k x_{k+1}\cdots x_n x_1x_2 x_3\cdots  x_1^{-1}y x_{k-1}x_ky x_{k+1}\cdots x_{n}\\
     &\xLeftrightarrow{Cyl(x_3,\cdots, x_n,x_1,x_2)}&
     x_{n}^{-1}\cdots x_{k-1}^{-1}x_1^{-1}y x_{k-1}x_{k}yx_{k+1}\cdots x_{n-1}\\
     &=& x_1^{-1}x_{n}^{-1}\cdots x_{k-1}^{-1}x_1^{-1}y x_{k-1}x_{k}yx_{k+1}\cdots x_{n}\\
     &\xLeftrightarrow[Co(x_1,x_{k-1}\cdots x_{n-1})]{Br_3(x_1,x_{n}), Co(y,x_{k+1}\cdots x_n)}&  
     x_{n-1}^{-1}\cdots x_{k-1}^{-1} y x_{k-1}x_{k}x_{k+1}\cdots x_{n-1}\\
     &=& x_{n}^{-1}\cdots x_{k-1}^{-1} y x_{k-1}x_{k}x_{k+1}\cdots x_{n}\\
     &\Leftrightarrow& Co(x_{n-1}^{-1}\cdots x_{k-1}^{-1} y x_{k-1}x_{k}x_{k+1}\cdots x_{n-1},x_n).
\end{array}
$$
As $x_{k}^{-1}x_{k-1}^{-1}yx_{k-1}x_k=(yz^{-1}y^{-1})yx_{k-1}y^{-1}(yzy^{-1})=yx_{k-1}y^{-1}$, we have 
$$x_{n-1}^{-1}\cdots x_{k-1}^{-1} y x_{k-1}x_{k}x_{k+1}\cdots x_{n-1}=yx_{k-1}y^{-1}.$$ Thus, $Co(x_{n-1}^{-1}\cdots x_{k-1}^{-1} y x_{k-1}x_{k}x_{k+1}\cdots x_{n-1},x_n)$ follows.

The proof is complete.
\end{proof}

Let $\Delta$ be ordinary triangulations of $\Sigma$, and let $p$ be a puncture. 
For any sequence of mutations $\mu:\Delta\to \mu\Delta$ that satisfies the requirement for the relation $R9$, denote by $\mathcal R(\mu)$ the corresponding instance of $R9$ for $p$ in $\Delta$ under $\mu$.

\begin{lemma}\label{lem:R9unique}
For any two mutation sequences $\mu:\Delta\to \mu\Delta$ and $\mu':\Delta\to \mu'\Delta$ satisfying the condition for relation $R9$, we have that $\mathcal R(\mu)$ holds if and only if $\mathcal R(\mu')$ holds (denoted $\mathcal R(\mu)\sim \mathcal R(\mu')$), provided that the relations $R1$ through $R8$ are satisfied. Consequently, it suffices to choose a single mutation sequence $\mu:\Delta\to \mu\Delta$ to define the relation $R9$ for each puncture $p$ in $Br_\Delta$.  
\end{lemma}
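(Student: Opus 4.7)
The strategy is to show that any two admissible mutation sequences $\mu,\mu':\Delta\to\mu\Delta,\mu'\Delta$ (each step flipping a loop at $p$ and strictly reducing the number of such loops) can be joined by a finite sequence of elementary moves, each preserving the cyclic relation modulo $R1$--$R8$. First I would reduce to the case $\mu\Delta=\mu'\Delta$: if the two endpoint triangulations differ, then since both have no loops at $p$, the cyclic list of arcs incident to $p$ is determined by the local topology of $\Delta$ near $p$ and is the same in both. Hence $\mu\Delta$ and $\mu'\Delta$ differ only by flips of arcs not incident to $p$, which commute with any loop-flip at $p$ and hence can be prepended to $\mu$ (or $\mu'$) without affecting admissibility or changing $\mathcal R$.

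With $\mu\Delta=\mu'\Delta$ fixed, I would induct on $\mathrm{length}(\mu)+\mathrm{length}(\mu')$. By the connectivity of the flip graph restricted to admissible sequences, any two admissible $\mu,\mu'$ differ by a composition of two kinds of local moves: (i) transposition $\mu_{\ell_1}\mu_{\ell_2}\leftrightarrow\mu_{\ell_2}\mu_{\ell_1}$ of two consecutive commuting loop-flips, and (ii) a pentagon-style move between two adjacent loop-flips $\mu_{\ell_1}\mu_{\ell_2}$ sharing a common triangle. Case (i) is immediate: the cyclic list at $p$ in the intermediate triangulations is the same, and $\mathcal R(\mu)$ and $\mathcal R(\mu')$ differ only by a reordering handled by the $R1$ commutation between $T_{\ell_1}$ and $T_{\ell_2}$. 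Case (ii) is exactly where Lemma \ref{lem:R90} applies: flipping a loop $\ell$ at $p$ corresponds, in the cyclic list, to replacing an element $x_k$ by a conjugated pair $(y,z)$ with $x_k=z^y$, where $y,z$ are the other two edges of the triangle containing $\ell$. Lemma \ref{lem:R90} then gives the desired equivalence $Cyl(x_1,\ldots,x_n)\sim Cyl(x_2^{x_1},x_3,\ldots,x_{k-1},y,z,x_{k+1},\ldots,x_n)$, using only the braid and commutation relations already present in the hypotheses.

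The main obstacle will be verifying case (ii) in full detail, namely confirming that every pentagon-type move between admissible sequences really does correspond to an instance of Lemma \ref{lem:R90}. Concretely one has to (a) identify which element of the cyclic list at $p$ in the intermediate triangulation corresponds to the loop $\ell$ being flipped, (b) check that the image of $T_{\alpha'_i}$ under the transport $h^{\mu_{\ell}}_{\Delta,\mu_{\ell}\Delta}$ (as defined just before Theorem \ref{th:brgroup}) takes exactly the conjugated form $z^y$, and (c) verify that all the auxiliary braid and commutation hypotheses of Lemma \ref{lem:R90} ($Br_3(y,z)$, $Br_3(x_{k-1},y)$, $Br_3(z,x_{k+1})$, and the various $Co$ relations) hold as instances of $R1$--$R4$ in $Br_\Delta$. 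This is essentially a local check in the neighborhood of $p$, using the explicit description of $Q_\Delta$ at $p$ and the equivalences from Remark \ref{rem:equi}.
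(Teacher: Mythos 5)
Your overall strategy---localize the discrepancy between $\mu$ and $\mu'$ to a pair of adjacent loop-flips and resolve the essential case with Lemma \ref{lem:R90}---is the same as the paper's, and your identification of Lemma \ref{lem:R90} as the engine of the argument is correct. However, there are two genuine gaps. First, the step ``by the connectivity of the flip graph restricted to admissible sequences, any two admissible $\mu,\mu'$ differ by a composition of transpositions and pentagon-style moves'' is precisely the hard combinatorial content of the lemma and cannot be taken for granted: admissibility (each flip strictly decreasing the number of loops at $p$) is a constraint on the whole sequence, and it is not obvious that the elementary moves stay inside the admissible class. The paper replaces this by an induction on $n_p(\Delta)$: it writes $\mu=\cdots\mu_{\beta_1}\vec\mu'\vec\mu$ and $\mu'=\cdots\mu_{\beta_2}\vec\mu''\vec\mu$ with $\mu_{\beta_1}\mu_{\beta_2}\neq\mu_{\beta_2}\mu_{\beta_1}$, strips off $\vec\mu,\vec\mu',\vec\mu''$ using the commutativity of the transport operations $h^{\mu_\alpha}$ together with the induction hypothesis (applied to triangulations with fewer loops at $p$), and concludes that $\beta_1,\beta_2$ must be two sides of a single triangle. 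You would need to supply an argument of this kind rather than appeal to connectivity. Your preliminary reduction to $\mu\Delta=\mu'\Delta$ is also both unjustified (flipping the loops in different orders can produce genuinely different terminal triangulations) and unnecessary, since $\mathcal R(\mu)$ and $\mathcal R(\mu')$ are compared as relations in $Br_\Delta$ after transport, not in the terminal triangulations.

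Second, your case (ii) does not cover all configurations of two adjacent non-commuting loop-flips. Once $\beta_1,\beta_2$ are identified as two sides of a triangle with third side $\beta_3$, the paper splits into two cases according to whether $\beta_3$ is a special loop. Only the case where $\beta_3$ is \emph{not} a special loop (Lemma \ref{lem:R9u2}) reduces to Lemma \ref{lem:R90} via the substitution $x_k\mapsto(y,z)$ with $x_k=z^y$ that you describe. When $\beta_3$ is a special loop (Lemma \ref{lem:R9u1}), the two cyclic relations have the same number of factors and the equivalence is instead a direct computation using $Br_4(T_{\beta_1},T_{\beta_3})$ from $R2$, the relation $R3$, and various commutations; Lemma \ref{lem:R90} does not apply in the form you invoke it. Your plan as written would miss this case entirely.
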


\begin{proof}
    We proceed by induction on $n_p(\Delta)$, the number of loops incident to $p$ in $\Delta$. If $n_p(\Delta)=0$, then the result is trivially true. Now assume that the result holds for all triangulations where $n_p(\Delta)<k$, and consider the case where $n_p(\Delta)=k$. 

Since $\mu\neq \mu'$, we may write $\mu=\cdots\mu_{\beta_1}{\vec\mu'}\vec \mu$ and $\mu'=\cdots\mu_{\beta_2}{\vec \mu''}\vec \mu$, where $\beta_1$ does not appear in $\vec \mu''$, $\beta_2$ does not appear in $\vec \mu'$, ${\vec \mu'}$ commutes with both $\mu_{\beta_2}$ and ${\vec \mu''}$, ${\vec \mu''}$ commutes with $\mu_{\beta_1}$, and $\mu_{\beta_1}\mu_{\beta_2}\neq \mu_{\beta_2}\mu_{\beta_1}$. 
  
We may further assume $\vec \mu={\vec \mu'}={\vec \mu''}=\emptyset$, since otherwise, we have
    $$
    \begin{array}{rcl}
       \mathcal R(\mu) &\sim& \mathcal R(\cdots{\vec \mu''}\mu_{\beta_1}{\vec \mu'}\vec \mu)\sim \mathcal R(\cdots\mu_{\beta_1}{\vec \mu''}{\vec \mu'}\vec \mu)\sim \mathcal R(\cdots\mu_{\beta_1}{\vec \mu'}{\vec \mu''}\vec \mu)\\
        & \sim & \mathcal R(\cdots\mu_{\beta_2}{\vec \mu'}{\vec \mu''}\vec \mu)\sim \mathcal R(\cdots{\vec \mu'}\mu_{\beta_2}{\vec \mu''}\vec \mu)\sim \mathcal R(\mu'),
    \end{array}
    $$
where the second, third and fifth equivalences follow by the fact that the operations $h^{\mu_\alpha}_{\Delta,\mu_\alpha\Delta}$ and $h^{\mu_\beta}_{\Delta,\mu_\beta\Delta}$ commute whenever $\mu_{\alpha}\mu_{\beta}=\mu_\beta \mu_\alpha$, and the first, fourth and sixth equivalences follow by induction hypothesis. 

Thus, $\beta_1,\beta_2$ are two sides of some triangle in $\Delta$. Denote the third side by $\beta_3$. As $\Sigma$ is not a once-punctured torus, we can define $S_i$ as the component of $\Sigma\setminus \beta_i$ that does not contain the triangle $(\beta_1,\beta_2,\beta_3)$, for $i=1,2,3$. For each $i=1,2,3$, fix a sequence of mutations $\vec \mu_i$ at the loops of $\Delta$ within $S_i$, chosen so that the number of loops incident to $p$ decreases after each step. These sequences $\vec\mu_i$ commute with both $\mu_{\beta_1}$ and $\mu_{\beta_2}$ for $i=1,2,3$. By induction hypothesis, we may assume that $\vec \mu_i=0$ for $i=1,2,3$.

Now we consider two cases:

\textbf{(Case 1)} If $\beta_3$ is a special loop, then the result follows by Lemma \ref{lem:R9u1}.

\textbf{(Case 2)} If $\beta_3$ is not a special loop, then the result follows by Lemma \ref{lem:R9u2}.
    
The proof is complete.
\end{proof}

\begin{lemma}\label{lem:R9u1}
    $\mathcal R(\mu)\sim \mathcal R(\mu')$ if $\beta_3$ is a special loop.
\end{lemma}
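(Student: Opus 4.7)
The plan is to exploit the symmetry forced by the special loop. The triangle $(\beta_1,\beta_2,\beta_3)$ with $\beta_3$ a special loop enclosing a special puncture $q$ of order $d:=|q|$ means that $\beta_1$ and $\beta_2$ together bound the bigon around $q$ in $\Delta$; in particular $\beta_1$ and $\beta_2$ are a pair of ``parallel'' arcs with common endpoints, and since $\mu_{\beta_1}\mu_{\beta_2}\ne \mu_{\beta_2}\mu_{\beta_1}$ (together with the reductions already made in the proof of Lemma \ref{lem:R9unique}) both $\beta_1$ and $\beta_2$ must appear among the arcs incident to $p$. Thus the clockwise cyclic list of arcs at $p$ in $\Delta$ has the form $(\alpha_1,\ldots,\alpha_{k-1},\beta_1,\beta_2,\alpha_{k+2},\ldots,\alpha_n)$, with $\beta_1$ and $\beta_2$ consecutive.

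First I would describe the two flips explicitly: $\mu_{\beta_1}$ replaces $\beta_1$ by the arc $\beta_1'$ obtained by pushing $\beta_1$ through the bigon around $q$, so that the cyclic list at $p$ in $\mu_{\beta_1}\Delta$ is $(\alpha_1,\ldots,\alpha_{k-1},\beta_2,\beta_1',\alpha_{k+2},\ldots,\alpha_n)$; symmetrically, in $\mu_{\beta_2}\Delta$ the list is $(\alpha_1,\ldots,\alpha_{k-1},\beta_2',\beta_1,\alpha_{k+2},\ldots,\alpha_n)$. Using $h^{\mu_{\beta_1}}_{\Delta,\mu_{\beta_1}\Delta}$, the relation $\mathcal R(\mu_{\beta_1})$ pulls back to $Br_\Delta$ as
$$Cyl\bigl(T_{\alpha_1},\ldots,T_{\alpha_{k-1}},T_{\beta_2},T_{\beta_1'}^{T_{\beta_1}},T_{\alpha_{k+2}},\ldots,T_{\alpha_n}\bigr),$$
and similarly $\mathcal R(\mu_{\beta_2})$ pulls back to
$$Cyl\bigl(T_{\alpha_1},\ldots,T_{\alpha_{k-1}},T_{\beta_2'}^{T_{\beta_2}},T_{\beta_1},T_{\alpha_{k+2}},\ldots,T_{\alpha_n}\bigr).$$

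Next I would prove a $Br_4$-analogue of Lemma \ref{lem:R90}: given elements satisfying the braid relations coming from the local weight-$d$ configuration at $\beta_3$ (so $Br_4(T_{\beta_i},T_{\beta_3})$ for $i=1,2$ by relation $R2$, together with $R4, R4', R6$ and the bigon special-puncture relation $(R8)$), the ``swap'' transformation $T_{\beta_1}\leftrightarrow T_{\beta_2}$ inside a cyclic word commutes with conjugation by $T_{\beta_1}$, $T_{\beta_2}$ in the appropriate sense. Applying this variant reduces both pulled-back relations to the same cyclic word, yielding $\mathcal R(\mu_{\beta_1})\sim \mathcal R(\mu_{\beta_2})$ modulo the relations $R1$--$R8$.

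The main obstacle will be proving the $Br_4$-analogue of Lemma \ref{lem:R90}. The original lemma uses the identity $T_3T_2T_3T_2^{-1}T_3^{-1}=T_2^{-1}T_3T_2$ coming from $Br_3(T_2,T_3)$ repeatedly to propagate the conjugation through the cyclic word; replacing $Br_3$ by $Br_4$ lengthens this identity and forces the introduction of a compensating letter that must be absorbed using either $R4'$ or $R6$, depending on the direction of the arrow between $\beta_i$ and the neighboring arc $\alpha_{k\pm}$ in $Q_\Delta$. Carefully matching these compensations on the two sides $\mu_{\beta_1}$ and $\mu_{\beta_2}$, and showing that the leftover conjugators cancel via the bigon special-puncture identity, is the computational heart of the argument. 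Once this verification is completed, the equivalence $\mathcal R(\mu_{\beta_1})\sim \mathcal R(\mu_{\beta_2})$ follows directly.
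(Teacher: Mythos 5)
There is a genuine gap, and it starts with the geometric setup. In the context of Lemma \ref{lem:R9unique}, $\beta_1$ and $\beta_2$ are \emph{loops based at $p$} (the sequences $\mu,\mu'$ consist of mutations at loops incident to $p$), so each of them contributes \emph{two} consecutive-pair positions to the cyclic list of ends around $p$, and the triangle $(\beta_1,\beta_2,\beta_3)$ has all three vertices at $p$: $\beta_1$ is an outer loop enclosing both the special loop $\beta_3$ and the inner loop $\beta_2$, which in turn encloses a block of arcs $\gamma_1,\dots,\gamma_s$. Your picture — $\beta_1,\beta_2$ a pair of parallel arcs with common endpoints bounding the bigon around $q$, each appearing once and consecutively in the star of $p$ — is a different (and here impossible) configuration, so the two cyclic words you pull back to $Br_\Delta$ are not the relations $\mathcal R(\mu)$, $\mathcal R(\mu')$ that must be compared. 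Moreover, what has to be compared is not $\mathcal R(\mu_{\beta_1})$ versus $\mathcal R(\mu_{\beta_2})$ for single flips, but the full sequences $\mu=\mu_{\beta_3}\mu_{\beta_2}\mu_{\beta_1}$ and $\mu'=\mu_{\beta_3}\mu_{\beta_1}\mu_{\beta_2}$ that remove all loops at $p$; the correct pulled-back words are
$Cyl(T_{\beta_1}^{T_{\beta_3}^{-1}}, T_{\gamma_1}^{T_{\beta_2}},\dots, T_{\gamma_s},T_{\delta_1}^{T_{\beta_2}T_{\beta_1}},\dots, T_{\delta_t})$ and $Cyl(T_{\beta_1}, T_{\gamma_1}^{T_{\beta_3}T_{\beta_2}},\dots, T_{\gamma_s},T_{\delta_1}^{T_{\beta_2}T_{\beta_1}},\dots, T_{\delta_t})$, obtained using $Br_4(T_{\beta_1},T_{\beta_3})$.

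The second problem is that the entire computational content of your plan — a ``$Br_4$-analogue of Lemma \ref{lem:R90}'' — is left unproved, and it is also not the right tool for this case. Once the two cyclic words above are written down correctly, they differ only in that a few letters are conjugated by $T_{\beta_3}$, and the equivalence reduces to the commutation relations $Co(T_{\beta_3},T_{\gamma_i})$, $Co(T_{\beta_3},T_{\delta_i})$ for $i\geq 2$ together with $Co(T_{\beta_3},T_{\delta_1}^{T_{\beta_2}T_{\beta_1}})$, the last of which follows from $Co(T_{\beta_3},T_{\delta_1})$, $Co(T_{\beta_2},T_{\delta_1})$ and the relation $R3$: $Co(T_{\beta_3},T_{\beta_1}^{T_{\beta_2}})$. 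No propagation identity of the Lemma \ref{lem:R90} type is needed when $\beta_3$ is a special loop; that machinery is what handles the complementary case (Lemma \ref{lem:R9u2}). As written, your argument neither identifies the correct relations to compare nor supplies the lemma it relies on.
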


\begin{proof}
We may assume that there is an arrow from $\beta_1$ to $\beta_2$ in the quiver $Q_\Delta$. We have the loops incident to $p$ in $\Delta$ are $\beta_1,\beta_2,\beta_3$. Thus, we may assume that $\mu=\mu_{\beta_3}\mu_{\beta_2}\mu_{\beta_1}$ and $\mu'=\mu_{\beta_3}\mu_{\beta_1}\mu_{\beta_2}$.

\begin{figure}[h]
    \centering
    \includegraphics[width=8cm]{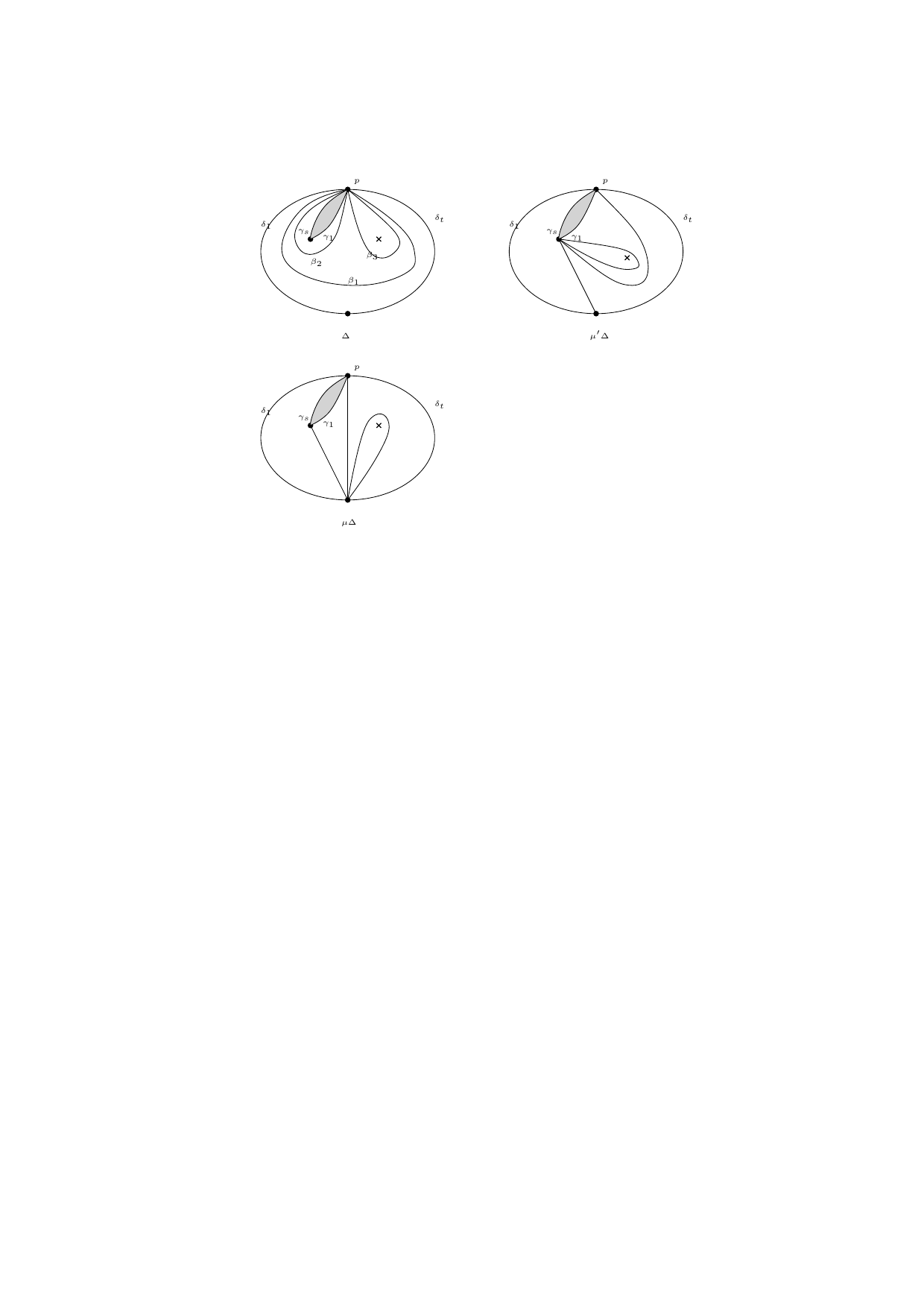}
    \caption{The case $\beta_3$ is a special loop}
\end{figure}

We only consider the case where $\beta_1$ and $\beta_2$ are not the loops in any self-folded triangles, as the other cases can be proved similarly. Suppose that the arcs incident to $p$ in $\Delta$ are $\beta_1,\beta_3 (\text{twice}), \beta_2,\gamma_1,\cdots, \gamma_s,\beta_2,\beta_1,\delta_1,\cdots,\delta_t$.

Thus, by $R2: Br_4(T_{\beta_1},T_{\beta_3})$, we obtain 
$\mathcal R(\mu)=Cyl(T_{\beta_1}^{T_{\beta_3}^{-1}}, T_{\gamma_1}^{T_{\beta_2}},\cdots, T_{\gamma_s},T_{\delta_1}^{T_{\beta_2}T_{\beta_1}},\cdots, T_{\delta_t})$
and $\mathcal R(\mu')=Cyl(T_{\beta_1}, T_{\gamma_1}^{T_{\beta_3}T_{\beta_2}},\cdots, T_{\gamma_s},T_{\delta_1}^{T_{\beta_2}T_{\beta_1}},\cdots, T_{\delta_t})$. 

Then the equivalence $\mathcal R(\mu)\sim \mathcal R(\mu')$ follows from the relations $Co(T_{\gamma_3},T_{\gamma_i}), Co(T_{\gamma_3},T_{\delta_i})$ for all $i\geq 2$, and $Co(T_{\gamma_3},T_{\delta_1}^{T_{\beta_2}T_{\beta_1}})$. The relation $Co(T_{\gamma_3},T_{\delta_1}^{T_{\beta_2}T_{\beta_1}})$ itself follows from the relations $Co(T_{\beta_3},T_{\delta_1})$, $Co(T_{\beta_2},T_{\delta_1})$ and $R3:Co(T_{\beta_3},T_{\beta_1}^{T_{\beta_2}})$.

The proof is complete.
\end{proof}

\begin{lemma}\label{lem:R9u2}
    $\mathcal R(\mu)\sim \mathcal R(\mu')$ if $\beta_3$ is not a special loop.
\end{lemma}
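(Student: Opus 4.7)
The approach mirrors Lemma~\ref{lem:R9u1}, but now the matching of the two cyclic relations requires Lemma~\ref{lem:R90} in place of a direct commutativity argument. Tracing endpoints in the cyclic triangle $(\beta_1,\beta_2,\beta_3)$ and using that $\beta_1,\beta_2$ are loops at $p$ forces $\beta_3$ to be a loop at $p$ as well; since we are excluding the case already handled in Lemma~\ref{lem:R9u1}, $\beta_3$ is an ordinary loop, enclosing either an ordinary puncture, a $0$-puncture, or a more complex subsurface. From the inductive reduction already carried out in Lemma~\ref{lem:R9unique} we may further assume that $\mu$ begins with $\mu_{\beta_1}$, that $\mu'$ begins with $\mu_{\beta_2}$, and that the subsequent mutations in the two sequences can be taken to be common.

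Next I would write the complete clockwise list of arcs incident to $p$ in $\Delta$ explicitly, noting that each of $\beta_1,\beta_2,\beta_3$ contributes two entries whose relative cyclic positions are dictated by the local picture at $p$. Applying $h^{\mu_{\beta_1}}_{\Delta,\mu_{\beta_1}\Delta}$ (resp.\ $h^{\mu_{\beta_2}}_{\Delta,\mu_{\beta_2}\Delta}$) to the $R9$-recipe and then propagating via $R1$--$R4$ yields explicit $Cyl$-expressions for $\mathcal R(\mu)$ and $\mathcal R(\mu')$, in which the entries coming from $\beta_1$ (resp.\ $\beta_2$) are replaced by the conjugates dictated by the local quiver $Q_\Delta$.

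To match the two expressions I would invoke Lemma~\ref{lem:R90}: the substitution $x_k=z^y$ corresponds precisely to the mutation formula at $\beta_1$ or $\beta_2$, expressing the new arc as a conjugate of one neighbouring loop by the other; the commutativity hypotheses $Co(y,x_i)$ and $Co(z,x_i)$ for $i\neq k-1,k,k+1$ are supplied by $R1$, while the braid constraints $Br_3(y,z)$ and $Br_3(x_{k\pm 1},\cdot)$ come from $R2$ applied to the edges of $(\beta_1,\beta_2,\beta_3)$. A double application of Lemma~\ref{lem:R90} (with a cyclic rotation in between) then absorbs both occurrences of $\beta_i$ in the cyclic list and identifies $\mathcal R(\mu)$ with $\mathcal R(\mu')$. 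In the weighted case where some $\beta_i$ encloses a $0$-puncture, the $Br_3$-relation is replaced by the $Br_4$-relation from the second clause of $R2$, and a parallel variant of Lemma~\ref{lem:R90} applies, verified in the same manner.

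The main obstacle is the fact that each loop $\beta_i$ appears twice in the cyclic list at $p$, so a single mutation $\mu_{\beta_i}$ modifies two positions of the $Cyl$-relation simultaneously; ensuring that the two conjugations produced by $h^{\mu_{\beta_i}}_{\Delta,\mu_{\beta_i}\Delta}$ at these two spots are compatible with a repeated application of Lemma~\ref{lem:R90} (or its cyclic-rotation variant) is where the bulk of the bookkeeping lies. The weighted variant adds a secondary layer, but should follow the template already used in Lemmas~\ref{lem:r21} and~\ref{lem:R231}.
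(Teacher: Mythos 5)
Your proposal follows essentially the same route as the paper: reduce to $\mu=\mu_{\beta_3}\mu_{\beta_2}\mu_{\beta_1}$ versus $\mu'=\mu_{\beta_3}\mu_{\beta_1}\mu_{\beta_2}$, write out the explicit clockwise list of arcs incident to $p$ (with each of $\beta_1,\beta_2,\beta_3$ appearing twice), compute the two resulting $Cyl$-expressions, and identify them via Lemma~\ref{lem:R90}, whose hypotheses are supplied by $R1$ and $R2$. The paper likewise records the two $Cyl$-forms explicitly and concludes by Lemma~\ref{lem:R90}, deferring the self-folded/weighted variants with a "proved similarly," so your outline matches its proof in both structure and level of detail.
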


\begin{proof}
We may assume that there is an arrow from $\beta_1$ to $\beta_2$ in the quiver $Q_\Delta$. We have the loops incident to $p$ in $\Delta$ are $\beta_1,\beta_2,\beta_3$. Thus, we may assume that $\mu=\mu_{\beta_3}\mu_{\beta_2}\mu_{\beta_1}$ and $\mu'=\mu_{\beta_3}\mu_{\beta_1}\mu_{\beta_2}$.

\begin{figure}[h]
    \centering
    \includegraphics[width=8cm]{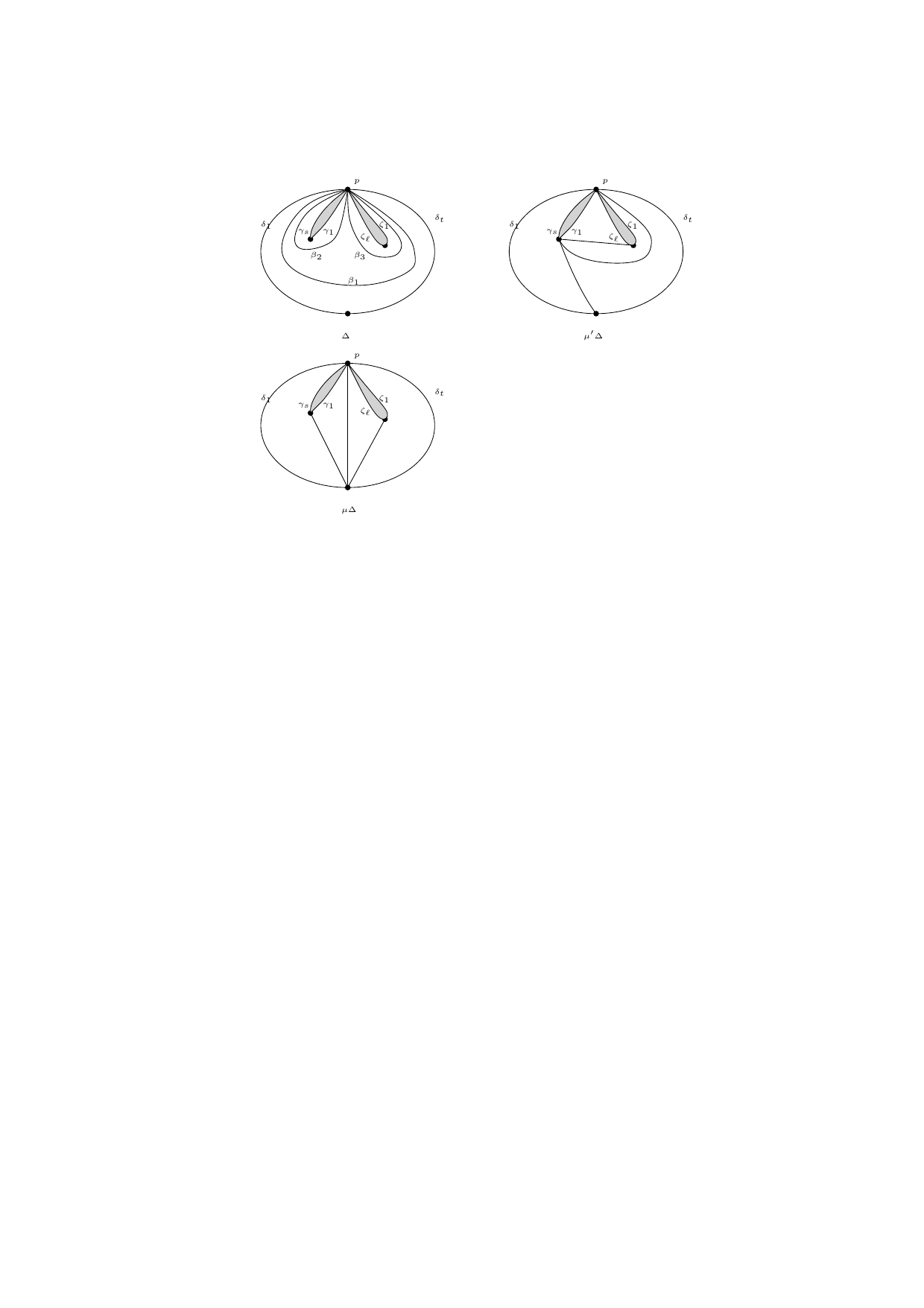}
    \caption{The case $\beta_3$ is not a special loop}
\end{figure}

We only consider the case where $\beta_1$, $\beta_2$ and $\beta_3$ are not the loops in any self-folded triangles, as the other cases can be proved similarly. Suppose that the arcs incident to $p$ in $\Delta$ are $\beta_1,\beta_3,\zeta_1,\cdots, \zeta_\ell,\beta_3, \beta_2,\gamma_1,\cdots, \gamma_s,\beta_2,\beta_1,\delta_1,\cdots,\delta_t$. 

By calculation, we obtain 
$$
\begin{cases}
 \mathcal R(\mu)=Cyl(T_{\zeta_2},\cdots, T_{\zeta_\ell},T_{\beta_3}, T_{\gamma_1}^{T_{\beta_2}},\cdots, T_{\gamma_s},T_{\delta_1}^{T_{\beta_2}T_{\beta_1}},\cdots, T_{\delta_t}, T_{\zeta_1}^{T_{\beta_1}T_{\beta_3}}),\\
 \mathcal R(\mu')=Cyl(T_{\zeta_2},\cdots, T_{\zeta_\ell}, T_{\gamma_1}^{T_{\beta_3}T_{\beta_2}},\cdots, T_{\gamma_s},T_{\delta_1}^{T_{\beta_2}T_{\beta_1}},\cdots, T_{\delta_t}, T_{\beta_1}, T_{\zeta_1}^{T_{\beta_3}}).
\end{cases}
$$

Then $\mathcal R(\mu)\sim \mathcal R(\mu')$ follows by Lemma \ref{lem:R90}.

The proof is complete.
\end{proof}

\begin{lemma}\label{lem:R9a1}
    Let $\Delta$ be a triangulation of $\Sigma$ and $\alpha\in \Delta$ be an internal arc. For any puncture $p$, if the number of arcs incident to $p$ in $\Delta$ differs from that in $\Delta'=\mu_\alpha(\Delta)$, 
    then the relation $R9$ for $p$ in $Br_{\Delta}$ holds in $Br_{\mu_\alpha\Delta}$ under $h_{\mu_\alpha\Delta,\Delta}$. 
\end{lemma}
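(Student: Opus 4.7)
The plan is to use Lemma \ref{lem:R9unique}, which frees us to represent the $R9$ relation for $p$ via any convenient loop-reducing mutation sequence. The first observation is that the number of arcs incident to $p$ changes under the flip of $\alpha$ only in a short list of local configurations: either $\alpha$ has both endpoints at $p$ (so $\alpha$ is a loop at $p$) and the flipped arc $\alpha'$ is not, or dually $\alpha'$ is a loop at $p$ and $\alpha$ is not, or $\alpha$ is a radius of a self-folded triangle enclosing $p$ (so the flip converts the self-folded configuration and changes the count). In each of these configurations, either $\alpha$ or $\alpha'$ is itself one of the arcs whose mutation strictly reduces the loop count at $p$, so $\mu_\alpha$ can be incorporated as the first step of a loop-reducing sequence in $\Delta$ or in $\Delta'=\mu_\alpha\Delta$.

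Concretely, assume we are in the first configuration, so that $\mu_\alpha$ reduces the loop count at $p$. I would choose a mutation sequence $\mu:\Delta\to\widetilde\Delta$ of the form $\mu=\nu\circ\mu_\alpha$ where $\nu:\Delta'\to\widetilde\Delta$ continues to strictly decrease the number of loops at $p$ at each step, so that $\widetilde\Delta$ contains no loops at $p$. By the definition of $R9$ together with Lemma \ref{lem:R9unique}, the relation $\mathcal{R}(\mu)$ in $Br_\Delta$ is the push
$$\mathcal{R}(\mu)=h^\mu_{\Delta,\widetilde\Delta}\bigl(\mathcal{R}_{\widetilde\Delta}\bigr)
=h_{\Delta,\Delta'}\,h^\nu_{\Delta',\widetilde\Delta}\bigl(\mathcal{R}_{\widetilde\Delta}\bigr),$$
while the relation $\mathcal{R}(\nu)$ in $Br_{\Delta'}$ is $h^\nu_{\Delta',\widetilde\Delta}(\mathcal{R}_{\widetilde\Delta})$. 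Applying $h_{\mu_\alpha\Delta,\Delta}=h_{\Delta',\Delta}$ to $\mathcal{R}(\mu)$ cancels the outer $h_{\Delta,\Delta'}=h_{\Delta',\Delta}^{-1}$, leaving exactly $\mathcal{R}(\nu)$, which by Lemma \ref{lem:R9unique} represents the $R9$ relation for $p$ in $Br_{\Delta'}$; this is the desired statement. The dual configuration, where $\alpha'$ rather than $\alpha$ is the loop-reducing mutation in $\Delta'$, is handled by swapping the roles of $\Delta$ and $\Delta'$ and using $h_{\Delta,\mu_\alpha\Delta}^{-1}$ in place of $h_{\mu_\alpha\Delta,\Delta}$.

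The main obstacle will be the case where $\alpha$ is the radius of a self-folded triangle around $p$: here $\mu_\alpha$ does not simply flip an arc, but according to our conventions in Section \ref{sec:taggedtri} it produces a tagged configuration, so one must verify that $R9$ as formulated on both sides matches under $h_{\mu_\alpha\Delta,\Delta}$ after passing through the microtagging isomorphism of Lemma \ref{lem:micro}. Once this is settled, the argument is structural and reduces to checking that the intermediate loop-reducing sequence $\nu$ exists in every local configuration causing the count change; this follows from Lemma \ref{le:f-admissible triangulations} and a direct inspection of the few subcases enumerated above.
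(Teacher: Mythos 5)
Your overall strategy --- reduce to Lemma \ref{lem:R9unique} by arranging the flip $\mu_\alpha$ to be the first step of a loop-reducing sequence --- is the right starting point, and it is how the paper disposes of one of the two directions. But the central step of your argument has a genuine gap. You write $\mathcal{R}(\mu)=h_{\Delta,\Delta'}\,h^{\nu}_{\Delta',\widetilde\Delta}(\mathcal{R}_{\widetilde\Delta})$ and then cancel the outer factor against $h_{\mu_\alpha\Delta,\Delta}$. This conflates two different maps: the operator appearing in the definition of $R9$ is the formal substitution $h^{\mu_\alpha}_{\Delta,\mu_\alpha\Delta}$, which sends $T_\beta\mapsto T_{\alpha}T_\beta T_{\alpha}^{-1}$ precisely when there is an arrow from $\beta$ to $\alpha$ in $Q_{\Delta}$, whereas the map you apply on the outside is $h_{\mu_\alpha\Delta,\Delta}$ from Theorem \ref{thm:quiverbraidgroup}, whose inverse conjugates by $T_{\alpha}^{-1}$ and only on generators with an arrow from $\alpha$ to $\beta$. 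These agree only on generators not adjacent to $\alpha$ in the quiver. The composite $h_{\mu_\alpha\Delta,\Delta}\circ h^{\mu_\alpha}_{\Delta,\mu_\alpha\Delta}$ therefore conjugates by $T_{\alpha'}$ exactly the generators adjacent to $\alpha$ and fixes the rest, so $h_{\mu_\alpha\Delta,\Delta}(\mathcal{R}(\mu))$ is not $\mathcal{R}(\nu)$ but a cyclic relation in which some, and only some, of the factors have been conjugated. Showing that this deformed relation still follows from the defining relations of $Br_{\Delta'}$ is the actual content of the lemma: in the paper it occupies Cases 1--3, relies on the auxiliary identity of Lemma \ref{lem:R90} relating $Cyl(x_1,\dots,x_n)$ to a version with conjugated entries, and in the configuration where several sides of the flipped quadrilateral meet $p$ it also needs an induction on the number of loops incident to $p$.

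Relatedly, you have the easy and hard directions reversed. The direction that genuinely reduces to Lemma \ref{lem:R9unique} is the one where $\Delta$ has \emph{fewer} arcs at $p$ than $\Delta'$: there $\mu_{\alpha'}$ is loop-reducing at $\Delta'$, and one checks that $h_{\Delta',\Delta}$ literally coincides with the substitution $h^{\mu_{\alpha'}}_{\Delta',\Delta}$ (mutation reverses the arrows at the flipped vertex, which exactly reconciles the two formulas), so the pushforward of $R9$ for $\Delta$ is an instance of $R9$ for $\Delta'$ and Lemma \ref{lem:R9unique} finishes. In the direction you treat as a formal cancellation --- $\mu_\alpha$ reducing the count, i.e. $\Delta$ having more arcs at $p$ --- no such coincidence holds, and the explicit braid computations cannot be avoided.
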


\begin{proof}
If the number of arcs incident to $p$ in $\Delta$ is less than that in $\Delta'$, then the result follows from Lemma \ref{lem:R9unique}.

We now consider the case that the number of arcs incident to $p$ in $\Delta$ is greater than that in $\Delta'$. Thus, at least one of $s(\alpha_1)$ and $s(\alpha_3)$ is $p$. We may assume that $s(\alpha_1)=p$.

{\bf Case 1}: Suppose $s(\alpha_3)\neq p$. Then $s(\alpha_2),s(\alpha_4)\neq p$. 

Let $\mu:\Delta\to \mu\Delta$ be a mutation sequence that satisfies the requirements for the relation $R9$. Then the sequence $\mu:\Delta'\to \mu\Delta'$ also satisfies the requirements for the relation $R9$. Assume the relations $R9$ in $Br_\Delta$ and $Br_{\Delta'}$ under $\mu$ are of form 
$$R9\Delta:Cyl(T_{\alpha_4},X_1,\cdots,X_n,T_{\alpha_1},T_{\alpha})$$
and 
$$R9\Delta':Cyl(T_{\alpha_4},X_1,\cdots,X_n,T_{\alpha_1})$$
for some Laurent monomials $X_1,\cdots,X_n$ in $T_{\beta},\beta\in \Delta\setminus \{\alpha_1,\alpha_2,\alpha_3,\alpha_4,\alpha\}$.

Then we have $h_{\Delta',\Delta}(R9\Delta)=Cyl(T_{\alpha_4},X_1,\cdots,X_n,T_{\alpha_1}^{T_{\alpha'}},T_{\alpha'})$, which follows from $R9\Delta'$, $Co(T_{\alpha'},X_i)$ for all $i=1,\cdots,n$ and $Br_3(T_{\alpha'},T_{\alpha_1})$.

{\bf Case 2}: Suppose $s(\alpha_3)=p$ and $s(\alpha_2),s(\alpha_4)\neq p$.

Let $\mu\mu_\alpha:\Delta\to \mu\mu_\alpha\Delta$ be a mutation sequence satisfying the requirements for the relation $R9$. Then the sequence $\mu:\Delta'\to \mu\Delta'$  satisfies the requirements for the relation $R9$. 

{\bf Case 2.1}: $\alpha$ is not a special loop.

We may assume the relations $R9$ in $Br_\Delta$ and $Br_{\Delta'}$ under $\mu$ are of form 
$$R9\Delta:Cyl(T_{\alpha_1},T^{T_{\alpha}}_{\alpha_4},X_1,\cdots,X_n,T_{\alpha_3},T_{\alpha_2}^{T_{\alpha}})$$
and 
$$R9\Delta':Cyl(T_{\alpha_1},T_{\alpha_4},X_1,\cdots,X_n,T_{\alpha_3},T_{\alpha_2})$$
for some Laurent monomials $X_1,\cdots,X_n$ in $T_{\beta},\beta\in \Delta\setminus \{\alpha_1,\alpha_2,\alpha_3,\alpha_4,\alpha\}$.

Then we have $h_{\Delta',\Delta}(R9\Delta)=Cyl(T^{T_{\alpha'}}_{\alpha_1},T^{T_{\alpha'}}_{\alpha_4},X_1,\cdots,X_n,T^{T_{\alpha'}}_{\alpha_3},T_{\alpha_2}^{T_{\alpha'}})$, which follows from $R9\Delta'$, $Co(T_{\alpha'},X_i)$ for all $i=1,\cdots,n$.

{\bf Case 2.2}: $\alpha$ is a special loop. 

We may assume the relations $R9$ in $Br_\Delta$ and $Br_{\Delta'}$ under $\mu$ are of form 
$$R9\Delta:Cyl(T_{\alpha_1},T^{T_{\alpha}}_{\alpha_4},X_1,\cdots,X_n)$$
and 
$$R9\Delta':Cyl(T_{\alpha_1},T_{\alpha_4},X_1,\cdots,X_n)$$
for some Laurent monomials $X_1,\cdots,X_n$ in $T_{\beta},\beta\in \Delta\setminus \{\alpha_1,\alpha_2,\alpha_3,\alpha_4,\alpha\}$.

Then we have $h_{\Delta',\Delta}(R9\Delta)=Cyl(T^{T_{\alpha'}}_{\alpha_1},T^{T_{\alpha'}}_{\alpha_4},X_1,\cdots,X_n)$, which follows from $R9\Delta'$, $Co(T_{\alpha'},X_i)$ for all $i=1,\cdots,n$.

{\bf Case 3}: Suppose $s(\alpha_3)=p$ and exactly that only one of $s(\alpha_2),s(\alpha_4)$ equals $p$. We may assume $s(\alpha_2)\neq p= s(\alpha_4)$. 

We prove this case by induction on the loops $n_p(\Delta)$ incident to $p$ in $\Delta$. We have $n_p(\Delta)\geq 3$.

For $n_p(\Delta)=3$, the loops incident to $p$ are $\alpha_3,\alpha_4$ and $\alpha$.

If $\alpha_3$ and $\alpha_4$ are not special loops, then we have the relations $R9$ in $Br_\Delta$ and $Br_{\Delta'}$ are
$$R9\Delta: Cyl(T_{\alpha_1},T_{\beta_{1}}^{T_{\alpha}T_{\alpha_4}},T_{\beta_{2}},\cdots, T_{\beta_{s_1}},T_{\alpha}^{T_{\alpha}T_{\alpha_4}},T^{T_{\alpha_3}}_{\gamma_{1}},\cdots,T_{\gamma_{s_2}},T_{\alpha_2}^{T_{\alpha_3}T_{\alpha}})$$
and
$$R9\Delta':Cyl(T_{\alpha_1},T_{\beta_{1}}^{T_{\alpha_4}},T_{\beta_{2}},\cdots, T_{\beta_{s_1}},T_{\alpha'}^{T_{\alpha_4}},T^{T_{\alpha_3}}_{\gamma_{1}},\cdots,T_{\gamma_{s_2}},T_{\alpha_2}^{T_{\alpha_3}}).$$
Thus, we have
$$h_{\Delta',\Delta}(R9\Delta)= Cyl(T_{\alpha_1}^{T_{\alpha'}},T_{\beta_{1}}^{T_{\alpha'}T_{\alpha_4}},T_{\beta_{2}},\cdots, T_{\beta_{s_1}},T_{\alpha'}^{T_{\alpha'}T_{\alpha_4}},T^{T_{\alpha'}T_{\alpha_3}}_{\gamma_{1}},\cdots,T_{\gamma_{s_2}},T_{\alpha_2}^{T_{\alpha'}T_{\alpha_3}}),$$
which follows from $R9\Delta'$, $Co(T_{\alpha'},T_{\beta_i})$ for $i=2,\cdots,s_1$ and $Co(T_{\alpha'},T_{\gamma_i})$ for $i=2,\cdots,s_2$.

\begin{figure}[h]
    \centering
    \includegraphics{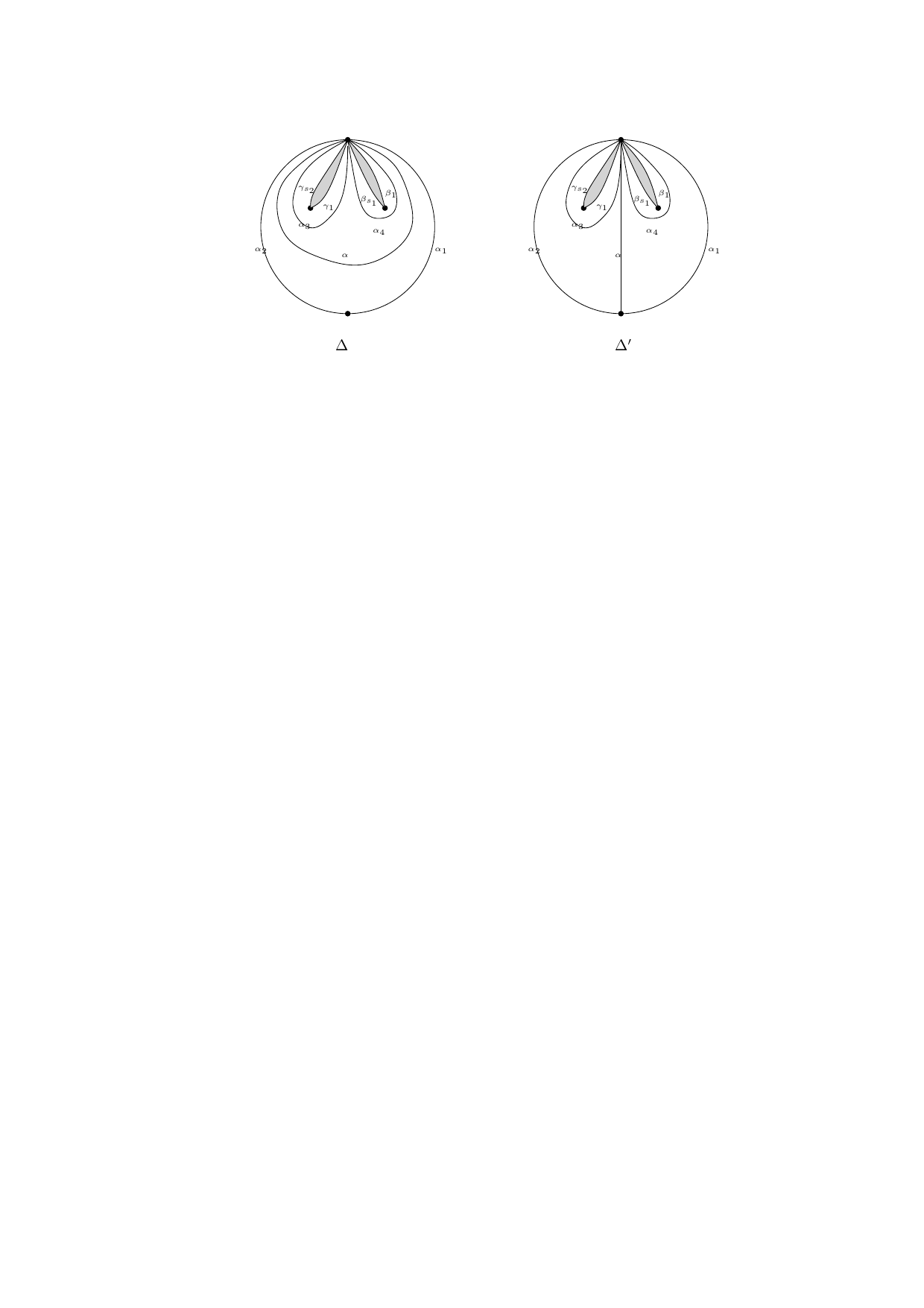}
    \caption{$\alpha_3$ and $\alpha_4$ are not special loops}
\end{figure}

If there is a special loop in $\{\alpha_3,\alpha_4\}$, we may assume that $\alpha_3$ is a special loop as the other cases can be proved similarly, then we have the relations $R9$ in $Br_\Delta$ and $Br_{\Delta'}$ are
$$R9\Delta: Cyl(T_{\alpha_1},T_{\beta_{1}}^{T_{\alpha}T_{\alpha_4}},T_{\beta_{2}},\cdots, T_{\beta_{s_1}},T_{\alpha}^{T_{\alpha}T_{\alpha_4}},T_{\alpha_2}^{T_{\alpha_3}T_{\alpha}})$$
and
$$R9\Delta':Cyl(T_{\alpha_1},T_{\beta_{1}}^{T_{\alpha_4}},T_{\beta_{2}},\cdots, T_{\beta_{s_1}},T_{\alpha'}^{T_{\alpha_4}},T_{\alpha_2}^{T_{\alpha_3}}).$$
Thus, we have
$$h_{\Delta',\Delta}(R9\Delta)= Cyl(T_{\alpha_1}^{T_{\alpha'}},T_{\beta_{1}}^{T_{\alpha'}T_{\alpha_4}},T_{\beta_{2}},\cdots, T_{\beta_{s_1}},T_{\alpha'}^{T_{\alpha'}T_{\alpha_4}},T_{\alpha_2}^{T_{\alpha'}T_{\alpha_3}}),$$
which follows from $R9\Delta'$ and $Co(T_{\alpha'},T_{\beta_i})$ for $i=2,\cdots,s_1$.

\begin{figure}[h]
    \centering
    \includegraphics{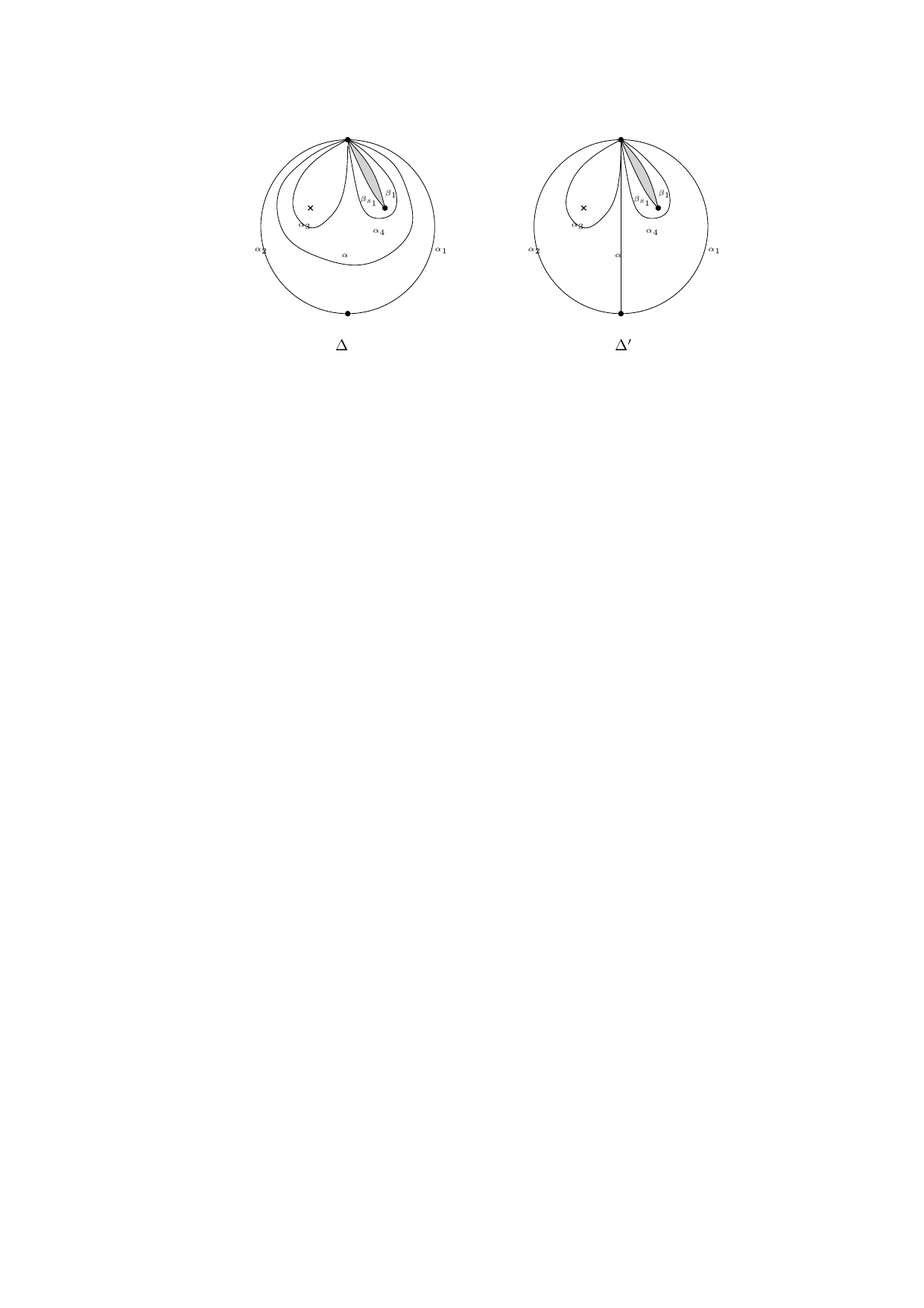}
    \caption{$\alpha_3$ is a special loop}
\end{figure}

For $n_p(\Delta)>3$, let $\mu$ be a mutation sequence at loops incident to $p$ such that the number of loops incident to $p$ decreases at each step, and $\alpha_3,\alpha_4,\alpha$ are the only loops incident to $p$ in $\mu\Delta$. 
Then $\mu$ commutes with $\mu_{\alpha}, \mu_{\alpha_3}$ and $\mu_{\alpha_4}$, and 
$$R9(\Delta)=h^{\mu_{\alpha_4}\mu_{\alpha_3}\mu_{\alpha}\mu}_{\Delta,\mu_{\alpha_4}\mu_{\alpha_3}\mu_{\alpha}\mu\Delta}(R9(\mu_{\alpha_4}\mu_{\alpha_3}\mu_{\alpha}\mu \Delta)), \quad R9(\Delta')=h^{\mu_{\alpha_4}\mu_{\alpha_3}\mu}_{\Delta',\mu_{\alpha_4}\mu_{\alpha_3}\mu\Delta'}(R9(\mu_{\alpha_4}\mu_{\alpha_3}\mu \Delta')),$$

$$R9(\mu\Delta)=h^{\mu_{\alpha_4}\mu_{\alpha_3}\mu_{\alpha}}_{\mu\Delta,\mu_{\alpha_4}\mu_{\alpha_3}\mu_{\alpha}\mu\Delta}(R9(\mu_{\alpha_4}\mu_{\alpha_3}\mu_{\alpha}\mu \Delta)), \quad R9(\mu\Delta')=h^{\mu_{\alpha_4}\mu_{\alpha_3}}_{\mu\Delta',\mu_{\alpha_4}\mu_{\alpha_3}\mu\Delta'}(R9(\mu_{\alpha_4}\mu_{\alpha_3}\mu \Delta')).$$ 
Therefore, $$
\begin{array}{rcl}
   h_{\Delta',\Delta}(R9(\Delta))  & =& h_{\Delta',\Delta}h^{\mu_{\alpha_4}\mu_{\alpha_3}\mu_{\alpha}\mu}_{\Delta,\mu_{\alpha_4}\mu_{\alpha_3}\mu_{\alpha}\mu\Delta}(R9(\mu_{\alpha_4}\mu_{\alpha_3}\mu_{\alpha}\mu \Delta))\\
&=&h^{\mu}_{\mu_{\alpha}\Delta,\mu_\alpha\mu\Delta}h_{\mu_\alpha\mu\Delta,\mu\Delta}h^{\mu_{\alpha_4}\mu_{\alpha_3}\mu_{\alpha}}_{\mu\Delta,\mu_{\alpha_4}\mu_{\alpha_3}\mu_{\alpha}\mu\Delta}(R9(\mu_{\alpha_4}\mu_{\alpha_3}\mu_{\alpha}\mu \Delta))\\
   & = & h^{\mu}_{\mu_{\alpha}\Delta,\mu_\alpha\mu\Delta}h_{\mu_\alpha\mu\Delta,\mu\Delta}(R9(\mu\Delta)).
\end{array}
$$

As $n_p(\mu\Delta)=3$, we have $h_{\mu_\alpha\mu\Delta,\mu\Delta}(R9(\mu\Delta))$ holds in $Br_{\mu_\alpha\mu\Delta}$. By induction hypothesis, we have $h^{\mu}_{\mu_{\alpha}\Delta,\mu_\alpha\mu\Delta}h_{\mu_\alpha\mu\Delta,\mu\Delta}(R9(\mu\Delta))$ holds in $Br_{\mu_\alpha\Delta}$.

The proof is complete.
\end{proof}

Assume that $\alpha$ is not a self-folded arc and a diagonal of some clockwise cyclic quadrilateral $(\alpha_1,\alpha_2,\alpha_3,\alpha_4)$ in $\Delta$ such that $(\alpha_1,\alpha_2,\overline\alpha)$ is a triangle. 

\begin{lemma}\label{lem:R9a}
    Assume that $s(\alpha)=s(\alpha_4)=p\neq t(\alpha),t(\alpha_1)$. If $\alpha_4$ is the unique loop incident to $p$ in $\Delta$, then the relation $R9$ for $p$ in $Br_{\Delta}$ holds in $Br_{\mu_\alpha\Delta}$ under $h_{\mu_\alpha\Delta,\Delta}$.
\end{lemma}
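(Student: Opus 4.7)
The plan is to exploit the hypothesis that $\alpha_4$ is the unique loop at $p$ in $\Delta$ in order to express both $R9(\Delta)$ and $R9(\mu_\alpha\Delta)$ through the single mutation $\mu_{\alpha_4}$, and then to transfer the resulting cyclic identity across the flip $\mu_\alpha$ using the pentagonal relation between $\mu_\alpha$ and $\mu_{\alpha_4}$. The first observation I would make is that since $t(\alpha)\ne p$, the arc $\alpha$ is not a loop at $p$, so $\mu_\alpha$ does not change the loop-count at $p$; consequently $\alpha_4$ is also the unique loop at $p$ in $\mu_\alpha\Delta$. This allows me to set
\[
R9(\Delta)=h^{\mu_{\alpha_4}}_{\Delta,\mu_{\alpha_4}\Delta}\bigl(\mathrm{Cyl}(\vec T)\bigr),\qquad R9(\mu_\alpha\Delta)=h^{\mu_{\alpha_4}}_{\mu_\alpha\Delta,\mu_{\alpha_4}\mu_\alpha\Delta}\bigl(\mathrm{Cyl}(\vec T')\bigr),
\]
where $\vec T$ and $\vec T'$ are the cyclic lists of generators attached to the arcs at $p$ in the two loop-free triangulations $\mu_{\alpha_4}\Delta$ and $\mu_{\alpha_4}\mu_\alpha\Delta$.

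Next I would give an explicit description of both lists. The arcs at $p$ that are disjoint from the quadrilateral $(\alpha_1,\alpha_2,\alpha_3,\alpha_4)$ are identical in $\mu_{\alpha_4}\Delta$ and $\mu_{\alpha_4}\mu_\alpha\Delta$ and contribute the same entries to $\vec T$ and $\vec T'$. The remaining entries, coming from the arcs $\alpha_1,\alpha_3$ and the flipped diagonals arising from $\alpha_4$ and $\alpha$, differ by a controlled substitution dictated by Theorem \ref{th:action} together with the mutation-conjugation rules \eqref{eq:h}--\eqref{eq:h^-}. I would then compute $h_{\mu_\alpha\Delta,\Delta}(R9(\Delta))$ by combining the two pullbacks $h_{\mu_\alpha\Delta,\Delta}\circ h^{\mu_{\alpha_4}}_{\Delta,\mu_{\alpha_4}\Delta}$ and rewrite it, using the pentagon relation of Theorem \ref{th:presentation of Tsurf} applied to the pair of flips $\mu_\alpha,\mu_{\alpha_4}$ (which share the triangle $(\alpha_3,\alpha_4,\alpha)$), as $h^{\mu_{\alpha_4}}_{\mu_\alpha\Delta,\mu_{\alpha_4}\mu_\alpha\Delta}$ composed with the intermediate flips that connect $\mu_{\alpha_4}\Delta$ to $\mu_{\alpha_4}\mu_\alpha\Delta$. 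Because each of those intermediate flips is at an arc outside the loop $\alpha_4$, I can apply Lemma \ref{lem:R9a1} at every intermediate step where the number of arcs at $p$ changes, and leave the cyclic identity unchanged at the other steps.

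The main obstacle is the bookkeeping of the cyclic list's entries through the pentagonal detour $\mu_{\alpha_4}\Delta\dashrightarrow\mu_{\alpha_4}\mu_\alpha\Delta$: one has to verify that the substitutions produced by the mutation-conjugation rules and by the intermediate applications of Lemma \ref{lem:R9a1} precisely match the entries of $\vec T'$ after reducing modulo the braid relations $R1$--$R8$. This reduces to a finite local identity in $\widetilde{Br}_{\mu_\alpha\Delta}$ involving only the generators $T_{\alpha_1},T_{\alpha_2},T_{\alpha_3},T_{\alpha'},T_{\alpha_4}$ together with $T_\gamma$ for those $\gamma$ that sit in a common triangle with $\alpha_4$ on the outside of the loop, and is established by iterated use of the braid and commutation relations already verified in Section \ref{sec:proofthm:fundegroup}. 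Finally, Lemma \ref{lem:R9unique} guarantees that the resulting cyclic identity is independent of the specific mutation sequence used to eliminate the loop at $p$, so the equality $h_{\mu_\alpha\Delta,\Delta}(R9(\Delta))=R9(\mu_\alpha\Delta)$ holds in $Br_{\mu_\alpha\Delta}$.
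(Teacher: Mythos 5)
Your overall strategy — kill the loop $\alpha_4$ with the single mutation $\mu_{\alpha_4}$, write the $R9$ relation in both $\Delta$ and $\mu_\alpha\Delta$ as cyclic relations on conjugated generators, and then compare the two — is the same starting point as the paper's proof. But there is a genuine gap at exactly the step where the work happens. After applying $h_{\mu_\alpha\Delta,\Delta}$, one obtains the cyclic relation
$Cyl(T_{\alpha_1}^{T_{\alpha'}},T_{\alpha'},T_{\beta_1}^{T_{\alpha_4}},\dots,T_{\beta_s},T_{\alpha_3}^{T_{\alpha_4}T_{\alpha'}},T_{\gamma_1},\dots,T_{\gamma_t})$, and one must show it is a consequence of
$Cyl(T_{\alpha_1},T_{\beta_1}^{T_{\alpha_4}},\dots,T_{\beta_s},T_{\alpha'}^{T_{\alpha_4}},T_{\alpha_3},T_{\gamma_1},\dots,T_{\gamma_t})$ together with $R1$--$R8$. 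This is not an ``iterated use of the braid and commutation relations'' that can be left implicit: it is precisely the content of the paper's Lemma~\ref{lem:R90}, a self-contained group-theoretic equivalence
$Cyl(x_1,\dots,x_n)\Leftrightarrow Cyl(x_2^{x_1},x_3,\dots,x_{k-1},y,z,x_{k+1},\dots,x_n)$ under the substitution $x_k=z^y$ with specified braid/commutation hypotheses, whose verification occupies a nontrivial chain of manipulations (including the identity $x_k^{-1}x_{k-1}^{-1}yx_{k-1}x_k=yx_{k-1}y^{-1}$). Your proposal asserts the conclusion of that lemma without identifying or proving it, so the proof is incomplete where it matters most.

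Secondarily, the detour through the pentagon relation and Lemma~\ref{lem:R9a1} is both unnecessary and shaky. In the situation of this lemma the number of arcs incident to $p$ is the \emph{same} in $\Delta$ and $\mu_\alpha\Delta$ (the arc $\alpha$ with $s(\alpha)=p$, $t(\alpha)\neq p$ is replaced by $\alpha'$ with $t(\alpha')=p$, $s(\alpha')=t(\alpha_1)\neq p$), so Lemma~\ref{lem:R9a1} does not govern the flip you are studying; and the intermediate flips in your pentagonal detour include the mutation at the loop $\alpha_4$ itself, which \emph{does} change the arc count at $p$ and is not covered by invoking Lemma~\ref{lem:R9a1} ``at every intermediate step'' without further argument. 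The paper avoids all of this by comparing the two explicit cyclic words directly and isolating the needed algebra in Lemma~\ref{lem:R90}; you should either do the same or supply a complete proof of your ``finite local identity.''
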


\begin{proof} 
Suppose that $\alpha_1,\alpha,\alpha_4,\beta_1,\cdots,\beta_s,\alpha_4,\alpha_3$, and $\gamma_1,\cdots,\gamma_t$ form a complete clockwise list of the loops incident to $p$ in $\Delta$ for some $s\geq 1$ and $t\geq 0$ ($\alpha_3$ may equal $\alpha_1$, in which case $t=0$).
    
Since ${\alpha_4}$ is the unique loop incident to $p$ in $\Delta$, we have that the relation $R9$ for $p$ in $Br_{\Delta}$ is 
$$R9\Delta:Cyl(T_{\alpha_1},T_{\alpha},T_{\beta_1}^{T_{\alpha_4}},T_{\beta_2},\cdots, T_{\beta_s},T_{\alpha_3}^{T_{\alpha_4}},T_{\gamma_1},\cdots,T_{\gamma_t})$$
    and the relation $R9$ for $p$ in $Br_{\mu_{\alpha}\Delta}$ is 
$$R9\mu_\alpha\Delta:Cyl(T_{\alpha_1},T_{\beta_1}^{T_{\alpha_4}},T_{\beta_2},\cdots, T_{\beta_s},T_{\alpha'}^{T_{\alpha_4}},T_{\alpha_3},T_{\gamma_1},\cdots,T_{\gamma_t}).$$
Thus, the relation $R9$ for $p$ in $Br_{\Delta}$ under $h_{\mu_\alpha\Delta,\Delta}$ is 
$$h_{\mu_\alpha\Delta,\Delta}(R9\Delta): Cyl(T_{\alpha_1}^{T_{\alpha'}},T_{\alpha'},T_{\beta_1}^{T_{\alpha_4}},T_{\beta_2},\cdots, T_{\beta_s},T_{\alpha_3}^{T_{\alpha_4}T_{\alpha'}},T_{\gamma_1},\cdots,T_{\gamma_t}).$$ 

Then the result follows by Lemma \ref{lem:R90}.

The proof is complete.
\end{proof}

\begin{lemma}\label{lem:R9b}
    Assume that $s(\alpha)=s(\alpha_2)=p\neq t(\alpha),t(\alpha_3)$. If $\alpha_1$ is the unique loop incident to $p$ in $\Delta$, then the relation $R9$ for $p$ in $Br_{\Delta}$ holds in $Br_{\mu_\alpha\Delta}$ under $h_{\mu_\alpha\Delta,\Delta}$.
\end{lemma}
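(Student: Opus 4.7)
The plan is to mirror the proof of Lemma \ref{lem:R9a} after interchanging the roles of $\alpha_4$ and $\alpha_1$. In Lemma \ref{lem:R9a}, the unique loop at $p$ was $\alpha_4$, which enclosed $\beta_1,\ldots,\beta_s,\alpha_3$, while $\alpha,\alpha_1$ sat outside. In our situation the unique loop is $\alpha_1$, and it is $\alpha,\alpha_2$ (together with the triangle $(\alpha_1,\alpha_2,\overline\alpha)$) that lie inside, while $\alpha_3,\alpha_4$ and auxiliary arcs $\gamma_k,\beta_j$ lie outside. First, I would enumerate the arcs incident to $p$ in $\Delta$ clockwise; using the hypotheses and the structure of the quadrilateral $(\alpha_1,\alpha_2,\alpha_3,\alpha_4)$, this list has the form
$$\alpha_3,\gamma_1,\ldots,\gamma_t,\alpha_1,\alpha,\alpha_2,\alpha_1,\beta_s,\ldots,\beta_1,\alpha_4$$
for some $t\ge 0$, $s\ge 0$ (with $\alpha_3$ possibly equal to $\alpha_4$ when $t=0$).

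Second, I would write down the $R9$ relations at $p$ in both $Br_\Delta$ and $Br_{\mu_\alpha\Delta}$, conjugating the arcs inside the loop $\alpha_1$ by $T_{\alpha_1}$ in accordance with relation $R9$ in Theorem \ref{th:brgroup}:
$$R9\Delta:\;Cyl\bigl(T_{\alpha_3},T_{\gamma_1},\ldots,T_{\gamma_t},T_{\alpha}^{T_{\alpha_1}},T_{\alpha_2}^{T_{\alpha_1}},T_{\beta_s},\ldots,T_{\beta_1},T_{\alpha_4}\bigr),$$
$$R9\mu_\alpha\Delta:\;Cyl\bigl(T_{\alpha_3},T_{\gamma_1},\ldots,T_{\gamma_t},T_{\alpha'}^{T_{\alpha_1}},T_{\beta_s},\ldots,T_{\beta_1},T_{\alpha_4}\bigr),$$
the key point being that after the flip the arc $\alpha_2$ no longer meets the inside of the loop $\alpha_1$ at $p$, so $T_{\alpha_2}^{T_{\alpha_1}}$ drops out of the cyclic word. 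Then I would apply Theorem \ref{thm:quiverbraidgroup}: the only arrows from $\alpha$ in $Q_\Delta$ land on $\alpha_1$ and $\alpha_3$, so $h_{\mu_\alpha\Delta,\Delta}$ sends $T_{\alpha}\mapsto T_{\alpha'}$, $T_{\alpha_1}\mapsto T_{\alpha_1}^{T_{\alpha'}}$, $T_{\alpha_3}\mapsto T_{\alpha_3}^{T_{\alpha'}}$, and fixes all of $T_{\alpha_2},T_{\alpha_4},T_{\beta_j},T_{\gamma_k}$.

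Finally, I would verify that the resulting word $h_{\mu_\alpha\Delta,\Delta}(R9\Delta)$ reduces to $R9\mu_\alpha\Delta$ modulo relations $R1$--$R3$ via Lemma \ref{lem:R90}, which is tailor-made for replacing a consecutive pair $(T_{\alpha'},T_{\alpha_2})^{T_{\alpha_1}}$ by the single generator $T_{\alpha'}^{T_{\alpha_1}}$ inside a cyclic relation; the various conjugations by $T_{\alpha'}$ picked up by $T_{\alpha_1}$ and $T_{\alpha_3}$ are absorbed by the commutations $Co(T_{\alpha'},T_{\beta_j})$, $Co(T_{\alpha'},T_{\gamma_k})$, $Co(T_{\alpha'},T_{\alpha_4})$ and the braid relation $Br_3(T_{\alpha'},T_{\alpha_1})$, all of which are instances of $R1$--$R2$ in $\mu_\alpha\Delta$. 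The main obstacle I anticipate is the bookkeeping of conjugations inside the cyclic word: because $\alpha_2$ sits on the inside of the loop $\alpha_1$ (unlike the symmetrically placed $\alpha_3$ in Lemma \ref{lem:R9a}, which was on the inside of $\alpha_4$), one must be careful with the direction in which $T_{\alpha_1}$-conjugation appears and with the ordering of $T_{\alpha}^{T_{\alpha_1}}$ relative to $T_{\alpha_2}^{T_{\alpha_1}}$. Once the clockwise list at $p$ is settled correctly, the algebraic reduction via Lemma \ref{lem:R90} is mechanical.
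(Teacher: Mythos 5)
Your overall strategy -- mirror the proof of Lemma \ref{lem:R9a} with the roles of the two triangles of the quadrilateral interchanged, write the two $Cyl$ words, apply $h_{\mu_\alpha\Delta,\Delta}$, and reduce via Lemma \ref{lem:R90} together with the commutation/braid relations -- is exactly what the paper intends (it omits the proof as ``similar to Lemma \ref{lem:R9a}''). However, your combinatorial setup of the star at $p$ is wrong in two concrete ways, and both errors propagate into your $Cyl$ words. First, $\alpha_3$ cannot appear in the clockwise list of arcs incident to $p$: its endpoints are $s(\alpha_3)=t(\alpha_2)=t(\alpha)$ and $t(\alpha_3)$, and the hypothesis explicitly excludes both from being $p$. (In Lemma \ref{lem:R9a} the arc $\alpha_3$ does appear, but only because there $t(\alpha_3)=s(\alpha_4)=p$; the correct analogue here is $\alpha_4$, which is incident to $p$ via $t(\alpha_4)=s(\alpha_1)=p$.) Second, $\alpha$ and $\alpha_2$ do not lie between the two strands of the loop $\alpha_1$. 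The two corners of the triangle $(\alpha_1,\alpha_2,\overline\alpha)$ at $p$ sit between $\alpha$ and one strand of $\alpha_1$, and between the other strand and $\alpha_2$, so this triangle (and hence $\alpha,\alpha_2$, and the whole flip quadrilateral including $\alpha_4$) lies in the sector \emph{outside} the loop, while the sector enclosed by the two strands contains only the auxiliary arcs $\beta_1,\dots,\beta_s$ (with $s\ge 1$, since a loop in a triangulation must enclose something). The correct clockwise list is, up to rotation, $\alpha_4,\alpha,\alpha_1,\beta_1,\dots,\beta_s,\alpha_1,\alpha_2,\gamma_1,\dots,\gamma_t$ (with $\alpha_4$ possibly equal to $\alpha_2$ when $t=0$), which is the genuine mirror of the list in Lemma \ref{lem:R9a}.

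Because of this, your displayed relations are not the ones to be compared. With the corrected star, the conjugations by $T_{\alpha_1}$ fall on the arcs having an arrow to $\alpha_1$ in $Q_\Delta$ (one $\beta_j$ adjacent to $\alpha_1$ inside the loop, and $\alpha$ from the clockwise triangle $(\alpha_1,\alpha_2,\overline\alpha)$), not on the pair $T_\alpha,T_{\alpha_2}$ as a block; and the passage from $R9\Delta$ to $R9\mu_\alpha\Delta$ consists of replacing the single entry coming from $\alpha$ by the one coming from $\alpha'$ (both are incident to $p$ exactly once, since $s(\alpha)=s(\alpha_1)=p$ and $s(\alpha')=t(\alpha_1)=p$), together with the conjugation of the neighbouring entries by $T_{\alpha'}$ produced by $h_{\mu_\alpha\Delta,\Delta}$. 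The reduction then goes through Lemma \ref{lem:R90} exactly as in Lemma \ref{lem:R9a}, but your current words would not reduce to each other, so the final step of your argument does not close as written. Redo the star computation and the two $Cyl$ words before invoking Lemma \ref{lem:R90}.
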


The proof is similar to Lemma \ref{lem:R9a}, so we omit it.

\begin{lemma}\label{lem:R9c}
    Assume that $s(\alpha_1)=s(\alpha_2)=s(\alpha_3)=s(\alpha_4)=p$. If $\alpha_1,\alpha_2,\alpha_3,\alpha_4,\alpha$ form a complete list of the loops incident to $p$ in $\Delta$, then the relation $R9$ for $p$ in $Br_{\Delta}$ holds in $Br_{\mu_\alpha\Delta}$ under $h_{\mu_\alpha\Delta,\Delta}$.
\end{lemma}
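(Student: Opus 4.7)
The plan is to mirror the structure of Lemmas \ref{lem:R9a} and \ref{lem:R9b}, combined with the inductive reduction used in Case 3 of Lemma \ref{lem:R9a1}. First I would reduce to a base case by induction on the number $n_p(\Delta)$ of loops incident to $p$. Since $\alpha_1,\alpha_2,\alpha_3,\alpha_4,\alpha$ are already among the loops at $p$, we have $n_p(\Delta)\ge 5$. If $n_p(\Delta)>5$, fix a mutation sequence $\mu$ at loops incident to $p$ different from $\alpha_1,\alpha_2,\alpha_3,\alpha_4,\alpha$ which decreases the loop count at each step, so that in $\mu\Delta$ only the five distinguished loops remain incident to $p$. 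Since these mutations take place outside the quadrilateral $(\alpha_1,\alpha_2,\alpha_3,\alpha_4)$, the sequence $\mu$ commutes with $\mu_\alpha$ and with each $\mu_{\alpha_i}$. Thus exactly as in the argument preceding Lemma \ref{lem:R9a}, the identity
\[
h_{\mu_\alpha\Delta,\Delta}(R9(\Delta))=h^{\mu}_{\mu_\alpha\Delta,\mu_\alpha\mu\Delta}\,h_{\mu_\alpha\mu\Delta,\mu\Delta}(R9(\mu\Delta))
\]
holds, and it suffices to verify that $h_{\mu_\alpha\mu\Delta,\mu\Delta}(R9(\mu\Delta))$ holds in $Br_{\mu_\alpha\mu\Delta}$ (Lemma \ref{lem:R9a1} then propagates the result back through $\mu$). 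This reduces us to the base case $n_p(\Delta)=5$.

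For the base case, the strategy is to write down the two $R9$-relations explicitly. In $\Delta$ the five loops at $p$ appear in the clockwise order dictated by the quadrilateral, producing a cyclic word in $T_{\alpha_1},T_{\alpha_2},T_{\alpha_3},T_{\alpha_4},T_\alpha$ conjugated by certain $T_{\alpha_i}$'s (the conjugations coming from the inner loops, read off from the fan structure around $p$, exactly as in the formula in Lemma \ref{lem:R9a}). In $\mu_\alpha\Delta$ the loop $\alpha$ is replaced by $\alpha'$ and the cyclic order of loops around $p$ changes: $\alpha'$ sits between $\alpha_3$ and $\alpha_4$ (respectively $\alpha_1$ and $\alpha_2$) instead of where $\alpha$ did. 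Writing both $Cyl$ relations this way and comparing term by term, $h_{\mu_\alpha\Delta,\Delta}$ transforms the generators according to the rule \eqref{eq:h}, so that $T_\alpha\mapsto T_{\alpha'}$ while the $T_{\alpha_i}$ corresponding to arcs connected to $\alpha$ by an arrow in $Q_\Delta$ become $T_{\alpha_i}^{T_{\alpha'}}$, and the remaining ones are fixed.

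Having produced explicit forms of $R9(\Delta)$, $R9(\mu_\alpha\Delta)$, and $h_{\mu_\alpha\Delta,\Delta}(R9(\Delta))$, the core calculation is to transform the latter into the former using only the relations $R1$--$R8$ and Lemma \ref{lem:R90}. Concretely, one checks that the local hypotheses of Lemma \ref{lem:R90} are met: the triple $(T_{\alpha_1},T_{\alpha'},T_{\alpha_2})$ (or its symmetric partner $(T_{\alpha_3},T_{\alpha'},T_{\alpha_4})$, depending on the orientation) satisfies $T_{\alpha_k}=T_{\alpha'}^{T_y}$ for one of $y=\alpha_1,\alpha_2,\alpha_3,\alpha_4$ together with the required braid and commutation relations, all of which are instances of $R2$ ($Br_3$), $R3$, $R4$, $R5$ obtained by inspecting the subquiver of $Q_\Delta$ around the quadrilateral. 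The $Cyl$-equivalence then follows by applying Lemma \ref{lem:R90} exactly as in the last step of Lemma \ref{lem:R9u2}.

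The principal obstacle will be bookkeeping in the base case: the combinatorics of how the cyclic clockwise list of arcs at $p$ rearranges when $\alpha$ flips to $\alpha'$ depends on the relative positions of $\alpha_1,\alpha_2,\alpha_3,\alpha_4$ in that list (they may interleave with other loops at $p$, but by induction we may assume they are the only loops present, which still allows several sub-configurations depending on whether $\alpha_1=\alpha_3$, etc.). Each configuration must be matched with the corresponding instance of Lemma \ref{lem:R90}, and in the degenerate sub-case where some $\alpha_i$ coincide (forcing self-folded triangles at $p$), one must instead invoke the $Br_4$-variant as in Lemma \ref{lem:R9u1}. Once all these sub-cases are checked, the lemma follows.
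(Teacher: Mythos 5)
Your overall shape — write out $R9\Delta$ and $R9\mu_\alpha\Delta$ explicitly as $Cyl$-relations, apply $h_{\mu_\alpha\Delta,\Delta}$ via \eqref{eq:h}, and compare using $R1$--$R8$ — matches the paper's proof. Two issues, one cosmetic and one substantive.

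The cosmetic one: your opening induction on $n_p(\Delta)$ is redundant. The hypothesis of Lemma \ref{lem:R9c} already states that $\alpha_1,\alpha_2,\alpha_3,\alpha_4,\alpha$ form a \emph{complete} list of the loops incident to $p$, so you are already in your ``base case''; the reduction from general $n_p(\Delta)$ to this configuration is performed in the main proof of Theorem \ref{th:brgroup} (and in Lemma \ref{lem:R9a1}) before this lemma is invoked, not inside it.

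The substantive one: your closing step leans on Lemma \ref{lem:R90}, but that lemma addresses a different kind of discrepancy — it compares $Cyl(x_1,\dots,x_n)$ with a $Cyl$-word in which one letter $x_k$ has been \emph{split} into a product $yz$ with $x_k=z^y$. That is the situation of Lemmas \ref{lem:R9a} and \ref{lem:R9u2}, where the flip changes the number of arcs incident to $p$. Here the flip at $\alpha$ keeps five loops at $p$ (with $\alpha$ replaced by $\alpha'$), so after choosing the removal sequences $\mu=\mu_{\alpha_4}\mu_{\alpha_3}\mu_{\alpha}\mu_{\alpha_2}\mu_{\alpha_1}$ and $\mu'=\mu_{\alpha_4}\mu_{\alpha_3}\mu_{\alpha_2}\mu_{\alpha'}\mu_{\alpha_1}$, the words $h_{\mu_\alpha\Delta,\Delta}(R9\Delta)$ and $R9\mu_\alpha\Delta$ have the \emph{same} letters except that one entry carries an extra conjugation by $T_{\alpha'}$ (e.g.\ $T_{\beta_1}^{T_{\alpha'}T_{\alpha_1}T_{\alpha_4}}$ versus $T_{\beta_1}^{T_{\alpha_1}T_{\alpha_4}}$). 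Lemma \ref{lem:R90} does not apply to this comparison. What closes the gap is showing that the extra conjugation is trivial: from $Cyl(T_{\alpha_1},T_{\alpha_4},T_{\alpha'})$ (an instance of $R3$ in $Br_{\mu_\alpha\Delta}$, since $\alpha_1,\alpha_4,\alpha'$ form a $3$-cycle in $Q_{\mu_\alpha\Delta}$) together with $Co(T_{\beta_1},T_{\alpha_1})$, $Co(T_{\beta_1},T_{\alpha'})$ and $Br_3(T_{\alpha_4},T_{\beta_1})$, one deduces $Co\bigl(T_{\alpha'},T_{\beta_1}^{T_{\alpha_1}T_{\alpha_4}}\bigr)$, hence $T_{\beta_1}^{T_{\alpha'}T_{\alpha_1}T_{\alpha_4}}=T_{\beta_1}^{T_{\alpha_1}T_{\alpha_4}}$, and the two $Cyl$-words coincide verbatim. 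Without this observation (or an equivalent one) your plan stalls exactly where you expect Lemma \ref{lem:R90} to fire. The special-loop subcase is then handled by the same mechanism, not by a $Br_4$-variant of Lemma \ref{lem:R9u1}.
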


\begin{proof}
    Let $\mu:\Delta\to \mu\Delta$ be a sequence of mutations that satisfy the requirements for relation $R9$. 

(Case 1) There is no special loops in $\{\alpha_1, \alpha_2, \alpha_3,\alpha_4,\alpha\}$. We may assume that $\mu=\mu_{\alpha_4}\mu_{\alpha_3}\mu_{\alpha}\mu_{\alpha_2}\mu_{\alpha_1}$. Then $\mu'=\mu_{\alpha_4}\mu_{\alpha_3}\mu_{\alpha_2}\mu_{\alpha'}\mu_{\alpha_1}$ satisfies the requirement for the relation $R9$ for $\mu_\alpha\Delta$.

\begin{figure}[h]
    \centering
    \includegraphics{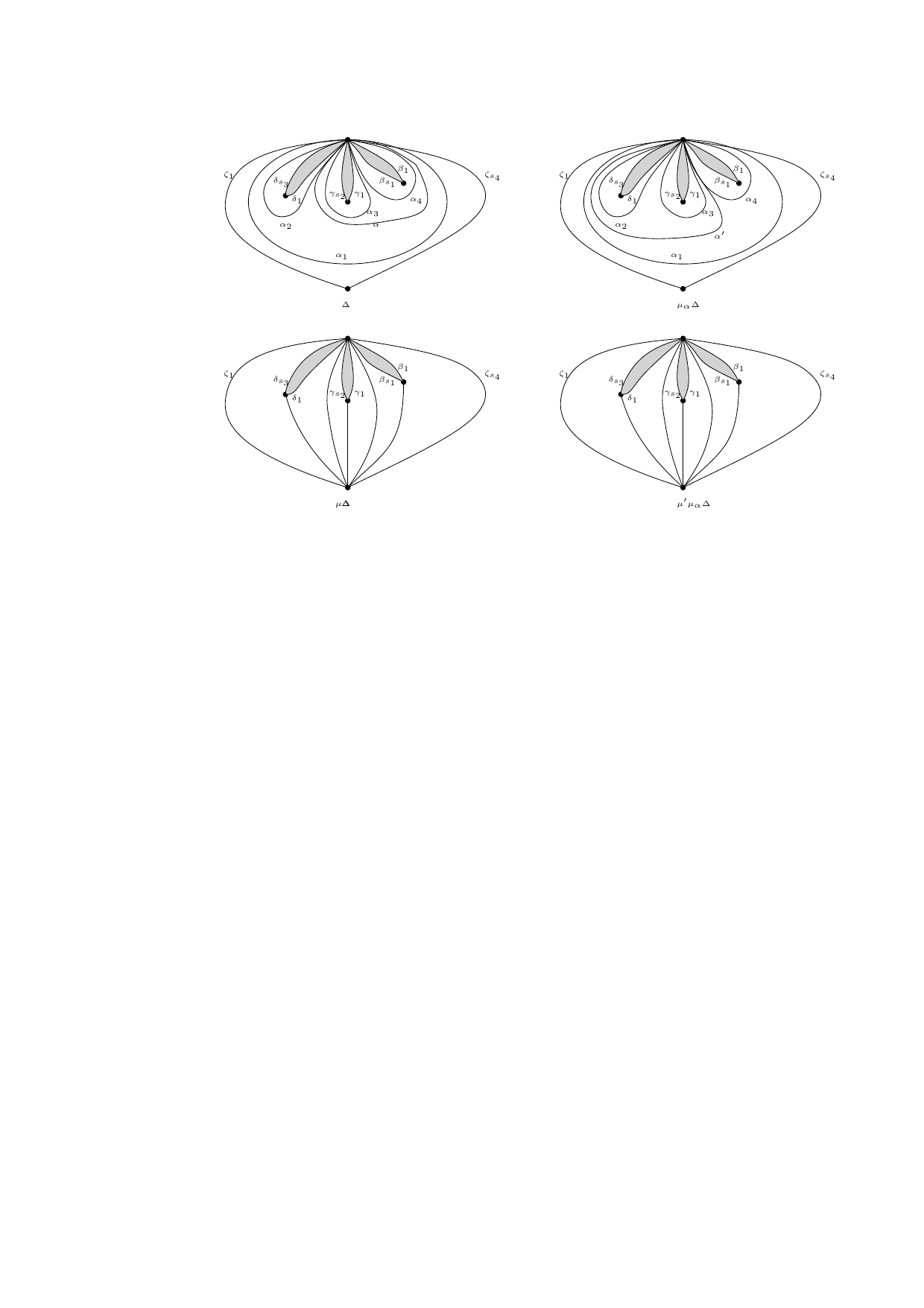}
\end{figure}

Thus, the relations $R9$ for $p$ in $Br_\Delta$ and $Br_{\mu_\alpha\Delta}$ under $\mu$ and $\mu'$, respectively, are 
$$Cyl(T_{\beta_{1}}^{T_{\alpha_1}T_{\alpha}T_{\alpha_4}},T_{\beta_{2}},\cdots, T_{\beta_{s_1}},T_{\alpha_4},T^{T_{\alpha_3}}_{\gamma_{1}},\cdots, T_{\gamma_{s_2}},T_{\alpha}^{T_{\alpha_3}},T^{T_{\alpha_2}}_{\delta_{1}},\cdots, T_{\delta_{s_3}},T^{T_{\alpha_2}T_{\alpha_1}}_{\zeta_{1}},\cdots, T_{\zeta_{s_4}}),$$
$$Cyl(T_{\beta_{1}}^{T_{\alpha_1}T_{\alpha_4}},T_{\beta_{2}},\cdots, T_{\beta_{s_1}},T_{\alpha_4},T^{T_{\alpha'}T_{\alpha_3}}_{\gamma_{1}},\cdots, T_{\gamma_{s_2}},T_{\alpha_3},T^{T_{\alpha_2}}_{\delta_{1}},\cdots, T_{\delta_{s_3}},T^{T_{\alpha_2}T_{\alpha'}T_{\alpha_1}}_{\zeta_{1}},\cdots, T_{\zeta_{s_4}}),$$
and the relation $h_{\mu_\alpha\Delta,\Delta}(R9\Delta)$ is 
$$Cyl(T_{\beta_{1}}^{T_{\alpha'}T_{\alpha_1}T_{\alpha_4}},T_{\beta_{2}},\cdots, T_{\beta_{s_1}},T_{\alpha_4},T^{T_{\alpha'}T_{\alpha_3}}_{\gamma_{1}},\cdots, T_{\gamma_{s_2}},T_{\alpha_3},T^{T_{\alpha_2}}_{\delta_{1}},\cdots, T_{\delta_{s_3}},T^{T_{\alpha_2}T_{\alpha'}T_{\alpha_1}}_{\zeta_{1}},\cdots, T_{\zeta_{s_4}}).$$

As $Cyl(T_{\alpha_1},T_{\alpha_4},T_{\alpha'})$, $Co(T_{\beta_1},T_{\alpha_1})$, $Co(T_{\beta_1},T_{\alpha'})$ and $Br_3(T_{\alpha_4},T_{\beta_1})$ hold in $Br_{\mu_\alpha\Delta}$, we have $Co(T_{\alpha'},T_{\beta_1}^{T_{\alpha_1}T_{\alpha_4}})$ holds and thus $T_{\beta_{1}}^{T_{\alpha'}T_{\alpha_1}T_{\alpha_4}}=T_{\beta_{1}}^{T_{\alpha_1}T_{\alpha_4}}$. Therefore, $h_{\mu_\alpha\Delta,\Delta}(R9\Delta)$ holds.

(Case 2) There are some special loops in $\{\alpha_1, \alpha_2, \alpha_3,\alpha_4,\alpha\}$. We may assume that $\alpha_2$ is a special loop, as the other cases can be proved similarly. We may further assume that $\mu=\mu_{\alpha_4}\mu_{\alpha_3}\mu_{\alpha}\mu_{\alpha_2}\mu_{\alpha_1}$. Then $\mu'=\mu_{\alpha_4}\mu_{\alpha_3}\mu_{\alpha_2}\mu_{\alpha'}\mu_{\alpha_1}$ satisfies the requirement for the relation $R9$ for $\mu_\alpha\Delta$.

\begin{figure}[h]
    \centering
    \includegraphics{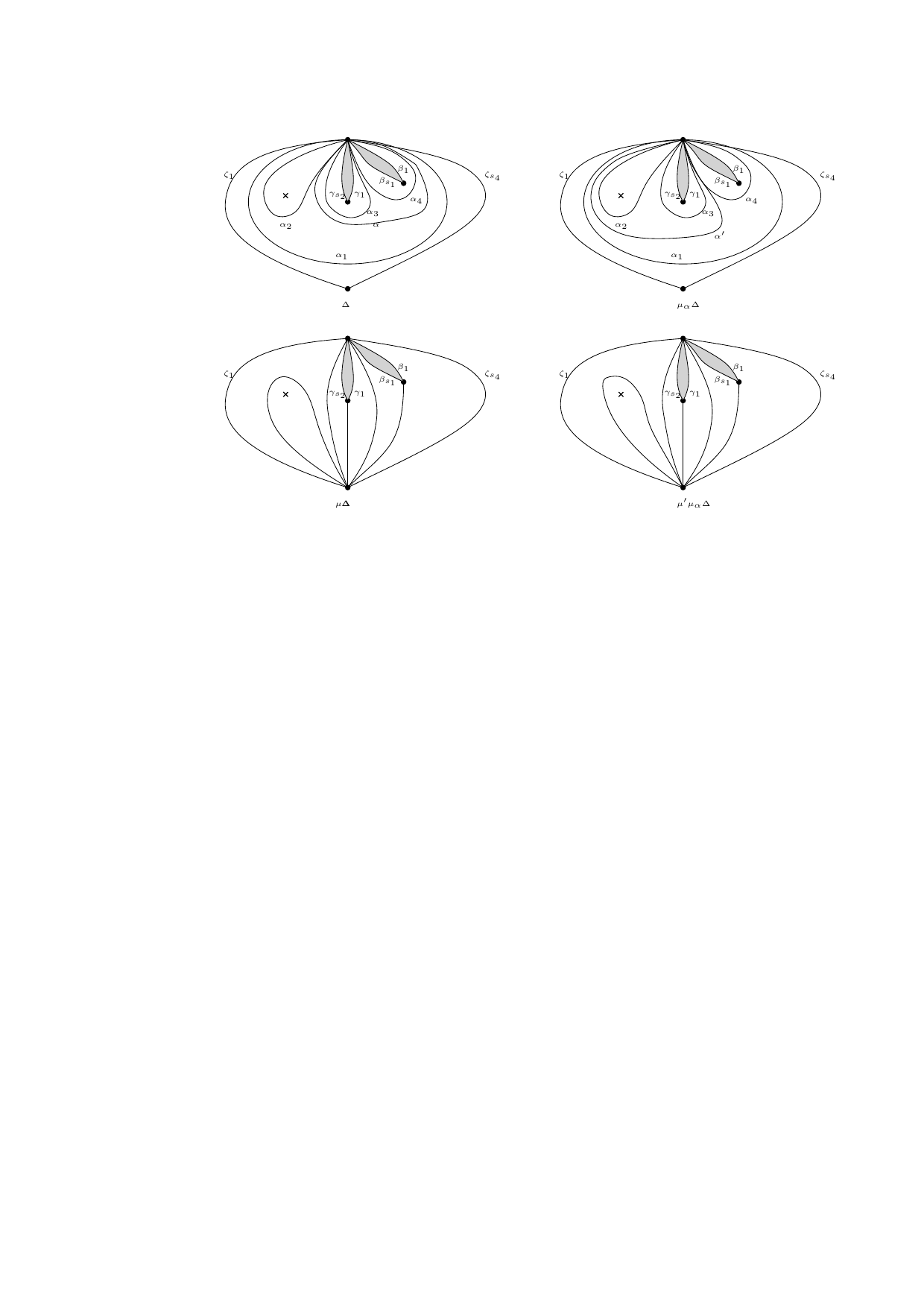}
\end{figure}

Thus, the relations $R9$ for $p$ in $Br_\Delta$ and $Br_{\mu_\alpha\Delta}$ under $\mu$ and $\mu'$, respectively, are 
$$R9\Delta:Cyl(T_{\beta_{1}}^{T_{\alpha_1}T_{\alpha}T_{\alpha_4}},T_{\beta_{2}},\cdots, T_{\beta_{s_1}},T_{\alpha_4},T^{T_{\alpha_3}}_{\gamma_{1}},\cdots, T_{\gamma_{s_2}},T_\alpha^{T_{\alpha_3}},T^{T_{\alpha_2}T_{\alpha_1}}_{\zeta_{1}},\cdots, T_{\zeta_{s_4}}),$$
$$R9\mu_\alpha\Delta:Cyl(T_{\beta_{1}}^{T_{\alpha_1}T_{\alpha_4}},T_{\beta_{2}},\cdots, T_{\beta_{s_1}},T_{\alpha_4},T^{T_{\alpha'}T_{\alpha_3}}_{\gamma_{1}},\cdots, T_{\gamma_{s_2}},T_{\alpha_3},T^{T_{\alpha_2}T_{\alpha'}T_{\alpha_1}}_{\zeta_{1}},\cdots, T_{\zeta_{s_4}}),$$
and the relation $h_{\mu_\alpha\Delta,\Delta}(R9\Delta)$ is 
$$Cyl(T_{\beta_{1}}^{T_{\alpha'}T_{\alpha_1}T_{\alpha_4}},T_{\beta_{2}},\cdots, T_{\beta_{s_1}},T_{\alpha_4},T^{T_{\alpha'}T_{\alpha_3}}_{\gamma_{1}},\cdots, T_{\gamma_{s_2}},T_{\alpha_3},T^{T_{\alpha_2}T_{\alpha'}T_{\alpha_1}}_{\zeta_{1}},\cdots, T_{\zeta_{s_4}}).$$

Similarly, as $Cyl(T_{\alpha_1},T_{\alpha_4},T_{\alpha'})$, $Co(T_{\beta_1},T_{\alpha_1})$, $Co(T_{\beta_1},T_{\alpha'})$ and $Br_3(T_{\alpha_4},T_{\beta_1})$ hold in $Br_{\mu_\alpha\Delta}$, we have $Co(T_{\alpha'},T_{\beta_1}^{T_{\alpha_1}T_{\alpha_4}})$ holds and thus $T_{\beta_{1}}^{T_{\alpha'}T_{\alpha_1}T_{\alpha_4}}=T_{\beta_{1}}^{T_{\alpha_1}T_{\alpha_4}}$. Therefore, $h_{\mu_\alpha\Delta,\Delta}(R9\Delta)$ holds.

The proof is complete.
\end{proof}

\subsubsection{Proof of Theorem \ref{th:brgroup}}

Fix an (ordinary) triangulation $\Delta_0$ of $\Sigma$, we construct a groupoid $\hat\Gamma_{\Delta_0}$ as follows: The objects are the same as ${\bf TSurf}_\Sigma$. The morphisms are generated by $\hat h_{\Delta',\Delta_0}:\Delta_0\to \Delta', \Delta'\in {\bf TSurf}_\Sigma$ and $T^{\Delta_0}_\alpha:\Delta_0\to \Delta_0$, $\alpha$ running over all internal edges of $\Delta_0$ such that $\langle T^{\Delta_0}_\alpha\mid \alpha \text{ is an internal edge of }\Delta_0\rangle=Br_{\Delta_0}$.

For any non-self-folded internal arc $\alpha\in \Delta_0$, let 
$$\hat h_{\Delta_0,\mu_\alpha \Delta_0}=T^{\Delta_0}_\alpha \hat h^{-1}_{\mu_\alpha\Delta_0,\Delta_0}, \hspace{5mm} \hat h_{\Delta',\mu_\alpha\Delta_0}:=\hat h_{\Delta',\Delta_0}\hat h^{sgn_\alpha(C^{\Delta'}_{\Delta_0})}_{\Delta_0,\mu_\alpha\Delta_0}$$ and 
$$\begin{array}{rcl}
T^{\mu_\alpha\Delta_0}_{\beta}:&=&
\begin{cases}
  \hat h_{\mu_\alpha\Delta_0,\Delta_0} T^{\Delta_0}_{\alpha} \hat h^{-1}_{\mu_\alpha\Delta_0,\Delta_0}, & \text{ if } \beta\in \mu_\alpha\Delta_0\setminus \Delta_0,\\ 
   \hat h_{\mu_\alpha\Delta_0,\Delta_0}(T^{\Delta_0}_{\alpha})^{-1} T^{\Delta_0}_{\beta}T^{\Delta_0}_{\alpha} \hat h^{-1}_{\mu_\alpha\Delta_0,\Delta_0}, & \text{if there is an arrow from $\alpha$ to $\beta$ in $Q_{\Delta_0}$,} \\ 
   \hat h_{\mu_\alpha\Delta_0,\Delta_0} T^{\Delta_0}_{\beta} \hat h^{-1}_{\mu_\alpha\Delta_0,\Delta_0}, & \text{ otherwise,} 
\end{cases}\vspace{3pt}\\
&=&
\begin{cases}
  \hat h_{\mu_\alpha\Delta_0,\Delta_0} T^{\Delta_0}_{\alpha} \hat h^{-1}_{\mu_\alpha\Delta_0,\Delta_0}, & \text{ if } \beta\in \mu_\alpha\Delta_0\setminus \Delta_0,\\ 
   \hat h^{-1}_{\Delta_0,\mu_\alpha\Delta_0}T^{\Delta_0}_{\beta}\hat h_{\Delta_0,\mu_\alpha\Delta_0}, & \text{if there is an arrow from $\alpha$ to $\beta$ in $Q_{\Delta_0}$,} \\ 
   \hat h_{\mu_\alpha\Delta_0,\Delta_0} T^{\Delta_0}_{\beta} \hat h^{-1}_{\mu_\alpha\Delta_0,\Delta_0}, & \text{ otherwise,} 
\end{cases}
\end{array}
$$ 
where
$\hat h^{\varepsilon}_{\Delta_0,\mu_\alpha\Delta_0}=
\begin{cases}
    \hat h_{\Delta_0,\mu_\alpha\Delta_0}, & \text{if $\varepsilon=+$},\\
    \hat h^{-1}_{\mu_\alpha\Delta_0,\Delta_0}, & \text{if $\varepsilon=-$}.
\end{cases}$

Inductively, we can construct a morphism $\hat h_{\Delta',\Delta}:\Delta\to \Delta'$ for any $\Delta,\Delta'\in \hat\Gamma_{\Delta_0}$ and $T_\beta^\Delta: \Delta\to \Delta$ for any internal arc $\beta\in \Delta$ using a sequence of flips from $\Delta_0$ to $\Delta$.

\begin{proposition}\label{prop:welldef}
The morphisms $\hat h_{\Delta',\Delta}:\Delta\to \Delta'$
and $T_{\beta}^{\Delta}$ 
are well-defined for any $\Delta, \Delta'$, i.e., they do not depend on the flips $\mu$ from $\Delta_0$ to $\Delta$.
\end{proposition}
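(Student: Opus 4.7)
The plan is to reduce well-definedness to verifying a finite list of defining relations, and then to translate those relations into identities that already hold in $Br_{\Delta_0}$. By Theorem \ref{th:presentation of Tsurf}, the groupoid ${\bf TSurf}_\Sigma$ is generated by single flips $h_{\Delta',\Delta}$ subject to the diamond/pentagon/hexagon relations, horizontal compatibility, and the once-punctured bigon relation. Any two flip sequences from $\Delta_0$ to a fixed $\Delta$ can be connected by finitely many local substitutions using these defining relations, so to show that $\hat h_{\Delta',\Delta}$ and $T_\beta^\Delta$ are independent of the chosen flip sequence it suffices to verify that each defining relation lifts to an identity in the groupoid $\hat\Gamma_{\Delta_0}$. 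After conjugating by a fixed reference path $\Delta_0 \to \Delta$, any such closed loop in $\hat\Gamma_{\Delta_0}$ becomes an element of the subgroup $Br_{\Delta_0} \subset \mathrm{Aut}_{\hat\Gamma_{\Delta_0}}(\Delta_0)$, and the task reduces to checking that this element is trivial.

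Under the construction, each defining relation of ${\bf TSurf}_\Sigma$ translates into one of the relations R1--R9 of Theorem \ref{th:brgroup} in $Br_{\Delta_0}$. Namely, a diamond cycle at two arcs with no common triangle reduces to R1; a pentagon cycle at two arcs sharing a triangle reduces to R2 (with the R4 variant when a weight-$\ne 1$ arc is involved); hexagon-type cycles around orbifold configurations reduce to R3--R8; horizontal compatibility is precisely the conjugation rule that was used to define $T_\beta^{\mu_\alpha\Delta_0}$; and the once-punctured bigon cycle reduces to the cyclic relation R9. Since all of R1--R9 are relations in $Br_{\Delta_0}$ by construction, every defining loop collapses to the identity in $\hat\Gamma_{\Delta_0}$. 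The simultaneous well-definedness of the $T_\beta^\Delta$ then follows from Theorem \ref{thm:quiverbraidgroup}: the formulas \eqref{eq:h}, \eqref{eq:h^-} defining $T_\beta^{\mu_\alpha\Delta}$ were already shown there to extend to group isomorphisms $\widetilde{Br}_\Delta \cong \widetilde{Br}_{\Delta'}$ preserving R1--R9 across a single flip, so composing them along any two flip sequences connecting $\Delta_0$ to $\Delta$ produces the same image of each $T_\beta^{\Delta_0}$, which is precisely what needs to be checked for the $T_\beta^\Delta$.

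The main obstacle will be the once-punctured bigon relation, since R9 is itself defined only up to a choice of mutation sequence $\mu$ that clears the loops incident to the puncture. Handling this requires (i) Lemma \ref{lem:R9unique}, which shows that the statement of R9 is independent of this auxiliary choice, and (ii) Lemmas \ref{lem:R9a}--\ref{lem:R9c} and \ref{lem:R9a1}, which show that each of the four flips in the bigon cycle preserves R9 under the conjugation rules. A secondary difficulty, comparatively routine but lengthy, is that the presence of orbifold points of weight $\ne 1$ splits the hexagon relations into multiple sub-cases (R2 versus R4, R3 versus R4--R5, and so on), and each sub-case must be matched to the appropriate $Br_3$, $Br_4$, or $Co$ identity in $Br_{\Delta_0}$. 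This is exactly the work carried out in Lemmas \ref{lem:r21}--\ref{lem:R41} and the case analysis of Section \ref{sec:proofofquiverbraidgroup}; once those are in hand, the verification of Proposition \ref{prop:welldef} becomes a bookkeeping exercise matching each cycle of ${\bf TSurf}_\Sigma$ to its corresponding braid-group identity.
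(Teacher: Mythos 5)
Your overall strategy -- reduce well-definedness to the finitely many defining cycles of ${\bf TSurf}_\Sigma$ and check that each collapses to an identity among the $T_\gamma^{\Delta_0}$ -- is the same as the paper's, which works with the cycles of length $4$, $5$ and $6$ generating the fundamental group of the flip graph and verifies the two identities \eqref{eq:inv1} and \eqref{eq:inv2} for each. However, two of your identifications are off, and one substantive ingredient is missing.

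First, you route the once-punctured bigon cycle through R9 and the lemmas \ref{lem:R9unique}, \ref{lem:R9a}--\ref{lem:R9c}. That is a misattribution: the once-punctured bigon relation asserts that the two flip-squares at the diagonals $\alpha,\beta$ commute, and it collapses via the commutation $T_\alpha^{\Delta}T_\beta^{\Delta}=T_\beta^{\Delta}T_\alpha^{\Delta}$, i.e.\ the third case of R1 (this is exactly Lemma \ref{lem:relationb}(b)). The entire R9 apparatus is needed to prove that the flip isomorphisms of Theorem \ref{thm:quiverbraidgroup} preserve the puncture relations; it plays no role in Proposition \ref{prop:welldef} itself. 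Similarly, the hexagon cycles do not invoke R3--R8: they collapse using the $Br_4$ case of R2 (together with R1 for the length-$4$ cycles and $Br_3$ for the pentagons). R3--R8 again belong to the proof of Theorem \ref{thm:quiverbraidgroup}, not here.

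Second, and more seriously, your argument never engages with the signs. The morphisms $\hat h_{\Delta',\Delta}$ are assembled from $\hat h^{\varepsilon}_{\Delta_0,\mu_\alpha\Delta_0}$ where $\varepsilon=sgn_\alpha(C^{\Delta'}_{\Delta_0})$, and likewise the conjugation of $T_\beta^{\Delta_0}$ around a cycle carries exponents $\varepsilon_i$ determined by the arrows into $\beta$ in the successive quivers. Verifying \eqref{eq:inv1} requires knowing exactly which sign patterns $(sgn_{\alpha_0}(C^{\Delta'}_{\Delta_0}),\ldots,sgn_{\alpha_{k-1}}(C^{\Delta'}_{\Delta_{k-1}}))$ can occur around a pentagon or hexagon; the paper pins these down using the recursion of Lemma \ref{lem:dual} (column sign-coherence of $C$-matrices) and only then does the telescoping product reduce to the braid relation. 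Without this bookkeeping the claim that ``every defining loop collapses to the identity'' is not actually checked: the composite around a cycle is a specific signed word in $T_{\alpha_0}^{\Delta_0}$ and $T_{\alpha_1}^{\Delta_0}$, and which word it is depends on $\Delta'$ through the $C$-matrix signs. You should either carry out this case analysis or explicitly cite the sign-coherence input; appealing to Theorem \ref{thm:quiverbraidgroup} alone does not supply it, since that theorem concerns a single flip and says nothing about how the signs compose around a closed cycle.
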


\begin{proof}
In case $\Delta=\Delta_0$ and $\mu=\mu_{\alpha'}\circ \mu_{\alpha}$ for $\alpha'\in \mu_\alpha(\Delta_0)\setminus \Delta_0$.

Following the mutation $\mu_\alpha$, we obtain $T_{\alpha'}^{\mu_\alpha(\Delta_0)}\hat h^{-1}_{\Delta_0,\mu_\alpha\Delta_0}:\Delta_0\to \mu_\alpha\Delta_0$. We have $$T_{\alpha'}^{\mu_\alpha(\Delta_0)}\hat h^{-1}_{\Delta_0,\mu_\alpha\Delta_0}=(\hat h_{\mu_\alpha\Delta_0,\Delta_0} T^{\Delta_0}_{\alpha} \hat h^{-1}_{\mu_\alpha\Delta_0,\Delta_0})\hat h^{-1}_{\Delta_0,\mu_\alpha\Delta_0}=\hat h_{\mu_\alpha\Delta_0,\Delta_0}.$$

For any $\Delta'$, we obtain the morphism $\hat h_{\Delta',\Delta_0}\hat h^{sgn_\alpha(C^{\Delta'}_{\Delta_0})}_{\Delta_0,\mu_\alpha\Delta_0}\hat h^{sgn_\alpha(C^{\Delta'}_{\mu_\alpha\Delta_0})}_{\mu_\alpha\Delta_0,\Delta_0}:\Delta_0\to \Delta$ following $\mu=\mu_{\alpha'}\circ \mu_{\alpha}$.

Since $sgn_\alpha(C_{\Delta_0}^{\Delta'})\neq sgn_\alpha(C_{\mu_\alpha\Delta_0}^{\Delta'})$, we have $\hat h^{sgn_\alpha(C^{\Delta'}_{\Delta_0})}_{\Delta_0,\mu_\alpha\Delta_0}\hat h^{sgn_\alpha(C^{\Delta'}_{\mu_\alpha\Delta_0})}_{\mu_\alpha\Delta_0,\Delta_0}=1$. Thus, $\hat h_{\Delta',\Delta_0}$ is stable under the flips $\mu_{\alpha'}\circ \mu_\alpha$.

For any internal arc $\beta$ of $\Delta_0$, under the sequence of flips $\mu_{\alpha'}\circ \mu_\alpha$, we obtain
$$
\begin{cases}
  \hat h_{\Delta_0,\mu_\alpha\Delta_0} T^{\mu_\alpha\Delta_0}_{\alpha'} \hat h^{-1}_{\Delta_0,\mu_\alpha\Delta_0}, & \text{ if } \beta=\alpha,\\ 
   \hat h_{\Delta_0,\mu_\alpha\Delta_0}(T^{\mu_\alpha\Delta_0}_{\alpha'})^{-1} T^{\mu_\alpha\Delta_0}_{\beta}T^{\mu_\alpha\Delta_0}_{\alpha'} \hat h^{-1}_{\Delta_0,\mu_\alpha\Delta_0}, & \text{if there is an arrow from $\alpha'$ to $\beta$ in $Q_{\mu_\alpha\Delta_0}$,} \\ 
   \hat h_{\Delta_0,\mu_\alpha\Delta_0} T^{\mu_\alpha\Delta_0}_{\beta} \hat h^{-1}_{\Delta_0,\mu_\alpha\Delta_0}, & \text{ otherwise.} 
\end{cases}$$ 
It is equal to $T^{\Delta_0}_{\beta}$ in all the cases. It implies that $T^{\Delta_0}_{\beta}$ is stable under the sequence of flips $\mu_{\alpha'}\circ \mu_\alpha$.

Therefore, the result is true in case $\Delta=\Delta_0$ and $\mu=\mu_{\alpha'}\circ \mu_{\alpha}$.

To prove the remaining cases, it suffices to prove the cases that $\mu$ is a simple cycle in the graph of flips. We have $\Delta=\Delta_0$ in these cases. Since the fundamental group of the graph of flips is generated by cycles of lengths $4$, $5$ and $6$, to complete the proof, we may assume that $\mu$ is a cycle of length $4$, $5$ or $6$. 

Assume that $\mu=\mu_{\alpha_{k-1}}\circ \cdots \circ \mu_{\alpha_1}\circ \mu_{\alpha_0}$ for $k=4,5$ or $6$. Then $\alpha_{k-1}=\alpha_1$. Denote $\Delta_i=\mu_{\alpha_i}\circ \cdots \circ \mu_{\alpha_2}\circ \mu_{\alpha_1}$ for all $i<k$.

Following the mutations $\mu$, we obtain the morphisms
\begin{equation*}
   \hat h_{\Delta',\Delta_0}\hat h^{sgn_{\alpha_0}(C^{\Delta'}_{\Delta_0})}_{\Delta_0,\Delta_1}\hat h^{sgn_{\alpha_1}(C^{\Delta'}_{\Delta_1})}_{\Delta_1,\Delta_2}\cdots \hat h^{sgn_{\alpha_{k-1}}(C^{\Delta'}_{\Delta_{k-1}})}_{\Delta_{k-1},\Delta_0},
\end{equation*}

\begin{equation*}
   (\hat h^{\varepsilon_{k-1}}_{\Delta_0,\Delta_{k-1}}\cdots \hat h^{\varepsilon_{1}}_{\Delta_2,\Delta_{1}} \hat h^{\varepsilon_{0}}_{\Delta_1,\Delta_{0}})T_\beta^{\Delta_0}(\hat h^{\varepsilon_{k-1}}_{\Delta_0,\Delta_{k-1}}\cdots \hat h^{\varepsilon_{1}}_{\Delta_2,\Delta_{1}} \hat h^{\varepsilon_{0}}_{\Delta_1,\Delta_{0}})^{-1}:\Delta_0\to \Delta_0,
\end{equation*}
where $\varepsilon_i=-$ only if there is an arrow from $\alpha_i$ to $\beta$ in $Q_{\Delta_i}$ for any $0\leq i\leq k-1$.

To show that $\hat h_{\Delta',\Delta_0}: \Delta_0\to \Delta'$ and $T^{\Delta_0}_\beta: \Delta_0\to \Delta'$
do not depend on the mutations $\mu$, we shall prove that  
\begin{equation}\label{eq:inv1}
   1=\hat h^{sgn_{\alpha_0}(C^{\Delta'}_{\Delta_0})}_{\Delta_0,\Delta_1}\hat h^{sgn_{\alpha_1}(C^{\Delta'}_{\Delta_1})}_{\Delta_1,\Delta_2}\cdots \hat h^{sgn_{\alpha_{k-1}}(C^{\Delta'}_{\Delta_{k-1}})}_{\Delta_{k-1},\Delta_0}.
\end{equation}
  
\begin{equation}\label{eq:inv2}
(\hat h^{\varepsilon_{k-1}}_{\Delta_0,\Delta_{k-1}}\cdots \hat h^{\varepsilon_{1}}_{\Delta_2,\Delta_{1}} \hat h^{\varepsilon_{0}}_{\Delta_1,\Delta_{0}})T_\beta^{\Delta_0}(\hat h^{\varepsilon_{k-1}}_{\Delta_0,\Delta_{k-1}}\cdots \hat h^{\varepsilon_{1}}_{\Delta_2,\Delta_{1}} \hat h^{\varepsilon_{0}}_{\Delta_1,\Delta_{0}})^{-1}=T_\beta^{\Delta_0}.
\end{equation}

{\bf Case 1}. $k=4$. Then $\alpha_2=\alpha_0$, $\alpha_3=\alpha_1$ and there is no arrow between $\alpha_0$ and $\alpha_1$ in $Q_{\Delta_0}$.

Following the sequence of mutations $\mu$, we have 
\begin{equation}\label{eq:a1}
\hat h_{\Delta_0,\Delta_1}=T_{\alpha_0}^{\Delta_0}\hat h^{-1}_{\Delta_1,\Delta_0},
\end{equation}
\begin{equation}\label{eq:a2}
\hat h_{\Delta_2,\Delta_1}=\hat h_{\Delta_2,\Delta_0}\hat h^{-1}_{\Delta_1,\Delta_0}, \hspace{5mm} \hat h_{\Delta_1,\Delta_2}=T_{\alpha_1}^{\Delta_1}\hat h^{-1}_{\Delta_2,\Delta_1}=\hat h_{\Delta_1,\Delta_0}T_{\alpha_1}^{\Delta_0}
\hat h^{-1}_{\Delta_2,\Delta_0},
\end{equation}
\begin{equation}\label{eq:a3}\hat h_{\Delta_3,\Delta_2}=\hat h_{\Delta_3,\Delta_1}\hat h^{-1}_{\Delta_2,\Delta_1}=\hat h_{\Delta_3,\Delta_0}\hat h_{\Delta_0,\Delta_1}\hat h^{-1}_{\Delta_2,\Delta_1}=\hat h_{\Delta_3,\Delta_0}T_{\alpha_0}^{\Delta_0}\hat h^{-1}_{\Delta_2,\Delta_0},
\end{equation}
\begin{equation}\label{eq:a4}
\begin{array}{rcl}
\hat h_{\Delta_2,\Delta_3}&=&T_{\alpha_2}^{\Delta_2}\hat h^{-1}_{\Delta_3,\Delta_2}=\hat h_{\Delta_2,\Delta_1}T_{\alpha_0}^{\Delta_1}\hat h^{-1}_{\Delta_2,\Delta_1}\hat h^{-1}_{\Delta_3,\Delta_2}\\
&=&\hat h_{\Delta_2,\Delta_1}\hat h_{\Delta_1,\Delta_0}T_{\alpha_0}^{\Delta_0}\hat h^{-1}_{\Delta_1,\Delta_0}\hat h^{-1}_{\Delta_2,\Delta_1}\hat h^{-1}_{\Delta_3,\Delta_2}\\
&=& \hat h_{\Delta_2,\Delta_0}\hat h^{-1}_{\Delta_3,\Delta_0},
\end{array}
\end{equation}
\begin{equation}\label{eq:a5}\hat h_{\Delta_0,\Delta_3}=\hat h_{\Delta_0,\Delta_2}\hat h^{-1}_{\Delta_3,\Delta_2}=\hat h_{\Delta_0,\Delta_1}\hat h_{\Delta_1,\Delta_2}\hat h^{-1}_{\Delta_3,\Delta_2}=T_{\alpha_0}^{\Delta_0}T_{\alpha_1}^{\Delta_0}(T_{\alpha_0}^{\Delta_0})^{-1}\hat h^{-1}_{\Delta_3,\Delta_0}.
\end{equation}
\begin{equation}\label{eq:a6}
\begin{array}{rcl}
\hat h_{\Delta_3,\Delta_0}&=&T_{\alpha_3}^{\Delta_3}\hat h^{-1}_{\Delta_0,\Delta_3}\\
&=&\hat h_{\Delta_3,\Delta_2}\hat h_{\Delta_2,\Delta_1}\hat h_{\Delta_1,\Delta_0}T_{\alpha_1}^{\Delta_0}(\hat h_{\Delta_3,\Delta_2}\hat h_{\Delta_2,\Delta_1}\hat h_{\Delta_1,\Delta_0})^{-1}\hat h^{-1}_{\Delta_0,\Delta_3}\\
&=& \hat h_{\Delta_3,\Delta_0}T_{\alpha_0}^{\Delta_0}T_{\alpha_1}^{\Delta_0}(T_{\alpha_0}^{\Delta_0})^{-1}\hat h^{-1}_{\Delta_3,\Delta_0}\hat h^{-1}_{\Delta_0,\Delta_3}\\
&=& \hat h_{\Delta_3,\Delta_0}.
\end{array}.
\end{equation}
This in particular implies that $\hat h_{\Delta_3,\Delta_0}$ is stable under the sequence of flips $\mu$.

As $sgn_{\alpha_1}(C^{\Delta'}_{\Delta_1})=-sgn_{\alpha_3}(C^{\Delta'}_{\Delta_3})$ and $sgn_{\alpha_2}(C^{\Delta'}_{\Delta_2})=-sgn_{\alpha_0}(C^{\Delta'}_{\Delta_0})$, by (\ref{eq:a1}) (\ref{eq:a2}) (\ref{eq:a3}) (\ref{eq:a4}),  (\ref{eq:a5}), and the fact that $T_{\alpha_0}^{\Delta_0}T_{\alpha_1}^{\Delta_0}=T_{\alpha_1}^{\Delta_0}T_{\alpha_0}^{\Delta_0}$, we have
$$\hat h^{sgn_{\alpha_0}(C^{\Delta'}_{\Delta_0})}_{\Delta_0,\Delta_1}\hat h^{sgn_{\alpha_1}(C^{\Delta'}_{\Delta_1})}_{\Delta_1,\Delta_2}\hat h^{sgn_{\alpha_2}(C^{\Delta'}_{\Delta_2})}_{\Delta_2,\Delta_3}\hat h^{sgn_{\alpha_3}(C^{\Delta'}_{\Delta_3})}_{\Delta_3,\Delta_0}=1.$$ Thus, (\ref{eq:inv1}) holds. It should be noted that we need the condition that $T_{\alpha_0}^{\Delta_0}T_{\alpha_1}^{\Delta_0}=T_{\alpha_1}^{\Delta_0}T_{\alpha_0}^{\Delta_0}$ only in the case $sgn_{\alpha_0}(C^{\Delta'}_{\Delta_1})=-sgn_{\alpha_1}(C^{\Delta'}_{\Delta_2})=-$.

If there are no arrows between $\beta$ and $\alpha_0$, and no arrows between $\beta$ and $\alpha_1$ in $Q_{\Delta_0}$, then $\varepsilon_i=1$ for all $0\leq i\leq 3$, and $T^{\Delta_0}_\beta$ commutes with $T^{\Delta_0}_{\alpha_0}$ and $T^{\Delta_0}_{\alpha_1}$. By (\ref{eq:a1}) (\ref{eq:a2}) (\ref{eq:a3}) (\ref{eq:a4}) and (\ref{eq:a5}), we have
$$\hat h_{\Delta_0,\Delta_3}\hat h_{\Delta_3,\Delta_2}\hat h_{\Delta_2,\Delta_1}\hat h_{\Delta_1,\Delta_0}=T_{\alpha_1}^{\Delta_0}T_{\alpha_0}^{\Delta_0}.$$
Thus, (\ref{eq:inv2}) holds.

If there is an arrow between $\beta$ and $\alpha_0$, but there are no arrows between $\beta$ and $\alpha_1$ in $Q_{\Delta_0}$, then $\varepsilon_i=1$ for $i=1,3$, and $T^{\Delta_0}_\beta$ commutes with $T^{\Delta_0}_{\alpha_1}$. We may assume that there is an arrow from  $\alpha_0$ to $\beta$ in $Q_{\Delta_0}$ since the other case can be proved similarly. Then $\varepsilon_0=-,\varepsilon_2=+$. By (\ref{eq:a1}) (\ref{eq:a2}) (\ref{eq:a3}) (\ref{eq:a4}) and (\ref{eq:a5}), we have
$$\hat h_{\Delta_0,\Delta_3}\hat h_{\Delta_3,\Delta_2}\hat h_{\Delta_2,\Delta_1}\hat h_{\Delta_0,\Delta_1}^{-1}=T_{\alpha_0}^{\Delta_0}T_{\alpha_1}^{\Delta_0}(T_{\alpha_0}^{\Delta_0})^{-1}=T_{\alpha_1}^{\Delta_0}.$$
Thus, (\ref{eq:inv2}) holds.

If there is an arrow between $\beta$ and $\alpha_0$, and an arrow between $\beta$ and $\alpha_1$ in $Q_{\Delta_0}$, we may assume that there are arrows from $\alpha_0$ and $\alpha_1$ to $\beta$ in $Q_{\Delta_0}$ as the other case can be proved similarly. Then $\varepsilon_0=\varepsilon_1=-,\varepsilon_2=\varepsilon_3=+$. By (\ref{eq:a1}) (\ref{eq:a2}) (\ref{eq:a3}) (\ref{eq:a4}) and (\ref{eq:a5}), we have
$$\hat h_{\Delta_0,\Delta_3}\hat h_{\Delta_3,\Delta_2}\hat h^{-1}_{\Delta_1,\Delta_2}\hat h^{-1}_{\Delta_0,\Delta_1}=1.$$
Thus, (\ref{eq:inv2}) holds.

{\bf Case 2}. $k=5$. In this case there is an arrow between $\alpha_0$ and $\alpha_1$ in $Q_{\Delta_0}$, $\alpha_i\in \Delta_{i-1}\setminus \Delta_{i-2}$ for $2\leq i\leq 4$, and $w(\alpha_0)=w(\alpha_1)=1$. We may assume that there is an arrow from $\alpha_1$ to $\alpha_0$ in $Q_{\Delta_0}$, since otherwise we can consider the mutation sequence $\mu^-=\mu_{\alpha_0}\circ \mu_{\alpha_1}\circ \cdots \circ\mu_{\alpha_4}$ instead.

Following the sequence of mutations $\mu$, using the braid relation $T_{\alpha_0}^{\Delta_0}T_{\alpha_1}^{\Delta_0}T_{\alpha_0}^{\Delta_0}=T_{\alpha_1}^{\Delta_0}T_{\alpha_1}^{\Delta_0}T_{\alpha_1}^{\Delta_0}$ and by calculation, we have
\begin{equation}\label{eq:b1}
       \begin{aligned}
     \hat h_{\Delta_0,\Delta_1} &=  T_{\alpha_0}^{\Delta_0} \hat h^{-1}_{\Delta_1,\Delta_0},\\
     \hat h_{\Delta_2,\Delta_1} &=  \hat h_{\Delta_2,\Delta_0} \hat h^{-1}_{\Delta_1,\Delta_0},\\
     \hat h_{\Delta_1,\Delta_2} &=  \hat h_{\Delta_1,\Delta_0} T_{\alpha_1}^{\Delta_0} \hat h^{-1}_{\Delta_2,\Delta_0},\\
     \hat h_{\Delta_3,\Delta_2} &=  \hat h_{\Delta_3,\Delta_0}T_{\alpha_0}^{\Delta_0} \hat h^{-1}_{\Delta_2,\Delta_0},\\
     \hat h_{\Delta_2,\Delta_3} &=  \hat h_{\Delta_2,\Delta_0} \hat h^{-1}_{\Delta_3,\Delta_0},\\
     \hat h_{\Delta_4,\Delta_3} &=  \hat h_{\Delta_4,\Delta_0} T_{\alpha_0}^{\Delta_0}T_{\alpha_1}^{\Delta_0}(T_{\alpha_0}^{\Delta_0})^{-1} \hat h^{-1}_{\Delta_3,\Delta_0},\\
     \hat h_{\Delta_3,\Delta_4} &=  \hat h_{\Delta_3,\Delta_0} \hat h^{-1}_{\Delta_4,\Delta_0},\\
     \hat h_{\Delta_0,\Delta_4} &=  T_{\alpha_1}^{\Delta_0} \hat h^{-1}_{\Delta_4,\Delta_0}.
    \end{aligned}
\end{equation}

By Lemma \ref{lem:dual},
$(sgn_{\alpha_0}(C^{\Delta'}_{\Delta_0}),sgn_{\alpha_1}(C^{\Delta'}_{\Delta_1}),sgn_{\alpha_2}(C^{\Delta'}_{\Delta_2}),sgn_{\alpha_3}(C^{\Delta'}_{\Delta_3}),sgn_{\alpha_4}(C^{\Delta'}_{\Delta_4}))$ has the following possibilities: $(+,+,+,-,-)$, $(-,+,+,+,-)$, $(-,-,+,+,+)$, $(+,-,-,+,+)$, and $(+,+,-,-,+)$. By (\ref{eq:b1}) and the fact that $T_{\alpha_0}^{\Delta_0}T_{\alpha_1}^{\Delta_0}T_{\alpha_0}^{\Delta_0}=T_{\alpha_1}^{\Delta_0}T_{\alpha_0}^{\Delta_0}T_{\alpha_1}^{\Delta_0}$, we have
$$\hat h^{sgn_{\alpha_0}(C^{\Delta'}_{\Delta_0})}_{\Delta_0,\Delta_1}\hat h^{sgn_{\alpha_1}(C^{\Delta'}_{\Delta_1})}_{\Delta_1,\Delta_2}\hat h^{sgn_{\alpha_2}(C^{\Delta'}_{\Delta_2})}_{\Delta_2,\Delta_3}\hat h^{sgn_{\alpha_3}(C^{\Delta'}_{\Delta_3})}_{\Delta_3,\Delta_4}\hat h^{sgn_{\alpha_4}(C^{\Delta'}_{\Delta_4})}_{\Delta_4,\Delta_0}=1.$$

Thus, (\ref{eq:inv1}) holds.

If there are no arrows between $\beta$ and $\alpha_0$, and no arrows between $\beta$ and $\alpha_1$ in $Q_{\Delta_0}$, then $\varepsilon_i=1$ for all $0\leq i\leq 4$, and $T^{\Delta_0}_\beta$ commutes with $T^{\Delta_0}_{\alpha_0}$ and $T^{\Delta_0}_{\alpha_1}$. By (\ref{eq:b1}), we have
$$\hat h_{\Delta_0,\Delta_4}\hat h_{\Delta_4,\Delta_3}\hat h_{\Delta_3,\Delta_2}\hat h_{\Delta_2,\Delta_1}\hat h_{\Delta_1,\Delta_0}=T_{\alpha_1}^{\Delta_0}T_{\alpha_0}^{\Delta_0}T_{\alpha_1}^{\Delta_0}.$$
Thus, (\ref{eq:inv2}) holds.

If there is an arrow between $\alpha_0$ and $\beta$, but there are no arrows between $\beta$ and $\alpha_1$ in $Q_{\Delta_0}$, then $T_{\beta}^{\Delta_0}$ commutes with $T_{\alpha_1}^{\Delta_0}$.
We may assume that there is an arrow from $\alpha_0$ to $\beta$ since the other case can be proved similarly, then 
$\varepsilon_0=\varepsilon_1=-,\varepsilon_2=\varepsilon_3=\varepsilon_4=+$. By (\ref{eq:b1}), we have
$$\hat h_{\Delta_0,\Delta_4}\hat h_{\Delta_4,\Delta_3}\hat h_{\Delta_3,\Delta_2}\hat h^{-1}_{\Delta_1,\Delta_2}\hat h^{-1}_{\Delta_0,\Delta_1}=T_{\alpha_1}^{\Delta_0}.$$
Thus, (\ref{eq:inv2}) holds.

If there is an arrow between $\alpha_1$ and $\beta$, but there are no arrows between $\beta$ and $\alpha_0$ in $Q_{\Delta_0}$, then $T_{\beta}^{\Delta_0}$ commutes with $T_{\alpha_0}^{\Delta_0}$.
We may assume that there is an arrow from $\alpha_1$ to $\beta$ since the other case can be proved similarly, then 
$\varepsilon_1=\varepsilon_2=-,\varepsilon_0=\varepsilon_3=\varepsilon_4=+$. By (\ref{eq:b1}), we have
$$\hat h_{\Delta_0,\Delta_4}\hat h_{\Delta_4,\Delta_3}\hat h^{-1}_{\Delta_2,\Delta_3}\hat h^{-1}_{\Delta_1,\Delta_2}\hat h_{\Delta_1,\Delta_0}=T_{\alpha_0}^{\Delta_0}.$$
Thus, (\ref{eq:inv2}) holds.

If there is an arrow between $\beta$ and $\alpha_0$, and an arrow between $\beta$ and $\alpha_1$ in $Q_{\Delta_0}$, we may assume that there are arrows from $\alpha_0$ and $\alpha_1$ to $\beta$ in $Q_{\Delta_0}$ as the other case can be proved similarly. Then $\varepsilon_0=\varepsilon_1=\varepsilon_2=-,\varepsilon_3=\varepsilon_4=+$. By (\ref{eq:b1}), we have
$$\hat h_{\Delta_0,\Delta_4}\hat h_{\Delta_4,\Delta_3}\hat h^{-1}_{\Delta_2,\Delta_3}\hat h^{-1}_{\Delta_1,\Delta_2}\hat h^{-1}_{\Delta_0,\Delta_1}=1.$$
Thus, (\ref{eq:inv2}) holds.

{\bf Case 3}. $k=6$. Then there is an arrow between $\alpha_0$ and $\alpha_1$ in $Q_{\Delta_0}$, $\alpha_i\in \Delta_{i-1}\setminus \Delta_{i-2}$ for $2\leq i\leq 5$, and $w(\alpha_0)\neq w(\alpha_1)=1$ or $w(\alpha_1)\neq w(\alpha_0)=1$. We may assume that there is an arrow from $\alpha_1$ to $\alpha_0$ in $Q_{\Delta_0}$, since otherwise we can consider the mutation sequence $\mu^-=\mu_{\alpha_0}\circ \mu_{\alpha_1}\circ \cdots \circ\mu_{\alpha_5}$ instead. 

Following the sequence of mutations $\mu$, using the braid relation $T_{\alpha_0}^{\Delta_0}T_{\alpha_1}^{\Delta_0}T_{\alpha_0}^{\Delta_0}T_{\alpha_1}^{\Delta_0}=T_{\alpha_1}^{\Delta_0}T_{\alpha_1}^{\Delta_0}T_{\alpha_1}^{\Delta_0}T_{\alpha_0}^{\Delta_0}$ and by calculation, we have
\begin{equation}\label{eq:b11}
       \begin{aligned}
     \hat h_{\Delta_0,\Delta_1} &=  T_{\alpha_0}^{\Delta_0} \hat h^{-1}_{\Delta_1,\Delta_0},\\
     \hat h_{\Delta_2,\Delta_1} &=  \hat h_{\Delta_2,\Delta_0} \hat h^{-1}_{\Delta_1,\Delta_0},\\
     \hat h_{\Delta_1,\Delta_2} &=  \hat h_{\Delta_1,\Delta_0} T_{\alpha_1}^{\Delta_0} \hat h^{-1}_{\Delta_2,\Delta_0},\\
     \hat h_{\Delta_3,\Delta_2} &=  \hat h_{\Delta_3,\Delta_0}T_{\alpha_0}^{\Delta_0} \hat h^{-1}_{\Delta_2,\Delta_0},\\
     \hat h_{\Delta_2,\Delta_3} &=  \hat h_{\Delta_2,\Delta_0} \hat h^{-1}_{\Delta_3,\Delta_0},\\
     \hat h_{\Delta_4,\Delta_3} &=  \hat h_{\Delta_4,\Delta_0} T_{\alpha_0}^{\Delta_0}T_{\alpha_1}^{\Delta_0}(T_{\alpha_0}^{\Delta_0})^{-1} \hat h^{-1}_{\Delta_3,\Delta_0},\\
     \hat h_{\Delta_3,\Delta_4} &=  \hat h_{\Delta_3,\Delta_0} \hat h^{-1}_{\Delta_4,\Delta_0},\\
     \hat h_{\Delta_5,\Delta_4} &=  \hat h_{\Delta_5,\Delta_0} T_{\alpha_0}^{\Delta_0}T_{\alpha_1}^{\Delta_0}T_{\alpha_0}^{\Delta_0}(T_{\alpha_1}^{\Delta_0})^{-1}(T_{\alpha_0}^{\Delta_0})^{-1}\hat h^{-1}_{\Delta_4,\Delta_0},\\
     \hat h_{\Delta_4,\Delta_5} &=  \hat h_{\Delta_4,\Delta_0} \hat h^{-1}_{\Delta_5,\Delta_0},\\
      \hat h_{\Delta_0,\Delta_5} &= T_{\alpha_1}^{\Delta_0} \hat h^{-1}_{\Delta_5,\Delta_0},\\
    \end{aligned}
\end{equation}

$(sgn_{\alpha_0}(C^{\Delta'}_{\Delta_0}),sgn_{\alpha_1}(C^{\Delta'}_{\Delta_1}),sgn_{\alpha_2}(C^{\Delta'}_{\Delta_2}),sgn_{\alpha_3}(C^{\Delta'}_{\Delta_3}),sgn_{\alpha_4}(C^{\Delta'}_{\Delta_4}),sgn_{\alpha_5}(C^{\Delta'}_{\Delta_5}))$ has the following possibilities: $(+,+,+,+,-,-)$, $(-,+,+,+,+,-)$, $(-,-,+,+,+,+)$, $(+,-,-,+,+,+)$, $(+,+,-,-,+,+)$ and $(+,+,+,-,-,+)$ by Lemma \ref{lem:dual}. By (\ref{eq:b11}) and the braid relation $T_{\alpha_0}^{\Delta_0}T_{\alpha_1}^{\Delta_0}T_{\alpha_0}^{\Delta_0}T_{\alpha_1}^{\Delta_0}=T_{\alpha_1}^{\Delta_0}T_{\alpha_0}^{\Delta_0}T_{\alpha_1}^{\Delta_0}T_{\alpha_0}^{\Delta_0}$, we have
$$\hat h^{sgn_{\alpha_0}(C^{\Delta'}_{\Delta_0})}_{\Delta_0,\Delta_1}\hat h^{sgn_{\alpha_1}(C^{\Delta'}_{\Delta_1})}_{\Delta_1,\Delta_2}\hat h^{sgn_{\alpha_2}(C^{\Delta'}_{\Delta_2})}_{\Delta_2,\Delta_3}\hat h^{sgn_{\alpha_3}(C^{\Delta'}_{\Delta_3})}_{\Delta_3,\Delta_4}\hat h^{sgn_{\alpha_4}(C^{\Delta'}_{\Delta_4})}_{\Delta_4,\Delta_5}\hat h^{sgn_{\alpha_4}(C^{\Delta'}_{\Delta_0})}_{\Delta_5,\Delta_0}=1.$$

Thus, (\ref{eq:inv1}) holds.

If there are no arrows between $\beta$ and $\alpha_0$, and no arrows between $\beta$ and $\alpha_1$ in $Q_{\Delta_0}$, then $\varepsilon_i=1$ for all $0\leq i\leq 5$, and $T^{\Delta_0}_\beta$ commutes with $T^{\Delta_0}_{\alpha_0}$ and $T^{\Delta_0}_{\alpha_1}$. By (\ref{eq:b11}), we have
$$\hat h_{\Delta_0,\Delta_5}\hat h_{\Delta_5,\Delta_4}\hat h_{\Delta_4,\Delta_3}\hat h_{\Delta_3,\Delta_2}\hat h_{\Delta_2,\Delta_1}\hat h_{\Delta_1,\Delta_0}=T_{\alpha_1}^{\Delta_0}T_{\alpha_0}^{\Delta_0}T_{\alpha_1}^{\Delta_0}T_{\alpha_0}^{\Delta_0}.$$
Thus, (\ref{eq:inv2}) holds.

If there is an arrow between $\alpha_0$ and $\beta$, but there are no arrows between $\beta$ and $\alpha_1$ in $Q_{\Delta_0}$, then $T_{\beta}^{\Delta_0}$ commutes with $T_{\alpha_1}^{\Delta_0}$.
We may assume that there is an arrow from $\alpha_0$ to $\beta$ since the other case can be proved similarly, then 
$\varepsilon_0=\varepsilon_1=\varepsilon_2=-,\varepsilon_3=\varepsilon_4=\varepsilon_5=+$. By (\ref{eq:b11}), we have
$$\hat h_{\Delta_0,\Delta_5}\hat h_{\Delta_5,\Delta_4}\hat h_{\Delta_4,\Delta_3}\hat h^{-1}_{\Delta_2,\Delta_3}\hat h^{-1}_{\Delta_1,\Delta_2}\hat h^{-1}_{\Delta_0,\Delta_1}=T_{\alpha_1}^{\Delta_0}.$$
Thus, (\ref{eq:inv2}) holds.

If there is an arrow between $\alpha_1$ and $\beta$, but there are no arrows between $\beta$ and $\alpha_0$ in $Q_{\Delta_0}$, then $T_{\beta}^{\Delta_0}$ commutes with $T_{\alpha_0}^{\Delta_0}$.
We may assume that there is an arrow from $\alpha_1$ to $\beta$ since the other case can be proved similarly, then 
$\varepsilon_1=\varepsilon_2=\varepsilon_3=-,\varepsilon_0=\varepsilon_4=\varepsilon_5=+$. By (\ref{eq:b11}), we have
$$\hat h_{\Delta_0,\Delta_5}\hat h_{\Delta_5,\Delta_4}\hat h^{-1}_{\Delta_3,\Delta_4}\hat h^{-1}_{\Delta_2,\Delta_3}\hat h^{-1}_{\Delta_1,\Delta_2}\hat h_{\Delta_1,\Delta_0}=T_{\alpha_0}^{\Delta_0}.$$
Thus, (\ref{eq:inv2}) holds.

If there is an arrow between $\beta$ and $\alpha_0$, and an arrow between $\beta$ and $\alpha_1$ in $Q_{\Delta_0}$, we may assume that there are arrows from $\alpha_0$ and $\alpha_1$ to $\beta$ in $Q_{\Delta_0}$ as the other case can be proved similarly. Then $\varepsilon_0=\varepsilon_1=\varepsilon_2=\varepsilon_3=-,\varepsilon_4=\varepsilon_5=+$. By (\ref{eq:b11}), we have
$$\hat h_{\Delta_0,\Delta_5}\hat h_{\Delta_5,\Delta_4}\hat h^{-1}_{\Delta_3,\Delta_4}\hat h^{-1}_{\Delta_2,\Delta_3}\hat h^{-1}_{\Delta_1,\Delta_2}\hat h^{-1}_{\Delta_0,\Delta_1}=1.$$
Thus, (\ref{eq:inv2}) holds.

The proof is complete.
\end{proof}

As one can see from the proof of Proposition \ref{prop:welldef}, we have the following.

\begin{corollary}\label{cor:relationa}
For $k\in \{4,5,6\}$ and distinct triangulations $\Delta_i, i=1,\ldots,k$ of $\Sigma$ such that $dist(\Delta_i,\Delta_{i+1\mod k})=1$ for $i=1,\ldots,k$ with $\Delta_2=\mu_{\alpha}(\Delta_1)$ and $\Delta_3=\mu_{\beta}(\Delta_2)$, we have
\begin{equation*}
\hat h_{\Delta_3,\Delta_2}\hat h_{\Delta_2,\Delta_1}=\hat h_{\Delta_3,\Delta_4}\cdots \hat h_{\Delta_{k-1},\Delta_k} \hat h_{\Delta_k,\Delta_1}
\end{equation*}
whenever $(\beta,\alpha)$ is not directed clockwise in $\Delta_1$.   
\end{corollary}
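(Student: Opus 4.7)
The plan is to extract Corollary \ref{cor:relationa} directly from the proof of Proposition \ref{prop:welldef} by rearranging the cycle identity \eqref{eq:inv1}. Recall that in that proof, for each $k \in \{4,5,6\}$ and for any cycle $\Delta_0 \to \Delta_1 \to \cdots \to \Delta_{k-1} \to \Delta_0$ of $k$ flips, and for an arbitrary auxiliary triangulation $\Delta'$, it was shown that
$$
\hat h^{\epsilon_0}_{\Delta_0,\Delta_1}\hat h^{\epsilon_1}_{\Delta_1,\Delta_2}\cdots \hat h^{\epsilon_{k-1}}_{\Delta_{k-1},\Delta_0} = 1,
\qquad \epsilon_i = \mathrm{sgn}_{\alpha_i}(C^{\Delta'}_{\Delta_i}).
$$
Because the Cases~1--3 ($k=4,5,6$) in the proof manipulate only the flip edges of the cycle, the braid/commutation relations among $T_{\alpha_0}^{\Delta_0}, T_{\alpha_1}^{\Delta_0}$, and Lemma \ref{lem:dual}, no property specific to $\Delta_0$ is used besides being the base point. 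Hence the identity transports verbatim to a cycle based at any triangulation, in particular at the corollary's $\Delta_1$.

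With the cycle of the corollary rewritten in this form, my plan is to choose $\Delta'$ so that $\epsilon_1 = \mathrm{sgn}_{\alpha}(C^{\Delta'}_{\Delta_1}) = +$ and $\epsilon_2 = \mathrm{sgn}_{\beta}(C^{\Delta'}_{\Delta_2}) = +$. Under this choice, the first two factors are exactly $\hat h_{\Delta_3,\Delta_2}\hat h_{\Delta_2,\Delta_1}$. Moving the remaining factors $\hat h^{\epsilon_3}_{\Delta_3,\Delta_4}\cdots \hat h^{\epsilon_{k-1}}_{\Delta_{k-1},\Delta_k}\hat h^{\epsilon_k}_{\Delta_k,\Delta_1}$ to the right side of the equation and inverting, while using the explicit sign lists
$(+,+,-,-)$ for $k=4$,
$(+,+,+,-,-)$ for $k=5$, and
$(+,+,+,+,-,-)$ for $k=6$
enumerated in Cases~1--3, yields exactly the claimed equality
$\hat h_{\Delta_3,\Delta_2}\hat h_{\Delta_2,\Delta_1}=\hat h_{\Delta_3,\Delta_4}\cdots \hat h_{\Delta_{k-1},\Delta_k} \hat h_{\Delta_k,\Delta_1}$, after interpreting each negative exponent via the identity $\hat h^{-}_{\Delta_{i+1},\Delta_i} = \hat h^{-1}_{\Delta_i,\Delta_{i+1}}$.

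The main subtlety — and the step I expect to take most care — is justifying that a $\Delta'$ realizing the sign pattern $(\epsilon_1,\epsilon_2) = (+,+)$ exists precisely when $(\beta,\alpha)$ is \emph{not} directed clockwise in $\Delta_1$. This is exactly the geometric translation of the $c$-vector sign-coherence constraint from Lemma \ref{lem:dual}: when $(\beta,\alpha)$ is clockwise in $\Delta_1$, the constraint $\epsilon_1 = -\epsilon_2$ forces one of the first two signs to be negative and we land on the opposite rearrangement (giving the analogue of Corollary \ref{cor:relationa} reading the cycle in the reverse direction). Thus the clockwise/counterclockwise dichotomy partitions the admissible sign patterns of Cases~1--3 into the two ways of splitting the cycle into a forward arc of length two and a backward arc of length $k-2$, and the ``not directed clockwise'' hypothesis picks out exactly the split appearing in the corollary.
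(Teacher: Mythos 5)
Your overall strategy (rearrange the cycle identity \eqref{eq:inv1} for a well-chosen auxiliary $\Delta'$) is not the route the paper takes, and as written it contains two concrete errors. First, a sign-convention error: in \eqref{eq:inv1} the factor carrying exponent $+$ is $\hat h^{+}_{\Delta_1,\Delta_2}=\hat h_{\Delta_1,\Delta_2}$, a morphism $\Delta_2\to\Delta_1$, whereas the corollary's left-hand side involves $\hat h_{\Delta_2,\Delta_1}:\Delta_1\to\Delta_2$. These are \emph{not} mutually inverse --- by construction $\hat h_{\Delta_1,\Delta_2}\hat h_{\Delta_2,\Delta_1}=T_{\alpha}^{\Delta_1}\neq 1$; that twist is the whole point of the braid generators. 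So with $(\epsilon_1,\epsilon_2)=(+,+)$ the first two factors are $\hat h_{\Delta_1,\Delta_2}\hat h_{\Delta_2,\Delta_3}$, not (the inverse of) $\hat h_{\Delta_3,\Delta_2}\hat h_{\Delta_2,\Delta_1}$. To isolate the corollary's left-hand side from \eqref{eq:inv1} you need the pattern $(-,-,+,\dots,+)$, since $\hat h^{-}_{\Delta_1,\Delta_2}=\hat h^{-1}_{\Delta_2,\Delta_1}$; this pattern does occur in the lists of Cases 2--3 of the proof of Proposition \ref{prop:welldef}, so the idea is salvageable, but not with the signs you chose. Second, your key claim that ``a $\Delta'$ realizing $(\epsilon_1,\epsilon_2)=(+,+)$ exists precisely when $(\beta,\alpha)$ is not directed clockwise'' is false: $\Delta'=\Delta_1$ always gives $\epsilon_1=\mathrm{sgn}_\alpha(C^{\Delta_1}_{\Delta_1})=+$ and $\epsilon_2=\mathrm{sgn}_\beta(C^{\Delta_1}_{\Delta_2})=+$, with no condition on the quiver. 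The clockwise hypothesis cannot enter through the existence of the first two signs alone; what actually has to be verified is that a \emph{single} $\Delta'$ realizes the \emph{entire} pattern $(-,-,+,\dots,+)$ simultaneously, and that this is what the clockwise dichotomy governs. You give no argument for the ``all remaining signs are $+$'' part, which is where the real content lies.

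By contrast, the paper obtains the corollary by direct substitution: in the proof of Proposition \ref{prop:welldef} each edge morphism of the cycle is computed explicitly in terms of the base morphisms and the generators, e.g.\ $\hat h_{\Delta_2,\Delta_1}=\hat h_{\Delta_2,\Delta_0}\hat h^{-1}_{\Delta_1,\Delta_0}$, $\hat h_{\Delta_2,\Delta_3}=\hat h_{\Delta_2,\Delta_0}\hat h^{-1}_{\Delta_3,\Delta_0}$, $\hat h_{\Delta_3,\Delta_4}=\hat h_{\Delta_3,\Delta_0}\hat h^{-1}_{\Delta_4,\Delta_0}$ in \eqref{eq:a1}--\eqref{eq:a6}, \eqref{eq:b1}, \eqref{eq:b11}, and both sides of the claimed identity telescope to the same base morphism. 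If you want to keep your approach, you must (i) fix the sign pattern to $(-,-,+,\dots,+)$, (ii) exhibit a $\Delta'$ realizing it (the natural candidate is $\Delta'=\Delta_3$, for which $\alpha\notin\Delta_3$ and $\beta\notin\Delta_3$ force $\epsilon_1=\epsilon_2=-$), and (iii) prove via Lemma \ref{lem:dual} that for this $\Delta'$ the remaining signs are all $+$ exactly under the stated orientation hypothesis; step (iii) is essentially equivalent in effort to the paper's direct computation and is where the clockwise condition genuinely bites.
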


\begin{lemma}\label{lem:relationb} 
$(a)$ For any triangulation $\Delta$, for any non-self-folded arcs $\alpha,\beta\in \Delta$ such that $\alpha$ is non-self-folded in $\mu_\beta\Delta$, if $(\beta,\alpha)$ is not directed clockwise in $\Delta$, then we have
$$\hat h_{\mu_\beta\Delta,\Delta}\hat h_{\Delta,\mu_\alpha\Delta}\hat h_{\mu_\alpha\Delta,\Delta}=\hat h_{\mu_\beta\Delta,\mu_\alpha\mu_\beta\Delta}\hat h_{\mu_\alpha\mu_\beta\Delta,\mu_\beta\Delta}\hat h_{\mu_\beta\Delta,\Delta}.$$  

$(b)$ $\hat h_{\Delta,\mu_\alpha\Delta}\hat h_{\mu_\alpha\Delta,\Delta}\hat h_{\Delta,\mu_\beta\Delta}\hat h_{\mu_\beta\Delta,\Delta}=\hat h_{\Delta,\mu_\beta\Delta}\hat h_{\mu_\beta\Delta,\Delta}\hat h_{\Delta,\mu_\alpha\Delta}\hat h_{\mu_\alpha\Delta,\Delta}$
for any once punctured bigon $(\alpha_1,\alpha_2)$ in $\Delta$ such that $\alpha,\beta\in \Delta$ are the two diagonals connecting the puncture with $\beta\neq \alpha, \overline\alpha$.
\end{lemma}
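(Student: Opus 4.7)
The plan is to work inside the groupoid $\hat\Gamma_{\Delta_0}$ constructed above and to translate both identities into relations among the generators $T_\gamma^\Delta = \hat h_{\Delta,\mu_\gamma\Delta}\hat h_{\mu_\gamma\Delta,\Delta}$ at each triangulation $\Delta$. The crucial ingredient is Theorem \ref{thm:quiverbraidgroup}, which provides canonical isomorphisms $h_{\Delta',\Delta}:\widetilde{Br}_\Delta \cong \widetilde{Br}_{\Delta'}$ whenever $\Delta' = \mu_\alpha(\Delta)$, compatibly with the mutation of quivers; in particular, the translated generators $T_\gamma^\Delta$ satisfy the relations $R1$--$R9$ of Theorem \ref{th:brgroup} with respect to $Q_\Delta$.

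For part $(a)$, the cases $\alpha = \beta$ and $\alpha = \overline\beta$ are immediate (both sides collapse to $\hat h_{\mu_\beta\Delta,\Delta}$), so we may assume $\alpha$ remains a non-self-folded arc of $\mu_\beta\Delta$ distinct from the newly introduced arc $\beta'$. Using $T_\gamma^\Delta = \hat h_{\Delta,\mu_\gamma\Delta}\hat h_{\mu_\gamma\Delta,\Delta}$, the desired identity is equivalent to
$$T_\alpha^{\mu_\beta\Delta} = \hat h_{\mu_\beta\Delta,\Delta}\,T_\alpha^\Delta\,\hat h_{\mu_\beta\Delta,\Delta}^{-1}.$$
The hypothesis that $(\beta,\alpha)$ is not directed clockwise in $\Delta$ translates to the absence of an arrow from $\beta$ to $\alpha$ in $Q_\Delta$. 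By the inductive definition of the translated generators (the ``otherwise'' case in the definition of $T_\gamma^{\mu_\alpha\Delta_0}$ given in Section \ref{sec:proofofquiverbraidgroup}), this formula holds by construction.

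For part $(b)$, unwinding the products using $T_\gamma^\Delta = \hat h_{\Delta,\mu_\gamma\Delta}\hat h_{\mu_\gamma\Delta,\Delta}$, the identity becomes the commutation $T_\alpha^\Delta T_\beta^\Delta = T_\beta^\Delta T_\alpha^\Delta$ in $Br_\Delta$. Since $\alpha$ and $\beta$ are the two diagonals of a once-punctured bigon in $\Delta$, they are not two sides of any triangle of $\Delta$, so the quiver $Q_\Delta$ has no arrow between them. Relation $R1$ of Theorem \ref{th:brgroup}, applied to the translated generators of $\widetilde{Br}_\Delta$, yields the commutation. The main difficulty of the whole argument lies in the verification that the translated generators genuinely satisfy $R1$--$R9$ at every triangulation $\Delta$, which is precisely the content of Theorem \ref{thm:quiverbraidgroup} proved earlier in this section; once this is in hand, the present lemma is a short deduction from the inductive construction (for part $(a)$) and from a single commutation relation (for part $(b)$).
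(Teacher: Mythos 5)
Your proof is correct and follows essentially the same route as the paper's: part $(a)$ reduces to the conjugation formula $T_\alpha^{\mu_\beta\Delta}=\hat h_{\mu_\beta\Delta,\Delta}T_\alpha^{\Delta}\hat h_{\mu_\beta\Delta,\Delta}^{-1}$, which holds by the ``otherwise'' clause of the inductive definition of the translated generators since no arrow goes from $\beta$ to $\alpha$ in $Q_\Delta$, and part $(b)$ reduces to the commutation $T_\alpha^\Delta T_\beta^\Delta=T_\beta^\Delta T_\alpha^\Delta$, which is an instance of relation $R1$. One small correction: the two diagonals of a once-punctured bigon \emph{are} two sides of a triangle (in fact of two triangles, whose arrows form a cancelling $2$-cycle), so the commutation is justified not by the ``no common triangle'' clause of $R1$ but by the explicit third clause of $R1$ covering diagonals of a once-punctured bigon — the conclusion is unaffected.
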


\begin{proof}
As $(\beta,\alpha)$ is not directed clockwise in $\Delta$, there is no arrow from $\beta$ to $\alpha$ in $Q_\Delta$. Thus, $T_{\alpha}^{\mu_\beta\Delta}=\hat h_{\mu_\beta\Delta,\Delta}T_{\alpha}^{\Delta}\hat h^{-1}_{\mu_\beta\Delta,\Delta}$. Then $(a)$ follows by $\hat h_{\Delta,\mu_\alpha\Delta}\hat h_{\mu_\alpha\Delta,\Delta}=T^{\Delta}_{\alpha}, \hat h_{\mu_\beta\Delta,\mu_\alpha\mu_\beta\Delta}\hat h_{\mu_\alpha\mu_\beta\Delta,\mu_\beta\Delta}=T_{\alpha}^{\mu_\beta\Delta}$.

$(b)$ follows from the relation $T_\alpha^\Delta T_\beta^\Delta=T_\beta^\Delta T_\alpha^\Delta$.
\end{proof}

{\bf Proof of Theorem \ref{th:brgroup}.} From Corollary \ref{cor:relationa} and Lemma \ref{lem:relationb}, we see that $\Gamma_{\Delta_0}$ is a quotient groupoid of ${\bf Tsurf}_\Sigma$ under $h_{\Delta',\Delta}\mapsto \hat h_{\Delta',\Delta}$. It is clear that $Aut_{{\Gamma}_{\Delta_0}}(\Delta_0)=Br_{\Delta_0}$. As $Aut_{{\bf TSurf}_\Sigma}(\Delta_0)$ is a quotient of $Br_{\Delta_0}$, we have $\Gamma_{\Delta_0}$ is a quotient group of ${\bf Tsurf}_\Sigma$ under $\hat h_{\Delta',\Delta}\mapsto h_{\Delta',\Delta}$. Therefore, we have $\Gamma_{\Delta_0}\cong {\bf Tsurf}_\Sigma$ under ${\bf Tsurf}_\Sigma$ under $h_{\Delta',\Delta}\mapsto \hat h_{\Delta',\Delta}$. It follows that $Aut_{{\bf TSurf}_\Sigma}(\Delta_0)\cong Br_{\Delta_0}$. The proof is complete.\endproof


\subsection{Proof of Theorem \ref{th:monomial mutation surfaces}}
\label{Proof of Theorem th:monomial mutation surfaces}

\begin{proof}
Theorem \ref{th:monomial mutation surfaces} follows by Theorem \ref{th:presentation of Tsurf}, and the following Lemmas \ref{lem1}, \ref{lem2}, \ref{lem21}, and \ref{lem3}.
\end{proof}

\begin{lemma}\label{lem1}
    For any ordinary triangulation $\Delta_1$ of $\Sigma$ with non-self-folded non-pending arcs $\alpha,\beta\in \Delta_1$ such that $\alpha$ and $\beta$ are not two sides in any triangle of $\Delta_1$, let $\Delta_2=\mu_{\alpha}(\Delta_1)$, $\Delta_3=\mu_{\beta}(\Delta_2)$ and $\Delta_4=\mu_{\alpha}(\Delta_3)$. Then 
$\mu_{\Delta_3,\Delta_2}\mu_{\Delta_2,\Delta_1}=\mu_{\Delta_3,\Delta_4}\mu_{\Delta_4,\Delta_1}.$ 
\end{lemma}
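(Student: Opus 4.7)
The plan is to verify the equality of group isomorphisms $\mu_{\Delta_3,\Delta_2}\mu_{\Delta_2,\Delta_1}$ and $\mu_{\Delta_3,\Delta_4}\mu_{\Delta_4,\Delta_1}$ from $\TT_{\Delta_1}$ to $\TT_{\Delta_3}$ by checking that they agree on each generator $t_\gamma$, $\gamma\in\Delta_1$, of $\TT_{\Delta_1}$. The key structural observation is that, since $\alpha$ and $\beta$ are not two sides of any triangle in $\Delta_1$, neither $\beta$ nor $\overline\beta$ can appear among the four boundary edges of the quadrilateral enclosing $\alpha$ in $\Delta_1$, and symmetrically. Consequently, the quadrilateral surrounding $\beta$ in $\Delta_2=\mu_\alpha\Delta_1$ coincides edge-for-edge with its counterpart in $\Delta_1$, and the quadrilateral surrounding $\alpha$ in $\Delta_4=\mu_\beta\Delta_1$ coincides with its counterpart in $\Delta_1$. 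In particular $\Delta_4=\mu_\alpha\Delta_3$ is obtained from $\Delta_1$ just by flipping $\beta$ (since the two flips commute as elementary operations on triangulations), and $\Delta_3$ is the common result of the two paths.

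First I would handle the trivial case $\gamma\notin\{\alpha,\overline\alpha,\beta,\overline\beta\}$: by the explicit formula for the monomial mutation along a flip, the generator $t_\gamma$ is fixed by each of the four factors on both sides, so both compositions send $t_\gamma$ to $t_\gamma$. Next, for $\gamma\in\{\alpha,\overline\alpha\}$, the left-hand composition starts with $\mu_{\Delta_2,\Delta_1}(t_\alpha)=t_{\alpha_1}t_{\overline{\alpha'}}^{-1}t_{\overline{\alpha_3}}$, where $(\alpha_1,\alpha_2,\alpha_3,\alpha_4)$ is the quadrilateral enclosing $\alpha$ in $\Delta_1$ and $\alpha'$ the new diagonal; the second factor $\mu_{\Delta_3,\Delta_2}$ fixes each of $t_{\alpha_1},t_{\overline{\alpha'}},t_{\overline{\alpha_3}}$ because none of these arcs equals $\beta$ or $\overline\beta$ by the disjointness above (and $\alpha'\ne\beta,\overline\beta$ trivially). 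On the right, $\mu_{\Delta_4,\Delta_1}$ fixes $t_\alpha$, and then $\mu_{\Delta_3,\Delta_4}$ applies the $\alpha$-flip formula to $t_\alpha$; since the bounding edges around $\alpha$ are identical in $\Delta_1$ and $\Delta_4$, the resulting Laurent expression in $\TT_{\Delta_3}$ is the same. The case $\gamma\in\{\beta,\overline\beta\}$ is treated symmetrically.

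The only nuance is when $\alpha$ (or $\beta$) is a loop around a pending arc, in which case the isomorphism $\mu$ is given by the alternative explicit formula involving the pending arc $\alpha_p$ enclosed by the loop and the two further edges of its quadrilateral. The same disjointness argument applies verbatim: $\beta$ cannot lie in the quadrilateral of the $\alpha$-loop without forming a triangle with $\alpha$, so the pending arc and the two remaining quadrilateral edges are all fixed by the $\beta$-flip, and the two compositions still agree on $t_\alpha$, $t_{\overline\alpha}$ and $t_{\alpha_p}$. The principal obstacle is purely bookkeeping, namely running through each sub-case of the defining formula for $\mu_{\Delta',\Delta}$ (ordinary versus loop-around-pending-arc, and for both orientations of the flipped arc) while tracking the auxiliary edges; once the non-interaction of the $\alpha$- and $\beta$-quadrilaterals is made precise as above, agreement on every generator follows immediately from the explicit formulas.
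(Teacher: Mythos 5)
Your proposal is correct and takes the same route as the paper: the paper's entire proof is the one-line observation that the result is immediate because $\alpha$ and $\beta$ are not two sides of any triangle of $\Delta_1$, which is exactly the non-interaction of the two quadrilaterals that you spell out. Your generator-by-generator verification (including the loop-around-a-pending-arc case) is simply the detailed version of that remark.
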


\begin{proof}
The result is immediate as $\alpha$ and $\beta$ are not two sides in any triangle of $\Delta_1$.
\end{proof}

\begin{lemma}\label{lem2}
     For the pentagon $\Sigma_5$, denote $\Delta_1=\{(1,3),(3,1),(1,4),(4,1)\}\cup \{\text{boundary arcs}\}$ and $\Delta_2=\mu_{(1,3)}(\Delta_1),\Delta_3=\mu_{(1,4)}(\Delta_2), \Delta_5=\mu_{(1,4)}(\Delta_1),\Delta_4=\mu_{(1,3)}(\Delta_5)$. Then we have

$(a)$ $\mu_{\Delta_3,\Delta_2}\mu_{\Delta_2,\Delta_1}=\mu_{\Delta_3,\Delta_4}\mu_{\Delta_4,\Delta_5}\mu_{\Delta_5,\Delta_1}.$

$(b)$ $\mu_{\Delta_2,\Delta_3}\mu_{\Delta_3,\Delta_2}\mu_{\Delta_2,\Delta_1}=\mu_{\Delta_2,\Delta_1}\mu_{\Delta_1,\Delta_5}\mu_{\Delta_5,\Delta_1}.$
    \end{lemma}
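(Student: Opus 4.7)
The plan is to verify both identities by applying each side to a generating set of the corresponding triangle group and checking the images agree using the triangle relations in the target. Since $\TT_{\Delta_i}$ is generated by $t_\gamma$ for $\gamma\in\Delta_i$ (in particular for the two non-boundary diagonals), and the elementary monomial mutations $\mu_{\mu_\alpha\Delta,\Delta}$ are given by the explicit flip formulas stated immediately after Proposition \ref{pro:tag/untag}, the entire verification reduces to direct substitution combined with the defining relation $t_{\alpha_1}t_{\overline\alpha_2}^{-1}t_{\alpha_3}=t_{\overline\alpha_3}t_{\alpha_2}^{-1}t_{\overline\alpha_1}$ of each cyclic triangle in $\Delta_3$ (respectively $\Delta_2$).

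For part (a), I would label the five triangulations as above, so that $\Delta_1$ has diagonals $(1,3),(1,4)$; $\Delta_2$ has diagonals $(2,4),(1,4)$; $\Delta_3$ has diagonals $(2,4),(2,5)$; $\Delta_5$ has diagonals $(1,3),(3,5)$; and $\Delta_4$ has diagonals $(3,5),(2,5)$. On generators other than the flipped diagonals, each elementary mutation is the identity, so the only interesting generators on which to compare the two sides are $t_{(2,5)},t_{(5,2)}$ and $t_{(2,4)},t_{(4,2)}$ in $\TT_{\Delta_3}$. I would compute $\mu_{\Delta_3,\Delta_2}\mu_{\Delta_2,\Delta_1}$ on these generators by tracing back through $\Delta_2$ and $\Delta_1$ using the flip formula; separately I would compute $\mu_{\Delta_3,\Delta_4}\mu_{\Delta_4,\Delta_5}\mu_{\Delta_5,\Delta_1}$ by tracing through $\Delta_4,\Delta_5,\Delta_1$. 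The two expressions will differ by a product of terms of the form $t_{\gamma_1}t_{\overline\gamma_2}^{-1}t_{\gamma_3}(t_{\overline\gamma_3}t_{\gamma_2}^{-1}t_{\overline\gamma_1})^{-1}$ for the triangles of $\Delta_1$, and the triangle relation forces these products to be trivial. This is essentially the usual Ptolemy/pentagon computation.

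For part (b), the statement is that the composition $\mu_{\Delta_2,\Delta_3}\mu_{\Delta_3,\Delta_2}\mu_{\Delta_2,\Delta_1}$ equals $\mu_{\Delta_2,\Delta_1}\mu_{\Delta_1,\Delta_5}\mu_{\Delta_5,\Delta_1}$ as isomorphisms $\TT_{\Delta_1}\to\TT_{\Delta_2}$. Note that $\mu_{\Delta_2,\Delta_3}\mu_{\Delta_3,\Delta_2}$ and $\mu_{\Delta_1,\Delta_5}\mu_{\Delta_5,\Delta_1}$ are both automorphisms (of $\TT_{\Delta_2}$ and $\TT_{\Delta_1}$ respectively) arising from a single internal flip composed with its reverse. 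I would compute each directly on generators: on $\TT_{\Delta_2}$ the composition $\mu_{\Delta_2,\Delta_3}\mu_{\Delta_3,\Delta_2}$ acts trivially on $t_{(1,4)}$ (and its frozen neighbors) and acts on $t_{(2,4)}$ by the automorphism given by the explicit $\mu^-\circ\mu$ formula, which using the triangle relations in $\Delta_2$ equals the identity. An analogous statement holds for $\mu_{\Delta_1,\Delta_5}\mu_{\Delta_5,\Delta_1}$ acting on $\TT_{\Delta_1}$. Hence both sides of (b) reduce to $\mu_{\Delta_2,\Delta_1}$ and agree.

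The main obstacle will be bookkeeping rather than substance: carefully recording the cyclic orientation of each triangle in each $\Delta_i$ so that the correct version of the Ptolemy-type flip formula is applied, and then reducing the resulting words using the triangle relations in the right target group. Once the cyclic orderings are fixed, both identities reduce to a small number of applications of the defining triangle relation, and no deeper input is required.
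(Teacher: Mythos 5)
Your treatment of part (a) is essentially the paper's own argument: apply both composites to the generators $t_{13},t_{14}$ (and their reverses) of $\TT_{\Delta_1}$, trace through the explicit flip formulas, and watch the intermediate factors cancel. That part is fine.

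Part (b), however, contains a genuine error. You claim that $\mu_{\Delta_2,\Delta_3}\mu_{\Delta_3,\Delta_2}$ is the identity automorphism of $\TT_{\Delta_2}$ (and likewise for $\mu_{\Delta_1,\Delta_5}\mu_{\Delta_5,\Delta_1}$ on $\TT_{\Delta_1}$), so that both sides of (b) collapse to $\mu_{\Delta_2,\Delta_1}$. This conflates $\mu_{\Delta_2,\Delta_3}$ with the inverse $\mu_{\Delta_3,\Delta_2}^{-1}$: the inverse of the flip isomorphism is the map denoted $\mu^-_{\Delta_2,\Delta_3}$ in the paper (which uses the \emph{other} pair of opposite sides of the flip quadrilateral), not $\mu_{\Delta_2,\Delta_3}$. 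The composite $\mu_{\Delta,\mu_\gamma\Delta}\,\mu_{\mu_\gamma\Delta,\Delta}$ is precisely the cross-ratio twist $T_{\gamma,\Delta}$ of \eqref{eq:Tij}, which generates the (faithful, infinite) braid group action of Theorem \ref{th:Br on Sigman}; it is never the identity. Concretely, the paper's computation gives
$$\mu_{\Delta_2,\Delta_3}\mu_{\Delta_3,\Delta_2}\mu_{\Delta_2,\Delta_1}(t_{14})=t_{12}t_{42}^{-1}t_{45}t_{15}^{-1}t_{14}\neq t_{14}=\mu_{\Delta_2,\Delta_1}(t_{14}),$$
so your claimed reduction fails already on this generator. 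If your reduction were correct, statement (b) would be vacuous, whereas its actual content is the nontrivial intertwining identity $T_{(1,4),\Delta_2}\circ\mu_{\Delta_2,\Delta_1}=\mu_{\Delta_2,\Delta_1}\circ T_{(1,4),\Delta_1}$ (the ``horizontal compatibility'' underlying Proposition \ref{thm:generate}). To repair the proof you must compute both sides of (b) on $t_{13}$ and $t_{14}$ explicitly; both yield $t_{12}t_{42}^{-1}t_{43}$ and $t_{12}t_{42}^{-1}t_{45}t_{15}^{-1}t_{14}$ respectively, where the left-hand side requires one application of the triangle relation for the triangle $(1,4,5)$ to rewrite $(t_{51}t_{41}^{-1}t_{42})^{-1}t_{54}$ as $t_{42}^{-1}t_{45}t_{15}^{-1}t_{14}$.
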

    
\begin{proof}
    By direct calculation, we have $\mu_{\Delta_3,\Delta_2}\mu_{\Delta_2,\Delta_1}(t_{13})=\mu_{\Delta_3,\Delta_2}(t_{12}t_{42}^{-1}t_{43})=t_{12}t_{42}^{-1}t_{43}$,
   
   $$\mu_{\Delta_3,\Delta_2}\mu_{\Delta_2,\Delta_1}(t_{14})=\mu_{\Delta_3,\Delta_2}(t_{14})=t_{12}t_{52}^{-1}t_{54},$$

\begin{equation*}
\begin{array}{rcl}
\mu_{\Delta_3,\Delta_4}\mu_{\Delta_4,\Delta_5}\mu_{\Delta_5,\Delta_1}(t_{13})&= &
\mu_{\Delta_3,\Delta_4}\mu_{\Delta_4,\Delta_5}(t_{13})\\
&=&
\mu_{\Delta_3,\Delta_4}(t_{12}t_{52}^{-1}t_{53})\\
&=& t_{12}t_{52}^{-1}t_{52}t_{42}^{-1}t_{43}=t_{12}t_{42}^{-1}t_{43},
\end{array}
\end{equation*}

\begin{equation*}
\begin{array}{rcl}
\mu_{\Delta_3,\Delta_4}\mu_{\Delta_4,\Delta_5}\mu_{\Delta_5,\Delta_1}(t_{14})&= &
\mu_{\Delta_3,\Delta_4}\mu_{\Delta_4,\Delta_5}(t_{13}t^{-1}_{53}t_{54})\\
&=&
\mu_{\Delta_3,\Delta_4}(t_{12}t_{52}^{-1}t_{53}t^{-1}_{53}t_{54})\\
&=& t_{12}t_{52}^{-1}t_{54},
\end{array}
\end{equation*}

Thus, we have $\mu_{\Delta_3,\Delta_2}\mu_{\Delta_2,\Delta_1}=\mu_{\Delta_3,\Delta_4}\mu_{\Delta_4,\Delta_5}\mu_{\Delta_5,\Delta_1}.$

$$\mu_{\Delta_2,\Delta_3}\mu_{\Delta_3,\Delta_2}\mu_{\Delta_2,\Delta_1}(t_{13})=\mu_{\Delta_2,\Delta_3}(t_{12}t_{42}^{-1}t_{43})=t_{12}t_{42}^{-1}t_{43},$$

$$\mu_{\Delta_2,\Delta_3}\mu_{\Delta_3,\Delta_2}\mu_{\Delta_2,\Delta_1}(t_{14})=\mu_{\Delta_2,\Delta_3}(t_{12}t_{52}^{-1}t_{54})=t_{12}(t_{51}t_{41}^{-1}t_{42})^{-1}t_{54}=t_{12}t_{42}^{-1}t_{45}t_{15}^{-1}t_{14},$$

$$\mu_{\Delta_2,\Delta_1}\mu_{\Delta_1,\Delta_5}\mu_{\Delta_5,\Delta_1}(t_{13})=
\mu_{\Delta_2,\Delta_1}(t_{13})=t_{12}t_{42}^{-1}t_{43},$$

\begin{equation*}
\begin{array}{rcl}
\mu_{\Delta_2,\Delta_1}\mu_{\Delta_1,\Delta_5}\mu_{\Delta_5,\Delta_1}(t_{14})&= &
\mu_{\Delta_2,\Delta_1}(t_{13}t^{-1}_{43}t_{45}t_{15}^{-1}t_{14})\\
&=&
t_{12}t_{42}^{-1}t_{43}t^{-1}_{43}t_{45}t_{15}^{-1}t_{14}\\
&=& t_{12}t_{42}^{-1}t_{45}t_{15}^{-1}t_{14}.
\end{array}
\end{equation*}

Thus, we have 
$\mu_{\Delta_2,\Delta_3}\mu_{\Delta_3,\Delta_2}\mu_{\Delta_2,\Delta_1}=\mu_{\Delta_2,\Delta_1}\mu_{\Delta_1,\Delta_5}\mu_{\Delta_5,\Delta_1}$.

The proof is complete.
\end{proof}

The following lemma can be proved similarly by calculation.

\begin{lemma}\label{lem21}
$(a)$  For the triangle $\Sigma$ with one special puncture, we label the boundary marked points clockwise by $1,2,3$, denote $\Delta_1=\{\ell_1, \overline{\ell}_1,(1,3)^+,\overline{(1,3)^+}\}\cup \{\text{boundary arcs}\}$ and $\Delta_2=\mu_{(1,3)^+}(\Delta_1),\Delta_3=\mu_{\ell_1}(\Delta_2),\Delta_6=\mu_{\ell_1}(\Delta_1), \Delta_5= \mu_{(1,3)^+}(\Delta_6), \Delta_4=\mu_{\ell_3}(\Delta_5)$, where $\ell_i$ is the special loop based at $i$ and $(1,3)^+$ is the internal arc connects $1$ and $3$. Then we have
$$\mu_{\Delta_3,\Delta_2}\mu_{\Delta_2,\Delta_1}=\mu_{\Delta_3,\Delta_4}\mu_{\Delta_4,\Delta_5}\mu_{\Delta_5,\Delta_6}\mu_{\Delta_6,\Delta_1}.$$

$(b)$  For the triangle $\Sigma$ with one $0$-puncture, we label the boundary marked points clockwise by $1,2,3$ and the special puncture $0$, denote $\Delta_1=\{\ell_1, \overline{\ell}_1,(0,1),(1,0), (1,3)^+,\overline{(1,3)^+}\}\cup \{\text{boundary arcs}\}$ and $\Delta_2=\mu_{(1,3)^+}(\Delta_1),\Delta_3=\mu_{\ell_1}(\Delta_2),\Delta_6=\mu_{\ell_1}(\Delta_1), \Delta_5= \mu_{(1,3)^+}(\Delta_6), \Delta_4=\mu_{\ell_3}(\Delta_5)$, where $\ell_i$ is the loop based at $i$ and $(1,3)^+$ is the internal arc connects $1$ and $3$, and $(1,0), (0,1)$ are the pending arcs connects $0$ and $1$. Then we have
$$\mu_{\Delta_3,\Delta_2}\mu_{\Delta_2,\Delta_1}=\mu_{\Delta_3,\Delta_4}\mu_{\Delta_4,\Delta_5}\mu_{\Delta_5,\Delta_6}\mu_{\Delta_6,\Delta_1}.$$
    \end{lemma}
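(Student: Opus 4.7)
\medskip

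\noindent\textbf{Proof proposal for Lemma \ref{lem21}.} The plan is to verify both identities by direct calculation on the generators of $\TT_{\Delta_1}$, in complete parallel with the proof of Lemma \ref{lem2}, adapting the monomial mutation formulas to accommodate the additional orbifold features (a special puncture in~(a) and a $0$-puncture in~(b)). Since each $\mu_{\Delta_i,\Delta_{i+1}}$ is a group isomorphism between free (or $1$-relator) triangle groups, and both sides fix the generators associated with boundary arcs as well as any arc unaffected by the flip sequence, it suffices in each part to track the images of only the two (respectively four) non-boundary generators of~$\TT_{\Delta_1}$.

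For part (a), I would first spell out the generators of $\TT_{\Delta_1}$, namely $t_{\ell_1},t_{\overline\ell_1},t_{(1,3)^+},t_{\overline{(1,3)^+}}$ modulo the monogon relation $t_{\overline\ell_1}=t_{\ell_1}$ and the triangle relation $t_{\ell_1}t^{-1}_{\overline{(1,3)^+}}t_{23}=t_{\overline{23}}t^{-1}_{(1,3)^+}t_{\overline\ell_1}$. Then I would apply the elementary flip formulas (case (a) of the lemma preceding Proposition~\ref{pro:tag/untag}) at each step of the short path $\Delta_1\to\Delta_2\to\Delta_3$, and at each step of the long path $\Delta_1\to\Delta_6\to\Delta_5\to\Delta_4\to\Delta_3$. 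At each step involving a special loop (namely the flips $\mu_{\ell_1}$ and $\mu_{\ell_3}$), I would use the bigon special puncture presentation in the triangle group, which replaces the loop by a new special loop based at the opposite vertex of the associated triangle. Finally, I would compare the two resulting expressions for $\mu(t_{\ell_1})$ and $\mu(t_{(1,3)^+})$ after reducing via the triangle and monogon relations.

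For part (b), the argument is the same in outline, but now $\TT_{\Delta_1}$ has additional generators $t_{(0,1)},t_{(1,0)}$ and the zero puncture relation $t_{\ell_1}=t_{(1,0)}t_{(0,1)}$. The flip $\mu_{\ell_1}$ is governed by case~(b) of the lemma preceding Proposition~\ref{pro:tag/untag}, which turns the loop around the $0$-puncture into a new loop and modifies the pending arc. I would therefore apply the pending arc version of the flip formula at each of $\mu_{\ell_1}$ and $\mu_{\ell_3}$, apply the standard formula at $\mu_{(1,3)^+}$, and again compare the images of $t_{\ell_1},t_{(1,3)^+},t_{(0,1)},t_{(1,0)}$ along the two paths.

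The principal obstacle is purely bookkeeping: the sequence of flips along the long path passes through several intermediate triangulations in which arcs change their role (an internal arc becomes part of a self-folded triangle or a loop around the puncture changes its base vertex), so each successive composition $\mu_{\Delta_{i+1},\Delta_i}$ must be applied using the correct local quadrilateral in~$\Delta_i$, not in~$\Delta_1$. I would manage this by drawing the six triangulations explicitly, labeling all arcs locally at each step, and reducing every intermediate monomial by the triangle, monogon, and (in case~(b)) $0$-puncture relations before proceeding. Once both sides are written in the canonical form dictated by the presentation of $\TT_{\Delta_3}$, the two expressions should agree term by term, exactly as in the pentagon calculation of Lemma~\ref{lem2}.
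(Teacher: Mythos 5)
Your proposal is correct and matches the paper's approach: the paper gives no written-out argument for this lemma, stating only that it ``can be proved similarly by calculation,'' i.e., by the same direct generator-by-generator verification along the two flip paths that is carried out for the pentagon in Lemma \ref{lem2}, which is exactly what you describe (including the correct use of the special-loop and pending-arc flip formulas at the $\mu_{\ell_1}$ and $\mu_{\ell_3}$ steps). The only remaining work is the bookkeeping you already identify.
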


\begin{lemma}\label{lem3}
    With the notation in Lemma \ref{le:f-admissible triangulations} (b), for any fixed order of $f^{-1}(\underline \gamma)=\{\gamma_1,\cdots, \gamma_s\}$, the following diagram is commutative.
\begin{equation}
\label{eq:vertical-horizontal diagram}
{\xymatrix{
  & \Delta \ar[d]_{\nu_{f,\Delta,\underline\Delta}} \ar[rr]^{ \mu_{\Delta',\Delta}}  &&  \Delta' \ar[d]
^{\nu_{f,\Delta',\underline\Delta'}}           \\
  & \underline \Delta \ar@{-}[rr]^{\mu^{ \varepsilon(f)}_{\mu_{\underline\gamma}\underline\Delta,\underline\Delta}}  && \underline\Delta'}}
  \end{equation}
where $\Delta'=\mu_{\gamma_s}\cdots \mu_{\gamma_2}\mu_{\gamma_1}(\Delta)$, $\mu_{\Delta',\Delta}=\mu_{\Delta',\mu_{\gamma_{s-1}}\cdots \mu_{\gamma_1}\Delta}\circ\cdots \circ \mu_{\mu_{\gamma_2}\mu_{\gamma_1}\Delta,\mu_{\gamma_1}\Delta}\circ \mu_{\mu_{\gamma_1}\Delta,\Delta}$ and

$\varepsilon(f):=\begin{cases}
+ & \text{if $f$ is orientation-preserving}\\
- & \text{if $f$ is orientation-reversing}.
\end{cases}$
\end{lemma}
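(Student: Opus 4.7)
My plan is to verify commutativity of \eqref{eq:vertical-horizontal diagram} on each generator $t_\beta$, $\beta\in\Delta$, of $\TT_\Delta$, using the explicit formulas for monomial mutations and for $\nu$ established in this section. I will split into two cases: whether or not $\beta$ is one of the preimages $\gamma_1,\ldots,\gamma_s$ of $\underline\gamma$.

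In the easy case $\beta\notin f^{-1}(\underline\gamma)$, I would argue as follows. By Lemma \ref{le:f-admissible triangulations}(b), no two of the $\gamma_j$ share a triangle in $\Delta$, so each intermediate flip in the factorization of $\mu_{\Delta',\Delta}$ only alters the generator $t_{\gamma_j}$ it acts on, and fixes $t_\beta$. Thus $\mu_{\Delta',\Delta}(t_\beta)=t_\beta$, and applying $\nu_{f,\Delta',\underline\Delta'}$ gives $t_{f(\beta)}$ (or, in the self-crossing exception, $t_\ell$). On the other route, $\nu_{f,\Delta,\underline\Delta}(t_\beta)=t_{f(\beta)}$ and, since $f(\beta)\neq \underline\gamma$, $\mu^{\varepsilon(f)}_{\underline\Delta',\underline\Delta}$ fixes it; the two routes agree.

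The main case is $\beta=\gamma_i$. Again using Lemma \ref{le:f-admissible triangulations}(b), the cyclic quadrilateral $(\alpha_1^{(i)},\alpha_2^{(i)},\alpha_3^{(i)},\alpha_4^{(i)})$ enclosing $\gamma_i$ in $\Delta$ is untouched by the flips at $\gamma_j$ for $j\neq i$, so the explicit formula in the paragraph before Lemma \ref{lem3} gives
$$\mu_{\Delta',\Delta}(t_{\gamma_i})=t_{\alpha_1^{(i)}}\, t_{\overline{\gamma_i'}}^{-1}\, t_{\overline{\alpha_3^{(i)}}}.$$
Applying $\nu_{f,\Delta',\underline\Delta'}$ and using $f(\alpha_k^{(i)})=\underline\alpha_k$, $f(\gamma_i')=\underline\gamma'$, I obtain $t_{\underline\alpha_1}\, t_{\overline{\underline\gamma'}}^{-1}\, t_{\overline{\underline\alpha_3}}$, where $(\underline\alpha_1,\underline\alpha_2,\underline\alpha_3,\underline\alpha_4)$ is the clockwise quadrilateral around $\underline\gamma$ in $\underline\Delta$ when $\varepsilon(f)=+$. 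This is exactly $\mu_{\underline\Delta',\underline\Delta}(t_{\underline\gamma})$ in the orientation-preserving case. On the right route, $\nu_{f,\Delta,\underline\Delta}(t_{\gamma_i})=t_{\underline\gamma}$ followed by $\mu^+_{\underline\Delta',\underline\Delta}$ produces the same expression. When $f$ is orientation-reversing, the clockwise ordering upstairs is reversed downstairs, so the same computation produces the inverse monomial-mutation formula $\mu^-_{\underline\Delta',\underline\Delta}(t_{\underline\gamma})$, matching the $\varepsilon(f)=-$ prescription in the diagram.

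The main obstacle is a careful case analysis for degenerate preimages. When some $\gamma_i$ is a loop around a point of $I_{p,0}(\Sigma)$ one must use Lemma 4.8(b) for the mutation rule instead of (a), and when $f$ identifies several arcs into a single boundary arc downstairs the expression simplifies through zero-puncture or microtagging relations. In every such configuration, matching of the two paths reduces to an identity that already holds in the target triangle group by its defining triangle/monogon/zero-puncture/tagged-bigon relations. Crucially, the hypothesis in Lemma \ref{le:f-admissible triangulations}(b) that $\underline\gamma$ is not a special loop prevents any bigon-special-puncture relation from being invoked (since the downstairs flip is a genuine arc flip), which is what makes the verification a finite check rather than requiring a separate analog of Lemmas \ref{lem2} and \ref{lem21}.
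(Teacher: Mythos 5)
Your proposal is correct and follows essentially the same route as the paper: a generator-by-generator check of the two composites using the explicit flip formulas, split according to whether $f(\beta)=\underline\gamma$, with the non-$f$-admissible (self-crossing loop) generators handled by the observation that $\nu$ sends them to $t_\ell\neq t_{\underline\gamma}$ so the downstairs mutation fixes them. The paper likewise treats only the orientation-preserving case explicitly and notes the reversed case is analogous, so no substantive difference in method.
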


\begin{proof}
Assume that $\underline\gamma$ is a diagonal of the quadrilateral $(\underline\alpha_1,\underline\alpha_2,\underline\alpha_3,\underline\alpha_4)$ in $\underline\Delta$ such that $(\underline\alpha_1,\underline\alpha_2,\overline{ \underline\gamma})$ and $(\underline\gamma,\underline\alpha_3,\underline\alpha_4)$ are cyclic triangles. Denote by $\underline\gamma'$ the arc in $\underline\Delta'$ such that $(\underline\alpha_2,\underline\alpha_3,\underline\gamma')$ is a cyclic triangle.

We shall only prove the case that $f$ is orientation-preserving, the case that $f$ is orientation-reversing can be proved similarly. 

For any $\alpha\in \Delta$, assume that $\alpha$ is a diagonal of the quadrilateral $(\alpha_1,\alpha_2,\alpha_3,\alpha_4)$ in $\Delta$ such that $(\alpha_1,\alpha_2,\overline \alpha)$ and $(\alpha,\alpha_3,\alpha_4)$ are cyclic triangles. Denote by $\alpha'$ the arc in $\mu_{\alpha}(\Delta)$ such that  $(\alpha_2,\alpha_3,\alpha')$ is a cyclic triangle.

If $\alpha$ is $f$-admissible, then 
$$\mu_{\mu_{\underline\gamma}\underline\Delta,\underline\Delta}\nu_{f,\Delta,\underline\Delta}(t_\alpha)=\mu_{\mu_{\underline\gamma}\underline\Delta,\underline\Delta}(t_{f(\alpha)})=\begin{cases}
    t_{f(\alpha)} & \text{ if $f(\alpha)\neq \underline\gamma$}\\
    t_{\underline\alpha_1} t_{\underline\gamma'}^{-1}t_{\underline\alpha_3} & \text{ if $f(\alpha)=\underline\gamma$.}
    \end{cases}
    $$

$$\nu_{f,\Delta',\underline\Delta'}\mu_{\Delta',\Delta}(t_\alpha)=\begin{cases}
 \nu_{f,\Delta',\underline\Delta'}(t_\alpha) & \text{ if $f(\alpha)\neq \underline\gamma$} \\
 \nu_{f,\Delta',\underline\Delta'}(t_{\alpha_1} t_{\alpha'}^{-1}t_{\alpha_3}) & \text{ if $f(\alpha)=\underline\gamma$}
\end{cases}
=\begin{cases}
    t_{f(\alpha)} & \text{ if $f(\alpha)\neq \underline\gamma$}\\
    t_{\underline\alpha_1} t_{\underline\gamma'}^{-1}t_{\underline\alpha_3} & \text{ if $f(\alpha)=\underline\gamma$.}
    \end{cases}
    $$

If $\alpha$ is not $f$-admissible, assume that $f(\alpha)$ is a loop around some special puncture $o$, denote by $\ell$ the special loop around $o$ in $\Delta$. As $\underline\gamma$ is assumed not a loop around any special puncture, we have $\ell\neq \underline\gamma$. Thus
$\mu_{\mu_{\underline\gamma}\underline\Delta,\underline\Delta}\nu_{f,\Delta,\underline\Delta}(t_\alpha)=\mu_{\mu_{\underline\gamma}\underline\Delta,\underline\Delta}(t_{\ell})=t_\ell$ and $\nu_{f,\Delta',\underline\Delta'}\mu_{\Delta',\Delta}(t_\alpha)=\nu_{f,\Delta',\underline\Delta'}(t_\alpha)=t_\ell$.

Therefore, we have $\mu_{\mu_{\underline\gamma}\underline\Delta,\underline\Delta}\nu_{f,\Delta,\underline\Delta}=\nu_{f,\Delta',\underline\Delta'}\mu_{\Delta',\Delta}$.

The proof is complete.
\end{proof}

\subsection{Proofs of Theorems \ref{th:Udelta2} and \ref{thm:sectorgroup}.} \label{sec:proof of thm:sectorgroup}

\begin{proposition}\label{prop:quotient1} Let $\Sigma$ be a marked surface with $I_{p,0}=\emptyset$ and $\Delta$ be an ordinary triangulation of $\Sigma$. For any $i\in I_b\cup I_{p,1}$, fix a curve $\gamma_i\in \Delta$ with $t_{\gamma_i}\neq t_{\overline\gamma_i}$ and $s(\gamma_i)=i$ (all these curves are automatically distinct). Then the assignments $t_\gamma\mapsto u_{\overline{\gamma_{s(\gamma)}},\gamma}$ (e.g., $ t_{\gamma_i}\mapsto 1$) define a group homomorphism $\pi: \TT_\Delta\rightarrow \TT_\Delta$ which is a projection onto $\UU_\Delta$.
\end{proposition}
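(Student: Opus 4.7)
The plan is to transport the strategy of Proposition \ref{prop:quotient} (the algebra-level projection $\mathcal A_\Sigma \twoheadrightarrow \mathcal B_\Sigma$) down to the triangle-group level. Since $I_{p,0}(\Sigma)=\emptyset$ and $\Delta$ is ordinary, the defining relations of $\TT_\Delta$ that need to be checked reduce to just the triangle relations and the monogon relations $t_{\overline\gamma}=t_\gamma$ for each special loop; the zero-puncture, tagged-bigon, and once-punctured-bigon relations are vacuous in this setting. First I would set $\pi(t_\gamma):=t_{\gamma_{s(\gamma)}}^{-1}t_\gamma$ on the free group $F$ generated by $\{t_\gamma:\gamma\in\Delta\}\cup\{t_{\overline\gamma}:\gamma\in\Delta\}$ (noting that this element is literally $u_{\overline{\gamma_{s(\gamma)}},\gamma}\in\UU_\Delta$), and then verify descent through the relations.

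The substantive verification is on a cyclic triangle $(\alpha_1,\alpha_2,\alpha_3)$. Applying $\pi$ to the left-hand side of $t_{\alpha_1}t_{\overline\alpha_2}^{-1}t_{\alpha_3}=t_{\overline\alpha_3}t_{\alpha_2}^{-1}t_{\overline\alpha_1}$ gives
\[
t_{\gamma_{s(\alpha_1)}}^{-1}\,t_{\alpha_1}\,t_{\overline\alpha_2}^{-1}\,t_{\gamma_{s(\overline\alpha_2)}}\,t_{\gamma_{s(\alpha_3)}}^{-1}\,t_{\alpha_3},
\]
and the key point is that along a cyclic triangle $s(\overline\alpha_2)=t(\alpha_2)=s(\alpha_3)$, so $\gamma_{s(\overline\alpha_2)}=\gamma_{s(\alpha_3)}$ and the middle factor cancels. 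Applying $\pi$ to the right-hand side and using $s(\alpha_2)=s(\overline\alpha_1)$ gives $t_{\gamma_{s(\overline\alpha_3)}}^{-1}t_{\overline\alpha_3}t_{\alpha_2}^{-1}t_{\overline\alpha_1}$, and since $s(\alpha_1)=s(\overline\alpha_3)$ (both equal the common vertex at which $\alpha_1$ starts and $\alpha_3$ ends in the cycle) the two sides become the original triangle relation left-multiplied by the common factor $t_{\gamma_{s(\alpha_1)}}^{-1}$. For a special loop $\gamma$ one has $s(\gamma)=s(\overline\gamma)$, so the monogon relation is preserved automatically; this is the one place where the hypothesis $t_{\gamma_i}\neq t_{\overline{\gamma_i}}$ matters, since it guarantees that the chosen basepoint curve $\gamma_i$ is itself not a special loop (so declaring $t_{\gamma_i}\mapsto 1$ does not inadvertently collapse a monogon relation through an equality $t_{\gamma_i}=t_{\overline{\gamma_i}}$).

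With $\pi$ well-defined on $\TT_\Delta$, the projection and image claims are immediate telescoping computations. Since $s(\gamma_{s(\gamma)})=s(\gamma)$, we get $\pi(t_{\gamma_{s(\gamma)}})=t_{\gamma_{s(\gamma)}}^{-1}t_{\gamma_{s(\gamma)}}=1$, hence $\pi^2(t_\gamma)=\pi(t_{\gamma_{s(\gamma)}})^{-1}\pi(t_\gamma)=\pi(t_\gamma)$. For the image, any sector generator $u_{\gamma,\gamma'}=t_{\overline\gamma}^{-1}t_{\gamma'}$ with $t(\gamma)=s(\gamma')$ satisfies
\[
\pi(u_{\gamma,\gamma'})=t_{\overline\gamma}^{-1}\,t_{\gamma_{s(\overline\gamma)}}\,t_{\gamma_{s(\gamma')}}^{-1}\,t_{\gamma'}=t_{\overline\gamma}^{-1}t_{\gamma'}=u_{\gamma,\gamma'},
\]
using $s(\overline\gamma)=t(\gamma)=s(\gamma')$, so $\pi$ fixes $\UU_\Delta$ pointwise; conversely every $\pi(t_\gamma)=u_{\overline{\gamma_{s(\gamma)}},\gamma}$ lies in $\UU_\Delta$, so $\mathrm{Im}(\pi)=\UU_\Delta$. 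The only real obstacle is the triangle-relation calculation, and even that reduces to the single combinatorial fact that the source map $s(\cdot)$ enjoys the identifications $s(\overline\alpha_2)=s(\alpha_3)$ and $s(\alpha_2)=s(\overline\alpha_1)$ forced by cyclicity; once this is recorded, the rest of the proof is essentially bookkeeping parallel to the algebra version.
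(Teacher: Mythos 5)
Your proposal is correct and follows essentially the same route as the paper's own proof: define $\pi$ on generators by $t_\gamma\mapsto t_{\gamma_{s(\gamma)}}^{-1}t_\gamma$, verify the triangle relations via the cyclicity identities $s(\overline\alpha_2)=s(\alpha_3)$ and $s(\overline\alpha_3)=s(\alpha_1)$ (so both sides acquire the same left factor $t_{\gamma_{s(\alpha_1)}}^{-1}$), check the monogon relations, and then obtain $\pi^2=\pi$ and $\mathrm{Im}(\pi)=\UU_\Delta$ by the telescoping computations. The only difference is cosmetic: you spell out the intermediate cancellation of $t_{\gamma_{s(\overline\alpha_2)}}t_{\gamma_{s(\alpha_3)}}^{-1}$ that the paper passes over silently, and you add a (plausible but not needed for the homomorphism check) remark on the role of the hypothesis $t_{\gamma_i}\neq t_{\overline\gamma_i}$.
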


\begin{proof} 
First, we prove that $\pi$ is a homomorphism.

(Triangle relations) For each cyclic triangle $(\alpha_1,\alpha_2,\alpha_3)$ in $\Sigma$, we have $$\pi(t_{\alpha_1}t_{\overline\alpha_2}^{-1}t_{\alpha_3})=u_{\overline{\gamma_{s(\alpha_1)}},\alpha_1}(u_{\overline{\gamma_{s(\overline\alpha_2)}},\overline\alpha_2})^{-1}u_{\gamma_{s(\alpha_3)},\alpha_3}=t_{ \gamma_{s(\alpha_1)}}^{-1}t_{\alpha_1}t_{\overline\alpha_2}^{-1}t_{\alpha_3},$$
$$\pi(t_{\overline \alpha_3}t_{\alpha_2}^{-1}t_{\overline\alpha_3})=t_{\gamma_{s(\overline \alpha_3)}}^{-1}t_{\overline \alpha_3}t_{\alpha_2}^{-1}t_{\overline \alpha_1}.$$
Thus $\pi(t_{\alpha_1}t_{\overline\alpha_2}^{-1}t_{\alpha_3})=\pi(t_{\overline \alpha_3}t_{\alpha_2}^{-1}t_{\overline\alpha_1})$ follows by $s(\alpha_1)=s(\overline\alpha_3)$.

 (Monogon relations) For each loop $\gamma$ cuts out a monogon that contains only a special puncture, $\pi(t_{\overline\gamma})=t^{-1}_{\gamma_{s(\overline\gamma)}}t_{\overline\gamma}=t^{-1}_{\gamma_{t(\gamma)}}x_{\gamma}=\pi(t_\gamma)$.

Therefore, we obtain a group homomorphism $\pi:\TT_\Delta\to \UU_\Delta$.

Next, show that $\pi^2=\pi$. Indeed, 
$$\pi^2(t_\gamma)=\pi(u_{\overline{\gamma_{s(\gamma)}},\gamma})=u_{\overline{\gamma_{s(\gamma)}},\gamma}$$
for any $\gamma$.

Finally, prove that the image of $\pi$ is $\UU_\Delta$. Indeed,
$$\pi(u_{\gamma,\gamma'})=\pi(t_{\overline\gamma}^{-1}t_{\gamma'})=u^{-1}_{\overline{\gamma_{s(\overline\gamma)}},\overline\gamma} u_{\overline{\gamma_{s(\gamma')}},\gamma'}=(t^{-1}_{\gamma_{s(\overline\gamma)}}t_{\overline\gamma})^{-1}(t^{-1}_{ \gamma_{s(\gamma')}}t_{\gamma'})=t_{\overline\gamma}^{-1}t_{\gamma'}=u_{\gamma,\gamma'}$$
for any $u_{\gamma,\gamma'}\in \UU_\Delta$. 

The proof is complete.
\end{proof}

The following follows immediately from Proposition \ref{prop:quotient1}.

\begin{corollary}\label{cor:relationBC}
For any $\Sigma\in {\bf Surf}$ with $I_{p,0}=\emptyset$ and ordinary triangulation $\Delta$ of $\Sigma$, the sector subgroup $\UU_\Delta$ has the following presentation:

$\bullet$ $t_{\gamma_{s(\alpha_1)},\alpha_1}(t_{\gamma_{s(\overline\alpha_2)},\overline\alpha_2})^{-1}t_{\gamma_{s(\alpha_3)},\alpha_3}=t_{\gamma_{s(\overline\alpha_3)},\overline\alpha_3}(t_{\gamma_{s(\alpha_2)},\alpha_2})^{-1}t_{\gamma_{s(\overline\alpha_1)},\overline\alpha_1}.$
 for any cyclic triangle $(\alpha_1,  \alpha_2,  \alpha_3)$ in $\Delta$. 

\end{corollary}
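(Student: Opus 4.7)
The plan is to read the corollary as a Tietze rewriting of the presentation of $\TT_\Delta$ (given in Section~\ref{Sec:triangleg}) through the retraction $\pi:\TT_\Delta\twoheadrightarrow\UU_\Delta$ constructed in Proposition~\ref{prop:quotient1}. First I would use Proposition~\ref{prop:quotient1} to view $\pi$ as a surjective group homomorphism onto $\UU_\Delta$ (the retraction property $\pi^2=\pi$ with image $\UU_\Delta$ yields surjectivity); the first isomorphism theorem then gives
\[
\UU_\Delta\;\cong\;\TT_\Delta/\ker\pi .
\]
Next I would identify $\ker\pi$ explicitly as the \emph{normal} closure of $\{t_{\gamma_i}\mid i\in I_b\cup I_{p,1}\}$ in $\TT_\Delta$. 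On one hand $\pi(t_{\gamma_i})=t_{\gamma_i}^{-1}t_{\gamma_i}=1$, so the normal closure is contained in $\ker\pi$. On the other hand, for every generator $t_\gamma$ of $\TT_\Delta$ one has $t_\gamma\cdot\pi(t_\gamma)^{-1}=t_{\gamma_{s(\gamma)}}$, so the standard ``retract kills exactly what it rewrites'' lemma forces equality of the two normal subgroups.

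Having identified $\ker\pi$, I would start from the given presentation of $\TT_\Delta$ (generators $t_\gamma$, $\gamma\in\Delta$, with triangle relations $t_{\alpha_1}t_{\overline\alpha_2}^{-1}t_{\alpha_3}=t_{\overline\alpha_3}t_{\alpha_2}^{-1}t_{\overline\alpha_1}$ and the monogon relations $t_{\overline\gamma}=t_\gamma$ for special loops), adjoin the relations $t_{\gamma_i}=1$, and perform the change of generators $t_\alpha\mapsto t_{\gamma_{s(\alpha)}}\cdot u_{\overline{\gamma_{s(\alpha)}},\alpha}$. Applying $\pi$ to both sides of each triangle relation and simplifying produces exactly the relation written in the corollary (this is literally the computation already carried out in the proof of Proposition~\ref{prop:quotient1}, where $\pi$ is verified to preserve the triangle relations). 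The monogon relations become tautological after rewriting, since $s(\gamma)=s(\overline\gamma)$ for a special loop. Because the move ``kill $t_{\gamma_i}$ and substitute $t_\alpha=t_{\gamma_{s(\alpha)}}u_{\overline{\gamma_{s(\alpha)}},\alpha}$'' is an invertible Tietze transformation, no new relations are created or lost.

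The only real work — and I expect the main obstacle, though a bookkeeping one — is checking that the rewriting is literally faithful on the indices, i.e.\ that the image under $\pi$ of the $\TT_\Delta$-relation $t_{\alpha_1}t_{\overline\alpha_2}^{-1}t_{\alpha_3}=t_{\overline\alpha_3}t_{\alpha_2}^{-1}t_{\overline\alpha_1}$ is precisely the relation
\[
u_{\overline{\gamma_{s(\alpha_1)}},\alpha_1}\,u_{\overline{\gamma_{s(\overline\alpha_2)}},\overline\alpha_2}^{-1}\,u_{\overline{\gamma_{s(\alpha_3)}},\alpha_3}
\;=\;
u_{\overline{\gamma_{s(\overline\alpha_3)}},\overline\alpha_3}\,u_{\overline{\gamma_{s(\alpha_2)}},\alpha_2}^{-1}\,u_{\overline{\gamma_{s(\overline\alpha_1)}},\overline\alpha_1},
\]
with all the anchor points $s(\overline\alpha_i)$ tracked correctly against the fixed choice $\{\gamma_i\}$. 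Concretely, this amounts to expanding $\pi(t_{\alpha_1}t_{\overline\alpha_2}^{-1}t_{\alpha_3})$ and $\pi(t_{\overline\alpha_3}t_{\alpha_2}^{-1}t_{\overline\alpha_1})$ as alternating products of the form $t_{\gamma_{s(\cdot)}}^{\pm 1}t_{(\cdot)}^{\pm 1}\cdots$ and recognizing the resulting strings as the sector monomials above; the matching $s(\alpha_i)=s(\overline\alpha_{i-1})$ at each vertex of the triangle makes all intermediate anchor factors telescope correctly. All deeper content — that $\pi$ is a group homomorphism, surjects onto $\UU_\Delta$, and is a retraction — is exactly Proposition~\ref{prop:quotient1}, which I would invoke as given.
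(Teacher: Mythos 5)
Your proposal is correct and takes essentially the same route the paper leaves implicit: the paper deduces the corollary "immediately" from Proposition \ref{prop:quotient1}, and your argument — identifying $\ker\pi$ with the normal closure of $\{t_{\gamma_i}\}$ via the retraction property, then pushing the triangle (and monogon) relations of $\TT_\Delta$ through $\pi$ by a Tietze rewriting — is exactly that omitted argument, with the relation computation already carried out in the proof of Proposition \ref{prop:quotient1}. The only bookkeeping worth noting is that the killed generators reappear as the degenerate sectors $u_{\overline{\gamma_i},\gamma_i}=1$ (since $\gamma_{s(\gamma_i)}=\gamma_i$), which is why no relations $t_{\gamma_i}=1$ need to be listed in the final presentation.
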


{\bf Proof of Theorem \ref{th:Udelta2}}
Let $\Delta$ be an ordinary triangulation. For any marked point $i\in I_b\cup I_{p,1}$, from Remark \ref{rmk:generator}, we can choose an arc $\gamma_i\in \Delta$ such that $t_{\gamma_i}$ is a generator of the free or 1-relator  torsion free group $\TT_\Delta$ in Theorem \ref{thm:1-relator}. Then the result follows by Proposition \ref{prop:quotient1} and Theorem \ref{thm:1-relator}.  
\endproof

\begin{lemma}\label{lem:freeprodut}
Let $A$ be a free group of rank $m$ and $B$ be a free group of rank $n$. Let  $C$ be a group which contains both $A$ and $B$ as subgroups and is generated by $A$ and $B$. If $C$ is free of rank $m+n$ then $C=A*B$, the free product of $A$ and $B$.    
\end{lemma}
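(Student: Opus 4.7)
\medskip

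\noindent\textbf{Proof proposal for Lemma \ref{lem:freeprodut}.}
The plan is to exhibit a natural surjection $A*B \twoheadrightarrow C$ and to show it is an isomorphism by a rank argument combined with Hopficity of finitely generated free groups. First I would invoke the universal property of the free product: since $A$ and $B$ are both subgroups of $C$, the inclusions $A\hookrightarrow C$ and $B\hookrightarrow C$ combine to give a unique homomorphism $\varphi : A*B \to C$ whose restriction to each factor is the given inclusion. Because $C$ is generated by $A\cup B$ by hypothesis, the map $\varphi$ is surjective.

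Next I would identify the source and target as free groups of the same finite rank. On the one hand, the free product of two free groups is again free, with a basis obtained by concatenating bases: if $\{a_1,\dots,a_m\}$ freely generates $A$ and $\{b_1,\dots,b_n\}$ freely generates $B$, then $A*B$ is free on $\{a_1,\dots,a_m,b_1,\dots,b_n\}$, so $A*B \cong F_{m+n}$. On the other hand, $C$ is free of rank $m+n$ by hypothesis. Hence $\varphi$ is a surjective homomorphism between two free groups of the same finite rank. Finitely generated free groups are Hopfian (they are residually finite, hence Hopfian), so any surjective endomorphism of $F_{m+n}$ is an isomorphism; transporting along any chosen isomorphism $C\cong F_{m+n}$ shows that $\varphi$ itself must be injective, and therefore an isomorphism $A*B \xrightarrow{\sim} C$.

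There is no real obstacle here, since the argument is essentially a one-line application of Hopficity; the only point that requires a moment of care is verifying that $A*B$ actually has rank $m+n$ as an abstract free group, which follows from the fact that a free product of free groups is free on the disjoint union of bases (equivalently, from the Kurosh subgroup theorem, or more elementarily from the explicit reduced-word normal form in $A*B$). Once this is noted, comparing ranks and invoking the Hopf property completes the proof.
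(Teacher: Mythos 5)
Your argument is correct and is essentially the paper's own: both construct the canonical surjection $\varphi:A*B\twoheadrightarrow C$ from the universal property, note that source and target are free of the same rank $m+n$, and conclude injectivity from the fact that a finitely generated free group admits no proper quotient that is free of the same rank (you phrase this as Hopficity via residual finiteness; the paper states it directly). No gap.
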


\begin{proof} The first condition implies a (unique) surjective homomorphism $\varphi:A*B\twoheadrightarrow C$. On the one hand, the rank of the free group $A*B$ is $m+n$. On the other hand, if $Ker ~\varphi\ne \{1\}$ then $A*B/Ker~\varphi$ is either nonfree or has smaller rank. This completes the proof.
\end{proof}

The following lemma is immediate.

\begin{lemma}\label{lem:freeproduct1}
    Assume that $\widetilde\TT$ is a free group of rank $m$ with a basis $g_1,\cdots, g_m$, $\widetilde U$ is the subgroup generated by $g_1,\cdots,g_n$ and $F$ is the subgroup generated by $g_{n+1},\cdots, g_m$. Fix $a\in \widetilde U$, then $\widetilde T/\langle a\rangle=\widetilde U/\langle a\rangle * F$, the free product of $\widetilde U/\langle a\rangle$ and $F$.
\end{lemma}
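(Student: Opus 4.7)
The plan is to first identify $\widetilde\TT$ with the free product $\widetilde U * F$. Since $\widetilde\TT$ is free on the basis $\{g_1,\ldots,g_m\} = \{g_1,\ldots,g_n\} \sqcup \{g_{n+1},\ldots,g_m\}$, and the free group functor sends disjoint unions of sets to coproducts in the category of groups, one has $\widetilde\TT \cong \widetilde U * F$ canonically (where $\widetilde U = F(g_1,\ldots,g_n)$ and $F = F(g_{n+1},\ldots,g_m)$). Thus the whole statement reduces to the following general fact: if $G = A*B$ is a free product and $a \in A$, then $G/\langle\!\langle a \rangle\!\rangle_G \cong (A/\langle\!\langle a \rangle\!\rangle_A) * B$, where $\langle\!\langle \cdot \rangle\!\rangle$ denotes the normal closure (which is what $\langle a\rangle$ must mean in the statement, since $a$ need not be central).

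To establish this reduction, I would construct mutually inverse homomorphisms using universal properties. In the forward direction, the quotient map $\pi_A:A \twoheadrightarrow A/\langle\!\langle a \rangle\!\rangle_A$ together with $\mathrm{id}_B$ assembles, via the universal property of the free product, into a homomorphism $\Phi : A * B \to (A/\langle\!\langle a \rangle\!\rangle_A) * B$. Since $\Phi(a) = \pi_A(a) = 1$, the normal closure $\langle\!\langle a \rangle\!\rangle_G$ lies in $\ker \Phi$, so $\Phi$ descends to $\bar\Phi : G/\langle\!\langle a \rangle\!\rangle_G \to (A/\langle\!\langle a \rangle\!\rangle_A) * B$. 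Conversely, composing the inclusions $A \hookrightarrow G$ and $B \hookrightarrow G$ with the quotient $G \twoheadrightarrow G/\langle\!\langle a \rangle\!\rangle_G$, the first composition kills $a$ and so factors through $A/\langle\!\langle a \rangle\!\rangle_A$; the universal property of the free product then produces a homomorphism $\Psi : (A/\langle\!\langle a \rangle\!\rangle_A) * B \to G/\langle\!\langle a \rangle\!\rangle_G$.

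Finally, $\bar\Phi$ and $\Psi$ are mutually inverse, as this can be checked on the generators $g_1,\ldots,g_m$, which generate both sides. Applying this general fact to $G = \widetilde\TT = \widetilde U * F$ with $a \in \widetilde U$ yields the claim. I do not foresee any real obstacle here; everything is formal and follows by repeated application of the universal properties of quotients and free products. The only minor point to be careful about is making sure that $\langle a\rangle$ is interpreted as the normal closure (so that quotients are groups), which is forced by the context of the conclusion $\widetilde\TT/\langle a\rangle = \widetilde U/\langle a \rangle * F$.
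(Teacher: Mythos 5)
Your proof is correct. The paper offers no argument of its own here — it simply declares the lemma ``immediate'' — and your write-up supplies exactly the standard justification: identify $\widetilde\TT$ with $\widetilde U * F$ via the coproduct property of free groups, then apply the general fact that $(A*B)/\langle\!\langle a\rangle\!\rangle \cong (A/\langle\!\langle a\rangle\!\rangle_A)*B$ for $a\in A$, constructed by mutually inverse maps from the universal properties. Your observation that $\langle a\rangle$ must be read as a normal closure is the right reading and is indeed forced by the conclusion.
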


{\bf Proof of Theorem \ref{thm:sectorgroup}.} Let $\Delta$ be an ordinary triangulation. For any marked point $i\in I_b\cup I_{p,1}$, from Remark \ref{rmk:generator}, we can choose an arc $\gamma_i\in \Delta$ such that $s(\gamma_i)=i$ and $t_{\gamma_i}$ is a generator of the free or 1-relator torsion free group $\TT_\Delta$ in Theorem \ref{thm:1-relator}. Let $F_{I_b\cup I_{p,1}}=\langle t_{\gamma_i}\mid i\in I_b\cup I_{p,1}\rangle$. Then $F_{I_b\cup I_{p,1}}$ is a free group of rank $|I_b\cup I_{p,1}|$. Denote by $a_1,\cdots,a_m$ the generators of the free or 1-relator torsion free group $\TT_\Delta$ in Theorem \ref{thm:1-relator} such that $\{a_{n+1},\cdots, a_m\}=\{t_{\gamma_i}\mid i\in I_b\cup I_{p,1}\}$. Then $\pi(a_i)=1$ for $n+1\leq i\leq m$, and denote $\bar a_i=\pi(a_i)$ for $1\leq i\leq n$, where $\pi: \TT_\Delta\to \UU_\Delta$ is the surjective map given in Proposition \ref{prop:quotient1}. Thus, $\UU_\Delta$ is generated by $\bar a_i,i=1,\cdots, n$.

For any $\gamma\in \Delta$, we have $t_\gamma=t_{\gamma_{s(\gamma)}}u_{\overline{\gamma_{s(\gamma)}},\gamma}$. Thus, $\TT_\Delta$ is generated by $\UU_\Delta$ and $F_{I_b\cup I_{p,1}}$.

By Theorem \ref{thm:1-relator}, $\TT_\Delta$ is either a free or a 1-relator torsion free group.

In case $\TT_\Delta$ is a free group, we have $\TT_\Delta=\UU_\Delta * F_{I_b\cup I_{p,1}}$ by Lemma \ref{lem:freeprodut}.

In case $\TT_\Delta$ is a 1-relator torsion free group, Remark \ref{rmk:generator} implies that the relation is also in $\UU_\Delta$. Assume that $\TT_\Delta=F\langle a_1,\cdots, a_m\rangle /\langle a\rangle$ for some $a\in F\langle a_1,\cdots, a_m\rangle$. Then $a\in F\langle \bar a_1,\cdots, \bar a_n\rangle$ and $\UU_\Delta=F\langle \bar a_1,\cdots, \bar a_n\rangle /\langle a\rangle$. By Lemma \ref{lem:freeproduct1}, we have $\TT_\Delta=\UU_\Delta * F_{I_b\cup I_{p,1}}$.

We now show that the relations $(1)$ $(2)$ and $(3)$ are the defining relations.

It is easy to see that the relations hold. To prove that these are the defining relations,
it suffices to prove that the relations in Theorem \ref{thm:sectorgroup} imply the relation in Corollary \ref{cor:relationBC}. 

For any cyclic triangle $(\alpha_1,\alpha_2,\alpha_3)$ in $\Sigma$, we have 
\begin{equation*}
\begin{array}{rcl}
&& t_{\gamma_{s(\alpha_1)},\alpha_1}(t_{\gamma_{s(\overline\alpha_2)},\overline\alpha_2})^{-1}t_{\gamma_{s(\alpha_3)},\alpha_3}t^{-1}_{\gamma_{s(\overline\alpha_1)},\overline\alpha_1}t_{\gamma_{s(\alpha_2)},\alpha_2}t^{-1}_{\gamma_{s(\overline\alpha_3)},\overline\alpha_3}\vspace{2.5pt}\\
&=& t_{\gamma_{s(\alpha_1)},\alpha_1}t_{\alpha_2, \overline \gamma_{s(\overline\alpha_2)}}t_{\gamma_{s(\alpha_3)},\alpha_3}
t_{\alpha_1,\overline\gamma_{s(\overline\alpha_1)}}t_{\gamma_{s(\alpha_2)},\alpha_2}
t_{\alpha_3,\overline\gamma_{s(\overline\alpha_3)}} \vspace{2.5pt}\\
&=& t_{\gamma_{s(\alpha_1)},\alpha_1}t_{\alpha_2, \alpha_3}
t_{\alpha_1,\alpha_2}
t_{\alpha_3,\overline\gamma_{s(\overline\alpha_3)}}\vspace{2.5pt}\\
&=& t_{\gamma_{s(\alpha_1)},\alpha_1}
t_{\overline\alpha_1,\overline \alpha_3}
t_{\alpha_3,\overline\gamma_{s(\overline\alpha_3)}}
=
1,
\end{array}
\end{equation*}
where 
the last equality is followed by the Star relation. 

The proof is complete.
\endproof

\subsection{Proof of Theorem \ref{th:Br on Sigman}}

\label{subsec:Proof of Theorem th:Br_n 1}

We label the marked points $\{1,2,\cdots,n\}$ of $\Sigma_n$ counterclockwise.
We may let $\Delta=\{(1,i),(i,1)\mid i=3,\ldots,n-1\}\cup \{\text{boundary arcs}\}$ be the star-like triangulation of $\Sigma_n$. By \cite[Theorem 3.26]{BR}, we have $\mathbb T_{\Delta}$ is a free group of rank $3n-4$ with basis $t_{ii^+}, t_{i^+i}, i=1,\cdots, n-1$ and $t_{j1}, j=3,\cdots, n$. Denote $T_i:=T_{(1,i)}, i=3,\cdots, n-1$. 

To finish the proof, it suffices to prove that the braid group $Br_{n-2}$ acts faithfully on $\mathbb U_{\Delta}$ via $\tau_i\mapsto T_{n-i}$. 

Let $H$ be the subgroup of $\mathbb T_{\Delta_1}$ generated by $t_{ii^+}, t_{i^+i}, i=1,\cdots, n-1$. It is a free subgroup of rank $2(n-1)$. We have $Br_{\Delta}$ acts trivially on $H$.

Let $t_n=t_{n1}, t_{n-1}=t_{n-1,n}$ and inductively let $t_{i-1}=t_{i-1,i}t_{i+1,i}^{-1}t_{i+1}$ for $i\geq 3$. Thus, $t_i\in H$ for any $i\geq 2$. Denote $y_i=t_{i,1}^{-1}t_i$ for $i\geq 2$. Then $y_n=1$ and $y_i\in \UU_{\Delta}$ for any $i\geq 2$. For any $i$ with $3\leq i\leq n-1$, we have $T_i(y_j)=y_j$ for $j\neq i$ and
$$T_i(y_i)=(t_{i,1}t_{i+1,1}^{-1}t_{i+1,i}t_{i-1,i}^{-1}t_{i-1,1})^{-1}t_i=y_{i-1}t_{i-1}^{-1}t_{i-1,i}t_{i+1,i}^{-1}t_{i+1}y_{i+1}^{-1}y_i=y_{i-1}y_{i+1}^{-1}y_i.$$

Let $G$ be the subgroup of $\UU_\Delta$ generated by $y_2,y_3,y_4,\cdots,y_{n-1}$. Then $G$ is invariant under the action of $Br_\Delta$ and a free subgroup of rank $n-2$. By Lemma \ref{lem:faithful}, $Br_{n-2}$ acts faithfully on $G$ and thus also faithfully on $\UU_\Delta$.

The proof is complete.
\endproof

\begin{lemma}\label{lem:faithful}
Let $G=\langle y_2,\cdots, y_{n-1}\rangle$ be a free group of rank $n-2$. Then the following actions 
$$\tau_{n-i}(y_j)=\begin{cases}
 y_{i-1}y^{-1}_{i+1}y_i & \text{if }j=i\\
 y_j & \text{otherwise},
\end{cases}$$
for all $3\leq i\leq n-1$ give a faithful action of $Br_{n-2}$ on $G$, where $y_n=1$ and $\tau_1,\cdots,\tau_{n-3}$ are the standard generators of $Br_{n-2}$. 
\end{lemma}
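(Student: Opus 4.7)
The plan is to verify the braid relations, then change to a free basis of $G$ in which the action takes a clean form, and finally reduce faithfulness to Artin's classical theorem for the standard representation of $Br_{n-2}$ on a free group of rank $n-2$.

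First I would check that the prescribed assignments yield a well-defined homomorphism $\rho:Br_{n-2}\to\mathrm{Aut}(G)$. The far-commutation $\tau_i\tau_j=\tau_j\tau_i$ for $|i-j|\ge 2$ is immediate: $\tau_{n-i}$ and $\tau_{n-j}$ then modify different generators of $G$ and fix the rest, so their supports are disjoint. The braid relation $\tau_i\tau_{i+1}\tau_i=\tau_{i+1}\tau_i\tau_{i+1}$ reduces to a word identity on each of the four affected generators $y_{i-1},y_i,y_{i+1},y_{i+2}$, which is a routine free-group calculation.

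Next I would change to a more revealing free basis of $G$. Set $\xi_j:=y_{j+1}^{-1}y_j$ for $j=2,\ldots,n-1$ (with $y_n:=1$), and then $\psi_i:=\xi_i\xi_{i+1}\cdots\xi_{n-1}$ for $i=2,\ldots,n-1$ (with $\psi_n:=1$). Both substitutions are upper-triangular and invertible, so the $\psi_i$ again freely generate $G$. A direct computation in these coordinates gives the clean form
\[
\tau_k(\psi_j)=\psi_j\ \text{for}\ j\ne n-k,\qquad \tau_k(\psi_{n-k})=\psi_{n-k}\,\psi_{n-k+1}^{-1}\,\psi_{n-k-1}.
\]
Thus every braid generator modifies a single $\psi$-generator by right-multiplication with the cross-ratio word in its two neighbors, while leaving the rest of the basis fixed.

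The last step identifies this $\psi$-form with a known faithful representation of $Br_{n-2}$. Its structural shape (one generator moved, by a short word in the two adjacent generators) matches the action of $Br_{n-2}$ by half-twists on the standard loop basis of $\pi_1$ of an $(n-2)$-punctured disk. Making this match explicit, either through a further inner conjugation inside $\mathrm{Aut}(F_{n-2})$ or through a topological model realizing $G$ as such a fundamental group with $\rho$ the mapping-class action, reduces the problem to Artin's classical theorem, from which $\ker\rho=\{1\}$.

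The main obstacle will be this last identification. The verification of the braid relations and the change of basis to $\psi$-coordinates are purely computational, but pinning down the precise dictionary between our $\psi$-action and the classical Artin representation (whether by an explicit basis change or by setting up the topological picture cleanly) is the substantive step. Once this dictionary is in place, faithfulness of $\rho$ follows at once from Artin's theorem.
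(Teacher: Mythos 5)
Your first two steps are sound: the braid relations do hold, and your change of basis is correct --- writing $c_j:=\psi_{n-j}$ (so $c_0=1$), the action becomes $\tau_k(c_k)=c_k c_{k-1}^{-1}c_{k+1}$ with every other $c_j$ fixed, which I checked directly for small $n$. The gap sits exactly at the step you yourself flag as substantive: this representation is \emph{not} identifiable with the Artin half-twist representation by any basis change or conjugation in $\mathrm{Aut}(F_{n-2})$, so the reduction to Artin's theorem cannot be carried out as proposed. The quickest obstruction is the abelianization: your $\tau_k$ abelianizes to the elementary matrix adding $-c_{k-1}+c_{k+1}$ to $c_k$, which is unipotent of determinant $+1$ and infinite order, whereas every Artin generator abelianizes to a transposition matrix of determinant $-1$ and order $2$. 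Conjugation in $\mathrm{Aut}(F_{n-2})$ induces conjugation in $GL_{n-2}(\mathbb Z)$, so the two representations of $Br_{n-2}$ are not conjugate; for the same reason your action cannot be the mapping-class action on $\pi_1$ of a punctured disk with respect to any basis or basepoint. (A non-abelian symptom of the same phenomenon: in the basis $e_j:=c_{j-1}^{-1}c_j$ one finds $\tau_k(e_{k+1})=e_{k+1}^{-1}e_k^{-1}e_{k+1}$ and $\tau_k(e_k)=e_k^2e_{k+1}$, so conjugacy classes of generators are sent to classes of \emph{inverses} and to cyclically reduced words of length $3$ --- behaviour the Artin representation never exhibits. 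Note also that in the prefix-product basis $a_j=x_1\cdots x_j$ the Artin generator reads $a_j\mapsto a_{j+1}a_j^{-1}a_{j-1}$, with the moved generator inverted in the middle, not $c_kc_{k-1}^{-1}c_{k+1}$.)

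What is actually needed is a different reduction, and the paper supplies one: it passes to a proper invariant subgroup of corank one, namely $G'=\langle z_1,\dots,z_{n-3}\rangle$ with $z_1=y_{n-2}^{-1}$ and $z_i=\tau_i(z_{i-1})$, computes that $\tau_1$ fixes $z_1$ and sends $z_j\mapsto z_1^{-1}z_j$ for $j\ge 2$, while $\tau_i$ for $i\ge 2$ sends $z_{i-1}\mapsto z_i$ and $z_i\mapsto z_iz_{i-1}^{-1}z_i$ and fixes the rest, and then matches this restricted action with a representation of $Br_{n-2}$ on $F_{n-3}$ already known to be faithful by \cite[Theorem 3.2]{P}. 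Some manoeuvre of this kind --- exhibiting an invariant subgroup on which the action takes a form covered by an existing faithfulness theorem --- is what your plan is missing; the $\psi$-coordinates, pleasant as they are, do not by themselves land you in the image of any classical faithful representation.
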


\begin{proof}
    Let $z_1=y_{n-2}^{-1}$ and inductively let $z_i=\tau_i(z_{i-1})$ for $i=2,\cdots,n-3$. Denote by $G'$ the subgroup of $G$ generated by $z_1,\cdots, z_{n-3}$. It is a free group of rank $n-3$.
    
    Then 
    $$z_2=\tau_{n-(n-2)}(y_{n-2}^{-1})=y_{n-2}^{-1}y_{n-1}y_{n-3}^{-1}=z_1y_{n-1}y_{n-3}^{-1},$$ 
    $$\tau_1(z_2)=\tau_1(z_1y_{n-1}y_{n-3}^{-1})=z_1y_{n-2}y_{n}^{-1}y_{n-1}y_{n-3}^{-1}=y_{n-1}y_{n-3}^{-1}=z_1^{-1}z_2.$$
    
For any $i\geq 1$, we have 
 $$z_{i+1}=z_1y_{n-1}y_{n-3}^{-1}y_{n-2}y_{n-4}^{-1}\cdots y_{n-i}y_{n-i-2}^{-1}=z_{i}y_{n-i}y_{n-i-2}^{-1},$$ 
 $$\tau_1(z_{i+1})=\tau_1(z_{i}y_{n-i}y_{n-i-2}^{-1})=z_1^{-1}z_iy_{n-i}y_{n-i-2}^{-1}=z_1^{-1}z_{i+1}.$$

For $i\geq 1$, we have $\tau_{i+1}(z_j)=z_j$ for $j<i$.

\begin{equation*}
\begin{aligned}
    \tau_{i+1}(z_{i}) &= \tau_{n-(n-i-1)}(z_1y_{n-1}y_{n-3}^{-1}y_{n-2}y_{n-4}^{-1}\cdots y_{n-{i+1}}y_{n-i-1}^{-1})  \\
     & =   z_1y_{n-1}y_{n-3}^{-1}y_{n-2}y_{n-4}^{-1}\cdots y_{n-{i+1}}\tau_{n-(n-i-1)}(y_{n-i-1}^{-1})\\
     & = z_{i+1}.
\end{aligned}
\end{equation*}

$$\tau_{i+1}(z_{i+1})=\tau_{n-(n-i-1)}(z_i y_{n-i}y_{n-i-2}^{-1}) =  z_{i+1}y_{n-i}y_{n-i-2}^{-1}= z_{i+1}z_i^{-1}z_{i+1}.$$

\begin{equation*}
\begin{aligned}
\tau_{i+1}(z_{i+2}) & =\tau_{n-(n-i-1)}(z_{i+1} y_{n-i-1}y_{n-i-3}^{-1})\\
&=    z_{i+1}y_{n-i}y_{n-i-2}^{-1}\tau_{n-(n-i-1)}(y_{n-i-1})y_{n-i-3}^{-1}\\
&= z_{i+2}.
\end{aligned}
\end{equation*}

$\tau_{i+1}(z_j)=\tau_{i+1}(z_{i+2} y_{n-i-2}y_{n-i-4}^{-1}\cdots y_{n-{j+1}}y_{n-j-1}^{-1})=z_j$ for any $j\geq i+2$.

In summary, $G'$ is invariant under $Br_{n-2}$ action and we have 
\[\begin{array}{ccl} \tau_1: 
\left\{\begin{array}{ll}
z_1\mapsto z_1, &\mbox{} \\
z_j\mapsto z_1^{-1}z_j, &\mbox{if $j\geq 2$}.
\end{array}\right.
\end{array}\]

For $2\leq i\leq n-3$, we have 
\[\begin{array}{ccl} \tau_i: 
\left\{\begin{array}{ll}
z_{i-1}\mapsto z_i, &\mbox{} \\
z_{i}\mapsto z_iz_{i-1}^{-1}z_i, &\mbox{} \\
z_j\mapsto z_j, &\mbox{if $j\neq i-1,i$}.
\end{array}\right.
\end{array}\]

By \cite[Theorem 3.2]{P}, the action on $G'$ is faithful. It follows that the action of $Br_{n-2}$ on $G$ is faithful. This completes the proof.
\end{proof}

\subsection{Proof of Theorem \ref{thm:inject}}\label{sec:proof of thm:inject}

Let $\Sigma_{n,1}$ be the once-punctured $n$-gon with puncture labeled $0$. We label the boundary marked points $\{1,2,\cdots,n\}$ of $\Sigma$ counterclockwise. 

We first show that the natural homomorphism $Br_n\to Br_{D_n}$ is injective. 

For $i\in \{1,2,\cdots,n\}$, denote by $(1,i)$ the simple curve connects $1$ and $i$ such that $0$ is on the left-hand side of $(1,i)$, denote by $(i,i^-), i\in \{1,2,\cdots,n\}$ the boundary arcs connecting $i$ and $i^-$. Denote $(i,1)=\overline{(1,i)}$.
We may let $\Delta=\{(0,1),(1,0),(1,i),(i,1)\mid i=1,\cdots, n-1\}\cup \{\text{boundary arcs}\}$. 

Let $t_n=t_{\overline{(1,n)}}, t_{n-1}=t_{\overline{(n,n-1)}}$ and inductively let $t_{i-1}=t_{\overline{(i,i-1)}}t_{i+1,i}^{-1}t_{i+1}$ for $i\geq 2$ and $t_0=t_2$. Denote $y_i=t_{\overline{(1,i)}}^{-1}t_i$ for $i\geq 1$ and $y_0=t_{(2,1)}^{-1}t_0$. Then $\langle y_0,y_1,\cdots,y_{n-1}\rangle$ is a free subgroup of $\TT_{\Delta}$ of rank $n$. For any $i$ with $1\leq i\leq n-1$, we have $T_{(1,i)}(y_j)=y_j$ for $j\neq i$ and
$$T_{(1,i)}(y_i)=y_{i-1}y_{i+1}^{-1}y_i.$$

 By Lemma \ref{lem:faithful}, the assignments $\tau_i\mapsto T_{(1,i)}$ give an injective homomorphism $Br_n\to \underline{Br}_\Delta$. From Theorem \ref{th:Br_n 1}(c), we see that $Br_{D_n}\to \underline {Br}_{\Delta}, \sigma_i\mapsto T_{(1,i)}, i=1,\cdots, n$ give a surjective homomorphism, where $\sigma_i,i=1,\cdots,n$ are the standard generators of the Artin braid group of type $D_n$. It is clear that the homomorphism $Br_{n}\to \underline{Br}_{\Delta}$ factors through the natural homomorphism $\iota:Br_{n}\to Br_{D_{n}}$. Therefore, $\iota$ is injective.

We then show that the natural homomorphism $Br_n\to Br_{\widetilde A_n}$ is injective.

Let $\Delta=\{(0,i),(i,0)\mid i=1,\cdots,n\}\cup \{\text{boundary arcs}\}$. Denote by $(i,i^+)$ the boundary arcs connecting $i$ and $i^+$. Let $t_{n+1}=t_{10}, t_{n}=t_{(n,1)}$, $t_{n-1}=t_{(n-1,n)}t_{\overline{(n,1)}}^{-1}t_{n+1}$ and inductively let $t_{i-1}=t_{(i-1,i)}t_{\overline{(i,i+1)}}^{-1}t_{i+1}$ for all $i$ with $n-2\geq i\geq 3$. Denote $y_i=t_{i,0}^{-1}t_i$ for $i$ with $1\leq i\leq n$. Denote $T_i=T_{(0,i)}$ for any $i\in \{2,\cdots,n\}$. As in the proof of Theorem \ref{th:Br on Sigman}, we have $G':=\langle y_1, y_2,\cdots, y_n \rangle$ is a free subgroup of $\mathbb T_\Delta$ of rank $n$ and for any $i\in\{2,\cdots, n\}$ we have
$$T_{i}(y_j)=\begin{cases}
 y_{i-1}y^{-1}_{i+1}y_i & \text{if }j=i\\
 y_j & \text{otherwise}.
\end{cases}$$

By Lemma \ref{lem:faithful}, we have $Br_{n}\cong \langle T_i\mid i=2,\cdots, n\rangle\subset \underline{Br}_{\Delta}$. Therefore, the homomorphism $Br_{n}\to Br_{\Delta}, \tau_i\mapsto T_i$ is injective, where $\tau_i,i=1,\cdots, n-1$ are the standard generators of $Br_\Delta$. From Theorem \ref{th:brgroup}, we see that $Br_{\widetilde A_n}\to \underline{Br}_{\Delta}, \sigma_i\mapsto T_i, i=1,\cdots, n$ give a surjective homomorphism, where $\sigma_i,i=1,\cdots,n$ are the standard generators of the Artin braid group of type $\widetilde A_n$. It is clear that the homomorphism $Br_{n}\to \underline{Br}_{\Delta}$ factors through the natural homomorphism $\iota:Br_{n}\to Br_{\widetilde A_{n}}$. Therefore, $\iota$ is injective. 

The proof is complete.
\endproof

\subsection{Proof of Theorem \ref{th:Br on Sigman1}}
\label{subsec:proof of Theorem clusterbraidgrouptypeB}

It suffices to prove that $Br_{\Delta}$ acts faithfully on $\mathbb U_{\Delta}$.

Let $\Sigma$ be an $n$-gon with one special puncture labeled $0$. We label the boundary marked points $\{1,2,\cdots,n\}$ of $\Sigma$ counterclockwise. For $i\in \{1,2,\cdots,n\}$, denote by $(1,i)$ the simple curve connects $1$ and $i$ such that $0$ is in the left hand side of $(1,i)$, denote by $(i,i^-), i\in \{1,2,\cdots,n\}$ the boundary arcs connecting $i$ and $i^-$. Denote $(i,1)=\overline{(1,i)}$.
We may let $\Delta=\{(1,i),(i,1)\mid i=1,\cdots, n-1\}\cup \{\text{boundary arcs}\}$. Denote $T_i=T_{(1,i)}$.

Let $G$ be the subgroup of $\mathbb T_{\Delta_1}$ generated by $t_{(i,i^-)}, t_{\overline{(i,i^-)}}, i=1,\cdots, n$. It is a free subgroup of rank $2n$. We have $Br_{\Delta}$ acts trivially on $G$.

Let $D_n=t_{\overline{(1,n)}}, D_{n-1}=t_{\overline{(n,n-1)}}$ and inductively let $D_{i-1}=t_{\overline{i,i-1}}t_{i+1,i}^{-1}D_{i+1}$ for $i\geq 2$. Let $D_0=D_2$. Thus $D_i\in G$ for any $i\geq 0$. Denote $y_i=t_{\overline{(1,i)}}^{-1}D_i$ for $i\geq 1$ and $y_0=t_{(2,1)}^{-1}D_0$. Then $y_n=1$ and $y_i\in \UU_{\Delta}$ for any $i\geq 0$. For any $i$ with $2\leq i\leq n-1$, we have $T_i(y_j)=y_j$ for $j\neq i$ and
$$T_i(y_i)=(t_{\overline{(1,i)}}t_{\overline{(1,i+1)}}^{-1}t_{(i+1,i)}t_{\overline{(i,i-1)}}^{-1}t_{\overline{(1,i-1)}})^{-1}D_i=y_{i-1}D_{i-1}^{-1}t_{\overline{(i,i-1)}}t_{(i+1,i)}^{-1}D_{i+1}y_{i+1}^{-1}y_i=y_{i-1}y_{i+1}^{-1}y_i,$$
$$T_1(y_1)=(t_{(1,1)}t_{\overline{(1,2)}}^{-1}t_{(2,1)}t_{\overline{(1,2)}}^{-1}t_{(2,1)})^{-1}D_1=
y_0D_0^{-1}
D_2y_2^{-1}
y_0D_0^{-1}
D_2y_2^{-1}
y_1=y_{0}y_{2}^{-1}y_0y_2^{-1}y_1.$$

Let $G'$ be the subgroup of $\UU_\Delta$ generated by $y_0,y_1,\cdots,y_{n-1}$. Then $G'$ is invariant under the action of $Br_\Delta$ and a free subgroup of rank $n$. 

By \cite[Proposition 5.1]{Cri}, we have $Br_{C_{n-1}}\to Br_{n}, \sigma_i\mapsto \tau_i$ for $i=1,\cdots,n-2$ and $\sigma_{n-1}\mapsto \tau_{n-1}^2$ give an injective group homomorphism, where $\sigma_i$ (resp. $\tau_i$), $i=1,\cdots, n-1$ are the standard generators of $Br_{C_{n-1}}$ (resp. $Br_n$). Then by Lemma \ref{lem:faithful}, $Br_{\Delta}$ acts faithfully on $G'$ with and thus also faithfully on $\UU_\Delta$ via $\sigma_i\mapsto T_{n-i}$. 

The proof is complete.
\endproof

\subsection{Proof of Theorem \ref{th:symmetries of n-gon}} \label{sec:symmetries of n-gon}

Part (a) immediately implies that any inner automorphism is homogenous of degree $0$.

(b) We consider only the case when $\Delta=\Delta_0$. For any $i=3,4,\cdots, n-1$, we have $T_{1,i},T_{2n+1,2n+i},T_{n+1,n+i}$ are pairwise commutative, denote $\sigma_i=T_{1,i}T_{2n+1,2n+i}T_{n+1,n+i}$ and
$\sigma_{n+1}=T_{1,n+1}T_{2n+1,1}T_{n+1,2n+1}T_{1,n+1}.$

For any $3\leq i\leq j\leq n+1$, denote 
$\sigma_{[i,j]}=\sigma_i\sigma_{i+1}\cdots \sigma_j.$  Let $\tau=(\sigma_{[3,n+1]})^{n-1}.$ Then $T_i \sigma_{[j,k]}=\sigma_{[j,k]}T_i$ for any $i$ and $j,k$ with $i<j-1$ or $i>k+1$. Therefore, for any $3\leq i\leq n-1$ we have
$$\begin{array}{rcl}
\sigma_{[3,n+1]} T_{1,i} &=& (\sigma_3\sigma_4\cdots \sigma_{n+1}) T_{1,i}\\
&=& \sigma_{[3,i-1]}\sigma_i\sigma_{i+1} T_{1,i} \sigma_{[i+2,n+1]}\\
&=& \sigma_{[3,i-1]} T_{1,i}T_{2n+1,2n+i}T_{n+1,n+i} T_{1,i+1}T_{2n+1,2n+i+1}T_{n+1,n+i+1}T_{1,i}\sigma_{[i+2,n+1]}
\\
&=& \sigma_{[3,i-1]} T_{2n+1,2n+i}T_{n+1,n+i}(T_{1,i}T_{1,i+1}T_{1,i})T_{2n+1,2n+i+1}T_{n+1,n+i+1}\sigma_{[i+2,n+1]}
\\
&=& \sigma_{[3,i-1]} T_{2n+1,2n+i}T_{n+1,n+i}(T_{1,i+1}T_{1,i}T_{1,i+1})T_{2n+1,2n+i+1}T_{n+1,n+i+1}\sigma_{[i+2,n+1]}
\\
&=& \sigma_{[3,i-1]} T_{1,i+1}T_{2n+1,2n+i}T_{n+1,n+i}T_{1,i}\sigma_{i+1}\sigma_{[i+2,n+1]}
\\
&=& \sigma_{[3,i-1]} T_{1,i+1}\sigma_i\sigma_{i+1}\sigma_{[i+2,n+1]}
\\
&=& T_{1,i+1}\sigma_{[3,n+1]}.
\end{array}$$

By symmetric, we have $\sigma_{[3,n+1]} T_{n+1,n+i}=T_{n+1,n+i+1}\sigma_{[3,n+1]}$.

We have 
$$\begin{array}{rcl}
& &\sigma^2_{[3,n+1]} T_{1,n}\\
&=& \sigma_{[3,n+1]}(\sigma_3\sigma_4\cdots \sigma_{n+1}) T_{1,n}\\
&=& \sigma_{[3,n+1]}\sigma_{[3,n-1]}\sigma_n\sigma_{n+1} T_{1,n}\\
&=& \sigma_{[3,n+1]}\sigma_{[3,n-1]} T_{1,n}T_{2n+1,3n}T_{n+1,2n} T_{2n+1,1}T_{n+1,2n+1}T_{1,n+1}T_{2n+1,1}T_{1,n}
\\
&=& \sigma_{[3,n+1]}\sigma_{[3,n-1]} T_{2n+1,3n}T_{n+1,2n} T_{2n+1,1}T_{n+1,2n+1}T_{1,n}T_{1,n+1}T_{1,n}T_{2n+1,1}
\\
&=&\sigma_{[3,n+1]} \sigma_{[3,n-1]} T_{2n+1,3n}T_{n+1,2n} T_{2n+1,1}T_{n+1,2n+1}T_{1,n+1}T_{1,n}T_{1,n+1}T_{2n+1,1}
\\
&=& \sigma_{[3,n+1]}T_{2n+1,3n}T_{n+1,2n} T_{2n+1,1}T_{n+1,2n+1}T_{1,n+1}\sigma_{[3,n-1]}T_{1,n}T_{1,n+1}T_{2n+1,1}
\\
&=& \sigma_{[3,n]}T_{1,n+1}T_{2n+1,1}T_{n+1,2n+1}T_{1,n+1}T_{2n+1,3n}T_{n+1,2n} T_{2n+1,1}T_{n+1,2n+1}T_{1,n+1}\\
&& \sigma_{[3,n-1]}T_{1,n}T_{1,n+1}T_{2n+1,1}\\
&=& \sigma_{[3,i-1]} T_{n+1,i+3}\sigma_i\sigma_{i+1}\sigma_{[i+2,n+1]}
\\
&=& T_{n+1,n+3}\sigma^2_{[3,n+1]}.
\end{array}$$

Thus, $\sigma^2_{[3,n+1]}T_{1,n}=T_{n+1,n+3}\sigma^2_{[3,n+1]}$ and $\sigma^{n-1}_{[3,n+1]}T_{1,n+1}=T_{n+1,2n+1}\sigma^{n-1}_{[3,n+1]}$.

Therefore, for any $3\leq i\leq n-1$
$$
\begin{array}{rcl}
\tau T_{1,i }&=&(\sigma_{[3,n+1]})^{n-1} T_{1,i}\\
&=& (\sigma_{[3,n+1]})^{i-1} T_{1,n} (\sigma_{[3,n+1]})^{n-i}
\\
&=& (\sigma_{[3,n+1]})^{i-3} T_{n+1,n+3}(\sigma_{[3,n+1]})^{n-i+2}
\\
&=& T_{n+1,n+i}(\sigma_{[3,n+1]})^{n-1}\\
&=& T_{n+1,n+i} \tau
\end{array}$$
$$
\begin{array}{rcl}
\tau T_{1,n}&=&(\sigma_{[3,n+1]})^{n-1} T_{1,n}\\
&=& (\sigma_{[3,n+1]})^{n-3} T_{n+1,n+3} (\sigma_{[3,n+1]})^{2}
\\
&=& T_{n+1,2n}(\sigma_{[3,n+1]})^{n-1}\\
&=& T_{n+1,2n}\tau.
\end{array}$$
$$
\tau T_{1,n+1}=T_{n+1,2n+1}\tau.$$

It follows that $\phi(T_\gamma)=\tau^{-1}T_\gamma \tau$ for any $\gamma\in \Delta$.

As $\phi^3=id$, we see that $\tau^3\in C(Br_{\Delta})$, the center of $Br_\Delta$. By comparing the length, we see that
$\tau^3=(\tau_1\tau_2\cdots\tau_{3k-3})^{3k-2}$ is the generator of $C(Br_\Delta)\cong C(Br_{3k-2})$.

$(c)$ By the relation $R9$, we have
$T_{i}T=TT_{n+2-i}$ for all $i$. If $n$ is odd, we have $T_i T^{\frac{n-1}{2}}=T^{\frac{n-1}{2}}T_{i+1}$ for all $i$. Thus, $\phi(T_{i})=\tau^{-1} T_{i} \tau$ with $\tau=T^{\frac{n-1}{2}}$. 

The proof is complete.
\endproof

\subsection{Proof of Proposition \ref{prop:BtoA}}\label{sec:proof of prop:BtoA}

The following lemma can be proved by direct calculation.

\begin{lemma}\label{lem:2kgon}
Consider the triangulations $\Delta_0=\{(1,2i+1),(2i+1,1),(2i-1,2i+1),(2i+1,2i-1)\mid i=1,\cdots, k-1\}\cup\{\text{boundary arcs}\}$ and $\Delta'_0=\{(2,2i+2),(2i+2,2),(2i,2i+2),(2i+2,2i)\mid i=1,\cdots, k-1\}\cup\{\text{boundary arcs}\}$ of the $2k$-gon $\Sigma_{2k}$. Denote $\rho=T^{\Delta_0}_{35}T^{\Delta_0}_{57}\cdots T^{\Delta_0}_{2k-3,2k-1}$, $\phi=T^{\Delta_0}_{13}T^{\Delta_0}_{15}\cdots T^{\Delta_0}_{1,2k-1}$ and $\tau=\rho\phi\rho$. Then we have

$(a)$ $h_{\Delta_0,\Delta'_0}h_{\Delta'_0,\Delta_0}(t_{1,2i+1})=\tau(t_{1,2i+1})=t_{12}t_{32}^{-1}t_{31}t_{2i+1,1}^{-1}t_{2i+1,2i+2}t_{2i+3,2i+2}^{-1}t_{2i+3,2i+1}$ modulo $2k$ for any $i$ with $1\leq i\leq k-2$.

$(b)$
$h_{\Delta_0,\Delta'_0}h_{\Delta'_0,\Delta_0}(t_{2i-1,2i+1})=\tau(t_{2i-1,2i+1})=t_{2i-1,2i}t_{2i+1,2i}^{-1}t_{2i+1,2i+2}t_{2i+3,2i+2}^{-1}t_{2i+3,2i+1}$ modulo $2k$ for any $i$ with $1\leq i\leq k-2$.

In particular, we have $h_{\Delta_0,\Delta'_0}h_{\Delta'_0,\Delta_0}=\tau$.
\end{lemma}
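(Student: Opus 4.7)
The plan is to verify parts (a) and (b) by direct computation, and then to deduce the ``In particular'' claim by faithfulness of the braid action. Both sides of the asserted equality are elements of $Aut(\mathbb{T}_{\Delta_0})$, and by Theorem~\ref{th:Br on Sigman}(b), $Br_{\Delta_0}$ acts faithfully on $\mathbb{T}_{\Delta_0}$. So once (a) and (b) are established, it suffices to observe that both $h_{\Delta_0,\Delta'_0}h_{\Delta'_0,\Delta_0}$ and $\tau$ fix all generators $t_\gamma$ for $\gamma$ a boundary arc of $\Sigma_{2k}$ (boundary arcs lie outside every quadrilateral used in Theorem~\ref{th:action}, hence are fixed by every $T_\alpha$; similarly every monomial mutation fixes them). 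This gives equality on all generators of $\mathbb{T}_{\Delta_0}$, and hence in $Br_{\Delta_0}$.

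For part (b) I would proceed as follows. The ear diagonal $(2i{-}1, 2i{+}1)$ is fixed by $\phi = T_{13}T_{15}\cdots T_{1,2k-1}$, because $T_{1,2j+1}$ only affects $t_{1,2j+1}$ and $t_{2j+1,1}$ (Theorem~\ref{th:action}(a)). It is also fixed by each $T_{2j-1,2j+1}$ with $j\ne i$, so $\tau(t_{2i-1,2i+1})$ reduces to $T_{2i-1,2i+1}(t_{2i-1,2i+1})$, whose value is read off from Theorem~\ref{th:action}(a) applied to the clockwise quadrilateral $(2i{-}1,2i,2i{+}1,2i{+}2)$ in $\Delta_0$: this yields precisely the five-factor product on the right-hand side of (b). For the left-hand side, the arc $(2i{-}1,2i{+}1)$ crosses only the single diagonal $(2i,2i{+}2)\in \Delta'_0$, so the monomial mutation $\mu_{\Delta'_0,\Delta_0}(t_{2i-1,2i+1})$ is a single flip expression; applying $\mu_{\Delta_0,\Delta'_0}$ and simplifying via the triangle relations in $\mathbb{T}_{\Delta_0}$ recovers the same five-factor product.

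For part (a) I would compute $\tau(t_{1,2i+1})$ in three passes, writing $\tau = \rho\phi\rho$ and tracking which factors act non-trivially on $t_{1,2i+1}$. The outer $\rho$ fixes $t_{1,2i+1}$; in $\phi$, the factors $T_{1,2j+1}$ with $j\ne i-1,i,i+1$ commute past $t_{1,2i+1}$ (relations $R1$ of Theorem~\ref{th:brgroup}), and only the three factors adjacent to $i$ contribute, via the quadrilaterals $(1,2i{-}1,2i,2i{+}1)$ and $(1,2i{+}1,2i{+}2,2i{+}3)$; the subsequent inner $\rho$ produces the final two ear factors $t_{2i+3,2i+2}^{-1}t_{2i+3,2i+1}$ and cleans up the terms near vertex $2$ to give $t_{12}t_{32}^{-1}t_{31}$. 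On the monomial-mutation side, $(1,2i{+}1)$ crosses the diagonals $(2,2j)$ of $\Delta'_0$ for $j = i{+}1,\ldots,k$ as well as the ear $(2i,2i{+}2)$, and applying the elementary flip formula of Section~\ref{subsec:triangle groups} along this sequence produces a telescoping Laurent monomial whose surviving factors match the seven-term right-hand side. The main obstacle is purely bookkeeping, tracking the orientations $t_\gamma$ versus $t_{\overline\gamma}$ and ensuring that the sign conventions in the definition of $h_{\Delta',\Delta}$ (Definition~\ref{def:gro}) are applied consistently across the several intermediate triangulations; this can be organized efficiently by induction on $k$, peeling off the symmetric pair $(T_{1,2k-1},T_{2k-3,2k-1})$ from both $\phi$ and $\rho$ and reducing to the polygon with one fewer pair of vertices.
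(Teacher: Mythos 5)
Your high-level architecture is fine and matches what the paper does implicitly (the paper proves this lemma ``by direct calculation,'' and the passage from equality of the two automorphisms of $\TT_{\Delta_0}$ to the identity $h_{\Delta_0,\Delta'_0}h_{\Delta'_0,\Delta_0}=\tau$ in $Br_{\Delta_0}$ does indeed require the faithfulness in Theorem \ref{th:Br on Sigman}, which you correctly invoke). However, your sketch of the calculation itself contains concrete errors that would derail it. In part (b), the quadrilateral of the ear $(2i-1,2i+1)$ in $\Delta_0$ is $(2i-1,2i,2i+1,1)$ --- its two triangles are $(2i-1,2i,2i+1)$ and $(1,2i-1,2i+1)$ --- not $(2i-1,2i,2i+1,2i+2)$, since $(2i-1,2i+2)\notin\Delta_0$. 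Hence the single twist gives $T_{2i-1,2i+1}(t_{2i-1,2i+1})=t_{2i-1,2i}\,t_{2i+1,2i}^{-1}\,t_{2i+1,1}\,t_{2i-1,1}^{-1}\,t_{2i-1,2i+1}$, which is \emph{not} the right-hand side of (b): the fan factors $t_{2i+1,1}t_{2i-1,1}^{-1}$ are then moved by $\phi$, and the second copy of $\rho$ acts yet again, so $\tau(t_{2i-1,2i+1})$ does not reduce to one twist. Already for $k=3$, $i=1$ one computes
$\tau(t_{13})=t_{12}t_{32}^{-1}\,\bigl(t_{34}t_{54}^{-1}t_{51}t_{31}^{-1}t_{35}\bigr)\,t_{15}^{-1}t_{13}$,
and the stated answer $t_{12}t_{32}^{-1}t_{34}t_{54}^{-1}t_{53}$ emerges only via the triangle relation $t_{53}=t_{51}t_{31}^{-1}t_{35}t_{15}^{-1}t_{13}$ of the triangle $(1,3,5)$; all passes of $\tau$ plus a triangle-relation simplification are needed.

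The mutation side of (b) is likewise misdescribed: $(2i-1,2i+1)$ crosses \emph{every} arc of $\Delta'_0$ incident to the vertex $2i$, namely $(2i-2,2i)$, $(2i,2i+2)$ and the fan arc $(2,2i)$ (and for $i=1$ the arc $(1,3)$ crosses all $k-1$ fan arcs at vertex $2$), so $\mu_{\Delta'_0,\Delta_0}(t_{2i-1,2i+1})$ is not a single-flip expression. In part (a) the quadrilaterals you name are not quadrilaterals of $\Delta_0$ (the fan diagonal $(1,2i+1)$ sits in $(1,2i-1,2i+1,2i+3)$, with no even vertices), and the claim that only the three factors of $\phi$ adjacent to $i$ contribute is false: $T_{1,2i+1}(t_{1,2i+1})$ contains $t_{1,2i-1}$, which $T_{1,2i-1}$ then rewrites, cascading down through $T_{1,2i-3},\dots,T_{13}$, so $i+1$ factors contribute before triangle relations collapse the word to the seven-term answer. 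Finally, for the ``in particular'' you must also cover the generators with $i=k-1$ (indices read modulo $2k$) and the reversed arcs $t_{2i+1,1}$, $t_{2i+1,2i-1}$; the latter do follow from your computations because every $T_\gamma$ and every monomial mutation commutes with the anti-involution $t_\gamma\mapsto t_{\overline\gamma}$, but this must be said. So the skeleton of your argument is right, but as written the computation would not go through.
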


We now provide a proof of Proposition \ref{prop:BtoA}. Let $\Delta'=\{(sn+1,sn+i), (sn+i,sn+1), ((t-1)n,tn), (tn,(t-1)n), (n,tn), (tn,n)\mid 0\leq s\leq k-1, 2\leq i\leq n, 2\leq t\leq k\}\cup \{\text{boundary arcs}\}$.

Consider the natural embedding of $\Sigma_{2k}\hookrightarrow \Sigma_{kn}$ via $(1,2,\cdots, 2k)\mapsto (1,n,1+n,2n,1+2n,\cdots, 1+(k-1)n,kn)$, by Lemma \ref{lem:2kgon}, we have $h_{\Delta,\Delta'}h_{\Delta',\Delta}=\tau_{n+1}$. Thus, $\tau_{n+1}\in Br_\Delta^{f_\sigma}$.

Denote $\Delta_i=\mu_{(1,i)}\mu_{n+1,n+i}\cdots \mu_{(k-1)n+1,(k-1)n+i}\Delta$ for any $i=3,\cdots,n$. By direct calculation, we have $\tau_i=h_{\Delta,\Delta_i}h_{\Delta_i,\Delta}$. Thus, $\tau_i\in Br_\Delta^{f_\sigma}$.

From Theorem \ref{th:brgroup}, we see that $\tau_i\tau_{i+1}\tau_i=\tau_{i+1}\tau_i\tau_{i+1}$ for $i$ with $3\leq i\leq n-1$, $\tau_i\tau_j=\tau_j\tau_i$ if $|i-j|\neq 1$.

For any $\ell$, by Lemma \ref{lem:2kgon}, we have

\begin{equation*}
\begin{aligned}
&  \quad\; \tau_n \tau_{n+1}\tau_n \tau_{n+1}(t_{1,\ell n+1})\\
& = \tau_n \tau_{n+1}\tau_n(t_{1n}t_{n+1,n}^{-1}t_{n+1,1}t_{\ell n+1,1}^{-1}t_{\ell n+1,(\ell+1)n}t_{(\ell+1)n+1,(\ell+1)n}^{-1}t_{(\ell+1)n+1,\ell n+1})
\\
&= \tau_n \tau_{n+1}  (t_{1,n-1}t_{n,n-1}^{-1}t_{n,1}
t_{\ell n+1,1}^{-1}
t_{\ell n+1,(\ell+1)n-1}
t_{(\ell+1)n,(\ell+1)n-1}^{-1}
t_{(\ell+1)n,\ell n+1}
)  \\
&= \tau_n (t_{1,n-1}t_{n,n-1}^{-1}t_{n,n+1}t_{1,n+1}^{-1}t_{1,\ell n+1}
t_{(\ell+1)n+1,\ell n+1}^{-1}
t_{(\ell+1)n+1,(\ell+1)n}
t_{(\ell+1)n-1,(\ell+1)n}^{-1}
t_{(\ell+1)n-1,\ell n+1})\\
&=t_{1,n-1}t_{n,n-1}^{-1}t_{n,n+1}t_{1,n+1}^{-1}t_{1,\ell n+1}
t_{(\ell+1)n+1,\ell n+1}^{-1}
t_{(\ell+1)n+1,(\ell+1)n}
t_{(\ell+1)n-1,(\ell+1)n}^{-1}
t_{(\ell+1)n-1,\ell n+1}\\
&=\tau_{n+1}\tau_n \tau_{n+1}\tau_n (t_{1,\ell n+1}).
\end{aligned}
\end{equation*}

\begin{equation*}
\begin{aligned}
& \;\;\;\;\;\tau_n \tau_{n+1}\tau_n \tau_{n+1}(t_{\ell n+1,(\ell+1)n+1})\\
& =\tau_n \tau_{n+1}\tau_n(t_{\ell n+1,(\ell+1)n}t_{(\ell+1)n+1,(\ell+1)n}^{-1}
t_{(\ell+1)n+1,(\ell+2)n+1}t_{(\ell+2)n,(\ell+2)n+1}^{-1}t_{(\ell+2)n,(\ell+1)n+1})
\\
&= \tau_n \tau_{n+1}  (t_{\ell n+1,(\ell+1)n-1}t_{(\ell+1)n,(\ell+1)n-1}^{-1}t_{(\ell+1)n,\ell n+1}t_{(\ell+1)n+1,\ell n+1}^{-1}
t_{(\ell+1)n+1,(\ell+2)n-1}\\
& \;\;\;\;\cdot\; t_{(\ell+2)n,(\ell+2)n-1}^{-1}t_{(\ell+2)n,(\ell+1)n+1}
)  \\
&= \tau_n (t_{\ell n+1,(\ell+1)n-1}t_{(\ell+1)n,(\ell+1)n-1}^{-1}
t_{(\ell+1)n,(\ell+1)n+1}t_{(\ell+2)n+1,(\ell+1)n+1}^{-1}t_{(\ell+2)n+1,(\ell+2)n}\\
& \;\;\;\;\cdot\; 
t^{-1}_{(\ell+2)n-1,(\ell+2)n}t_{(\ell+2)n-1,(\ell+1)n+1})\\
&= t_{\ell n+1,(\ell+1)n-1}t_{(\ell+1)n,(\ell+1)n-1}^{-1}
t_{(\ell+1)n,(\ell+1)n+1}t_{(\ell+2)n+1,(\ell+1)n+1}^{-1}t_{(\ell+2)n+1,(\ell+2)n}t^{-1}_{(\ell+2)n-1,(\ell+2)n}\\
& \;\;\;\;\cdot\; 
t_{(\ell+2)n-1,(\ell+1)n+1}\\
&=\tau_{n+1}\tau_n \tau_{n+1}\tau_n (t_{\ell n+1,(\ell+1)n+1}).
\end{aligned}
\end{equation*}

\begin{equation*}
\begin{aligned}
& \;\;\;\;\;\tau_{n+1} \tau_n \tau_{n+1}\tau_n (t_{1,n})\\
& =\tau_{n+1}\tau_n T(t_{1,n-1}t_{n,n-1}^{-1}t_{n,n+1}t_{1,n+1}^{-1}t_{1,n})
\\
&=\tau_{n+1}\tau_n (t_{1,n-1}t_{n,n-1}^{-1}t_{n,n+1}t_{2n,n+1}^{-1}t_{2n,2n+1}t_{n+1,2n+1}^{-1}t_{n+1,n}) \\
&= \tau_{n+1}(t_{1,n-1}t_{n,n-1}^{-1}t_{n,n+1}t_{2n,n+1}^{-1}t_{2n,2n-1}t_{n+1,2n-1}^{-1}t_{n+1,n})\\
&=t_{1,n-1}t_{n,n-1}^{-1}t_{n,n+1}t_{2n,n+1}^{-1}t_{2n,2n-1}t_{n+1,2n-1}^{-1}t_{n+1,n}\\
&=\tau_n \tau_{n+1}\tau_n \tau_{n+1}(t_{1,n}).
\end{aligned}
\end{equation*}

Similarly, we have $\tau_{n+1} \tau_n \tau_{n+1}\tau_n (t_{\ell n+1,(\ell+1)n})=\tau_n \tau_{n+1}\tau_n \tau_{n+1}(t_{\ell n+1,(\ell+1)n})$ for all $\ell$ with $1\leq \ell \leq k-1$.

Therefore, $\tau_{n+1}\tau_n \tau_{n+1}\tau_n(t_\gamma)=\tau_n \tau_{n+1}\tau_n \tau_{n+1}(t_\gamma)$ for all $\gamma\in \Delta$. Thus, by Theorem \ref{th:Br on Sigman}, we have $\tau_{n+1}\tau_n \tau_{n+1}\tau_n=\tau_n\tau_{n+1}\tau_n\tau_{n+1}$.

Thus, $\langle \tau_3,\cdots, \tau_n, \tau_{n+1}\rangle$ is isomorphic to a quotient group of $Br_{C_{n-1}}$. Under the surjective map $f_\sigma:\TT_{\Delta}\to \TT_{f_\sigma(\Delta)}$, we see that $\tau_3,\cdots,\tau_n,\tau_{n+1}$ act on $\TT_{f_\sigma(\Delta)}$ via $\tau_i\mapsto T_{f(1,i+2)}, \tau_{n+1}\to T_{f(1,n+1)}$. By Theorem \ref{th:Br on Sigman1}, the action of $\langle \tau_3,\cdots, \tau_n, \tau_{n+1}\rangle$ on $\TT_{f_\sigma(\Delta)}$ is faithful. It follows that $\langle \tau_1,\cdots, \tau_n, \tau_{n+1}\rangle\cong Br_{B_{n+1}}$.

The proof is complete.
\endproof

\subsection{Proofs of Theorem \ref{th:functoriality nc-surface} and Proposition \ref{prop:symmetries and orbifolds}}
\label{subsec:Proof of Theorem functoriality nc-surface}

The following is immediate.

\begin{lemma}\label{lem:lift} Let
$\mathcal L,\mathcal A$ be semifirs with $\mathcal A=\mathcal L\langle x^{-1}\mid x\in S\rangle$ is a localization of $\mathcal L$ and $Frac(\mathcal L)=Frac(\mathcal A)$. Let $\mathcal F'$ be a skew-field and $\varphi:\mathcal L\to \mathcal F'$ be a ring homomorphism such that $\varphi(x)\neq 0$ for any $x\in S$. Then $\varphi$ can be extended to a ring homomorphism $\varphi: \mathcal A\to \mathcal F'$.
\end{lemma}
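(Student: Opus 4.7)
My plan is to deduce the lemma from the universal property of noncommutative localization. By construction, the ring $\mathcal{A}=\mathcal{L}\langle x^{-1}\mid x\in S\rangle$ is characterized by the following property: any ring homomorphism $\psi:\mathcal{L}\to R$ such that $\psi(x)$ is invertible in $R$ for every $x\in S$ extends uniquely to a ring homomorphism $\tilde\psi:\mathcal{A}\to R$ satisfying $\tilde\psi(x^{-1})=\psi(x)^{-1}$. Once this formal setup is in place, the lemma is essentially a one-line verification.

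First I will observe that, because $\mathcal{F}'$ is a skew-field, every nonzero element of $\mathcal{F}'$ is a two-sided unit. The hypothesis $\varphi(x)\ne 0$ for each $x\in S$ therefore immediately gives $\varphi(x)\in(\mathcal{F}')^{\times}$ for every $x\in S$. Next I will apply the universal property above with $\psi=\varphi$ and $R=\mathcal{F}'$; this produces the desired extension $\tilde\varphi:\mathcal{A}\to\mathcal{F}'$, uniquely determined by $\tilde\varphi|_{\mathcal{L}}=\varphi$ and $\tilde\varphi(x^{-1})=\varphi(x)^{-1}$.

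The one genuinely non-trivial point—and the one I would need to address before invoking the universal property—is to verify that $\mathcal{A}$ really is the \emph{universal} $\mathcal{L}$-ring inverting $S$, rather than a proper quotient of it. This is where the remaining hypotheses are used: the semifir property of both $\mathcal{L}$ and $\mathcal{A}$ together with $Frac(\mathcal{L})=Frac(\mathcal{A})$ force $\mathcal{A}$ to embed into this common universal field of fractions. Combined with the defining surjection from the universal localization onto $\mathcal{A}$ (which exists since $\mathcal{A}$ certainly inverts $S$), this embedding forces that surjection to be an isomorphism. I expect this identification to be the main (though modest) obstacle; once it is in place, the construction of $\tilde\varphi$ is entirely automatic from the universal property.
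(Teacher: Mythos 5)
Your core argument is exactly the one the paper has in mind: the paper offers no proof beyond declaring the lemma ``immediate,'' and the intended reading is that $\mathcal L\langle x^{-1}\mid x\in S\rangle$ \emph{is} the universal localization of $\mathcal L$ at $S$, so that the skew-field hypothesis turns $\varphi(x)\ne 0$ into $\varphi(x)\in(\mathcal F')^{\times}$ and the universal property hands you the extension. On that reading your ``main obstacle'' is vacuous, and the first two paragraphs of your proposal constitute a complete proof; the semifir and $Frac(\mathcal L)=Frac(\mathcal A)$ hypotheses are there for the surrounding application (they guarantee, e.g., that $\mathcal A$ is a domain sitting inside a common skew field of fractions), not as ingredients of this extension argument.

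The one place where your write-up is actually wrong is the resolution you sketch for the identification step. From a surjection $\pi:\mathcal L_S\twoheadrightarrow\mathcal A$ and an embedding $\iota:\mathcal A\hookrightarrow Frac(\mathcal L)$ you cannot conclude that $\pi$ is an isomorphism: a monomorphism following an epimorphism says nothing about the kernel of the epimorphism. What you would need is injectivity of the \emph{composite} $\mathcal L_S\to Frac(\mathcal L)$, i.e.\ that the canonical map from the universal localization to the universal skew field of fractions has zero kernel. For semifirs this is a genuine theorem of Cohn's localization theory (valid for suitable, e.g.\ factor-closed, sets of full matrices), not a formal consequence of the data you list, so if you insist on distinguishing $\mathcal A$ from the universal localization you must cite such a result rather than argue as you do. Since the lemma's hypothesis already identifies $\mathcal A$ with $\mathcal L\langle x^{-1}\mid x\in S\rangle$, the cleaner fix is simply to delete the third paragraph and invoke the universal property directly.
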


\begin{lemma}\label{lem:map}
Let $\underline\Sigma$ be a monogon with a special puncture $\underline p$, and let $\ell$ denote the special loop in $\underline\Sigma$. 

$(a)$ Let $\Delta$ be the star-like triangle of $\Sigma_{|\underline p|}$ at the marked point $1$. Then the assignments $$x_{1i}, x_{i1}\mapsto 2\cos(\frac{\text{min}\{i-2,|\underline p|-i\}}{|\underline p|}\pi)x_{\ell}$$ 
define a $\kk _{\underline\Sigma}$-algebra homomorphism 
$$\kk _{\underline\Sigma}[x^{\pm 1}_{\gamma}\mid \gamma\in \Delta] \mapsto Frac(\mathcal A_{\underline\Sigma}).$$

$(b)$ Let $\Sigma$ be an $n$-gon with a special puncture $p$ and let $\Delta$ be the star-like triangle at the marked point $1$, explicitly given by $\Delta=\{(1,i),\overline{(1,i)}\mid i=3,4,\cdots, n\}\cup \{\text{boundary arcs}\}$, where $(1,i)$ denotes the arc connecting $1$ and $i$ such that the special puncture $p$ is on the right. If $n|p|=|\underline p|$, then the assignments $$x_{\gamma}\mapsto \begin{cases}
    x_\ell, & \text{if $\gamma$ is a boundary arc,}\\
   2\cos(\frac{n-1}{|\underline p|}\pi)x_{\ell}, & \text{if $\gamma=(1,1)$,}\\ 
    2\cos(\frac{i-2}{|\underline p|}\pi)x_{\ell}, & \text{otherwise,}
\end{cases}$$ 
define a $\kk _{\underline\Sigma}$-algebra homomorphism 
$$\kk _{\underline \Sigma}[x^{\pm 1}_{\gamma}\mid \gamma\in \Delta] \to Frac(\mathcal A_{\underline\Sigma}).$$
\end{lemma}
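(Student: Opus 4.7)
The plan is to exploit that $\kk_{\underline\Sigma}[x^{\pm 1}_\gamma\mid \gamma\in \Delta]$ is a (Laurent) polynomial algebra on independent generators $x_\gamma$: a $\kk_{\underline\Sigma}$-algebra homomorphism out of it is uniquely determined by specifying, for each $x_\gamma$, an invertible element of $Frac(\mathcal A_{\underline\Sigma})$ as its image, subject only to pairwise commutation of these images. In both (a) and (b) every prescribed image is a scalar multiple of $x_\ell$, so all images lie in the commutative subalgebra $\kk_{\underline\Sigma}[x_\ell^{\pm 1}]\subset \mathcal A_{\underline\Sigma}$ and commutativity is automatic; the only real content to verify is invertibility.

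First I would check that $x_\ell$ itself is invertible in $\mathcal A_{\underline\Sigma}$. Since $\ell$ is simple and its endpoint is the unique boundary marked point (not $\underline p$), removing the special puncture $\underline p$ does not alter its (zero) self-intersection count. Hence $\ell$ is $\{\underline p\}$-admissible (that is, $\ell\in [\Gamma(\underline\Sigma)]$), and Definition \ref{def:ASigma} then supplies $x_\ell^{\pm 1}\in \mathcal A_{\underline\Sigma}$. Next I would check that each scalar coefficient $2\cos(\frac{k}{|\underline p|}\pi)$ appearing in the formulas lies in $\kk_{\underline\Sigma}=\mathbb Q(\cos(\pi/|\underline p|))$; this follows from the Chebyshev identity $\cos(k\theta)=T_k(\cos\theta)$.

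The main substantive step is to verify that none of these scalars vanish. For part (a), $2\cos(\frac{\min\{i-2,|\underline p|-i\}}{|\underline p|}\pi)=0$ would force $\min\{i-2,|\underline p|-i\}=|\underline p|/2$, i.e.\ both $i-2\ge |\underline p|/2$ and $|\underline p|-i\ge |\underline p|/2$ simultaneously, yielding the impossible inequality $|\underline p|/2+2\le i\le |\underline p|/2$. For part (b), using $|\underline p|=n|p|$ and $|p|\ge 2$ (since $p$ is a special puncture), the scalar $2\cos(\frac{n-1}{|\underline p|}\pi)$ vanishes iff $2(n-1)=n|p|$, i.e.\ $|p|=2-2/n<2$, excluded; and $2\cos(\frac{i-2}{|\underline p|}\pi)$ vanishes iff $2(i-2)=n|p|$, but for internal non-loop arc indices $3\le i\le n-1$ one has $2(i-2)\le 2n-6<2n\le n|p|$, also excluded. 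Boundary arcs in (b) are sent directly to the invertible $x_\ell$. Consequently every image is a nonzero scalar multiple of $x_\ell$, hence invertible, and the universal property of the polynomial source yields the desired $\kk_{\underline\Sigma}$-algebra homomorphism into $Frac(\mathcal A_{\underline\Sigma})$.

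There is no serious technical obstacle inside this lemma — its content reduces to the short arithmetic case analysis above showing the cosine coefficients never vanish. The deeper content, namely that these particular scalars encode the $|\underline p|$-fold ramification of the covering $\Sigma_{|\underline p|}\to \underline \Sigma$ (respectively the $n|p|$-fold ramification $\Sigma\to \underline \Sigma$ in part (b)) correctly enough to descend to an algebra morphism of noncommutative surfaces, emerges only when this initial homomorphism is fed into Lemma \ref{lem:lift} to extend from the polynomial source to the full algebra $\mathcal A_{\Sigma_{|\underline p|}}$ (respectively $\mathcal A_{\Sigma}$), as required downstream in the proofs of Theorem \ref{th:functoriality nc-surface} and Proposition \ref{prop:symmetries and orbifolds}.
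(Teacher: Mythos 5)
The paper's own ``proof'' of this lemma is the single sentence ``It follows by direct calculation,'' so there is no detailed argument to compare against; your write-up is a reasonable way to carry out that calculation, and your arithmetic core (the Chebyshev identity putting the scalars in $\kk_{\underline\Sigma}$, the non-vanishing case analysis, and the $\{\underline p\}$-admissibility of $\ell$ giving $x_\ell^{\pm 1}\in\mathcal A_{\underline\Sigma}$) is correct. The one point I would push back on is your opening framing: $\kk_{\underline\Sigma}[x^{\pm 1}_\gamma\mid\gamma\in\Delta]$ cannot be read as a free (Laurent) polynomial algebra on independent generators. For the lemma to do its job downstream --- it is fed into Lemma \ref{lem:lift}, which requires $\mathcal A_\Sigma^f$ to be a \emph{localization} of this source with the same skew field of fractions --- the source has to be the subalgebra $\iota_\Delta(\kk\TT_\Delta)$ of $\mathcal A_{\Sigma_{|\underline p|}}$ generated by the $x_\gamma^{\pm 1}$, $\gamma\in\Delta$, and by Theorem \ref{th:iotaDelta} this algebra carries the triangle relations $x_{\alpha_1}x_{\overline\alpha_2}^{-1}x_{\alpha_3}=x_{\overline\alpha_3}x_{\alpha_2}^{-1}x_{\overline\alpha_1}$ of $\TT_\Delta$ (and, in part (b), the monogon relation $x_{(1,1)}=x_{\overline{(1,1)}}$ for the special loop). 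So there genuinely is a relation-preservation check to perform, not just an invertibility check. That said, your own observations already dispose of it: every generator and its reversal are sent to the \emph{same} scalar multiple of the single element $x_\ell$, so both sides of each triangle relation map to $c_{\alpha_1}c_{\alpha_2}^{-1}c_{\alpha_3}\,x_\ell$ and the relations are preserved for free. I would simply make that step explicit rather than asserting there is nothing to verify beyond invertibility. Your closing remark --- that the specific cosine values only become load-bearing when the map is extended through Lemma \ref{lem:lift} to all cluster variables in the proofs of Theorem \ref{th:functoriality nc-surface} and Proposition \ref{prop:symmetries and orbifolds} --- is accurate.
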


\begin{proof}
    It follows by direct calculation.
\end{proof}

{\bf Proof of Theorem \ref{th:functoriality nc-surface}}.

(a) follows immediately by the relations in Definition \ref{def:ASigma}.

(b)
By Lemma \ref{le:f-admissible triangulations}, there exists a triangulation $\underline \Delta$ of $f(\Sigma)$ that can be lifted to a triangulation $\Delta$ of $\Sigma$, i.e., $f(\Delta)=\underline \Delta$. For each special loop $\underline\gamma$ in $\underline\Delta$, the preimage $f^{-1}(\underline\gamma)$ is either a polygon or a polygon with one special puncture. Restricting $\Delta$ to $f^{-1}(\underline\gamma)$, We obtain a triangulation of $f^{-1}(\underline\gamma)$. We may assume that it is of the form in Lemma \ref{lem:lift}.

Now define a map $\hat f_*$ on $x_\gamma,\gamma\in \Delta$ as follows:

$\bullet$ If $f(\gamma)$ is an arc, then set $\hat f_*(x_\gamma)=x_{f(\gamma)}$. 

$\bullet$ If $f(\gamma)$ is not an arc, then $\gamma$ is an arc inside the $n$-gon $f^{-1}(\underline \gamma)=(\gamma_1,\cdots,\gamma_n)$ for some special loop $\underline\gamma\in \underline\Delta$ encloses an special puncture $\underline p$. 

In case $f^{-1}(\underline \gamma)$ encloses no special puncture, suppose $(\gamma,\gamma_1,\gamma_2\cdots,\gamma_k,\gamma_{k+1})$ is a $k+2$-gon for some $k\leq \frac{n}{2}$. Then define 
$$\hat f_*(x_\gamma)=2\cos(\frac{2k\pi}{|\underline p|})x_{\underline\gamma}.$$

In case $f^{-1}(\underline \gamma)$ encloses a special puncture $p$, we may assume that $(\gamma,\gamma_1,\gamma_2\cdots,\gamma_k,\gamma_{k+1})$ is a $k+2$-gon for some $k\leq n$. Then define  $$\hat f_*(x_\gamma)=2\cos(\frac{2k\pi}{|\underline p|})x_{\underline\gamma}.$$

By Lemma \ref{lem:map}, the assignments define a $\kk _{\Sigma'}$-algebra homomorphism 
$$\hat f_*:\kk _{\Sigma'}[x^{\pm 1}_\gamma\mid \gamma\in \Delta]\to \kk _{\Sigma'}[x^{\pm 1}_{\gamma'}\mid \gamma'\in \underline \Delta]\hookrightarrow Frac(\Delta')=Frac(\mathcal A_{\Sigma'}).$$

According to Theorem \ref{thm:laurent}, we have $\hat f_*(x_\beta)\neq 0$ for any $f$-admissible curve $\beta$ in $\Sigma$. Therefore, by Lemma \ref{lem:lift}, $\hat f_*$ extends to a $\kk _{\Sigma'}$-algebra homomorphism  
$$\hat f_*:\kk _{\Sigma'}\otimes_{\kk _{\Sigma}}{\mathcal A}_\Sigma^f\to {\mathcal Frac}({\mathcal A}_{\Sigma'}).$$ 

Moreover, it is clear that the image of ${\mathcal A}_\Sigma^f$ in $\hat f_*$ is in ${\mathcal A}_{\Sigma'}$.

The proof is complete.
\endproof

{\bf Proof of Proposition \ref{prop:symmetries and orbifolds}} From the proof of Theorem \ref{th:functoriality nc-surface}, there exist a triangle $\Delta$ and $\Delta'$ of $\Sigma$ and $\Sigma'$, respectively, and a $\kk _{\Sigma'}$-algebra homomorphism 
$$\hat f_*:\kk _{\Sigma'}[x^{\pm 1}_\gamma\mid \gamma\in \Delta]\to \kk _{\Sigma'}[x^{\pm 1}_{\gamma'}\mid \gamma'\in \underline \Delta].$$ 

As $f:\Sigma\to \Sigma'=\Sigma/\Gamma$ is the quotient map, we have $\hat f_*$ is surjective. By Lemma \ref{lem:map}, we see that $f(x_{\sigma(\gamma)})=f(x_\gamma)$ for all $\gamma\in \Delta$ and $\sigma\in \Gamma$, and $Ker~\hat f_*$ is generated by the following elements:

$\bullet$ $x_{\gamma}-x_{\overline \gamma}$ for all arcs $\gamma\in \Delta$ such that $f(\gamma)$ is a special loop enclosing a special puncture $p$ such that $|p|\neq |f(p)|$;

$\bullet$ $x_{\gamma_k}-2\cos(\frac{k}{|\gamma|}\pi)x_\gamma$ for all pairs $(\gamma,\gamma_k)$ in $\Delta$ such that $f(\gamma)$ is a special loop enclosing a special puncture $p$ such that $|p|\neq |f(p)|$, and $f(\gamma_k)$ is a closed curve with $k$ self-intersection points and enclosing the same special puncture as $f(\gamma)$.

Now consider the extended $\kk _{\Sigma'}$-algebra homomorphism  
$$\hat f_*:\kk _{\Sigma'}\otimes_{\kk _{\Sigma}}{\mathcal A}_\Sigma^f\to {\mathcal Frac}({\mathcal A}_{\Sigma'}).$$ 
One can see that $\hat f_*$ does not depend on the choice of the $f$-compatible pair $(\Delta,\Delta')$ and $\hat f_*(x_{\sigma(\gamma)})=\hat f_*(x_\gamma)$ for any curve $\gamma$ and $\sigma\in \Gamma$. Thus, we obtain a natural $\kk _{\Sigma'}$-algebra homomorphism 
$$\hat f_*:\kk _{\Sigma'}\otimes_{\kk _{\Sigma}}({\mathcal A}_\Sigma^f)_\Gamma\to {\mathcal A}_{\Sigma'},$$
whose kernel is generated by the following elements:

$\bullet$ $x_{\gamma}-x_{\overline \gamma}$ for all arcs $\gamma$ such that $f(\gamma)$ is a special loop enclosing a special puncture $p$ such that $|p|\neq |f(p)|$;

$\bullet$ $x_{\gamma_k}-2\cos(\frac{k}{|\gamma|}\pi)x_\gamma$ for all pairs $(\gamma,\gamma_k)$ such that $f(\gamma)$ is a special loop enclosing a special puncture $p$ such that $|p|\neq |f(p)|$, and $f(\gamma_k)$ is a closed curve with $k$ self-intersection points and enclosing the same special puncture as $f(\gamma)$.

As every curve in $\Sigma$ can be lifted to a curve in $\Sigma$, $\hat f_*:\kk _{\Sigma'}\otimes_{\kk _{\Sigma}}({\mathcal A}_\Sigma^f)_\Gamma\to {\mathcal A}_{\Sigma'}$ is surjective. 

This completes the proof.
\endproof

\section{Commutative and quantum cluster structures and their symmetries}

\subsection{Ordinary and quantum seeds}
\label{subsec:classical monomial mutations and braid}

Fix $n\leq m\in \mathbb Z_{>0}$, given any seed of geometric type ${\bf S}=({\bf x},\tilde B)$ with $\tilde B\in Mat_{m\times n}(\mathbb Z)$, we denote $G_{\bf S}=\mathbb Z^m$. 

Denote by ${\mathcal A}$ the cluster algebra of ${\bf S}$ and by ${\mathcal A}'$ its localization by all cluster variables.

The celebrated Laurent Phenomenon asserts a (canonical) embedding ${\bf j}_{\bf S}:{\mathcal A}\hookrightarrow {\mathbb Z}^m=\kk [x_1^{\pm 1},\ldots,x_m^{\pm 1}]$ for any seed ${\bf S}$ (here we view elements of $e$ of ${\mathbb Z}^m$ as Laurent monomials $x^e$). This, in turn, defines the opposite embedding 
$$\iota_{\bf S}:{\mathbb Z}^m\hookrightarrow {\mathcal A}'$$
which is our ``noncommutative" cluster.

Thus, the Laurent Phenomenon asserts that for any polynomial (not Laurent) $x\in \kk G_{\bf S}$ it image $\iota_{\bf S'}(x)$ is in the image of $\iota_{\bf S}$.

The following is well-known, see, e.g., \cite[Corollary 6.3]{FZ4}. 

\begin{theorem} 
\label{th:leading term}
For any mutation-equivalent (ordinary or quantum) seeds ${\bf S}$ and ${\bf S}'$, there exists a unique isomorphism $\mu_{{\bf S}',{\bf S}}$ of ${\mathbb Z}^m$ such that the $k$-th cluster variable $x'_k=\iota_{{\bf S}'}(x^{e_k})$ of ${\bf S}'$ expands as
$$x'_k
=\iota_{\bf S}(x^{\mu_{{\bf S}',{\bf S}}(e_k)})+lower ~terms$$
or, more generally, 
$$x'^{{\bf m}}
=\iota_{\bf S}(x^{\mu_{{\bf S}',{\bf S}}({\bf m})})+lower ~terms$$
for any ${\bf m}\in \Z^m$.    
\end{theorem}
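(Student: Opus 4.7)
The plan is to reduce everything to the Fomin--Zelevinsky separation-of-additions formula and the theory of $g$-vectors.

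First, I would reduce to the case ${\bf m} = e_k$ (a single cluster variable). In the commutative setting, $\iota_{{\bf S}'}(x^{\bf m})$ is just the Laurent monomial $\prod_k (x'_k)^{m_k}$ in the ${\bf S}'$-cluster, so once the leading-term expansion is established for each $x'_k$ individually, the general statement follows from the multiplicativity of leading terms with respect to the $g$-vector grading. The quantum case proceeds identically after accounting for $q$-commutation prefactors, which affect only the coefficient, not the exponent, of the leading monomial.

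Second, for adjacent seeds ${\bf S}' = \mu_k({\bf S})$, I would read off $\mu_{{\bf S}',{\bf S}}$ directly from the exchange relation
\[
x'_k = x_k^{-1}\bigl( x^{[b_k]_+} + x^{[-b_k]_+} \bigr),
\]
identifying the leading monomial of $x'_k$ with $\iota_{\bf S}(x^{-e_k + [b_k]_+})$. The map
\[
\mu_{{\bf S}',{\bf S}}(e_j) = \begin{cases} e_j & \text{if } j \ne k, \\ -e_k + [b_k]_+ & \text{if } j = k, \end{cases}
\]
extended $\mathbb{Z}$-linearly, is then the unique candidate, matching the formula stated in the introduction. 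For arbitrary mutation-equivalent ${\bf S}$ and ${\bf S}'$ I would extend by transitivity along the unique path in the $n$-valent seed tree; no nontrivial cycle compatibility is required because the tree is simply connected.

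Third, I would invoke \cite[Corollary~6.3]{FZ4}: each $x'_k$ admits a separation formula
\[
x'_k = \iota_{\bf S}\bigl(x^{g_k({\bf S}',{\bf S})}\bigr) \cdot \frac{F_k(\hat{y})}{F_k|_{\mathcal{P}}(y)},
\]
with $g$-vector $g_k \in \mathbb{Z}^m$ and $F$-polynomial $F_k$ whose constant term is $1$. The tropical recursion for $g$-vectors (op.\ cit.\ Prop.~6.6) is precisely the mutation rule above, so by induction $\mu_{{\bf S}',{\bf S}}(e_k) = g_k({\bf S}',{\bf S})$, and $\iota_{\bf S}(x^{g_k})$ is the leading term. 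Uniqueness of $\mu_{{\bf S}',{\bf S}}$ then follows from the uniqueness of the leading monomial. For the quantum case, the same scheme runs with the quantum separation formula and quantum $g$-vectors in place of their commutative counterparts.

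The main technical obstacle is making precise the notion of \emph{lower terms}: it must be a strict dominance partial order on Laurent monomials in the ${\bf S}$-cluster for which each $x'_k$ has a unique leading monomial and under which leading terms are multiplicative. Such an order is provided by the sign-coherence of $c$-vectors, equivalently $g$-vectors, which in the general skew-symmetrizable case is a deep result established in \cite{GHKK}. Once sign-coherence is available, the leading-term statement becomes a direct consequence of the separation formula, and the entire theorem reduces to a bookkeeping exercise on the tree.
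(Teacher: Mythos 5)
The paper offers no proof of this statement at all --- it is dispatched with the citation to \cite[Corollary 6.3]{FZ4} --- and your overall skeleton (separation of additions, $F$-polynomials with constant term $1$, sign-coherence from \cite{GHKK}, multiplicativity of leading terms) is exactly the intended argument. However, your second step contains a concrete error: the map obtained by composing the \emph{fixed} linear elementary rules $e_k\mapsto -e_k+[b_k]_+$ along the path in the seed tree is not the map $\mu_{{\bf S}',{\bf S}}$ of the theorem, and your assertion that ``the tropical recursion for $g$-vectors is precisely the mutation rule above'' is false. Already in type $A_2$ with $B=\left(\begin{smallmatrix}0&1\\-1&0\end{smallmatrix}\right)$: the elementary rule declares the leading term of $x_1'=(1+x_2)/x_1$ to be $x_1^{-1}$ and that of $x_2'=(x_1+x_2+1)/(x_1x_2)$ to be $x_2^{-1}$ (both correct), but after the third mutation one reaches $x_1''=(1+x_1)/x_2$, whose expansion in the initial cluster consists of $x_2^{-1}$ and $x_1x_2^{-1}$, while the naive composite sends $e_1\mapsto -e_1\mapsto e_1$ and predicts leading monomial $x_1$, which does not even occur. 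The failure is not about cycles in the exchange graph (so simple connectedness of the tree is beside the point): it is that ``leading with respect to ${\bf S}'$, then leading with respect to ${\bf S}$'' need not equal ``leading with respect to ${\bf S}$,'' because the dominance orders at intermediate seeds differ. The true $g$-vector recursion is only \emph{piecewise} linear, the linear branch at each step being selected by the sign of a column of a $C$-matrix --- which is precisely why the paper's groupoid ${\bf TSurf}_\Sigma$ carries the exponents $sgn_\alpha(C_\Delta^{\Delta_0})$ in its defining relations.

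The repair is to abandon the tree and define $\mu_{{\bf S}',{\bf S}}$ on the basis directly by $e_k\mapsto g_k({\bf S}',{\bf S})$ for mutable $k$ (and identically on frozen indices), where $g_k$ is the $g$-vector supplied by the separation formula of \cite[Corollary 6.3]{FZ4}. Sign-coherence (\cite{GHKK}) gives that the $g_k$ form a $\mathbb{Z}$-basis, hence $\mu_{{\bf S}',{\bf S}}$ is an isomorphism; the constant term $1$ of $F_k$ gives that $x^{g_k}$ occurs and strictly dominates every other term $x^{g_k+\widetilde B\nu}$, $\nu>0$ (full rank of $\widetilde B$, automatic in the quantum case by compatibility, is needed here so that distinct $\nu$ give distinct exponents and no cancellation of maximal terms occurs); multiplicativity of the dominance order then yields the statement for general ${\bf m}$, with negative entries handled by formally inverting $1+(\text{lower terms})$; and uniqueness is immediate since a linear map is determined on a basis. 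With this modification your argument is correct and coincides with the proof the paper implicitly delegates to \cite{FZ4} and \cite{GHKK}.
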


In particular, for any $k=1,\ldots,n$, we have
$$\mu_{\mu_k({\bf S}),{\bf S}}(e_j)=-e_j+\delta_{kj} [b_k]_+$$
for any $j=1,\ldots, m$, where $b_k$ is the $k$-th column of $\tilde B$. 

Denote by $\Gamma$ the groupoid whose objects are mutation-equivalence classes of seeds and whose morphisms in $\Gamma$ are compositions of monomial mutation $\mu_{{\bf S}',{\bf S}}:\Z^m\to \Z^m$ and their inverses. 

Following \cite[Section 2.2]{SSVZ}, define {\it transvection} $T_k=T_{k,{\bf S}}\in Br_{\bf S}$ to be 
$\mu_{{\bf S},\mu_k{\bf S}}\circ \mu_{\mu_k{\bf S},{\bf S}}: G_{\bf S}\to G_{{\bf S}}$, to be precisely, $T_k(e_j)=e_j+\delta_{kj} b_k$ for any $j=1,\ldots,m$. 

Let $Br_{\bf S}=\langle T_{k,{\bf S}}\mid k=1,2,\cdots,n \rangle \subset Aut_\Gamma({\bf S})$. By definition, it is a subgroup of $Aut(G_{\bf S})\cong GL_m(\Z)$. 

The following is immediate.

\begin{lemma} The assignments $g\mapsto \mu_{{\bf S}',{\bf S}}g \mu_{{\bf S}',{\bf S}}^{-1}$ defines an isomorphism $Aut_\Gamma({\bf S})\simeq Aut_\Gamma({\bf S}')$.
\end{lemma}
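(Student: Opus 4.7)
The plan is to recognize this as the standard groupoid fact: any morphism between two objects induces, by conjugation, a group isomorphism between their automorphism groups. So the work is really just to confirm that $\Gamma$ is a bona fide groupoid in the categorical sense, with $\mu_{{\bf S}',{\bf S}}$ a genuine invertible morphism from ${\bf S}$ to ${\bf S}'$.

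First I would verify that $\mu_{{\bf S}',{\bf S}}$ is invertible in $\Gamma$, with inverse $\mu_{{\bf S},{\bf S}'}$, and more generally that monomial mutations satisfy the transitivity relation $\mu_{{\bf S}'',{\bf S}}=\mu_{{\bf S}'',{\bf S}'}\circ \mu_{{\bf S}',{\bf S}}$. Both follow from the uniqueness clause of Theorem \ref{th:leading term}: the leading term of the Laurent expansion of $x'^{\bf m}$ in the cluster ${\bf S}$ determines $\mu_{{\bf S}',{\bf S}}({\bf m})$ uniquely, so $\mu_{{\bf S},{\bf S}'}\circ \mu_{{\bf S}',{\bf S}}$ and $\mathrm{id}$ must agree (both produce $x^{\bf m}$ as leading term of $x^{\bf m}$), and similarly for the composition $\mu_{{\bf S}'',{\bf S}'}\circ \mu_{{\bf S}',{\bf S}}$ against $\mu_{{\bf S}'',{\bf S}}$.

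Once this is in place, write $\mu:=\mu_{{\bf S}',{\bf S}}$ and define $\Phi_{{\bf S}',{\bf S}}:g\mapsto \mu g\mu^{-1}$. For $g\in Aut_\Gamma({\bf S})$, the composition $\mu\circ g\circ \mu^{-1}$ is a morphism ${\bf S}'\to {\bf S}'$, hence lies in $Aut_\Gamma({\bf S}')$. It is a group homomorphism by the routine identity $\mu(gh)\mu^{-1}=(\mu g\mu^{-1})(\mu h\mu^{-1})$, and its two-sided inverse is the analogously defined map $\Phi_{{\bf S},{\bf S}'}:h\mapsto \mu^{-1}h\mu$ from $Aut_\Gamma({\bf S}')$ to $Aut_\Gamma({\bf S})$.

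There is no substantive obstacle here; the lemma amounts to the observation that in a connected groupoid all automorphism groups are canonically isomorphic (up to inner automorphism, depending on the choice of connecting morphism). The only point requiring care is the basic groupoid structure of $\Gamma$, which is handed to us by Theorem \ref{th:leading term}.
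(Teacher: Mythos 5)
Your proof is correct and is exactly the argument the paper has in mind: the paper states this lemma as ``immediate,'' since by construction $\Gamma$ is a groupoid (its morphisms are generated by the monomial mutations together with their inverses), so $\mu_{{\bf S}',{\bf S}}^{-1}$ is a morphism of $\Gamma$ and conjugation by $\mu_{{\bf S}',{\bf S}}$ is the standard isomorphism of automorphism groups of two objects in a connected groupoid. Your extra verification that $\mu_{{\bf S},{\bf S}'}=\mu_{{\bf S}',{\bf S}}^{-1}$ via uniqueness of leading terms is sound but not strictly needed, since the lemma conjugates by $\mu_{{\bf S}',{\bf S}}^{-1}$, which lies in $\Gamma$ by definition.
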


\begin{proposition} \label{prop:iso2} 
We have $Br_{\bf S}\cong Br_{\mu_i\bf S}$ for any $i=1,2,\cdots,n$. 
\end{proposition}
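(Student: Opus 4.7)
The plan is to invoke the preceding lemma, which gives a groupoid isomorphism $\Phi\colon Aut_\Gamma({\bf S})\xrightarrow{\sim}Aut_\Gamma(\mu_i{\bf S})$ defined by conjugation, $g\mapsto \mu_{\mu_i{\bf S},{\bf S}}\,g\,\mu_{\mu_i{\bf S},{\bf S}}^{-1}$, and to verify that $\Phi$ carries the generating set $\{T_{k,{\bf S}}:k=1,\ldots,n\}$ of $Br_{\bf S}$ into $Br_{\mu_i{\bf S}}$ (and that its inverse reverses the roles). Once this is established, the two inclusions $\Phi(Br_{\bf S})\subseteq Br_{\mu_i{\bf S}}$ and $\Phi^{-1}(Br_{\mu_i{\bf S}})\subseteq Br_{\bf S}$ combine with the lemma to yield the asserted isomorphism.

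First I would handle the case $k=i$. Since $\mu_i(\mu_i{\bf S})={\bf S}$ as a groupoid object, both $T_{i,{\bf S}}=\mu_{{\bf S},\mu_i{\bf S}}\mu_{\mu_i{\bf S},{\bf S}}$ and $T_{i,\mu_i{\bf S}}=\mu_{\mu_i{\bf S},{\bf S}}\mu_{{\bf S},\mu_i{\bf S}}$ make sense, and a one-line cancellation gives $\Phi(T_{i,{\bf S}})=\mu_{\mu_i{\bf S},{\bf S}}\mu_{{\bf S},\mu_i{\bf S}}=T_{i,\mu_i{\bf S}}$. For $k\ne i$ I would compute $\Phi(T_{k,{\bf S}})$ directly on the standard basis $\{e_j\}$ of $G_{\mu_i{\bf S}}=\Z^m$, using $\mu_{\mu_i{\bf S},{\bf S}}(e_j)=e_j$ for $j\ne i$, $\mu_{\mu_i{\bf S},{\bf S}}(e_i)=-e_i+[b_i]_+$, and $T_{k,{\bf S}}(e_j)=e_j+\delta_{kj}b_k$. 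The result remains unipotent, altering only the $k$-th basis vector, and the correction turns out to be an integer combination of $b'_k$ (the $k$-th column of $\mu_i(\tilde B)$) and $b'_i=-b_i$; hence $\Phi(T_{k,{\bf S}})$ is expressible as a product of $T_{k,\mu_i{\bf S}}$ with an integer power of $T_{i,\mu_i{\bf S}}$, which lies in $Br_{\mu_i{\bf S}}$. The explicit formula splits according to the sign $\varepsilon=\mathrm{sgn}(b_{ik})$, exactly matching the tropical sign dichotomy recorded in Lemma \ref{lem:dual}. Exchanging the roles of ${\bf S}$ and $\mu_i{\bf S}$ (again by $\mu_i\mu_i{\bf S}={\bf S}$), the identical argument proves $\Phi^{-1}(Br_{\mu_i{\bf S}})\subseteq Br_{\bf S}$.

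The main obstacle is not the overall strategy but the sign bookkeeping in the case $k\ne i$: one must produce an \emph{honest} word in the $T_{j,\mu_i{\bf S}}$ equal to the conjugate $\Phi(T_{k,{\bf S}})$, rather than merely exhibit an element of the ambient group $Aut_\Gamma(\mu_i{\bf S})$. This is the commutative shadow of the much more intricate relations $R1$--$R8$ in Theorem \ref{th:brgroup}, but because each transvection is unipotent with a single non-trivial column, the case analysis reduces to a pair of short linear computations parametrized by $\varepsilon\in\{\pm\}$, so the argument, while requiring care, should be entirely elementary.
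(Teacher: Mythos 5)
Your overall strategy is exactly the paper's: conjugate the generating transvections by the elementary monomial mutation $\mu_{\mu_i{\bf S},{\bf S}}$, check that the results are words in the generators on the other side, and conclude by symmetry. Your treatment of the case $k=i$ is correct. But the key assertion in the case $k\ne i$ is wrong: the conjugated generator is \emph{not} of the form $T_{k,\mu_i{\bf S}}\cdot T_{i,\mu_i{\bf S}}^{N}$ (nor $T_{i,\mu_i{\bf S}}^{N}\cdot T_{k,\mu_i{\bf S}}$). Since $T_{i}^{N}$ fixes $e_k$, any such product sends $e_k\mapsto e_k+b'_k$ (resp.\ $e_k+b'_k+Nb'_{ik}b'_i$), whereas in the nontrivial sign case the conjugate picks up a genuine $T_i$-conjugation: the correct statement, which is what the paper records, is that $\mu_{\mu_i{\bf S},{\bf S}}^{-1}\,T_{k,\mu_i{\bf S}}\,\mu_{\mu_i{\bf S},{\bf S}}$ equals $T_{k,{\bf S}}$ when $b_{ik}\ge 0$ and equals $T_{i,{\bf S}}^{-1}T_{k,{\bf S}}T_{i,{\bf S}}$ when $b_{ik}<0$. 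One checks directly that $T_i^{-1}T_kT_i$ differs from every $T_kT_i^N$ already on the basis vector $e_k$ (the discrepancy is the vector $b_{ik}b_i$, which is nonzero precisely in the case at issue), so your proposed closed form cannot be made to work by any choice of $N$. Your inference ``the correction on $e_k$ is an integer combination of $b'_k$ and $b'_i$, hence the element is a product of $T_k$ with a power of $T_i$'' is a non sequitur: the element is not determined by its action on $e_k$, and a transvection-by-transvection conjugate is generally not such a product.

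This does not sink the argument: a conjugate $T_i^{\pm1}T_kT_i^{\mp1}$ is still a word in the generators, so the inclusion $\Phi(Br_{\bf S})\subseteq Br_{\mu_i{\bf S}}$ (and its mirror) still follows, and the rest of your proof goes through verbatim. You only need to replace the claimed normal form by the correct one above and verify it by the two short linear computations you already set up.
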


 \begin{proof} 
 By calculation, we have
$$\mu_{\mu_k({\bf S}),{\bf S}}^{-1}T_{k,\mu_i{\bf S}}\mu_{\mu_k({\bf S}),{\bf S}}=\begin{cases}
 T_{k,{\bf S}}, & \text{ if } b_{ik}\geq 0,\\
 T_{i,{\bf S}}^{-1}T_{k,{\bf S}}T_{i,{\bf S}}, & \text{ if }b_{ik}<0.
\end{cases}$$
 The result follows.
 \end{proof}

In other words, group $Br_{\bf S}$ depends only on cluster algebra ${\mathcal A}={\mathcal A}({\bf S})$, denote it by $Br_{\mathcal A}$.  We refer to $Br_{\mathcal A}$ as cluster braid group of ${\mathcal A}$.

 

We expect that $Br_{\bf S}\cong Aut_\Gamma({\bf S})$.

\begin{proposition} \label{prop-braid1}
The following relations

$\bullet$
$\underbrace{T_iT_jT_i\cdots}_m =\underbrace{T_jT_iT_j\cdots}_m$, where 
$m=\begin{cases} 
2 & \text{if $b_{ji}=b_{ij}=0$}\\
3 & \text{if $|b_{ji}b_{ij}|=1$}\\
4 & \text{if $|b_{ji}b_{ij}|=2$}\\
6 & \text{if $|b_{ji}b_{ij}|=3$}\\
\end{cases}$

hold in $Br_\Sigma$.  
\end{proposition}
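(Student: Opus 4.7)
The plan is to reduce the verification of each braid relation to a finite matrix identity. First, since $b_{ii}=b_{jj}=0$, the formula $T_k(e_l)=e_l+\delta_{kl}b_k$ shows that $T_k$ fixes $e_l$ for every $l\neq k$ (for $k\in\{i,j\}$). In particular, every word in $T_i,T_j$ acts as the identity on the sublattice $L:=\mathrm{span}_{\mathbb Z}(e_l:l\notin\{i,j\})$, so verifying an equation between two words in $\langle T_i,T_j\rangle$ reduces to checking its action on $e_i$ and $e_j$ alone.

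Next, decompose $b_i=b_{ji}e_j+b_i'$ and $b_j=b_{ij}e_i+b_j'$ with $b_i',b_j'\in L$. A short check shows that both $T_i$ and $T_j$ fix $b_i'$ and $b_j'$ pointwise, so the rank-$\leq 4$ sublattice $V:=\mathbb Z e_i+\mathbb Z e_j+\mathbb Z b_i'+\mathbb Z b_j'$ is preserved, and the whole question becomes a matrix identity in $\mathrm{End}(V)$. The projection $V\twoheadrightarrow V/(\mathbb Z b_i'+\mathbb Z b_j')\cong\mathbb Z e_i\oplus\mathbb Z e_j$ intertwines $T_i,T_j$ with the rank-$2$ transvections $T_i^{\mathrm{ab}}=\bigl(\begin{smallmatrix}1&0\\b_{ji}&1\end{smallmatrix}\bigr)$ and $T_j^{\mathrm{ab}}=\bigl(\begin{smallmatrix}1&b_{ij}\\0&1\end{smallmatrix}\bigr)$ in $GL_2(\mathbb Z)$. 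By Lemma \ref{lem:finiteorder}, the braid relation $\underbrace{T_i^{\mathrm{ab}}T_j^{\mathrm{ab}}\cdots}_m=\underbrace{T_j^{\mathrm{ab}}T_i^{\mathrm{ab}}\cdots}_m$ in $GL_2(\mathbb Z)$ holds precisely in the four cases listed; a Cayley--Hamilton computation using $\mathrm{tr}(T_i^{\mathrm{ab}}T_j^{\mathrm{ab}})=2+b_{ij}b_{ji}$ and $\det(T_i^{\mathrm{ab}}T_j^{\mathrm{ab}})=1$ pins down the exponent.

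It remains to lift the equality from $GL_2(\mathbb Z)$ back to $\mathrm{End}(V)$, i.e., to check that the two alternating products also agree on the $b_i',b_j'$ components of the action on $e_i,e_j$. Identifying $T_k$ with a pair $(T_k^{\mathrm{ab}},C_k)$ in the semidirect product $GL_2(\mathbb Z)\ltimes M_2(\mathbb Z)$, where $C_i=\bigl(\begin{smallmatrix}1&0\\0&0\end{smallmatrix}\bigr)$ records that $T_i(e_i)$ picks up a $b_i'$ contribution while $T_i(e_j)$ does not (and analogously for $T_j$), the composition rule $(A,C)(A',C')=(AA',\,CA'+C')$ expresses the ``$b'$-part'' of any alternating word as a telescoping sum $\sum_s C_{k_s}\prod_{t>s}T_{k_t}^{\mathrm{ab}}$. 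Taking the difference of these sums on the two sides of the putative braid relation and invoking the $GL_2$-identity just established yields the desired equality in $\mathrm{End}(V)$.

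The main obstacle is this final telescoping verification, which must be performed case-by-case and is most intricate for $m=6$; however, it is purely mechanical once the semidirect-product structure is in place. A conceptually cleaner alternative is to restrict to a rank-$2$ sub-seed (involving only the directions $i$ and $j$) and invoke the commutative/abelianized form of Theorem \ref{th:faithful rank 2}, which gives a complete presentation of $\langle T_i^{\mathrm{ab}},T_j^{\mathrm{ab}}\rangle$ in each case; the reduction above then transfers this presentation to $\langle T_i,T_j\rangle$, completing the proof.
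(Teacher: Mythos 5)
Your proposal is correct and takes essentially the same route as the paper, which disposes of this proposition in a single line (``follows by direct calculation, as in Theorem \ref{th:faithful rank 2}''), namely by passing to the rank-two abelianized transvections $T_i^{\mathrm{ab}},T_j^{\mathrm{ab}}$ and checking the dihedral relation there. Your semidirect-product bookkeeping for the residual directions $b_i',b_j'$ is a sound way to organize the remaining lift (the telescoping sums do agree in all four cases); if you want to avoid that case-by-case arithmetic, note that for any word $w$ in $T_i,T_j$ the conjugate $wT_iw^{-1}$ is again a transvection, attached to the vector $w(b_i)$ and the covector $e_i^*\circ w^{-1}$, so the whole verification collapses to computations in the rank-two lattices $\mathbb{Z}b_i+\mathbb{Z}b_j$ and $\mathbb{Z}e_i^*+\mathbb{Z}e_j^*$, where $T_i,T_j$ act by the same $2\times 2$ matrices.
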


\begin{proof}
    Follows by direct calculation, as in Theorem \ref{th:faithful rank 2}.
\end{proof}

Similarly, recall that a quantum seed ${\bf S}_q$
is a triple $({\bf X},\Lambda,\tilde B)$, where ${\bf X}$ is the quantum cluster $\{X_1,\ldots,X_m\}$  subject to relations in the ambient quantum torus group $G_{{\bf X},\Lambda}$ with the presentation
$$X_iX_j=q^{\lambda_{ij}}X_jX_i \ ,$$
where $q^{1/2}$ is the generator of the center of $G_{{\bf X},\Lambda}$ and  $\Lambda=(\lambda_{ij})$ is a
 skew-symmetric matrix compatible with $\tilde B$, i.e., $\Lambda \tilde B=\begin{pmatrix} -{\bf d}\\
{\bf 0}
\end{pmatrix}$, where ${\bf d}=diag(d_1,\ldots,d_n)$ and all $d_i\in \Z_{>0}$.

\begin{lemma} For each $i=1,\cdots, n$, the assignments $X_j\mapsto X^{e_j+\delta_{ij}b_i}$, $j=1,\ldots,m$ define a unique automorphism $T_i$ of the quantum torus $G_{{\bf X},\Lambda}$ commuting with the anti-involution $\overline {\cdot}$.
\end{lemma}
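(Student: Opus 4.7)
The statement is an instance of the standard fact that a quantum torus admits exactly those endomorphisms coming from $\Z$-linear maps on the exponent lattice that preserve the skew form $\Lambda$ on the vectors involved. My plan is therefore to reduce to a short lattice computation using the compatibility condition $\Lambda\tilde B=\begin{pmatrix}-{\bf d}\\{\bf 0}\end{pmatrix}$, and then verify invertibility and bar-compatibility directly.

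First, I will adopt the symmetrized normal form $X^a=q^{\frac12\sum_{i<j}a_ia_j\lambda_{ji}}X_1^{a_1}\cdots X_m^{a_m}$, in which the quantum torus relations take the uniform shape $X^a X^b=q^{a^\top\Lambda b}X^b X^a$, and $\overline{X^a}=X^a$. Writing $a_j:=e_j+\delta_{ij}b_i$, to prove the assignments $X_j\mapsto X^{a_j}$ extend to a homomorphism it suffices to verify $a_j^\top\Lambda a_k=\lambda_{jk}$ for all $j,k$. This reduces, via bilinearity and skew-symmetry, to checking $b_i^\top\Lambda e_k=0$ for $k\ne i$ and $b_i^\top\Lambda b_i=0$. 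Both follow immediately from the compatibility $\Lambda b_i=-d_ie_i$ (the first gives $b_i^\top\Lambda e_k=-(\Lambda b_i)_k=d_i\delta_{ik}$, which vanishes for $k\ne i$, and the second is automatic from skew-symmetry).

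Second, to upgrade the homomorphism to an automorphism I will exhibit its inverse explicitly. The underlying map $L_i\colon\Z^m\to\Z^m$, $e_j\mapsto e_j+\delta_{ij}b_i$, is the rank-one perturbation $I+b_ie_i^\top$, whose determinant is $1+b_{ii}=1$ since the diagonal of $\tilde B$ vanishes; hence $L_i\in SL_m(\Z)$ with inverse $e_j\mapsto e_j-\delta_{ij}b_i$. The same lattice computation as above (with $-b_i$ in place of $b_i$) shows that $X_j\mapsto X^{e_j-\delta_{ij}b_i}$ defines a homomorphism, and composing in either order recovers the identity on the generators. Uniqueness is immediate because any automorphism of the quantum torus is determined by its values on $X_1,\ldots,X_m$.

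Finally, bar-compatibility is automatic from the symmetrized normal form: $T_i(\overline{X_j})=T_i(X_j)=X^{a_j}=\overline{X^{a_j}}=\overline{T_i(X_j)}$, and both $T_i$ and $\overline{\cdot}$ are multiplicative, so they agree on all of $G_{{\bf X},\Lambda}$. There is no genuine obstacle here — the only subtlety is the correct bookkeeping of the skew form when passing between the generator relations and the normalized monomial form, and this is handled cleanly once the identity $\Lambda b_i=-d_ie_i$ is invoked at the right moment.
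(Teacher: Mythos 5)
Your proof is correct. The paper states this lemma without any proof (treating it as immediate), and your argument is exactly the standard verification it implicitly relies on: the compatibility $\Lambda b_i=-d_ie_i$ gives $\Lambda(e_j+\delta_{ij}b_i,\,e_k+\delta_{ik}b_i)=\lambda_{jk}$ so the assignment respects the torus relations, $\det(I+b_ie_i^\top)=1+b_{ii}=1$ gives invertibility, and bar-invariance of the normalized monomials $X^a$ gives bar-equivariance (your phrase ``both multiplicative'' should strictly read that $T_i\circ\overline{\cdot}$ and $\overline{\cdot}\circ T_i$ are both anti-multiplicative and agree on generators, but this is cosmetic).
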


Denote by $Br_{{\bf S}_q}$ the subgroup of $Aut(G_{{\bf X},\Lambda})$ generated by $T_1,\ldots,T_n$.

\begin{proposition}\label{prop:iso}
We have
 $Br_{{\bf S}_q}\cong Br_{{\bf S}}$.
\end{proposition}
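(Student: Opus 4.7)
\medskip

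The plan is to build an explicit isomorphism $\pi:Br_{{\bf S}_q}\to Br_{{\bf S}}$ arising from the classical limit of the quantum torus. First I would observe that $G_{{\bf X},\Lambda}$ fits into a central extension
$$1\longrightarrow \langle q^{1/2}\rangle\longrightarrow G_{{\bf X},\Lambda}\xrightarrow{\;\;p\;\;}\mathbb{Z}^m=G_{{\bf S}}\longrightarrow 1,$$
where $p$ is the canonical projection killing the center. The quantum transvection $T_i^q$, defined by $T_i^q(X_j)=X^{e_j+\delta_{ij}b_i}$, fixes $q^{1/2}$ (it is an automorphism of the group, hence preserves its center) and therefore descends along $p$ to the automorphism $e_j\mapsto e_j+\delta_{ij}b_i$ of $\mathbb{Z}^m$, which is exactly the classical transvection $T_i$. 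Since any automorphism of $G_{{\bf X},\Lambda}$ that fixes $q^{1/2}$ is determined by what it does modulo the center together with its effect on $q^{1/2}$, the assignment $T_i^q\mapsto T_i$ extends to a well-defined group homomorphism $\pi:Br_{{\bf S}_q}\to Br_{{\bf S}}$, surjective by construction.

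Next I would prove injectivity using bar-equivariance. Suppose a word $w=w(T_1^q,\ldots,T_n^q)$ lies in $\ker\pi$. Then modulo the center $w$ fixes every $e_j$, so for each $j$ there is an integer $c_j$ with $w(X_j)=q^{c_j/2}X_j$. By the preceding lemma each $T_i^q$ commutes with the bar-involution $\overline{\cdot}$, and hence so does $w$. Applying $\overline{\cdot}$, which sends $q^{1/2}\mapsto q^{-1/2}$ and fixes $X_j$, to the identity $w(X_j)=q^{c_j/2}X_j$ and comparing the two sides yields $q^{c_j/2}X_j=q^{-c_j/2}X_j$, forcing $c_j=0$. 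Since $w$ also fixes $q^{1/2}$, it is the identity automorphism of $G_{{\bf X},\Lambda}$, and $\pi$ is injective.

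The main (in fact only) delicate point is the compatibility check at the bar-involution step: this requires knowing that the bar-involution on $G_{{\bf X},\Lambda}$ fixes each $X_j$ (the standard normalization) and that each $T_i^q$ really is $\bar{\;}$-equivariant, which is exactly the content of the preceding lemma. Once that is in place, the argument is essentially a diagram chase in the central extension, and no direct braid/relation calculation is needed because surjectivity is built in and injectivity is handled uniformly by the bar-involution obstruction. As a sanity check, the resulting isomorphism is compatible with Propositions \ref{prop-braid1} and \ref{prop:iso2}: the braid-type relations and the conjugation identities under monomial mutation have the same form in the classical and quantum settings, reflecting the fact that both groups are governed by the common exchange matrix $\tilde B$.
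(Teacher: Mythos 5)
Your proof is correct and is essentially the paper's argument: the paper proves this via Lemma \ref{lem:iso}, which is exactly your specialization-at-$q=1$ homomorphism (killing the central $q^{1/2}$) together with the bar-involution argument showing that any element of the kernel, acting as $X_j\mapsto q^{c_j/2}X_j$, must have all $c_j=0$. The only cosmetic difference is that you phrase the descent as a diagram chase through the central extension, while the paper states it directly as injectivity of the specialization map on bar-equivariant automorphism groups.
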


\begin{proof}
It follows from Lemma \ref{lem:iso}.
\end{proof}

 \begin{lemma}\label{lem:iso} Let $Br_q$ be an automorphism groups of $G_{\bf X}$ commuting with the anti-involution $\overline {\cdot}$ of $G_{\bf X}$. Then the  specialization  $q\mapsto 1$ defines an injective homomorphism $Br_q\hookrightarrow GL_m(\Z)$.
     
 \end{lemma}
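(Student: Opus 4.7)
The plan is to describe every bar-equivariant automorphism of the quantum torus $G_{\bf X}$ explicitly via its action on monomials, and to read off its image in $GL_m(\Z)$ as the ``leading exponent'' matrix. First I would recall that, for each $v = (v_1,\ldots,v_m) \in \Z^m$, there is a unique normalized bar-invariant monomial
$$X^v \; := \; q^{-\frac{1}{2}\sum_{j<k} v_jv_k\lambda_{jk}}\,X_1^{v_1}\cdots X_m^{v_m},\qquad \overline{X^v} = X^v,$$
and that these form a $\Z[q^{\pm 1/2}]^{\overline{\cdot}}$-basis of $G_{\bf X}$. Every unit of $G_{\bf X}$ has the form $c\cdot X_1^{v_1}\cdots X_m^{v_m}$ for some $v \in \Z^m$ and $c \in \Z[q^{\pm 1/2}]^\times$; applied to $\varphi(X_i)$ for $\varphi \in Br_q$ and combined with bar-equivariance, this forces $\varphi(X_i) = \pm X^{v_i(\varphi)}$ for a well-defined $v_i(\varphi) \in \Z^m$. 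The sign is always $+1$: it is multiplicative under composition, equals $+1$ on the identity, and equals $+1$ on the generating transvections $T_k$, which by definition send each $X_j$ to $X^{e_j+\delta_{jk}b_k}$.

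Second, I would assemble the columns $v_i(\varphi)$ into a matrix $V(\varphi) \in \mathrm{Mat}_{m\times m}(\Z)$. Using the elementary multiplication rule $X^u\cdot X^v = q^{\frac{1}{2}u^\top\Lambda v}\,X^{u+v}$, a direct computation shows $V(\varphi\circ\psi) = V(\varphi)V(\psi)$; since $\varphi$ is invertible, $V(\varphi)$ lies in $GL_m(\Z)$. By the very definition of $X^v$, specializing at $q^{1/2}\mapsto 1$ sends $X^v$ to $x_1^{v_1}\cdots x_m^{v_m}$, so $V$ coincides with the specialization homomorphism $Br_q\to GL_m(\Z)$ that appears in the statement.

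Injectivity is then immediate. If $V(\varphi) = I$, then $\varphi(X_i) = X^{e_i} = X_i$ for every $i$, and since $q^{1/2}$ is central and fixed by any ring automorphism commuting with $\overline{\cdot}$, we conclude $\varphi = \mathrm{id}$.

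The main (and only) obstacle is the first step: ruling out nontrivial $q^{1/2}$-factors and signs in $\varphi(X_i)$ using only bar-equivariance. I would resolve this either by a direct argument on units of $G_{\bf X}$ (modelled as the group algebra of a Heisenberg-type group, whose units are precisely the scalar multiples of monomials), or, more economically, by observing that the generators $T_k$ produced in the preceding lemma are manifestly normalized monomial transformations, and that the collection of all such transformations forms a subgroup of $\mathrm{Aut}(G_{\bf X})$ containing $Br_q$.
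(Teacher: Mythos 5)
Your proof is correct and follows essentially the same route as the paper: the whole content is that a bar-equivariant automorphism in the kernel of the specialization must send each $X_i$ to $q^{a_i}X^{e_i}$, and bar-invariance of the normalized monomial forces $a_i=0$. The extra work you do on signs is harmless but unnecessary (the paper treats $G_{\bf X}$ as a group, so elements are already of the form $q^{a}X^{v}$, and in any case a kernel element specializes to the identity, which rules out the sign $-1$ without appealing to the generators $T_k$ — which you cannot invoke anyway, since the lemma allows $Br_q$ to be an arbitrary bar-equivariant automorphism group).
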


 \begin{proof}
    Assume that $\sigma\in Br_{q}$ belongs to the kernel. Then for any $i=1,\cdots, m$, we have $\sigma(X^{e_i})=q^{a_i}X^{e_i}$ for some $a_i\in \frac{1}{2}\mathbb Z$. As $\sigma$ commute with the anti-involution, we see that $\sigma(X^{e_i})$ is bar-invariant, it follows that $a_i=1$ for any $i$. Therefore $\sigma$ is the identity in $Br_q$. Consequently, $Br_q\hookrightarrow GL_m(\Z)$ is injective.
 \end{proof}

The following result follows immediately from Propositions \ref{prop:iso2} and \ref{prop:iso}.

\begin{theorem} $Br_{{\bf S}_q}\cong Br_{{\bf S}'_q}$ for any mutation-equivalent quantum seeds ${\bf S}_q$ and ${\bf S}'_q$.   
\end{theorem}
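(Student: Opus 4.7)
The plan is to deduce the theorem as a short corollary of the two preceding propositions together with induction on mutation distance. Given mutation-equivalent quantum seeds ${\bf S}_q$ and ${\bf S}'_q$, I would denote by ${\bf S}$ and ${\bf S}'$ their underlying classical seeds obtained by forgetting the compatible skew-symmetric matrix $\Lambda$. Since mutation of quantum seeds refines mutation of ordinary seeds, the classical seeds ${\bf S}$ and ${\bf S}'$ are mutation-equivalent as well.

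The first step is to iterate Proposition \ref{prop:iso2} along any mutation sequence ${\bf S}={\bf S}_0\stackrel{k_1}{\to}{\bf S}_1\stackrel{k_2}{\to}\cdots\stackrel{k_r}{\to}{\bf S}_r={\bf S}'$ connecting ${\bf S}$ and ${\bf S}'$. Each one-step mutation yields an isomorphism $Br_{{\bf S}_{j-1}}\cong Br_{{\bf S}_j}$, and composing these gives a group isomorphism $Br_{\bf S}\cong Br_{{\bf S}'}$. The second step is to apply Proposition \ref{prop:iso} at both ends, producing $Br_{{\bf S}_q}\cong Br_{\bf S}$ and $Br_{{\bf S}'_q}\cong Br_{{\bf S}'}$. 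Concatenating these three isomorphisms yields
$$Br_{{\bf S}_q}\cong Br_{\bf S}\cong Br_{{\bf S}'}\cong Br_{{\bf S}'_q},$$
which is the desired statement.

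The only point requiring a little care is that the classical isomorphism $Br_{\bf S}\cong Br_{{\bf S}'}$ used in the middle is well-defined as an isomorphism of abstract groups — different mutation paths could a priori produce different isomorphisms. However, this is not a real obstacle: the isomorphism is realized inside the automorphism groupoid $\Gamma$ from Section \ref{subsec:classical monomial mutations and braid} by conjugation with a composition of monomial mutations $\mu_{{\bf S}_j,{\bf S}_{j-1}}$, so any two such paths produce isomorphisms that differ by an inner automorphism of $Br_{\bf S}$. Consequently, the resulting isomorphism class $Br_{\bf S}\cong Br_{{\bf S}'}$ is canonical up to conjugation, which is all we need for the statement.

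The main (but mild) obstacle is therefore purely bookkeeping: verifying that the specialization $q\mapsto 1$ from Lemma \ref{lem:iso} intertwines quantum and classical transvections in a way compatible with the one-step conjugation formula
$$\mu_{\mu_k({\bf S}),{\bf S}}^{-1}T_{j,\mu_k{\bf S}}\mu_{\mu_k({\bf S}),{\bf S}}=\begin{cases}T_{j,{\bf S}}, & b_{kj}\ge 0,\\ T_{k,{\bf S}}^{-1}T_{j,{\bf S}}T_{k,{\bf S}}, & b_{kj}<0,\end{cases}$$
used in the proof of Proposition \ref{prop:iso2}. Once this compatibility is recorded, no further work is required and the theorem follows.
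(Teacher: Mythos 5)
Your proposal is correct and matches the paper's argument: the paper states that the theorem "follows immediately from Propositions \ref{prop:iso2} and \ref{prop:iso}," which is precisely your chain $Br_{{\bf S}_q}\cong Br_{\bf S}\cong Br_{{\bf S}'}\cong Br_{{\bf S}'_q}$ obtained by iterating the classical one-step isomorphism and applying the quantum-to-classical isomorphism at both ends. Your additional remarks on path-independence up to conjugation are a reasonable elaboration but not something the paper spells out.
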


In other words, the group $Br_{{\bf S}_q}$ depends only on the quantum cluster algebra ${\mathcal A}_q={\mathcal A}({\bf S}_q)$ and denote it by $Br_{{\mathcal A}_q}$.  We refer to $Br_{\mathcal A}$ as the cluster braid group of ${\mathcal A}_q$.

\subsection{Abelianization and $q$-abelianization of noncommutative surfaces}

\label{subsec:Abelianization and $q$-abelianization of Noncommutative surfaces}

The following is immediate.

\begin{lemma} 
\label{le:abelianization of a surface}
The quotient algebra of the abelianized algebra ${\mathcal A}_\Sigma^{ab}$ by the relations $x_{\overline \gamma}=x_\gamma$ for all $\gamma$ is a localization of the ordinary cluster algebra $\mathcal A (\Sigma)$ of $\Sigma$.  
\end{lemma}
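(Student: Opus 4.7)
The plan is to construct a pair of mutually inverse algebra homomorphisms between the quotient algebra $\bar{\mathcal A}_\Sigma := \mathcal A_\Sigma^{ab}/\langle x_{\overline\gamma}-x_\gamma\rangle$ and a suitable localization of the (generalized) cluster algebra $\mathcal A(\Sigma)$, where the localizing multiplicative set is generated by the cluster variables corresponding to arcs in $[\Gamma(\Sigma)]$.

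First, I would inspect how each defining relation of Definition \ref{def:ASigma} simplifies after passing to the commutative quotient and imposing $x_{\overline\gamma}=x_\gamma$. The triangle relations $x_{\alpha_1}x_{\overline\alpha_2}^{-1}x_{\alpha_3}=x_{\overline\alpha_3}x_{\alpha_2}^{-1}x_{\overline\alpha_1}$ both collapse to the tautology $x_{\alpha_1}x_{\alpha_2}^{-1}x_{\alpha_3}=x_{\alpha_3}x_{\alpha_2}^{-1}x_{\alpha_1}$; the monogon relations $x_{\overline\ell}=x_\ell$ are imposed by construction; the zero-puncture relations reduce to $x_\ell=x_\gamma^2$; the Ptolemy relations collapse to the classical exchange $x_\alpha x_{\alpha'}=x_{\alpha_1}x_{\alpha_3}+x_{\alpha_2}x_{\alpha_4}$; and the bigon special-puncture and $0$-puncture relations become, respectively, the generalized Chekhov--Shapiro exchange with coefficient $2\cos(\pi/|p|)$ and the standard folded exchange around a pending arc.

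Second, I would recognize the resulting presentation (commutative generators $x_\gamma$ for curves modulo reversal, with $x_\gamma^{-1}$ adjoined for $\gamma\in[\Gamma(\Sigma)]$, subject precisely to the Ptolemy and generalized exchange relations listed above) as the defining presentation of a localization of the Fomin--Shapiro--Thurston cluster algebra $\mathcal A(\Sigma)$, extended in the Chekhov--Shapiro sense when $\Sigma$ carries special or $0$-punctures. Building the evident map in both directions---sending $x_\gamma$ to the tagged-arc cluster variable corresponding to $\gamma$, and conversely sending each cluster variable to the class of $x_\gamma$ for the arc $\gamma$ it represents---the relations match tautologically by the previous paragraph, so both compositions are identities and the resulting isomorphism identifies $\bar{\mathcal A}_\Sigma$ with $\mathcal A(\Sigma)[S^{-1}]$ for $S=\{x_\gamma:\gamma\in[\Gamma(\Sigma)]\}$.

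The main obstacle is the careful bookkeeping needed to match our conventions for special loops, pending arcs, and self-folded triangles with the tagged-arc/pending-arc formalism used to define $\mathcal A(\Sigma)$. Specifically, the relations $x_\ell=x_\gamma^2$ at $0$-punctures are what implements the folding that replaces a self-folded triangle of $\mathcal A(\Sigma)$ by a pending arc in the corresponding generalized cluster algebra, and the relations $x_{\overline\ell}=x_\ell$ at special punctures identify both orientations of the surrounding loop with a single generalized cluster variable. Once this dictionary is pinned down, the identification of presentations, and hence the isomorphism, is formal.
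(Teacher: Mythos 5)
Your proposal is correct and is essentially the argument the paper has in mind: the authors state this lemma as ``immediate'' with no written proof, and the intended justification is exactly your relation-by-relation check that the defining relations of Definition \ref{def:ASigma} collapse, after abelianizing and identifying $x_{\overline\gamma}$ with $x_\gamma$, to the (generalized) exchange relations of $\mathcal A(\Sigma)$, with the expansion formula of Theorem \ref{thm:laurent} handling the generators attached to non-simple curves. Your explicit dictionary for special loops, pending arcs, and tagging is a reasonable fleshing-out of what the paper leaves implicit.
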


\begin{lemma}\label{lem:finiteorder}
In the notation of Section \ref{subsec:rank 2 braid}, denote the image of $T_1,T_2$ under the homomorphism $Br_\Delta\to (Br_\Delta)^{ab}$ by $T_1^{ab}$ and $T_2^{ab}$, respectively. Then $T_1^{ab}T_2^{ab}$ has finite order whenever $r_1r_2\in \{1,2,3\}$.
\end{lemma}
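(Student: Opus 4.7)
The plan is to reduce the statement to a direct linear-algebraic computation in $GL_2(\mathbb{Z})$ using the explicit abelianized matrices already recorded in the introduction of the paper. Recall that
\[
T_1^{ab}=\begin{pmatrix} 1 & 0\\ r_2 & 1\end{pmatrix},\qquad T_2^{ab}=\begin{pmatrix} 1 & -r_1\\ 0 & 1\end{pmatrix},
\]
so the first step is simply to multiply:
\[
T_1^{ab}T_2^{ab}=\begin{pmatrix} 1 & -r_1\\ r_2 & 1-r_1r_2\end{pmatrix}.
\]
This matrix has determinant $1$ and trace $2-r_1r_2$. Hence by Cayley--Hamilton it satisfies $M^2-(2-r_1r_2)M+I=0$, and an element $M\in SL_2(\mathbb{Z})$ with $|\mathrm{tr}(M)|<2$ has finite order equal to the order of a root of the characteristic polynomial in the unit circle.

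The second step is to read off the three cases. When $r_1r_2=1$, the trace is $1$ and the eigenvalues are primitive sixth roots of unity, giving $(T_1^{ab}T_2^{ab})^6=I$. When $r_1r_2=2$, the trace is $0$ and the eigenvalues are $\pm i$, giving $(T_1^{ab}T_2^{ab})^4=I$. When $r_1r_2=3$, the trace is $-1$ and the eigenvalues are primitive third roots of unity, giving $(T_1^{ab}T_2^{ab})^3=I$. In each case one can verify the identity directly by computing the appropriate power of the $2\times 2$ matrix, which provides an elementary self-contained check with no reliance on the spectral argument.

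There is no real obstacle here: the only thing to notice is that for $r_1r_2\ge 4$ the trace has absolute value $\ge 2$, the eigenvalues are real and the matrix has infinite order, which explains the sharpness of the statement. The proof itself is a one-line matrix multiplication followed by three bookkeeping computations, so the entire argument fits comfortably in a short display; I would present it exactly in that form, listing the three cases side by side to mirror the trichotomy $m\in\{3,4,6\}$ of the Artin braid relation appearing in Theorem~\ref{th:faithful rank 2}.
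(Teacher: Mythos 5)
Your proof is correct and follows essentially the same route as the paper: the paper's proof also reduces to observing that the characteristic polynomial of $T_1^{ab}T_2^{ab}$ is $\lambda^2+(r_1r_2-2)\lambda+1$, which for $r_1r_2\in\{1,2,3\}$ is a cyclotomic polynomial (dividing $\lambda^{12}-1$), so the matrix has finite order. Your version merely spells out the three cases and the resulting orders $6$, $4$, $3$ explicitly, which is a fine presentation of the same argument.
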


\begin{proof}
The result follows from the fact that the characteristic polynomial for $T_1^{ab}T_2^{ab}$ is $\lambda^2+(r_1r_2-2)\lambda +1$, which divides $\lambda^{12}-1$. 

The proof is complete.
\end{proof}

\begin{conjecture} The homomorphism $Br_\Delta\to (Br_\Delta)^{ab}$ is never injective.    
\end{conjecture}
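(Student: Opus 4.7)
The plan is to exhibit, for any triangulation $\Delta$ of a suitably non-trivial marked surface $\Sigma$, a non-identity element of $Br_\Delta$ that becomes trivial in $(Br_\Delta)^{ab}$. By Corollary \ref{cor:brmuta} the group $Br_\Delta$ is invariant (up to conjugation) under mutation of $\Delta$, so the first step is to pick $\Delta$ conveniently for each topological type of $\Sigma$. The abelianized algebra ${\mathcal A}_\Sigma^{ab}$ is (a localization of) an ordinary cluster algebra by Lemma \ref{le:abelianization of a surface}, and the structural map $Br_\Delta\to (Br_\Delta)^{ab}$ sends each cluster braid generator $T_\gamma$ to the symplectic transvection $T_\gamma^{ab}\in GL_m(\mathbb{Z})$ from Section \ref{subsec:classical monomial mutations and braid}, acting via $e_j\mapsto e_j+\delta_{\gamma j}\,b_\gamma$.

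The core idea is to locate, inside $GL_m(\mathbb{Z})$, a torsion identity satisfied by the $T_\gamma^{ab}$'s that does not lift to $Br_\Delta$. The prototype is the rank $2$ mechanism of Lemma \ref{lem:finiteorder}: whenever $r_1r_2\in\{1,2,3\}$, the product $T_1^{ab}T_2^{ab}$ is of finite order $N$ in $GL_2(\mathbb{Z})$, while its preimage in $Br_{r_1,r_2}$ is an infinite-order element of an Artin braid group. For a general $\Sigma$, the plan is to identify a ``rank $2$ substructure'' inside $\Delta$, namely two internal arcs $\alpha,\beta$ that are the two non-boundary sides of a single triangle and whose local exchange sub-matrix coincides with one of the three finite-type rank~$2$ matrices, and then to import the rank $2$ computation: some power $(T_\alpha^{ab}T_\beta^{ab})^N$ equals the identity in $GL_m(\mathbb{Z})$, whereas $(T_\alpha T_\beta)^N\in Br_\Delta$ is non-trivial. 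In finite-type cases ($\Sigma_n$, $\Sigma_{n,1}$, $n$-gon with a special or a $0$-puncture) this non-triviality follows from Theorem \ref{th:Br_n 1}(a)--(d) and the well-known torsion-freeness of Artin braid groups of type $A,B,C,D$; in the other cases covered by Theorems \ref{th:Br_n 1}(e)--(f) and Corollary \ref{pro:free} it follows from torsion-freeness of affine Artin braid groups and of free groups respectively. A more uniform approach is to observe that the full twist $(T_{\gamma_1}\cdots T_{\gamma_{n-3}})^{n-2}$ associated with a starlike triangulation of $\Sigma_n$ already gives such an element, since its abelianization is a scalar in $GL_m(\mathbb{Z})$ whose appropriate power is $I$, while upstairs it is a non-trivial central element of $Br_{n-2}$.

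The main obstacle is twofold. First, for surface types outside those where the faithfulness of the $Br_\Delta$-action on $\TT_\Delta$ is already established (i.e., beyond Theorems \ref{th:Br on Sigman} and \ref{th:Br on Sigman1}, and before Conjecture \ref{conj:faithful} is known), verifying that the chosen commutator or Coxeter-type word is non-trivial in the abstract group $Br_\Delta$ presented by Theorem \ref{th:brgroup} is genuinely delicate; a natural workaround is to pass through the $q$-abelianization of Section \ref{subsec:Abelianization and $q$-abelianization of Noncommutative surfaces}, whose Burau-type image separates $(T_\alpha T_\beta)^N$ from the identity even when the classical image is trivial. Second, genuinely degenerate cases must be carved out of the conjecture's scope: for $\Sigma_4$ one has $Br_\Delta\cong\mathbb{Z}$, and for $\Sigma_{2,1}$ one has $Br_\Delta\cong\mathbb{Z}^2$ by Example \ref{ex:D_2}, and in both of these the abelianization map is an isomorphism. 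Accordingly, the first substantive step of the proof should be to formulate the precise hypothesis on $\Sigma$ (for instance, that $\Delta$ contains a pair of internal arcs generating a non-abelian subgroup of $Br_\Delta$, or equivalently that the quiver $Q_\Delta$ is not a disjoint union of $A_1$'s and $A_2$'s), and to verify that this is the exact complement of the degenerate list.
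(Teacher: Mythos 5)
First, a calibration: the statement you set out to prove is an open conjecture in the paper. The authors give no proof — only supporting evidence, namely the rank-$2$ computation (Lemma \ref{lem:finiteorder}) and the once-punctured torus example, where relations such as $T_1^{ab}(T_2^{ab}T_3^{ab})^2=(T_2^{ab}T_3^{ab})^2T_1^{ab}$ hold in the abelianized image while $Br_\Delta$ is free of rank $3$ by Corollary \ref{pro:free}. So there is no paper proof to compare against; the question is whether your program closes the conjecture, and it does not.

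Three concrete gaps. (i) Your central mechanism — a pair of internal arcs whose local pairing is of finite rank-$2$ type — does not exist on every surface: for the once-punctured torus every ideal triangulation carries the Markov quiver, with double arrows between every pair of arcs, so $|b_{\alpha\beta}b_{\beta\alpha}|=4$ for all pairs and $T_\alpha^{ab}T_\beta^{ab}$ has infinite order. This is exactly the case the paper's evidence handles by a different device (commutation relations in the image versus freeness of $Br_\Delta$), and your non-degeneracy hypothesis (``$Q_\Delta$ is not a disjoint union of $A_1$'s and $A_2$'s'') does not supply the $A_2$-type subquiver your argument needs. (ii) Your proposed workaround through the $q$-abelianization is refuted by the paper itself: Lemma \ref{lem:iso} and Proposition \ref{prop:iso} show that the specialization $q\mapsto 1$ is injective on the quantum image, i.e.\ $Br_{{\bf S}_q}\cong Br_{\bf S}$, so the Burau-type image can never separate elements whose classical images coincide. (iii) Even where a finite-type pair exists, the identity $(T_\alpha^{ab}T_\beta^{ab})^N=I$ is asserted, not proved: the abelianized action on $\TT_\Delta^{ab}$ modifies both $t_\gamma$ and $t_{\overline\gamma}$, so it is not given by the single transvections of Section \ref{subsec:classical monomial mutations and braid} — those appear only after the symmetrization quotient of Lemma \ref{le:abelianization of a surface}, and triviality in that further quotient does not imply triviality in $Aut(\TT_\Delta^{ab})$. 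You must first fix which target the conjecture means and then carry out the finite-order computation there; the needed statement is that $I+M+\cdots+M^{N-1}=0$ on the invariant subspace spanned by the relevant $b$-vectors, which holds precisely because $M$ has no eigenvalue $1$ in the finite-type cases, but this computation (and the analogous, unproved, ``scalar'' claim for the full twist) is absent. Finally, the nontriviality of $(T_\alpha T_\beta)^N$ upstairs rests, outside the Artin-type cases of Theorem \ref{th:Br_n 1} and the free cases of Corollary \ref{pro:free}, on torsion-freeness of $Br_\Delta$, which the paper itself only conjectures.

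On the positive side, your observation about the degenerate cases is correct and valuable: for $\Sigma_4$ and $\Sigma_{2,1}$ one has $Br_\Delta\cong\mathbb{Z}$ and $\mathbb{Z}^2$, the generators map to infinite-order unipotents with independent images, and the structural map is injective — so the conjecture's literal ``never'' fails there, and any correct statement must carve these cases out, exactly as you propose.
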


\begin{example}
For the commutative cluster algebra from the once-punctured torus, we have
$$T_1^{ab}(T_2^{ab}T_3^{ab})^2=(T_2^{ab}T_3^{ab})^2T_1^{ab}.$$
$$(T_3^{ab}T_2^{ab})T_1^{ab}(T_3^{ab}T_2^{ab})^{-1}T_1^{ab}=T_1^{ab}(T_3^{ab}T_2^{ab})T_1^{ab}(T_3^{ab}T_2^{ab})^{-1}.$$
Thus $Br_{\Delta}^{ab}$ is not free, but $Br_\Delta$ is free by Proposition \ref{pro:free}.
\end{example}

\medskip

For the rest of this section, we always assume that $I_{p,0}(\Sigma)\cup I_{p,1}(\Sigma)=\emptyset$ and $\mathcal A_q(\Sigma)$ is a (generalized) quantum cluster algebra from $\Sigma$ with boundary coefficients. The readers are referred to \cite{BCDX} for the definition of (generalized) quantum cluster algebra. For each triangulation $\Delta$, denote by $(X^\Delta,B^\Delta,\Lambda^\Delta)$ the associated quantum seed. We also write $\Lambda^\Delta$ as $\Lambda$ if there is no case of confusion.

\begin{definition}\label{qcd}
Let $\Delta$ be a triangulation. A map $v:\Delta \rightarrow \mathbb Q$ is called a \emph{quantum cluster data on $\Delta$} if it satisfies
\begin{enumerate}[$(1)$]
  \item $v(\gamma)=-v(\overline{\gamma})$;
  \item
$v(\gamma_{1})+v(\gamma_{2})+v(\gamma_{3})=\frac{1}{2}\left(\Lambda(\gamma_1,\gamma_2)+\Lambda(\gamma_2,\gamma_3)+\Lambda(\gamma_3,\gamma_1)\right)$ for any cyclic triangle $(\gamma_{1},\gamma_{2},\gamma_{3})$ in $\Delta$;
\item $v(\gamma)=0$ for any special loop $\gamma$ in $\Delta$.
\end{enumerate}

\end{definition}

Given a non-boundary arc $\alpha\in \Delta$, denote $\Delta'=\mu_\alpha(\Delta)$. Throughout this section, assume $\alpha'\in \Delta'\setminus \Delta$, $(\alpha_1,\alpha,\overline\alpha_4)$ and $(\alpha,\alpha_3,\overline\alpha_2)$ are cyclic triangles in $\Delta$, and $(\alpha_1,\alpha_2,\overline{\alpha'})$ is a cyclic triangle in $\Delta'$, see Figure \ref{Fig:1}.  

\centerline{\begin{tikzpicture}
\draw[->,line width=1pt] (1.5,0) -- (2.25,0);
\draw[-,line width=1pt] (2.25,0) -- (3,0);
\draw[->,line width=1pt] (1.5,-1.5) -- (2.25,-1.5);
\draw[-,line width=1pt] (2.25,-1.5) -- (3,-1.5);
\draw[->,line width=1pt] (1.5,0) -- (1.5,-0.75);
\draw[-,line width=1pt] (1.5,-0.75) -- (1.5,-1.5);
\draw[->,line width=1pt] (3,0) -- (3,-0.75);
\draw[-,line width=1pt] (3,-0.75) -- (3,-1.5);
\draw[->,line width=1pt] (1.5,-1.5) -- (2.0625, -0.9375);
\draw[dashed,->,line width=1pt] (1.5,0) -- (2.0625,-0.5625);
\draw[dashed,line width=1pt] (2.0625,-0.5625) -- (3,-1.5);
\draw[-,line width=1pt] (3,0) -- (2.0625, -0.9375);
\node[above] at (2.25,0) {$\alpha_{4}$};
\node[left] at (1.5,-0.75) {$\alpha_{1}$};
\node[below] at (2.25,-1.5) {$\alpha_{2}$};
\node[right] at (3,-0.75) {$\alpha_{3}$};
\node[right] at (2.35,-1) {${\alpha}'$};
\node[right] at (1.48,-1) {$\alpha$};
\end{tikzpicture}}

\centerline{{\rm Figure 7.2}} \label{Fig:1}

\begin{lemma}\label{equal}
If $v$ is a quantum cluster data on $\Delta$, then $$\begin{array}{rcl}
& &
v(\alpha_{1})+v({\alpha})+v(\alpha_{3})+\frac{1}{2}(\Lambda(\alpha_1,\alpha_3)-\Lambda(\alpha_1,\alpha)-\Lambda(\alpha,\alpha_3))\vspace{2.5pt}\\
&=&
v(\alpha_{4})+v(\overline{\alpha})+v(\alpha_{2})+\frac{1}{2}(\Lambda(\alpha_4,\alpha_2)-\Lambda(\alpha_4,\alpha)-\Lambda(\alpha,\alpha_2)).
\end{array}
$$
\end{lemma}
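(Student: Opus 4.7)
The plan is to apply the triangle relation from Definition \ref{qcd}(2) to each of the two triangles of $\Delta$ adjacent to $\alpha$, and then combine the two resulting equations.

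First, I would apply Definition \ref{qcd}(2) to the cyclic triangle $(\alpha_1,\alpha,\overline{\alpha}_4)$ and then use part $(1)$ to substitute $v(\overline{\alpha}_4)=-v(\alpha_4)$, obtaining
\[
v(\alpha_1)+v(\alpha)-v(\alpha_4)=\tfrac{1}{2}\bigl(\Lambda(\alpha_1,\alpha)+\Lambda(\alpha,\overline{\alpha}_4)+\Lambda(\overline{\alpha}_4,\alpha_1)\bigr).
\]
By the same procedure applied to the cyclic triangle $(\alpha,\alpha_3,\overline{\alpha}_2)$, using $v(\overline{\alpha}_2)=-v(\alpha_2)$, I would obtain
\[
v(\alpha)+v(\alpha_3)-v(\alpha_2)=\tfrac{1}{2}\bigl(\Lambda(\alpha,\alpha_3)+\Lambda(\alpha_3,\overline{\alpha}_2)+\Lambda(\overline{\alpha}_2,\alpha)\bigr).
\]

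Next I would sum the two equations, producing an expression for $v(\alpha_1)+2v(\alpha)+v(\alpha_3)-v(\alpha_2)-v(\alpha_4)$ as a half-sum of six $\Lambda$-values. Using $v(\overline{\alpha})=-v(\alpha)$ from Definition \ref{qcd}(1), the difference of the two sides of the claimed identity rearranges into exactly this expression plus a residual purely $\Lambda$-theoretic combination $\Omega$ supported on the quadrilateral $(\alpha_1,\alpha_2,\alpha_3,\alpha_4)$ together with $\alpha$ and $\overline{\alpha}$. Substituting the sum of the two triangle identities then reduces the lemma to the single identity $\Omega=0$.

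The main obstacle is the verification of $\Omega=0$, which is an identity among entries of $\Lambda^\Delta$ only. It should follow from the skew-symmetry of $\Lambda^\Delta$ together with the rule governing $\Lambda^\Delta$ on reversed arcs, which itself is determined by the compatibility of $\Lambda^\Delta$ with the exchange matrix $B^\Delta$ of the quantum seed at $\Delta$ (as recorded in \cite{BCDX}). Once this reversal/compatibility rule is in place, the residual identity collapses by a direct bookkeeping of the six $\Lambda$-terms, and no new ideas beyond the triangle axiom of Definition \ref{qcd} are required.
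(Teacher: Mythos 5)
Your proposal follows essentially the same route as the paper: apply the triangle axiom of Definition \ref{qcd}(2) to the two cyclic triangles $(\alpha_1,\alpha,\overline{\alpha}_4)$ and $(\alpha,\alpha_3,\overline{\alpha}_2)$, use $v(\overline{\gamma})=-v(\gamma)$ to convert the difference of the two sides of the lemma into the sum of these two triangle sums, and reduce everything to a purely $\Lambda$-theoretic identity on the quadrilateral.

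The one place where your justification is off target is the final step. After the reduction, the residual identity $\Omega=0$ is precisely
$$\Lambda(\alpha_4,\alpha_1)+\Lambda(\alpha_3,\alpha_2)+\Lambda(\alpha_1,\alpha_3)+\Lambda(\alpha_2,\alpha_4)=0,$$
and this does \emph{not} follow from skew-symmetry of $\Lambda^\Delta$ together with a reversal rule for arcs: an arbitrary skew-symmetric form on the four arcs of the quadrilateral need not satisfy it. What is actually needed is the compatibility condition $\Lambda^\Delta \tilde B^{\Delta}=\begin{pmatrix}-{\bf d}\\ {\bf 0}\end{pmatrix}$ applied to the $\alpha$-th column of $B^\Delta$ (whose nonzero entries sit at $\alpha_1,\alpha_2,\alpha_3,\alpha_4$) and evaluated at the rows $\alpha_1$ and $\alpha_2$ (both $\neq\alpha$, so the right-hand side vanishes). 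This yields $\Lambda(\alpha_4,\alpha_1)+\Lambda(\alpha_2,\alpha_1)-\Lambda(\alpha_3,\alpha_1)=0$ and $\Lambda(\alpha_1,\alpha_2)+\Lambda(\alpha_3,\alpha_2)-\Lambda(\alpha_4,\alpha_2)=0$, whose sum (using skew-symmetry to cancel $\Lambda(\alpha_2,\alpha_1)+\Lambda(\alpha_1,\alpha_2)$) gives exactly the displayed identity. You do mention compatibility, but only as the source of the reversal rule, which is a misattribution; the compatibility relation in the $\alpha$-direction is the essential input, and without invoking it explicitly the step $\Omega=0$ remains unproved.
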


\begin{proof}

As $v$ is a quantum cluster data, we have $$\begin{array}{rcl}
& &
v(\alpha_{1})+v({\alpha})+v(\alpha_{3})-v(\alpha_{4})-v(\overline{\alpha})-v(\alpha_{2})\vspace{2.5pt}\\
&=&
\left(v(\alpha_{1})+v({\alpha})+v(\overline\alpha_{4})\right)+\left(v(\alpha_{3})+v(\overline\alpha_{2})+v({\alpha})\right)\vspace{2.5pt}\\
&=&
\frac{1}{2}\left(\Lambda(\alpha_{1},\alpha)+\Lambda(\alpha,\alpha_{4})+\Lambda(\alpha_{4},\alpha_{1})+\Lambda(\alpha_3,\alpha_2)+\Lambda(\alpha_2,\alpha)+\Lambda(\alpha,\alpha_3)\right).
\end{array}$$

Thus, the required equality is equivalent to
$$
\begin{array}{rcl}
& &
\Lambda(\alpha_{1},\alpha)+\Lambda(\alpha,\alpha_{4})+\Lambda(\alpha_{4},\alpha_{1})+\Lambda(\alpha_3,\alpha_2)+\Lambda(\alpha_2,\alpha)+\Lambda(\alpha,\alpha_3)\vspace{2.5pt}\\
&+&
\left(\Lambda(\alpha_1,\alpha_3)-\Lambda(\alpha_1,\alpha)-\Lambda(\alpha,\alpha_3)\right)-\left(\Lambda(\alpha_4,\alpha_2)-\Lambda(\alpha_4,\alpha)-\Lambda(\alpha,\alpha_2)\right)\vspace{2.5pt}\\
&=& \Lambda(\alpha_4,\alpha_1)+\Lambda(\alpha_3,\alpha_2)+\Lambda(\alpha_1,\alpha_3)+\Lambda(\alpha_2,\alpha_4)=0.
\end{array}
$$

Because of $(B^\Delta,\Lambda^\Delta)$ is compatible and $\alpha_1,\alpha_2\neq \alpha$, we obtain
$$\Lambda(\alpha_4,\alpha_1)+\Lambda(\alpha_2,\alpha_1)-\Lambda(\alpha_3,\alpha_1)=0,\;\; \Lambda(\alpha_1,\alpha_2)+\Lambda(\alpha_3,\alpha_2)-\Lambda(\alpha_4,\alpha_2)=0.$$

Therefore, take the addition of the above two equations, we have $$\Lambda(\alpha_4,\alpha_1)+\Lambda(\alpha_3,\alpha_2)+\Lambda(\alpha_1,\alpha_3)+\Lambda(\alpha_2,\alpha_4)=0.$$
The result follows.
\end{proof}

\medskip

\begin{proposition}\label{muta}
Let $v$ be a quantum cluster data on $\Delta$. The following assignments define a quantum cluster data on $\Delta'$
$$v'({\gamma})=\left\{\begin{array}{lll}\hspace{-3pt}  v({\gamma}), & {\rm if} \gamma\in \Delta\cap \Delta';\vspace{2.5pt}\\
\hspace{-3pt} v(\alpha_{1})+v({\alpha})+v(\alpha_{3})+\frac{1}{2}(\Lambda(\alpha_1,\alpha_3)-\Lambda(\alpha_1,\alpha)-\Lambda(\alpha,\alpha_3)), & {\rm if} \gamma=\alpha';\vspace{2.5pt}\\
-v'(\overline\alpha'), & {\rm if} \gamma=\overline\alpha'.
\end{array}\right.$$
\end{proposition}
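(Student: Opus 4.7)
The plan is to verify the three defining conditions of Definition~\ref{qcd} for $v'$ on $\Delta'$. Condition (1) is immediate: for $\gamma\in\Delta\cap\Delta'$ it is inherited from $v$, while for $\gamma=\alpha'$ it is built into the definition $v'(\overline{\alpha'}):=-v'(\alpha')$. Condition (3) holds because, under the hypothesis $I_{p,0}(\Sigma)\cup I_{p,1}(\Sigma)=\emptyset$, flipping a non-boundary arc $\alpha$ (which, in the quadrilateral configuration of Figure~7.2, is not itself a special loop) neither creates nor destroys special loops, so every special loop of $\Delta'$ already belongs to $\Delta\cap\Delta'$ and $v'$ agrees with $v$ on it.

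All the content sits in Condition (2). Cyclic triangles of $\Delta'$ that already occur as triangles of $\Delta$ inherit the triangle relation directly from $v$, so it suffices to check the two new cyclic triangles whose edges involve $\alpha'$ or $\overline{\alpha'}$: the stated $(\alpha_1,\alpha_2,\overline{\alpha'})$ and the ``opposite'' triangle $(\alpha_3,\overline{\alpha_4},\alpha')$ read off from the quadrilateral $(\alpha_1,\alpha_2,\alpha_3,\alpha_4)$ after the flip. For the first, I would substitute the definition of $v'(\alpha')$ and $v'(\overline{\alpha'})=-v'(\alpha')$ into the left-hand side, turning it into an expression in $v(\alpha_1),v(\alpha),v(\alpha_3)$ and entries of $\Lambda^\Delta$, then expand each $\Lambda^{\Delta'}(\alpha',-)$ on the right-hand side via the standard quantum mutation rule together with the compatibility $\Lambda^\Delta B^\Delta$ being block-diagonal of the form $-\mathbf{d}\oplus \mathbf{0}$. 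The desired identity should then collapse to a consequence of the triangle relations satisfied by $v$ on the two triangles $(\alpha_1,\alpha,\overline{\alpha_4})$ and $(\alpha,\alpha_3,\overline{\alpha_2})$ of~$\Delta$.

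The opposite triangle is treated in parallel, but using the alternative expression for $v'(\alpha')$ supplied by Lemma~\ref{equal}, namely $v(\alpha_4)+v(\overline\alpha)+v(\alpha_2)+\tfrac12(\Lambda^\Delta(\alpha_4,\alpha_2)-\Lambda^\Delta(\alpha_4,\alpha)-\Lambda^\Delta(\alpha,\alpha_2))$; this equivalence is precisely what makes the two verifications symmetric and explains why a single definition of $v'(\alpha')$ simultaneously satisfies both new triangle relations. The main obstacle is pure bookkeeping: correctly applying the quantum mutation formula to each $\Lambda^{\Delta'}(\alpha',\alpha_i)$, keeping track of which $b^\Delta_{\alpha_i,\alpha}$ are positive versus negative (this is where the two sub-cases $[b]_+$ versus $[-b]_+$ in the mutation rule intervene), and checking that the resulting bilinear combinations of entries of $B^\Delta$ and $\Lambda^\Delta$ cancel. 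The algebraic input that makes everything collapse is the compatibility of $(B^\Delta,\Lambda^\Delta)$ together with Lemma~\ref{equal}; no genuinely new ideas beyond these should be required.
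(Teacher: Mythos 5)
Your treatment of conditions (1) and (2) is essentially the paper's argument: condition (1) is immediate, and for condition (2) one checks only the two new cyclic triangles of $\Delta'$ incident to $\alpha'$, expanding $\Lambda^{\Delta'}(\alpha',-)$ via the mutation rule for $\Lambda$, invoking compatibility of $(B^\Delta,\Lambda^\Delta)$, and collapsing everything onto the triangle relations that $v$ already satisfies on $(\alpha_1,\alpha,\overline{\alpha_4})$ and $(\alpha,\alpha_3,\overline{\alpha_2})$; the role you assign to Lemma~\ref{equal} (making the single formula for $v'(\alpha')$ serve both new triangles symmetrically) is exactly how the paper uses it. That part is fine.

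The gap is in condition (3). You argue that since $I_{p,0}(\Sigma)\cup I_{p,1}(\Sigma)=\emptyset$ and $\alpha$ sits in the quadrilateral of Figure~7.2, $\alpha$ cannot be a special loop, so no special loop is created or destroyed by the flip and condition (3) is inherited. This is not correct: the standing hypothesis of the section excludes only $0$-punctures and ordinary punctures, not special punctures of order $\ge 2$ --- indeed condition (3) of Definition~\ref{qcd} would be vacuous otherwise. When $\alpha$ is the special loop of a bigon $(\gamma_1,\gamma_2)$ around a special puncture, the quadrilateral degenerates but the flip is still within the scope of the proposition, and $\mu_\alpha$ replaces $\alpha$ by the \emph{other} special loop $\alpha'$ around that puncture, which is a special loop of $\Delta'$ not in $\Delta$. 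You must therefore verify $v'(\alpha')=0$ directly rather than by inheritance. The paper does this: in the degenerate quadrilateral one gets
$$v'(\alpha')=v(\gamma_{2})+v(\alpha)+v(\overline{\gamma}_{2})+\tfrac{1}{2}\bigl(\Lambda(\gamma_2,\gamma_2)-\Lambda(\gamma_2,\alpha)-\Lambda(\alpha,\gamma_2)\bigr)=0,$$
using $v(\gamma_2)+v(\overline{\gamma}_2)=0$, $v(\alpha)=0$, and skew-symmetry of $\Lambda$. The fix is a one-line computation, but your stated reason for skipping the case is wrong, so as written the proof of condition (3) does not go through.
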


\begin{proof}

Condition (1) of Definition \ref{qcd} is immediately satisfied for $v'$. For condition (2), it suffices to prove that condition (2) holds for cyclic triangles $(\alpha_{1}, \overline\alpha_{2}, \overline\alpha')$ and $(\alpha_{4}, \alpha_{3}, \overline\alpha')$. We shall only prove that for the triangle $(\alpha_{4}, \alpha_{3}, \overline\alpha')$ since the other case can be proved similarly.

As $(B^\Delta,\Lambda^\Delta)$ is compatible, we have $\Lambda(\alpha_2,\alpha_4)=\Lambda(\alpha_1,\alpha_4)+\Lambda(\alpha_3,\alpha_4)$ and $\Lambda(\alpha_3,\alpha')=\Lambda(\alpha_3, \alpha_1)-\Lambda(\alpha_3,\alpha)$, $\Lambda(\alpha',\alpha_4)=\Lambda(\alpha_2, \alpha_4)-\Lambda(\alpha,\alpha_4)$. Therefore, by the construction of $v'$, we have
$$
\begin{array}{rcl}
& &
v'(\alpha_{4})+v'(\alpha_{3})+v'(\overline\alpha')\vspace{2.5pt}\\
&=& v(\alpha_{4})+v(\alpha_{3})-v(\alpha_{1})-v({\alpha})-v(\alpha_{3})
-\frac{1}{2}(\Lambda(\alpha_1,\alpha_3)-\Lambda(\alpha_1,\alpha)-\Lambda(\alpha,\alpha_3))\vspace{2.5pt}\\
&=&
v(\alpha_{4})+v(\overline{\alpha})+v(\overline\alpha_{1})-\frac{1}{2}(\Lambda(\alpha_1,\alpha_3)-\Lambda(\alpha_1,\alpha)-\Lambda(\alpha,\alpha_3))\vspace{2.5pt}\\
&=&
\frac{1}{2}(\Lambda(\alpha_4,\alpha)+\Lambda(\alpha,\alpha_1)+\Lambda(\alpha_1,\alpha_4))+\frac{1}{2}(\Lambda(\alpha_1,\alpha)+\Lambda(\alpha,\alpha_3)-\Lambda(\alpha_1,\alpha_3))\vspace{2.5pt}\\
&=&
\frac{1}{2}(\Lambda(\alpha_4,\alpha)+\Lambda(\alpha_1,\alpha_4)+\Lambda(\alpha,\alpha_3)+\Lambda(\alpha_3,\alpha_1))\vspace{2.5pt}\\
&=&
\frac{1}{2}(-\Lambda(\alpha,\alpha_4)+\Lambda(\alpha_2,\alpha_4)-\Lambda(\alpha_3,\alpha_4)-\Lambda(\alpha_3,\alpha)+\Lambda(\alpha_3,\alpha_1))\vspace{2.5pt}\\
&=&
\frac{1}{2}(\Lambda(\alpha',\alpha_4)+\Lambda(\alpha_4,\alpha_3)+\Lambda(\alpha_3,\alpha')).
\end{array}
$$

For condition $(3)$, if $\alpha$ is not a special loop, then any special loop $\gamma$ in $\Delta'$ is a special loop in $\Delta$ and thus $v'(\gamma)=v(\gamma)=0$. If $\alpha$ is a special loop, then $\alpha',\overline{\alpha'}$ are the special loops in $\Delta'$ but not in $\Delta$. Assume that $\alpha$ is in the bigon $(\gamma_1,\gamma_2)$ with $s(\alpha)=s(\gamma_1)$, then we have $$v'(\alpha')=v(\gamma_{2})+v({\alpha})+v(\overline\gamma_{2})+\frac{1}{2}(\Lambda(\gamma_2,\gamma_2)-\Lambda(\gamma_2,\alpha)-\Lambda(\alpha,\gamma_2))=0.$$ 

Therefore, the result follows.
\end{proof}

We denote $\mu_\alpha v=v'$ and call it the mutation of $v$ at $\alpha$.

\begin{lemma}\label{involution}

In the previous notation, mutation of the quantum cluster data is an involution, that is, $\mu_{\alpha'}\mu_{\alpha}(v)=v$.

\end{lemma}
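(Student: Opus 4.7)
The plan is to verify the identity $\mu_{\alpha'}\mu_\alpha v=v$ separately on the three types of arcs of $\Delta$: those common to $\Delta$ and $\Delta'$, the diagonal $\alpha$, and its reverse $\overline\alpha$. Since the composition of the two flips returns $\Delta$ setwise, and since a quantum cluster datum is determined by its values on positively oriented representatives of the arcs of a triangulation (together with the antisymmetry and triangle relations in Definition \ref{qcd}), these three cases suffice.

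For $\gamma\in\Delta\cap\Delta'$ the first clause of the formula in Proposition \ref{muta} leaves the value unchanged under each flip, so $v''(\gamma)=v'(\gamma)=v(\gamma)$. The case $\gamma=\overline\alpha$ reduces to $\gamma=\alpha$ by Definition \ref{qcd}(1), which is preserved under mutation by the antisymmetry $v'(\overline{\alpha'})=-v'(\alpha')$ built into the definition of $\mu_\alpha v$. So the whole content of the lemma is the single verification $v''(\alpha)=v(\alpha)$, equivalently $v''(\overline\alpha)=-v(\alpha)$.

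For this step I would first redraw the picture from the point of view of the reverse flip. Label the vertices of the quadrilateral by $a,b,c,d$ so that $\alpha:b\to d$ and $\alpha':a\to c$; then the two cyclic triangles in $\Delta'$ containing $\alpha'$ are $(\alpha_1,\alpha_2,\overline{\alpha'})$ and $(\alpha',\overline{\alpha_3},\overline{\alpha_4})$, which after cyclic rotation fit the template of Proposition \ref{muta} with $(\alpha'_1,\alpha'_2,\alpha'_3,\alpha'_4)=(\overline{\alpha_4},\alpha_1,\overline{\alpha_2},\alpha_3)$ and with the new diagonal in $\mu_{\alpha'}(\Delta')$ being $\overline\alpha$. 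Feeding this into the mutation formula of Proposition \ref{muta} yields
\[v''(\overline\alpha)=v'(\overline{\alpha_4})+v'(\alpha')+v'(\overline{\alpha_2})+\tfrac12\bigl(\Lambda^{\Delta'}(\overline{\alpha_4},\overline{\alpha_2})-\Lambda^{\Delta'}(\overline{\alpha_4},\alpha')-\Lambda^{\Delta'}(\alpha',\overline{\alpha_2})\bigr).\]
Substituting $v'(\overline{\alpha_k})=-v(\alpha_k)$ and the \emph{second} expression for $v'(\alpha')$ supplied by Lemma \ref{equal} (the one in terms of $v(\alpha_4),v(\overline\alpha),v(\alpha_2)$), the $v(\alpha_2)$ and $v(\alpha_4)$ contributions cancel and one is left with
\[v''(\overline\alpha)=v(\overline\alpha)+\tfrac12 E,\]
where $E$ is a combination of entries of $\Lambda^\Delta$ and $\Lambda^{\Delta'}$ built from the four sides of the quadrilateral and from the two diagonals.

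It then remains to show $E=0$, which is a pure identity among the quantum exchange matrices before and after mutation at $\alpha$. I would dispose of it by rewriting each $\Lambda^{\Delta'}$-entry in $E$ in terms of $\Lambda^\Delta$-entries using the standard quantum mutation rule for $\Lambda$ at $\alpha$, after which the compatibility of $(B^\Delta,\Lambda^\Delta)$ (the same input that powered Lemma \ref{equal}) collapses the remainder to zero. The main obstacle I anticipate is precisely this $\Lambda$-identity: one must carefully match sign conventions when exchanging $\gamma\leftrightarrow\overline\gamma$, distinguish entries of $\Lambda^{\Delta'}$ from those of $\Lambda^\Delta$, and apply the quantum mutation formula consistently to each of the three $\Lambda^{\Delta'}$-entries appearing in $E$. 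The fact that the $v(\alpha_2),v(\alpha_4)$ contributions already cancel on the nose is strong evidence that the identification of the reverse-flip template is correct and that what remains is purely bookkeeping.
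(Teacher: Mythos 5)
Your proposal is correct and follows essentially the same route as the paper: reduce to the single verification at $\alpha$ (or $\overline\alpha$), apply the mutation formula of Proposition \ref{muta} to the reverse flip, substitute the expression for $v'(\alpha')$, and collapse the residual $\Lambda$-terms using the quantum mutation rule for $\Lambda$ together with the compatibility of $(B^\Delta,\Lambda^\Delta)$. The only (immaterial) difference is that you use the second expression for $v'(\alpha')$ from Lemma \ref{equal} (in terms of $\alpha_2,\alpha_4$) where the paper uses the defining one (in terms of $\alpha_1,\alpha_3$); the terminal identity $E=0$ that you defer as bookkeeping does hold exactly as you predict.
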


\begin{proof}
It suffices to show $\mu_{\alpha'}\mu_{\alpha}(v)(\alpha)=v(\alpha)$. By calculation, we have
$$
\begin{array}{rcl}
\mu_{\alpha'}\mu_{\alpha}(v)(\alpha)
\hspace{-8pt} &=& \hspace{-8pt} v'(\overline\alpha_{1})+v'({\alpha})+v'(\overline\alpha_{3})+\frac{1}{2}(\Lambda(\alpha_1,\alpha_3)-\Lambda(\alpha_1,\alpha')-\Lambda(\alpha',\alpha_3))\vspace{2.5pt}\\
\hspace{-8pt} &=& \hspace{-8pt}
-v(\alpha_{1})-v(\alpha_{3})+\frac{1}{2}\Lambda(\alpha_1,\alpha_3)\vspace{2.5pt}\\
\hspace{-8pt} & & \hspace{-8pt} +
v(\alpha_{1})+v({\alpha})+v(\alpha_{3})+\frac{1}{2}(\Lambda(\alpha_1,\alpha_3)-\Lambda(\alpha_1,\alpha)-\Lambda(\alpha,\alpha_3))\vspace{2.5pt}\\
\hspace{-8pt} & & \hspace{-8pt} +
\frac{1}{2}(-\Lambda(\alpha_1,\alpha_3)+\Lambda(\alpha_1,\alpha)-\Lambda(\alpha_1,\alpha_3)+\Lambda(\alpha,\alpha_3))\vspace{2.5pt}\\
\hspace{-8pt} &=& \hspace{-8pt}
v(\alpha).
\end{array}
$$
The result follows.
\end{proof}

\begin{proposition}\label{exist1}
For any triangulation $\Delta$ there exists at least one quantum cluster data.
\end{proposition}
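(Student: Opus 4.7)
The plan is to establish existence by reducing to a well-chosen base case and then propagating via mutation. The axioms of Definition \ref{qcd} are linear in $v$, so the existence of a quantum cluster data amounts to consistency of an inhomogeneous linear system. By Proposition \ref{muta} together with Lemma \ref{involution}, if a quantum cluster data exists on $\Delta$ then so does one on $\mu_\alpha(\Delta)$, and vice versa. Combined with the flip-connectivity of the space of triangulations (Harer's theorem, extended to the orbifold case in \cite[Theorem 4.2]{FST3}), this reduces the proposition to producing at least one quantum cluster data on a single, conveniently chosen triangulation $\Delta_0$ in the mutation class of each connected component of $\Sigma$.

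For a component with non-empty boundary I would take $\Delta_0$ so that each of its triangles carries at least one boundary edge; such a triangulation can always be reached by flips starting from any initial triangulation. In this setting the dual graph of $\Delta_0$ is a tree rooted at any boundary triangle, and one can build $v$ inductively: set $v \equiv 0$ on every boundary arc and on every special loop (the latter forced by condition $(3)$); these assignments respect the antisymmetry of $(1)$. Then process the triangles in tree order, using condition $(2)$ at each step to extend $v$ to the unique remaining side of the newly encountered triangle. No consistency problem arises because no arc is ever visited twice as a ``remaining side.'' The construction on the base case in this setting is purely mechanical.

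The genuine obstruction arises for closed connected components, where every triangle is internal and the inductive extension produces a global consistency condition of the form $\sum_{T \subset \Sigma_0} \omega(T) = 0$ with $\omega(\gamma_1,\gamma_2,\gamma_3) = \tfrac{1}{2}(\Lambda(\gamma_1,\gamma_2)+\Lambda(\gamma_2,\gamma_3)+\Lambda(\gamma_3,\gamma_1))$. Verifying this vanishing is the main obstacle and the place where the compatibility of the exchange pair $(B^{\Delta_0}, \Lambda^{\Delta_0})$ must be used. My plan is to regroup the sum $\sum_T \omega(T)$ by rearranging each cyclic $\Lambda$-sum as a sum of $\Lambda$-values indexed by pairs $(\gamma,\gamma')$ of arcs meeting at a marked point, and then to observe that at each internal arc the contributions of the two adjacent triangles cancel by the antisymmetry of $\Lambda$, since $\gamma$ and $\overline\gamma$ appear with opposite orientations in the two triangles. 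What remains is a sum indexed by marked points that can be rewritten in terms of the entries of the product $\Lambda^{\Delta_0} B^{\Delta_0}$, and the compatibility relation $\Lambda^{\Delta_0} B^{\Delta_0} = (-\mathbf{d} \mid \mathbf{0})$ then forces the total to vanish. Once the base case is in place, Proposition \ref{muta} delivers a quantum cluster data on every triangulation in the mutation class, completing the proof.
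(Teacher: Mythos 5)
Your overall strategy --- reduce to one base triangulation via Proposition \ref{muta}, Lemma \ref{involution}, and flip-connectivity, then construct a solution there by hand --- is genuinely different from the paper's. The paper never mutates: it views the conditions of Definition \ref{qcd} as a linear system $AX=b$ indexed by triangles and shows $A$ has full row rank, because a minimal linear dependency among the rows would force the corresponding triangles to glue along all of their edges into a closed subsurface, which is impossible under the section's standing hypothesis $I_{p,0}(\Sigma)\cup I_{p,1}(\Sigma)=\emptyset$ (every closed component is required to carry an ordinary puncture). That same hypothesis makes your entire closed-component discussion vacuous: there are no closed components in this section, so the part you single out as ``the main obstacle'' and leave as a sketch is not needed at all.

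The genuine gap is in the base case you do need, namely components with boundary. First, a triangulation in which every triangle has a boundary edge does not exist in general: an ideal triangulation of a genus-one surface with one boundary component and a single marked point has three triangles but only one boundary arc, so at least two triangles are entirely internal; more generally the count $4g-4+2b+c$ of triangles exceeds the count $c$ of boundary arcs as soon as $4g+2b>4$. So ``such a triangulation can always be reached by flips'' is false. Second, even when such a triangulation exists the dual graph need not be a tree --- for an annulus it is a cycle --- so your inductive extension does revisit an arc: the last triangle processed has all three sides already assigned, and condition $(2)$ there is a nontrivial constraint you have not verified. As written, your construction is only valid for disks. Repairing it along your lines would require either handling a base triangulation with purely internal triangles directly (which amounts to re-proving the rank statement) or verifying the cycle-closing constraints via the compatibility of $(B^{\Delta_0},\Lambda^{\Delta_0})$; the paper's rank argument packages exactly this in one step.
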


\begin{proof}
For any triangle $\delta$ in $\Delta$, condition (2) of Definition \ref{qcd} gives an equation of three variables. We assume that the number of triangles in $\Delta$ is $s$. Thus, the existence of quantum cluster data on $\Delta$ is equivalent to the linear equations $AX=b$ determined by the triangles in $\Delta$ having at least one solution. It suffices to show that the rank of $A$ is the full rank $s$. Otherwise, after changing the order of the rows of $A$, we may assume that the first $t$ rows $r_1,\cdots,r_t$ of $A$ are linearly dependent and any proper subset of $\{r_1,\cdots, r_t\}$ is linearly independent. Assume $r_i,1\leq i\leq t$ is determined by the triangle $\delta_i,1\leq i\leq t$. By the assumption on $\{r_1,\cdots, r_t\}$, we see that for any triangle $\delta_i,1\leq i\leq t$, each arc of $\delta_i$ is an arc of some triangle $\delta_j$ with $j\neq i$. Consequently, the subsurface $\bigcup_{1\leq i\leq t} \Delta_i$ of $\Sigma$ is a closed surface. This contradicts $I_{p,0}(\Sigma)\cup I_{p,1}(\Sigma)=\emptyset$. 

The proof is complete.
\end{proof}

\medskip

We now define the quantum cluster data for a surface.

\begin{definition}\label{quantumclusterdata}

A map $v: \{\text{arcs in }\Sigma\}\rightarrow \mathbb Q$ is called a \emph{quantum cluster data on $\Sigma$} if it satisfies

\begin{enumerate}[$(1)$]
  \item $v({\gamma})=-v(\overline{\gamma})$;
  \item
  $v(\gamma_{1})+v(\gamma_{2})+v(\gamma_{3})=\frac{1}{2}\left(\Lambda(\gamma_1,\gamma_2)+\Lambda(\gamma_2,\gamma_3)+\Lambda(\gamma_3,\gamma_1)\right)$ for each cyclic triangle $(\gamma_{1},\gamma_2,\gamma_{3})$ in $\Sigma$;
  \item $v(\gamma)=0$ for any special loop $\gamma$ in $\Delta$.
\end{enumerate}

\end{definition}

\medskip

Let $\beta_1,\beta_2\in \Delta$. Assume that $|b^\Delta_{12}|=1$. Then $\mu_1\mu_2\mu_1\mu_2\mu_1(\Delta)=\Delta$, see \cite[Section 9.4]{FST}.

\begin{lemma}\label{5-cycle}
With the previous notation. Let $v$ be a quantum cluster data on $\Delta$. If $|b^{\Delta}_{12}|=1$ for some $\beta_1,\beta_2\in \Delta$, then $\mu_1\mu_2\mu_1\mu_2\mu_1(v)=v$.
\end{lemma}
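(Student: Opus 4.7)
The plan is to reduce to an explicit identity on the five arcs that move under the pentagon sequence. Write $\gamma_0:=\beta_1$, $\gamma_1:=\beta_2$, and let $\gamma_2,\gamma_3,\gamma_4$ be the arcs introduced successively by $\mu_1,\mu_2,\mu_1$ (in the first three mutations of $\mu_1\mu_2\mu_1\mu_2\mu_1$), with $\mu_2$ then restoring $\gamma_0$ and the last $\mu_1$ restoring $\gamma_1$. Every other arc of $\Delta$ belongs to all intermediate triangulations in the pentagon and is pointwise fixed by each flip (Definition \ref{def:gro} and Proposition \ref{muta} only change the value of $v$ on the flipped arc and its reverse). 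So it suffices to prove that the composition returns the correct values on $\gamma_0$ and $\gamma_1$ (and their reverses, by condition~(1) of Definition \ref{qcd}).

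First I would apply Proposition \ref{muta} five times along the pentagon to get recursive formulas
$$v_{k+1}(\gamma_{k+2}) \;=\; v_k(\gamma_k) + v_k(\gamma_{k+1}) + v_k(\gamma_{k+3}) + \tfrac{1}{2}\bigl(\Lambda^{(k)}(\gamma_k,\gamma_{k+3}) - \Lambda^{(k)}(\gamma_k,\gamma_{k+1}) - \Lambda^{(k)}(\gamma_{k+1},\gamma_{k+3})\bigr),$$
where $\Lambda^{(k)}$ denotes the compatibility form of the quantum seed at step $k$, updated by the standard rule $\Lambda^{(k+1)}=E_k^T\Lambda^{(k)}E_k$ of Berenstein--Zelevinsky. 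Since $|b^\Delta_{12}|=1$ and $(B,\Lambda)$ remains compatible at every step, each $\Lambda^{(k)}$-entry needed is expressible in terms of entries of the initial $\Lambda^\Delta$ on the neighborhood of $\{\gamma_0,\gamma_1\}$, producing an algebraic identity in $v(\gamma_0), v(\gamma_1)$ and a finite list of $\Lambda^\Delta$-entries that must reduce to $v(\gamma_0), v(\gamma_1)$ respectively.

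To collapse this identity I would use two inputs: condition (2) of Definition \ref{qcd} applied to every triangle of $\Delta$ that contains $\gamma_0$ or $\gamma_1$ (there are at most four such triangles, giving linear relations between the $v(\gamma_k)$ and $\Lambda$-data), together with the compatibility $\Lambda^\Delta \widetilde B^\Delta \in \mathrm{diag}(\mathbb{Z}_{>0})\oplus 0$, which forces the relevant $\Lambda$-combinations to telescope. Concretely, Lemma \ref{equal} already packages one such cancellation, and the same mechanism (applied iteratively along the pentagon) produces the desired equalities $\mu_1\mu_2\mu_1\mu_2\mu_1(v)(\gamma_i)=v(\gamma_i)$ for $i=0,1$.

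The main obstacle is purely bookkeeping: the $\Lambda^{(k)}$'s differ from $\Lambda^\Delta$ on entries involving the newly created arcs, and one must track these five updates simultaneously with the five updates of $v$, ensuring that the cumulative sign pattern agrees with the pentagon identity. A cleaner alternative, which I would attempt in parallel, is to identify $v$ with the logarithmic normalization of a quantum cluster monomial of degree $e_{\gamma}$; then Lemma \ref{5-cycle} becomes a corollary of the Faddeev--Kashaev--Volkov quantum pentagon identity for the $A_2$ cluster algebra, bypassing the explicit five-step computation entirely.
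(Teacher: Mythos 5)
Your plan coincides in substance with the paper's proof: the paper reduces to the pentagon $\Sigma_5$, expands $\mu_1\mu_2\mu_1\mu_2\mu_1 v(13)$ step by step via the mutation rule of Proposition \ref{muta}, and collapses the result using three identities coming from the compatibility $\Lambda^{\Delta}\widetilde B^{\Delta}=\begin{pmatrix}-{\bf d}\\ {\bf 0}\end{pmatrix}$ — exactly the telescoping you describe. The only differences are presentational: the paper works throughout with the single pairing $\Lambda$ on compatible arcs rather than tracking mutated forms $\Lambda^{(k)}$ via $E_k$, and the final cancellation uses only condition (1) and compatibility (condition (2) being already encoded in the mutation rule), so your proposed appeal to the triangles containing $\gamma_0,\gamma_1$ and the quantum pentagon identity is unnecessary.
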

\begin{proof}
We assume that $\beta_1$ and $\beta_2$ are diagonals of the pentagon $\Sigma_5$ in $\Sigma$. For clarity of notation, we also assume $\beta_1=(1,3)$ and $\beta_2=(1,4)$, the diagonal connecting $1$ with $3$ and $1$ with $4$, respectively. 

We shall only prove that $\mu_1\mu_2\mu_1\mu_2\mu_1v(13)=v(13)$, $\mu_1\mu_2\mu_1\mu_2\mu_1v(14)=v(14)$ can be proved in a similar way.

$$
\begin{array}{rcl}
\mu_1\mu_2\mu_1\mu_2\mu_1v(13)
\hspace{-8pt} &=& \hspace{-8pt}\mu_2\mu_1\mu_2\mu_1v(13)\vspace{2.5pt}\\
\hspace{-8pt} &=& \hspace{-8pt}
v(12)+v(25)+v(53)+\frac{1}{2}(\Lambda(12,35)-\Lambda(12,25)-\Lambda(25,35))\vspace{2.5pt}\\
\hspace{-8pt} &=& \hspace{-8pt}
v(12)+v(25)+v(52)+v(24)+v(43)\vspace{2.5pt}\\
              & &\hspace{-8pt} + \frac{1}{2}(\Lambda(25,34)-\Lambda(25,24)-\Lambda(24,34))\vspace{2.5pt}\\
              & &\hspace{-8pt} + \frac{1}{2}(\Lambda(12,35)-\Lambda(12,25)-\Lambda(25,35))\vspace{2.5pt}\\
\hspace{-8pt} &=& \hspace{-8pt}
v(12)+v(43)+v(21)+v(13)+v(34)\vspace{2.5pt}\\
              & &\hspace{-8pt} + \frac{1}{2}(\Lambda(12,34)-\Lambda(12,13)-\Lambda(13,34))\vspace{2.5pt}\\
              & &\hspace{-8pt} + \frac{1}{2}(\Lambda(25,34)-\Lambda(25,24)-\Lambda(24,34))\vspace{2.5pt}\\
              & &\hspace{-8pt} + \frac{1}{2}(\Lambda(12,35)-\Lambda(12,25)-\Lambda(25,35)).
\end{array}
$$

As $(B^\Delta,\Lambda^\Delta)$ is compatible, we have $\Lambda(12,34)-\Lambda(13,34)-\Lambda(12,34)=0$, $-\Lambda(12,13)+\Lambda(12,35)-\Lambda(12,25)=0$ and $\Lambda(25,34)-\Lambda(25,24)-\Lambda(25,35)=0$. It follows that $\mu_1\mu_2\mu_1\mu_2\mu_1v(13)=v(13)$. Our result follows.
\end{proof}

The following theorem together with Proposition \ref{exist1} implies an existence of quantum cluster data on $\Sigma$.

\begin{theorem}\label{existence}
 Let $\Delta$ be a triangulation of $\Sigma$. If $v$ is a quantum cluster data on $\Delta$, then $v$ can be uniquely extended to a quantum cluster data $v$ on $\Sigma$ via the mutations of quantum cluster data.
\end{theorem}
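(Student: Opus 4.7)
The plan is to extend $v$ from the single triangulation $\Delta$ to all arcs of $\Sigma$ by transporting along sequences of flips, and then to verify the three axioms of Definition~\ref{quantumclusterdata} for the extended map. For any arc $\gamma$ in $\Sigma$, one first completes $\gamma$ to a triangulation $\Delta'$ of $\Sigma$ (any finite collection of pairwise compatible arcs extends to a triangulation). By connectivity of the flip graph (Harer in the unpunctured case, Felikson--Shapiro--Tumarkin in the presence of special punctures), there is a finite sequence of flips $\Delta = \Delta_0 \xrightarrow{\mu_{\alpha_1}} \Delta_1 \to \cdots \xrightarrow{\mu_{\alpha_s}} \Delta_s = \Delta'$. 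Iterating Proposition~\ref{muta} along this sequence produces a quantum cluster data $v_{\Delta'} := \mu_{\alpha_s}\circ\cdots\circ\mu_{\alpha_1}(v)$ on $\Delta'$, and I would set $v(\gamma) := v_{\Delta'}(\gamma)$. Uniqueness is automatic from the construction, since at every flip the value on the newly appearing arc is forced by the mutation formula.

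The substantive content is showing that $v(\gamma)$ is independent of the choices of $\Delta'$ and of the flip path from $\Delta$ to $\Delta'$. Any two such paths (with a common endpoint) differ by a loop in the flip graph based at $\Delta$, so it suffices to check invariance on a generating family of loops. The back-and-forth loops $\mu_\alpha \mu_\alpha$ are covered by Lemma~\ref{involution}. The pentagon loops $\mu_\alpha\mu_\beta\mu_\alpha\mu_\beta\mu_\alpha$ with $|b^{\Delta}_{\alpha\beta}|=1$ are covered by Lemma~\ref{5-cycle}. The remaining commuting-square loops, in which $\alpha$ and $\beta$ are not two sides of any common triangle, are handled by direct inspection of the formula in Proposition~\ref{muta}: that formula only alters data on the two triangles containing $\alpha$, which are untouched by any flip at such a $\beta$, so the two flips commute in their action on $v$. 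In the presence of special punctures one must additionally check the analogous elementary cycles arising around orbifold points; these reduce to the same local manipulation after passing to an unfolded polygonal cover via the morphisms of Theorem~\ref{th:functoriality nc-surface}, which respects the compatibility identities used in Lemma~\ref{equal}.

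Once well-definedness is established, the three axioms of Definition~\ref{quantumclusterdata} for the extended $v$ become local statements, each involving only finitely many arcs. Any such finite collection (one arc, a cyclic triangle, or a special loop) extends to a common triangulation $\widetilde\Delta$, and by well-definedness the restriction of the extended $v$ to $\widetilde\Delta$ coincides with the quantum cluster data $v_{\widetilde\Delta}$ obtained by transporting $v$ along any chosen flip path from $\Delta$ to $\widetilde\Delta$. The axioms then hold automatically because they hold for quantum cluster data on a single triangulation by Definition~\ref{qcd} and Proposition~\ref{muta}.

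The main obstacle is the pentagon-invariance step: Lemma~\ref{5-cycle} handles the pentagon for ordinary triangulations, but in the orbifold setting one must also verify compatibility with the additional flip relations that appear around special punctures (where $\alpha$ or $\beta$ may be a special loop, and the pentagon is replaced by a longer elementary cycle). This is a finite case analysis, and each case reduces, after unfolding, to the compatibility identity $\Lambda B = \begin{pmatrix}-{\bf d}\\{\bf 0}\end{pmatrix}$ together with the computation already carried out in the proofs of Proposition~\ref{muta} and Lemma~\ref{5-cycle}; the algebra is routine but bookkeeping-heavy.
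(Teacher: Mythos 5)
Your proposal follows essentially the same route as the paper: extend $v$ along flip paths, reduce well-definedness to invariance on the generating cycles of the flip/exchange graph (lengths $4$, $5$, $6$), dispose of the squares by locality/involutivity, the pentagons by Lemma \ref{5-cycle}, and the length-$6$ cycles around special punctures by unfolding to a polygon. The argument is correct and matches the paper's proof in both structure and the key lemmas invoked.
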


\begin{proof}
For any arc $\gamma$, we can obtain $\gamma$ from $\Delta$ by different way of flips. It suffices to prove that the values on $\gamma$ are the same via different steps of mutations at $v$. By Lemma \ref{involution}, it is equivalent to show that $\mu_{\beta_s}\cdots \mu_{\beta_1}(v)=v$ for any sequence of flips $\mu_{\beta_1},\cdots, \mu_{\beta_s}$ so that $\mu_{\beta_s}\cdots \mu_{\beta_1}(\Delta)=\Delta$.
Consider the exchange graph of $\mathcal A(\Sigma)$, the cycles are generated by cycles of length 4, 5 and 6 (see \cite[Section 9.4]{FST}), there is a length cycle in the exchange graph only if $\Sigma$ contains special punctures.

In the length 4 case, since mutation of quantum cluster data is an involution, we have $\mu_i\mu_j\mu_i\mu_j(v)=v$. The length 5 case follows by Lemma \ref{5-cycle}. In particular, the result holds for all $\Sigma$ without special punctures.

For any length 6 cycle, it can folded by a length 9 cycle in the exchange graph of the hexagon $\Sigma_6$. Thus the length 6 case follows.

The proof is completes.
\end{proof}

\medskip

\begin{corollary}\label{flip}

Let $v$ be a quantum cluster data on $\Sigma$. Then for any quadrilateral in $\Sigma$, as shown in Figure \ref{Fig:1}, we have
$$
\begin{array}{rcl}
v(\alpha')
\hspace{-3pt}&=&\hspace{-3pt}
v(\alpha_{1})+v({\alpha})+v(\alpha_{3})+\frac{1}{2}(\Lambda(\alpha_1,\alpha_3)-\Lambda(\alpha_1,\alpha)-\Lambda(\alpha,\alpha_3))\vspace{2.5pt}\\
\hspace{-3pt}&=&\hspace{-3pt}
v(\alpha_{\hspace{-2pt}4})+v(\overline{\alpha})+v(\alpha_{2})+\frac{1}{2}(\Lambda(\alpha_4,\alpha_2)-\Lambda(\alpha_4,\alpha)-\Lambda(\alpha,\alpha_2)). \end{array}
$$

\end{corollary}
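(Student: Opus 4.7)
The plan is to deduce this corollary directly from the construction of quantum cluster data on $\Sigma$ given in Theorem \ref{existence}, together with the mutation formula of Proposition \ref{muta} and the compatibility established in Lemma \ref{equal}.

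First I would choose a triangulation $\Delta$ of $\Sigma$ containing the five arcs $\alpha,\alpha_1,\alpha_2,\alpha_3,\alpha_4$ that appear in the quadrilateral; such a $\Delta$ exists since we can start from any triangulation of the complement of the quadrilateral and adjoin the four boundary arcs together with the diagonal $\alpha$. By Theorem \ref{existence}, the value of $v$ on every arc of $\Sigma$ is determined (and well-defined) by successive mutations of its restriction to $\Delta$, so in particular $v(\alpha')$ equals $\mu_\alpha(v|_\Delta)(\alpha')$, the value prescribed by Proposition \ref{muta} on the new arc of the flipped triangulation $\Delta' = \mu_\alpha(\Delta)$.

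Next I would note that the first displayed equality is then literally the defining formula of Proposition \ref{muta} for $v'(\alpha')$, applied to the quadrilateral $(\alpha_1,\alpha_2,\alpha_3,\alpha_4)$ with diagonal $\alpha$ in $\Delta$. For the second equality, I would invoke Lemma \ref{equal}, which asserts precisely that the two expressions
\[
v(\alpha_1)+v(\alpha)+v(\alpha_3)+\tfrac12\bigl(\Lambda(\alpha_1,\alpha_3)-\Lambda(\alpha_1,\alpha)-\Lambda(\alpha,\alpha_3)\bigr)
\]
and
\[
v(\alpha_4)+v(\overline\alpha)+v(\alpha_2)+\tfrac12\bigl(\Lambda(\alpha_4,\alpha_2)-\Lambda(\alpha_4,\alpha)-\Lambda(\alpha,\alpha_2)\bigr)
\]
coincide whenever $v$ is a quantum cluster data on $\Delta$ (which the restriction $v|_\Delta$ is by construction).

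There is no substantive obstacle here; the only minor point to verify carefully is that one can always complete the five given arcs of the quadrilateral to a triangulation of $\Sigma$ (which is standard, since the quadrilateral together with its diagonal fills a disk subsurface whose boundary one can triangulate independently). Once this is in place, the corollary is an immediate combination of Proposition \ref{muta} (giving the first equality) and Lemma \ref{equal} (giving the equality between the two expressions), with Theorem \ref{existence} ensuring the result is independent of the auxiliary triangulation chosen.
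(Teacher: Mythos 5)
Your proposal is correct and follows essentially the same route as the paper: restrict $v$ to a triangulation $\Delta$ containing the five arcs of the quadrilateral, note that by Theorem \ref{existence} the global $v$ is the unique mutation-extension of $v|_\Delta$ so that $v(\alpha')$ is given by the formula of Proposition \ref{muta}, and then conclude the second equality from Lemma \ref{equal}. No gaps.
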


\begin{proof}

Let $\Delta$ be a triangulation of $\Sigma$ so that $\alpha_1,\alpha_2,\alpha_3,\alpha_4, \alpha\in \Delta$. Restricting $v$ to $\Delta$, we obtain a quantum cluster data $v|_{\Delta}$ on $\Delta$. Clearly, $v$ is an extension of $v|_{\Delta}$. According to Theorem \ref{existence}, $v|_{\Delta}$ can be uniquely extended to a quantum cluster data on $\Sigma$ via mutations, thus is $v$. Then the result is followed by Lemma \ref{equal}.
\end{proof}

\medskip

\begin{theorem}\label{surj}

Let $v$ be a quantum cluster data on $\Sigma$. Then $$\pi: \kk_\Sigma(q)\otimes_{\kk_\Sigma} \mathcal A_{\Sigma}\rightarrow \kk_\Sigma(q)\otimes_{\mathbb Q[q^{\pm \frac{1}{2}}]} \mathcal A_q(\Sigma),\;\; x_{{\gamma}}\to q^{v({\gamma})}X_{\gamma}$$ gives a surjective $\mathbb Q[q^{\pm 1}]$-algebra homomorphism. Moreover, for any $x\in \mathcal A_{\Sigma}$, $$\pi(\overline x)= \overline{\pi(x)}.$$
\end{theorem}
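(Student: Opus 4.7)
The plan is to verify that $\pi$ respects each defining relation of $\mathcal A_\Sigma$ from Definition \ref{def:ASigma}. Because this section assumes $I_{p,0}(\Sigma) \cup I_{p,1}(\Sigma) = \emptyset$, only the triangle relations, monogon relations for special loops, Ptolemy relations, and bigon special puncture relations need to be checked; surjectivity and bar-equivariance then follow in short order.

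\textbf{Triangle and monogon relations.} For any cyclic triangle $(\alpha_1, \alpha_2, \alpha_3)$, I would apply $\pi$ to both sides of $x_{\alpha_1} x_{\overline{\alpha_2}}^{-1} x_{\alpha_3} = x_{\overline{\alpha_3}} x_{\alpha_2}^{-1} x_{\overline{\alpha_1}}$, use $v(\overline\gamma) = -v(\gamma)$ together with the identification of $X_\gamma$ and $X_{\overline\gamma}$ in the quantum torus, and normalize each side via $X_\beta X_{\beta'} = q^{\Lambda(\beta,\beta')} X_{\beta'} X_\beta$. The two sides reduce to $q$-scalar multiples of the same monomial, and the scalars agree precisely when condition (2) of Definition \ref{quantumclusterdata} holds. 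The monogon identity $x_\ell = x_{\overline\ell}$ for a special loop follows from condition (3) together with the fact that $X_\ell = X_{\overline\ell}$ in the quantum torus.

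\textbf{Ptolemy and bigon special puncture relations.} For a cyclic quadrilateral with diagonals $\alpha, \alpha'$, applying $\pi$ to the Ptolemy relation turns each product on the right into an explicit $q$-scalar times the monomial appearing in the quantum exchange relation of $\mathcal A_q(\Sigma)$ at $\alpha$. Comparing the $q$-exponent of the left side with the $q$-exponents of the two summands on the right, the identity $\pi(x_{\alpha'}) = q^{v(\alpha')} X_{\alpha'}$ holds precisely when $v(\alpha')$ is given by the formula in Corollary \ref{flip}. That this is the correct value is exactly what Theorem \ref{existence} and Corollary \ref{flip} provide: restricting $v$ to a triangulation containing $\alpha$ yields a quantum cluster data on that triangulation, and the flip formula transports this to the triangulation containing $\alpha'$. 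The bigon special puncture relation is handled analogously; the three terms on its right-hand side correspond to the three monomials of the generalized quantum mutation around a special puncture, and the cosine coefficient $2\cos(\pi/|p|)$ is precisely the one appearing in the definition of $\mathcal A_q(\Sigma)$ in this generalized setting.

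\textbf{Surjectivity and bar-equivariance.} Each generator $X_\gamma$ equals $q^{-v(\gamma)} \pi(x_\gamma)$, so $\pi$ is surjective onto the $\kk_\Sigma(q)$-algebra $\kk_\Sigma(q) \otimes \mathcal A_q(\Sigma)$. For the bar involution, $\pi(\overline{x_\gamma}) = \pi(x_{\overline\gamma}) = q^{v(\overline\gamma)} X_{\overline\gamma} = q^{-v(\gamma)} X_\gamma$, while $\overline{\pi(x_\gamma)} = \overline{q^{v(\gamma)} X_\gamma} = q^{-v(\gamma)} X_\gamma$, since the quantum bar involution fixes $X_\gamma$ and inverts $q^{1/2}$. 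Because both $\pi$ and $\overline{\cdot}$ on each side are (anti-)multiplicative, the identity extends to all of $\mathcal A_\Sigma$.

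The main obstacle is the bookkeeping of $q$-powers in the Ptolemy and bigon special puncture cases: each product $X_\beta X_{\beta'}^{-1} X_{\beta''}$ must be rewritten into the canonical monomial form appearing in the quantum exchange relation, and the resulting $\Lambda$-linear combinations have to collapse to exactly the value of $v(\alpha')$ prescribed by Corollary \ref{flip}. The compatibility of $(\Lambda, \tilde B)$ built into the quantum seed is what forces the cancellations (essentially the same cancellations as in the proof of Lemma \ref{equal}) to reappear at the level of quantum cluster variables, and the special puncture case requires the additional verification that the cosine coefficients prescribed by the generalized exchange relation match those in $\mathcal A_\Sigma$.
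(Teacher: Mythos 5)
Your proposal is correct and follows essentially the same route as the paper: verify that $\pi$ preserves each defining relation of $\mathcal A_\Sigma$ (triangle and monogon relations via conditions (2) and (3) of Definition \ref{quantumclusterdata}, Ptolemy relations via Corollary \ref{flip}, bigon special puncture relations via $v(\alpha)=v(\alpha')=0$ and the triangle condition on the bigon), then deduce surjectivity from the fact that $\mathcal A_q(\Sigma)$ is generated by the quantum cluster variables, and bar-equivariance from $v(\overline\gamma)=-v(\gamma)$ together with the anti-multiplicativity of both bar involutions.
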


\begin{proof}

For any triangle $(\gamma_{1},\gamma_{2},\gamma_{3})$ in $\Sigma$, as $v$ is a quantum cluster data on $\Sigma$, $v(\gamma_{1})+v(\gamma_{2})+v(\gamma_{3})=\frac{1}{2}\left(\Lambda(\gamma_1,\gamma_2)+\Lambda(\gamma_2,\gamma_3)+\Lambda(\gamma_3,\gamma_1)\right)$. Thus,
$$q^{v(\gamma_{1})}X_{\gamma_1} q^{-v(\overline\gamma_{2})}X^{-1}_{\overline\gamma_2}q^{v(\overline\gamma_{3})}X_{\gamma_3}=q^{v(\overline\gamma_{3})}X_{\overline\gamma_3}q^{-v(\gamma_{2})}X^{-1}_{\gamma_2}q^{v(\overline\gamma_{1})}X_{\overline\gamma_1},$$ that is,
$$\pi(x_{\gamma_{1}}x^{-1}_{\overline\gamma_{2}} x_{\gamma_{3}})=\pi(x_{\overline\gamma_{1}}x^{-1}_{\gamma_{2}} x_{\overline\gamma_{3}}).$$

For any quadrilateral in $\Sigma$, as shown in Figure \ref{Fig:1}, if $\alpha$ is not a special loop, by Corollary \ref{flip}, we have $$
\begin{array}{rcl}
v(\alpha')
\hspace{-3pt}&=&\hspace{-3pt}
v(\alpha_{1})+v({\alpha})+v(\alpha_{3})+\frac{1}{2}(\Lambda(\alpha_1,\alpha_3)-\Lambda(\alpha_1,\alpha)-\Lambda(\alpha,\alpha_3))\vspace{2.5pt}\\
\hspace{-3pt}&=&\hspace{-3pt}
v(\alpha_{4})+v(\overline{\alpha})+v(\alpha_{2})+\frac{1}{2}(\Lambda(\alpha_4,\alpha_2)-\Lambda(\alpha_4,\alpha)-\Lambda(\alpha,\alpha_2)). \end{array}
$$

Thus, we have 
$$\pi(x_{\alpha'})=\pi(x_{\alpha_{1}}x^{-1}_{\overline\alpha}x_{\alpha_{3}})+\pi(x_{\alpha_{4}}x^{-1}_{\alpha}x_{\alpha_{2}}).$$

For any bigon $(\alpha_1,\alpha_2)$ around a special puncture $p$, assume that $\alpha$ is the loop around $p$ such that $(\alpha_1,\alpha_2,\alpha)$ is a triangle and $\alpha'$ is the loop around $p$ such that $(\alpha',\alpha_2,\alpha_1)$ is a triangle, then $v(\alpha)=v(\alpha')=0$ and $v(\alpha_1)+v(\alpha_2)=\frac{1}{2}\Lambda(\alpha_2,\alpha_1)$. 

Therefore, we have
$$
\begin{array}{rcl}
\pi(x_{\alpha'}) = X_{\alpha'}
\hspace{-3pt}&=&\hspace{-3pt}
X_{\overline\alpha_{1}}X^{-1}_{\alpha}X_{ \alpha_{ 1}}+2\cos(\frac{\pi}{|p|})q^{-\frac{1}{2}\Lambda(\alpha_1,\alpha_2)}X_{\overline\alpha_{ 1}}
X^{-1}_{\alpha}X_{\overline\alpha_2}+X_{\alpha_{ 2}}X^{-1}_{\alpha}X_{\overline\alpha_{2}}\vspace{2.5pt}\\
\hspace{-3pt}&=&\hspace{-3pt}
\pi(x_{\overline\alpha_{1}}x^{-1}_{\alpha}x_{ \alpha_{ 1}})+2\cos(\frac{\pi}{|p|})\pi(x_{\overline\alpha_{ 1}}
x^{-1}_{\alpha}x_{\overline\alpha_2})+\pi(x_{\alpha_{ 2}}x^{-1}_{\alpha}x_{\overline\alpha_{2}}). \end{array}
$$

Therefore, $x_{{\gamma}}\to q^{v({\gamma})}X_{\gamma}$ define an algebra homomorphism $\pi$. Moreover, as $\mathcal A_q(\Sigma)$ is generated by cluster variables $Z_{\gamma}$, it follows that $\pi$ is surjective.

As $v(\overline\gamma)=-v(\gamma)$ and $\overline{x_{\gamma}}=x_{\overline\gamma}$, $\pi(\overline {x_{\gamma}})= \overline{\pi(x_{\gamma})}$. Since the bar involutions on $\mathcal A_q(\Sigma)$ and $\mathcal A_\Sigma$ are algebra anti-homomorphisms, $\pi(\overline x)= \overline{\pi(x)}$ for all $x\in \mathcal A_\Sigma$.

The proof is complete.
\end{proof}

As an application of Theorem \ref{surj}, we give a new expansion formula for quantum cluster variables of $\mathcal A_q$ and prove the positivity.

\begin{corollary}\label{cor:expansion21}
Let $v$ be a quantum cluster data on $\Sigma$. Let $\Delta$ be a triangulation and $\gamma$ be an arc in $\Sigma$. Then
$$\textstyle X_{\gamma}=q^{-v(\gamma)}\sum_{\vec\gamma\in Adm(\gamma, \Delta)} q^{v(\vec\gamma)} X(\vec\gamma),$$
where $v(\vec\gamma)=\sum v(\gamma_i)$ and $X(\vec\gamma)=X_{\gamma_1}X_{\gamma_2}^{-1}X_{\gamma_3}\cdots$ for any $\vec\gamma=(\gamma_1,\gamma_2,\gamma_3,\cdots)$.
In particular, the positivity conjecture holds for all quantum (generalized) cluster algebras from noncommutative surfaces which have neither $0$-punctures  nor ordinary punctures.
\end{corollary}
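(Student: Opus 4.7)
The plan is to derive the quantum expansion as the direct image of the noncommutative Laurent expansion under the bar-equivariant surjection $\pi: \kk_\Sigma(q) \otimes \mathcal A_\Sigma \to \kk_\Sigma(q) \otimes \mathcal A_q(\Sigma)$, $x_\gamma \mapsto q^{v(\gamma)} X_\gamma$, provided by Theorem \ref{surj}. I would start from the untagged expansion formula of Proposition \ref{expansion1}(1.1),
\[
x_\gamma = \sum_{\vec\gamma \in Adm(\gamma,\Delta)} c_{\vec\gamma}\, x_{\vec\gamma},
\]
where $x_{\vec\gamma}$ is the alternating product $x_{\gamma_1} x_{\overline\gamma_2}^{-1} x_{\gamma_3} \cdots$ dictated by $\vec\gamma$. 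Applying $\pi$ to the left-hand side yields $X_\gamma = q^{-v(\gamma)} \pi(x_\gamma)$, so the corollary reduces to the identity $\pi(x_{\vec\gamma}) = q^{v(\vec\gamma)} X(\vec\gamma)$ with $v(\vec\gamma) = \sum_i v(\gamma_i)$.

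To prove this identity I would track scalars factor by factor. Each $x_{\gamma_i}$ contributes $q^{v(\gamma_i)}$ under $\pi$, while each $x_{\overline\gamma_j}^{-1}$ contributes $q^{-v(\overline\gamma_j)} = q^{v(\gamma_j)}$, thanks to the antisymmetry $v(\overline\gamma) = -v(\gamma)$ from Definition \ref{qcd}(1). Since these scalars are central in $\mathcal A_q(\Sigma)$, they collect into a single prefactor $q^{\sum_i v(\gamma_i)} = q^{v(\vec\gamma)}$ in front of $X(\vec\gamma)$, with no cross terms from the $q$-commutation relations. Substituting back gives the displayed formula, with the combinatorial weight $c_{\vec\gamma}$ implicit: it equals $1$ in the absence of special punctures, and in general is a non-negative product $\prod_{p_-}(2\cos(\pi/|i_{p_-}|))^{N_{p_-}} \ge 0$.

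For the positivity corollary, under the hypothesis that $\Sigma$ has neither $0$-punctures nor ordinary punctures, each $X(\vec\gamma)$ is an honest Laurent monomial in the initial quantum cluster $\{X_\beta : \beta \in \Delta\}$, and the exponent $v(\vec\gamma) - v(\gamma)$ lies in $\tfrac{1}{2}\mathbb Z$ by Theorem \ref{existence} combined with compatibility of $(\tilde B^\Delta, \Lambda^\Delta)$; hence every coefficient $c_{\vec\gamma}\, q^{v(\vec\gamma) - v(\gamma)}$ lies in $\mathbb Z_{\ge 0}[q^{\pm 1/2}]$. The main obstacle is not technical but organizational: one must carefully track (i) the alternating sign convention $\pm\varepsilon$ in the definition of $x_{\vec\gamma}$, (ii) the cancellation produced by $v(\overline\gamma) = -v(\gamma)$ so that exponents never become spuriously negative, and (iii) the half-integrality of $v$ on $\Delta$ that propagates through mutation via Proposition \ref{muta}. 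Once these bookkeeping points are aligned, the proof becomes essentially a one-line consequence of Theorems \ref{thm:laurent} and \ref{surj}, and positivity follows from the non-negativity of each $c_{\vec\gamma}$.
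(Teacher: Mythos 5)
Your proposal is correct and follows exactly the paper's route: the paper's proof is literally the one-line observation that the formula follows from Theorem \ref{thm:laurent} (the noncommutative expansion) combined with Theorem \ref{surj} (the bar-equivariant specialization $x_\gamma\mapsto q^{v(\gamma)}X_\gamma$), with the scalar bookkeeping via $v(\overline\gamma)=-v(\gamma)$ exactly as you describe. Your explicit tracking of the weights $c_{\vec\gamma}$ and of the half-integrality of the exponents is more detail than the paper supplies, but it is consistent with it.
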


\begin{proof}
The result follows immediately by Theorem \ref{thm:laurent} and Theorem \ref{surj}.
\end{proof}

\section{Appendix: Groupoids and their symmetries}

Let $\Gamma$ be a groupoid and $\underline \Gamma$ be a directed sub(multi)graph of $\Gamma$ such that $\underline \Gamma$ generates $\Gamma$. We always assume that if $h$ is an edge of $\underline \Gamma$, then $h^{-1}$ is also an edge of $\underline \Gamma$.

\label{sec:Appendix A}
\begin{proposition} 
\label{pr:fundamental braid group}
Let $\Gamma$ be a groupoid and $\underline \Gamma$ be a directed subgraph of $\Gamma$ such that $\underline \Gamma$ generates $\Gamma$
and $t\in \underline \Gamma$ iff $t^{-1}\in \underline \Gamma$. Then for any object $i$ of $\Gamma$ the group $Aut_\Gamma(i)$  is a naturally a quotient of fundamental group $\pi_1(\underline \Gamma,i)$ (here we view $\underline \Gamma$ as an undirected (multi-)graph). In particular, $Aut_\Gamma(i)$ is generated by all simple oriented cycles starting $i$.
\end{proposition}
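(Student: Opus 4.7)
The plan is to construct a surjective group homomorphism $\Phi \colon \pi_1(\underline{\Gamma}, i) \twoheadrightarrow Aut_\Gamma(i)$, from which the proposition follows since the fundamental group of any graph (viewed as a $1$-dimensional CW complex) is a free group generated by simple cycles.

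First, I would set up notation by viewing $\underline{\Gamma}$ as a $1$-dimensional CW complex whose $0$-cells are the objects of $\Gamma$ (equivalently of $\underline{\Gamma}$) and whose $1$-cells correspond to the unordered pairs $\{t, t^{-1}\}$ of directed edges in $\underline{\Gamma}$; the hypothesis $t \in \underline{\Gamma} \iff t^{-1} \in \underline{\Gamma}$ is exactly what allows this identification. An element of $\pi_1(\underline{\Gamma}, i)$ is then represented by a reduced edge-path based at $i$, i.e., a sequence of directed edges $(h_1, h_2, \ldots, h_n)$ with $h_k \in \underline{\Gamma}$, $s(h_1) = t(h_n) = i$, $t(h_k) = s(h_{k+1})$, and containing no backtrack $(h, h^{-1})$.

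Second, I would define $\Phi$ on such an edge-path by sending $(h_1, \ldots, h_n)$ to the composition $h_n \circ \cdots \circ h_1 \in Aut_\Gamma(i)$. To show that this passes to the fundamental group, it suffices to verify invariance under insertion or deletion of a backtrack pair $(h, h^{-1})$; but this is immediate from the groupoid axioms since $h^{-1} \circ h = 1_{s(h)}$, so inserting such a pair does not change the composition. The map is a homomorphism because concatenation of edge-paths is sent to composition of morphisms by construction.

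Third, surjectivity follows from the hypothesis that $\underline{\Gamma}$ generates $\Gamma$: any $g \in Aut_\Gamma(i)$ is a finite composition of morphisms from $\underline{\Gamma}$ together with their inverses, but since $\underline{\Gamma}$ is closed under taking inverses, we may express $g$ as a composition of morphisms lying in $\underline{\Gamma}$ itself. The resulting sequence is an edge-path based at $i$, providing a preimage under $\Phi$. For the final assertion, fix a spanning tree $T$ of the connected component of $\underline{\Gamma}$ containing $i$; then $\pi_1(\underline{\Gamma}, i)$ is freely generated by the simple cycles obtained by adjoining to $T$ each edge outside it and closing up through $T$. Since $\Phi$ is surjective, the images of these simple cycles generate $Aut_\Gamma(i)$.

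I do not expect a serious obstacle here; the only point requiring care is the bookkeeping around viewing the directed graph $\underline{\Gamma}$ (with its inversion-symmetric edge set) as an undirected graph so that the standard fact $\pi_1(\text{graph}) \cong$ free group on simple cycles applies, and keeping track of the distinction between formal inverses in a free groupoid of paths and actual inverses in $\Gamma$ when proving well-definedness.
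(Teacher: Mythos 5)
Your proposal is correct and takes essentially the same route as the paper: the paper's (much terser) proof likewise observes that $\pi_1(\underline\Gamma,i)$ is generated by loops at $i$ subject to $t_\ell t_{\overline\ell}=1$ and that every element of $Aut_\Gamma(i)$ is presented by some loop in $\underline\Gamma$ since $\underline\Gamma$ generates $\Gamma$, which is exactly your surjection $\Phi$. Your version just makes the well-definedness (backtrack cancellation via $h^{-1}\circ h=1_{s(h)}$) and the spanning-tree description of the generators explicit, which the paper leaves implicit.
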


\begin{proof} 
  We have $\pi_1(\underline \Gamma,i)$ is the group generated by $t_\ell$ subject to $t_\ell t_{\overline \ell}=1$, where $\ell$ runs over all the loops in $\underline \Gamma$ incident to $i$. For any element $x\in Aut_\Gamma(i)$, $x$ can be presented by some loop $\ell$ in $\underline \Gamma$ incident to $i$, the result follows.  
\end{proof}

For any object $i$ of $\Gamma$ denote by $\underline{Aut}_\Gamma(i)$ the subgroup of $Aut_\Gamma(i)$  generated by $h h'$ with $h,h'\in \underline \Gamma$, $s(h)=t(h')=i$, $t(h)=s(h')$ (we will sometimes refer to $\underline{Aut}_\Gamma(i)$ as the two-cycle group of automorphisms of $i$).

\begin{theorem} 
\label{th:two-cycle generation}
In the notation of Proposition \ref{pr:fundamental braid group}, suppose additionally that $\underline \Gamma$ has no loops and

$\bullet$  each simple cycle in $\underline \Gamma$ corresponds to a relation in $\Gamma$, i.e., for each simple cycle $f_1f_2\cdots f_n$ we have $f_1\cdots f_n=g_1g_2\cdots g_m$ for some $g_1,\cdots,g_m$ such that  $m$ is even and $s(g_k)=t(g_{m-k+1})$ and $t(g_k)=s(g_{m-k+1})$ for all $k=1,\cdots,\frac{m}{2}$. 

$\bullet$ for any objects $i,j$ of $\Gamma$, for any arrows $f:i\to j$ in $\underline\Gamma$, we have $f^{-1}\circ \underline{Aut}_\Gamma(j)\circ f\subseteq \underline{Aut}_\Gamma(i)$.

Then $Aut_\Gamma(i)=\underline{Aut}_\Gamma(i)$.  
\end{theorem}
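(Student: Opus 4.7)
By Proposition~\ref{pr:fundamental braid group}, $Aut_\Gamma(i)$ is generated by the simple oriented cycles in $\underline\Gamma$ starting at $i$, so it suffices to show that every such simple cycle $c = f_1 f_2 \cdots f_n$ lies in $\underline{Aut}_\Gamma(i)$. The first hypothesis lets us rewrite $c = g_1 g_2 \cdots g_m$ in $\Gamma$ with $m$ even and with the palindromic endpoint conditions $s(g_k) = t(g_{m-k+1})$ and $t(g_k) = s(g_{m-k+1})$ for all $k = 1, \ldots, m/2$. I will prove by induction on the even integer $m$ that any composable product $g_1 \cdots g_m$ in $\Gamma$ with $g_k \in \underline\Gamma$ and satisfying these palindromic conditions (and with $s(g_1) = t(g_m) = i$) lies in $\underline{Aut}_\Gamma(i)$. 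The base case $m=2$ is immediate: the conditions force $g_2$ to reverse the endpoints of $g_1$, so $g_1 g_2$ is a two-cycle at $i$, hence in $\underline{Aut}_\Gamma(i)$ by definition.

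For the inductive step with $m \ge 4$, the middle case $k = m/2$ of the palindromic condition gives $s(g_{m/2}) = t(g_{m/2+1})$ and $t(g_{m/2}) = s(g_{m/2+1})$, so $\tau := g_{m/2} g_{m/2+1}$ is a two-cycle at $j := s(g_{m/2})$, i.e., $\tau \in \underline{Aut}_\Gamma(j)$. Writing $\alpha := g_1 \cdots g_{m/2-1}$ (a path $i \to j$ in $\underline\Gamma$) and $\beta := g_{m/2+2} \cdots g_m$ (a path $j \to i$), we decompose
\[
g_1 g_2 \cdots g_m \;=\; \alpha \tau \beta \;=\; (\alpha \tau \alpha^{-1})\,(\alpha \beta).
\]
It then suffices to show that each factor on the right lies in $\underline{Aut}_\Gamma(i)$.

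For the factor $\alpha\beta$, I will verify that the relabeled product $h_1 \cdots h_{m-2}$ with $h_j = g_j$ for $j \le m/2-1$ and $h_j = g_{j+2}$ for $j \ge m/2$ is again composable and satisfies the palindromic conditions at $i$; composability at the junction $j = m/2-1 \to m/2$ uses precisely $t(g_{m/2-1}) = s(g_{m/2}) = t(g_{m/2+1}) = s(g_{m/2+2})$, and the palindromic identities for $(h_k, h_{m-1-k})$ reduce after index shift $h_{m-1-k} = g_{m+1-k}$ to the original identities for $(g_k, g_{m-k+1})$ with $k \le m/2-1$. By the inductive hypothesis applied to this length-$(m-2)$ palindromic product, $\alpha\beta \in \underline{Aut}_\Gamma(i)$. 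For the factor $\alpha\tau\alpha^{-1}$, I will first strengthen the second hypothesis by a straightforward induction on path length: for every path $p = f_1 \cdots f_\ell$ in $\underline\Gamma$ from $i$ to some object $j'$, one has $p\,\underline{Aut}_\Gamma(j')\,p^{-1} \subseteq \underline{Aut}_\Gamma(i)$ (applying the hypothesis iteratively one edge at a time). Taking $p = \alpha$ and $j' = j$ gives $\alpha \tau \alpha^{-1} \in \underline{Aut}_\Gamma(i)$, completing the induction and hence the proof.

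The main obstacle is the bookkeeping in the inductive step: verifying that the removal of the middle pair $(g_{m/2}, g_{m/2+1})$ preserves both composability and the palindromic conditions under the appropriate index shift, so that the induction hypothesis actually applies. The second hypothesis is designed precisely to handle the remaining conjugation $\alpha \tau \alpha^{-1}$, and its iterated version gives no difficulty once one is careful about the composition convention used in writing $f^{-1} \circ \underline{Aut}_\Gamma(j) \circ f$.
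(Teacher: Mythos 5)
Your treatment of a single palindromic word is correct and closely parallels the paper's argument: the paper peels off the outermost pair $(f_1,f_n)$ as a two-cycle and conjugates the inner word by the single edge $f_1$, whereas you peel off the middle pair and conjugate by the whole prefix path; both work, yours just requires the (easily iterated) path version of the conjugation hypothesis. The problem is the very first reduction. You invoke the ``in particular'' clause of Proposition \ref{pr:fundamental braid group} to claim that $Aut_\Gamma(i)$ is generated by \emph{simple} oriented cycles \emph{based at $i$}, and then only treat such cycles. Taken literally, that generation claim is false for general graphs: take $\underline\Gamma$ with objects $i,a,b$, one edge $i\to a$, and two parallel edges $a\to b$; then $\pi_1(\underline\Gamma,i)\cong\mathbb Z$ but no simple cycle passes through $i$ at all, so the simple cycles based at $i$ generate only the trivial subgroup. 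What is true (and what the paper's proof actually uses) is that an arbitrary element of $Aut_\Gamma(i)$ is represented by an arbitrary cycle at $i$, which decomposes into simple cycles based at \emph{other} objects, each inserted as $p^{-1}cp$ along a path $p$ from $i$.

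Concretely, the paper runs an outer induction on the number $p$ of simple cycles in such a decomposition: it writes $f=f'f''$ where $f''$ is a conjugate, by a path from $i$, of a simple cycle rewritten as a palindromic word, and $f'$ is a cycle with $p-1$ simple cycles. Your proof is missing this outer induction entirely; the first hypothesis of the theorem is stated for \emph{all} simple cycles of $\underline\Gamma$ (not just those through $i$) precisely because off-basepoint simple cycles must be handled and conjugated back. Your path-conjugation lemma is exactly the tool needed for that step, so the repair is not far away, but as written the argument only establishes that the subgroup generated by simple cycles through $i$ lies in $\underline{Aut}_\Gamma(i)$, which need not be all of $Aut_\Gamma(i)$.
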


\begin{proof}
For any $f\in Aut_\Gamma(i)$, we have $f=f_n\cdots f_2 f_1$ with $f_n,\cdots, f_2, f_1$ correspond to a cycle based on $i$ in $\underline \Gamma$. We can decompose $f_n,\cdots,f_2,f_1$ into simple cycles and prove by induction on the number $p$ of simple cycles.

In case $p=0$, then $n$ is even with $s(f_k)=t(f_{n-k+1})$ and $t(f_k)=s(f_{n-k+1})$ for all $k=1,\cdots,\frac{n}{2}$. We prove by induction on the number $n$.

It is trivial if $n=0$. We then assume that $n>0$. By induction, we have $f_{n-1}\cdots f_{2}\in \underline{Aut}_\Gamma(t(f_1))$. Then 
$$f=f_n (f_{n-1}\cdots f_{2}) f_{1}=(f_n f_1) f_1^{-1}(f_{n-1}\cdots f_{2}) f_{1}\in \underline{Aut}_\Gamma(i).$$

Thus the result is proved in case $p=0$.

We then consider the case that $p\geq 1$. Then $f=f_n\cdots f_{k_2+1} g_1^{-1}\cdots g^{-1}_{\ell} (g_\ell\cdots g_1)f_{k_2}\cdots f_{k_1}\cdots f_2f_1$ for some $1\leq k_1<k_2\leq n$ such that $(g_\ell\cdots g_1)f_{k_2}\cdots f_{k_1}$ is a simple cycle in $\underline\Gamma$ and $$f':=f_n\cdots f_{k_2+1} g_1^{-1}\cdots g^{-1}_{\ell}f_{k_1-1}\cdots f_1$$ is a cycle can be decomposed into $p-1$ simple cycles in $\underline\Gamma$. By induction we have $f'\in \underline{Aut}_\Gamma(i)$.

Since $(g_\ell\cdots g_1)f_{k_2}\cdots f_{k_1}$ is a simple cycle in $\underline\Gamma$, we have $(g_\ell\cdots g_1)f_{k_2}\cdots f_{k_1}=f'_m\cdots f'_1$ such that $f'_m\cdots f'_1$ can be decomposed into $0$ simple cycles. Thus we have 
$$f'':=f^{-1}_1\cdots f^{-1}_{k_1-1}(f'_m\cdots f'_1)f_{k_1-1}\cdots f_1\in \underline{Aut}_\Gamma(i).$$ 

Therefore we obtain $f=f'f''\in \underline{Aut}_\Gamma(i)$. The proof is complete.
\end{proof}

The following is immediate.

\begin{lemma} 
\label{le:groupoid representation}
For any category ${\mathcal C}$ the assignments $i\mapsto Aut_{\mathcal C}(i)$ define a functor $Aut:{\mathcal C}\to {\bf Grp}'$, the groupoid whose object are groups and arrows are group isomorphisms.

\end{lemma}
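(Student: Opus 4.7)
\textbf{Proof proposal for Lemma \ref{le:groupoid representation}.}

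The plan is to define the functor by conjugation. For any morphism $f : i \to j$ in $\mathcal{C}$, I would set $Aut(f) : Aut_{\mathcal{C}}(i) \to Aut_{\mathcal{C}}(j)$ by $g \mapsto f \circ g \circ f^{-1}$. Since the target $\mathbf{Grp}'$ is a groupoid of groups with isomorphisms, $f$ must be invertible for this formula to be defined; thus the lemma is to be read as asserting that the construction is well-defined whenever $f$ is an isomorphism in $\mathcal{C}$ (in particular, for $\mathcal{C}$ itself a groupoid, as this is the only case in which the lemma is applied in the paper).

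The verification would then proceed in three routine steps. First, $Aut(f)$ is a group homomorphism: for $g_1, g_2 \in Aut_{\mathcal{C}}(i)$,
\[
Aut(f)(g_1 g_2) \;=\; f g_1 g_2 f^{-1} \;=\; (f g_1 f^{-1})(f g_2 f^{-1}) \;=\; Aut(f)(g_1) \cdot Aut(f)(g_2),
\]
and it is a bijection because $Aut(f^{-1})$ is a two-sided inverse, so $Aut(f)$ is a group isomorphism. Second, $Aut(\mathrm{id}_i)(g) = \mathrm{id}_i \cdot g \cdot \mathrm{id}_i = g$, so $Aut(\mathrm{id}_i) = \mathrm{id}_{Aut_{\mathcal{C}}(i)}$. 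Third, for composable isomorphisms $f_1 : j \to k$ and $f_2 : i \to j$,
\[
Aut(f_1 \circ f_2)(g) \;=\; (f_1 f_2) g (f_1 f_2)^{-1} \;=\; f_1 (f_2 g f_2^{-1}) f_1^{-1} \;=\; (Aut(f_1) \circ Aut(f_2))(g),
\]
which establishes functoriality.

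There is no real obstacle: the entire content is the elementary fact that conjugation by an isomorphism defines a group isomorphism between automorphism groups, compatibly with composition. The only point requiring care is the tacit restriction to invertible morphisms; this is harmless since the codomain $\mathbf{Grp}'$ is a groupoid, and in all applications in the paper (most notably in Remark following Proposition \ref{prop:invariant}, and in the statement that $\Sigma \mapsto Br_\Sigma$ defines an invariant) the source $\mathcal{C}$ is already a groupoid such as ${\bf TSurf}_\Sigma$ or ${\bf TSurf}^f_\Sigma$.
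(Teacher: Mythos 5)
Your proof is correct and is exactly the argument the paper has in mind: the paper offers no proof at all (it declares the lemma ``immediate''), and conjugation $g\mapsto f g f^{-1}$ is the evident construction. Your caveat that this formula requires $f$ to be invertible---so that the statement, despite saying ``any category,'' should really be read for groupoids (or the maximal subgroupoid of $\mathcal{C}$), which covers all of the paper's actual uses such as ${\bf TSurf}_\Sigma$ and ${\bf TSurf}^f_\Sigma$---is well taken.
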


\begin{lemma} Given a small category ${\mathcal C}$ and  a group $\Gamma\subset Aut({\mathcal C})$, the ${\mathcal D}:={\mathcal C}/\Gamma$ is a well-defined quotient category.

In particular, $Aut_{\mathcal D}(\Gamma\cdot c)=(Aut_{\mathcal C}(c))^{Stab_\Gamma(c)}$ for any object $c$ of ${\mathcal C}$.
\end{lemma}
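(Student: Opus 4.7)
The plan is to first pin down the definition of the quotient category ${\mathcal D} := {\mathcal C}/\Gamma$ in the only way that reproduces the stated fixed-point formula, then verify the category axioms, and finally read off the automorphism groups.

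First I would set $\mathrm{Ob}({\mathcal D})$ to be the set of $\Gamma$-orbits $[c] = \Gamma \cdot c$, and declare a morphism in ${\mathcal D}$ from $[c]$ to $[c']$ to be a $\Gamma$-equivariant family of morphisms $\{f_{c_1,c_1'} : c_1 \to c_1'\}$ indexed by pairs $(c_1, c_1') \in (\Gamma\cdot c)\times(\Gamma\cdot c')$, where equivariance means $f_{\sigma \cdot c_1,\, \sigma \cdot c_1'} = \sigma(f_{c_1, c_1'})$ for all $\sigma \in \Gamma$. After fixing representatives $c, c'$, such a family is determined by the single morphism $f = f_{c,c'}: c \to c'$, and the equivariance constraint reduces to $\sigma(f) = f$ for every $\sigma \in Stab_\Gamma(c) \cap Stab_\Gamma(c')$. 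Composition and identities are inherited from ${\mathcal C}$; I would verify directly that the composite of two equivariant families is again equivariant, that $\mathrm{id}_c$ satisfies the equivariance constraint since $\sigma(\mathrm{id}_c) = \mathrm{id}_{\sigma\cdot c} = \mathrm{id}_c$ for $\sigma \in Stab_\Gamma(c)$, and that associativity and unit axioms are inherited componentwise from ${\mathcal C}$.

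Second, specializing to $c = c'$ immediately gives $End_{\mathcal D}([c]) = (End_{\mathcal C}(c))^{Stab_\Gamma(c)}$. Because $\Gamma$ acts by functors, any $f \in End_{\mathcal C}(c)$ that is invertible in ${\mathcal C}$ satisfies $\sigma(f^{-1}) = \sigma(f)^{-1}$, so the $Stab_\Gamma(c)$-equivariance of $f$ transfers automatically to $f^{-1}$. Consequently an equivariant $f$ is invertible in ${\mathcal D}$ if and only if it is invertible in ${\mathcal C}$, yielding $Aut_{\mathcal D}(\Gamma\cdot c) = (Aut_{\mathcal C}(c))^{Stab_\Gamma(c)}$ as asserted.

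The main obstacle is pinning down the correct convention for the quotient: the naive orbit-category construction (morphisms modulo the diagonal $\Gamma$-action) would instead yield the coset space $Aut_{\mathcal C}(c)/Stab_\Gamma(c)$, which is not even a group unless the stabilizer is normal in $Aut_{\mathcal C}(c)$. The equivariant-family description above is the unique construction I see that produces a genuine category whose automorphism groups match the lemma's fixed-point formula, and articulating this choice — together with checking that the assignment $c \mapsto [c]$ really does define the expected partial functor ${\mathcal C} \dashrightarrow {\mathcal D}$ once one restricts to equivariant morphisms, and that ${\mathcal D}$ enjoys the universal property of accepting $\Gamma$-invariant functors out of ${\mathcal C}$ — is where the bookkeeping is concentrated.
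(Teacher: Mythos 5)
The paper states this lemma without proof and without first defining ${\mathcal C}/\Gamma$, so there is no argument of the authors' to compare yours against; the only explicit quotient construction given later in the same appendix (objects are orbits, morphisms are the composition closure of $f\equiv g(f)$) would make $Aut_{\mathcal D}(\Gamma\cdot c)$ a \emph{quotient} of $Aut_{\mathcal C}(c)$ by the normal subgroup generated by the elements $f^{-1}\sigma(f)$, not the fixed subgroup $(Aut_{\mathcal C}(c))^{Stab_\Gamma(c)}$. These genuinely differ: for $\Gamma=\Z/2$ acting on a one-object groupoid with morphism group $\Z$ by inversion, the fixed subgroup is trivial while the quotient is $\Z/2$. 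So your instinct that the convention must be chosen to force the fixed-point formula is sound, and is the real content here. (Your parenthetical that the naive construction yields the coset space $Aut_{\mathcal C}(c)/Stab_\Gamma(c)$ is not quite right — $Stab_\Gamma(c)$ acts on $Aut_{\mathcal C}(c)$ by group automorphisms, not by translation, so the naive quotient is an orbit set, and after composition closure a quotient group — but this does not affect your main point.)

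The genuine gap is that your replacement construction does not produce a category: composition is not well defined. If the family $\{f_{c_1,c_1'}\}$ is indexed by all of $(\Gamma\cdot c)\times(\Gamma\cdot c')$, it is \emph{not} determined by the single component $f_{c,c'}$ (it also contains independent components $f_{c,d}$ for every $d\in\Gamma\cdot c'$), so your reduction to the constraint $\sigma(f)=f$ for $\sigma\in Stab_\Gamma(c)\cap Stab_\Gamma(c')$ already presupposes indexing by the diagonal orbit $\Gamma\cdot(c,c')$. With that indexing, composing $[c]\to[c']\to[c'']$ forces a choice of object in the middle orbit through which to route the composite; two such choices differ by some $\rho\in Stab_\Gamma(c)\cap Stab_\Gamma(c'')$ that need not stabilize $c'$, and since $f$ and $g$ are only invariant under the smaller groups $Stab_\Gamma(c)\cap Stab_\Gamma(c')$ and $Stab_\Gamma(c')\cap Stab_\Gamma(c'')$, the candidates $\rho(g)\rho(f)$ and $g f$ genuinely differ. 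Concretely, let $\Gamma=\Z/2=\{1,\rho\}$ act on a connected groupoid with three objects $c,c',c''$ and vertex group $G$, with $\rho$ fixing $c$, swapping $c'$ and $c''$, and acting on $G$ by a nontrivial automorphism: then morphisms $[c]\to[c']$ and $[c']\to[c]$ carry no invariance constraint, yet their composite is required to lie in $G^{\rho}$, which fails for generic choices. What does survive of your argument is the computation for a single orbit, where the middle object is forced and one indeed gets $End_{\mathcal D}(\Gamma\cdot c)=(End_{\mathcal C}(c))^{Stab_\Gamma(c)}$ and hence the asserted formula for automorphisms; but to obtain an honest category one needs additional hypotheses (for instance $\Gamma$ acting freely on objects, or passing to the subcategory of $\Gamma$-invariant objects, which is what the paper actually does elsewhere, e.g.\ for ${\bf TSurf}_{\underline\Sigma}$ in Corollary \ref{cor:anti-involution}). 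As stated, the lemma is problematic in full generality, and your proof does not close that gap.
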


Let ${\mathcal C}$ and ${\mathcal D}$ be isomorphic small categories and $F_0$ be an isomorphism ${\mathcal C}\simeq {\mathcal D}$. Define a category ${\mathcal C}\# {\mathcal D}$ which contains ${\mathcal C}$ and ${\mathcal D}$ as subcategories, $Ob({\mathcal C}\# {\mathcal D})=Ob({\mathcal C})\sqcup Ob({\mathcal D})$ and morphisms of 
${\mathcal C}\# {\mathcal D}$ are compositions of morphisms of  ${\mathcal C}$ and ${\mathcal D}$ with the invertible morphisms $a_i:i\mapsto F_0(i)$ and their inverses $a_{F_0(i)}:=a_i^{-1}$ subject  to
$$ f  a_{s(f)}=a_{t(f)} f$$
for any morphisms $f$ in ${\mathcal C}$.

The following is immediate.

\begin{lemma} There is a unique (involutive)  automorphism $F$ of ${\mathcal C}\#{\mathcal D}$ such that $F|_{\mathcal C}=F_0$, $F|_{\mathcal D}=F_0^{-1}$, and $F(a_i)=a_i^{-1}$ for any object $i$ of ${\mathcal C}$. Moreover, 
the assignment $i\mapsto  a_i$ is a natural transformation from the identity functor to  $F$.

\end{lemma}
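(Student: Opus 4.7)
The plan is to define $F$ on the generators of ${\mathcal C}\#{\mathcal D}$ as specified and then check that the defining relations are preserved, that $F$ is involutive, and finally that the assignment $i\mapsto a_i$ gives a natural transformation $\mathrm{Id}\to F$.

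First, I would set $F(i)=F_0(i)$ for $i\in\mathrm{Ob}({\mathcal C})$, $F(j)=F_0^{-1}(j)$ for $j\in\mathrm{Ob}({\mathcal D})$, $F|_{\mathcal C}=F_0$, $F|_{\mathcal D}=F_0^{-1}$, and $F(a_i)=a_i^{-1}$ for $i\in\mathrm{Ob}({\mathcal C})$ (so that the induced value $F(a_{F_0(i)})=F(a_i^{-1})=a_i$ is forced by functoriality). Uniqueness of $F$ is automatic from this prescription because the listed generators generate ${\mathcal C}\#{\mathcal D}$. For well-definedness, the only nontrivial relations to check are the naturality squares $f\,a_{s(f)}=a_{t(f)}\,f$ (read with $F_0(f)$ implicit on the ${\mathcal D}$-side, as is forced by source/target matching); applying $F$ converts each such square into $f\,a_{s(f)}^{-1}=a_{t(f)}^{-1}\,F_0(f)$ after using $F(F_0(f))=f$ and $F(a_i)=a_i^{-1}$, and this new identity is equivalent to the original relation by multiplying by $a_{t(f)}$ on the left and $a_{s(f)}$ on the right. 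Composition relations inside ${\mathcal C}$ and ${\mathcal D}$ are preserved by $F_0$ and $F_0^{-1}$, so $F$ extends uniquely to a functor.

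Involutivity $F^2=\mathrm{Id}$ then follows directly on each type of generator: $F_0^{-1}\circ F_0=\mathrm{Id}_{\mathcal C}$, $F_0\circ F_0^{-1}=\mathrm{Id}_{\mathcal D}$, and $F^2(a_i)=F(a_i^{-1})=F(a_i)^{-1}=a_i$. For the naturality statement, the family $\alpha_k:=a_k$ (with $a_{F_0(i)}:=a_i^{-1}$) gives a morphism $k\to F(k)$ for every object $k$ of ${\mathcal C}\#{\mathcal D}$, and the naturality square $F(h)\circ\alpha_{s(h)}=\alpha_{t(h)}\circ h$ holds on each family of generators: for $h\in{\mathcal C}$ it is precisely the built-in relation of ${\mathcal C}\#{\mathcal D}$; the case $h\in{\mathcal D}$ is obtained from the ${\mathcal C}$-case by writing $h=F_0(f)$ and rearranging; and for $h=a_k$ both sides collapse to the identity since $F(a_k)=a_k^{-1}$. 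Naturality on arbitrary morphisms then follows by the usual horizontal-pasting argument for composites.

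There is no substantive obstacle here; the only care needed is bookkeeping—keeping track of which objects lie in ${\mathcal C}$ versus ${\mathcal D}$, and making sure that $F$ swaps them consistently so that the formula $F(a_i)=a_i^{-1}$ is compatible on both sides of each naturality square. All other steps are direct verifications on generators.
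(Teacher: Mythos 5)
Your proof is correct. The paper offers no argument here (the lemma is introduced with ``The following is immediate''), and your verification on generators---checking that $F$ preserves the single nontrivial family of relations $F_0(f)\,a_{s(f)}=a_{t(f)}\,f$, that $F^2=\mathrm{Id}$, and that the naturality square for $i\mapsto a_i$ reduces to that same relation (or its $F$-image, or a triviality on the $a_i$ themselves)---is exactly the intended routine check. You were also right to flag that the relation as printed is only type-correct when read with $F_0(f)$ on the ${\mathcal D}$-side; that is the reading forced by source/target matching and the one under which everything goes through.
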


This construction generalizes to the direct product of ${\mathcal B}\times {\mathcal C}$ of any categories   ${\mathcal B}$ and ${\mathcal C}$ (see e.g., \cite[Section II.3, page 36]{Mac}). Namely, $Ob({\mathcal B}\times {\mathcal C}):=Ob({\mathcal B})\times Ob({\mathcal C})$ and $Hom_{{\mathcal B}\times {\mathcal C}}((b,c),(b',c'))=Hom_{\mathcal B} (b,b')\times Hom_{\mathcal B} (c,c')$ for any object $b,b'$ of ${\mathcal B}$ and $c,c'$ of ${\mathcal C}$ with the natural composition law
$$(\varphi,\psi)(\varphi',\psi)=(\varphi \varphi',\psi,\psi')$$
whenever $\varphi \varphi'$ is defined in ${\mathcal B}$ and $\psi \psi'$ is defined in ${\mathcal C}$.

In particular, 
$(\varphi,\psi) =(\varphi,Id_{t(\psi)})(Id_{s(\varphi)},\psi)=(Id_{t(\varphi)},\psi)(\varphi,Id_{s(\psi)})$.

The following is immediate. 

\begin{lemma} For any endofunctors 
 $F_{\mathcal B}$ of ${\mathcal B}$ and   
 $F_{\mathcal C}$ of ${\mathcal C}$  one has
 
(a) The assignments $(b,c)\mapsto (F_{\mathcal B}(b),F_{\mathcal C}(c))$, $(b,c)\in Ob({\mathcal B}\times {\mathcal C})$
define a unique endofunctor $F_{\mathcal B}\times F_{\mathcal C}$ of  ${\mathcal B}\times {\mathcal C}$. 

 (b) For any natural transformations $\tau_{\mathcal B}:Id_{\mathcal B}\to F_{\mathcal B}$ and $\tau_{\mathcal C}:Id_{\mathcal C}\to F_{\mathcal C}$ the assignments $(b,c)\mapsto (\tau_{\mathcal B}(b),\tau_{\mathcal B}(b))$ define a natural transformation $\tau_{\mathcal B}\times \tau_{\mathcal C}: Id_{{\mathcal B}\times {\mathcal C}}\to F_{\mathcal B}\times F_{\mathcal C}$.
\end{lemma}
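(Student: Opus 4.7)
The plan is to verify both assertions by unwinding the definition of the product category and reducing to the corresponding facts in each factor separately.

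For part (a), I would first extend the assignment $(b,c)\mapsto (F_{\mathcal B}(b),F_{\mathcal C}(c))$ on objects to an assignment on morphisms by setting $(F_{\mathcal B}\times F_{\mathcal C})(\varphi,\psi):=(F_{\mathcal B}(\varphi),F_{\mathcal C}(\psi))$ for any morphism $(\varphi,\psi)$ in ${\mathcal B}\times {\mathcal C}$. Since $s(F_{\mathcal B}(\varphi))=F_{\mathcal B}(s(\varphi))$ and $t(F_{\mathcal B}(\varphi))=F_{\mathcal B}(t(\varphi))$ (and similarly for $F_{\mathcal C}$), this lands in the correct hom-set. Then I would check the functor axioms: preservation of identities reduces to $F_{\mathcal B}(Id_b)=Id_{F_{\mathcal B}(b)}$ and $F_{\mathcal C}(Id_c)=Id_{F_{\mathcal C}(c)}$, while preservation of composition reduces, using the componentwise composition law $(\varphi,\psi)(\varphi',\psi')=(\varphi \varphi',\psi \psi')$ recalled just before the lemma, to the corresponding compositional identities for $F_{\mathcal B}$ and $F_{\mathcal C}$. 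Uniqueness is automatic from the fact that any two morphisms in ${\mathcal B}\times {\mathcal C}$ with prescribed components agree.

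For part (b), I would first note that for any object $(b,c)$ one has $(\tau_{\mathcal B}(b),\tau_{\mathcal C}(c))\in Hom_{\mathcal B}(b,F_{\mathcal B}(b))\times Hom_{\mathcal C}(c,F_{\mathcal C}(c))=Hom_{{\mathcal B}\times {\mathcal C}}((b,c),(F_{\mathcal B}\times F_{\mathcal C})(b,c))$, so the prescription does define a candidate natural transformation. Naturality then amounts to checking that for every morphism $(\varphi,\psi):(b,c)\to (b',c')$ in ${\mathcal B}\times {\mathcal C}$ the square
\[
(F_{\mathcal B}(\varphi),F_{\mathcal C}(\psi))\circ (\tau_{\mathcal B}(b),\tau_{\mathcal C}(c))=(\tau_{\mathcal B}(b'),\tau_{\mathcal C}(c'))\circ (\varphi,\psi)
\]
holds, and by componentwise composition in the product category this reduces to the two naturality squares of $\tau_{\mathcal B}$ and $\tau_{\mathcal C}$ individually.

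Since both parts are purely formal verifications that decompose along the two factors, there is no genuine obstacle; the only mildly subtle point is making sure one uses the correct componentwise composition law in ${\mathcal B}\times {\mathcal C}$ (as recalled explicitly in the paragraph preceding the lemma) so that functoriality and naturality in the product reduce cleanly to the same properties in each factor. I would also remark in passing that the typographical $\tau_{\mathcal B}(b)$ appearing twice in the statement of (b) is a typo for $(\tau_{\mathcal B}(b),\tau_{\mathcal C}(c))$, which is the only sensible reading and the one the argument above verifies.
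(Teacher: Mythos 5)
Your verification is correct and is exactly the routine componentwise check that the paper has in mind — the paper itself offers no proof, simply labeling the lemma "immediate." Your observation that $(\tau_{\mathcal B}(b),\tau_{\mathcal B}(b))$ in part (b) is a typo for $(\tau_{\mathcal B}(b),\tau_{\mathcal C}(c))$ is also correct.
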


Then define the quotient category ${\mathcal C}/G$ whose object set is $Ob({\mathcal C})/G$ the set of orbits and whose $Hom$ set is the composition closure of the equivalence relation $f\equiv f'$ for morphisms $f:a\to b$ and $f':a'\to b'$ of ${\mathcal C}$ iff $f'=g(f)$ for some $g\in G$ (e.g., $a'=g(a)$, $b'=g(a)$).

\begin{lemma} ${\mathcal C}/G$ is a well-defined category.
    
\end{lemma}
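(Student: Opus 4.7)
The plan is to unpack the definition of ${\mathcal C}/G$ and then verify, in order, that (i) the prescribed relation on morphisms really is an equivalence, (ii) composition descends to equivalence classes when representatives can be chosen composable, (iii) identities and associativity hold. The cleanest way to organize the proof is to first realize ${\mathcal C}/G$ as a quotient of an intermediate ``action category'' ${\mathcal C}\rtimes G$ whose objects are those of ${\mathcal C}$ and whose morphisms are generated by morphisms of ${\mathcal C}$ together with formal symbols $\tau_g^a:a\to g(a)$ for $g\in G$, $a\in \mathrm{Ob}({\mathcal C})$, subject to $\tau_g^{t(f)}\circ f=g(f)\circ \tau_g^{s(f)}$ and $\tau_{gg'}^a=\tau_g^{g'(a)}\circ \tau_{g'}^a$. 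The category ${\mathcal C}/G$ then arises by identifying objects in the same $G$-orbit and killing all $\tau_g^a$; this universal-property viewpoint automatically delivers a category, and the task reduces to matching its morphism sets with those described in the lemma.

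First I would check that $f\equiv f'$ iff $f'=g(f)$ for some $g\in G$ is indeed an equivalence relation on $\bigsqcup_{a,b}\mathrm{Hom}_{\mathcal C}(a,b)$: reflexivity uses $g=1$, symmetry uses $g^{-1}$, and transitivity uses the composition in $G$. Next I would define a morphism $[a]\to[b]$ in ${\mathcal C}/G$ as an equivalence class of finite chains $(f_1,\ldots,f_n)$ of morphisms in ${\mathcal C}$ with $s(f_1)\in[a]$, $t(f_n)\in[b]$, and $t(f_i)$, $s(f_{i+1})$ in the same $G$-orbit for each $i$; the equivalence is generated by the $G$-action on individual $f_i$ and by replacing a composable pair $(f_i,f_{i+1})$ by $f_{i+1}\circ f_i$. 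Composition of two morphisms is concatenation of chains; the identity on $[a]$ is the class of $(\mathrm{id}_a)$, which is independent of the choice of $a\in[a]$ since $g(\mathrm{id}_a)=\mathrm{id}_{g(a)}$. Associativity follows from associativity of concatenation and of composition in ${\mathcal C}$.

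The main obstacle is the well-definedness of composition at the level of single-$\mathrm{Hom}$ classes, which is why taking the composition closure is essential: given $f_0:a\to b$ and $f_0':b\to c$, and representatives $f_1=g(f_0)$, $f_1'=g'(f_0')$ with $t(f_1)=s(f_1')$, one gets $g(b)=g'(b)$, so $h:=g^{-1}g'\in\mathrm{Stab}_G(b)$; then
\[
f_1'\circ f_1 \;=\; g\bigl(h(f_0')\circ f_0\bigr),
\]
which is equivalent to $h(f_0')\circ f_0$, but this need not equal $f_0'\circ f_0$ unless $h$ fixes $f_0'$. In the chain description above, however, $f_1'\circ f_1$ and $f_0'\circ f_0$ both correspond to the same equivalence class, because the chain $(f_0,f_0')$ is connected to $(f_0,h(f_0'))$ by the allowed move that inserts the action of $h$ at the intermediate object $b$. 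Thus the obstruction is precisely absorbed by the closure operation, and no additional quotienting is needed. Once this is verified, the functoriality statement $\mathrm{Aut}_{{\mathcal C}/G}([a])=(\mathrm{Aut}_{\mathcal C}(a))^{\mathrm{Stab}_G(a)}$ from the preceding lemma is an immediate consequence when $G$ acts on a groupoid, by choosing chains of length one.
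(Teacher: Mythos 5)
The paper offers no proof of this lemma at all --- it is stated as an unproved assertion immediately after the definition of ${\mathcal C}/G$ --- so there is no argument of the authors' to compare yours against; the question is only whether your verification is sound, and it is. You correctly identify the one genuine subtlety, namely that composition of single-morphism equivalence classes is ambiguous: if $f_1=g(f_0)$ and $f_1'=g'(f_0')$ are composable representatives, then $f_1'\circ f_1=g\bigl(h(f_0')\circ f_0\bigr)$ with $h=g^{-1}g'\in\mathrm{Stab}_G(b)$, and $h(f_0')\circ f_0$ need not be $G$-equivalent to $f_0'\circ f_0$. Your chain/concatenation model makes precise what ``composition closure'' must mean and shows that this ambiguity is exactly what the closure absorbs, which is the content of the lemma. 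The opening paragraph about the action category ${\mathcal C}\rtimes G$ is harmless scaffolding that you never really use; the chain description alone suffices. One caution about your closing sentence: with this construction, $\mathrm{Aut}_{{\mathcal C}/G}([a])$ is naturally the quotient of $\mathrm{Aut}_{\mathcal C}(a)$ by the relations $f\sim h(f)$ for $h\in\mathrm{Stab}_G(a)$ (a coinvariant-type object), not obviously the fixed-point subgroup $(\mathrm{Aut}_{\mathcal C}(a))^{\mathrm{Stab}_G(a)}$ that the paper's earlier lemma asserts; that identification requires a separate argument (or a different reading of the superscript), but it is tangential to the well-definedness statement you were asked to prove.
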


We will also use the following fact. Let  ${\bf Grp}$ denote all of all groups where morphisms are group homomorphisms. Given a connected groupoid $\Gamma$ and a functor $F:\Gamma\to{\bf Grp}$, we assign to $F$ a unique up to an isomorphism group $G(F)$ which is isomorphic to any $F(i)$, $i\in \Gamma$.

\begin{lemma} 
\label{le:unique up to conjugation homomorphism}

Let $\Gamma$ be a connected groupoid, $F$ and $F'$ be functors $\Gamma\to {\bf Grp}$.  Let $\tau:F\to F'$ be a natural transformation.
Then there is a unique up to conjugation group homomorphism $\varphi_\tau:G(F)\to G(F')$ which identifies all homomorphisms $\tau(i):F(i)\to F'(i)$ for all $i\in \Gamma$.

\end{lemma}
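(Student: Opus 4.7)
The strategy is to unpack the informal definition of $G(F)$ by choosing a basepoint, construct $\varphi_\tau$ on the nose with respect to that basepoint, and then show that all choice-dependence is absorbed into conjugation, appealing only to naturality of $\tau$. First, I would fix an object $i_0$ of $\Gamma$ and, for every other object $i$, choose a morphism $g_i : i_0 \to i$ in $\Gamma$; such morphisms exist because $\Gamma$ is a connected groupoid. These choices produce canonical identifications $\iota_i := F(g_i) : F(i_0) \xrightarrow{\sim} F(i)$ and $\iota'_i := F'(g_i) : F'(i_0) \xrightarrow{\sim} F'(i)$, and allow us to realize $G(F) = F(i_0)$ and $G(F') = F'(i_0)$ as concrete groups. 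With these identifications in hand, the proposed definition is simply
\[
\varphi_\tau \;:=\; \tau(i_0)\,:\, F(i_0) \longrightarrow F'(i_0).
\]

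Next I would verify that $\varphi_\tau$ indeed ``identifies'' every $\tau(i)$. For any object $i$ the naturality square of $\tau$ along $g_i$ reads $\tau(i) \circ F(g_i) = F'(g_i) \circ \tau(i_0)$, which rearranges to $\tau(i) = \iota'_i \circ \varphi_\tau \circ \iota_i^{-1}$. In other words, transported via the chosen identifications $\iota_i$ and $\iota'_i$, the homomorphism $\varphi_\tau$ becomes exactly $\tau(i)$, which is the compatibility requirement in the statement.

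For uniqueness up to conjugation, I would examine how the construction depends on the auxiliary data. Replacing $g_i$ by another morphism $g'_i : i_0 \to i$ changes $\iota_i$ by postcomposition with an automorphism of $F(i)$ coming from the element $g_i^{-1} g'_i \in \mathrm{Aut}_\Gamma(i_0)$; naturality of $\tau$ on this element shows that $F(g_i^{-1}g'_i)$ and $F'(g_i^{-1}g'_i)$ intertwine $\tau(i_0)$, so the induced change in $\varphi_\tau$ is absorbed into automorphisms of $G(F)$ and $G(F')$ matched by $\tau$. Changing the basepoint from $i_0$ to some $i_1$ via a morphism $h : i_0 \to i_1$ similarly produces the new candidate $\tau(i_1) = F'(h) \circ \tau(i_0) \circ F(h)^{-1}$, which is $\varphi_\tau$ conjugated by the isomorphism $F(h)$ on the source and $F'(h)$ on the target. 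Under the identifications of $F(i_0)$ and $F(i_1)$ (respectively $F'(i_0)$ and $F'(i_1)$) with $G(F)$ (respectively $G(F')$), these discrepancies become conjugations on $G(F)$ and $G(F')$, giving the stated uniqueness.

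The only real step here is bookkeeping: the substantive content is already contained in the naturality of $\tau$, which instantly produces the compatibility with $\tau(i)$ and controls the ambiguity under change of basepoint. The main thing to be careful about is to state exactly in what sense ``conjugation'' is meant --- namely that the ambiguity in $\varphi_\tau$ is exactly by the subgroups $F(\mathrm{Aut}_\Gamma(i_0)) \subset \mathrm{Aut}(G(F))$ and $F'(\mathrm{Aut}_\Gamma(i_0)) \subset \mathrm{Aut}(G(F'))$, and to observe via naturality that $\tau$ intertwines these two actions, so that the resulting equivalence class of $\varphi_\tau$ is intrinsic.
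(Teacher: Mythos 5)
The paper states this lemma without proof (it is presented as an immediate fact at the end of the appendix), so there is no argument of the authors' to compare against; your proof supplies the standard and correct argument. Fixing a basepoint, setting $\varphi_\tau=\tau(i_0)$, and using the naturality square along the chosen connecting morphisms $g_i$ is exactly what is needed, and your observation that naturality applied to $\mathrm{Aut}_\Gamma(i_0)$ intertwines the two ambiguity subgroups $F(\mathrm{Aut}_\Gamma(i_0))$ and $F'(\mathrm{Aut}_\Gamma(i_0))$ is the whole content of ``unique up to conjugation'' here. One small bookkeeping slip: replacing $g_i$ by $g'_i$ alters $\iota_i$ by \emph{pre}composition with the automorphism $F(g_i^{-1}g'_i)$ of $F(i_0)$ (equivalently postcomposition with $F(g'_ig_i^{-1})\in\mathrm{Aut}(F(i))$), not postcomposition with an automorphism of $F(i)$ ``coming from'' an element of $\mathrm{Aut}_\Gamma(i_0)$; in fact naturality on loops at $i_0$ shows the candidate $\tau(i_0)$ itself is strictly unchanged by such replacements, and the only genuine ambiguity lies in the identifications of the abstract groups $G(F)$, $G(F')$ with $F(i_0)$, $F'(i_0)$. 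This does not affect the validity of your argument.
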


\end{document}